\theoremstyle{plain}
\newtheorem{proposition}{Proposition}[section]
\newtheorem{theorem}[proposition]{Theorem}
\newtheorem{lemma}[proposition]{Lemma}
\newtheorem{corollary}[proposition]{Corollary}
\theoremstyle{definition}
\newtheorem{example}[proposition]{Example}
\newtheorem{definition}[proposition]{Definition}
\newtheorem{observation}[proposition]{Observation}
\theoremstyle{remark}
\newtheorem{remark}[proposition]{Remark}
\newtheorem{question}[proposition]{Question}
\DeclareMathOperator{\Aut}{Aut}
\DeclareMathOperator{\diam}{diam}
\DeclareMathOperator{\GL}{GL}
\DeclareMathOperator{\PSL}{PSL}
\DeclareMathOperator{\PGL}{PGL}
\DeclareMathOperator{\End}{End}
\DeclareMathOperator{\Spanset}{Span} 
\DeclareMathOperator{\Gr}{Gr} 
\DeclareMathOperator{\id}{id} 
\DeclareMathOperator{\Haus}{Haus} 
\DeclareMathOperator{\CAT}{CAT} 
\DeclareMathOperator{\Isom}{Isom}
\DeclareMathOperator{\Stab}{Stab}
\DeclareMathOperator{\CH}{ConvHull}
\DeclareMathOperator{\relint}{rel-int}
\DeclareMathOperator{\dist}{d}
\DeclareMathOperator{\core}{core}
\DeclareMathOperator{\prl}{\diamond}
\DeclareMathOperator{\simplexcc}{max}
\DeclareMathOperator{\partiali}{\partial_{\, i}}
\DeclareMathOperator{\partialni}{\partial_{\, n}}
\newcommand{\triangleD}{\Delta}
\newcommand{\hil}{H_{\Omega}}
\newcommand{\bdry}{\partial}
\newcommand{\til}{\widetilde}
\DeclareMathOperator{\LG}{\Lambda}
\DeclareMathOperator{\sG}{\sigma}
\DeclareMathOperator{\gG}{\gamma}
\DeclareMathOperator{\Bc}{\mathcal{B}}
\DeclareMathOperator{\Cc}{\mathcal{C}}
\DeclareMathOperator{\Fc}{\mathcal{F}}
\DeclareMathOperator{\Gc}{\mathcal{G}}
\DeclareMathOperator{\Hc}{\mathcal{H}}
\DeclareMathOperator{\Kc}{\mathcal{K}}
\DeclareMathOperator{\Lc}{\mathcal{L}}
\DeclareMathOperator{\Nc}{\mathcal{N}}
\DeclareMathOperator{\Oc}{\mathcal{O}}
\DeclareMathOperator{\Tc}{\mathcal{T}}
\DeclareMathOperator{\Sc}{\mathcal{S}}
\DeclareMathOperator{\Hb}{\mathbb{H}}
\DeclareMathOperator{\Pb}{\mathbb{P}}
\DeclareMathOperator{\Rb}{\mathbb{R}}
\DeclareMathOperator{\Zb}{\mathbb{Z}}
\newcommand{\abs}[1]{\left|#1\right|}
\newcommand{\norm}[1]{\left\|#1\right\|}
\newcommand{\wt}[1]{\widetilde{#1}}
\newcommand{\wh}[1]{\widehat{#1}}
\newcommand{\ip}[1]{\left\langle #1\right\rangle}
\begin{document}

\title{Convex co-compact actions of relatively hyperbolic groups}
\author{Mitul Islam}
\address{Department of Mathematics, University of Michigan \newline Current Address: Mathematisches Institut, Heidelberg University}
\email{mislam@mathi.uni-heidelberg.de}
\author{Andrew Zimmer}
\address{Department of Mathematics, Louisiana State University \newline Current Address: Department of Mathematics, University of Wisconsin-Madison}
\email{amzimmer2@wisc.edu}
\date{\today}
\keyword{convex real projective geometry, relatively hyperbolic groups}
\subject{primary}{msc2010}{22E40}
\subject{secondary}{msc2010}{20H10, 53A20, 20F67, 53C60}

\arxivreference{1910.08885}

\begin{abstract} 
In this paper we consider discrete groups in $\PGL_d(\Rb)$ acting convex co-compactly on a properly convex domain in real projective space. For such groups, we establish necessary and sufficient conditions for the group to be relatively hyperbolic in terms of the geometry of the convex domain. This answers a question of  Danciger-Gu{\'e}ritaud-Kassel and is analogous to a result of Hruska-Kleiner for $\CAT(0)$ spaces. \end{abstract}

\maketitle

\tableofcontents

\section{Introduction}
If $G$ is a connected simple Lie group with trivial center and no compact factors and $K \leq G$ is a maximal compact subgroup, then $X=G/K$ has a unique (up to scaling) Riemannian symmetric metric $g$ such that $G = \Isom_0(X,g)$. The metric $g$ is non-positively curved and $X$ is simply connected, hence every two points in $X$ are joined by a unique geodesic segment. A subset $\Cc \subset X$ is called \emph{convex} if for every $x,y \in \Cc$ the geodesic joining them is also in $\Cc$. Then, a discrete group $\Gamma \leq G$ is  \emph{convex co-compact} if there exists a non-empty closed convex set $\Cc \subset X$ such that $\gamma(\Cc) = \Cc$ for all $\gamma \in \Gamma$ and $\Gamma$ acts co-compactly on $\Cc$.

When $G$ has real rank one, for instance $G = \PSL_2(\Rb)$, there are an abundance of examples of convex co-compact subgroups in the context of Kleinian groups and hyperbolic geometry. When $G$ has higher real rank, for instance $G=\PSL_3(\Rb)$, there are few examples: Kleiner-Leeb~\cite{KL2006} and independently Quint~\cite{Q2005} proved that every Zariski dense convex co-compact subgroup is a co-compact lattice. 

Danciger-Gu{\'e}ritaud-Kassel~\cite{DGF2017} have recently introduced a different notion of convex co-compact subgroups in $\PGL_d(\Rb)$ based on the action of the subgroup on the projective space $\Pb(\Rb^d)$.  Their notion of convex co-compactness requires some preliminary definitions.  When $\Omega \subset \Pb(\Rb^d)$ is a properly convex domain, the \emph{automorphism group of $\Omega$} is defined to be
\begin{align*}
\Aut(\Omega) : = \{ g \in \PGL_d(\Rb) : g \Omega = \Omega\}.
\end{align*}
For a subgroup $\Lambda \leq \Aut(\Omega)$, the  \emph{full orbital limit set of $\Lambda$ in $\Omega$} is defined to be
\begin{equation*}
\Lc_{\Omega}(\Lambda):= \bigcup_{p \in \Omega} \Big( \overline{\Lambda \cdot p} \setminus \Lambda \cdot p \Big).
\end{equation*}
Next, let $\Cc_\Omega(\Lambda)$ denote the convex hull of $\Lc_\Omega(\Lambda)$ in $\Omega$. 

\begin{definition}[{Danciger-Gu{\'e}ritaud-Kassel~\cite[Definition 1.10]{DGF2017}}] 
\label{defn:cc}
Suppose $\Omega \subset \Pb(\Rb^d)$ is a properly convex domain. An infinite discrete subgroup $\Lambda \leq \Aut(\Omega)$ is called \emph{convex co-compact} if $\Cc_\Omega(\Lambda)$ is non-empty and $\Lambda$ acts co-compactly on $\Cc_\Omega(\Lambda)$. 
\end{definition}

When $\Lambda$ is word hyperbolic there is a close connection between this class of discrete groups in $\PGL_d(\Rb)$ and Anosov representations, see~\cite{DGF2017} for details and~\cite{DGF2018,Z2017} for related results. Further, by adapting an argument of Benoist~\cite{B2004}, Danciger-Gu{\'e}ritaud-Kassel proved a characterization of hyperbolicity in terms of the geometry of $\Cc_\Omega(\Lambda)$. To state their result we need two definitions.

\begin{definition} A subset $S \subset \Pb(\Rb^d)$ is a \emph{simplex} if there exist $g \in \PGL_d(\Rb)$ and $1 \leq k \leq d$ such that 
\begin{align*}
gS = \left\{ [x_1:\dots:x_{k}:0:\dots:0] \in \Pb(\Rb^d): x_1>0,\dots, x_k>0  \right\}.
\end{align*}  
In this case we define the dimension of $S$ to be $\dim(S)=k-1$ (notice that $S$ is homeomorphic to $\Rb^{k-1}$) and say that the $k$ points 
\begin{align*}
g^{-1}[1:0:\dots:0], g^{-1}[0:1:0:\dots:0], \dots, g^{-1}[0:\dots:0:1:0:\dots:0] \in \partial S
\end{align*}
are the vertices of $S$. 
\end{definition}

\begin{definition} Suppose $A \subset B \subset \Pb(\Rb^d)$. Then $A$ is \emph{properly embedded in $B$} if the inclusion map $A \hookrightarrow B$ is a proper map (relative to the subspace topology). 
\end{definition}

Finally, given a properly convex domain $\Omega \subset \Pb(\Rb^d)$ let $H_\Omega$ denote the Hilbert metric on $\Omega$ (see Section~\ref{subsec:Hilbert_metric} for the definition). 

\begin{theorem}[{Danciger-Gu{\'e}ritaud-Kassel~\cite[Theorem 1.15]{DGF2017}}]\label{thm:char_of_hyp}  Suppose $\Omega \subset \Pb(\Rb^d)$ is a properly convex domain and $\Lambda \leq \Aut(\Omega)$ is convex co-compact. Then the following are equivalent: 
\begin{enumerate}
\item $\Cc_\Omega(\Lambda)$ contains no properly embedded simplices with dimension at least two, 
\item $(\Cc_\Omega(\Lambda), H_\Omega)$ is Gromov hyperbolic, 
\item $\Lambda$ is word hyperbolic.
\end{enumerate}
\end{theorem}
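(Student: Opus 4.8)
The plan is to prove the three implications $(2)\Leftrightarrow(3)$, $(2)\Rightarrow(1)$, and $(1)\Rightarrow(2)$, which together give all six. For $(2)\Leftrightarrow(3)$, set $\Cc:=\Cc_\Omega(\Lambda)$. Since projective line segments between points of $\Omega$ are $H_\Omega$-geodesics and $\Cc$ is convex and closed in $\Omega$, the space $(\Cc,H_\Omega)$ is proper and geodesic; since $\Aut(\Omega)$ acts properly and isometrically on $(\Omega,H_\Omega)$ and $\Lambda$ is discrete, $\Lambda$ acts on $(\Cc,H_\Omega)$ properly discontinuously, isometrically, and (by hypothesis) cocompactly. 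The Milnor--{\v S}varc lemma then shows $\Lambda$ is finitely generated and quasi-isometric to $(\Cc,H_\Omega)$, and since Gromov hyperbolicity is a quasi-isometry invariant of geodesic spaces, $(\Cc,H_\Omega)$ is Gromov hyperbolic if and only if $\Lambda$ is word hyperbolic.

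For $(2)\Rightarrow(1)$ I argue the contrapositive. Suppose $S\subset\Cc$ is a properly embedded simplex, say $gS=\{[x_1:\dots:x_{k+1}:0:\dots:0]:x_i>0\}$ with $k\ge2$. As $\Cc$ is closed in $\Omega$, $S$ is properly embedded in $\Omega$ as well, so $\overline S\cap\Omega=S$ and every proper face of $S$ lies in $\partial\Omega$. Hence for $x,y\in S$ the projective line through them, which lies in the projective $k$-plane spanned by $S$, meets $\partial\Omega$ precisely at the two points where it meets $\partial S$; therefore the intrinsic Hilbert metric of the simplex $S$ agrees with $H_\Omega|_S$, so $(S,H_\Omega|_S)$ is an isometrically embedded copy of the $k$-simplex with its Hilbert metric. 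That model space is isometric to $(\Rb^k,\norm{\cdot})$ for a polyhedral norm, and for $k\ge2$ it contains, for each $N$, a geodesic triangle with a point at distance $\ge cN$ from the union of the other two sides (for instance the triangle with vertices $0,Nv,Nw$ for linearly independent unit vectors $v,w$). Since the sides of such a triangle are projective segments, it is also a geodesic triangle of $(\Cc,H_\Omega)$, so $(\Cc,H_\Omega)$ is not Gromov hyperbolic.

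For $(1)\Rightarrow(2)$, again in contrapositive, assume $(\Cc,H_\Omega)$ is not Gromov hyperbolic; I must produce a properly embedded simplex, following the method Benoist used for divisible convex sets. Fix a compact $K\subset\Cc$ with $\Lambda K=\Cc$. Failure of the thin-triangles condition gives, for each $n$, a geodesic triangle in $\Cc$ with vertices $x_n,y_n,z_n$ and a point $p_n$ on the side $[x_n,y_n]$ with $H_\Omega(p_n,[x_n,z_n]\cup[y_n,z_n])\ge n$; translating by suitable $\gamma_n\in\Lambda$ I may assume $p_n\in K$. Passing to a subsequence, $p_n\to p\in\Cc$; since the sides are projective segments, $[x_n,y_n]$ converges in the Hausdorff topology on closed subsets of $\overline\Omega$ to a bi-infinite geodesic $L$ through $p$ with endpoints $\xi^\pm\in\partial\Omega$, with $x_n\to\xi^-$, $y_n\to\xi^+$, and (after a further subsequence) $z_n\to\zeta$ while $[x_n,z_n]$ and $[y_n,z_n]$ converge to geodesics joining $\xi^-$ and $\xi^+$ to $\zeta$. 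The hypothesis $H_\Omega(p_n,[x_n,z_n]\cup[y_n,z_n])\to\infty$ forces $p$ to lie at \emph{infinite} distance from both limiting geodesics, and from this one extracts that $z_n$ too escapes every compact set so that $\zeta\in\partial\Omega$, that $\xi^-,\xi^+,\zeta$ are not collinear so that the open triangle $T$ they span is a $2$-simplex, that $T\subset\Cc$, and that $T$ is properly embedded in $\Cc$.

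\textbf{The main obstacle} is this last implication, and within it the passage from the \emph{quantitative} divergence $H_\Omega(p_n,[x_n,z_n]\cup[y_n,z_n])\to\infty$ to the \emph{qualitative} conclusion that the limiting triangle $T$ is non-degenerate and, crucially, \emph{properly embedded} in $\Cc$, that is $\overline T\cap\Omega=T$, rather than merely a triangle contained in $\Cc$; equivalently, one must rule out that $\zeta$ is an interior point of $\Omega$ and that $\xi^-,\xi^+,\zeta$ lie on a common flat face of $\partial\overline{\Cc}$. Controlling this requires understanding how the fat triangles sit inside $\Cc$ near $p$, and it is precisely here that the cocompact action is essential: it is what permits translating the fat center $p_n$ into a fixed compact set before extracting the limit, so that the limiting configuration is non-trivial.
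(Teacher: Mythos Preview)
The paper does not contain a proof of this statement. Theorem~\ref{thm:char_of_hyp} is quoted from Danciger--Gu\'eritaud--Kassel~\cite[Theorem 1.15]{DGF2017} as background, with the remark that the divisible case is due to Benoist~\cite{B2004}; no argument is given here. So there is no ``paper's own proof'' to compare your attempt against.

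That said, your outline is the standard one and matches the structure of the original arguments. The equivalence $(2)\Leftrightarrow(3)$ via Milnor--\v{S}varc is exactly right, and $(2)\Rightarrow(1)$ is correct once you note (as you do) that a properly embedded simplex inherits the ambient Hilbert metric, hence is isometric to a normed $\Rb^k$ with $k\ge 2$. For $(1)\Rightarrow(2)$ you correctly follow Benoist's rescaling/limiting strategy and you are honest about where the work lies. One sharpening: in the limit you do not immediately get a properly embedded $2$-simplex; what the argument actually produces is a \emph{half triangle} $a,b,c\in\partiali\Cc$ with $[a,b],[b,c]\subset\partial\Omega$ and $(a,c)\subset\Cc$ (this is exactly the configuration exploited repeatedly in the present paper, e.g.\ in Section~\ref{sec:half_triangles} and the proof of Theorem~\ref{thm:asympt-trans-free}). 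From a half triangle one then runs a second Benz\'ecri-type rescaling near the corner $b$ (cf.\ Lemma~\ref{lem:geom_near_corner}) to extract a genuine properly embedded $2$-simplex in $\Cc$. Your sketch jumps from ``the two limiting sides are at infinite distance from $p$'' to ``the triangle $T$ is properly embedded,'' and that leap hides this second step; the divergence condition gives $[\,\xi^\pm,\zeta\,]\subset\partial\Omega$ but not directly that $\relint\CH\{\xi^-,\xi^+,\zeta\}\subset\Omega$. With that caveat, your plan is the right one.
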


\begin{remark}  In the special case when  $\Lambda$ acts co-compactly on $\Omega$, Theorem~\ref{thm:char_of_hyp} is due to Benoist~\cite{B2004}. 
\end{remark}

The case when $\Lambda$ is not word hyperbolic is less understood and Danciger-Gu{\'e}ritaud-Kassel asked the following. 

\begin{question}[{Danciger-Gu{\'e}ritaud-Kassel~\cite[Question A.2]{DGF2017}}]\label{quest:DGF} Suppose $\Omega \subset \Pb(\Rb^d)$ is a properly convex domain and $\Lambda \leq \Aut(\Omega)$ is convex co-compact. Under what conditions is $\Lambda$ relatively hyperbolic with respect to a collection of virtually Abelian subgroups?
\end{question}

In this paper we provide an answer to this question in terms of the geometry of the family of all maximal properly embedded simplices (note, a properly embedded simplex is called \emph{maximal} if it is not contained in a larger properly embedded simplex; they are not necessarily simplices of codimension one). 

Our approach is motivated by previous work of Hruska-Kleiner~\cite{HK2005} for $\CAT(0)$ spaces (see Section \ref{sec:motivation-cat-0} for details). In some ways the Hilbert metric on a properly convex domain behaves like a $\CAT(0)$-metric, see the discussion in~\cite{L2014}. However, an old result of Kelly-Strauss~\cite{KS1958} says that a Hilbert geometry $(\Omega, H_\Omega)$ is $\CAT(0)$ if and only if it is isometric to real hyperbolic $(d-1)$-space (in which case $\Omega$ coincides with the interior of the convex hull of an ellipsoid in some affine chart). Thus, one requires different techniques for studying the geometry of properly convex domains and the groups acting on them. 

The following theorem is the first main result of this paper.

\begin{theorem}\label{thm:main_cc}(see Section~\ref{sec:pf_of_thm_main_cc}) Suppose $\Omega \subset \Pb(\Rb^d)$ is a properly convex domain, $\Lambda \leq \Aut(\Omega)$ is convex co-compact, and $\Sc_{\simplexcc}$ is the family of all maximal properly embedded simplices in $\Cc_{\Omega}(\Lambda)$ of dimension at least two. Then the following are equivalent: 
\begin{enumerate}
\item $\Sc_{\simplexcc}$ is closed and discrete in the local Hausdorff topology induced by $H_\Omega$,
\item $(\Cc_\Omega(\Lambda), H_\Omega)$ is a relatively hyperbolic space with respect to $\Sc_{\simplexcc}$,
\item $(\Cc_\Omega(\Lambda), H_\Omega)$ is a relatively hyperbolic space with respect to a family of properly embedded simplices in $\Cc_\Omega(\Lambda)$ of dimension at least two,
\item $\Lambda$ is a relatively hyperbolic group with respect to a collection of virtually Abelian subgroups of rank at least two.
\end{enumerate}
\end{theorem}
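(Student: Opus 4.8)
The plan is to prove the cyclic chain $(1) \Rightarrow (2) \Rightarrow (3) \Rightarrow (4) \Rightarrow (1)$, of which $(2) \Rightarrow (3)$ is immediate since every member of $\Sc_{\simplexcc}$ is in particular a properly embedded simplex in $\Cc_\Omega(\Lambda)$. Throughout I use freely that $\Cc := \Cc_\Omega(\Lambda)$ is a proper geodesic metric space---it is closed and convex in $\Omega$, so straight projective segments realize $H_\Omega$-geodesics inside it---on which $\Lambda$ acts properly, cocompactly and by isometries; in particular $\Lambda$ is finitely generated and quasi-isometric to $(\Cc, H_\Omega)$. I also use the basic fact that a $k$-simplex equipped with the restriction of a Hilbert metric is bi-Lipschitz to $\Rb^k$ with a polyhedral norm, hence to Euclidean $\Rb^k$, and that the definition of a simplex forces $k \ge 2$.

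The implication $(1) \Rightarrow (2)$ is where the real work lies, and I would prove it by verifying that $(\Cc, H_\Omega)$ is asymptotically tree-graded with respect to $\Sc_{\simplexcc}$ (equivalently, that coning off the members of $\Sc_{\simplexcc}$ yields a Gromov hyperbolic space with a bounded-penetration property). First, closedness and discreteness of $\Sc_{\simplexcc}$ in the local Hausdorff topology, combined with cocompactness, give that $\Sc_{\simplexcc}$ is locally finite (each metric ball meets only finitely many of its members, else a subsequence would converge in the local Hausdorff topology to a member of $\Sc_{\simplexcc}$, violating discreteness) and hence has finitely many $\Lambda$-orbits; this yields a uniform bound on the coarse intersections of distinct members, which is the input needed for pieces in an asymptotic cone to meet in at most one point. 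Second---and this is the main obstacle---I must show geodesic triangles in $\Cc$ are thin relative to $\Sc_{\simplexcc}$: each side lies in a uniformly bounded neighborhood of the union of the other two sides together with at most one member of $\Sc_{\simplexcc}$. I would argue by contradiction, adapting the compactness argument of Benoist underlying Theorem~\ref{thm:char_of_hyp}: from a sequence of triangles violating thinness with constants tending to infinity, use cocompactness to translate the offending points into a fixed compact set, pass (via Arzel\`a--Ascoli, and a subsequential limit of the relevant convex sets) to a limiting ``fat'' configuration, and extract from it a properly embedded simplex in $\Cc$ running near the basepoint. Since $\Sc_{\simplexcc}$ is the family of \emph{all} maximal properly embedded simplices, this limiting simplex lies in a member of $\Sc_{\simplexcc}$, so for large index the original triangles travel close to that member near the offending point, contradicting the choice of divergent constants. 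The delicate point is to run the Benoist-type limiting argument in the relative setting, detecting flatness that is not already accounted for by the members of $\Sc_{\simplexcc}$ the triangle fellow-travels.

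For $(3) \Rightarrow (4)$ I would use quasi-isometry invariance of relative hyperbolicity. By hypothesis $(\Cc, H_\Omega)$ is hyperbolic relative to a family $\Sc$ of properly embedded simplices; since $\Lambda$ acts geometrically, rigidity of relatively hyperbolic structures (Drutu--Sapir) shows the $\Lambda$-translates of members of $\Sc$ fall into finitely many coarse-equivalence classes, so after replacing $\Sc$ by a $\Lambda$-invariant family with finitely many orbits and within finite Hausdorff distance of the original, the standard transfer theorem (Drutu; Hruska) gives that $\Lambda$ is hyperbolic relative to the stabilizers of these simplices. Each such stabilizer is quasi-isometric to the corresponding simplex with its Hilbert metric, hence to some Euclidean $\Rb^k$ with $k \ge 2$, and a group quasi-isometric to $\Rb^k$ is virtually $\Zb^k$; thus the peripheral subgroups are virtually Abelian of rank at least two.

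For $(4) \Rightarrow (1)$ I would invoke the structure theory of higher-rank virtually Abelian subgroups of convex co-compact $\Lambda$ (a flat torus theorem for such actions): a virtually Abelian subgroup of rank $k \ge 2$ stabilizes and acts cocompactly on a properly embedded $k$-simplex in $\Cc$, and the stabilizer of a maximal properly embedded simplex is a maximal such subgroup; this gives a $\Lambda$-equivariant correspondence between $\Sc_{\simplexcc}$ and the conjugates of the peripheral subgroups $\{P_i\}$, so in particular $\Sc_{\simplexcc}$ has finitely many $\Lambda$-orbits. Transferring the bounded-penetration property of the relatively hyperbolic group $\Lambda$ across the quasi-isometry $\Lambda \simeq \Cc$, one obtains that for every $R$ only finitely many members of $\Sc_{\simplexcc}$ meet a fixed ball of radius $R$; this local finiteness, together with the fact that a local Hausdorff limit of maximal properly embedded simplices (being, by the local finiteness just obtained, not a proper accumulation point) is again such a simplex, shows that $\Sc_{\simplexcc}$ is closed and discrete in the local Hausdorff topology. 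This closes the cycle.
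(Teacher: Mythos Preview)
Your overall structure is reasonable, and the directions $(2)\Rightarrow(3)$ and $(3)\Rightarrow(4)$ are close in spirit to what the paper does (the paper actually routes everything through the naive convex co-compact Theorem~\ref{thm:main_ncc}, but this does not matter much). However, there is a genuine gap in your $(1)\Rightarrow(2)$ argument, and a related one in $(4)\Rightarrow(1)$.

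\textbf{The gap in $(1)\Rightarrow(2)$.} You assert that closedness and discreteness of $\Sc_{\simplexcc}$, together with local finiteness and finitely many $\Lambda$-orbits, ``yields a uniform bound on the coarse intersections of distinct members.'' This is the strong isolation property, and it does \emph{not} follow formally from isolation plus cocompactness. A priori, two distinct maximal simplices $S_1,S_2$ could share boundary faces of $\Omega$, forcing $\diam_\Omega(\Nc_\Omega(S_1;r)\cap\Nc_\Omega(S_2;r))=\infty$ without any local-Hausdorff accumulation. Ruling this out is exactly the content of Lemma~\ref{lem:pf_of_main_cc}, whose proof rests on the entire $\Sc_{\core}$ machinery (Theorem~\ref{thm:S-core-exists}, which in turn needs the boundary-face analysis of Theorem~\ref{thm:bd_faces}) together with the convex co-compact fact $F_\Omega(x)\subset\partiali\Cc_\Omega(\Lambda)$ (Proposition~\ref{prop:full_face}) to force $F_\Omega(x)=F_S(x)$ for $x\in\partial S$. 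Only then does one get $\Sc_{\core}=\Sc_{\simplexcc}$ and hence strong isolation of $\Sc_{\simplexcc}$. Your sketch skips this, and the subsequent relative-thinness argument needs strong isolation as an input (in the paper, the half-triangle Theorem~\ref{thm:half_triangle_nearby_simplex} and the transverse-free Theorem~\ref{thm:asympt-trans-free} both use it). The paper's route to relative hyperbolicity is also technically different from a direct Benoist-style limiting argument: it verifies Sisto's criterion (Theorem~\ref{thm:Sisto_equiv}) using explicit linear projections onto simplices built from supporting hyperplanes (Section~\ref{sec:linear_projections}); the compactness arguments appear when proving these projections have the required contracting properties.

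\textbf{The gap in $(4)\Rightarrow(1)$.} You claim a $\Lambda$-equivariant correspondence between $\Sc_{\simplexcc}$ and conjugates of the peripherals. The flat torus theorem (Theorem~\ref{thm:max_abelian}) produces a simplex from a peripheral, but it does not tell you that \emph{every} maximal simplex arises this way, nor that the map is injective. What one actually obtains from $(4)$ (via Theorem~\ref{thm:rh_embeddings_of_flats} transported along the quasi-isometry) is only that every properly embedded simplex lies in a bounded neighborhood of one of finitely many $\Lambda$-orbits of simplices---i.e.\ coarsely isolated simplices in the sense of Definition~\ref{defn:IS}. Upgrading this to ``$\Sc_{\simplexcc}$ itself is closed and discrete'' again requires Lemma~\ref{lem:pf_of_main_cc}.
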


Theorem~\ref{thm:main_cc} can be viewed as a real projective analogue of a result of Hruska-Kleiner~\cite{HK2005} for $\CAT(0)$ spaces (see Section \ref{sec:motivation-cat-0} for details). In this analogy, maximal properly embedded simplices correspond to maximal totally geodesic flats in $\CAT(0)$ spaces (see \cite{IZ2019, B2004}).

We also establish a number of properties for convex co-compact subgroups satisfying the conditions in Theorem~\ref{thm:main_cc}. Before stating these results, we informally introduce some notations (see Section~\ref{sec:notations_and_definitions} for precise definitions). Given a properly convex set $\Omega$ which is open in its span, let $\diam_\Omega(A)$ and $\Nc_\Omega(A;r)$ denote the diameter and $r$-neighborhood of a subset $A$ with respect to the Hilbert metric.  Also, given $x \in \overline{\Omega}$, let $F_\Omega(x) \subset \overline{\Omega}$ denote the open face of $x$ in $\Omega$ (see Definition~\ref{defn:open_faces}). Finally, given a properly convex set $C$, let $\relint(C)$ denote the relative interior (see Definition~\ref{defn:topology}) and let $\partial C = \overline{C} \setminus \relint(C)$ denote the boundary.

We will prove the following.

\begin{theorem}\label{thm:properties_of_cc}(see Section~\ref{sec:pf_of_thm_main_cc})  Suppose $\Omega \subset \Pb(\Rb^d)$ is a properly convex domain, $\Lambda \leq \Aut(\Omega)$ is convex co-compact, and $\Sc_{\simplexcc}$ is the family of all maximal properly embedded simplices in $\Cc_{\Omega}(\Lambda)$ of dimension at least two. If $\Sc_{\simplexcc}$ is closed and discrete in the local Hausdorff topology induced by $H_\Omega$, then:
\begin{enumerate}
\item $\Lambda$ has finitely many orbits in $\Sc_{\simplexcc}$. 
\item If $S \in \Sc_{\simplexcc}$, then $\Stab_{\Lambda}(S)$ acts co-compactly on $S$ and contains a finite index subgroup isomorphic to $\Zb^k$ where $k = \dim S$. 
\item If $A \leq \Lambda$ is an Abelian subgroup of rank at least two, then there exists a unique $S \in \Sc_{\simplexcc}$ with $A \leq \Stab_{\Lambda}(S)$. 
\item If $S \in \Sc_{\simplexcc}$ and $x \in \partial S$, then $F_{\Omega}(x) = F_S(x)$. 
\item If $S_1, S_2 \in \Sc_{\simplexcc}$ are distinct, then $\#(S_1 \cap S_2) \leq 1$ and $\partial S_1 \cap \partial S_2 = \emptyset$. 
\item For any $r > 0$ there exists $D(r) > 0$ such that: if $S_1, S_2 \in \Sc_{\simplexcc}$ are distinct, then 
\begin{align*}
\diam_\Omega \Big( \Nc_\Omega(S_1; r) \cap \Nc_\Omega(S_2; r) \Big) \leq D(r).
\end{align*}
\item If $\ell \subset \overline{\Cc_{\Omega}(\Lambda)} \cap \partial \Omega$ is a non-trivial line segment, then there exists $S \in \Sc_{\simplexcc}$ with $\ell \subset \partial S$. 
\item If $x \in \overline{\Cc_{\Omega}(\Lambda)} \cap \partial \Omega$ is not a $C^1$-smooth point of $\partial \Omega$, then there exists  $S \in \Sc_{\simplexcc}$ with $x \in \partial S$. 
\end{enumerate}
\end{theorem}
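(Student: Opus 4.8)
The plan is to derive all eight properties from the equivalences in Theorem~\ref{thm:main_cc} — so in particular we may freely assume that $\Lambda$ is relatively hyperbolic with respect to a collection of virtually Abelian subgroups and that $(\Cc_\Omega(\Lambda),H_\Omega)$ is relatively hyperbolic with respect to $\Sc_{\simplexcc}$ — together with the structure theory of relatively hyperbolic spaces (bounded coset penetration, the fellow-traveling and bounded-intersection properties of peripheral subsets) and the Benoist/Danciger--Gu{\'e}ritaud--Kassel dictionary between the Hilbert geometry of $\Cc_\Omega(\Lambda)$ and its boundary structure. The overall strategy is: (i)~set up the action on the peripheral structure to get items (1)--(3); (ii)~use coarse geometry of the relatively hyperbolic space to get items (5)--(6); and (iii)~use the convex-projective geometry (faces, $C^1$-points, line segments in $\partiali\Cc_\Omega(\Lambda)$) to get items (4), (7), (8).

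For (1), since $\Sc_{\simplexcc}$ is closed and discrete in the local Hausdorff topology and $\Lambda$ acts co-compactly on $\Cc_\Omega(\Lambda)$, a standard compactness argument shows $\Lambda$ has only finitely many orbits of maximal properly embedded simplices; alternatively this is immediate from the fact that the peripheral subgroups of a relatively hyperbolic group fall into finitely many conjugacy classes and each peripheral subgroup coarsely stabilizes a unique $S\in\Sc_{\simplexcc}$. For (2), first identify $\Stab_\Lambda(S)$ with (a finite-index overgroup of) the corresponding peripheral subgroup; since that subgroup is virtually Abelian of rank $\geq 2$ and acts properly on $S$ with $S$ quasi-isometric to a Euclidean space of dimension $\dim S$, the rank must be exactly $k=\dim S$, and cocompactness on $S$ follows because the peripheral subset is coarsely all of $S$. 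Here one uses that a properly embedded $k$-simplex with its Hilbert metric is bi-Lipschitz to $(\Rb^k, \|\cdot\|_\infty)$ (a result of Vinberg/de la Harpe, available in the literature the paper cites), so a virtually-$\Zb^m$ group acting properly cocompactly forces $m=k$. For (3), an infinite Abelian $A\leq\Lambda$ is, by relative hyperbolicity, conjugate into a peripheral subgroup (infinite Abelian subgroups of relatively hyperbolic groups with non-relatively-hyperbolic — here, Abelian of rank $\geq 2$ — behavior are parabolic; more carefully, an infinite Abelian subgroup not contained in a peripheral one would be loxodromic/virtually cyclic, contradiction once $A$ has rank $\geq 2$, and the rank-one case is handled by noting $A$ must lie in a unique maximal simplex by the geometry), hence stabilizes a unique $S\in\Sc_{\simplexcc}$; uniqueness is exactly the malnormality-type property of peripheral subgroups combined with item (5).

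For items (5) and (6): two distinct $S_1,S_2\in\Sc_{\simplexcc}$ correspond to distinct peripheral subsets of the relatively hyperbolic space $(\Cc_\Omega(\Lambda),H_\Omega)$, and the defining axioms of relative hyperbolicity (specifically, that distinct peripheral subsets have the ``bounded coarse intersection'' property) give a function $D(r)$ bounding $\diam_\Omega(\Nc_\Omega(S_1;r)\cap\Nc_\Omega(S_2;r))$, which is (6); then (5) follows by taking $r\to 0$ and using that the $S_i$ are closed convex sets — if they shared two points they would share the segment between them, forcing unbounded intersection, and if $\partial S_1\cap\partial S_2\ni x$ then points of $S_1$ and $S_2$ heading to $x$ would coarsely intersect in arbitrarily large diameter, again contradicting (6). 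Items (7) and (8) are where the convex-projective input is essential and where I expect the main difficulty. For (7), a non-trivial segment $\ell\subset\partiali\Cc_\Omega(\Lambda)$ gives, via the Hilbert-geometry fact that a geodesic in $\Omega$ with both endpoints on a common segment of $\partial\Omega$ is flat (spans a $2$-simplex), a properly embedded simplex of dimension $\geq 2$ containing a translate of this flat strip; one then enlarges it to a maximal one $S\in\Sc_{\simplexcc}$ and checks $\ell\subset\partial S$ using item (4) and the fact that faces are preserved. For (8), a non-$C^1$ point $x$ has, by definition, at least two distinct supporting hyperplanes, which forces $x$ to be a non-trivial face $F_\Omega(x)$ of dimension $\geq 1$ or to lie on such a face — reducing to (7) — except one must rule out the ``genuinely conical but non-$C^1$'' configuration, and the way to exclude it is to invoke the characterization of hyperbolicity (Theorem~\ref{thm:char_of_hyp}): near a point that lies on no simplex, $\Cc_\Omega(\Lambda)$ is Gromov-hyperbolic and Benoist's theorem forces $C^1$-smoothness there. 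Item (4) then reads off: for $S\in\Sc_{\simplexcc}$ and $x\in\partial S$, clearly $F_S(x)\subseteq F_\Omega(x)$; for the reverse, if $F_\Omega(x)$ were strictly larger it would contain a segment transverse to $S$, yielding by (7) another maximal simplex $S'$ through that segment with $\partial S\cap\partial S'\neq\emptyset$, contradicting (5).

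The step I expect to be the main obstacle is item (8) (and the part of (4) that depends on it): proving that every non-$C^1$ point of $\partiali\Cc_\Omega(\Lambda)$ actually lies on some maximal properly embedded simplex. The subtlety is that ``not $C^1$-smooth'' is weaker than ``lies on a boundary segment,'' so one cannot directly feed the point into the flatness criterion of (7); instead one must argue that the relative-hyperbolic thin-triangle condition relative to $\Sc_{\simplexcc}$, applied to geodesic rays limiting to $x$, forces either that $x$ is a conical limit point with a unique supporting hyperplane (contradicting non-$C^1$) or that the rays eventually penetrate a common peripheral simplex whose boundary contains $x$. Making this dichotomy precise will require careful use of the boundary-convergence dynamics of $\Lambda\curvearrowright\partial\Omega$ and the interaction between the Hilbert-metric horofunctions at $x$ and the geometry of the $S\in\Sc_{\simplexcc}$, and is the technical heart of the argument.
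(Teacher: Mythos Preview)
Your outline has a genuine circularity: you derive (4) from (7) (``if $F_\Omega(x)$ were strictly larger it would contain a segment transverse to $S$, yielding by (7) another maximal simplex $S'$\ldots''), but you also derive (7) from (4) (``checks $\ell\subset\partial S$ using item (4)''). This loop cannot be closed by the relative-hyperbolicity axioms alone: those axioms only tell you that a properly embedded simplex containing a given flat strip lies in a bounded neighborhood of some $S\in\Sc_{\max}$, which gives $\ell\subset F_\Omega(\partial S)$, not $\ell\subset\partial S$. To upgrade $F_\Omega(\partial S)$ to $\partial S$ you genuinely need (4), and you have not supplied an independent argument for it.

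The paper breaks this circle by proving (4) \emph{first}, using a convex-projective input you do not invoke: for a convex co-compact (as opposed to merely naive convex co-compact) group, every boundary face meeting $\partiali\Cc_\Omega(\Lambda)$ lies entirely in $\partiali\Cc_\Omega(\Lambda)$ (Proposition~\ref{prop:full_face}). Combined with the Hausdorff-distance bound of Theorem~\ref{thm:properties_of_ncc}(4), this gives $F_\Omega(x)=\overline{\Cc}\cap F_\Omega(x)$ within bounded $H_{F_\Omega(x)}$-distance of $F_S(x)$; since $F_S(x)$ is properly embedded in $F_\Omega(x)$, an extreme-point argument then forces equality. With (4) and strong isolation in hand, the paper proves (7) and (8) by a Benz\'ecri-type rescaling (Section~\ref{sec:lines_corners_in_the_bd}): blowing up near a boundary segment or a non-$C^1$ point produces, for each nearby interior point $x$, a properly embedded $2$-simplex $S(x)$ uniformly close to $x$; strong isolation forces all $S(x)$ to coincide, and then (4) pins the segment/point to $\partial S$ rather than merely to $F_\Omega(\partial S)$. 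Your proposed route for (8) --- reducing non-$C^1$ points to boundary segments and handling the residual ``conical non-$C^1$'' case via a local Gromov-hyperbolicity/Benoist argument --- does not work as stated: Benoist's smoothness result is global, not local, and there is no mechanism in your outline that rules out a non-$C^1$ extreme point outside every simplex.
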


\begin{remark} In the special case when $d \leq 4$ and $\Lambda$ acts co-compactly on $\Omega \subset \Pb(\Rb^d)$, Theorems \ref{thm:main_cc} and \ref{thm:properties_of_cc} can be obtained from results of Benoist~\cite{B2006}. But, when $d > 4$ Theorems \ref{thm:main_cc} and \ref{thm:properties_of_cc} are new even in the special case when $\Lambda$ acts co-compactly on $\Omega$. 
\end{remark}

As alluded to above, convex co-compact subgroups can be seen as a way to extend the theory of Anosov representations to non-hyperbolic groups. Kapovich-Leeb~\cite{KL2018} and Zhu \cite{FZ2019} have also recently proposed notions of relative Anosov representations for relatively hyperbolic groups. However, in their definitions the peripheral subgroups will have unipotent image while convex co-compact subgroups never contain non-trivial unipotent elements. So the groups considered in this paper are very different from the ones studied in the work of Kapovich-Leeb and Zhu.

\subsection{Naive convex co-compact subgroups}We also establish a variant of Theorem~\ref{thm:main_cc} for a more general notion of convex co-compact subgroup. 

\begin{definition}\label{defn:cc_naive}  Suppose $\Omega \subset \Pb(\Rb^d)$ is a properly convex domain. An infinite discrete subgroup $\Lambda \leq \Aut(\Omega)$ is called \emph{naive convex co-compact} if there exists a non-empty closed convex subset $\Cc \subset \Omega$ such that 
\begin{enumerate}
\item $\Cc$ is $\Lambda$-invariant, that is, $g\Cc = \Cc$ for all $g \in \Lambda$, and
\item $\Lambda$ acts co-compactly on $\Cc$. 
\end{enumerate}
In this case, we say that $(\Omega, \Cc, \Lambda)$ is a \emph{naive convex co-compact triple}. 
\end{definition}

It is straightforward to construct examples where $\Lambda \leq \Aut(\Omega)$ is naive convex co-compact, but not convex co-compact (see Section~\ref{sec:ncc_examples} or \cite[Section 3.4]{DGF2017}). In these cases, the convex subset $\Cc$ in Definition~\ref{defn:cc_naive} is a strict subset of $\Cc_\Omega(\Lambda)$.

For naive convex co-compact subgroups, we also provide a characterization of relative hyperbolicity, but require a technical notion of  isolated simplices. In the naive convex co-compact case there exist examples where the group is relatively hyperbolic, but the family of maximal properly embedded simplices is not discrete. Instead, maximal properly embedded simplices can occur in parallel families, see Section~\ref{sec:ncc_examples}. These examples lead to the following definition.

\begin{definition}\label{defn:IS}
Suppose $(\Omega, \Cc, \Lambda)$ is a naive convex co-compact triple. A family $\Sc$ of maximal properly embedded simplices in $\Cc$ of dimension at least two is called: 
\begin{enumerate}
\item \emph{Isolated}, if $\Sc$ is closed and discrete in the local Hausdorff topology induced by $H_\Omega$.
\item \emph{Coarsely complete}, if any properly embedded simplex in $\Cc$ of dimension at least two is contained in a uniformly bounded tubular neighborhood of some maximal properly embedded simplex in $\Sc$.
\item \emph{$\Lambda$-invariant}, if $g \cdot S \in \Sc$ for all $S \in \Sc$ and $g \in \Lambda$.
\end{enumerate}
We say that $(\Omega, \Cc, \Lambda)$ has \emph{coarsely isolated simplices} if there exists an isolated, coarsely complete, and $\Lambda$-invariant family of maximal properly embedded simplices in $\Cc$ of dimension at least two. 
\end{definition}

\begin{remark} This definition is motivated by Hruska-Kleiner's notion of isolated flats in the first sense for $\CAT(0)$ spaces~\cite[pg. 1505]{HK2005} (see Section \ref{sec:motivation-cat-0}). \end{remark}

Informally, Definition~\ref{defn:IS} says that a naive convex co-compact triple has coarsely isolated simplices if it is possible to select a closed and discrete family of simplices that contain a representative from each parallel family of maximal properly embedded simplices.

We will prove the following. 

\begin{theorem}\label{thm:main_ncc}(see Section~\ref{sec:pf_of_thm_main_ncc}) Suppose $(\Omega, \Cc, \Lambda)$ is a naive convex co-compact triple. Then the following are equivalent: 
\begin{enumerate}
\item $(\Omega, \Cc, \Lambda)$ has coarsely isolated simplices,
\item $(\Cc, H_\Omega)$ is a relatively hyperbolic space with respect to a family of properly embedded simplices in $\Cc$ of dimension at least two,
\item $\Lambda$ is a relatively hyperbolic group with respect to a family  of virtually Abelian subgroups of rank at least two.
\end{enumerate}
\end{theorem}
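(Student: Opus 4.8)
The plan is to establish the cycle of implications $(1)\Rightarrow(2)\Rightarrow(3)\Rightarrow(1)$, using the Drutu--Sapir/asymptotic-cone characterization of relative hyperbolicity of spaces together with the Milnor--\v{S}varc lemma to pass between the geometric and group-theoretic statements. Throughout, the central objects are the maximal properly embedded simplices, which play the role of peripheral subsets; the key background facts needed are (a) that each properly embedded simplex in $\Cc$ carries an interior flat quasi-isometric to a Euclidean space of the same dimension, and (b) the Hilbert-geometry estimates on how two simplices (or their tubular neighborhoods) can coarsely intersect.

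For $(1)\Rightarrow(2)$: given an isolated, coarsely complete, $\Lambda$-invariant family $\Sc$, I would verify the defining properties of a relatively hyperbolic space in the sense used in the paper (presumably the ``asymptotically tree-graded'' or Drutu--Sapir definition restricted to the quasiconvex collection $\Sc$). The isolatedness hypothesis gives the bounded-coarse-intersection property (for each $r$ there is $D(r)$ bounding $\diam_\Omega(\Nc_\Omega(S_1;r)\cap\Nc_\Omega(S_2;r))$ for distinct $S_1,S_2\in\Sc$); this should follow from a cocompactness-plus-properness argument since $\Lambda$ acts cocompactly on $\Cc$ and, by isolatedness, with finitely many orbits on $\Sc$, exactly as in items (1) and (6) of Theorem~\ref{thm:properties_of_cc}. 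The remaining point is that, once the simplices are collapsed, the resulting space is Gromov hyperbolic; here I would invoke Theorem~\ref{thm:char_of_hyp}-type reasoning, namely that coarse completeness forces every properly embedded simplex to lie in a bounded neighborhood of some $S\in\Sc$, so after coning off $\Sc$ there are ``no more simplices,'' hence no more quasi-flats obstructing hyperbolicity. Making the last step precise — deducing hyperbolicity of the coned-off space from the absence of properly embedded simplices — is the part I expect to require the most work, and it is presumably where the bulk of the paper's technical machinery (a relative version of Benoist's and DGK's arguments) is deployed.

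For $(2)\Rightarrow(3)$: since $\Lambda$ acts properly discontinuously and cocompactly by isometries on $(\Cc,H_\Omega)$, the Milnor--\v{S}varc lemma gives a $\Lambda$-equivariant quasi-isometry $\Lambda\to\Cc$. Relative hyperbolicity of a space with respect to a family of subsets transfers, under an equivariant quasi-isometry with the peripheral family permuted by the group with finitely many orbits, to relative hyperbolicity of the group with respect to the stabilizers of representatives of those orbits (this is standard, e.g.\ via Drutu--Sapir). So I must check the peripheral simplices fall into finitely many $\Lambda$-orbits and that the stabilizer $\Stab_\Lambda(S)$ of each acts cocompactly on (a neighborhood of) $S$; cocompactness of $\Stab_\Lambda(S)$ on $S$ together with the fact that $S$ with the Hilbert metric is quasi-isometric to $\Rb^{\dim S}$ shows, via Milnor--\v{S}varc again and a theorem of Bieberbach type, that $\Stab_\Lambda(S)$ is virtually Abelian of rank $\dim S\ge 2$. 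The rank-at-least-two claim uses the definition of simplex ($k\ge2$).

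For $(3)\Rightarrow(1)$: starting from $\Lambda$ hyperbolic relative to virtually Abelian subgroups $P_1,\dots,P_m$ of rank $\ge2$, I would transport this back to $\Cc$ via the quasi-isometry $\Lambda\simeq\Cc$, obtaining that $\Cc$ is relatively hyperbolic with respect to the orbits of quasiconvex subsets coarsely equal to the $P_i$-cosets. Each such coarse coset is a quasi-flat of dimension $\ge2$ in $(\Cc,H_\Omega)$, and I would argue — using a flat-torus/flat-closing argument in Hilbert geometry, cf.\ the results of \cite{IZ2019} and Theorem~\ref{thm:properties_of_cc}(2),(3) — that each is within bounded distance of a genuine properly embedded simplex $S_i\subset\Cc$. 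Taking $\Sc=\Lambda\cdot\{S_1,\dots,S_m\}$ then gives a $\Lambda$-invariant family; coarse completeness holds because any properly embedded simplex is a quasi-flat, hence (by asymptotic tree-gradedness, which confines quasi-flats to bounded neighborhoods of peripherals) lies in a bounded neighborhood of some $P_i$-coset, hence of some $S\in\Sc$; and isolatedness (closed and discrete in the local Hausdorff topology) follows from the bounded-coarse-intersection property of peripheral sets in a relatively hyperbolic space together with properness of the $\Lambda$-action, since a local-Hausdorff accumulation of distinct simplices would produce two peripherals with unbounded coarse intersection. The main obstacle here is the flat-closing step: upgrading an abstract quasi-flat of the correct dimension to an actual properly embedded simplex (rather than merely a quasi-isometrically embedded Euclidean subset) requires genuinely projective input — controlling the boundary behavior in $\partial\Omega$ and showing the quasi-flat straightens to a simplex — and this is where I would expect to reuse the structural results of Theorem~\ref{thm:properties_of_cc} and the techniques behind Theorem~\ref{thm:char_of_hyp}.
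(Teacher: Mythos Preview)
Your overall architecture is reasonable and the implications $(2)\Rightarrow(3)$ and $(3)\Rightarrow(1)$ are essentially correct in spirit---indeed, the paper's $(3)\Rightarrow(2)$ step uses exactly the ``flat torus'' result you allude to (Theorem~\ref{thm:max_abelian} from \cite{IZ2019}), and the subsequent passage to a coarsely isolated family uses the Dru\c{t}u--Sapir quasi-flat confinement you describe.

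There is, however, a genuine gap in your $(1)\Rightarrow(2)$. You assert that ``the isolatedness hypothesis gives the bounded-coarse-intersection property\ldots via a cocompactness-plus-properness argument,'' but this is false in the naive convex co-compact setting. Section~\ref{sec:ncc_examples} constructs an explicit family $\Sc^{(R)}_{\prl}$ that is isolated, coarsely complete, and $\Lambda_\star$-invariant, yet contains pairs of \emph{parallel} simplices (same vertex faces, different vertices) which lie in bounded Hausdorff distance of one another and hence violate strong isolation; consequently $(\Cc_\star^{(R)},H_{\Omega_\star})$ is \emph{not} relatively hyperbolic with respect to $\Sc^{(R)}_{\prl}$. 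So isolation alone cannot drive your argument. The missing step---upgrading an arbitrary isolated, coarsely complete, invariant family to a \emph{strongly} isolated one---is precisely Theorem~\ref{thm:S-core-exists}, and it requires real work: one must first discard simplices trapped in neighborhoods of higher-dimensional ones, then replace each remaining simplex by a canonical ``core'' representative of its parallel class, built via the center of mass of $\overline{\Cc}\cap F_\Omega(v)$ at each vertex $v$ (which in turn requires Theorem~\ref{thm:bd_faces} to know these sets are compact).

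Once strong isolation is in hand, the paper does not cone off and invoke a Benoist/DGK-type absence-of-simplices argument as you suggest; instead it verifies Sisto's characterization (Theorem~\ref{thm:Sisto_equiv}) directly, building an almost-projection system from the linear projections $L_{S,\Hc}$ of Section~\ref{sec:linear_projections} and checking the asymptotically-transverse-free condition using the half-triangle result (Theorem~\ref{thm:half_triangle_nearby_simplex}). Your cone-off sketch is plausible in outline, but making ``no simplices after coning'' into ``Gromov hyperbolic'' would still require something like the half-triangle analysis, so it is not clear it would be shorter.
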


\begin{remark} Using the basic theory of relatively hyperbolic spaces (see Theorem~\ref{thm:rh_embeddings_of_flats} below), if $(\Cc, H_\Omega)$ is a relatively hyperbolic space with respect to a family of properly embedded simplices in $\Cc$, then every simplex in that family is maximal. 
\end{remark}

There is one subtle aspect of Theorem~\ref{thm:main_ncc}: it does \textbf{not} say that any isolated, coarsely complete, and $\Lambda$-invariant family of simplices satisfies part (2). In fact, it is possible to construct an example of a naive convex compact triple $(\Omega, \Cc, \Lambda)$ and a family $\Sc$ of maximal properly embedded simplices where $\Sc$ is isolated, coarsely complete, and $\Lambda$-invariant, but $(\Cc, H_\Omega)$ is \textbf{not} relatively hyperbolic with respect to $\Sc$ (see Section~\ref{sec:ncc_examples} for details). This motivates the following definition.

\begin{definition} Suppose $(\Omega, \Cc, \Lambda)$ is a naive convex co-compact triple. A family $\Sc$ of maximal properly embedded simplices in $\Cc$ of dimension at least two is called \emph{strongly isolated} if for every $r>0$, there exists $D(r)>0$ such that: if $S_1, S_2 \in \Sc$ are distinct, then 
\begin{equation*}
\diam_{\Omega} \Big( \Nc_{\Omega}(S_1;r) \cap \Nc_{\Omega}(S_2;r)\Big) \leq D(r).
\end{equation*}
\end{definition}

\begin{remark} This definition is motivated by Hruska-Kleiner's notion of isolated flats in the second sense for $\CAT(0)$ spaces~\cite[pg. 1505]{HK2005} (see Section \ref{sec:motivation-cat-0}). \end{remark}

It is fairly easy to show that a family of strongly isolated simplices is also isolated (see Observation~\ref{obs:str_iso_implies_iso} below). Although the converse is not always true, we will prove the following theorem.

\begin{theorem} 
\label{thm:S-core-exists}(see Section~\ref{sec:intersection_of_nbhds})
Suppose $(\Omega, \Cc, \Lambda)$ is a naive convex co-compact triple with coarsely isolated simplices. Then there exists a strongly isolated, coarsely complete, and $\Lambda$-invariant family of maximal properly embedded simplices in $\Cc$ of dimension at least two. 
\end{theorem}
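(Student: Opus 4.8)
The plan is to begin with an isolated, coarsely complete, and $\Lambda$-invariant family $\Sc$ of maximal properly embedded simplices (which exists by hypothesis) and then to replace each ``parallel family'' inside $\Sc$ by a single canonical representative. First I would make precise the relevant equivalence relation: call two properly embedded simplices $S_1, S_2 \subset \Cc$ \emph{parallel} if they lie at finite Hausdorff distance for $H_\Omega$. Transitivity follows from the triangle inequality for Hausdorff distance, so this is an equivalence relation, and $\Lambda$ permutes the classes. Using that $\Sc$ is closed and discrete in the local Hausdorff topology together with the cocompactness of $\Lambda \curvearrowright \Cc$ — arguing exactly as in the proof of Theorem~\ref{thm:properties_of_cc}(1) — I would show $\Lambda$ has only finitely many orbits of simplices in $\Sc$, hence finitely many orbits of parallel classes. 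I would also record that, by coarse completeness, every properly embedded simplex of $\Cc$ lies in a one-sided bounded neighborhood of some $S \in \Sc$, and in particular $S \in [S]$, where $[S]$ denotes the full parallel class of $S$ among all properly embedded simplices of $\Cc$.

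The heart of the argument is to attach to each parallel class $\mathcal{P}$ a distinguished maximal properly embedded simplex $\core(\mathcal{P})$ with three properties: (i) $\core(g\cdot\mathcal{P}) = g\cdot\core(\mathcal{P})$ for all $g \in \Lambda$; (ii) there is a constant $R$, depending only on the $\Lambda$-orbit of $\mathcal{P}$ — hence, by finiteness of orbits, a single uniform $R$ — with $S' \subset \Nc_\Omega(\core(\mathcal{P}); R)$ for every $S' \in \mathcal{P}$; and (iii) if $\mathcal{P} \neq \mathcal{Q}$ then $\core(\mathcal{P})$ and $\core(\mathcal{Q})$ are not parallel. For this I would study the parallel set of $\mathcal{P}$, namely the closed convex hull in $\Cc$ of $\bigcup_{S'\in\mathcal{P}} S'$: one expects it to be projectively equivalent to a product $\triangleD^k \times C$, with $k = \dim S$ and the members of $\mathcal{P}$ corresponding to the slices $\triangleD^k \times \{c\}$, and with $\Stab_\Lambda(\mathcal{P})$ acting cocompactly on it (this is where I would invoke, or reprove in the naive setting, the structure theory of parallel families of simplices, in the spirit of \cite{IZ2019, B2006}). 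Then $\core(\mathcal{P})$ can be taken to be a canonical $\Stab_\Lambda(\mathcal{P})$-invariant slice — for instance one selected by a barycenter/center-of-mass construction on the cocompact factor $C$ — so that (i) is automatic, (ii) follows from cocompactness of the $\Stab_\Lambda(\mathcal{P})$-action plus finiteness of $\Lambda$-orbits, and (iii) follows from transitivity of parallelism (anything parallel to $\core(\mathcal{P})$ is parallel to all of $\mathcal{P}$, hence lies in $\mathcal{P}$).

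Given the cores, set $\Sc' := \{\core([S]) : S \in \Sc\}$. Property (i) makes $\Sc'$ a $\Lambda$-invariant family of maximal properly embedded simplices; it is coarsely complete because any properly embedded simplex $T \subset \Cc$ satisfies $T \subset \Nc_\Omega(S; D)$ for some $S \in \Sc$ by coarse completeness of $\Sc$, and $S \subset \Nc_\Omega(\core([S]); R)$ by (ii), so $T \subset \Nc_\Omega(\core([S]); R+D)$ with a uniform constant. One checks, as for $\Sc$, that $\Sc'$ is closed and discrete in the local Hausdorff topology (using finiteness of orbits and (iii) to see distinct cores in a common orbit are uniformly separated). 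Finally, to see $\Sc'$ is strongly isolated, fix $r>0$ and suppose no uniform $D(r)$ works: then there are distinct classes $\mathcal{P}_n, \mathcal{Q}_n$ and points $x_n,y_n \in \Nc_\Omega(\core(\mathcal{P}_n); r) \cap \Nc_\Omega(\core(\mathcal{Q}_n); r)$ with $H_\Omega(x_n,y_n) \to \infty$; after translating by $\Lambda$ and using cocompactness of $\Lambda\curvearrowright\Cc$ we may assume the $x_n$ stay in a fixed compact set, and then, passing to a subsequence and using local compactness of the space of properly embedded simplices, $\core(\mathcal{P}_n)$ and $\core(\mathcal{Q}_n)$ converge locally to simplices $P_\infty, Q_\infty \in \Sc'$ (using closedness) whose $r$-neighborhoods meet in an unbounded set, so $P_\infty, Q_\infty$ are distinct parallel elements of $\Sc'$, contradicting (iii). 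Being strongly isolated, $\Sc'$ is isolated by Observation~\ref{obs:str_iso_implies_iso}, and the theorem follows.

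The main obstacle is the construction in the second paragraph: establishing the product structure $\triangleD^k\times C$ of the parallel set of a class and the cocompactness of its stabilizer in the \emph{naive} convex co-compact setting — where $\Cc$ can be a proper subset of $\Cc_\Omega(\Lambda)$ and one loses access to the orbital limit set — and then making the choice of slice genuinely canonical and $\Lambda$-equivariant with uniform constants; everything else is a packaging of these facts together with the finiteness of orbits and a Chabauty-type compactness argument.
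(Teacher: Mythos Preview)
Your high-level strategy --- partition the given isolated family into parallel classes and choose one canonical representative per class --- is exactly the paper's strategy, but the execution and the key technical input differ substantially from what you outline, and your sketch has a genuine gap in the strong-isolation step.

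\textbf{How the paper does it.} The paper never establishes (or even formulates) a product decomposition $\triangleD^k\times C$ of the parallel set. Instead it works \emph{vertex by vertex}: for a simplex $S$ with vertices $v_1,\dots,v_p$, one replaces each $v_j$ by the center of mass of $\overline{\Cc}\cap F_\Omega(v_j)$ inside the face $F_\Omega(v_j)$, and $\Phi(S)$ is the simplex on the new vertices. For this to make sense one needs $\overline{\Cc}\cap F_\Omega(v_j)$ to be \emph{compact} in $F_\Omega(v_j)$, and that is the content of Theorem~\ref{thm:bd_faces} (the property~$(\star)$ that $H_{F_\Omega(x)}^{\Haus}\big(\overline{\Cc}\cap F_\Omega(x),F_S(x)\big)\le D_1$ for all $S$ in a suitable subfamily and all $x\in\partial S$). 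This theorem is the real engine of the proof and takes most of Section~\ref{sec:bd_faces}; it substitutes for your hoped-for product structure and simultaneously furnishes the uniform constant $R$ in your property~(ii).

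\textbf{The gap in your strong-isolation argument.} After extracting limits you assert: ``$P_\infty,Q_\infty$ \dots\ whose $r$-neighborhoods meet in an unbounded set, so $P_\infty,Q_\infty$ are distinct parallel elements.'' But an unbounded intersection of $r$-neighborhoods does \emph{not} imply finite Hausdorff distance: two distinct maximal simplices can share (coarsely) a lower-dimensional face, giving $\diam_\Omega\big(\Nc_\Omega(P_\infty;r)\cap\Nc_\Omega(Q_\infty;r)\big)=\infty$ while $H_\Omega^{\Haus}(P_\infty,Q_\infty)=\infty$. The paper closes exactly this gap with Theorem~\ref{thm:bd_faces}: from a properly embedded line $(a,b)\subset\Nc_\Omega(S_1;r)\cap\Nc_\Omega(S_2;r)$ one uses $(\star)$ at the faces $F_\Omega(a),F_\Omega(b)$ together with Lemma~\ref{lem:line_segments_cover} and Proposition~\ref{prop:Crampons_dist_est_2} to force $H_\Omega^{\Haus}(S_1,S_2)\le 4R$, and only then invokes injectivity of $\Phi$ on parallel classes (your property~(iii)). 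Without an analogue of $(\star)$ your contradiction does not close, and the product structure you propose would not by itself supply this step either.
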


 We then prove the following refinement of the Theorem~\ref{thm:main_ncc}.

\begin{theorem}
\label{thm:IS-implies-rel-hyp}(see Section~\ref{sec:isolated-simplex-implies-rel-hyp})
Suppose $(\Omega, \Cc, \Lambda)$ is a naive convex co-compact triple with coarsely isolated simplices. Let $\Sc$ be a strongly isolated, coarsely complete, and $\Lambda$-invariant family of maximal properly embedded simplices in $\Cc$ of dimension at least two. Then:
\begin{enumerate}
\item  $(\Cc, H_\Omega)$ is a relatively hyperbolic space with respect to $\Sc$.
\item $\Lambda$ has finitely many orbits in $\Sc$ and if $\{S_1,\dots, S_m\}$ is a set of orbit representatives, then $\Lambda$ is a relatively hyperbolic group with respect to 
\begin{align*}
\left\{\Stab_{\Lambda}(S_1),\dots, \Stab_{\Lambda}(S_m)\right\}.
\end{align*} 
Further, each $\Stab_{\Lambda}(S_i)$ is virtually Abelian of rank at least two.
\end{enumerate}
\end{theorem}

Delaying definitions until later in the paper (see Definitions~\ref{defn:topology},~\ref{defn:open_faces}, and~\ref{defn:half_triangle}), we will also establish an analogue of Theorem~\ref{thm:properties_of_cc} for naive convex co-compact subgroups.

\begin{theorem}\label{thm:properties_of_ncc}(see Section~\ref{sec:pf_of_properties_of_ncc})  Suppose $(\Omega, \Cc, \Lambda)$ is a naive convex co-compact triple with coarsely isolated simplices. Let $\Sc$ be a strongly isolated, coarsely complete, and $\Lambda$-invariant family of maximal properly embedded simplices in $\Cc$ of dimension at least two.  Then: 
\begin{enumerate}
\item $\Lambda$ has finitely many orbits in $\Sc$. 
\item If $S \in \Sc$, then $\Stab_{\Lambda}(S)$ acts co-compactly on $S$ and contains a finite index subgroup isomorphic to $\Zb^k$ where $k = \dim S$. 
\item If $A \leq \Lambda$ is an Abelian subgroup with rank at least two, then there exists a unique $S \in \Sc$ with $A \leq \Stab_{\Lambda}(S)$. 
\item There exists $D > 0$ such that: If $S \in \Sc$ and $x \in \partial S$, then 
\begin{align*}
H_{F_\Omega(x)}^{\Haus}\Big( \overline{\Cc} \cap F_{\Omega}(x), F_S(x) \Big) \leq D.
\end{align*} 
\item If $S_1, S_2 \in \Sc$ are distinct, then $\#(S_1 \cap S_2) \leq 1$ and 
\begin{align*}
\left(\bigcup_{x \in \partial S_1} F_\Omega(x) \right) \bigcap \left( \bigcup_{x \in \partial S_2} F_\Omega(x) \right)= \emptyset.
\end{align*}
\item  If $a,b,c \in \overline{\Cc} \cap \partial \Omega$ form a half triangle in $\Omega$, then there exists $S \in \Sc$ where 
\begin{align*}
a,b,c \in \bigcup_{x \in \partial S} F_\Omega(x). 
\end{align*}
\end{enumerate}
\end{theorem}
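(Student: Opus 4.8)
My plan is to deduce all six items from the relative hyperbolicity supplied by Theorem~\ref{thm:IS-implies-rel-hyp} together with the structure theory of relatively hyperbolic groups/spaces and the elementary geometry of the Hilbert metric on a simplex and its faces. Concretely, Theorem~\ref{thm:IS-implies-rel-hyp} already gives that $(\Cc, H_\Omega)$ is relatively hyperbolic with respect to $\Sc$, that $\Lambda$ has finitely many orbits on $\Sc$, and that for orbit representatives $S_1,\dots,S_m$ the group $\Lambda$ is relatively hyperbolic with respect to $\{\Stab_\Lambda(S_1),\dots,\Stab_\Lambda(S_m)\}$, each virtually Abelian of rank at least two. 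Item (1) is exactly the finiteness statement there. For item (2), the proof of Theorem~\ref{thm:IS-implies-rel-hyp} shows $\Stab_\Lambda(S)$ acts properly discontinuously and cocompactly on the open simplex $S$; since the group of projective transformations preserving $S$ restricts on $\mathrm{span}(S)$ to $(\Rb^{\dim S})\rtimes\Sym(\dim S+1)$, a discrete cocompact image has finite-index intersection with the translation factor, so (after killing the finite kernel acting trivially on $S$) $\Stab_\Lambda(S)$ contains a finite-index subgroup isomorphic to $\Zb^{\dim S}$.

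For item (3), an infinite Abelian group of rank at least two is not virtually cyclic, so by the almost malnormality of peripheral subgroups in a relatively hyperbolic group (cf.\ Theorem~\ref{thm:rh_embeddings_of_flats} and the surrounding discussion) such an $A$ is contained in a conjugate of exactly one peripheral subgroup, i.e.\ in $\Stab_\Lambda(S)$ for some $S\in\Sc$; uniqueness of $S$ follows since distinct conjugates of peripheral subgroups intersect in finite subgroups while $A$ is infinite. The first half of item (5) is independent of item (4): $S_1\cap S_2$ is convex, and if it contained a nondegenerate segment then—because $\partial S_i\subseteq\partial\Omega$ (proper embeddedness of $S_i$ in $\Cc$ forces $\overline{S_i}\setminus S_i\subseteq\overline{\Cc}\setminus\Cc\subseteq\partial\Omega$)—that segment would be the entire intersection of a projective line with $\Omega$, hence a complete bi-infinite geodesic lying in $\Nc_\Omega(S_1;r)\cap\Nc_\Omega(S_2;r)$ for every $r>0$, contradicting strong isolation.

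The geometric heart is item (4). The inclusion $F_S(x)\subseteq\overline{\Cc}\cap F_\Omega(x)$ is immediate from $F_S(x)=\overline{S}\cap F_\Omega(x)$ and $S\subseteq\Cc$, so the content is the reverse coarse inclusion with a constant $D$ uniform in $S$ and $x$. I would argue by contradiction: if some $y\in\overline{\Cc}\cap F_\Omega(x)$ were at very large $H_{F_\Omega(x)}$-distance from $F_S(x)$, then coning $y$ (together with a small subsimplex of $\overline S$ near $x$) with the interior of $S$ produces a properly embedded simplex in $\Cc$ escaping every fixed tubular neighborhood of $S$; coarse completeness then forces some other $S'\in\Sc$ to carry most of this simplex, and strong isolation applied along the common face yields a contradiction. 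Uniformity of $D$ comes from cocompactness of $\Lambda\curvearrowright\Cc$ and finiteness of the number of orbits in $\Sc$. Given item (4), the second half of item (5) follows: a point common to $\bigcup_{x\in\partial S_1}F_\Omega(x)$ and $\bigcup_{x\in\partial S_2}F_\Omega(x)$ would place $F_{S_1}(x_1)$ and $F_{S_2}(x_2)$ in a common face $F$ and, by (4), within Hausdorff distance $2D$ of one another in $F$, and then coning with the interiors of $S_1$ and $S_2$ makes $S_1$ and $S_2$ coarsely coincide—impossible for distinct members of a strongly isolated family.

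Finally, for item (6) I would first extract from the half-triangle configuration $a,b,c$ a genuine properly embedded simplex $T\subseteq\Cc$ whose closure contains $a,b,c$; coarse completeness puts $T$ in a bounded neighborhood of some $S\in\Sc$, and the same neighborhood-capture argument used for item (4) then shows the boundary points of $T$, in particular $a,b,c$, lie in $\bigcup_{x\in\partial S}F_\Omega(x)$. The step I expect to be the main obstacle is the coning construction underlying item (4) (and reused in (6)): proving that coning a boundary point of $\Cc$ along a face with the interior of $S$ actually yields a \emph{properly embedded} simplex rather than merely a convex subset, and controlling its position relative to $S$, requires a careful interplay between proper embeddedness, the Hilbert metric on faces, and the hypotheses on $\Sc$. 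Once this is established, the remaining work is bookkeeping on top of Theorem~\ref{thm:IS-implies-rel-hyp} and strong isolation.
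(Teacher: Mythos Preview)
Your plan has a circularity problem and a genuine gap in item (6).

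\textbf{Circularity.} In this paper the logical dependence runs the other way: Theorem~\ref{thm:IS-implies-rel-hyp} is proved \emph{using} Theorem~\ref{thm:properties_of_ncc}, specifically property (6) (via Theorem~\ref{thm:half_triangle_nearby_simplex} and Corollary~\ref{cor:half_triangles_out_of_maximal_simplices}, which are invoked repeatedly throughout Section~\ref{sec:isolated-simplex-implies-rel-hyp}). So you cannot appeal to relative hyperbolicity or the peripheral structure from Theorem~\ref{thm:IS-implies-rel-hyp} to establish (1) or (3). The paper avoids this by proving (1) and (2) directly from isolation and $\Lambda$-invariance (Proposition~\ref{prop:HK_section_3_1}), and proving (3) from the real-projective flat torus theorem (Theorem~\ref{thm:max_abelian}) together with strong isolation, with no appeal to relative hyperbolicity.

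\textbf{Gap in (6).} Your plan to ``extract from the half-triangle configuration $a,b,c$ a genuine properly embedded simplex $T\subseteq\Cc$ whose closure contains $a,b,c$'' does not work: the defining feature of a half triangle is that $(a,c)\subset\Omega$, so the edge $[a,c]$ of the would-be $2$-simplex lies in $\Omega$, not in $\partial\Omega$, and $\relint(\CH_\Omega\{a,b,c\})$ is \emph{not} properly embedded in $\Omega$. Coarse completeness therefore says nothing about it. The paper's argument (Section~\ref{sec:half_triangles}) is genuinely different: a Benz\'ecri-type rescaling (Lemma~\ref{lem:geom_near_corner}) shows that near the corner $b$, large Hilbert balls in the $2$-plane $[\Spanset\{a,b,c\}]$ are $\epsilon$-close to properly embedded simplices in $\Cc$; strong isolation then forces all these nearby simplices to coincide with a single $S\in\Sc$, after which one reads off $a,b,c\in F_\Omega(\partial S)$ using property (4).

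\textbf{On (4) and (5).} Your sketch for (4) is in the right spirit but underestimates the difficulty; the paper's proof (Theorem~\ref{thm:bd_faces}) is an induction on the pair $(\dim F_\Omega(x),\,\dim F_\Omega(x)-\dim F_S(x))$ with a separate vertex case and a connectivity argument on $\overline{\Cc}\cap\partial F_\Omega(x)$ that uses the group action. For the second half of (5) the paper does not go through (4): if $F_\Omega(s_1)=F_\Omega(s_2)$ for $s_i\in\partial S_i$, then Proposition~\ref{prop:Crampons_dist_est_2} applied to rays $[p_i,s_i)$ with $p_i\in S_i$ already produces an unbounded set in $\Nc_\Omega(S_1;r)\cap\Nc_\Omega(S_2;r)$, contradicting strong isolation directly.
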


\begin{remark} In general, properties (4), (7), and (8) in Theorem~\ref{thm:properties_of_cc} are not true in the naive convex co-compact case. But, properties (4) and (6) in Theorem~\ref{thm:properties_of_ncc} can be seen as their coarse analogues. 
\end{remark}

\subsection{Motivation from the theory of $\CAT(0)$ spaces}
\label{sec:motivation-cat-0} The main results of this paper are inspired by previous work of Hruska-Kleiner~\cite{HK2005} in the $\CAT(0)$-setting. They introduced two notions of isolated flats for $\CAT(0)$ spaces and then related these conditions to relative hyperbolicity. In this subsection we recall their definitions and results.

\begin{definition}[Hruska-Kleiner~\cite{HK2005}]  Suppose $X$ is a $\CAT(0)$-space and $\Gamma$ acts geometrically on $X$ (i.e. the action is properly discontinuous, co-compact, and isometric). 
\begin{enumerate}
\item $(X,\Gamma)$ has \emph{isolated flats in the first sense} if there exists a set $\Fc$ of flats of $X$ such that:  $\Fc$ is $\Gamma$-invariant; each flat in $X$ is contained in a uniformly bounded tubular neighborhood of some flat in $\Fc$; and $\Fc$ is closed and discrete in the local Hausdorff topology.
\item $(X,\Gamma)$ has \emph{isolated flats in the second sense} if there exists a set $\Fc$ of flats of $X$ such that: $\Fc$ is $\Gamma$-invariant; each flat in $X$ is contained in a uniformly bounded tubular neighborhood of some flat in $\Fc$; and for any $r > 0$ there exists $D(r) >0$ so that for any two distinct flats $F_1,F_2 \in \Fc$ we have
\begin{align*}
\diam_X \left(\Nc_X(F_1;r) \cap \Nc_X(F_2;r) \right) < D(r). 
\end{align*}
\end{enumerate}
\end{definition}

Clearly, if $(X,\Gamma)$ has isolated flats in the second sense then it also has isolated flats in the first sense. Hruska-Kleiner~\cite{HK2005} proved the following. 

\begin{theorem}[Hruska-Kleiner~\cite{HK2005}] Suppose $X$ is a $\CAT(0)$-space and $\Gamma$ acts geometrically on $X$. The following are equivalent: 
\begin{enumerate}
\item $(X,\Gamma)$ has isolated flats in the first sense,
\item $(X,\Gamma)$ has isolated flats in the second sense,
\item $X$ is a relatively hyperbolic space with respect to a family of flats,
\item $\Gamma$ is a relatively hyperbolic group with respect to a collection of virtually Abelian subgroups of rank at least two.
\end{enumerate}
\end{theorem}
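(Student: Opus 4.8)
The plan is to establish $(1)\Leftrightarrow(2)$, then $(2)\Rightarrow(3)\Rightarrow(1)$, and finally $(3)\Leftrightarrow(4)$, so that all four statements become equivalent; throughout I would use that a geometric action makes $X$ proper and quasi-isometric to $\Gamma$. The implication $(2)\Rightarrow(1)$ is immediate: if distinct $F_1,F_2\in\Fc$ were arbitrarily close in the local Hausdorff topology on a large ball $B(x_0,R)$, then $B(x_0,R)\cap F_1$ would lie inside $\Nc_X(F_1;r)\cap\Nc_X(F_2;r)$ for an $r$ controlled by that proximity, and for $R$ large this would violate the bound $D(r)$; closedness of $\Fc$ is similar. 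The substantive direction is $(1)\Rightarrow(2)$, which I would prove by contradiction: given $r>0$, distinct $F_1^n,F_2^n\in\Fc$, and $x_n,y_n\in\Nc_X(F_1^n;r)\cap\Nc_X(F_2^n;r)$ with $\dist(x_n,y_n)\to\infty$, use cocompactness to translate $x_n$ into a fixed compact set by $\gamma_n\in\Gamma$. Then $\gamma_n F_i^n\in\Fc$ by $\Gamma$-invariance, and by properness and a compactness argument one may pass to a subsequence along which $\gamma_n F_i^n$ converges in the local Hausdorff topology to a flat $F_i^\infty$, which lies in $\Fc$ since $\Fc$ is closed. By convexity of the $\CAT(0)$ metric the geodesics $[\gamma_n x_n,\gamma_n y_n]$ stay uniformly close to both $\gamma_n F_1^n$ and $\gamma_n F_2^n$, and since their lengths diverge they converge to a ray uniformly close to both $F_1^\infty$ and $F_2^\infty$. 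If $F_1^\infty=F_2^\infty$ this contradicts discreteness of $\Fc$, since for large $n$ the distinct flats $\gamma_n F_1^n,\gamma_n F_2^n$ are both arbitrarily local-Hausdorff-close to it; if $F_1^\infty\ne F_2^\infty$, then projecting the common ray into each flat yields parallel rays, which by the flat strip theorem bound a flat strip, and a short further argument shows that two distinct elements of $\Fc$ sharing such a strip are again excluded by discreteness. Hence $(1)\Leftrightarrow(2)$.

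Next I would prove $(2)\Rightarrow(3)$. Form the coned-off space $\hat{X}$ by attaching to each $F\in\Fc$ a point at uniformly bounded distance from all of $F$; that $X$ is a relatively hyperbolic space with respect to $\Fc$ then amounts to $\hat{X}$ being Gromov hyperbolic together with the bounded coset penetration property for the pair $(X,\Fc)$. Hyperbolicity of $\hat{X}$ would follow from a cocompactness-driven limiting argument: a sequence of increasingly fat triangles in $\hat{X}$ would, in the limit, produce a flat of $X$ not coarsely contained in any single element of $\Fc$, contradicting coarse completeness; thus the flats of $\Fc$ are the only obstruction to hyperbolicity. The bounded coset penetration property is exactly what the second-sense bound $D(r)$ encodes, so it holds by $(2)$. (Equivalently, one can verify that every asymptotic cone of $X$ is tree-graded with respect to the ultralimits of elements of $\Fc$ --- coarse completeness handles the part away from the pieces, and the $D(r)$-bound forces distinct pieces to meet in at most one point --- and then invoke the Drutu--Sapir characterization of relatively hyperbolic spaces.) For $(3)\Rightarrow(1)$: in a relatively hyperbolic space distinct peripheral subsets have uniformly bounded coarse intersection, giving the second sense of isolation hence the first; every quasi-isometrically embedded flat of dimension at least two is coarsely contained in a single peripheral --- which is the content of Theorem~\ref{thm:rh_embeddings_of_flats} and gives coarse completeness; and replacing the peripheral family by its $\Gamma$-orbit makes it $\Gamma$-invariant.

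For $(3)\Leftrightarrow(4)$, since $\Gamma$ acts geometrically on $X$ it is quasi-isometric to $X$, and relative hyperbolicity together with its peripheral structure is transported across quasi-isometries, so it suffices to identify the peripheral subgroups. A compactness argument shows $\Gamma$ has finitely many orbits of flats in $\Fc$. For $F\in\Fc$, the nearest-point projection $X\to F$ is coarsely $\Stab_\Gamma(F)$-equivariant and coarsely surjective on a $\Gamma$-cocompact neighborhood of $F$, so $\Stab_\Gamma(F)$ acts cocompactly on $F$; Bieberbach's theorem then gives that $\Stab_\Gamma(F)$ is virtually $\Zb^{\dim F}$, with $\dim F\ge 2$ by the convention that a flat has dimension at least two. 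Conversely, the flat stabilizers act cocompactly on genuine flats in $X$, so the peripheral structure of $\Gamma$ is realized by a family of flats, completing the cycle.

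The main obstacle is the pair $(1)\Rightarrow(2)$ and $(2)\Rightarrow(3)$. The first needs the genuinely $\CAT(0)$-geometric input --- the flat strip theorem combined with compactness of the set of flats passing through a fixed compact set, which itself comes from the cocompact action --- to exclude elements of $\Fc$ that are coarsely parallel without being local-Hausdorff close. The second requires constructing the relatively hyperbolic structure essentially from scratch, the delicate point being the hyperbolicity of the coned-off space: one must show that the flats of $\Fc$ account for \emph{all} of the non-hyperbolicity of $X$, which again runs through a limiting argument powered by coarse completeness and cocompactness.
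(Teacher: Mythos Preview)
The paper does not prove this theorem. It is stated in Section~1.2 as a known result of Hruska--Kleiner~\cite{HK2005}, purely as motivation for the paper's own results in the Hilbert-geometry setting; the paper explicitly remarks that ``Hruska-Kleiner's work motivated the results in this paper, but the methods of proof are very different.'' So there is no proof in the paper to compare your proposal against.

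As a standalone sketch, your outline follows the right architecture and identifies the correct pressure points, but two steps are substantially underspecified. In $(1)\Rightarrow(2)$, the flat-strip endgame needs more: having two distinct flats in $\Fc$ that share a flat strip does not by itself contradict local-Hausdorff discreteness (a strip is thin; the flats could diverge away from it), and Hruska--Kleiner in fact route this through a careful analysis of parallel sets and a splitting argument rather than a one-line appeal to the flat strip theorem. In $(2)\Rightarrow(3)$, the assertion that fat triangles in $\hat X$ limit to a flat not covered by $\Fc$ is the heart of the matter and is where most of the work in~\cite{HK2005} lies; it is not a routine limiting argument, and your parenthetical asymptotic-cone alternative hides the same difficulty (showing that \emph{every} simple geodesic triangle in every cone lies in a single piece). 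Finally, your argument that $\Stab_\Gamma(F)$ acts cocompactly on $F$ via nearest-point projection is not the standard one and is not obviously correct as stated; the usual argument (cf.\ the paper's Proposition~\ref{prop:HK_section_3_1} for the analogous statement in the Hilbert setting) uses local finiteness of $\Fc$ directly.
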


Hruska-Kleiner's work motivated the results in this paper, but the methods of proof are very different. 

\subsection{Outline of paper}

Sections~\ref{sec:examples} through~\ref{sec:background_on_rel_hyp} are mostly expository. In Section~\ref{sec:examples}, we describe some examples. In Section~\ref{sec:notations_and_definitions}, we set our basic notations and definitions.  In Section~\ref{sec:background_on_rel_hyp}, we recall the definition of relatively hyperbolic metric spaces and some of their basic properties.

The rest of the paper is divided into three parts. In the first part, Sections~\ref{sec:basic_properties_of_simplices} through~\ref{sec:opposite_faces}, we study properly embedded simplices in general properly convex domains. In the second part of the paper, Sections~\ref{sec:inv_plus_iso_implies_periodic} through~\ref{sec:pf_of_thm_main_ncc}, we consider the naive convex co-compact case and prove Theorems~\ref{thm:main_ncc},~\ref{thm:S-core-exists},~\ref{thm:IS-implies-rel-hyp}, and~\ref{thm:properties_of_ncc}. 

An experienced reader would be able to follow the proof in the naive convex co-compact case by only reading the following: Section~\ref{sec:linear_projections}, Section~\ref{sec:inv_plus_iso_implies_periodic}, the statement of Theorem \ref{thm:bd_faces}, then Sections~\ref{sec:intersection_of_nbhds} through~\ref{sec:pf_of_thm_main_ncc}. In the final part of the paper, Sections \ref{sec:lines_corners_in_the_bd} and \ref{sec:pf_of_thm_main_cc}, we consider the convex co-compact case and prove Theorems~\ref{thm:main_cc} and~\ref{thm:properties_of_cc}. In Section \ref{sec:pf_of_thm_main_cc}, we explain how to deduce the convex co-compact case from the naive convex co-compact case. Section \ref{sec:lines_corners_in_the_bd} proves parts (7) and (8) of Theorem \ref{thm:properties_of_cc}, which is a refinement of Theorem \ref{thm:properties_of_ncc}.

We now describe some of the proofs in the second part of the paper in the order they are presented.

\subsubsection{Outline of the proof of Theorem~\ref{thm:S-core-exists}:} (see Section~\ref{sec:intersection_of_nbhds}) When $(\Omega, \Cc, \Lambda)$ has coarsely isolated simplices, we use the following algorithm to construct $\Sc_{\core}$: a canonical strongly isolated, coarsely complete, and $\Lambda$-invariant family of maximal properly embedded simplices of dimension at least two. 

First, let $\Sc_{\max}$ denote the family of \textbf{all} maximal properly embedded simplices in $\Cc$ of dimension at least two. In the naive convex co-compact case, this family can have several  undesirable properties:
\begin{enumerate}[(a)]
\item A maximal simplex could be contained in a tubular neighborhood of a properly embedded simplex with strictly larger dimension. 
\item $\Sc_{\max}$ could contain families of parallel maximal simplices (see Definition~\ref{defn:parallel}). 
\end{enumerate}
 In Section~\ref{sec:max_not_coarsely_max} (respectively Section~\ref{sec:ncc_examples}) we construct a simple example where the first (respectively second) problem occurs. 

To deal with the first problem, we consider $\wh{\Sc}_{\max} \subset \Sc_{\max}$ the family of all maximal properly embedded simplices that are not contained in a tubular neighborhood of a properly embedded simplex with strictly larger dimension and show that this subfamily is still coarsely complete. 

To deal with the second problem, we select from each family of parallel simplices a canonical ``core'' simplex.  This is accomplished by studying the open boundary faces $F_\Omega(x)$ of points $x \in \partial \Omega$. 

For every $S \in \wh{\Sc}_{\max}$ and vertex $v$ of $S$, we show that $F_\Omega(v) \cap \overline{\Cc}$ is a compact subset of $F_\Omega(v)$. Then we exploit the fact that every compact set in a properly convex domain has a well defined ``center of mass'' (see Proposition~\ref{prop:center_of_mass} below). Using this, for each simplex $S \in \wh{\Sc}_{\max}$ we construct a new simplex $\Phi(S)$ as follows. Let $v_1,\dots, v_p$ be the vertices of $S$. Then for $1 \leq j \leq p$, define $w_j$ to be the center of mass of $F_\Omega(v_j) \cap \overline{\Cc}$ in $F_\Omega(v_j)$. Next define $\Phi(S)$ to be the convex hull of $w_1,\dots, w_p$ in $\Omega$. 

Then $\Phi(S)$ is a properly embedded simplex parallel to $S$ (see Lemma~\ref{lem:slide_along_faces}). Moreover, if $S_1, S_2 \in \wh{\Sc}_{\max}$ are parallel, then $\Phi(S_1) = \Phi(S_2)$. Finally, we define 
\begin{align*}
\Sc_{\core} :=\left\{ \Phi(S) : S \in \wh{\Sc}_{\max}\right\}.
\end{align*}
Showing that this procedure actually produces a strongly isolated, coarsely complete, and invariant family requires results from Sections~\ref{sec:inv_plus_iso_implies_periodic} and~\ref{sec:bd_faces}.  

In Section~\ref{sec:inv_plus_iso_implies_periodic}, we show that any isolated and invariant family of maximal properly embedded simplices satisfies properties (1) and (2) in Theorem~\ref{thm:properties_of_ncc}. 

Then in Section~\ref{sec:bd_faces} we show that if $\Sc_0$ is an isolated, coarsely complete, and invariant family of maximal properly embedded simplices of dimension at least two, then there exists a subfamily $\Sc \subset \Sc_0$ which satisfies property (4) in Theorem~\ref{thm:properties_of_ncc} while still being isolated, coarsely complete,  and invariant. In the proof we first construct an explicit subfamily and then argue by contradiction that it must satisfy property (4). The main idea is to use the structure theorem from Section~\ref{sec:opposite_faces} and the action of $\Lambda$ to construct lots of properly embedded simplices. Then we use these simplices to obtain a contradiction. This result is a key step in showing that the map $\Phi$ is well defined and that $\Sc_{\core}$ is strongly isolated.

\subsubsection{Outline of the proof of Theorem~\ref{thm:properties_of_ncc}:}  

(see Section \ref{sec:pf_of_properties_of_ncc})
Properties (1) and (2) are established in Section~\ref{sec:inv_plus_iso_implies_periodic}. Properties (3) and (5) are straightforward consequences of the strong isolation property. Property (4) follows from the results in Section~\ref{sec:bd_faces}. 

We establish property (6) in Section~\ref{sec:half_triangles} by combining a Benz\'ecri~\cite{B1960} recentering argument with the strong isolation property. With the notation in Theorem~\ref{thm:properties_of_ncc}, let $V:=\Spanset\{a,b,c\}$. By a recentering argument, for any $r > 0$ there exists a neighborhood $\Oc$ of $b$ such that: if $x \in \Oc \cap \Pb(V) \cap \Cc$, then there exists a simplex $S_x \in \Sc$ of dimension at least two with
\begin{align*}
B_\Omega(x;r) \subset \Nc_\Omega(S_x;D+1)
\end{align*}
where $D$ is the constant from the coarsely complete condition. By picking $r > 0$ sufficiently large and using the fact the family $\Sc$  is strongly isolated, we then show that $S_x$ is independent of $x$ and hence
\begin{align*}
\Oc \cap \Pb(V) \cap \Cc \subset \Nc_\Omega(S; D+1)
\end{align*}
for some $S \in \Sc$. Then it is easy to show that $a,b,c \in \cup_{x \in \partial S} F_\Omega(x)$. 

This use of the strong isolation property  is similar to the proofs of Lemma 3.3.2 and Proposition 3.2.5 in~\cite{HK2005}.

\subsubsection{Outline of the proof of Theorem~\ref{thm:IS-implies-rel-hyp}:} (see Section~\ref{sec:isolated-simplex-implies-rel-hyp}) Our proof uses Theorem~\ref{thm:properties_of_ncc} and a characterization of relative hyperbolicity due to Sisto~\cite{S2013}, which is stated in Theorem~\ref{thm:Sisto_equiv} below. This characterization involves the existence of a system of projection maps onto the simplices in $\Sc$ with certain nice metric properties and a technical condition concerning thinness of certain geodesic triangles whose edges  infrequently intersect neighborhoods of simplices in $\Sc$.

In Section~\ref{sec:linear_projections}, we use supporting hyperplanes to construct natural linear projections from a properly convex domain onto any properly embedded simplex. In the setup of Theorem \ref{thm:IS-implies-rel-hyp}, these linear  projections end up being coarsely equivalent to the closest point projection onto simplices of dimension at least two in the Hilbert metric (see Definition \ref{defn:closest-point-proj} and Proposition \ref{prop:proj-coarsely-equiv}). The following property of these linear projections plays a key role: if the linear projections of two points onto a simplex are far apart, then the geodesic between those two points spends a significant amount of time in a tubular neighbourhood of $S$ (see Proposition \ref{prop:projection-to-simplices-contracting} and Corollary \ref{cor:penetration-of-simplex-nbd}).

Many of the proofs in Section~\ref{sec:isolated-simplex-implies-rel-hyp} proceed by contradiction and involve constructing a half triangle in $\partiali \Cc$ using an argument similar to the proof of Proposition 2.5 in~\cite{B2004}. Then property (6) in Theorem~\ref{thm:properties_of_ncc} is used.

\subsubsection{Outline of the proof of Theorem~\ref{thm:main_ncc}:}(see Section~\ref{sec:pf_of_thm_main_ncc})  The $(1) \Rightarrow (3)$ direction is a consequence of Theorems~\ref{thm:S-core-exists} and~\ref{thm:IS-implies-rel-hyp}. We show that $(2) \Rightarrow (1)$ using the general theory of relatively hyperbolic metric spaces. And we establish $(3) \Rightarrow (2)$ by using the following real projective analogue of the  flat torus theorem~\cite{GW1971,LY1972}.

\begin{theorem}[I.-Z.~\cite{IZ2019}] \label{thm:max_abelian}Suppose that $(\Omega, \Cc, \Lambda)$ is a naive convex co-compact triple. If $A \leq \Lambda$ is a maximal Abelian subgroup of $\Lambda$, then there exists a properly embedded simplex $S \subset \Cc$ such that:
\begin{enumerate}
\item $S$ is $A$-invariant,
\item $A$ fixes each vertex of $S$, and
\item $A$ acts co-compactly on $S$. 
\end{enumerate}
Moreover, $A$ contains a finite index subgroup isomorphic to $\Zb^{\dim (S)}$. 
\end{theorem}

\subsection*{Acknowledgements} The authors thank Ralf Spatzier for many helpful conversations. They also thank the referees for their useful comments and corrections. A. Zimmer also thanks the University of Michigan for hospitality during a visit where work on this project started. 

M. Islam is partially supported by the National Science Foundation under grant 1607260 and A. Zimmer is partially supported by the National Science Foundation under grants 1904099, 2105580, and 2104381.

\section{Examples}\label{sec:examples}

\subsection{Divisible examples} A properly convex domain $\Omega \subset \Pb(\Rb^d)$ is called \emph{divisible} if there exists a discrete group $\Lambda \leq \Aut(\Omega)$ which acts co-compactly on $\Omega$. Clearly, in this case $\Lambda$ is also a convex co-compact subgroup. Divisible domains have been extensively studied and in this subsection we will recall some examples, for more details see the survey papers~\cite{B2008,Q2010,L2014}. 

An open convex cone $C \subset \Rb^d$ is  \emph{reducible} if there exist a non-trivial decomposition $\Rb^d = V_1 \oplus V_2$ and convex cones $C_1 \subset V_1$ and $C_2 \subset V_2$ such that $C=C_1 + C_2$. Otherwise $C$ is said to be \emph{irreducible}. The preimage in $\Rb^d$ of a properly convex domain $\Omega \subset \Pb(\Rb^d)$ is the union of a cone and its negative, when this cone is reducible (respectively irreducible) we say that $\Omega$ is \emph{reducible} (respectively \emph{irreducible}). 

The Klein-Beltrami model of real hyperbolic $d$-space is the fundamental example of a convex divisible domain. In particular, if $\Bc \subset \Pb(\Rb^{d})$ is the interior of the convex hull of an ellipsoid in some affine chart, then $(\Bc, H_{\Bc})$ is isometric to $\Hb^{d-1}_{\Rb}$ real hyperbolic $(d-1)$-space and $\Aut(\Bc)$ coincides with $\Isom(\Hb^{d-1}_{\Rb})$. Further, $\Bc$ is a divisible convex domain because $\Aut(\Bc)$, being a simple Lie group, contains co-compact lattices.

There are many other examples of divisible convex domains, for instance: for every $d \geq 5$, Kapovich~\cite{K2007} has constructed divisible convex domains $\Omega \subset \Pb(\Rb^{d})$ such that $\Aut(\Omega)$ is discrete, Gromov hyperbolic, and not quasi-isometric to any symmetric space. 

When $d=3$, results of Benz\'{e}cri~\cite{B1960} imply that every irreducible divisible convex domain has word hyperbolic dividing group (see~\cite[Section 2]{B2006} for details). In $d=4$, Benoist established the following dichotomy. 

\begin{theorem}[Benoist~\cite{B2006}]\label{thm:benoist_d_4} If $\Omega \subset \Pb(\Rb^4)$ is an irreducible properly convex domain and $\Lambda \leq \Aut(\Omega)$ is a discrete group acting co-compactly on $\Omega$, then either
\begin{enumerate}
\item $\Lambda$ is word hyperbolic, or 
\item  $\Lambda$ is a relatively hyperbolic group with respect to a non-empty collection of virtually Abelian subgroups of rank two.
\end{enumerate}
\end{theorem}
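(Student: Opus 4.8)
The plan is to reduce to the case where $\Omega$ is not strictly convex, show that the ``bad'' part of $\partial\Omega$ (the non-$C^1$ points together with the segments) is precisely the union of boundaries of properly embedded triangles, check that these triangles form an isolated $\Lambda$-invariant family with virtually $\Zb^2$ stabilizers, and then deduce relative hyperbolicity.

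First I would invoke Benoist's trichotomy for irreducible divisible domains from \cite{B2004}: $\Lambda$ is word hyperbolic if and only if $\Omega$ is strictly convex if and only if $\partial\Omega$ is $C^1$. So if alternative (1) fails there is a non-trivial segment $\sigma\subset\partial\Omega$. Note that in $\Pb(\Rb^4)$ a properly embedded simplex must be a $2$-simplex (a triangle): it cannot be a tetrahedron, since then (being an open proper subset of the connected $3$-manifold $\Omega$ which is also closed by properness) one would get $S=\Omega$, so $\Omega$ itself would be a projective simplex, contradicting irreducibility. The core step is therefore: \emph{every maximal non-trivial segment of $\partial\Omega$ lies in the boundary of a properly embedded triangle $T\subset\Omega$}. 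I would prove this by a Benz\'ecri rescaling argument: take interior points $a_n\to p$ and $b_n\to q$ running to the two endpoints of the maximal segment through $\sigma$, and elements $\gamma_n\in\Lambda$ pulling the midpoint of the Hilbert geodesic $[a_n,b_n]$ back into a fixed compact subset of $\Omega$; after passing to a subsequence $(\gamma_n\Omega,\gamma_n\cdot)$ converges to a properly convex $\Omega_\infty$ whose boundary contains a full affine line. Analyzing the supporting hyperplanes along this line inside the $3$-dimensional domain $\Omega_\infty$, and using that the limiting boundary still carries the segment structure inherited from $\partial\Omega$ together with the largeness of the renormalized stabilizer of the line, forces $\overline{\Omega_\infty}$ to contain a genuine $2$-simplex; pulling back produces the required triangle $T$ with $\sigma\subset\partial T$. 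Controlling this limit — in particular showing the triangle embeds properly in $\overline\Omega$ and does not degenerate or escape — is the main obstacle, and is exactly where $d=4$ is used, since the low dimension pins down the possible shape of the flats.

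Next I would record the structural facts about the family $\Sc$ of all properly embedded triangles in $\Omega$. Co-compactness of the $\Lambda$-action gives finitely many $\Lambda$-orbits in $\Sc$ (a triangle is determined by a compact piece of $\Omega$, and a compactness argument bounds the number of orbit types). For a fixed $T\in\Sc$, since $\Aut(T)$ is, up to finite index, the diagonal torus of the triangle and $T$ is properly embedded, $\Stab_\Lambda(T)$ is discrete in that torus and acts co-compactly on $T$, hence contains a finite-index $\Zb^2$; maximality of $T$ makes it a maximal virtually abelian subgroup. Finally, the isolation property — for every $r>0$ a uniform bound on $\diam_\Omega\bigl(\Nc_\Omega(T_1;r)\cap\Nc_\Omega(T_2;r)\bigr)$ for distinct $T_1,T_2\in\Sc$ — I would again prove by contradiction and rescaling: a failure would produce in the limit two distinct properly embedded triangles sharing an unbounded tubular neighborhood, which is impossible by a direct convexity argument in dimension $3$.

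With $\Sc$ established as an isolated, $\Lambda$-invariant family of triangles with finitely many orbits and virtually $\Zb^2$ stabilizers, I would conclude that $(\Omega,H_\Omega)$ is hyperbolic relative to $\Sc$ — the essential point being a Benoist-type cross-ratio estimate showing that any geodesic triangle in $(\Omega,H_\Omega)$ whose sides stay boundedly far from every member of $\Sc$ is uniformly thin — and then transfer this to $\Lambda$ using that $\Lambda$ acts geometrically on $\Omega$ and permutes $\Sc$ with finitely many orbits; this yields alternative (2). The two alternatives are mutually exclusive because a word hyperbolic group contains no $\Zb^2$. I expect the two genuinely hard ingredients to be the construction step (every segment sits in a triangle) and the relative-hyperbolicity/thin-triangle estimate, both of which are special to the low-dimensional setting.
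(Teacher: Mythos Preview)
The paper does not contain a proof of this statement: Theorem~\ref{thm:benoist_d_4} is quoted from Benoist~\cite{B2006} as background in the examples section, with no argument supplied. So there is no ``paper's own proof'' to compare your proposal against.

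That said, your outline is a faithful sketch of Benoist's original argument in~\cite{B2006}: reduce to the non-strictly-convex case via~\cite{B2004}, use a Benz\'ecri rescaling to produce properly embedded triangles through every boundary segment, establish isolation and the virtually $\Zb^2$ stabilizer structure, and then deduce relative hyperbolicity. The places you flag as hard (segment $\Rightarrow$ triangle, and the thin-triangle/relative-hyperbolicity step) are indeed where the work lies in~\cite{B2006}, and the dimension restriction enters exactly where you say.

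It is worth noting that the present paper's main results (Theorems~\ref{thm:main_cc} and~\ref{thm:properties_of_cc}) give a dimension-free generalization: they characterize relative hyperbolicity of a convex co-compact group in terms of the family of maximal properly embedded simplices being closed and discrete, without assuming $d=4$. The approach here is different from both Benoist's and your sketch --- rather than a direct thin-triangle estimate, the authors use Sisto's almost-projection-system characterization (Theorem~\ref{thm:Sisto_equiv}), building linear projections onto simplices from supporting hyperplanes. So while your proposal correctly reconstructs Benoist's low-dimensional proof, the paper's own contribution is to replace that dimension-specific argument with machinery that works in all dimensions.
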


Benoist~\cite{B2006} and Ballas-Danciger-Lee~\cite{BDL2018} have constructed examples of the second case in Theorem~\ref{thm:benoist_d_4}.

The case when $d>4$ is fairly mysterious. When $d=5$, $6$, or $7$ Choi-Lee-Marquis~\cite{CLM2016} have constructed examples where $\Lambda$ is a relatively hyperbolic group with respect to a collection of virtually Abelian subgroups of rank at least two. They also ask whether Benoist's result is true in any dimension.

\begin{question}[{Choi-Lee-Marquis~\cite[Remark 1.11]{CLM2016}}] Are groups dividing non-symmetric irreducible properly convex domains always relatively hyperbolic with respect to a (possibly empty) collection of virtually Abelian subgroups of rank at least two? \end{question}

In the context of the above question, we should mention a recent result of Bobb~\cite{MB2020} who proved that if $\Omega \subset \Pb(\Rb^d)$ is divisible, then the family of properly embedded simplices in $\Omega$ of dimension $(d-2)$ (i.e. codimension 1) is closed and discrete in the local Hausdorff topology induced by the Hilbert metric. Thus if $\Omega$ only contains simplices of dimension $(d-2)$, then Theorem \ref{thm:main_cc} above implies that $\Lambda$ is relatively hyperbolic with respect  to a collection of virtually Abelian subgroups of rank $(d-2)$.

\subsection{Convex co-compact examples} 

In this subsection we recall a class of examples constructed by  Danciger-Gu{\'e}ritaud-Kassel. For details and other examples see~\cite[Section 12]{DGF2017}. 
 
\begin{proposition}[{Danciger-Gu{\'e}ritaud-Kassel~\cite[Section 12.2.2]{DGF2017}}] For any $d \geq 4$ there exists a properly convex domain $\Omega \subset \Pb(\Rb^d)$ with a convex co-compact subgroup $\Lambda \leq \Aut(\Omega)$ such that:
\begin{enumerate}
\item $\Lambda$ acts irreducibly on $\Rb^d$ and
\item $\Lambda$ is a relatively hyperbolic group with respect to a non-empty collection of virtually Abelian subgroups of rank two.
\end{enumerate}
\end{proposition}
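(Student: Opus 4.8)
The plan is to obtain these examples by deforming a suitable Kleinian group out of the orthogonal subgroup. Fix $d \geq 4$. The first step is to produce a discrete, geometrically finite, Zariski-dense subgroup $\Gamma_0 \leq \SO(d-1,1) \subset \PGL_d(\Rb)$ of infinite covolume, all of whose cusps have rank two, and which has at least one cusp. Such a group exists for every $d \geq 4$: since $d-1 \geq 3$ one may form, by a Klein--Maskit combination in $\Hb^{d-1}_{\Rb}$, the group $\Gamma_0 = \Gamma_{\mathrm{Sch}} \ast \Zb^2$, where $\Gamma_{\mathrm{Sch}}$ is a Zariski-dense Schottky subgroup and $\Zb^2$ is a rank-two parabolic subgroup placed in general position. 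Then $\Gamma_0$ is geometrically finite with a single cusp up to conjugacy, the cusp subgroup is $\cong \Zb^2$, and $\Gamma_0$ is Zariski dense in $\SO(d-1,1)$, hence acts strongly irreducibly on $\Rb^d$ (the identity component of $\SO(d-1,1)$ already does). Being geometrically finite, $\Gamma_0$ is a relatively hyperbolic group with respect to the conjugacy class of its maximal parabolic subgroup $\Zb^2$.

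The second, and \emph{central}, step is to deform the inclusion $\rho_0 : \Gamma_0 \hookrightarrow \PGL_d(\Rb)$ to a nearby representation $\rho$ so that: (a) $\rho$ preserves a properly convex domain $\Omega \subset \Pb(\Rb^d)$; (b) for each maximal parabolic $P \cong \Zb^2$ the image $\rho(P)$ consists of simultaneously diagonalizable elements stabilizing, and acting co-compactly on, a properly embedded $2$-simplex of $\Omega$ --- that is, the rank-two cusp is "opened up" into a triangle cusp; and (c) $\rho$ is convex co-compact in the sense of Danciger--Gu\'eritaud--Kassel. Locally near each parabolic this is an explicit deformation inside a maximal torus of $\PGL_d(\Rb)$ carrying a pair of commuting unipotents to a pair of commuting diagonal elements preserving a $2$-simplex; globalizing uses the openness of the holonomies of properly convex projective structures together with an analysis of the convex core, following Benoist's construction of the analogous $d=4$ examples and their higher-dimensional counterparts. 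Once (b) holds, the convex core $\Cc_\Omega(\rho(\Gamma_0))$ is the convex core of $\Gamma_0$ in $\Hb^{d-1}_{\Rb}$ with each cusp region replaced by a compact region abutting a simplex, so the $\rho(\Gamma_0)$-action on it is genuinely co-compact --- which is what makes $\rho$ convex co-compact rather than merely of finite covolume.

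Finally, set $\Lambda := \rho(\Gamma_0)$. Then $\Lambda$ is convex co-compact by (c); $\Lambda$ acts irreducibly on $\Rb^d$ because strong irreducibility is a Zariski-open condition and $\rho$ is a small perturbation of the strongly irreducible $\rho_0$; and, transporting the relative hyperbolicity of $\Gamma_0$ across the isomorphism $\Gamma_0 \cong \Lambda$, the group $\Lambda$ is relatively hyperbolic with respect to a non-empty collection of subgroups each isomorphic to $\Zb^2$, hence virtually Abelian of rank two.

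I expect step two --- producing a single deformation realizing (a), (b) and (c) at once --- to be the main obstacle, since it couples a local normal-form computation at the cusps with a global argument that the perturbed holonomy still bounds a properly convex domain with co-compact convex core. An alternative that sidesteps a delicate global deformation is to assemble $\Lambda$ directly via a combination theorem for convex co-compact actions: take a word-hyperbolic convex co-compact subgroup of $\PGL_d(\Rb)$ acting strongly irreducibly together with a copy of $\Zb^2$ acting co-compactly on a properly embedded $2$-simplex (possible since $d \geq 4$), arrange them in a ping-pong configuration, and conclude that the resulting free product is convex co-compact; then relative hyperbolicity of the free product follows from the Bowditch--Dahmani combination theorem and irreducibility from that of the word-hyperbolic factor, the obstacle now shifting to verifying the transversality hypotheses of the combination theorem while keeping the whole group strongly irreducible.
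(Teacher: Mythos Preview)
The paper does not prove this proposition: it is stated as a citation of a construction from Danciger--Gu\'eritaud--Kassel~\cite[Section 12.2.2]{DGF2017}, with the surrounding text explicitly directing the reader there for details. So there is no ``paper's own proof'' to compare against.

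Your sketch is in the right spirit---the examples in the cited reference are indeed obtained by deforming geometrically finite subgroups of $\SO(d-1,1)$ so that rank-two parabolic cusps open into properly embedded $2$-simplices---but as written it is not a proof. You have correctly identified the crux (your step two) and then deferred it: producing a deformation that simultaneously realizes conditions (a), (b), (c) is precisely the content of the cited construction, and it requires specific arguments (e.g.\ explicit models for the cusp deformation, control of the limit set under deformation, and a verification that the resulting action is convex co-compact in the DGK sense rather than merely naive convex co-compact). Your alternative combination-theorem route has the same status: the existence of a combination theorem for convex co-compact subgroups with the needed transversality hypotheses is itself a nontrivial result that you would have to supply or cite. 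In short, what you have is an accurate outline of the strategy, not an independent argument; for the actual proof one must go to~\cite{DGF2017}.
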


\subsection{ Naive convex co-compact examples I }
\label{sec:ncc_examples} In this subsection we construct examples of the following:
\begin{enumerate}[(a)]
\item  a naive convex co-compact triple which is not convex co-compact,
\item a naive convex co-compact triple where the group is relatively hyperbolic, but the family of all maximal properly embedded simplices of dimension at least two is not discrete in the local Hausdorff topology, and
\item a naive convex co-compact triple $\left(\Omega_{\star}, \Cc_{\star}^{(R)}, \Lambda_{\star}\right)$ and a family $\Sc$ of maximal properly embedded simplices of dimension at least two which is isolated, coarsely complete, and $\Lambda_{\star}$-invariant; but $( \Cc_{\star}^{(R)}, H_{\Omega_\star})$ is not relatively hyperbolic with respect to $\Sc$. 
\end{enumerate} 

For the rest of the subsection, we will freely use the notation introduced in Section~\ref{sec:notations_and_definitions} below and make the following assumptions:

 \medskip

\noindent \textbf{Assumptions:} $\Omega \subset \Pb(\Rb^d)$ is a properly convex domain and $\Lambda \leq \Aut(\Omega)$ is a discrete group which acts co-compactly on $\Omega$. 

\medskip

Let $\pi: \Rb^d \rightarrow \Pb(\Rb^d)$ be the natural projection. Then $\pi^{-1}(\Omega) = C \cup -C$ where $C \subset \Rb^d$ is some properly convex cone.   Then define
\begin{align*}
\Omega_{\star} & := \{ [(v,w)] : v,w \in C\} \subset \Pb(\Rb^{2d}), \\
\Cc_{\star}& :=\{ [(v,v)] : v \in C\}  \subset \Pb(\Rb^{2d}), \text{ and} \\
\Lambda_{\star}&:=\{ [g \oplus g] : g \in \GL_d(\Rb), [g] \in \Lambda\} \subset \PGL_{2d}(\Rb). 
\end{align*}
Then, by construction, $(\Omega_{\star}, \Cc_{\star}, \Lambda_{\star})$ is a naive convex co-compact triple. 

\begin{observation} 
\label{obs:ncc_but_not_cc}
$\Cc_{\Omega_{\star}}(\Lambda_{\star}) = \Omega_{\star}$. In particular, $\Lambda_{\star} \leq \Aut(\Omega_{\star})$ is not a convex co-compact subgroup.
\end{observation}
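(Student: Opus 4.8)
The plan is to show that the full orbital limit set $\Lc_{\Omega_\star}(\Lambda_\star)$ is large enough that its convex hull in $\Omega_\star$ is all of $\Omega_\star$. First I would use the fact that $\Lambda$ acts co-compactly on $\Omega$, so in particular $\overline{\Lambda \cdot p} \supseteq \partial \Omega$ for every $p \in \Omega$ — this is a standard consequence of the fact that $\overline{\Lambda \cdot p}$ is closed, $\Lambda$-invariant, and meets $\partial\Omega$ nontrivially (any limit point of an orbit lies on $\partial\Omega$), combined with minimality of the $\Lambda$-action on $\partial\Omega$ when $\Omega$ is divisible (alternatively, one can avoid minimality by using the stronger classical fact that for a divisible $\Omega$ the orbit closure of any point contains the whole boundary). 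Thus in the diagonal picture, for $p_0 = [(v_0,v_0)] \in \Cc_\star \subset \Omega_\star$, the orbit $\Lambda_\star \cdot p_0 = \{[(gv_0, gv_0)] : [g] \in \Lambda\}$ has closure containing $\{[(w,w)] : [w] \in \partial\Omega\} = \partial \Cc_\star$. Hence $\Lc_{\Omega_\star}(\Lambda_\star) \supseteq \partial\Cc_\star$ (after removing the orbit itself, which doesn't affect the closure of the convex hull).

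Next I would exploit the freedom to choose the basepoint off the diagonal. Take $p_1 = [(v_1, w_1)]$ with $v_1, w_1 \in C$ not proportional. Then $\Lambda_\star \cdot p_1 = \{[(gv_1, gw_1)] : [g] \in \Lambda\}$, and I claim its closure contains points of the form $[(a,b)]$ with $[a], [b] \in \partial\Omega$ and $[a] \neq [b]$ "independently chosen" along suitable sequences. Concretely, pick $\gamma_n \in \Lambda$ with $\gamma_n \to \xi$ in the sense that $[\gamma_n u] \to \xi \in \partial\Omega$ for all $u$ in a generic set; by passing to a subsequence where the linear maps $g_n$ (suitable representatives) converge projectively to a rank-one-image limit, both $[g_n v_1]$ and $[g_n w_1]$ converge to the same attracting point $\xi^+$. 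That alone only gives the diagonal boundary again, so instead I would combine two such limits: using that $\overline{\Cc_\star}$ already contains the full diagonal boundary $\partial\Cc_\star$ and that $\Cc_\star(\Lambda_\star)$ is convex and $\Lambda_\star$-invariant, I take a boundary point $q = [(a,b)]$ of $\Omega_\star$ with $a, b \in \overline C$; since segments $[(a,a), (b,b)]$ lie in $\overline{\Cc_\star} \subseteq \overline{\Cc_{\Omega_\star}(\Lambda_\star)}$, and the point $[(a,b)]$ lies on the projective line through $[(a,a)]$ and $[(b,b)]$ — indeed $(a,b) = \tfrac12((a,a)+(b,b)) + \tfrac12((a,-a)+(-b,b))$ — I need one more family of limit points to "open up" the second coordinate. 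This is where the basepoint $p_1$ off the diagonal enters: its orbit limit points fill in enough extremal directions of $\Omega_\star$ that the convex hull of $\Lc_{\Omega_\star}(\Lambda_\star)$ contains an open set, and being $\Lambda_\star$-invariant and convex inside $\Omega_\star$, it must then equal $\Omega_\star$.

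Let me state the cleanest route, which I expect to be the actual argument: show directly that $\Lc_{\Omega_\star}(\Lambda_\star)$ contains every point $[(a,b)]$ with $[a],[b] \in \partial\Omega$ arbitrary (not necessarily equal). Given such $a,b$, choose $\gamma_n, \delta_n \in \Lambda$ with $[\gamma_n p] \to [a]$ and $[\delta_n p] \to [b]$ for a generic $p$; the element $[\gamma_n \oplus \delta_n]$ is \emph{not} in $\Lambda_\star$, but one can instead start from an off-diagonal basepoint and use a single sequence $\gamma_n$ whose associated linear map, rescaled, has a limit with at least a two-dimensional image, so that $[(g_n v_1, g_n w_1)]$ converges to $[(a,b)]$ with $[a] \neq [b]$; such sequences exist because $\Omega$ is not a point so its dividing group contains elements that are not proximal in a suitable sense, or more robustly because we may vary $p_1$ over the open set $\Omega$ and take $n \to \infty$ along a diagonal subsequence. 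Once $\Lc_{\Omega_\star}(\Lambda_\star) \supseteq \{[(a,b)] : [a],[b]\in\partial\Omega\}$, its convex hull in $\Omega_\star$ contains, for any $[(v,w)] \in \Omega_\star$ (so $v,w \in C$), the following: $[v] \in \Omega$ is a convex combination of boundary points $[a_i]$, $[w]$ a convex combination of boundary points $[b_j]$, and $[(v,w)]$ lies in the convex hull of $\{[(a_i, b_j)]\}$ inside $\Omega_\star$ — hence $\Cc_{\Omega_\star}(\Lambda_\star) = \Omega_\star$. Finally, since $\Lambda_\star$ does not act co-compactly on $\Omega_\star$ (the quotient is noncompact — e.g. the "distance to the diagonal" function is $\Lambda_\star$-invariant, proper, and unbounded), $\Lambda_\star$ is not convex co-compact.

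The main obstacle is the claim that the orbit closure from an off-diagonal basepoint reaches boundary points $[(a,b)]$ with $a,b$ genuinely independent: a single linear sequence $g_n$ acting diagonally tends to collapse both coordinates to the same attracting fixed point unless one is careful to pick $v_1, w_1$ in different "Cartan directions," or to take limits where the normalized $g_n$ has image of dimension $\geq 2$. I would handle this by a Cartan/KAK decomposition argument: write $g_n = k_n a_n l_n$, pass to subsequences so $k_n \to k$, $l_n \to l$, and choose $v_1, w_1$ so that $l v_1$ and $l w_1$ pair against different singular directions of $a_n$ — then $[g_n v_1]$ and $[g_n w_1]$ limit to $[k e_i]$ and $[k e_j]$ for $i \neq j$, which are (generically) distinct boundary points, and varying $k$ over the limit set of $\Lambda$ sweeps out enough pairs. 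The convexity bookkeeping at the end (convex hull of products of boundary points fills $\Omega_\star$) is routine and I would not belabor it.
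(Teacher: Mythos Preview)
Your overall strategy is right --- show that $\Lc_{\Omega_\star}(\Lambda_\star)$ is large enough that its convex hull is all of $\Omega_\star$ --- but your ``cleanest route'' has a genuine gap. You claim the limit set contains points $[(a,b)]$ with $[a],[b]\in\partial\Omega$ and $[a]\neq[b]$, and propose to produce these via a KAK argument where $lv_1$ and $lw_1$ pair against different singular directions. This fails when $\Omega$ is strictly convex (e.g.\ the projective model of $\Hb^{d-1}$): there every divergent sequence $g_n\in\Lambda$, suitably normalized, converges to a rank-one endomorphism $T$ with image $\Rb v$ for some $[v]\in\partial\Omega$, so for \emph{any} basepoint $v_1,w_1\in C$ one has $(g_nv_1,g_nw_1)\to(\lambda(v_1)v,\lambda(w_1)v)$ and hence $[(g_nv_1,g_nw_1)]\to[(tv,v)]$ with $t=\lambda(v_1)/\lambda(w_1)$. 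The two projective coordinates always collapse to the \emph{same} boundary point $[v]$; the set $\{[(a,b)]:[a]\neq[b]\in\partial\Omega\}$ is simply not contained in $\Lc_{\Omega_\star}(\Lambda_\star)$ in this case. Your appeal to non-proximal elements does not help, since for strictly convex divisible $\Omega$ every infinite-order element is biproximal.

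The paper's argument is precisely the observation you almost reach in that last computation: from the off-diagonal basepoint $[(t\overline p,\overline p)]$ the diagonal action gives $[\overline g_n\oplus\overline g_n]\cdot[(t\overline p,\overline p)]\to[(tv,v)]$, so $[(tv,v)]\in\Lc_{\Omega_\star}(\Lambda_\star)$ for every $t>0$ and every $[v]\in\partial\Omega$. Sending $t\to 0$ and $t\to\infty$ puts $[(0,v)]$ and $[(v,0)]$ in $\overline{\Cc_{\Omega_\star}(\Lambda_\star)}$ for every $v\in\partial C$; since any $(v,w)$ with $v,w\in C$ is a positive combination of such corner vectors, convexity gives $\Cc_{\Omega_\star}(\Lambda_\star)=\Omega_\star$. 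So the fix is not to hunt for pairs $[a]\neq[b]$, but to recognize that the scalar $t$ is preserved by the diagonal action and can then be sent to $0$ or $\infty$ after taking the orbital limit. Your final paragraph on why $\Lambda_\star$ fails to act co-compactly on $\Omega_\star$ is correct.
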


 \begin{proof}
Since $\Lambda$ acts co-compactly on $\Omega$, it is easy to show that $\Lc_\Omega(\Lambda) = \partial \Omega$ (see for instance the proof of Lemma 2.2 in~\cite{B2003b}). By convexity, to prove that $\Cc_{\Omega_{\star}}(\Lambda_{\star}) = \Omega_{\star}$ it is enough to show that 
\begin{align*}
\overline{\Cc_{\Omega_{\star}}(\Lambda_{\star})} \supset \{ [(v,0)] : v \in \partial C\} \cup \{ [(0,v)] : v \in \partial C\}.
\end{align*}
Fix $v \in \partial C$. Then $[v] \in \partial \Omega$ and so there exist $p \in \Omega$ and a sequence $g_n \in \Lambda$ such that $\lim_{n \to \infty} g_n(p) = [v]$. Let $\overline{p} \in C$ be a lift of $p$ and $\overline{g}_n \in \GL_d(\Rb)$ be a lift of $g_n$. Then for any $t > 0$ 
\begin{align*}
[(tv,v)] = \lim_{n \rightarrow \infty}  [\overline{g}_n \oplus \overline{g}_n]  \cdot [(t\overline{p}, \overline{p})] \in \Lc_{\Omega_\star}(\Lambda_\star).
\end{align*}
Taking limits as $t \to 0$ and $t \to \infty$ respectively, 
\begin{align*}
\Big\{ [(v,0)] , [(0,v)] \Big\} \subset \overline{\Lc_{\Omega_\star}(\Lambda_\star)} \subset  \overline{\Cc_{\Omega_{\star}}(\Lambda_{\star})}.
\end{align*}
Since $v \in \partial C$ was arbitrary, the proof is  complete.
 \end{proof}

We can thicken $\Cc_{\star}$ to obtain a one parameter family of naive convex co-compact triples: for $R \geq 0$ define
\begin{align*}
\Cc_{\star}^{(R)} :=   \left\{ y \in \Omega_{\star} : H_{\Omega_{\star}}(y, \Cc_{\star}) \leq R \right\}.
\end{align*}

\begin{observation}\label{obs:bounds}
 For any $R \geq 0$, $\left(\Omega_{\star}, \Cc_{\star}^{(R)}, \Lambda_{\star}\right)$ is a naive convex co-compact triple. 
 \end{observation}
 
 \begin{proof}
 Note that $\Cc_{\star}^{(R)}$ is the closed $R$-neighborhood of the convex set $\Cc_{\star}$ in the Hilbert metric $H_{\Omega_{\star}}$. So $\Cc_{\star}^{(R)}$ is a closed convex set containing $\Cc_{\star}$, see for instance~\cite[Corollary 1.10]{CLT2015}. The observation then follows since $\Lambda_{\star}$ acts co-compactly on $\Cc_{\star}$. 
 \end{proof}

For $x \in \overline{\Omega}$, define $\overline{x} \in \overline{C}$ to be the unique lift of $x$ with $\norm{\overline{x}}_2=1$. Then define $x_{\star}:=[(\overline{x},\overline{x})] \in \overline{\Omega}_{\star}$. 

By definition we have the following description of the faces of $\Omega_\star$. 

\begin{observation}\label{obs:ex_faces} If $x \in \overline{\Omega}$, then 
\begin{align*}
F_{\Omega_\star}(x_{\star}) = \left\{ [(v,w)]  : v,w \in \overline{C} \text{ and } [v], [w] \in F_\Omega(x) \right\}.
\end{align*}
In particular, if $x$ is an extreme point of $\Omega$, then
\begin{align*}
F_{\Omega_\star}(x_{\star}) = \left\{ [(s\overline{x},\overline{x})]  : s \in (0,+\infty) \right\} = \left\{ [(\overline{x},t\overline{x})]  : t \in (0,+\infty) \right\} .
\end{align*}
\end{observation}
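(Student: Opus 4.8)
The plan is to reduce the claim to the standard description of the faces of a product of convex cones, after passing to the cone picture. First I would set $\hat{C} := C \times C \subset \Rb^{2d}$, so that $\hat{C}$ is an open convex cone with $\pi(\hat{C}) = \Omega_{\star}$ and $\overline{\hat{C}} = \overline{C}\times\overline{C}$; since $C$ is pointed, $\overline{C}\times\overline{C}$ contains no line, consistent with $\Omega_{\star}$ being properly convex. Note $(\overline{x},\overline{x})\neq 0$ because $\norm{\overline{x}}_2 = 1$, so $x_{\star}$ is well defined for every $x\in\overline{\Omega}$.

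Next I would record the cone description of open faces: if $\hat{F}_x$ denotes the face of the closed convex cone $\overline{C}$ whose relative interior contains $\overline{x}$, then by the definition of the open face $F_\Omega(x) = \pi(\relint \hat{F}_x)$, and for $v\in\overline{C}\setminus\{0\}$ one has $v\in\relint\hat{F}_x$ if and only if $[v]\in F_\Omega(x)$ (using that each point of $\overline{\Omega}$ lifts to $\overline{C}$ uniquely up to positive scaling, as $\overline{C}$ is pointed). Then I would invoke the elementary fact that the faces of $\overline{C}\times\overline{C}$ are precisely the products $F_1\times F_2$ of faces of $\overline{C}$, together with $\relint(F_1\times F_2)=\relint F_1\times\relint F_2$. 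Since $(\overline{x},\overline{x})$ lies in the relative interior of $\hat{F}_x\times\hat{F}_x$, that product is exactly the face of $\overline{\hat{C}}$ whose relative interior contains $(\overline{x},\overline{x})$, whence
\[
F_{\Omega_{\star}}(x_{\star})=\pi\big(\relint\hat{F}_x\times\relint\hat{F}_x\big)=\big\{[(v,w)]:v,w\in C \text{ and } [v],[w]\in F_\Omega(x)\big\},
\]
which is the first displayed identity.

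For the ``in particular'' clause I would specialize: if $x$ is an extreme point of $\Omega$ then $F_\Omega(x)=\{x\}$, so $\hat{F}_x=\Rb_{\geq 0}\,\overline{x}$ and $\relint\hat{F}_x=\Rb_{>0}\,\overline{x}$, giving $F_{\Omega_{\star}}(x_{\star})=\{[(s\overline{x},t\overline{x})]:s,t>0\}$; dividing the representative $(s\overline{x},t\overline{x})$ by $t$ (respectively by $s$) shows this equals $\{[(s\overline{x},\overline{x})]:s\in(0,+\infty)\}$ (respectively $\{[(\overline{x},t\overline{x})]:t\in(0,+\infty)\}$). The only non-formal ingredient is the product-of-faces fact, which is standard (a short extreme-point argument coordinate by coordinate), so I do not anticipate a genuine obstacle — the observation amounts to exactly this bookkeeping, which is why the paper states it ``by definition''.
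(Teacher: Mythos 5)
Your proof is correct, and it supplies exactly the bookkeeping the paper leaves implicit when it asserts the observation "by definition": pass to the cone picture, use that faces of $\overline{C}\times\overline{C}$ are products of faces of $\overline{C}$ with $\relint(F_1\times F_2)=\relint F_1\times\relint F_2$, and translate back via the standard correspondence between open faces of $\Omega$ and relative interiors of faces of the cone. The only thing worth noting is that in the displayed identity the paper's "$v,w\in C$" must be read as $v,w\in\overline{C}\setminus\{0\}$ (otherwise the right-hand side would be empty for $x\in\partial\Omega$), which your use of $\overline{C}$ handles correctly.
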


Observations~\ref{obs:bounds},~\ref{obs:ex_faces} and the definition of the Hilbert metric imply the following. 
 
 \begin{observation}\label{obs:ex_faces_of_ext_pts} If $x \in \partial\Omega$ is an extreme point and $R > 0$, then 
 \begin{align*}
\overline{ \Cc_{\star}^{(R)}} \cap F_{\Omega_\star}(x_\star) 
 \end{align*}
 is a compact interval containing $x_\star$ with non-empty interior. 
 \end{observation}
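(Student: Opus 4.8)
The plan is to analyze the set $\overline{\Cc_{\star}^{(R)}} \cap F_{\Omega_\star}(x_\star)$ by using the explicit parametrization of $F_{\Omega_\star}(x_\star)$ from Observation~\ref{obs:ex_faces}. Since $x$ is an extreme point of $\Omega$, we know $F_{\Omega_\star}(x_\star)$ is the open segment $\{[(s\overline{x},\overline{x})] : s \in (0,+\infty)\}$, and $x_\star$ corresponds to the parameter value $s = 1$. Under the coordinate $s$, this face is isometrically a copy of $(\Rb, \abs{\cdot})$ via $s \mapsto \tfrac{1}{2}\log s$ (as the Hilbert metric on a one-dimensional properly convex domain is the standard line), so I will phrase the argument in terms of the parameter $t = \tfrac{1}{2}\log s \in \Rb$.

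First I would show the intersection contains $x_\star$: since $x_\star \in \Cc_{\star}$ and $\Cc_{\star} \subset \Cc_{\star}^{(R)}$, we have $x_\star \in \overline{\Cc_{\star}^{(R)}} \cap F_{\Omega_\star}(x_\star)$. Next I would show the interior is non-empty. The point $[(s\overline{x}, \overline{x})]$ for $s$ near $1$ should lie within bounded Hilbert distance of $\Cc_{\star}$: indeed, for $y \in \Omega$ close to $x$, the point $y_\star = [(\overline{y},\overline{y})]$ lies in $\Cc_{\star}$, and one can connect $[(s\overline{x},\overline{x})]$ to such a point while controlling the Hilbert distance using the definition of $H_{\Omega_\star}$ and the fact that $F_{\Omega_\star}(x_\star)$ contains a whole neighborhood-worth of the segment. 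More directly: for $\abs{\log s}$ small, $[(s\overline{x},\overline{x})]$ has $H_{\Omega_\star}$-distance to $\Cc_{\star}$ bounded by a function of $\abs{\log s}$ going to $0$, hence lies in $X_1^{(R)} \subset \Cc_{\star}^{(R)}$ for $R > 0$; combined with convexity this gives an open sub-interval of $F_{\Omega_\star}(x_\star)$ inside $\overline{\Cc_{\star}^{(R)}}$ around $x_\star$.

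The main work is showing the intersection is \emph{compact}, i.e. bounded away from both ends $s \to 0$ and $s \to +\infty$. By Observation~\ref{obs:bounds}, $\Cc_{\star}^{(R)} \subset \{y : H_{\Omega_{\star}}(y, \Cc_{\star}) \leq 2^{d-1}R\}$, so it suffices to show that $H_{\Omega_\star}\big([(s\overline{x},\overline{x})], \Cc_{\star}\big) \to \infty$ as $s \to 0$ or $s \to \infty$. For this I would compute, or estimate from below, the Hilbert distance from $[(s\overline{x},\overline{x})]$ to the point $[(\overline{v},\overline{v})] \in \Cc_{\star}$ for arbitrary $v \in C$, using cross-ratios along the line in $\Omega_\star$ through these two points and comparing with the cross-ratio along the face $F_{\Omega_\star}(x_\star)$ — as $s \to 0$ the point escapes toward the endpoint $[(0,\overline{x})]$ of the face, which is a point of $\partial\Omega_\star$, while every point of $\Cc_{\star}$ stays in the "diagonal," forcing the Hilbert distance to blow up (this is where properness of $\Omega_\star$ and the fact that $F_{\Omega_\star}(x_\star)$ is properly embedded in $\Omega_\star$ come in). The hard part will be making this divergence estimate clean: one needs a lower bound on $H_{\Omega_\star}\big([(s\overline{x},\overline{x})], [(\overline{v},\overline{v})]\big)$ that is uniform in $v \in C$ and tends to $\infty$ as $s \to 0^+$ (and symmetrically $s \to \infty$), which I expect to follow by projecting to a suitable two-dimensional slice and using monotonicity of the Hilbert metric under inclusion of convex domains, together with the observation that the segment $F_{\Omega_\star}(x_\star)$ is a geodesic that is properly embedded in $\Omega_\star$. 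Once boundedness is established, compactness follows since the intersection is a closed subset of the line $F_{\Omega_\star}(x_\star)$, and convexity of $\Cc_{\star}^{(R)}$ forces it to be an interval; combined with the non-empty interior this completes the proof.
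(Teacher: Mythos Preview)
Your overall strategy matches the paper's (terse) proof --- use Observation~\ref{obs:ex_faces} to parametrize the face, use Observation~\ref{obs:bounds} to bound $\Cc_{\star}^{(R)}$ between two Hilbert tubes about $\Cc_\star$, and then read off the conclusion from the definition of the Hilbert metric. But there is a recurring confusion that makes several of your steps literally false.

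Since $x \in \partial\Omega$, the lift $\overline{x}$ lies in $\partial C$, so for every $s>0$ the point $[(s\overline{x},\overline{x})]$ lies in $\partial\Omega_\star$, not in $\Omega_\star$. Consequently $x_\star \notin \Cc_\star$ (only $x_\star \in \overline{\Cc_\star}$), the expression $H_{\Omega_\star}\big([(s\overline{x},\overline{x})],\Cc_\star\big)$ is undefined, and the claim that $[(s\overline{x},\overline{x})] \in X_1^{(R)}$ is meaningless. Your ``more direct'' argument for non-empty interior and your formulation of the compactness step both rest on this error. The fix is to work by approximation from the interior. For non-empty interior: for $y \in \Omega$, the line through $[(t\overline{y},\overline{y})]$ and $y_\star=[(\overline{y},\overline{y})]$ meets $\partial\Omega_\star$ exactly at $[(0,\overline{y})]$ and $[(\overline{y},0)]$, so a cross-ratio computation gives $H_{\Omega_\star}\big([(t\overline{y},\overline{y})],y_\star\big)=\tfrac12|\log t|$; hence $[(t\overline{y},\overline{y})]\in\Cc_\star^{(R)}$ whenever $\tfrac12|\log t|\le R$, and sending $y\to x$ gives $[(t\overline{x},\overline{x})]\in\overline{\Cc_\star^{(R)}}$ on this range. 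For compactness: if $p_n\in\Cc_\star^{(R)}$ converges to $p\in F_{\Omega_\star}(x_\star)$, choose $q_n\in\Cc_\star$ with $H_{\Omega_\star}(p_n,q_n)\le 2^{d-1}R$ (Observation~\ref{obs:bounds}), pass to a limit $q\in\overline{\Cc_\star}$, and apply Proposition~\ref{prop:dist_est_and_faces} to get $q\in F_{\Omega_\star}(x_\star)$ with $H_{F_{\Omega_\star}(x_\star)}(p,q)\le 2^{d-1}R$. Since $x$ is extreme, $\overline{\Cc_\star}\cap F_{\Omega_\star}(x_\star)=\{x_\star\}$, forcing $q=x_\star$ and bounding $p$ in a compact subinterval. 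This is what your projection/monotonicity idea was groping toward, but routed through boundary faces rather than undefined Hilbert distances.
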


We now specialize our assumptions.

\medskip

\noindent \textbf{Additional Assumption:} $\Lambda$ is relatively hyperbolic group with respect to a non-empty collection of virtually Abelian subgroups of rank at least two.

\medskip

Let $\Sc_{\max}$ denote the set of all maximal properly embedded simplices in $\Omega$ of dimension at least two. Then by Theorem~\ref{thm:main_cc} 
\begin{enumerate}
\item $\Sc_{\max}$ is closed and discrete in local Hausdorff topology induced by $H_\Omega$, 
\item $(\Omega, H_\Omega)$ is relatively hyperbolic with respect to $\Sc_{\max}$, and
\item if $S \in \Sc_{\max}$, then each vertex of $S$ is an extreme point of $\Omega$. 
\end{enumerate}
For each $S \in \Sc_{\max}$ define 
\begin{align*}
S_\star := \{ x_\star : x \in S\} \subset \Cc_\star.
\end{align*}
Then $S_\star$ is a maximal properly embedded simplex in $\Cc_\star$. 

\begin{observation} 
 For any $R> 0$, the metric space $(\Cc_\star^{(R)}, H_{\Omega_\star})$ is relatively hyperbolic with respect to $\Sc_\star = \{ S_\star: S \in \Sc_{\max}\}$.
 \end{observation}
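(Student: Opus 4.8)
The plan is to show directly that the naive convex co-compact triple $\left(\Omega_\star, \Cc_\star^{(R)}, \Lambda_\star\right)$ has coarsely isolated simplices with the family $\Sc_\star$, and then to invoke the machinery of Theorem~\ref{thm:main_ncc} (or rather the part of it already available: that coarsely isolated simplices imply relative hyperbolicity). Concretely, the key steps are (i) identify the maximal properly embedded simplices of $\Cc_\star^{(R)}$, (ii) check that $\Sc_\star$ is isolated, coarsely complete, and $\Lambda_\star$-invariant, and then (iii) upgrade to strong isolation and apply Theorem~\ref{thm:IS-implies-rel-hyp}. However, since the cleanest route is a direct comparison with the known relative hyperbolicity of $(\Omega, H_\Omega)$ with respect to $\Sc_{\max}$, I would actually prefer to produce the projection system of Sisto's characterization (Theorem~\ref{thm:Sisto_equiv}) on $\Cc_\star^{(R)}$ by pulling back the one on $\Omega$.

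First I would analyze the metric geometry of $\Cc_\star^{(R)}$. The diagonal map $x \mapsto x_\star$ is a $\Lambda_\star$-equivariant embedding of $(\Omega, H_\Omega)$ into $(\Omega_\star, H_{\Omega_\star})$, and from the definition of the Hilbert metric one checks it is an isometric (or at least bi-Lipschitz) embedding onto $\Cc_\star$; by Observation~\ref{obs:bounds} the set $\Cc_\star^{(R)}$ is coarsely the $R$-neighborhood of $\Cc_\star$, and the nearest-point projection $\Cc_\star^{(R)} \to \Cc_\star$ is a coarse $\Lambda_\star$-equivariant quasi-isometry. So $\left(\Cc_\star^{(R)}, H_{\Omega_\star}\right)$ is quasi-isometric, $\Lambda_\star$-equivariantly, to $(\Omega, H_\Omega)$, and under this quasi-isometry $S_\star$ corresponds (coarsely) to $S \in \Sc_{\max}$. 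Relative hyperbolicity of a metric space with respect to a family of subsets is a quasi-isometry invariant (when the quasi-isometry matches up the families coarsely) — this is part of the basic theory recalled in Section~\ref{sec:background_on_rel_hyp} — so relative hyperbolicity of $(\Omega, H_\Omega)$ with respect to $\Sc_{\max}$ transfers to relative hyperbolicity of $\left(\Cc_\star^{(R)}, H_{\Omega_\star}\right)$ with respect to $\Sc_\star$.

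Two points need care. The first is verifying that $x \mapsto x_\star$ really is a quasi-isometric embedding of $(\Omega, H_\Omega)$ onto $\Cc_\star$: here one uses Observation~\ref{obs:ex_faces} to see that geodesics in $\Omega$ map into $\Cc_\star$ and a direct cross-ratio computation (the cross-ratio defining $H_{\Omega_\star}$ along the diagonal reduces to the cross-ratio for $H_\Omega$, since the supporting structure of $\Omega_\star$ restricted to $\Cc_\star$ is governed by $\partial\Omega \times \partial\Omega$). The second is matching up the families: one must know that every maximal properly embedded simplex of $\Cc_\star^{(R)}$ is within bounded Hausdorff distance of some $S_\star$, i.e. coarse completeness of $\Sc_\star$. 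This follows because a properly embedded simplex in $\Cc_\star^{(R)}$ projects to a properly embedded simplex in $\Cc_\star \cong \Omega$ (flats/simplices are preserved up to bounded error under the quasi-isometry), which by Benoist is contained in a bounded neighborhood of some $S \in \Sc_{\max}$, hence its preimage lies in a bounded neighborhood of $S_\star$ inside $\Cc_\star^{(R)}$.

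The main obstacle I expect is the second point — controlling maximal properly embedded simplices of the thickened set $\Cc_\star^{(R)}$, since $\Cc_\star^{(R)}$ has genuinely two-dimensional pieces near each face $F_{\Omega_\star}(x_\star)$ (Observation~\ref{obs:ex_faces_of_ext_pts}) and so a priori a simplex in $\Cc_\star^{(R)}$ could "use up" the extra directions coming from the thickening rather than coming from a simplex of $\Omega$. To handle this I would argue that any properly embedded simplex $S' \subset \Cc_\star^{(R)}$ has all its vertices in $\partial\Cc_\star^{(R)}$, and that the face of $\Omega_\star$ spanned by such a vertex is of the form in Observation~\ref{obs:ex_faces}, forcing the "vertical" interval directions to contribute only bounded diameter (as the $F_{\Omega_\star}(x_\star) \cap \overline{\Cc_\star^{(R)}}$ are compact intervals); thus $S'$ is, up to bounded Hausdorff error, the diagonal image of a properly embedded simplex in $\Omega$, and Benoist's structure result finishes the argument. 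Once coarse completeness, isolation (inherited from discreteness of $\Sc_{\max}$ via the quasi-isometry), and $\Lambda_\star$-invariance are in hand, Theorems~\ref{thm:S-core-exists} and~\ref{thm:IS-implies-rel-hyp} give that $\left(\Cc_\star^{(R)}, H_{\Omega_\star}\right)$ is relatively hyperbolic with respect to $\Sc_\star$, completing the proof.
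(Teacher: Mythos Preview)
Your second paragraph contains exactly the paper's argument: the diagonal map $F(x)=x_\star$ is a quasi-isometry $(\Omega,H_\Omega)\to(\Cc_\star^{(R)},H_{\Omega_\star})$, it carries $\Sc_{\max}$ to $\Sc_\star$ by definition, and Theorem~\ref{thm:rh_quasi_isometry_inv} (Dru{\c t}u--Sapir) transfers relative hyperbolicity. That is the entire proof sketch in the paper.

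Everything else you wrote is unnecessary, and one part is a genuine misconception. Your ``second point that needs care'' --- that every maximal properly embedded simplex in $\Cc_\star^{(R)}$ must lie near some $S_\star$ --- is \emph{not} required. Theorem~\ref{thm:rh_quasi_isometry_inv} says $(X,\Sc)$ is relatively hyperbolic if and only if $(Y,f(\Sc))$ is; it makes no demand that $f(\Sc)$ be coarsely complete among all flats or simplices of $Y$. Since $F(S)=S_\star$ literally, the families match on the nose and there is nothing further to check. Your worry about simplices in $\Cc_\star^{(R)}$ ``using up the extra directions'' is real for other questions (indeed the paper exploits it in the very next observation to show $\Sc_{\max}$ in $\Cc_\star^{(R)}$ is \emph{not} discrete), but it is irrelevant to the present statement.

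Likewise, the detour through coarsely isolated simplices and Theorems~\ref{thm:S-core-exists}/\ref{thm:IS-implies-rel-hyp} is both unnecessary and logically awkward: this observation sits in the examples section and is meant to be a direct consequence of the background Dru{\c t}u--Sapir theorem, not of the paper's main machinery.
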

\begin{proof}[Proof sketch] Consider the map $F : \Omega \rightarrow \Omega_\star$ defined by $F(x) = x_\star$. Then $F$ induces a quasi-isometry $(\Omega, H_\Omega) \rightarrow (\Cc_\star^{(R)}, H_{\Omega_\star})$ and so the observation follows from the general theory of relatively hyperbolic spaces (see Theorem~\ref{thm:rh_quasi_isometry_inv} below). 
\end{proof}

Next we show that the family of all maximal properly embedded simplices in $\Cc^{(R)}_{\star}$ of dimension at least two is not discrete in the local Hausdorff topology. By construction, if $S \subset \Omega$ is a properly embedded simplex of dimension at least two (i.e. $S \in \Sc_{\max}$), then $S_{\star}$ is a maximal properly embedded simplex in $\Omega_\star$ of the same dimension. Let $v_1,\dots, v_p$ be the vertices of $S$. Then by Lemma~\ref{lem:slide_along_faces} below 
 \begin{align*}
 \Pb(\Spanset\{w_1,\dots,w_p\}) \cap \Omega_{\star}
 \end{align*}
 is a maximal properly embedded simplex in $\Omega_\star$ for any choice of 
 \begin{align*}
 w_j \in \overline{ \Cc_{\star}^{(R)}} \cap F_{\Omega_\star}(v_{j,\star}) \quad j=1,\dots, p.
 \end{align*}
 This construction combined with Observation~\ref{obs:ex_faces_of_ext_pts} yields the following.

\begin{observation} 
\label{obs:ex_parallel_simplices}
For any $R > 0$, the family of maximal properly embedded simplices in $\Cc_{\star}^{(R)}$ of dimension at least two is not discrete. In particular, $\Cc_{\star}^{(R)}$ contains parallel properly embedded simplices (see Definition \ref{defn:parallel}).
\end{observation}

We also can construct the following.

\begin{observation} For any $R > 0$, there exists a family $\Sc^{(R)}_{\prl}$  of maximal properly embedded simplices in $\Cc_{\star}^{(R)}$ of dimension at least two where
\begin{enumerate}
\item $\Sc^{(R)}_{\prl}$ is isolated, coarsely complete, and $\Lambda_\star$-invariant
\item $(\Cc_{\star}^{(R)}, H_{\Omega_\star})$ is not a relatively hyperbolic space with respect to $\Sc^{(R)}_{\prl}$.
\end{enumerate}
\end{observation}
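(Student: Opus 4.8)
The plan is to take the ``good'' family $\Sc_\star := \{S_\star : S \in \Sc_{\max}\}$ — which by the observation above witnesses relative hyperbolicity of $(\Cc_\star^{(R)}, H_{\Omega_\star})$ — and spoil it by adjoining a single parallel translate of itself. Fix $R > 0$ and let $g \in \Aut(\Omega_\star)$ be the projective transformation $[v \oplus w] \mapsto [e^R v \oplus w]$, represented by $e^R I_d \oplus I_d$. I would set
\[
\Sc^{(R)}_\prl := \Sc_\star \cup g(\Sc_\star) = \{S_\star : S \in \Sc_{\max}\} \cup \{g(S_\star) : S \in \Sc_{\max}\}.
\]
The first task is to check every element is a maximal properly embedded simplex in $\Cc_\star^{(R)}$. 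For $S = \{v_1, v_2, v_3\} \in \Sc_{\max}$ the set $g(S_\star) = \{[e^R \overline{x} \oplus \overline{x}] : x \in S\}$ is the simplex of $\Omega_\star$ spanned by the three points $[e^R\overline{v}_j \oplus \overline{v}_j] \in F_{\Omega_\star}(v_{j,\star})$, hence maximal in $\Omega_\star$ by Lemma~\ref{lem:slide_along_faces}. A cross-ratio computation along the chord $\{[t\overline{x} \oplus \overline{x}] : t > 0\}$ of $\Omega_\star$ gives $H_{\Omega_\star}(x_\star, g(x_\star)) = R/2$ for every $x \in \Omega$; thus $g(S_\star) \subseteq g(\Cc_\star) \subseteq \{y \in \Omega_\star : H_{\Omega_\star}(y, \Cc_\star) \le R/2\} \subseteq \Cc_\star^{(R)}$ by Observation~\ref{obs:bounds}, so $g(S_\star)$ is properly embedded in $\Cc_\star^{(R)}$, and it is maximal there because every properly embedded simplex in $\Cc_\star^{(R)}$ is two-dimensional (this follows from Lemma~\ref{lem:slide_along_faces} and the irreducibility of $\Omega$: a properly embedded simplex of dimension at least three would give a quasi-isometrically embedded $\Rb^3$ in a space quasi-isometric to $(\Omega, H_\Omega)$, which is impossible by Benoist's structure theorem~\cite{B2006}).

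Next I would verify condition (1). Since $g$ is a scalar on each $\Rb^d$-summand it commutes with every element $[h \oplus h]$ of $\Lambda_\star$; as $\Sc_\star$ is $\Lambda_\star$-invariant, so is $g(\Sc_\star)$, hence so is $\Sc^{(R)}_\prl$. Coarse completeness is inherited from $\Sc_\star$, which is coarsely complete because every properly embedded simplex of $\Cc_\star^{(R)}$ is spanned by points on the three faces $F_{\Omega_\star}(v_{j,\star})$ of some $S \in \Sc_{\max}$ and therefore lies within uniformly bounded Hausdorff distance of $S_\star$. The remaining part of (1) — isolation — is where I expect the real work. Suppose distinct elements $T_n \in \Sc^{(R)}_\prl$ converge in the local Hausdorff topology; after a subsequence all $T_n$ lie in a single one of the two families, and after applying the $H_{\Omega_\star}$-isometry $g^{-1}$ of $\Omega_\star$ if necessary I may assume $T_n = S_{n,\star}$ with distinct $S_n \in \Sc_{\max}$. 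Now $(\Cc_\star, H_{\Omega_\star})$ is isometric to $(\Omega, H_\Omega)$ and $\Cc_\star$ is closed in $\Cc_\star^{(R)}$, so the limit is contained in $\Cc_\star$ and corresponds to a properly embedded simplex of $\Omega$ that is the local Hausdorff limit of the $S_n$, contradicting that $\Sc_{\max}$ is closed and discrete in $(\Omega, H_\Omega)$. Hence $\Sc^{(R)}_\prl$ is closed and discrete. The delicate point — the main obstacle — is precisely that $g(S_\star)$ sits at bounded (indeed small) Hausdorff distance from $S_\star$, so one must rule out the two families merging in the local Hausdorff topology; what saves the argument is that $\Cc_\star$ is closed in $\Cc_\star^{(R)}$ and $g(\Cc_\star) \cap \Cc_\star = \emptyset$ (the two diagonal subspaces meet only at $0$), so a local Hausdorff limit of diagonal simplices is again a diagonal simplex.

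Finally I would rule out relative hyperbolicity; this step is short. Fix any $S \in \Sc_{\max}$. Then $S_\star$ and $g(S_\star)$ are distinct elements of $\Sc^{(R)}_\prl$ — indeed $g(S_\star) \cap S_\star = \emptyset$, since no point $[e^R\overline{x} \oplus \overline{x}]$ lies on the diagonal $\Cc_\star$. From $H_{\Omega_\star}(p, g(p)) = R/2$ for $p \in S_\star$ we get $S_\star \subseteq \Nc_{\Omega_\star}(g(S_\star); R/2)$, and therefore
\[
S_\star \subseteq \Nc_{\Omega_\star}(S_\star; R/2) \cap \Nc_{\Omega_\star}(g(S_\star); R/2),
\]
which has infinite diameter because the properly embedded simplex $S_\star$ is unbounded. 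But in a relatively hyperbolic space distinct peripheral subsets have bounded coarse intersection — for every $r$ there is $D(r)$ with $\diam(\Nc(P_1; r) \cap \Nc(P_2; r)) \le D(r)$ whenever $P_1 \ne P_2$ are peripheral (see Section~\ref{sec:background_on_rel_hyp}). Taking $r = R/2$, $P_1 = S_\star$, $P_2 = g(S_\star)$ yields a contradiction, so $(\Cc_\star^{(R)}, H_{\Omega_\star})$ is not a relatively hyperbolic space with respect to $\Sc^{(R)}_\prl$.
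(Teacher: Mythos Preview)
Your argument is correct and takes a genuinely different route from the paper. The paper builds $\Sc^{(R)}_{\prl}$ from ``corner'' simplices: for each $S \in \Sc_{\max}$ with vertices $v_1, v_2, v_3$ and each $\sigma \in \{+,-\}^3$, it forms the simplex $S_\sigma$ whose vertices are the endpoints $v_j^{\sigma_j}$ of the intervals $\overline{\Cc_\star^{(R)}} \cap F_{\Omega_\star}(v_{j,\star})$, yielding eight parallel copies per $S$. You instead use a single central automorphism $g = [e^R I_d \oplus I_d] \in \Aut(\Omega_\star)$ to produce just two copies, $S_\star$ and $g(S_\star)$. This makes $\Lambda_\star$-invariance immediate (since $g$ commutes with every element of $\Lambda_\star$), and the non-hyperbolicity step is identical in spirit to the paper's: a pair of distinct parallel simplices at bounded Hausdorff distance violates Theorem~\ref{thm:rh_intersections_of_neighborhoods}. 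Your isolation argument, reducing to the known discreteness of $\Sc_{\max}$ in $\Omega$ via the isometry $\Omega \to \Cc_\star$ and the disjointness $\Cc_\star \cap g(\Cc_\star) = \emptyset$, is more explicit than the paper's ``by construction.''

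One small comment: your justification of coarse completeness is a bit informal. The cleanest route is to cite the immediately preceding observation --- that $(\Cc_\star^{(R)}, H_{\Omega_\star})$ is already relatively hyperbolic with respect to $\Sc_\star$ --- together with Theorem~\ref{thm:rh_embeddings_of_flats} and Observation~\ref{obs:QI_to_Rk}, which give uniform quasi-isometry constants for every two-dimensional properly embedded simplex and hence a uniform bound $M$ with $T \subset \Nc_{\Omega_\star}(S_\star; M)$ for some $S \in \Sc_{\max}$. The paper's corner construction sidesteps this appeal by placing its parallel simplices at the extreme ends of the face intervals, so any parallel simplex is trapped between them; the trade-off is that $\Lambda_\star$-invariance then rests on $\Lambda_\star$ permuting those endpoints, which the paper leaves implicit.
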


\begin{proof} For each extreme point $x \in \partial \Omega$ define $x^+, x^- \in F_{\Omega_\star}(x_\star)$ to be the points such that
 \begin{align*}
[x^+, x^-]=\overline{ \Cc_{\star}^{(R)}} \cap F_{\Omega_\star}(x_\star).
 \end{align*}
Notice that $x^+ \neq x^-$ by Observation~\ref{obs:ex_faces_of_ext_pts}.
 
Given $S \in \Sc_{\max}$, fix a labeling $v_1,\dots, v_p$ of the vertices of $S$. Then for $\sigma=(\sigma_1,\dots,\sigma_p) \in \{ +,- \}^p$ define 
 \begin{align*}
S_\sigma:=\Pb(\Spanset\{v_1^{\sigma_1},\dots,v_p^{\sigma_p}\}) \cap \Omega_\star.
\end{align*}
Then by Lemma~\ref{lem:slide_along_faces}  below, $S_\sigma$ is a properly embedded simplex in $\Omega_\star$. 

By construction, the set 
\begin{align*}
\Sc^{(R)}_{\prl} :=\left\{ S_\sigma: S \in \Sc_{\max}, \ \sigma \in \{ +,- \}^{\dim S+1}\right\}
\end{align*}
is isolated, coarsely complete, and $\Lambda_\star$-invariant. However, if $S \in \Sc_{\max}$ and $\sigma,\tau \in \{+,-\}^{\dim S+1}$ are distinct, then $S_\sigma \neq S_\tau$ and 
\begin{align*}
H^{\Haus}_\Omega(S_\sigma, S_\tau) < +\infty
\end{align*}
by Lemma~\ref{lem:slide_along_faces} below. So by Theorem~\ref{thm:rh_intersections_of_neighborhoods} below, $(\Cc_{\star}^{(R)}, H_{\Omega_\star})$ is not a relatively hyperbolic space with respect to $\Sc^{(R)}_{\prl}$.
\end{proof}

\subsection{Naive convex co-compact examples II } \label{sec:max_not_coarsely_max}

In this section we construct a naive convex co-compact triple $(\Omega, \Cc, \Lambda)$ where $\Cc$ contains a maximal properly embedded simplex which is contained in a bounded neighborhood of a properly embedded simplex with strictly larger dimension. 

Let $C := \left\{ (x_1,x_2,x_3) \in \Rb^3 : x_1^2 + x_2^2 < x_3^2\right\}$ and $\Bc := \{ [v] : v \in C\}$. Then $(\Bc,H_{\Bc})$ is the Klein-Beltrami model of real hyperbolic 2-space. In particular, if we fix $x,y \in \partial\Bc$ distinct, there exists $h \in \Aut(\Bc)$ which translates along the line segement $(x,y) \subset \Bc$. Let $\overline{x}, \overline{y} \in \partial C$ be lifts of $x,y$ respectively and let $\overline{h} \in \Aut(C)$ be a lift of $h$.  

Next define 
\begin{align*}
\Omega: = \left\{ [(v,w)] : v,w \in C\right\} \subset \Pb(\Rb^6) 
\end{align*}
and let 
\begin{align*}
x_1 :=\left [ \left(\overline{x}, 0\right)\right], \, x_2 := \left [ \left(0, \overline{x}\right)\right], \, y_1 := \left [ \left(\overline{y}, 0\right)\right], \, y_2 :=\left [ \left(0, \overline{y}\right)\right].
\end{align*}
Then $x_1, x_2, y_1, y_2$ are the vertices of a properly embedded simplex $S \subset \Omega$. Further, if 
\begin{align*}
\Lambda := \ip{ \left[ ~\overline{h} \oplus \id\right],\left[ \id \oplus \overline{h} ~\right], \left[ (2 \id) \oplus \left(\frac{1}{2} \id \right) \right] } \leq \Aut(\Omega),
\end{align*}
then $(\Omega, S, \Lambda)$ is a naive convex co-compact triple. 

Fix $R > 0$. Then $\Cc : = \{ p \in \Omega : H_\Omega(p,S) \leq R\}$ is a closed convex subset of $\Omega$, see for instance~\cite[Corollary 1.10]{CLT2015}. So $(\Omega, \Cc, \Lambda)$ is also a naive convex co-compact triple. Now 
\begin{align*}
F:= \{ [(v,0)] : v \in C\}
\end{align*}
is an open boundary face of $\Omega$ and $\overline{\Cc} \cap F$ has non-empty interior in $F$. So there exists 
\begin{align*}
z \in \overline{\Cc} \cap F \setminus (x_1, y_1).
\end{align*}
Then $z,x_2,y_2$ are the vertices of a properly embedded simplex $S^\prime$ in $\Cc$. Further, $S^\prime$ is maximal and $S^\prime$ is contained in a bounded neighborhood of $S$.

\section{Some notations and definitions}\label{sec:notations_and_definitions}

In this section we set some notation that we will use for the rest of the paper. 

\subsection{Basic notations in projective geometry} 

If $V \subset \Rb^d$ is a non-zero linear subspace, we will let $\Pb(V) \subset \Pb(\Rb^d)$ denote its projectivization. In most other cases, we will use $[o]$ to denote the projective equivalence class of an object $o$, for instance: 
\begin{enumerate}
\item if $v \in \Rb^{d} \setminus \{0\}$, then $[v]$ denotes the image of $v$ in $\Pb(\Rb^{d})$, 
\item if $\phi \in \GL_{d}(\Rb)$, then $[\phi]$ denotes the image of $\phi$ in $\PGL_{d}(\Rb)$, and 
\item if $T \in \End(\Rb^{d}) \setminus\{0\}$, then $[T]$ denotes the image of $T$ in $\Pb(\End(\Rb^{d}))$. 
\end{enumerate}
We also identify $\Pb(\Rb^d) = \Gr_1(\Rb^d)$, so for instance: if $x \in \Pb(\Rb^d)$ and $V \subset \Rb^d$ is a linear subspace, then $x \in \Pb(V)$ if and only if $x \subset V$.

Given a subset $X$ of $\Rb^d$ or $\Pb(\Rb^d)$ we will let $\Spanset X \subset \Rb^d$ denote the smallest linear subspace containing $X$. 

Next, for a subset $X \subset \Pb(\Rb^d)$ we define the \emph{automorphism group of $X$} to be 
\begin{align*}
\Aut(X) : = \{ g \in \PGL_d(\Rb): g X = X\}.
\end{align*}
Further, given a group $G \leq \PGL_d(\Rb)$ the \emph{stabilizer of $X$ in $G$} is
\begin{align*}
\Stab_G(X) := \{ g \in G : g X = X\} = G \cap \Aut(X). 
\end{align*}

\subsection{Convexity} 

\begin{definition} \ 
\begin{enumerate}
\item A subset $C \subset \Pb(\Rb^d)$ is \emph{convex} if there exists an affine chart $\mathbb{A}$ of $\Pb(\Rb^d)$ where $C \subset \mathbb{A}$ is a convex subset. 
\item A subset $C \subset \Pb(\Rb^d)$ is \emph{properly convex} if there exists an affine chart $\mathbb{A}$ of $\Pb(\Rb^d)$ where $C \subset \mathbb{A}$ is a bounded convex subset. 
\item An open subset of $\Pb(\Rb^d)$ that is properly convex is called a \emph{properly convex domain} in $\Pb(\Rb^d)$.
\end{enumerate}
\end{definition}

Notice that if $C \subset \Pb(\Rb^d)$ is convex, then $C$ is a convex subset of every affine chart that contains it. 

A \emph{line segment} in $\Pb(\Rb^{d})$ is a connected subset of a projective line. Given two points $x,y \in \Pb(\Rb^{d})$ there is no canonical line segment with endpoints $x$ and $y$, but we will use the following convention: if $C \subset \Pb(\Rb^d)$ is a properly convex set and $x,y \in \overline{C}$, then when the context is clear we will let $[x,y]$ denote the closed line segment joining $x$ to $y$ which is contained in $\overline{C}$. In this case, we will also let $(x,y)=[x,y]\setminus\{x,y\}$, $[x,y)=[x,y]\setminus\{y\}$, and $(x,y]=[x,y]\setminus\{x\}$.

Along similar lines, given a properly convex subset $C \subset \Pb(\Rb^d)$ and a subset $X \subset \overline{C}$ we will let 
\begin{align*}
{\rm ConvHull}_C(X)
\end{align*}
 denote the smallest convex subset of $\overline{C}$ which contains $X$. For instance, with our notation  $[x,y] = {\rm ConvHull}_{C}(\{x,y\})$ when $x,y \in \overline{C}$. 

We also make the following topological definitions.

\begin{definition}\label{defn:topology} Suppose $C \subset \Pb(\Rb^d)$ is a convex set. The \emph{relative interior of $C$}, denoted by $\relint(C)$, is  the interior of $C$ in $\Pb(\Spanset C)$. In the case that $C = \relint(C)$, we will say that $C$ is  \emph{open in its span}. The \emph{boundary of $C$} is $\partial C : = \overline{C} \setminus \relint(C)$, the \emph{ideal boundary of $C$} is
\begin{align*}
\partiali C := \partial C \setminus C,
\end{align*}
and the \emph{non-ideal boundary of $C$} is
\begin{align*}
\partialni C := \partial C \cap C
\end{align*}
Finally, we define $\dim C$ to be the dimension of $\relint(C)$ (notice that $\relint(C)$ is homeomorphic to $\Rb^{\dim C}$). 
\end{definition}

Recall that a subset $A \subset B \subset \Pb(\Rb^d)$ is properly embedded if the inclusion map $A \hookrightarrow B$ is proper. With the notation in Definition~\ref{defn:topology} we have the following characterization of properly embedded subsets.

\begin{observation} Suppose $C \subset \Pb(\Rb^d)$ is a convex set. A convex subset $S \subset C$ is properly embedded if and only if $\partiali S \subset \partiali C$. \end{observation}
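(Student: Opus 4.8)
The plan is to prove both directions of the biconditional directly from the definitions of "properly embedded'' and "ideal boundary'' introduced in Definition~\ref{defn:topology}, unwinding what properness means for the inclusion $S \hookrightarrow C$ of a convex subset. Throughout I would work in a fixed affine chart $\mathbb{A}$ in which $C$ is a bounded convex subset, so that $\overline{C}$ is compact and ordinary Euclidean compactness arguments apply. Recall $S$ is properly embedded in $C$ iff for every compact $K \subset C$ the preimage $K \cap S$ is compact in $S$; since $S$ carries the subspace topology, $K \cap S$ is automatically closed in $S$, so the content is that $K \cap S$ is compact, equivalently that $K \cap S$ has compact closure contained in $S$.

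First I would prove the contrapositive of "$S$ properly embedded $\Rightarrow \partiali S \subset \partiali C$'': suppose there is a point $x \in \partiali S$ with $x \notin \partiali C$. Since $x \in \partial S \subset \overline{C}$ and $x \notin \partiali C = \partial C \setminus C$, either $x \in C$ or $x \notin \partial C$; in the latter case $x \in \inte(C)$ as well, so in both cases $x \in C$. Now $x \in \partiali S = \partial S \setminus S$ means $x \notin S$ but $x \in \overline{S}$, so choose a sequence $s_n \in S$ with $s_n \to x$. Then $K := \{s_n : n \ge 1\} \cup \{x\}$ is a compact subset of $C$ (it is closed and bounded in $\mathbb{A}$, and contained in $C$ since $x \in C$), but $K \cap S \supseteq \{s_n\}$ is not closed in $S$ — its closure in $S$ would need to contain the limit $x \notin S$ — hence $K \cap S$ is not compact, contradicting proper embeddedness. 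This gives $\partiali S \subset \partiali C$.

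For the converse, assume $\partiali S \subset \partiali C$ and let $K \subset C$ be compact; I must show $K \cap S$ is compact. Let $x$ be any point in the closure of $K \cap S$ taken in $\Pb(\Rb^d)$; then $x \in \overline{K} = K \subset C$ and $x \in \overline{S}$. I claim $x \in S$. If not, then $x \in \overline{S} \setminus S = \partial S \setminus S = \partiali S \subset \partiali C = \partial C \setminus C$, so $x \notin C$, contradicting $x \in C$. Hence every limit point of $K \cap S$ lies in $S$, so $K \cap S$ is closed in $\Pb(\Rb^d)$; being also bounded (contained in the bounded set $K$ in the chart $\mathbb{A}$), it is compact. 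Therefore the inclusion $S \hookrightarrow C$ is proper.

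The argument is essentially a direct unwinding of definitions and I do not anticipate a genuine obstacle; the one point requiring a little care is the observation that a point of $\partial S$ lying in $C$ cannot lie in $\partiali C$, which forces the "bad'' limit point to land in $\partiali S$ and hence (by hypothesis) in $\partiali C$ — contradicting its membership in $C$. The use of a bounded affine chart for $C$ is what lets me replace "compact'' with "closed and bounded'' and run the sequential compactness argument cleanly; since $S \subset C$ and any compact $K \subset C$ is bounded in that chart, $K \cap S$ is automatically bounded, so properness reduces exactly to the closedness statement proved above.
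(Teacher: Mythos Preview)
Your argument is correct and is exactly the kind of routine unwinding of definitions the paper has in mind; in fact the paper states this observation without proof, so there is nothing to compare against. One small inaccuracy: you assume you can choose an affine chart in which $C$ is \emph{bounded}, but the hypothesis is only that $C$ is convex, not properly convex. Fortunately your argument never actually uses boundedness of $C$: in the first direction $K=\{s_n\}\cup\{x\}$ is compact simply as a convergent sequence together with its limit, and in the second direction $K\cap S$ is compact because it is a closed subset of the compact set $K$. If you drop the boundedness remark and phrase compactness this way, the proof goes through verbatim for arbitrary convex $C$.
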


We also recall the definition of supporting hyperplanes. 

\begin{definition}\label{defn:supp_hyp_C1_point} Suppose $\Omega \subset \Pb(\Rb^d)$ is a properly convex domain. 
\begin{enumerate}
\item A projective hyperplane $H \subset \Pb(\Rb^d)$ is a \emph{supporting hyperplane of $\Omega$} if $H \cap \Omega = \emptyset$ and $H \cap \partial \Omega \neq \emptyset$. 
\item A boundary point $x \in \partial \Omega$ is a \emph{$C^1$-smooth point} of $\partial \Omega$ if there exists a unique supporting hyperplane containing $x$.  
\end{enumerate}
\end{definition}

\subsection{The Hilbert metric}\label{subsec:Hilbert_metric}

For distinct points $x,y \in \Pb(\Rb^{d})$ let $\overline{xy}$ be the projective line containing them. Suppose $C \subset \Pb(\Rb^{d})$ is a properly convex set which is open in its span. If $x,y \in C$ are distinct, let $a,b$ be the two points in $\overline{xy} \cap \partial C$ ordered $a, x, y, b$ along $\overline{xy}$. Then define \emph{the Hilbert distance between $x$ and $y$ to be}
\begin{align*}
H_{C}(x,y) = \frac{1}{2}\log [a, x,y, b]
\end{align*}
 where 
 \begin{align*}
 [a,x,y,b] = \frac{\abs{x-b}\abs{y-a}}{\abs{x-a}\abs{y-b}}
 \end{align*}
 is the cross ratio. We also define $H_{C}(x,x)=0$ for all $x \in C$. Using the invariance of the cross ratio under projective maps and the convexity of $C$ it is possible to establish the following (see for instance~\cite[Section 28]{BK1953}). 
 
 \begin{proposition}\label{prop:hilbert_basic}
Suppose $C \subset \Pb(\Rb^{d})$ is a properly convex set which is open in its span. Then $H_{C}$ is a complete $\Aut(C)$-invariant metric on $C$ which generates the standard topology on $C$. Moreover, if $p,q \in C$, then there exists a geodesic joining $p$ and $q$ whose image is the line segment $[p,q]$.
\end{proposition}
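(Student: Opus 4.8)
The plan is to verify each assertion in turn, working in a fixed affine chart $\mathbb{A}$ in which $C$ is a bounded open convex subset of the span. First I would address well-definedness: for distinct $x,y \in C$, the projective line $\overline{xy}$ meets $\partial C$ in exactly two points (since $C$ is bounded and convex in $\mathbb{A}$, and open in its span), so the cross ratio $[a,x,y,b]$ is a well-defined real number strictly greater than $1$; hence $H_C(x,y) > 0$, and we set $H_C(x,x)=0$. Symmetry is immediate from the identity $[a,x,y,b] = [b,y,x,a]$. The $\Aut(C)$-invariance follows because projective transformations preserve the cross ratio and permute $\{a,b\} = \overline{xy} \cap \partial C$ appropriately; this is where I would cite the projective invariance of the cross ratio.

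The substantive point is the triangle inequality. Here I would follow the classical argument (as in~\cite[Section 28]{BK1953}): given $x,y,z \in C$, reduce first to the case where $z$ lies in the convex hull of $x$ and $y$ together with the relevant boundary structure, i.e. work in the (at most) $2$-dimensional convex region spanned by $x,y,z$ inside $C$. In dimension two one uses a projective change of coordinates to normalize and then a direct computation, or more cleanly the monotonicity of the cross ratio: projecting from a point can only decrease Hilbert length along a segment. The key lemma is that if $[x,y]$ is subdivided by a point $w$, then $H_C(x,y) = H_C(x,w) + H_C(w,y)$ (additivity along segments, which is a one-dimensional cross-ratio computation using $\log$), and that for any $z$ not on $\overline{xy}$ one has $H_C(x,z) + H_C(z,y) \geq H_C(x,y)$ by comparing the line through $z$ with the boundary points on $\overline{xy}$ via a perspectivity. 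Combining additivity along segments with this comparison gives both the triangle inequality and the statement that the segment $[p,q]$ is a geodesic: its Hilbert length equals $H_C(p,q)$ by additivity, and no path can be shorter by the triangle inequality applied to a polygonal approximation.

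For the topological claim, I would show that the Hilbert balls are comparable to Euclidean balls in $\mathbb{A}$: near any point $p \in C$, the distance $\dist(p,\partial C)$ is bounded below, and a short computation with the cross-ratio formula shows $H_C(x,y)$ is comparable to the Euclidean distance $\abs{x-y}$ up to constants depending on $\dist(p, \partial C)$ and $\diam(C)$; this yields that $H_C$ generates the standard topology. Finally, completeness follows since $H_C(p,x) \to \infty$ as $x \to \partial C$ (because one of the factors $\abs{x-a}$ or $\abs{x-b}$ tends to $0$), so any Cauchy sequence stays in a compact subset of $C$ and hence converges. The main obstacle is organizing the triangle inequality argument cleanly in the general (non-full-dimensional, possibly degenerate) setting; restricting to the $2$-plane spanned by the three points and invoking the perspectivity/monotonicity argument there is the cleanest route, and this is exactly the content of the reference~\cite[Section 28]{BK1953} that we invoke.
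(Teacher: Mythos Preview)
Your proposal is correct and outlines precisely the classical argument; in fact the paper does not give its own proof of this proposition at all, but simply cites~\cite[Section 28]{BK1953} (the same reference you invoke) and moves on. Your sketch is a faithful expansion of what that citation contains, so there is nothing to compare.
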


As a corollary to Proposition~\ref{prop:hilbert_basic}, we observe the following. 

\begin{corollary} 
Suppose $\Omega \subset \Pb(\Rb^d)$ is a properly convex domain. Then $\Aut(\Omega)$ acts properly on $\Omega$. 
\end{corollary}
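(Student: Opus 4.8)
The plan is to verify properness via the standard criterion for a closed subgroup $G$ of the Lie group $\PGL_d(\Rb)$: the action of $G$ on a locally compact space $X$ is proper as soon as, for every compact $K \subset X$, the set $\{g \in G : gK \cap K \neq \emptyset\}$ is compact. So first I would record that $\Aut(\Omega)$ is a closed subgroup of $\PGL_d(\Rb)$: if $g_n \in \Aut(\Omega)$ and $g_n \to g$ in $\PGL_d(\Rb)$, then passing $g_n\overline{\Omega} = \overline{\Omega}$ to the Hausdorff limit gives $g\overline{\Omega} = \overline{\Omega}$, and since $g$ is a homeomorphism of $\Pb(\Rb^d)$ and $\Omega = \interior \overline{\Omega}$ (for $\Omega$ open convex) we get $g\Omega = \Omega$. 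Now fix a compact $K \subset \Omega$ and set $A_K := \{g \in \Aut(\Omega) : gK \cap K \neq \emptyset\}$. The set $\{g \in \PGL_d(\Rb) : gK \cap K \neq \emptyset\}$ is closed, so $A_K$ is closed in $\PGL_d(\Rb)$; it therefore suffices to show $A_K$ is relatively compact in $\PGL_d(\Rb)$.

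Next I would use Proposition~\ref{prop:hilbert_basic}. It gives that $H_\Omega$ is an $\Aut(\Omega)$-invariant metric generating the standard topology, and, together with local compactness of $\Omega$ and the existence of geodesics, it makes $(\Omega, H_\Omega)$ a proper metric space — closed metric balls are compact (equivalently, $H_\Omega(x_0,\cdot)\to\infty$ at $\partial\Omega$, which is immediate from the cross-ratio formula). Fixing $x_0 \in K$, the triangle inequality and isometry invariance give $H_\Omega(g x_0, x_0) \le 2\diam_{H_\Omega}(K) =: R$ for all $g \in A_K$, so $A_K \cdot x_0$ lies in the compact ball $\overline{B}_{H_\Omega}(x_0,R) \subset \Omega$.

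The heart of the argument is a contradiction. Suppose $A_K$ is not relatively compact, so there is a sequence $g_n \in A_K$ with $g_n \to \infty$ in $\PGL_d(\Rb)$. By the standard contraction lemma (a consequence of the Cartan decomposition): after passing to a subsequence, lifting each $g_n$ to a norm-one endomorphism and taking the limit $T \in \End(\Rb^d)$, the operator $T$ has rank $r < d$ and $g_n \to [T]$ uniformly on compact subsets of $\Pb(\Rb^d) \setminus [\ker T]$, where $[T]$ takes values in the proper projective subspace $[\mathrm{Im}\,T]$. Since $[\ker T]$ has empty interior, choose $x_0' \in \Omega \setminus [\ker T]$ and a small closed Hilbert ball $\overline{B}' := \overline{B}_{H_\Omega}(x_0',\delta) \subset \Omega \setminus [\ker T]$; an estimate as in the previous paragraph shows $g_n\cdot\overline{B}'$ stays in a fixed compact subset of $\Omega$, so $g_n|_{\overline{B}'} \to [T]|_{\overline{B}'}$ uniformly (and hence uniformly in $H_\Omega$, the two topologies agreeing on that compact set). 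As each $g_n|_{\overline{B}'}$ is an isometric embedding of $(\overline{B}',H_\Omega)$ into $(\Omega,H_\Omega)$, the uniform limit $\phi := [T]|_{\overline{B}'}$ is an isometric embedding as well, in particular injective. But $\ker T \neq 0$, so through $x_0'$ there is a projective line meeting $[\ker T]$; this line is collapsed by $[T]$ to the single point $[T]x_0'$, and it meets the open ball $\overline{B}'$ in a non-degenerate arc on which $\phi$ is constant — contradicting injectivity of $\phi$. Hence no such sequence exists, $A_K$ is relatively compact, hence compact, and $\Aut(\Omega)$ acts properly on $\Omega$.

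The main obstacle is exactly the step linking "$g_n \to \infty$ in $\PGL_d(\Rb)$" to genuine metric degeneration: the contraction lemma plus the observation that an isometric embedding of a Hilbert ball cannot be collapsed onto the lower-dimensional subspace $[\mathrm{Im}\,T]$. (One can equally phrase the last point via topological dimension, using $\dim[\mathrm{Im}\,T] = r-1 \le d-2$ versus $\dim \overline{B}' = d-1$; the collapsed-arc formulation just makes the non-injectivity explicit.) Everything else is routine: the reduction to relative compactness of $A_K$, closedness of $\Aut(\Omega)$ in $\PGL_d(\Rb)$, and properness of $(\Omega,H_\Omega)$, which are immediate from Proposition~\ref{prop:hilbert_basic}.
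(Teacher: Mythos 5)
Your proof is correct, and it fills in exactly what the paper leaves implicit: the corollary is stated with no proof beyond the citation of Proposition~\ref{prop:hilbert_basic}, and your argument uses that proposition for the same facts (invariance and properness of $H_\Omega$) together with the standard contraction-lemma/degenerate-limit step showing that a sequence in $\Aut(\Omega)$ with a bounded orbit cannot diverge in $\PGL_d(\Rb)$. This is the standard argument and I see no gaps.
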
 

We will frequently use the following notation. 

\begin{definition} Suppose $\Omega \subset \Pb(\Rb^d)$ is a properly convex domain. 
\begin{enumerate}
\item For $x \in \Omega$ and $r > 0$ define 
\begin{align*}
B_\Omega(x;r) = \{ y \in \Omega : H_\Omega(x,y) < r\}.
\end{align*}
\item For a subset $A\subset \Omega$ and $r > 0$ define 
\begin{align*}
\Nc_\Omega(A;r) = \cup_{x \in A} B_\Omega(x;r).
\end{align*}
\item For a subset $A\subset \Omega$ define
\begin{align*}
\diam_\Omega(A) = \sup\left\{ H_\Omega(x,y) : x,y \in A\right\}.
\end{align*}
\end{enumerate}
\end{definition}

\subsection{The center of mass of a compact subset}

It is possible to define a ``center of mass'' for a compact set in a properly convex domain. Let $\Kc_d$ denote the set of all pairs $(\Omega, K)$ where $\Omega \subset \Pb(\Rb^d)$ is a properly convex domain and $K \subset \Omega$ is a compact subset. 

\begin{proposition}\label{prop:center_of_mass} There exists a function
\begin{align*}
(\Omega, K) \in \Kc_d \longrightarrow {\rm CoM}_\Omega(K) \in \Pb(\Rb^d)
\end{align*}
such that:
\begin{enumerate}
\item ${\rm CoM}_\Omega(K)  \in {\rm ConvHull}_\Omega(K)$, 
\item ${\rm CoM}_\Omega(K) = {\rm CoM}_\Omega({\rm ConvHull}_\Omega(K))$, and
\item if $g \in \PGL_d(\Rb)$, then $g{\rm CoM}_\Omega(K)={\rm CoM}_{g\Omega}(gK)$,
\end{enumerate}
for every $(\Omega, K) \in \Kc_d$. 
\end{proposition}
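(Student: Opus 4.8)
The plan is to realize ${\rm CoM}_\Omega(K)$ as a linear barycenter, computed not in $\Pb(\Rb^d)$ (where ``barycenter'' has no projective meaning) but inside the convex cone lying over $\Omega$, using a section of that cone which is equivariant up to a positive scalar. First I would reduce to the case where $K$ spans: setting $L := {\rm ConvHull}_\Omega(K)$ and passing to $\Spanset L = \Spanset K$ — that is, replacing $\Omega$ by the properly convex domain $\Omega \cap [\Spanset K]$, which is open in its span, and declaring ${\rm CoM}_\Omega(K)$ to be the point produced in $[\Spanset K] \subseteq \Pb(\Rb^d)$ — I may assume $\Spanset K = \Rb^d$, so that $L$ has non-empty interior. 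Since $g \cdot \Spanset K = \Spanset(gK)$ and $g\big(\Omega \cap [\Spanset K]\big) = (g\Omega) \cap [\Spanset(gK)]$, this reduction is compatible with the $\PGL_d(\Rb)$-action and the three asserted properties are insensitive to it, so it suffices to treat the spanning case.

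Next I would fix two pieces of canonical data. \emph{(a)} A $\PGL_d(\Rb)$-equivariant assignment $\Omega \mapsto \mu_\Omega$ of Radon measures — for instance the $(d-1)$-dimensional Hausdorff measure of the Hilbert metric $H_\Omega$, or the Riemannian volume of the Vinberg (affine) metric. Because $g \colon (\Omega, H_\Omega) \to (g\Omega, H_{g\Omega})$ is an isometry one has $g_*\mu_\Omega = \mu_{g\Omega}$; moreover $\mu_\Omega$ is finite on compact subsets of $\Omega$ and positive on non-empty open subsets, since $H_\Omega$ is locally bi-Lipschitz to a Euclidean metric. \emph{(b)} For a properly convex cone $C$ that is a connected component of $\pi^{-1}(\Omega) \subset \Rb^d$, where $\pi \colon \Rb^d \setminus \{0\} \to \Pb(\Rb^d)$, the Vinberg characteristic function $\phi_C \colon C \to (0,\infty)$: it is continuous, homogeneous of degree $-d$, and satisfies $\phi_{gC}(gv) = \abs{\det g}^{-1}\phi_C(v)$ for $g \in \GL_d(\Rb)$. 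The hypersurface $\{\phi_C = 1\}$ meets each ray of $C$ exactly once, giving a continuous section $s_C$ of $\pi|_C \colon C \to \Omega$, and the homogeneity forces $s_{gC}(gx) = \abs{\det g}^{-1/d}\, g\, s_C(x)$. With these in hand, let $\nu$ be the pushforward under $s_C$ of the restriction of $\mu_\Omega$ to $L$; it is a non-zero finite measure with compact support $s_C(L) \subset C$. I would then define
\begin{equation*}
{\rm CoM}_\Omega(K) := \left[\, \int_C w \; d\nu(w) \,\right].
\end{equation*}
Replacing $C$ by $-C$ negates this vector and hence leaves the projective point unchanged, so no genuine choice is involved.

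Verifying the three properties should then be routine. \emph{(1):} $s_C(L)$ is a compact subset of the open convex cone $C$, so its convex hull is a compact subset of $C$; the integral lies in this convex hull, hence is a non-zero vector of $C$, and its image in $\Pb(\Rb^d)$ lies in the projective convex hull of $\pi(s_C(L)) = L$, which equals $L = {\rm ConvHull}_\Omega(K)$ by convexity of $L$. \emph{(2):} the construction sees $K$ only through $\Omega$ and $L = {\rm ConvHull}_\Omega(K)$, and since ${\rm ConvHull}_\Omega(L) = L$ and $\Spanset L = \Spanset K$, feeding in ${\rm ConvHull}_\Omega(K)$ yields the same point. \emph{(3):} combining $g_*\mu_\Omega = \mu_{g\Omega}$, the identity $\mu_{g\Omega}|_{gL} = g_*(\mu_\Omega|_L)$, and the scaling-equivariance of $s_C$, one finds that the cone integral for the pair $(g\Omega, gK)$ is $\abs{\det g}^{-1/d}\, g$ applied to the cone integral for $(\Omega, K)$; projectivizing kills the scalar and gives $g \cdot {\rm CoM}_\Omega(K)$.

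The main obstacle is conceptual rather than computational: a barycenter of a measure is only \emph{affinely} natural, so one cannot legitimately work in an affine chart containing $\overline{\Omega}$ (there are many, and they give different answers). The device that removes this defect is exactly the lift to the cone via the characteristic-function section, whose only failure of $\GL_d(\Rb)$-equivariance is an overall positive scalar, invisible after projectivizing. The remaining points — that the Hilbert Hausdorff measure is genuinely finite on compacts and positive on opens, that $s_C$ is Borel (indeed continuous), and the bookkeeping in the spanning reduction — are all straightforward.
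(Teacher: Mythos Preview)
The paper does not prove this proposition; after stating it, the authors write only that ``there are several constructions of such a center of mass'' and cite \cite[Lemma 4.2]{L2014} and \cite[Proposition 4.5]{IZ2019}, noting the latter adapts an argument of Frankel from several complex variables. So there is no in-paper argument to compare yours against.

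Your construction is correct. Lifting to the cone via the level set $\{\phi_C = 1\}$ of the Vinberg characteristic function and then taking the linear barycenter of the pushed-forward invariant measure is exactly the mechanism needed to turn an affinely natural averaging operation into a projectively natural one, and your verifications of (1)--(3) are sound. One point worth spelling out completely: after the spanning reduction, property (3) for $g \in \PGL_d(\Rb)$ becomes equivariance of the full-dimensional construction under the restricted linear isomorphism $g|_{\Spanset K} \colon \Spanset K \to \Spanset(gK)$ between possibly distinct subspaces; your argument for (3) covers this case unchanged, since the only non-canonical datum on each side --- the choice of Lebesgue measure on the dual, entering through $\phi_C$ --- merely rescales $s_C$ by a positive scalar and is killed by the final projectivization. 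With that remark your proof is complete.
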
 

There are several constructions of such a center of mass, see for instance~\cite[Lemma 4.2]{L2014} or~\cite[Proposition 4.5]{IZ2019}. The approach in~\cite{IZ2019} is based on an argument of Frankel~\cite[Section 12]{Fra1989} in several complex variables. 

\subsection{The faces of a convex domain} In this section we define the faces of a convex set and then describe some of their properties. 

\begin{definition}\label{defn:open_faces}
Given a properly convex domain $\Omega \subset \Pb(\Rb^d)$ and $x \in \overline{\Omega}$ let $F_\Omega(x)$ denote the \emph{open face} of $x$, that is 
\begin{align*}
F_\Omega(x) = \{ x\} \cup \left\{ y \in \overline{\Omega} : \text{ $\exists$ an open line segment in $\overline{\Omega}$ containing $x$ and $y$} \right\}.
\end{align*}
\end{definition}

\begin{remark}
Notice that $F_\Omega(x) = \Omega$ when $x \in \Omega$.  Further, note that a properly convex set $C$ that is open in its span is a properly convex domain in $\Pb(\Spanset C)$. Thus, the above defintion (and the results of this subsection) apply to any properly convex set open in its span.
\end{remark}

We also introduce the following notation. 

\begin{definition}\label{defn:faces_of_subsets}
Given a properly convex domain $\Omega \subset \Pb(\Rb^d)$ and a subset $X \subset \overline{\Omega}$ define
\begin{align*}
F_\Omega(X) := \cup_{x \in X} F_\Omega(x).
\end{align*}
\end{definition}

The next observation is a simple consequence of convexity. 

\begin{observation}\label{obs:faces} Suppose $\Omega \subset \Pb(\Rb^d)$ is a properly convex domain. 
\begin{enumerate}
\item $F_\Omega(x)$ is open in its span,
\item $y \in F_\Omega(x)$ if and only if $x \in F_\Omega(y)$ if and only if $F_\Omega(x) = F_\Omega(y)$,
\item if $y \in \partial F_\Omega(x)$, then $F_\Omega(y) \subset \partial F_\Omega(x)$,
\item if $x, y \in \overline{\Omega}$, $z \in (x,y)$, $p \in F_{\Omega}(x)$, and $q \in F_{\Omega}(y)$, then 
\begin{align*}
(p,q) \subset F_\Omega(z).
\end{align*}
In particular, $(p,q) \subset \Omega$ if and only if $(x,y) \subset \Omega$.
\end{enumerate}
\end{observation}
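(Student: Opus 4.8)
The final statement is Observation~\ref{obs:faces}, which collects four elementary consequences of convexity about open faces. Let me sketch proofs of each part.

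\medskip

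The plan is to prove each of the four parts of Observation~\ref{obs:faces} directly from the definition of the open face $F_\Omega(x)$ and basic properties of convex sets in an affine chart. Throughout, fix an affine chart $\mathbb{A}$ in which $\Omega$ is a bounded convex subset, so that we may use ordinary Euclidean convex geometry, and recall that when $x \in \Omega$ we have $F_\Omega(x) = \Omega$, so all statements are trivial in that case; hence we may assume the relevant points lie in $\partial\Omega$.

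\medskip

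\textbf{Part (1).} First I would show $F_\Omega(x)$ is convex: given $y, z \in F_\Omega(x)$, pick open segments $I_y \ni x, y$ and $I_z \ni x, z$ in $\overline{\Omega}$; a standard convexity argument (taking the two-dimensional plane spanned by $x, y, z$ and using that $\overline\Omega$ is convex) produces an open segment in $\overline\Omega$ through $x$ and any point of $[y,z]$. Once $F_\Omega(x)$ is convex, to see it is open in its span one checks that every point $y \in F_\Omega(x)$ other than $x$ has $x$ on an open segment through it inside $\overline\Omega$, hence inside the relative interior of $F_\Omega(x)$; combined with convexity this forces $F_\Omega(x) = \relint(F_\Omega(x))$. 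The one point requiring a little care is that $x$ itself lies in the relative interior — here one uses that $x$ is an interior point of each of these open segments.

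\medskip

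\textbf{Part (2).} The equivalence ``$y \in F_\Omega(x) \iff x \in F_\Omega(y)$'' is immediate from the symmetry of the defining condition (an open segment containing $x$ and $y$ is the same as one containing $y$ and $x$), together with the observation that $y = x$ is a trivial case. For ``$y \in F_\Omega(x) \iff F_\Omega(x) = F_\Omega(y)$'': the direction $\Leftarrow$ is trivial since $y \in F_\Omega(y)$; for $\Rightarrow$, assuming $y \in F_\Omega(x)$, one shows $F_\Omega(y) \subseteq F_\Omega(x)$ by a concatenation-of-segments argument (if $z$ lies on an open segment with $y$, and $y$ lies on an open segment with $x$, then $z$ and $x$ lie on a common open segment in $\overline\Omega$, using convexity in the plane they span), and then symmetry gives the reverse inclusion.

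\medskip

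\textbf{Part (3).} Suppose $y \in \partial F_\Omega(x)$. Since $F_\Omega(x)$ is open in its span (part (1)), $y \notin F_\Omega(x)$, so by part (2), $x \notin F_\Omega(y)$ and $F_\Omega(y) \neq F_\Omega(x)$. One must show $F_\Omega(y) \subseteq \overline{F_\Omega(x)} \setminus F_\Omega(x) = \partial F_\Omega(x)$. That $F_\Omega(y) \subseteq \overline{F_\Omega(x)}$: take $z \in F_\Omega(y)$; approximating $y$ by points of $F_\Omega(x)$ along the span and using that an open segment through $y$ and $z$ persists in $\overline\Omega$, one gets $z \in \overline{F_\Omega(x)}$. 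That $F_\Omega(y) \cap F_\Omega(x) = \emptyset$: if some $z$ were in both, then by part (2) $F_\Omega(z) = F_\Omega(x)$ and $F_\Omega(z) = F_\Omega(y)$, forcing $F_\Omega(x) = F_\Omega(y)$, a contradiction.

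\medskip

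\textbf{Part (4).} This is the main substantive point, but it is still elementary. Given $x, y \in \overline\Omega$, $z \in (x,y)$, $p \in F_\Omega(x)$, $q \in F_\Omega(y)$, I would work in the (at most $3$-dimensional) plane spanned by $x, y, p, q$ inside the affine chart. If $p = x$ and $q = y$ there is nothing to prove since $z \in (x,y) \subseteq F_\Omega(z)$ already places the open segment through $z$; in general pick open segments $J_x \ni x, p$ and $J_y \ni y, q$ in $\overline\Omega$. Choosing points $x' \in J_x$ slightly beyond $x$ (on the far side of $x$ from $p$) and $y' \in J_y$ slightly beyond $y$, the segment $[x', y']$ lies in $\overline\Omega$ by convexity and passes near $[x,y]$; sliding appropriately, the open segment from (a point just past $p$ on $J_x$) to (a point just past $q$ on $J_y$) lies in $\overline\Omega$ by convexity, contains a point on the open segment $(p,q)$, and — crucially — the corresponding point is collinear with $z$'s image, showing $(p,q) \subseteq F_\Omega(z)$. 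The cleanest version: parametrize $z = (1-t)x + ty$ with $t \in (0,1)$ and for $r \in (p,q)$ written as $r = (1-s)p + sq$, exhibit an explicit open segment in $\overline\Omega$ through $r$ by convex-combining the segments $J_x$ and $J_y$ with weights $1-t, t$; openness of the resulting segment at $r$ follows because $x$ (resp. $y$) is interior to $J_x$ (resp. $J_y$). The final clause ``$(p,q) \subseteq \Omega \iff (x,y) \subseteq \Omega$'' then follows: $(x,y) \subseteq \Omega$ means $F_\Omega(z) = \Omega$ for the relevant $z$, hence $(p,q) \subseteq F_\Omega(z) = \Omega$; conversely if some point of $(p,q)$ lies in $\Omega$ then its face is $\Omega$, and by part (2) applied along the construction, $z \in \Omega$, so $(x,y) \subseteq \Omega$ by openness of $\Omega$ and convexity.

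\medskip

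The only genuine obstacle is part (4): one must be careful that the segments one builds are \emph{open} segments contained in $\overline\Omega$ (not just half-open or closed), which is exactly where the hypotheses $p \in F_\Omega(x)$, $q \in F_\Omega(y)$ (rather than $p = x$, $q = y$) get used — they provide the extra room on both ends. Parts (1)--(3) are routine once part (1)'s convexity and relative-openness claims are in hand.
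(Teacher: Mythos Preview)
The paper gives no proof of this observation; it is simply announced as ``a simple consequence of convexity.'' Your sketch supplies the missing argument and is essentially correct, so there is nothing to compare against.

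One small imprecision in your treatment of part (4) is worth flagging. In your ``cleanest version'' you parametrize $z = (1-t)x + ty$ and $r = (1-s)p + sq$, and then propose to convex-combine the segments $J_x$ and $J_y$ with weights $1-t, t$. But that produces a segment through the point $w := (1-t)p + tq$, not through $r = (1-s)p + sq$ for general $s$; and you also need the segment to contain $z$, which you do not mention. The clean fix is to argue in two steps: first show that the single point $w = (1-t)p + tq$ lies in $F_\Omega(z)$ (your convex-combination of $J_x$ and $J_y$ with weights $1-t,t$ does exactly this, yielding an open segment in $\overline\Omega$ through both $w$ and $z$); then observe that since $w \in (p,q)$ and $(p,q)$ is itself an open segment in $\overline\Omega$, every point of $(p,q)$ lies in $F_\Omega(w)$, which by part (2) equals $F_\Omega(z)$. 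With that adjustment the argument is complete.
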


The next two results relate the faces to the Hilbert metric. 

\begin{proposition}\label{prop:dist_est_and_faces} Suppose $\Omega \subset \Pb(\Rb^d)$ is a properly convex domain, $x_n$ is a sequence in $\Omega$, and $\lim_{n \to \infty} x_n =x \in \overline{\Omega}$. If $y_n$ is another sequence in $\Omega$, $\lim_{n \to \infty} y_n =  y \in \overline{\Omega}$, and 
\begin{align*}
\liminf_{n \rightarrow \infty} H_\Omega(x_n,y_n) < + \infty,
\end{align*}
then $y \in F_\Omega(x)$ and 
\begin{align*}
H_{F_\Omega(x)}(x,y) \leq \liminf_{n \rightarrow \infty} H_\Omega(x_n,y_n).
\end{align*}
\end{proposition}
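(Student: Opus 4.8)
The plan is to work with explicit lifts to $\Rb^d$ and to use the cross-ratio formula for the Hilbert metric together with a compactness/normalization argument. Fix an affine chart $\mathbb{A}$ in which $\overline{\Omega}$ is compact and convex, so that all the points $x_n, y_n, x, y$ lie in $\mathbb{A}$ and we may use the Euclidean norm $\abs{\cdot}$ of $\mathbb{A}$ to measure cross-ratios. Passing to a subsequence, I may assume $\lim_{n\to\infty} H_\Omega(x_n,y_n) = L := \liminf_{n\to\infty} H_\Omega(x_n,y_n) < +\infty$. I want to show first that $y \in F_\Omega(x)$, and then establish the distance bound.

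\textbf{Step 1: $y \in F_\Omega(x)$.} If $x = y$ this is immediate (since $x \in F_\Omega(x)$ always), so assume $x \neq y$. For each $n$ with $x_n \neq y_n$, let $a_n, b_n$ be the endpoints of $\overline{x_n y_n} \cap \partial\Omega$, ordered $a_n, x_n, y_n, b_n$. After passing to a further subsequence, $a_n \to a \in \partial\Omega$ and $b_n \to b \in \partial\Omega$, and then $a, x, y, b$ lie on a common projective line $\overline{xy}$ in the order $a, x, y, b$ (allowing coincidences). The key point is that $L < \infty$ forces $x \neq a$ and $y \neq b$: indeed $H_\Omega(x_n,y_n) = \tfrac12\log\frac{\abs{x_n-b_n}\abs{y_n-a_n}}{\abs{x_n-a_n}\abs{y_n-b_n}}$, and since $\abs{x_n - b_n} \to \abs{x-b} > 0$ (as $x \neq y$ and $x,y,b$ are ordered so $\abs{x-b} \geq \abs{x-y} > 0$) and similarly $\abs{y_n-a_n} \to \abs{y-a} > 0$, boundedness of the log forces $\liminf_n \abs{x_n - a_n} > 0$ and $\liminf_n \abs{y_n - b_n} > 0$, i.e. $a \neq x$ and $b \neq y$. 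Hence the open segment $(a,b) \subset \overline{\Omega}$ contains both $x$ and $y$ in its interior; since $a,b \in \partial\Omega$ and $(a,b)$ meets $\Omega$ (it contains, e.g., limits from the interior — actually one must be slightly careful here), the segment $(a,b)$ is an open line segment in $\overline{\Omega}$ containing both $x$ and $y$, so by Definition~\ref{defn:open_faces}, $y \in F_\Omega(x)$.

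\textbf{Step 2: the distance bound.} Now that $y \in F_\Omega(x)$, both $x$ and $y$ lie in the properly convex set $F_\Omega(x)$, which is open in its span $[\Spanset F_\Omega(x)]$, so $H_{F_\Omega(x)}(x,y)$ is defined. If $x = y$ the bound is trivial, so assume $x \neq y$. Let $a', b'$ be the endpoints of $\overline{xy} \cap \partial F_\Omega(x)$ ordered $a', x, y, b'$; then $H_{F_\Omega(x)}(x,y) = \tfrac12\log\frac{\abs{x-b'}\abs{y-a'}}{\abs{x-a'}\abs{y-b'}}$. With $a, b$ as in Step 1, the monotonicity of the Hilbert cross-ratio under shrinking the domain gives $[a',x,y,b'] \leq [a'',x,y,b'']$ whenever $a'', b''$ are further out along $\overline{xy}$; since $\partial F_\Omega(x) \subset \partial\Omega$ on the line $\overline{xy}$ — more precisely, because $(a,b)$ is a segment in $\overline{\Omega}$ through $x$, its maximal subsegment through $x$ lying in a single open face is exactly $\overline{xy} \cap \overline{F_\Omega(x)}$, so $a', b'$ are between $\{a,x\}$ and $\{y,b\}$ respectively, i.e. the order along $\overline{xy}$ is $a, a', x, y, b', b$. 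Consequently
\begin{align*}
H_{F_\Omega(x)}(x,y) = \tfrac12\log[a',x,y,b'] \leq \tfrac12\log[a,x,y,b] = \tfrac12\log\lim_{n\to\infty}[a_n,x_n,y_n,b_n] = \lim_{n\to\infty} H_\Omega(x_n,y_n) = L,
\end{align*}
where the limit of cross-ratios is justified because $a_n \to a$, $b_n \to b$, $x_n \to x$, $y_n \to y$ and all four denominators in the cross-ratios are bounded away from $0$ (by Step 1 and $x \neq y$). This is the desired inequality.

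\textbf{Main obstacle.} The delicate point is Step 1, specifically verifying that the limiting segment $(a,b)$ is genuinely an \emph{open line segment in $\overline{\Omega}$} in the sense required by the definition of $F_\Omega(x)$ — one must rule out the degenerate possibility that $(a,b)$ somehow escapes $\overline{\Omega}$ in the limit, and confirm $a \neq b$ (which follows since $x \neq y$ both lie strictly between them). The cleanest way is to note that $[a_n, b_n] \subset \overline{\Omega}$ for all $n$ and $\overline{\Omega}$ is closed, so $[a,b] \subset \overline{\Omega}$, and then that $a \neq x, y \neq b$ (established via the boundedness of $H_\Omega(x_n, y_n)$) makes $x, y$ interior points of this segment. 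A secondary subtlety is the monotonicity step relating the cross-ratio in $F_\Omega(x)$ to that in $\Omega$; this is standard but should be stated carefully in terms of the ordering $a, a', x, y, b', b$ along $\overline{xy}$, using that $\partial F_\Omega(x) \cap \overline{xy}$ lies between $\partial\Omega \cap \overline{xy}$ and the points $x, y$.
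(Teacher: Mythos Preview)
Your approach is exactly what the paper intends by ``straightforward consequence of the definition of the Hilbert metric,'' and Step~1 is fine. However, Step~2 contains two sign errors that happen to cancel, so the written reasoning is not actually valid even though the final inequality is correct.

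First, your monotonicity statement is backwards: if $a'', b''$ are \emph{further out} than $a', b'$ (so $(a', b') \subset (a'', b'')$), then the Hilbert cross-ratio satisfies $[a'', x, y, b''] \le [a', x, y, b']$, not the reverse --- enlarging the domain \emph{decreases} the Hilbert distance. Second, your claimed ordering $a, a', x, y, b', b$ is wrong. Since $(a,b)$ is an open segment in $\overline{\Omega}$ containing $x$, every point of $(a,b)$ lies in $F_\Omega(x)$; hence $(a,b) \subset F_\Omega(x) \cap \overline{xy} = (a', b')$, so the correct order is $a', a, x, y, b, b'$ (with possible equalities). That the inequality $(a',b') \subset (a,b)$ can genuinely fail is easy to see: take $\Omega = (0,1)^3$, $x=(1/4,1/2,1)$, $y=(3/4,1/2,1)$, $x_n = (1/4,1/2,1-1/n)$, $y_n = (3/4,1/2,1-100/n)$; then $a_n$ exits through the top face and $a = (1/4 - 1/198, 1/2, 1)$ lies strictly between $a' = (0,1/2,1)$ and $x$.

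The corrected one-line argument for Step~2 is: since $(a,b) \subset (a',b')$, monotonicity gives $[a', x, y, b'] \le [a, x, y, b] = \lim_n [a_n, x_n, y_n, b_n]$, and taking $\tfrac12\log$ yields $H_{F_\Omega(x)}(x,y) \le L$.
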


\begin{proof} Straightforward consequence of the definition of the Hilbert metric. \end{proof}

\begin{proposition}
\label{prop:Crampons_dist_est_2} Suppose $\Omega \subset \Pb(\Rb^d)$ is a properly convex domain. Assume $p_1,p_2,q_1,q_2 \in \overline{\Omega}$, $F_\Omega(p_1) = F_\Omega(p_2)$, and $F_\Omega(q_1) = F_\Omega(q_2)$. If $(p_1,q_1) \cap \Omega \neq \emptyset$, then
\begin{align*}
H_{\Omega}^{\Haus}\Big((p_1,q_1), (p_2, q_2) \Big) \leq \max \left\{ H_{F_\Omega(p_1)}(p_1,p_2), H_{F_\Omega(q_1)}(q_1,q_2) \right\}.
\end{align*}
\end{proposition}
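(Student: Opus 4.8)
\textbf{Proof plan for Proposition~\ref{prop:Crampons_dist_est_2}.}

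The plan is to reduce the statement to a one-sided estimate for a single point, and then apply that estimate at both ends of the segment $(p_1,q_1)$. Concretely, the main claim to establish is: for every $z_1 \in (p_1,q_1) \cap \Omega$ there exists $z_2 \in (p_2,q_2)$ with
\begin{align*}
H_\Omega(z_1,z_2) \leq H_{F_\Omega(p_1)}(p_1,p_2) + H_{F_\Omega(q_1)}(q_1,q_2),
\end{align*}
together with the symmetric statement swapping the roles of the subscripts $1$ and $2$. Since the Hausdorff distance is the supremum over one point of the distance to the other set (in both directions), these two one-sided bounds give the proposition. Note first that $(p_1,q_1)\cap\Omega\neq\emptyset$ forces $(p_1,q_1)\subset\Omega$ and, by Observation~\ref{obs:faces}(4) applied with $p=p_2$, $q=q_2$, also $(p_2,q_2)\subset\Omega$, so both open segments actually lie in $\Omega$ and the Hilbert distances in question make sense.

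To build the point $z_2$, I would use the following geometric picture. Fix $z_1\in(p_1,q_1)$ and write $z_1 = [\,s\,\overline{p_1} + t\,\overline{q_1}\,]$ for suitable lifts and scalars $s,t>0$. Choose a geodesic in $F_\Omega(p_1)$ from $p_1$ to $p_2$ of length $a := H_{F_\Omega(p_1)}(p_1,p_2)$, realized (by Proposition~\ref{prop:hilbert_basic}) as the segment $[p_1,p_2]$; pick its midpoint-type interior point, or really just interpolate: for each $\tau\in[0,1]$ let $p(\tau)$ be the point on $[p_1,p_2]\subset \overline{F_\Omega(p_1)}\subset\overline{\Omega}$ with $H_{F_\Omega(p_1)}(p_1,p(\tau)) = \tau a$. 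Similarly define $q(\tau)$ on $[q_1,q_2]$ with $H_{F_\Omega(q_1)}(q_1,q(\tau)) = \tau b$ where $b := H_{F_\Omega(q_1)}(q_1,q_2)$. Now consider the path $\tau \mapsto w(\tau)$ obtained by sliding $z_1$ along: take $w(\tau)$ to be the point on the segment $[p(\tau),q(\tau)]$ obtained from $z_1$ by the same "barycentric" recipe, i.e. the image of $z_1$ under the projective map sending $p_1\mapsto p(\tau)$, $q_1\mapsto q(\tau)$ and fixing the relevant subspace complement. By Observation~\ref{obs:faces}(4), each $[p(\tau),q(\tau)]$ meets $\Omega$ (since $(p_1,q_1)\subset\Omega$ and $p(\tau)\in F_\Omega(p_1)$, $q(\tau)\in F_\Omega(q_1)$), so $w(\tau)\in\Omega$. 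Setting $z_2 := w(1) \in (p_2,q_2)$, it remains to bound $H_\Omega(z_1,z_2)$ by $a+b$.

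For the length bound I would compare $H_\Omega$ along the path $w(\tau)$ with the Hilbert metrics on the two faces. The clean way: first slide only the $p$-endpoint (keep $q_1$ fixed), moving $z_1$ to an intermediate point $z_1'$ along a path lying in the cone over $F_\Omega(p_1)$ and $\{q_1\}$; the infinitesimal Hilbert length of this path is controlled by the Hilbert length of the corresponding path $p(\tau)$ in $F_\Omega(p_1)$, because cross-ratios only shrink when one passes to a convex subset and the endpoint $q_1$ plays no role in the $p$-direction variation — this gives $H_\Omega(z_1,z_1')\leq a$. Then slide the $q$-endpoint from $q_1$ to $q_2$ similarly to reach $z_2$, getting $H_\Omega(z_1',z_2)\leq b$; the triangle inequality finishes it. The symmetric one-sided bound (starting from a point of $(p_2,q_2)$) is identical with $1\leftrightarrow 2$. \textbf{The main obstacle} I anticipate is making the "cross-ratios shrink under passing to a convex subdomain" comparison fully rigorous at the level of the Hilbert length of a curve — one must check that the relevant supporting data (the points where lines through $w(\tau)$ exit $\overline{\Omega}$) vary so that the cross-ratio defining $H_\Omega$ is dominated by the cross-ratio computed inside $\overline{F_\Omega(p_1)}$ for the $p$-slide and inside $\overline{F_\Omega(q_1)}$ for the $q$-slide; this is where a careful choice of affine chart and an explicit coordinate computation (of exactly the sort the paper defers as "straightforward") is needed.
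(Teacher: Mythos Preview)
Your approach is correct and is essentially the one the paper has in mind (it defers to \cite[Proposition~5.3]{IZ2019}, which carries out exactly the ``slide one endpoint at a time, then use the triangle inequality for Hausdorff distance'' reduction). One simplification: the continuous path $w(\tau)$ and the infinitesimal length comparison are unnecessary. For the first slide (fixing $q_1$), given $z_1\in(p_1,q_1)$ let $z_1'\in(p_2,q_1)$ be the point collinear with $q_1$ and $z_1$ in the sense that $z_1,z_1',q_1$ lie in the plane $[\Spanset\{p_1,p_2,q_1\}]$ with $z_1'$ on the ray from $q_1$ through the corresponding point; equivalently, project from $q_1$. The line through $z_1,z_1'$ meets $\partial\Omega$ in points $\alpha,\beta$, and the cone over $[a,b]\supset[p_1,p_2]$ (where $a,b\in\partial F_\Omega(p_1)$) from $q_1$ lies in $\overline{\Omega}$, so the cone's intersection with that line gives $\alpha',\beta'\in[\alpha,\beta]$ with $[\alpha',z_1,z_1',\beta']=[a,p_1,p_2,b]$ by projective invariance. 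Monotonicity of the cross ratio under shrinking the outer interval gives $H_\Omega(z_1,z_1')\le H_{F_\Omega(p_1)}(p_1,p_2)$ directly, with no limiting argument. The second slide is identical, and your ``main obstacle'' disappears.
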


\begin{remark} Notice that Observation~\ref{obs:faces} implies that $(p_1,q_1), (p_2,q_2) \subset \Omega$. \end{remark}

\begin{proof} This is a straightforward consequence of the fact that a $R$-neighborhood of a closed convex set in the Hilbert metric is convex (see for instance~\cite[Corollary 1.10]{CLT2015}). For details see~\cite[Proposition 5.3]{IZ2019}. \end{proof}

The final result of this subsection relates the faces to the behavior of automorphisms. 

\begin{proposition}\cite[Proposition 5.6]{IZ2019}\label{prop:dynamics_of_automorphisms}
Suppose $\Omega \subset \Pb(\Rb^d)$ is a properly convex domain, $p_0 \in \Omega$, and $g_n \in \Aut(\Omega)$ is a sequence such that 
\begin{enumerate}
\item $\lim_{n \to \infty} g_n (p_0) = x \in \partial \Omega$, 
\item $\lim_{n \to \infty} g_n^{-1} (p_0) = y \in \partial \Omega$, and
\item $g_n$ converges in $\Pb(\End(\Rb^d))$ to $T \in \Pb(\End(\Rb^d))$. 
\end{enumerate}
Then ${\rm image}(T) \subset \Spanset \{ F_\Omega(x)\}$, $\Pb(\ker T) \cap \Omega = \emptyset$, and $y \in \Pb(\ker T)$. 
\end{proposition}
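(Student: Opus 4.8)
## Proof proposal for Proposition~\ref{prop:dynamics_of_automorphisms}

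The plan is to exploit the fact that $g_n(p_0) \to x$ and $g_n^{-1}(p_0) \to y$ with $g_n \to T$ projectively, using elementary linear algebra together with the convexity of $\Omega$ and properness of the $\Aut(\Omega)$-action (Proposition~\ref{prop:hilbert_basic} and its corollary). First I would fix lifts: choose $\wt{g}_n \in \GL_d(\Rb)$ lifting $g_n$, normalized (say $\norm{\wt{g}_n}_{\mathrm{op}} = 1$) so that after passing to a subsequence $\wt{g}_n \to \wt{T}$ with $\norm{\wt T}_{\mathrm{op}} = 1$, hence $\wt T \neq 0$ and $[\wt T] = T$. I would similarly normalize $\wt{g}_n^{-1}$, with subsequential limit $\wt{S}$; note $\wt S \neq 0$ as well. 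The key relation $\wt g_n \wt g_n^{-1} = c_n \cdot \mathrm{Id}$ for scalars $c_n$ will, after controlling the scalars, give $\wt T \wt S$ and $\wt S \wt T$ are scalar multiples of the identity or zero, which pins down how $\ker \wt T$, $\mathrm{image}\,\wt T$, $\ker \wt S$, $\mathrm{image}\,\wt S$ interlock.

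Next I would identify the limits geometrically. For any $q \in \Omega$ with lift $\wt q$, we have $g_n(q) = [\wt g_n \wt q]$; if $\wt T \wt q \neq 0$ then $g_n(q) \to [\wt T \wt q] \in \mathrm{image}(\wt T)$. Taking $q = p_0$ gives $x = [\wt T \wt p_0] \in [\mathrm{image}(\wt T)]$, so in particular $\mathrm{image}(\wt T) \neq 0$. To get the stronger claim $\mathrm{image}(T) \subset \Spanset\{F_\Omega(x)\}$, I would argue that $g_n(z) \to $ a point of $F_\Omega(x)$ for \emph{every} $z \in \Omega$ with $\wt T \wt z \neq 0$: indeed, since $H_\Omega$ is $\Aut(\Omega)$-invariant, $H_\Omega(g_n(p_0), g_n(z)) = H_\Omega(p_0, z)$ stays bounded, so by Proposition~\ref{prop:dist_est_and_faces} the limit $[\wt T \wt z] = \lim g_n(z)$ lies in $F_\Omega(x)$. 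The set of such $z$ — those with $\wt z \notin \ker \wt T$ — spans all of $\Rb^d$ (it is the complement of a proper subspace intersected with the open cone over $\Omega$), so its image under $\wt T$ spans $\mathrm{image}(\wt T)$; hence $\mathrm{image}(\wt T) \subset \Spanset\{F_\Omega(x)\}$.

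For the statement $[\ker T] \cap \Omega = \emptyset$: suppose $z \in \Omega$ with $\wt z \in \ker \wt T$. Pick any $z' \in \Omega$ with $\wt T \wt z' \neq 0$ and consider the segment $[z, z'] \subset \Omega$; parametrize lifts $\wt z_t = (1-t)\wt z + t \wt z'$, so $\wt T \wt z_t = t \wt T \wt z' \neq 0$ for $t \in (0,1]$. Then $g_n(z_t) \to [\wt T \wt z']$ independent of $t \in (0,1]$, which forces (via Proposition~\ref{prop:dist_est_and_faces} applied along the segment, or directly by examining the $\wt g_n$-images of the segment and using that $\Omega$ is properly convex) a contradiction with $g_n(z)$ having a subsequential limit $[\wt S' \wt z]$ for some nonzero limit $\wt S'$ of a different normalization — more cleanly, I would instead use that if $z \in \ker \wt T \cap \Omega$ then the segment images degenerate and the bounded-distance argument breaks, contradicting properness. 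Finally, for $y \in [\ker T]$: apply the first two parts to the sequence $g_n^{-1}$, which satisfies the same hypotheses with $x$ and $y$ swapped and limit operator $[\wt S]$; this gives $y \in [\mathrm{image}(\wt S)]$. The relation $\wt T \wt S = (\lim c_n)\mathrm{Id}$ — and the observation that $\lim c_n = 0$, since otherwise $g_n$ would be bounded in $\PGL_d(\Rb)$ contradicting $g_n(p_0) \to \partial\Omega$ and properness — yields $\mathrm{image}(\wt S) \subset \ker \wt T$, hence $y \in [\ker \wt T] = [\ker T]$.

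The main obstacle I anticipate is making the scalar bookkeeping rigorous: establishing that the normalizing scalars behave so that $\wt T$ and $\wt S$ are genuinely nonzero and that $\wt T \wt S$ is a scalar multiple of the identity which must be \emph{zero}. This is where properness of the action (the Corollary to Proposition~\ref{prop:hilbert_basic}) is essential — it is what rules out $g_n$ staying in a compact subset of $\PGL_d(\Rb)$ and thus forces the product $\wt T \wt S$ to be singular. Everything else is a combination of this linear-algebra skeleton with the face-detection criterion of Proposition~\ref{prop:dist_est_and_faces}, which is exactly designed to convert "bounded Hilbert distance between converging sequences" into "the limits share a face."
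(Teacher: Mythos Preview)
The paper does not give its own proof of this proposition --- it is quoted from \cite{IZ2019} --- so there is nothing to compare against directly. Your outline is the standard approach and the arguments for $\mathrm{image}(T) \subset \Spanset\{F_\Omega(x)\}$ and for $y \in [\ker T]$ (via $\wt T\wt S = 0$ and $y = S(p_0) \in [\mathrm{image}\,\wt S]$) are correct once the kernel statement is in hand.

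The genuine gap is the step $[\ker T] \cap \Omega = \emptyset$, which you yourself flag as sketchy. The ``segment images degenerate / bounded-distance breaks / properness'' heuristic does not by itself produce a contradiction: if $z \in [\ker T]\cap\Omega$ one indeed shows $g_n(z_t)\to T(z')$ for all $t\in(0,1]$ and even $g_n(z)\to T(z')$, but two sequences in $\Omega$ at fixed Hilbert distance may perfectly well converge to the same boundary point, so nothing breaks. What you are missing is a sign/cone argument using proper convexity. Lift to the open cone $C$ over $\Omega$ and pass to a subsequence so that $\wt g_n(C)=C$. Since $z\in\Omega$ is interior, the line through $z$ and $z'$ extends past $z$ inside $\Omega$ to some $z''$, and with lifts $\wt z,\wt z',\wt z''\in C$ one has $\wt z'' = \alpha\wt z - \beta\wt z'$ for some $\alpha,\beta>0$. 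Then $\wt g_n\wt z' \to \wt T\wt z'\in\overline C$ while $\wt g_n\wt z'' \to \alpha\cdot 0 - \beta\,\wt T\wt z' \in \overline C$, so both $\wt T\wt z'$ and $-\wt T\wt z'$ lie in $\overline C$. Since $\overline C\cap(-\overline C)=\{0\}$ (this is exactly proper convexity), $\wt T\wt z'=0$, contradicting $z'\notin[\ker T]$. Equivalently, one can track a hyperplane $[H_0]$ disjoint from $\overline\Omega$: the limit of $g_n^{-1}[H_0]$ is a hyperplane containing $\ker\wt T$ and still disjoint from $\Omega$. Either route closes the gap; the rest of your plan then goes through.
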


\subsection{Parallel properly embedded simplices} 

\begin{definition}\label{defn:parallel} Suppose $\Omega \subset \Pb(\Rb^d)$ is a properly convex domain. Two properly embedded simplices $S_1, S_2 \subset \Omega$ are called \emph{parallel} if $\dim S_1 = \dim S_2\geq 1$ and there is a labeling  $v_1,\dots, v_p$ of the vertices of $S_1$ and a labeling $w_1,\dots, w_p$ of the vertices of $S_2$ such that $F_\Omega(v_k) = F_\Omega(w_k)$ for all $1 \leq k \leq p$. 
\end{definition} 

The following lemma allows us to ``wiggle'' the vertices of a properly embedded simplex and obtain a new parallel properly embedded simplex. 

\begin{lemma}\label{lem:slide_along_faces} Suppose that $\Omega \subset \Pb(\Rb^d)$ is a properly convex domain and $S \subset \Omega$ is a properly embedded simplex with vertices $v_1,\dots, v_p$. If $w_j \in F_\Omega(v_j)$ for $1 \leq j \leq p$, then 
\begin{align*}
S^\prime : = \Omega \cap \Pb(\Spanset\{ w_1,\dots, w_p\})
\end{align*}
is a properly embedded simplex with vertices $w_1,\dots,w_p$. Moreover, 
\begin{align*}
H_\Omega^{\Haus}\left(S, S^\prime\right) \leq \max_{1 \leq j \leq p} H_{F_\Omega(v_j)}(v_j, w_j). 
\end{align*}
\end{lemma}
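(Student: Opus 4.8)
The plan is to show first that $S'$ is a properly embedded simplex and then to estimate the Hausdorff distance using Proposition~\ref{prop:Crampons_dist_est_2}. For the structural claim, let $V_j = \Spanset\{v_j\}$ and $W_j = \Spanset\{w_j\}$ be the lines spanned by the old and new vertices. Since $S$ is a simplex with vertices $v_1,\dots,v_p$, we have $\Spanset\{v_1,\dots,v_p\} = \Rb^p$ after a suitable choice of coordinates, and $S$ is the projectivization of the open positive cone in that coordinate system. The key point is that since $w_j \in F_\Omega(v_j)$, Observation~\ref{obs:faces}(2) gives $F_\Omega(w_j) = F_\Omega(v_j)$, so in particular $w_j \in \overline{\Omega}$ and the open face through $w_j$ is the same set as that through $v_j$. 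I first need to check that $w_1,\dots,w_p$ are in general position, i.e. that $\Spanset\{w_1,\dots,w_p\}$ is again $p$-dimensional; this follows because each $w_j$ lies in the open face $F_\Omega(v_j)$, which by Observation~\ref{obs:faces}(1) is open in its span, and the faces $F_\Omega(v_1),\dots,F_\Omega(v_p)$ have pairwise disjoint spans meeting only at the origin — otherwise the vertices $v_j$ themselves would fail to be in general position (one can see this by using Observation~\ref{obs:faces}(3)–(4): a linear relation among the $w_j$ would, by convexity, force a corresponding degeneracy among the $v_j$). Hence $[\Spanset\{w_1,\dots,w_p\}]$ is a $(p-1)$-dimensional projective subspace, and its intersection with $\Omega$ is a properly convex domain open in that subspace.

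Next I argue that $S' = \Omega \cap [\Spanset\{w_1,\dots,w_p\}]$ is in fact a simplex with vertices exactly $w_1,\dots,w_p$, and that it is properly embedded. For this it suffices to show $\overline{S'} = {\rm ConvHull}(\{w_1,\dots,w_p\})$ and that $\partiali S' \subset \partiali \Omega$. The inclusion ${\rm ConvHull}(\{w_1,\dots,w_p\}) \subset \overline{\Omega} \cap [\Spanset\{w_1,\dots,w_p\}]$ is clear. For the reverse: any point of $\overline\Omega$ in this subspace is a convex combination $\sum t_j w_j$; I claim it lies in $\overline\Omega$ iff all $t_j \geq 0$, which will both identify $\overline{S'}$ with the simplex hull and show $S'$ is properly embedded. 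The mechanism is Observation~\ref{obs:faces}(4): given two distinct indices, the segment from $v_i$ to $v_j$ lies in $\partial\Omega$ (since $v_i, v_j$ are distinct vertices of the properly embedded simplex $S$, the open segment $(v_i,v_j) \subset \partial\Omega$), hence for $p \in F_\Omega(v_i)$ and $q \in F_\Omega(v_j)$ the open segment $(p,q)$ also lies in $\partial\Omega$; applying this with $p = w_i$, $q = w_j$ shows $(w_i, w_j) \subset \partial\Omega$, and iterating across all the faces (using that all the relevant segments stay in the boundary) identifies the whole boundary of the simplex hull with $\partiali\Omega \cap [\Spanset\{w_1,\dots,w_p\}]$. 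Combined with the fact that the interior of the simplex hull meets $\Omega$ (a small perturbation of an interior point of $S$), this forces $S' $ to be exactly the simplex with vertices $w_1,\dots,w_p$, properly embedded.

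For the metric estimate, I proceed by induction on $p$, peeling off one vertex at a time and applying Proposition~\ref{prop:Crampons_dist_est_2} repeatedly. Concretely, introduce the "intermediate" simplices $S^{(0)} = S$, $S^{(k)} = $ the simplex with vertices $w_1,\dots,w_k,v_{k+1},\dots,v_p$ (each a properly embedded simplex by the first part of the argument applied with only one vertex moved). Passing from $S^{(k-1)}$ to $S^{(k)}$ moves only the vertex $v_k$ to $w_k$ within $F_\Omega(v_k)$. To bound $H_\Omega^{\Haus}(S^{(k-1)}, S^{(k)})$, note that any point of $S^{(k-1)}$ lies on a segment from $v_k$ to a point $q$ in the opposite face (the span of the other vertices), and the corresponding point of $S^{(k)}$ lies on the segment from $w_k$ to the same $q$; since $F_\Omega(v_k) = F_\Omega(w_k)$ and $q$ has $F_\Omega(q) = F_\Omega(q)$, Proposition~\ref{prop:Crampons_dist_est_2} gives $H_\Omega^{\Haus}\big((v_k,q),(w_k,q)\big) \leq H_{F_\Omega(v_k)}(v_k,w_k)$, and taking the supremum over $q$ yields $H_\Omega^{\Haus}(S^{(k-1)}, S^{(k)}) \leq H_{F_\Omega(v_k)}(v_k,w_k)$. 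Summing via the triangle inequality for the Hausdorff metric over $k=1,\dots,p$ gives the claimed bound. The main obstacle is the first part — carefully verifying that moving each vertex within its open face preserves the combinatorial structure (general position of the new vertices, and that the convex hull is exactly cut out by the span intersected with $\Omega$), since this is where one genuinely uses properness of the embedding together with Observation~\ref{obs:faces}; the metric estimate, once the structure is in place, is a routine induction on Proposition~\ref{prop:Crampons_dist_est_2}.
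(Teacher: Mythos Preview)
Your Hausdorff-distance argument via the intermediate simplices $S^{(k)}$ and Proposition~\ref{prop:Crampons_dist_est_2} is correct and is essentially the induction the paper has in mind.

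The structural part has a genuine gap. The claim that the faces $F_\Omega(v_1),\dots,F_\Omega(v_p)$ have ``pairwise disjoint spans meeting only at the origin'' is false in general: take $\Omega$ an open square in $\Pb(\Rb^3)$ and $S$ the segment joining midpoints of two opposite edges; the spans of those two edge-faces meet in a line (the point at infinity of the parallel edges). So your argument for linear independence of the $w_j$ does not go through, and the parenthetical alternative is too vague to repair it. Separately, the assertion that $\relint\CH\{w_1,\dots,w_p\}$ meets $\Omega$ ``by a small perturbation of an interior point of $S$'' is circular: no closeness of $S$ and $S'$ has been established yet. Your ``iterating across all the faces'' step is the right idea for showing $\partial\CH\{w\}\subset\partial\Omega$, but it does not give the reverse inclusion needed to identify $\Omega\cap[\Spanset\{w\}]$ with the open simplex.

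The paper's proof closes all of this with one induction on $q=|I|$ over subsets $I\subset\{1,\dots,p\}$: using Observation~\ref{obs:faces}(4), it shows that $\relint\CH_\Omega\{v_i:i\in I\}$ and $\relint\CH_\Omega\{w_i:i\in I\}$ lie in the \emph{same} face of $\overline\Omega$. For proper $I$ this is your boundary step made precise; for $I=\{1,\dots,p\}$ the common face is $\Omega$ itself, which directly gives $\relint\CH\{w\}\subset\Omega$---exactly what your perturbation argument was meant to supply. Linear independence then follows, since a dependence would let some proper sub-hull (contained in $\partial\Omega$) have full-dimensional overlap with $\relint\CH\{w\}$ (contained in $\Omega$). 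Alternatively, you could have run your one-vertex-at-a-time scheme for the structural claim as well: at each step Observation~\ref{obs:faces}(4) applied to segments from the moved vertex to the unchanged opposite face shows the new hull is again properly embedded.
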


\begin{proof} For $1 \leq q \leq p$ and $1 \leq i_1 < i_2 < \dots < i_q \leq p$ define
\begin{align*}
S(i_1,\dots,i_q) : = \relint\left( \CH_\Omega\{ v_{i_1},\dots, v_{i_q}\}\right)
\end{align*}
and
\begin{align*}
S^\prime(i_1,\dots,i_q) : = \relint\left( \CH_\Omega\{ w_{i_1},\dots, w_{i_q}\}\right).
\end{align*}
We claim that for each  $1 \leq q \leq p$ and $1 \leq i_1 < i_2 < \dots < i_q \leq p$ there exists a face $F(i_1,\dots,i_q)$ of $\Omega$ such that 
\begin{align*}
S(i_1,\dots,i_q) \cup S^\prime(i_1,\dots,i_q) \subset F(i_1,\dots,i_q).
\end{align*}
When $q=1$ this is by hypothesis. Then the claim follows from induction on $q$ and Observation~\ref{obs:faces} part (4).

Then $S^\prime(i_1,\dots,i_q)  \subset \partial \Omega$ if and only if $S(i_1,\dots,i_q)  \subset \partial \Omega$. Hence $S^\prime$ is a properly embedded simplex. 

The moreover part follows from a similar induction argument and Proposition~\ref{prop:Crampons_dist_est_2}. 
\end{proof}

\subsection{The Hausdorff distance and local Hausdorff topology}When $(X,\dist)$ is a metric space, the \emph{Hausdorff distance} between two subsets $A,B \subset X$ is defined by 
\begin{align*}
\dist^{\Haus}(A,B) = \max \left\{ \sup_{a \in A} \inf_{b \in B} \dist(a,b), \ \sup_{b \in B} \inf_{a \in A} \dist(a,b) \right\}.
\end{align*}
When $(X,\dist)$ is a complete metric space space,  $d^{\Haus}$ is a complete metric on the set of non-empty compact subsets of $X$. 

The local Hausdorff topology is a natural topology on the set of closed sets in $X$. For a closed set $C_0$, a base point $x_0 \in X$, and $r_0, \epsilon_0 > 0$ define the set $U(C_0,x_0,r_0,\epsilon_0)$ to consist of all closed subsets $C \subset X$ where
\begin{align*}
\dist^{\Haus}\Big(C_0 \cap B_X(x_0;r_0),\, C \cap B_X(x_0;r_0)\Big) < \epsilon_0
\end{align*}
where 
\begin{align*}
B_X(x_0;r_0) =\{ x\in X : \dist(x_0,x) < r_0\}.
\end{align*}
The \emph{local Hausdorff topology} on the set of non-empty closed subsets of $X$ is the topology generated by the sets $U(\cdot, \cdot, \cdot, \cdot)$. 

\subsection{The local Hausdorff topology on slices of a properly convex domain}

Fix a distance $d_{\Pb}$ on $\Pb(\Rb^d)$ induced by a Riemannian metric. Then the following observation is a consequence of convexity and the definition of the Hilbert metric.

\begin{observation}\label{obs:slice_convergence} Suppose $\Omega \subset \Pb(\Rb^d)$ is a properly convex domain. If $V_n \in \Gr_p(\Rb^d)$ is a sequence of $p$-dimensional linear subspaces,  $V_n \rightarrow V$ in $\Gr_{p}(\Rb^d)$, and $\Pb(V) \cap \Omega \neq \emptyset$, then
\begin{enumerate}
\item $\overline{\Pb(V_n) \cap \Omega}$ converges to $\overline{\Pb(V) \cap \Omega}$ in the Hausdorff topology induced by $d_{\Pb}$,
\item $\Pb(V_n) \cap \Omega$ converges to $\Pb(V) \cap \Omega$ in the local Hausdorff topology induced by $H_\Omega$. 
\end{enumerate}
\end{observation}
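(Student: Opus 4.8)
The statement to prove is Observation~\ref{obs:slice_convergence}: given a properly convex domain $\Omega$, a convergent sequence of $p$-dimensional subspaces $V_n \to V$ in $\Gr_p(\Rb^d)$ with $[V] \cap \Omega \neq \emptyset$, we must show (1) Hausdorff convergence of the closures $\overline{[V_n] \cap \Omega}$ to $\overline{[V] \cap \Omega}$ in $d_{\Pb}$, and (2) local Hausdorff convergence of the open slices $[V_n] \cap \Omega$ to $[V] \cap \Omega$ in $H_\Omega$. The plan is to first fix a bounded affine chart $\mathbb{A}$ containing $\overline{\Omega}$, so that $\overline{\Omega}$ becomes a compact convex body in Euclidean space and all the relevant sets are genuine bounded convex sets; this reduces everything to elementary facts about convergence of convex bodies.

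For part (1), I would argue in two inclusions. For the ``liminf'' direction: since $[V]\cap\Omega$ is a nonempty open subset of $[V]$, pick an interior point $x_0 \in [V] \cap \Omega$; because $V_n \to V$ one can choose $x_n \in [V_n]$ with $x_n \to x_0$, and since $\Omega$ is open, $x_n \in \Omega$ for large $n$. More generally, for any $x \in \overline{[V]\cap\Omega}$, the segment $[x_0,x)$ lies in $[V]\cap\Omega$; approximating $x$ along this segment and lifting through the subspace convergence shows every point of $\overline{[V]\cap\Omega}$ is a limit of points in $\overline{[V_n]\cap\Omega}$. For the ``limsup'' direction: if $y_{n_k} \in \overline{[V_{n_k}]\cap\Omega}$ with $y_{n_k} \to y$, then $y \in [V]$ (since $V_{n_k}\to V$ and the $y$'s are forced into $[V]$ in the limit — concretely, lifts $\tilde y_{n_k}$ can be taken in $V_{n_k}$ converging to a nonzero vector in $V$) and $y \in \overline{\Omega}$, so $y \in \overline{[V]\cap\Omega}$ provided $\overline{[V]\cap\Omega} = [V]\cap\overline{\Omega}$; this last equality holds because $[V]\cap\Omega$ is nonempty and open in $[V]$, so $\overline{[V]\cap\Omega}$, being the closure of a convex set with nonempty interior in $[V]$, equals $[V]\cap\overline{\Omega}$. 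Combining the two inclusions gives $\dist_{\Pb}^{\Haus}$ convergence.

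For part (2), I would combine part (1) with the explicit cross-ratio formula for $H_\Omega$. Fix a basepoint $q_0 \in [V]\cap\Omega$ and $r_0 > 0$; by part (1) and the fact that the $[V_n]\cap\Omega$ are open convex sets, for large $n$ the basepoint has a nearby point $q_n \in [V_n]\cap\Omega$, and one can compare the balls $B_\Omega(q_0;r_0) \cap [V]$ and $B_\Omega(q_n;r_0)\cap[V_n]$. The key point is that $H_\Omega$ restricted to $[V_n]\cap\Omega$ depends only on the positions of points relative to $\partial\Omega$ along lines inside $[V_n]$ — but these are exactly the boundary points of $\overline{[V_n]\cap\Omega}$ (using properness of $\Omega$, the endpoints of a chord of $[V_n]\cap\Omega$ lie on $\partial([V_n]\cap\Omega) \subset \partial\Omega$). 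Since $\overline{[V_n]\cap\Omega} \to \overline{[V]\cap\Omega}$ in $d_{\Pb}$ by part (1), and the cross-ratio is continuous where all four points are distinct and the endpoints stay a definite distance apart (which holds on compact subsets of the open slices, by proper convexity), the metrics converge uniformly on compacta. A standard argument then upgrades uniform-on-compacta convergence of the metrics plus $d_{\Pb}$-Hausdorff convergence of the sets to local Hausdorff convergence in $H_\Omega$: given $x_0, r_0, \epsilon_0$, one shows $[V_n]\cap\Omega \in U([V]\cap\Omega, x_0, r_0, \epsilon_0)$ for large $n$ by matching points of the $H_\Omega$-ball of radius $r_0$ around $x_0$ across the two slices.

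The main obstacle I anticipate is the interplay between the two metrics in part (2): one must be careful that an $H_\Omega$-ball of radius $r_0$ in $[V_n]\cap\Omega$ might come close to $\partial\Omega$ if $[V_n]\cap\Omega$ is a ``thin sliver,'' and a priori the slices could degenerate. This is controlled by the observation that $[V]\cap\Omega$ is a genuine $p$-dimensional open set (it is nonempty and open in $[V]$), so it contains a small Euclidean ball, and by $d_{\Pb}$-convergence the nearby slices $[V_n]\cap\Omega$ also contain uniformly-sized Euclidean balls around $q_n$; combined with the uniform comparison between $H_\Omega$ and the Euclidean metric on a fixed-size neighborhood (coming from proper convexity and compactness of $\overline\Omega$), this prevents degeneration and makes the cross-ratio estimates uniform. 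Everything else is routine convexity and continuity of the cross-ratio away from its singular locus.
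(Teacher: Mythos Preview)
Your proposal is correct and follows essentially the same route as the paper. The paper's argument for (1) is phrased via compactness of the Hausdorff space of compact subsets of $\Pb(\Rb^d)$ (pass to a convergent subsequence with limit $C$, then show $\overline{[V]\cap\Omega}\subset C$ using that $\Omega$ is open and $C\subset\overline{[V]\cap\Omega}$ using supporting hyperplanes), while you verify the two Kuratowski inclusions directly; these are equivalent framings of the same elementary fact. For (2) the paper simply says ``similar,'' whereas your cross-ratio continuity argument spells out what ``similar'' means, and your discussion of the potential thin-sliver obstacle is the right thing to worry about and you resolve it correctly.
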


\begin{proof} To prove (1), notice that the set of non-empty compact subsets in $\Pb(\Rb^d)$ endowed with $d_{\Pb}^{\Haus}$ is a compact metric space. Hence to show that $\overline{\Pb(V_n) \cap \Omega}$ converges to $\overline{\Pb(V) \cap \Omega}$ it suffices to show that every convergent subsequence of $\overline{\Pb(V_n) \cap \Omega}$ converges to $\overline{\Pb(V) \cap \Omega}$. So suppose that $\overline{\Pb(V_{n_j}) \cap \Omega}$ converges to some compact set $C$. Since $\Omega$ is open, it is clear that $\overline{\Pb(V) \cap \Omega} \subset C$. Since every point in $\partial \Omega$ has a supporting hyperplane it is clear that $C \subset \overline{\Pb(V) \cap \Omega}$. Thus $\overline{\Pb(V_{n_j}) \cap \Omega}$ converges to $\overline{\Pb(V) \cap \Omega}$. 

The proof of (2) is similar. 
\end{proof}

This observation has the following consequence. 

\begin{observation}\label{obs:PES_closed} Suppose $\Omega \subset \Pb(\Rb^d)$ is a properly convex domain. Then the set of properly embedded simplices in $\Omega$ of dimension at least two is closed in the local Hausdorff topology. 
\end{observation}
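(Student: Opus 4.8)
The plan is to show that if $S_n$ is a sequence of properly embedded simplices in $\Omega$ converging to a closed set $S$ in the local Hausdorff topology (induced by $H_\Omega$), then $S$ is itself a properly embedded simplex. First I would reduce to a statement about linear spans: each $S_n$ has the form $S_n = \Omega \cap [V_n]$ where $V_n = \Spanset(S_n) \in \Gr_{k_n+1}(\Rb^d)$ and $k_n = \dim S_n$. Since $\Gr(\Rb^d)$ is compact and there are finitely many dimensions, after passing to a subsequence I may assume $k_n \equiv k$ is constant and $V_n \to V$ in $\Gr_{k+1}(\Rb^d)$. Pick a base point $p \in S$; since $p$ is the $H_\Omega$-limit of points $p_n \in S_n$, in particular $p \in \Omega$, so $[V] \cap \Omega \neq \emptyset$. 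Then Observation~\ref{obs:slice_convergence}(2) tells us $S_n = [V_n] \cap \Omega$ converges to $[V] \cap \Omega$ in the local Hausdorff topology induced by $H_\Omega$, and by uniqueness of limits $S = [V] \cap \Omega$.

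Next I would identify the projective structure: by Observation~\ref{obs:slice_convergence}(1), $\overline{S_n} = \overline{[V_n]\cap\Omega}$ converges to $\overline{[V]\cap\Omega} = \overline{S}$ in the Hausdorff topology induced by $d_\Pb$. Each $\overline{S_n}$ is (the closure of) a $k$-simplex in the projective subspace $[V_n]$, i.e. it has exactly $k+1$ extreme points (its vertices) $v_1^{(n)},\dots,v_{k+1}^{(n)}$, and $\overline{S_n}$ is their convex hull. Passing to a further subsequence, $v_j^{(n)} \to v_j \in \overline{[V]\cap\Omega}$ for each $j$. I claim $v_1,\dots,v_{k+1}$ span $V$ (so in particular they are projectively independent): they all lie in $[V]$, and if they spanned a proper subspace then $\overline{S} = \lim \overline{S_n} = \CH\{v_1,\dots,v_{k+1}\}$ would lie in that proper subspace, contradicting $[V] = \Spanset(\overline{S})$. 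Hence $\overline{S} = \CH\{v_1,\dots,v_{k+1}\}$ is the closure of a nondegenerate $k$-simplex, so $S = [V] \cap \Omega$ is a $k$-simplex in the sense of the paper's definition.

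Finally I would check proper embeddedness. By the Observation immediately preceding the statement in the excerpt (characterizing properly embedded convex subsets), it suffices to show $\partiali S \subset \partiali \Omega$, equivalently $\partialni S = \emptyset$, i.e. $\partial S \subset \partial\Omega$. Since $S = [V]\cap\Omega$ is open in its span $[V]$, its boundary is $\partial S = \overline{S} \setminus S = \partial([V]\cap\Omega)$ taken inside $[V]$; a point $x \in \partial S$ with $x \in \Omega$ would force $x$ to be an interior point of $[V]\cap\Omega$ in $[V]$ (as $\Omega$ is open in $\Pb(\Rb^d)$, hence $[V]\cap\Omega$ is open in $[V]$), a contradiction. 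So $\partial S \subset \partial\Omega$, giving $\partiali S = \partial S \subset \partial\Omega$; and since $S \subset \Omega$ we get $\partiali S = \partial S \setminus S \subset \partial\Omega \setminus \Omega$... wait, more carefully: $\partiali S = \partial S \setminus S$, and every point of $\partial S$ lies in $\partial\Omega$, so it lies in $\Omega^c$ hence is not in $S$; thus $\partiali S = \partial S \subset \partial\Omega \cap \Omega^c = \partiali\Omega$. Therefore $S$ is a properly embedded simplex, and since the subsequence limit is independent of the subsequence, the original set is one too.

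The main obstacle I anticipate is bookkeeping around the two different topologies — the $d_\Pb$-Hausdorff topology on closures versus the $H_\Omega$-local Hausdorff topology on the open simplices — and making sure the convergence of vertices and the nondegeneracy argument are airtight; but Observation~\ref{obs:slice_convergence} packages exactly the compatibility needed, so the argument should be short. One should also be slightly careful that the hypothesis only gives convergence in the local Hausdorff topology, so the reduction "$S_n = [V_n]\cap\Omega$" must be justified for each $n$ (which is immediate: a properly embedded simplex equals the intersection of $\Omega$ with its projective span) before invoking the Grassmannian compactness.
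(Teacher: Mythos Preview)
Your proposal is correct and follows essentially the same approach as the paper's proof: pass to a subsequence with constant dimension, use compactness of the Grassmannian to get $V_n \to V$, invoke Observation~\ref{obs:slice_convergence} to identify $S = [V]\cap\Omega$ and to pass the vertex convergence through, and then check that the limiting vertices are projectively independent. The paper is slightly terser in two places---it does not explicitly verify $[V]\cap\Omega\neq\emptyset$ (you do, via a base point), and it absorbs the ``properly embedded'' check into the equality $S=[V]\cap\Omega$ rather than spelling out $\partial S\subset\partial\Omega$---but the substance is identical.
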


\begin{proof} 
Suppose $S_n \subset \Omega$ is a sequence of properly embedded simplices of dimension at least two and $S_n$ converges to $S$ in the local Hausdorff topology. We claim that $S$ is also a properly embedded simplex. By passing to a sequence we can suppose that $\dim S_n = p-1 \geq 2$ for all $n$.

Let $V_n = \Spanset S_n$, then $S_n = \Pb(V_n) \cap \Omega$. By passing to a subsequence we can suppose that 
\begin{align*}
V:=\lim_{n \rightarrow \infty} V_n
\end{align*}
exists in $\Gr_{p}(\Rb^d)$. Then Observation~\ref{obs:slice_convergence} part (2) implies that $S = \Pb(V) \cap \Omega$. Next let $v_1^{(n)}, \dots, v_p^{(n)}$ be the vertices of $S_n$. Then 
\begin{align}
\label{eq:S_n_vertices}
S_n = \relint \left( \CH_\Omega\left\{v_1^{(n)},\dots, v_p^{(n)}\right\} \right).
\end{align}

By passing to a subsequence we can suppose that $v_1^{(n)}, \dots, v_p^{(n)}$ converge to $v_1,\dots, v_p$. Then Observation~\ref{obs:slice_convergence} part (1) and Equation~\eqref{eq:S_n_vertices} imply that 
\begin{align*}
S = \Pb(V) \cap \Omega = \relint\left( \CH_\Omega\left\{v_1,\dots, v_p\right\}\right).
\end{align*}
Since $\dim \Pb(V) \cap \Omega = (p-1)$, the lines $v_1,\dots, v_p$ must be linearly independent. Hence $S$ is a properly embedded simplex. 
\end{proof}

As a consequence of Observation~\ref{obs:PES_closed} we will show the following. 

\begin{observation}\label{obs:str_iso_implies_iso} Suppose $\Omega \subset \Pb(\Rb^d)$ is a properly convex domain and $\Sc$ is a family of maximal properly embedded simplices of dimension at least two. If $\Sc$ is strongly isolated, then it is also isolated. 
\end{observation}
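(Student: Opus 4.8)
The plan is to argue by contradiction: if $\Sc$ is strongly isolated but not isolated, then $\Sc$ fails to be closed and discrete in the local Hausdorff topology, so there is a sequence of distinct simplices in $\Sc$ that converges (in the local Hausdorff topology) to some closed set $S$, and after passing to a subsequence I may assume $S$ is itself a properly embedded simplex by Observation~\ref{obs:PES_closed}. (One must also handle the case where $\Sc$ is not closed, i.e.\ the limit $S$ is a properly embedded simplex not in $\Sc$; but by taking the $S_n$ distinct from $S$ this can be folded into the same argument, since the relevant estimate only uses that the $S_n$ are pairwise distinct elements of $\Sc$ eventually, or distinct from the limit, and the strong isolation bound applies to distinct elements of $\Sc$.) So let $S_n \in \Sc$ be pairwise distinct with $S_n \to S$ in the local Hausdorff topology induced by $H_\Omega$.

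Next I would extract the contradiction from the strong isolation estimate. Fix a base point $p_0 \in S$ (recall $S$ is a properly embedded simplex, hence nonempty and a geodesic subspace for $H_\Omega$). Choose $r = 1$ and let $D = D(1)$ be the constant from the strong isolation hypothesis, so that $\diam_\Omega\big(\Nc_\Omega(S_i;1) \cap \Nc_\Omega(S_j;1)\big) \leq D$ for all distinct $S_i, S_j \in \Sc$. Now pick any two points $x, y \in S$ with $H_\Omega(x,y) > D + 2$; this is possible because a properly embedded simplex of positive dimension is unbounded in $H_\Omega$ (and all maximal properly embedded simplices here have dimension at least one — indeed by the definition of "simplex" in the paper, $k \geq 2$, so $\dim \geq 1$). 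Since $S_n \to S$ in the local Hausdorff topology, for the radius $r_0 := H_\Omega(p_0, x) + H_\Omega(p_0, y) + 1$ and any $\epsilon_0 \in (0,1)$, for all large $n$ we have $H_\Omega^{\Haus}\big(S_n \cap B_\Omega(p_0;r_0), S \cap B_\Omega(p_0;r_0)\big) < \epsilon_0$; in particular both $x$ and $y$ lie within distance $\epsilon_0 < 1$ of $S_n$ for all large $n$. Hence $x, y \in \Nc_\Omega(S_n;1)$ for all large $n$, and so, picking two distinct indices $m < n$ both large, $x, y \in \Nc_\Omega(S_m;1) \cap \Nc_\Omega(S_n;1)$. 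But then $H_\Omega(x,y) \leq \diam_\Omega\big(\Nc_\Omega(S_m;1)\cap \Nc_\Omega(S_n;1)\big) \leq D$, contradicting $H_\Omega(x,y) > D + 2$.

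To finish, I should also record why "not isolated" does indeed yield such a convergent sequence of distinct simplices: a family $\Sc$ of closed sets fails to be closed and discrete in the local Hausdorff topology exactly when some point of the local Hausdorff completion is either a limit of a sequence of distinct members of $\Sc$, or a member of the closure of $\Sc$ not in $\Sc$; in both cases, using that the set of properly embedded simplices is closed (Observation~\ref{obs:PES_closed}) and that each member of $\Sc$ is a properly embedded simplex, one obtains a sequence $S_n$ of members of $\Sc$, pairwise distinct (or all distinct from a fixed limit $S \in \Sc$), converging in the local Hausdorff topology to a properly embedded simplex. The argument above then applies verbatim (in the "fixed limit $S \in \Sc$" case, replace "two distinct large indices $m<n$" by "one large index $n$ and the simplex $S$ itself," using strong isolation for the distinct pair $S, S_n$). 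The main obstacle is purely bookkeeping: being careful about the two flavors of non-closed-non-discrete, and verifying that a positive-dimensional properly embedded simplex is genuinely unbounded in the Hilbert metric so that a pair $x,y$ with $H_\Omega(x,y)$ larger than $D$ exists — the latter follows since $S$ is isometrically a simplicial cone with the Hilbert metric, which contains bi-infinite geodesics.
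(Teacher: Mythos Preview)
Your proposal is correct and follows essentially the same approach as the paper: take a convergent sequence $S_n \to S$ in the local Hausdorff topology, use Observation~\ref{obs:PES_closed} to see that $S$ is a properly embedded simplex (hence unbounded), and then observe that consecutive (or distinct) $S_n$ have tubular neighborhoods whose intersection has arbitrarily large diameter, contradicting strong isolation unless the sequence is eventually constant. The paper's version is slightly more streamlined in that it avoids the explicit case split between ``not closed'' and ``not discrete'' by simply showing every convergent sequence in $\Sc$ is eventually constant with limit in $\Sc$, but the content is the same.
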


\begin{proof} 
Suppose $S_n \in \Sc$ is a sequence and $S_n$ converges to $S$ in the local Hausdorff topology induced by $H_{\Omega}$. Then $S$ is a properly embedded simplex by Observation~\ref{obs:PES_closed}. Since $S$ is unbounded in $(\Omega, H_\Omega)$, for any $r > 0$ 
\begin{align*}
\lim_{n \rightarrow \infty} \diam_\Omega \Big( \Nc_\Omega(S_n; r) \cap \Nc_\Omega(S_{n+1}; r) \Big) =\infty.
\end{align*}
Then, since $\Sc$ is strongly isolated, there exists $N \geq 0$ such that $S_n = S_m$ for all $n,m \geq N$. Then $S = S_N$. So $\Sc$ is closed and discrete in  the local Hausdorff topology.
\end{proof}

\section{Background on relatively hyperbolic metric spaces}\label{sec:background_on_rel_hyp}

In this section we recall the definition of relatively hyperbolic metric spaces and then state a useful characterization due to Sisto. 

\begin{definition} Suppose $\omega$ is a non-principal ultrafilter, $(X,\dist)$ is a metric space, $(x_n)$ is a sequence of points in $X$, and $(\lambda_n)$ is a sequence of positive numbers with $\lim_{\omega} \lambda_n = \infty$. The \emph{asymptotic cone of $X$ with respect to $(x_n)$ and $(\lambda_n)$}, denoted by $C_\omega(X,x_n,\lambda_n)$, is the ultralimit $\lim_{\omega} (X,\lambda_n^{-1} \dist, x_n)$. 
\end{definition}

For more background on asymptotic cones, see~\cite{D2002}. 

\begin{definition}[{Dru{\c t}u-Sapir~\cite[Definition 2.1]{DS2005}}] Let $(X,\dist)$ be a complete geodesic metric space and let $\Sc$ be a collection of closed geodesic subsets (called \emph{pieces}). Suppose that the following two properties are satisfied:
\begin{enumerate}
\item Every two different pieces have at most one common point. 
\item Every simple geodesic triangle (a simple loop composed of three geodesics) in $X$ is contained in one piece. 
\end{enumerate}
Then we say that the metric space $(X,\dist)$ is \emph{tree-graded with respect to $\Sc$}. 
\end{definition}

\begin{definition} A a complete geodesic metric space $(X,\dist)$ is \emph{relatively hyperbolic with respect to a collection of subsets $\Sc$} if all its asymptotic cones, with respect to a fixed non-principal ultrafilter, are tree-graded with respect to the collection of ultralimits of the elements of $\Sc$. 
\end{definition}

\begin{definition}\label{defn:rel-hyp-group}
A finitely generated group $G$ is \emph{relatively hyperbolic with respect to a family of subgroups $\{H_1,\dots, H_k\}$} if the Cayley graph of $G$ with respect to some (hence any) finite set of generators is relatively hyperbolic with respect to the collection of left cosets $\{g H_i : g \in G, i=1,\dots,k\}$. 
\end{definition}

\begin{remark}This is one among several equivalent definitions of relatively hyperbolic groups. We direct the interested reader to \cite{DS2005} and the references therein for more details. 

\end{remark}

We now recall some basic properties of relatively hyperbolic metric spaces.  Given a metric space $(X,\dist)$, a subset $A \subset X$, and $r > 0$ define
\begin{align*}
\Nc_X(A;r) = \{ x \in X: \dist(x,a) < r \text{ for some } a \in A\}.
\end{align*}

\begin{theorem}[{Dru{\c t}u-Sapir~\cite[Theorem 4.1]{DS2005}}]\label{thm:rh_intersections_of_neighborhoods} Suppose $(X,\dist)$ is relatively hyperbolic with respect to $\Sc$. For any $r > 0$ there exists $Q(r) > 0$ such that: if $S_1, S_2 \in \Sc$ are distinct, then 
\begin{align*}
\diam_X \big( \Nc_X(S_1;r)\cap \Nc_X(S_2;r) \big) \leq Q(r).
\end{align*}
\end{theorem}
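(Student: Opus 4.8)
The plan is to prove this by contradiction using asymptotic cones, in the spirit of \cite[Theorem 4.1]{DS2005}. Suppose the conclusion fails for some $r > 0$; then for every $n \in \Nb$ one can select distinct pieces $A_n, B_n \in \Sc$ together with points $a_n, b_n \in \Nc_X(A_n;r) \cap \Nc_X(B_n;r)$ satisfying $\lambda_n := \dist(a_n, b_n) > n$. With respect to the fixed non-principal ultrafilter $\omega$ from the definition of relative hyperbolicity, I would pass to the asymptotic cone $Y := C_\omega(X, a_n, \lambda_n)$, which by that definition is tree-graded with respect to the collection of ultralimits of elements of $\Sc$.

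The first step is to identify the limiting pieces. Set $A := \lim_\omega A_n$, $B := \lim_\omega B_n$, $\hat a := \lim_\omega a_n$, and $\hat b := \lim_\omega b_n$. Since $r/\lambda_n \to 0$ and $a_n, b_n$ lie in the common $r$-neighbourhood of $A_n$ and $B_n$, both $\hat a$ and $\hat b$ belong to $A \cap B$; in particular $A$ and $B$ are non-empty, hence are pieces of $Y$, and $\dist_Y(\hat a, \hat b) = 1$, so $\hat a \ne \hat b$. By the first tree-graded axiom (two distinct pieces meet in at most one point), this forces $A = B$.

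The remaining — and main — difficulty is to extract a contradiction from having $\lim_\omega A_n = \lim_\omega B_n$ while each $A_n \ne B_n$; equivalently, one must rule out two distinct pieces that fellow-travel over a stretch of length tending to infinity. Here I would use the finer structure of tree-graded spaces: a geodesic joining two points of a piece stays inside that piece, and the transversal directions through a point of a piece carry an $\Rb$-tree structure. The idea is to examine geodesics of $X$ running between points of $A_n$ and nearby points of $B_n$ near the two ends of the coarse-intersection region, and to re-run the asymptotic cone construction with a base sequence placed at points where $A_n$ and $B_n$ actually separate, so that their ultralimits become genuinely distinct pieces; one then reads off either a geodesic bigon or a simple geodesic triangle not contained in a single piece, contradicting tree-gradedness. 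An alternative route is to invoke the equivalence — also established in \cite{DS2005} — between the asymptotic-cone definition of relative hyperbolicity used here and the definitions of Bowditch and Osin, under which the statement is a form of the Bounded Coset Penetration property. I expect the delicate point to be precisely this last step: choosing the base sequence and rescaling so that the two pieces stay visibly distinct in the limit, since once that is arranged the contradiction with tree-gradedness is immediate.
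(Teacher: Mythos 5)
Your opening paragraph is the standard setup and is correct: the limits $\hat a \neq \hat b$ both lie in $\lim_\omega A_n \cap \lim_\omega B_n$, so if these two ultralimits were \emph{distinct} pieces of the cone, the first tree-graded axiom would finish the proof on the spot. The gap is exactly the one you flag, and the repair you sketch does not close it. The problematic case is when the distinct pieces $A_n \neq B_n$ remain at uniformly bounded Hausdorff distance from one another -- nothing in $A_n \neq B_n$ excludes this, and it is precisely the configuration the theorem must rule out (it is how the paper uses this theorem in Section~\ref{sec:ncc_examples}, where distinct parallel simplices $S_\sigma \neq S_\tau$ at finite Hausdorff distance force non-relative-hyperbolicity). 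In that case there is no location ``where $A_n$ and $B_n$ actually separate'': for \emph{every} choice of basepoints and of scaling factors tending to infinity, $\lim_\omega A_n$ and $\lim_\omega B_n$ coincide as subsets of the cone, so no re-basing can make them ``genuinely distinct pieces,'' and no simple geodesic bigon or triangle straddling the two can be manufactured. Your alternative route through the Bowditch/Osin definitions and BCP is a citation rather than a proof, and those equivalences are themselves usually established via statements of exactly this type.

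The missing ingredient is not a lemma but a point of definition. In Dru{\c t}u--Sapir's setup, a piece of the asymptotic cone is the ultralimit of an ($\omega$-class of a) \emph{sequence} of elements of $\Sc$, and two pieces are ``different'' when the defining sequences differ $\omega$-almost everywhere, not merely when the resulting subsets of the cone differ. Since $A_n \neq B_n$ for all $n$, the sequences $(A_n)$ and $(B_n)$ define different pieces of $C_\omega(X,a_n,\lambda_n)$ that nonetheless share the two distinct points $\hat a$ and $\hat b$, contradicting the first tree-graded axiom -- so with this convention your first paragraph already completes the proof. If instead one reads ``the collection of ultralimits'' purely as a set of subsets, the statement becomes false: adjoining to a valid peripheral family a bounded thickening of one unbounded member changes no asymptotic cone as a set-theoretic tree-graded space, yet the two members now violate the conclusion for suitable $r$. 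So the delicate step you anticipated cannot be carried out by further asymptotic-cone manipulations; it must be resolved by invoking the indexing convention in the definition.
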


The next two results involve quasi-isometric embeddings. 

\begin{definition} Suppose $(X,\dist_X)$, $(Y,\dist_Y)$ are complete geodesic metric spaces. A map $f: X \rightarrow Y$ is a \emph{$(A,B)$-quasi-isometric embedding} if 
\begin{align*}
\frac{1}{A} \dist_X(x_1,x_2) - B \leq \dist_Y(f(x_1),f(x_2)) \leq A \dist_X(x_1,x_2) + B
\end{align*}
for all $x_1, x_2 \in X$. If, in addition, there exists a quasi-isometry $g : Y \rightarrow X$ and $R > 0$ such that 
\begin{align*}
\dist_X( x, (g \circ f)(x)) \leq R
\end{align*}
for all $x \in X$ and 
\begin{align*}
\dist_Y( y, (f \circ g)(y)) \leq R
\end{align*}
for all $y \in Y$, then $f$ is a \emph{$(A,B)$-quasi-isometry}. 
\end{definition} 

Being relatively hyperbolic is invariant under quasi-isometries. 

\begin{theorem}[{Dru{\c t}u-Sapir~\cite[Theorem 5.1]{DS2005}}]\label{thm:rh_quasi_isometry_inv} Suppose $(X, \dist_X)$, $(Y, \dist_Y)$ are complete geodesic metric spaces and $f: X \rightarrow Y$ is a quasi-isometry. Then $(X, \dist_X)$ is relatively hyperbolic with respect to $\Sc$ if and only if $(Y, \dist_Y)$ is relatively hyperbolic with respect to $f(\Sc)$.
\end{theorem}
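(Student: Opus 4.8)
The plan is to argue directly from the asymptotic-cone definition of relative hyperbolicity, in the spirit of Dru{\c t}u--Sapir. Since $f$ is a quasi-isometry it has a coarse inverse $g\colon Y\to X$, itself a quasi-isometry, with $g\circ f$ and $f\circ g$ bounded distance from the identities; as replacing a collection of pieces by one at uniformly bounded Hausdorff distance leaves all ultralimits unchanged, it suffices to prove for an arbitrary quasi-isometry that relative hyperbolicity of $(X,\dist_X)$ with respect to $\Sc$ implies relative hyperbolicity of $(Y,\dist_Y)$ with respect to $f(\Sc)$; the converse follows by applying this to $g$. So fix a non-principal ultrafilter $\omega$, a basepoint sequence $(y_n)$ in $Y$, and scales $(\lambda_n)$ with $\lim_\omega\lambda_n=\infty$, and pick $x_n\in X$ with $\dist_Y(f(x_n),y_n)$ uniformly bounded (possible by coarse surjectivity). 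First I would check that
\[
f_\omega\colon C_\omega(X,x_n,\lambda_n)\longrightarrow C_\omega(Y,y_n,\lambda_n),\qquad f_\omega\big([a_n]\big)=\big[f(a_n)\big],
\]
is a well-defined bi-Lipschitz homeomorphism, using only the quasi-isometry inequalities (divided by $\lambda_n$, then $\lim_\omega$) and coarse surjectivity of $f$. Then, for $S_n\in\Sc$, the inclusion $g(f(S_n))\subset\Nc_X(S_n;R)$ with $R$ uniform shows that no collapsing happens in the cone, so $f_\omega$ carries $\lim_\omega S_n$ exactly onto $\lim_\omega f(S_n)$. Hence the candidate pieces of $C_\omega(Y,y_n,\lambda_n)$, namely the ultralimits of elements of $f(\Sc)$, are precisely the $f_\omega$-images of the pieces of $C_\omega(X,x_n,\lambda_n)$, and the latter is tree-graded with respect to its pieces by hypothesis.

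It then remains to transfer tree-gradedness along the bi-Lipschitz homeomorphism $f_\omega$, with pieces going to images of pieces. The first tree-graded axiom --- distinct pieces intersect in at most one point --- passes across at once, as $f_\omega$ is a bijection. For the second I would not work with geodesic triangles, which bi-Lipschitz maps do not preserve, but instead invoke Dru{\c t}u--Sapir's equivalent, essentially topological description of tree-graded spaces: a complete geodesic metric space is tree-graded with respect to a collection of closed geodesic subsets exactly when distinct members meet in at most a point and every simple loop lies in a single member. Both ``simple loop'' and ``is contained in'' are preserved by the homeomorphism $f_\omega$, so this condition transfers --- provided one knows that the image sets $f_\omega(\lim_\omega S_n)=\lim_\omega f(S_n)$ are themselves geodesic subspaces of $C_\omega(Y,y_n,\lambda_n)$.

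Establishing that the $\lim_\omega f(S_n)$ are geodesic is the main obstacle, precisely because a bi-Lipschitz image of a convex set need not be convex. The way around it is to go back to $Y$: pieces of a tree-graded space are convex, hence (after possibly enlarging $\Sc$ by a bounded Hausdorff amount) one can arrange the subsets $f(S_n)\subset Y$ to be uniformly quasi-convex, and an ultralimit of a uniformly quasi-convex sequence of subsets of geodesic spaces is a convex, in particular geodesic, subset of the cone: a $C_\omega(Y,y_n,\lambda_n)$-geodesic between two of its points is realized as an ultralimit of $Y$-geodesics each staying within $o(\lambda_n)$ of $f(S_n)$. With this in hand, $f_\omega$ restricts to a bijection between the geodesic pieces and maps simple loops to simple loops, so $C_\omega(Y,y_n,\lambda_n)$ is tree-graded with respect to the ultralimits of $f(\Sc)$; since $\omega$, $(y_n)$, $(\lambda_n)$ were arbitrary, $(Y,\dist_Y)$ is relatively hyperbolic with respect to $f(\Sc)$. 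An alternative route, closer to the methods used later in the paper, would avoid asymptotic cones entirely and instead transport a Sisto-style system of projections onto the pieces along $f$ and $g$, checking that its coarse geometric properties are preserved.
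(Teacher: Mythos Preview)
The paper does not prove this theorem; it merely cites it as \cite[Theorem 5.1]{DS2005} and uses it as a black box. So there is no proof in the paper to compare against. What you have written is a sketch of (essentially) Dru\c{t}u--Sapir's own argument: pass to asymptotic cones, where the quasi-isometry becomes a bi-Lipschitz homeomorphism, and use a topological reformulation of tree-gradedness (simple loops rather than geodesic triangles) that survives bi-Lipschitz maps.

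One step in your sketch is not quite right as written. You justify uniform quasi-convexity of the $f(S_n)$ in $Y$ by saying ``pieces of a tree-graded space are convex.'' That statement is about the pieces \emph{in the cone}, not about the $S_n$ in $X$; convexity of $\lim_\omega S_n$ in $C_\omega(X,x_n,\lambda_n)$ does not by itself force the $S_n$ to be uniformly quasi-convex in $X$, and it is the latter you need so that $f(S_n)$ is uniformly quasi-convex in $Y$. The actual argument (in Dru\c{t}u--Sapir) goes the other way: one first proves directly, from asymptotic tree-gradedness of $X$, that each $S\in\Sc$ is uniformly Morse/quasi-convex in $X$ (this is part of their list of geometric properties $(\alpha_1)$--$(\alpha_3)$ equivalent to being asymptotically tree-graded), then transports that under $f$ to get uniform quasi-convexity of $f(S)$ in $Y$, and only then concludes that the ultralimits $\lim_\omega f(S_n)$ are geodesic in the $Y$-cone. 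With that correction your outline is sound; your alternative route via Sisto's projection system (Theorem~\ref{thm:Sisto_equiv}) would also work and is closer in spirit to how the present paper uses the result.
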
 

Being relatively hyperbolic also constrains the possible quasi-isometric embeddings of $\Rb^k$ when $k \geq 2$. 

\begin{theorem}[{Dru{\c t}u-Sapir~\cite[Corollary 5.8]{DS2005}}]\label{thm:rh_embeddings_of_flats} Suppose $(X, \dist_X)$ is relatively hyperbolic with respect to $\Sc$. Then for any $A \geq 1$ and $B \geq 0$ there exists $M =M(A,B)$ such that: if $k \geq 2$ and $f : \Rb^k \rightarrow X$ is an $(A,B)$-quasi-isometric embedding, then there exists some $S \in \Sc$ such that 
\begin{align*}
f(\Rb^k) \subset \Nc_X(S;M).
\end{align*}
\end{theorem}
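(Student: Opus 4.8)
The plan is to recover this from the structure theory of tree-graded spaces, following Dru\c{t}u--Sapir \cite{DS2005}. Two facts drive the argument: first, the \emph{structural lemma} for tree-graded spaces, that a path-connected subset of a tree-graded space which has no global cut point is contained in a single piece; and second, the elementary topological fact that $\Rb^k$ has no global cut point when $k \geq 2$, a property preserved under bi-Lipschitz homeomorphisms (such a map restricts to a homeomorphism onto its image, and ``no global cut point'' is a homeomorphism invariant since $Z\setminus\{h(c)\}\cong Y\setminus\{c\}$).

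First I would reduce to the case $k = 2$. Suppose the statement holds for $\Rb^2$ with a constant $M_2 = M_2(A,B)$, and let $f \colon \Rb^k \to X$ be an $(A,B)$-quasi-isometric embedding with $k \geq 2$. For each $2$-plane $\Pi$ through the origin the restriction $f|_{\Pi}$ is an $(A,B)$-quasi-isometric embedding of $\Rb^2$, so there is $S_\Pi \in \Sc$ with $f(\Pi) \subset \Nc_X(S_\Pi; M_2)$. If two such planes $\Pi,\Pi'$ share a line $\ell$, then $f(\ell)$ is an unbounded subset of $\Nc_X(S_\Pi; M_2) \cap \Nc_X(S_{\Pi'}; M_2)$, so $S_\Pi = S_{\Pi'}$ by Theorem~\ref{thm:rh_intersections_of_neighborhoods}. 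Since any two $2$-planes through the origin are joined by a finite chain of $2$-planes through the origin in which consecutive members share a line, all the $S_\Pi$ coincide with one $S \in \Sc$; and since $\Rb^k$ is the union of its $2$-planes through the origin, $f(\Rb^k) \subset \Nc_X(S; M_2)$. Thus $M(A,B) := M_2(A,B)$ works in all dimensions, which incidentally explains why $M$ need not depend on $k$.

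For the $\Rb^2$ case I would argue by contradiction using asymptotic cones. If no $M_2$ existed for some $A,B$, then for each $n$ there would be an $(A,B)$-quasi-isometric embedding $f_n \colon \Rb^2 \to X$ whose image is not contained in $\Nc_X(S;n)$ for any $S \in \Sc$; for each $S$ pick $w_{n,S} \in \Rb^2$ with $\dist_X(f_n(w_{n,S}), S) \geq n$. One then chooses basepoints $p_n \in X$ and scales $\lambda_n \to \infty$ and passes to the asymptotic cone $\mathcal{C} = C_\omega(X,(p_n),(\lambda_n))$. Because the Euclidean metric is scale-invariant, the maps $f_n$ induce an $A$-bi-Lipschitz embedding $f_\omega \colon \Rb^2 \to \mathcal{C}$ (the additive constant $B$ dies in the limit). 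By definition of relative hyperbolicity, $\mathcal{C}$ is tree-graded with respect to the ultralimits $\{\lim_\omega S_n : S_n \in \Sc\}$; and $f_\omega(\Rb^2)$ is path-connected with no global cut point, so by the structural lemma it lies in a single piece $P = \lim_\omega S_n$. The contradiction is then produced by arranging $p_n$ and $\lambda_n$ so that in $\mathcal{C}$ the limiting plane $f_\omega(\Rb^2)$ provably fails to stay inside $P$ --- for instance by locating the scale $\lambda_n$ at which $f_n(\Rb^2)$ first ceases to lie in a bounded neighbourhood of a single piece and recentering $p_n$ near that region, so that $f_\omega(\Rb^2)$ meets the limits of two distinct pieces, which meet in at most a point (a consequence of Theorem~\ref{thm:rh_intersections_of_neighborhoods} passing to the cone).

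The main obstacle is exactly this last step: promoting the qualitative statement ``in every asymptotic cone a bi-Lipschitz $\Rb^2$ lies in one piece'' to a uniform metric bound. The naive choice $\lambda_n = n$ centered at $f_n(0)$ only controls $f_n$ on balls of radius $O(n)$ and does not ``see'' the bad points $w_{n,S}$, which may lie much farther out in $\Rb^2$; one genuinely has to extract the scale and centre at which the plane detaches from any single piece and recenter there, and this is where the quantitative ``wideness'' of $\Rb^k$ --- uniform in $k$ --- is used. (An alternative route would be to run the argument through Sisto's projection characterisation of relative hyperbolicity \cite{S2013}, but the asymptotic-cone argument is the most direct.)
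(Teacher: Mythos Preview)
The paper does not prove this statement; it is quoted as a black-box result of Dru\c{t}u--Sapir \cite[Corollary 5.8]{DS2005} and used as such. So there is no in-paper argument to compare against.

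Your sketch is broadly the Dru\c{t}u--Sapir strategy. The reduction to $k=2$ via Theorem~\ref{thm:rh_intersections_of_neighborhoods} is clean and correct, and it is a nice way to see why $M$ is independent of $k$. The asymptotic-cone half is also on the right track: a bi-Lipschitz copy of $\Rb^2$ in a tree-graded space has no cut point and hence lies in a single piece.

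However, you have correctly identified the genuine gap yourself. The passage from ``in every cone the limit plane sits in one piece'' to a uniform neighbourhood bound is not automatic, and your description of how to choose $(p_n,\lambda_n)$ is too vague to close it. The difficulty is that the failure of containment could occur at wildly varying scales and locations for different $n$, and a single ultrafilter limit only sees one scale at a time; one must argue that \emph{every} choice of cone data still yields a plane in a single piece and then invoke a compactness/saturation principle to extract uniformity. In \cite{DS2005} this is packaged as the equivalence between relative hyperbolicity and being \emph{asymptotically tree-graded}, together with their results on quasi-isometric embeddings of \emph{unconstricted} spaces (of which $\Rb^k$, $k\geq 2$, is the prototype): the uniform bound comes out of that machinery rather than from an ad hoc choice of basepoints and scales. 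Your proposal would become a proof once that step is made precise, but as written it is a sketch with the key quantitative step missing.
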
 

We end this section by describing a useful characterization of relative hyperbolicity due to Sisto. 

\begin{definition}
Let $(X, \dist)$ be a complete geodesic metric space and $\Sc$ a collection of subsets of $X$.
A family of maps $\Pi=\{\pi_S : X \to S \}_{S \in \Sc}$ is an \emph{almost-projection system for $\Sc$} if there exists $C>0$ such that for all $S \in \Sc$:
\begin{enumerate}
\item If $x \in X$ and $p \in S$, then
\begin{align*}
\dist(x,p) \geq \dist(x,\pi_S(x))+ \dist(\pi_S(x),p)-C,
\end{align*}
\item $\diam_X \pi_S(S')  \leq C$ for all $S, S' \in \Sc$ distinct, and
\item if $x \in X$ and $\dist(x,S)=R$, then $\diam_X \pi_S(B(x;R)) \leq C$.
\end{enumerate}
\end{definition}

In a relatively hyperbolic metric space, almost projection systems appear naturally. 

\begin{theorem}[{Sisto~\cite[Theorem 2.14]{S2013}}]\label{thm:rel-hyp-sisto}
 Suppose $(X,\dist)$ is relatively hyperbolic with respect to a collection $\Sc$. For each $S \in \Sc$ and $x \in X$, let $\pi_S(x)$ be any point in $S$ satisfying 
\begin{align*}
\dist(x,\pi_S(x)) \leq \dist(x,S) + 1,
\end{align*}
then $\Pi=\{\pi_S : X \to S \}_{S \in \Sc}$ is an almost-projection system for $\Sc$.
\end{theorem}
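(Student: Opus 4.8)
The plan is to deduce each of the three defining conditions of an almost-projection system from the tree-graded structure of the asymptotic cones of $X$, arguing by contradiction in each case. As a preliminary step I would recall (or re-derive from the Druţu--Sapir definition) the basic features of pieces in a tree-graded space $(Y,\Pc)$: each piece $P\in\Pc$ is a closed geodesically convex subset, two distinct pieces meet in at most one point, and there is a \emph{gate} projection $\operatorname{gate}_P\colon Y\to P$ which coincides with the nearest-point projection onto $P$, is $1$-Lipschitz, is constant on every piece $Q\neq P$ and on every connected component of $Y\setminus P$, and satisfies: every geodesic from a point $y\notin P$ to a point of $P$ passes through $\operatorname{gate}_P(y)$. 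Conditions (1), (2), (3) of an almost-projection system are precisely the coarse shadows of, respectively, ``geodesics into a piece pass through the gate of their other endpoint'', ``the gate onto a piece is constant on any other piece'', and ``the gate onto a piece is constant on the complement of that piece'' (note that when $\dist(x,S)=R$ the open ball $B(x;R)$ is connected and disjoint from $S$, hence lies in a single component of $Y\setminus S$).

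For condition (1), suppose it fails for every constant; then there are $x_n\in X$, $S_n\in\Sc$, $p_n\in S_n$ and $q_n:=\pi_{S_n}(x_n)$ with
\[
\dist(x_n,p_n)<\dist(x_n,q_n)+\dist(q_n,p_n)-n.
\]
Writing $a_n=\dist(x_n,q_n)$, $b_n=\dist(q_n,p_n)$, $c_n=\dist(x_n,p_n)$, elementary triangle inequalities together with the near-minimality $a_n\le\dist(x_n,S_n)+1\le c_n+1$ force $\min(a_n,b_n)\ge n/2$, hence $b_n,c_n\to\infty$. I would then select a rescaling $\lambda_n\to\infty$ and a basepoint --- taken along the geodesic $[x_n,p_n]$, at the scale of its excursion away from $S_n$ --- and pass to the ultralimit. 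This produces a point $\bar x$, a piece $\bar S=\lim_\omega S_n$ of the tree-graded structure of the resulting asymptotic cone, a point $\bar q\in\bar S$ which must equal $\operatorname{gate}_{\bar S}(\bar x)$ because $q_n$ is a near-nearest point of $S_n$ to $x_n$, and a point $\bar p\in\bar S$, such that the strict inequality $\dist(\bar x,\bar p)<\dist(\bar x,\bar q)+\dist(\bar q,\bar p)$ survives in the limit. This contradicts the fact that the geodesic from $\bar x$ to $\bar p$ must pass through $\operatorname{gate}_{\bar S}(\bar x)=\bar q$.

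Conditions (2) and (3) are handled by the same mechanism. If $\diam_X\pi_S(S')$ were unbounded over distinct $S,S'\in\Sc$, pick $S_n\neq S_n'$ in $\Sc$ and $p_n,p_n'\in S_n'$ realizing a diameter tending to infinity, rescale by that quantity with a suitable basepoint, and pass to the cone: the ultralimits $\bar S$ and $\bar S'=\lim_\omega S_n'$ are either distinct pieces or $\bar S'$ is a single point, and in both cases $\operatorname{gate}_{\bar S}$ is constant on $\bar S'$, contradicting that $\bar p,\bar p'\in\bar S'$ have distinct images. Likewise, a failure of (3) produces $x_n$ with $R_n:=\dist(x_n,S_n)$ and $y_n,z_n\in B(x_n;R_n)$ whose projections onto $S_n$ are far apart; after the appropriate rescaling one obtains in the cone a point $\bar x$ and a connected subset $B$ with $\bar y,\bar z\in B$ and $B\cap\bar S=\emptyset$, so $B$ lies in one component of the complement of $\bar S$, on which $\operatorname{gate}_{\bar S}$ is constant --- a contradiction.

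The routine tree-graded bookkeeping (convexity of pieces, uniqueness of the nearest point, the gate property) is standard. The genuine obstacle --- the technical heart of Sisto's argument --- is the quantitative step of choosing the rescaling sequences and basepoints so that the \emph{coarse} failure of each condition survives passage to the asymptotic cone as a genuine, non-degenerate contradiction rather than collapsing in the limit; in particular, for condition (1) one must rescale at the scale of the geodesic's excursion past the piece, not at the scale of the raw defect, and one must track carefully the interplay between $\dist(x,S)$, $\dist(x,\pi_S(x))$, and the length of that excursion.
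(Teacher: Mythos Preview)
The paper does not give its own proof of this theorem: it is stated as a citation of Sisto~\cite[Theorem 2.14]{S2013} and is used as a black box. There is therefore no argument in the paper to compare your proposal against.

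That said, your outline is the natural one given that the paper adopts the Dru{\c t}u--Sapir definition of relative hyperbolicity via tree-graded asymptotic cones, and your identification of the gate projection as the limiting object of the coarse projections $\pi_S$ is exactly right. You also correctly flag the genuine difficulty: in the contradiction for condition (1), if one rescales by $\lambda_n\asymp a_n+b_n$ the defect ratio $(a_n+b_n-c_n)/\lambda_n$ can tend to zero (e.g.\ $a_n=b_n=n^2$, $c_n=2n^2-n$), so the na\"{\i}ve rescaling collapses. Your remark that one must rescale ``at the scale of the geodesic's excursion past the piece'' is the right instinct, but as written the proposal does not actually specify such a basepoint/scale and verify that the limiting configuration is non-degenerate; that is the step where the argument would need real work to become a proof. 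Sisto's own treatment handles this through his quantitative projection lemmas rather than a pure cone argument, which sidesteps the rescaling issue.
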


To obtain a characterization of relatively hyperbolicity in terms of the existence of almost-projection systems, one needs an additional property. 

\begin{definition} Let $(X,\dist)$ be a complete geodesic metric space. A collection of geodesics $\Gc$ is a \emph{geodesic path system} if 
\begin{enumerate}
\item[(1)] for every distinct $x_1, x_2 \in X$ there exists a geodesic in $\Gc$ whose endpoints are $x_1$ and $x_2$.
\item[(2)] if $\alpha \in \Gc$, then every sub-path of $\alpha$ is also in $\Gc.$ 
\end{enumerate}
\end{definition}

\begin{definition}\label{defn:asym-trans-free}
 Let $(X,\dist)$ be a complete geodesic metric space and $\Sc$ a collection of subsets of $X$.
\begin{enumerate}
\item A geodesic triangle $\Tc$ in $X$ is \emph{$\Sc$-almost-transverse with constants $\kappa$ and  $\triangleD$} if
\begin{align*}
\diam_X(\Nc_X(S;\kappa) \cap \gamma) \leq \triangleD
\end{align*}
for every $S \in \Sc$ and edge $\gG$ of $\Tc$.
 \item The collection $\Sc$ is \emph{asymptotically transverse-free} if there exist $\lambda,~\sigma$ such that for each $\triangleD \geq 1, ~\kappa \geq \sG$ the following holds: if $\Tc$ is a geodesic triangle in $X$ which is $\Sc$-almost-transverse with constants $\kappa$ and $\triangleD$, then $\Tc$ is $(\lambda \triangleD)$-thin.
 \item The collection $\Sc$ is \emph{asymptotically transverse-free relative to a geodesic path system $\Gc$} if there exist $\lambda,~\sigma$ such that for each $\triangleD \geq 1, \kappa \geq \sG$ the following holds: if $\Tc$ is a geodesic triangle in $X$ whose sides are in $\Gc$ and is $\Sc$-almost-transverse with constants $\kappa$ and $\triangleD$, then $\Tc$ is $(\lambda \triangleD)$-thin.
 \end{enumerate}
\end{definition}

\begin{observation}\label{obs:almost-transverse}
Suppose $\Tc$ is geodesic triangle that is $\Sc$-almost-transverse with constants $\kappa$ and $\triangleD$.  If $\kappa' < \kappa$ and $\triangleD' > \triangleD$, then $\Tc$ is $\Sc$-almost-transverse with constants $\kappa'$ and $\triangleD'.$ 
\end{observation}

\begin{theorem}[{Sisto~\cite[Theorem 2.14]{S2013}}]\label{thm:Sisto_equiv} Let $(X,\dist)$ be a complete geodesic metric space and $\Sc$ a collection of subsets of $X$. Then the following are equivalent: 
\begin{enumerate}
\item $X$ is relatively hyperbolic with respect to $\Sc$,
\item $\Sc$ is asymptotically transverse-free and there exists an almost-projection system for $\Sc$,
\item $\Sc$ is asymptotically transverse-free relative to a geodesic path system and there exists an almost-projection system for $\Sc$,
\end{enumerate}
\end{theorem}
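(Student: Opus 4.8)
The plan is to invoke~\cite[Theorem 2.14]{S2013}, where this equivalence is established; for completeness I sketch the three implications. \emph{$(1) \Rightarrow (2)$:} An almost-projection system is already furnished by Theorem~\ref{thm:rel-hyp-sisto}, via the coarse closest-point projections $\pi_S$. For asymptotic transverse-freeness one uses that the space $\wh{X}$ obtained by coning off the members of $\Sc$ is Gromov hyperbolic and that $(X,\dist)$ satisfies a bounded coset penetration property: a geodesic triangle $\Tc$ in $X$ that is $\Sc$-almost-transverse with constants $\kappa$ and $\triangleD$ maps to a triangle in $\wh{X}$ lying within $O(\triangleD)$ of an actual geodesic triangle of $\wh{X}$, which is uniformly thin; lifting this thinness back to $X$ costs a loss linear in $\triangleD$, so $\Tc$ is $(\lambda\triangleD)$-thin for a uniform $\lambda$ once $\kappa$ exceeds a threshold $\sigma$. \emph{$(2) \Rightarrow (3)$:} Immediate, since asymptotic transverse-freeness over \emph{all} geodesic triangles is stronger than asymptotic transverse-freeness over the triangles whose sides lie in a fixed geodesic path system $\Gc$, and the same almost-projection system works.

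\emph{$(3) \Rightarrow (1)$:} This is the heart of the matter. Fix a non-principal ultrafilter $\omega$ and an arbitrary asymptotic cone $C = C_\omega(X,x_n,\lambda_n)$; by definition one must show that $C$ is tree-graded with respect to the collection $\wh{\Sc}$ of ultralimits of elements of $\Sc$. The almost-projection system passes to the ultralimit, giving for each $P \in \wh{\Sc}$ a $1$-Lipschitz retraction $C \to P$ satisfying a no-backtracking property; this forces distinct pieces to meet in at most one point and forces any geodesic of $C$ to meet each piece in a connected subsegment. It then remains to show that every simple geodesic triangle $\Tc$ in $C$ lies in a single piece. If a side of $\Tc$ spends positive length inside some piece $P$, then the projection property together with the simplicity of $\Tc$ forces $\Tc \subset P$. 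Otherwise every side of $\Tc$ is transverse to every piece, so $\Tc$ lifts to triangles $\Tc_n$ in $X$, with sides chosen in $\Gc$ (using that $\Gc$ is closed under subpaths), which are $\Sc$-almost-transverse with constants $\kappa_n \to \infty$ and $\triangleD_n = o(\lambda_n)$; by $(3)$ they are $(\lambda\triangleD_n)$-thin, so rescaling by $\lambda_n^{-1}$ and passing to the ultralimit exhibits $\Tc$ as a degenerate tripod, which is compatible with being tree-graded. Since $C$ was arbitrary, $X$ is relatively hyperbolic with respect to $\Sc$.

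The main obstacle is $(3) \Rightarrow (1)$, and within it the quantitative bookkeeping in the transverse case: one must select the sequences $\kappa_n$ and $\triangleD_n$ so that the hypothesis of $(3)$ applies to $\Tc_n$ for $\omega$-almost-every $n$, while the conclusion survives the rescaled ultralimit, all the while correctly handling the dichotomy between those sides of a limiting triangle that do, and those that do not, penetrate a piece.
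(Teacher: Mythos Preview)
Your route through $(3) \Rightarrow (1)$ has a genuine gap. You write that a simple geodesic triangle $\Tc$ in the asymptotic cone ``lifts to triangles $\Tc_n$ in $X$, with sides chosen in $\Gc$,'' and then apply condition~(3). But a geodesic in the cone is an ultralimit of $(1,o(\lambda_n))$-quasi-geodesics in $X$, not of $\Gc$-geodesics. If you instead replace those lifts by $\Gc$-geodesics between the lifted vertices, you lose control of the ultralimit: the $\Gc$-triangle in the cone need not coincide with $\Tc$, so showing \emph{it} is a tripod does not collapse $\Tc$. The phrase ``using that $\Gc$ is closed under subpaths'' does not help here, and you have not invoked the almost-projection system to transfer the transversality of the original sides to the $\Gc$-sides. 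So as written, condition~(3) is never actually applied to an object it governs.

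The paper avoids this entirely by taking a different route: it does \emph{not} redo the asymptotic-cone argument for~(3). Instead it proves, in Appendix~\ref{sec:pf_of_thm_Sisto}, that condition~(3) already implies the full (indeed the \emph{strong}) asymptotic transverse-freeness of~(2), and then invokes Sisto's original $(2)\Leftrightarrow(1)$. The upgrade $(3)\Rightarrow(2)$ is a Morse-lemma style bootstrap: given a $(1,c)$-quasi-geodesic triangle $\Tc$ that is $\Sc$-almost-transverse, one first uses the almost-projection system (Lemma~\ref{lem:geodesic-penetration}, which is Sisto's Corollary~2.7 / Lemma~2.10) to show that the $\Gc$-geodesics between its vertices are still $\Sc$-almost-transverse with comparable constants; condition~(3) then makes the $\Gc$-triangle thin; and a local Morse lemma (Lemma~\ref{lem:morse-lemma-type-result}) forces each side of $\Tc$ to fellow-travel the corresponding $\Gc$-side, so $\Tc$ itself is thin. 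This is exactly the missing step that would repair your argument --- but once you have it, you no longer need the direct cone argument at all.
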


To be precise, Sisto~\cite[Theorem 2.14]{S2013} only explicitly proves that (1) and (2) are equivalent, however his proof can be adapted to show the equivalence of (2) and (3). We will explain how in Appendix~\ref{sec:pf_of_thm_Sisto}.

\part{General remarks about properly embedded simplices}

\section{Basic properties of simplices}\label{sec:basic_properties_of_simplices}

In this section we explain some properties of simplices that we will use throughout the paper. We begin by considering the standard open simplex in $\Pb(\Rb^{d})$. 

\begin{example}\label{ex:basic_properties_of_simplices} Let 
\begin{align*}
S = \left\{ [x_1:\dots:x_{d}] \in \Pb(\Rb^{d}) : x_1>0, \dots, x_{d}> 0\right\}.
\end{align*}
Then $S$ is a $(d-1)$-dimensional simplex. Let $G \leq \GL_d(\Rb)$ denote the group generated by the group of diagonal matrices with positive entries and the group of permutation matrices. Then 
\begin{align*}
\Aut(S) = \{ [g] \in \PGL_d(\Rb) : g \in G\}.
\end{align*}
The Hilbert metric on $S$ can be explicitly computed as:
\begin{align*}
H_S\Big([x_1:\dots:x_{d}], [y_1:\dots:y_{d}] \Big) =\max_{1\leq i,j \leq d} \frac{1}{2} \abs{\log \frac{x_i y_j}{y_i x_j}}.
\end{align*}
For details, see~\cite[Proposition 1.7]{N1988}, ~\cite{dlH1993}, or ~\cite{V2014}.
\end{example}

We also observe the following. 

\begin{observation}\label{obs:cocompact-action-on-simplices}
Suppose $S \subset \Pb(\Rb^d)$ is a simplex and $H \leq \Aut(S)$ acts co-compactly on $S$. Then:
\begin{enumerate}
\item  If $H_0 \leq H$ is the subgroup of elements that fix the vertices of $S$, then $H_0$ also acts co-compactly on $S$.
\item If $F \subset \partial S$ is a face of $S$, then $\Stab_H(F)$ acts co-compactly on $F$. 
\end{enumerate}
\end{observation}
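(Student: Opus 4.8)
\textbf{Proof plan for Observation~\ref{obs:cocompact-action-on-simplices}.}

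The plan is to reduce everything to the explicit model in Example~\ref{ex:basic_properties_of_simplices}, where $S = \{[x_1:\dots:x_d] : x_i > 0\}$ and $\Aut(S)$ is the image in $\PGL_d(\Rb)$ of the group $G$ generated by positive diagonal matrices $D$ and the finite group $P$ of permutation matrices. Since $P$ is finite, $\Aut(S)$ is a finite extension of the image of $D$, which is isomorphic to $\Rb^{d-1}$ (after projectivizing). First I would observe that the image $\ol{D}$ of $D$ in $\PGL_d(\Rb)$ acts simply transitively on $S$ via the identification $[x_1:\dots:x_d] \mapsto (\log(x_1/x_d), \dots, \log(x_{d-1}/x_d)) \in \Rb^{d-1}$, and that under this identification the Hilbert metric on $S$ is bi-Lipschitz to the sup-norm metric on $\Rb^{d-1}$ (from the formula in Example~\ref{ex:basic_properties_of_simplices}). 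So $(S, H_S)$ is quasi-isometric to $\Rb^{d-1}$ and $\ol{D}$ is its translation group.

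For part (1): if $H \leq \Aut(S)$ acts cocompactly on $S$, then $H$ is a cocompact group of isometries of a space quasi-isometric to $\Rb^{d-1}$; I want to show the finite-index subgroup $H_0 = H \cap \ol{D}$ (the elements fixing each vertex — equivalently, having trivial permutation part) is still cocompact. The point is that $H_0$ has finite index in $H$: the natural homomorphism $\Aut(S) \to P$ (recording the permutation part) has image a finite group, so its restriction to $H$ has finite kernel-complement, i.e. $H_0 = \ker(H \to P)$ has index dividing $|P| = d!$ in $H$. A finite-index subgroup of a group acting cocompactly on a (proper, geodesic) metric space still acts cocompactly — this is the standard Milnor–Švarc-type fact that if $K$ acts cocompactly on $X$ and $K_0 \leq K$ has finite index, then a fundamental domain for $K$ can be translated by finitely many coset representatives to give one for $K_0$, hence $X/K_0$ is still compact. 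Applying this with $X = (S, H_S)$, $K = H$, $K_0 = H_0$ gives (1). (The properness of $(S,H_S)$ comes from Proposition~\ref{prop:hilbert_basic}.)

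For part (2): a face $F$ of $S$ is, in the model coordinates, the simplex spanned by a subset $I \subsetneq \{1,\dots,d\}$ of the vertices, i.e. $F = \{[x] : x_i > 0 \text{ for } i \in I, \ x_j = 0 \text{ for } j \notin I\}$, which is itself a lower-dimensional standard simplex living in $[\Spanset F]$, with its own Hilbert metric $H_F$ (and $H_F$ agrees with the restriction of the relevant cross-ratio). An element $g \in \Stab_H(F)$ permutes the vertices in $I$ among themselves; the restriction-to-$F$ map $\Stab_H(F) \to \Aut(F)$ is a well-defined homomorphism. The crucial step is to show its image acts cocompactly on $F$: given $p \in F$, pick an interior point $q \in S$; by cocompactness of $H$ on $S$ there are $h_n \in H$ with $h_n(q)$ ranging over a bounded set, and I want to choose a sequence $p_n \to p$ inside $S$ and group elements pushing a fixed basepoint near $p_n$ while staying in (a bounded neighborhood of) $\Stab_H(F)$. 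Concretely: I would argue that the subgroup of $H$ stabilizing $F$ has finite index in the subgroup stabilizing the vertex set of $F$, and then show cocompactness of $\Stab_H(F)$ on $F$ directly by a compactness/limiting argument using Proposition~\ref{prop:dynamics_of_automorphisms} or simply the diagonal structure: the diagonal elements that scale coordinates in $I$ while fixing the ratio structure transverse to $F$ already act cocompactly on $F$, and $H \cap \ol D$ projects (by forgetting the coordinates outside $I$) to a finite-index — hence cocompact — subgroup of the diagonal group acting on $F$, because $H \cap \ol D$ is cocompact in $\ol D \cong \Rb^{d-1}$ and any cocompact subgroup of $\Rb^{d-1}$ surjects onto a cocompact (indeed finite-index, up to the kernel being cocompact in the complementary subspace) image in the quotient $\Rb^{d-1} \to \Rb^{|I|-1}$. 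Combining this with part (1) applied inside $F$ cleans up the permutation ambiguity.

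\textbf{Main obstacle.} The genuinely delicate point is part (2): controlling cocompactness on the face $F$. The subtlety is that cocompactness of $H$ on $S$ only gives group elements whose \emph{action on all of $S$} is controlled, and one must extract from these a subfamily that (a) stabilizes $F$ and (b) still acts cocompactly on $F$ — a priori the elements moving a basepoint toward a given point of $F$ need not preserve $F$. The clean resolution is to pass to the diagonal model: once one knows $H \cap \ol D$ is a cocompact (lattice-like) subgroup of $\ol D \cong \Rb^{d-1}$, projecting to the coordinates indexing $F$ is a linear surjection of vector spaces sending a cocompact subgroup to a cocompact subgroup, which yields a cocompact group of translations on $F$; the permutation part only contributes a finite extension, handled by part (1). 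I expect writing this carefully — the identification of $\Stab_H(F)$'s image with a cocompact subgroup of $\Aut(F)$ and the bookkeeping of which diagonal/permutation elements preserve $F$ — to be the bulk of the work, but no single step is deep.
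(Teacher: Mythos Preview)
Your proposal is correct and follows the approach the paper intends: the paper gives no detailed proof at all, simply stating that the observation follows ``from the explicit description of the automorphism group'' in Example~\ref{ex:basic_properties_of_simplices}. Your write-up is a reasonable unpacking of that sentence.

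One simplification for part~(2): you over-complicate the ``main obstacle.'' The clean observation you almost make, but bury, is that $H_0 = H \cap \ol{D}$ already stabilizes \emph{every} face $F$ (an element fixing each vertex fixes each vertex set, hence each face), so $H_0 \leq \Stab_H(F)$ automatically. There is no need to extract face-preserving elements by a limiting argument or to invoke Proposition~\ref{prop:dynamics_of_automorphisms}. Once you know $H_0$ is cocompact in $\ol{D} \cong \Rb^{d-1}$ (by part~(1)), the restriction-to-$F$ map corresponds to the linear surjection $\Rb^{d-1} \to \Rb^{|I|-1}$, and the image of a cocompact subgroup under a surjective homomorphism of abelian groups is cocompact (if $H_0 + K = \Rb^{d-1}$ with $K$ compact, then $\pi(H_0) + \pi(K) = \Rb^{|I|-1}$). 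Also, drop the phrase ``finite-index'' for this image: nothing in the statement assumes $H$ is discrete, so the image need not be a lattice, only cocompact --- which is all you need.
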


\begin{proof}
Notice that $H_0$ is a finite index subgroup of $H$ and so (1) follows. 

By changing coordinates we can assume that 
\begin{align*}
S &= \{[x_1: \ldots:x_k:0:\ldots:0]: x_1>0, \ldots, x_k>0\} \text{ and } \\
F&=\{[x_1: \ldots:x_\ell:0:\ldots:0]: x_1>0, \ldots, x_\ell>0\}
\end{align*}
where $\ell < k$. 

Consider the onto map $\pi : S \rightarrow F$ which projects to the first $\ell$ coordinates. Then $h \circ \pi = \pi \circ h$ for all $h \in H_0$. By (1) there exists a compact set $K \subset S$ with $H_0 \cdot K = S$. Then $\pi(K) \subset F$ is compact and 
\begin{align*}
H_0 \cdot \pi(K) = \pi(H_0 \cdot K) = \pi(S) = F.
\end{align*}
So $H_0$ acts co-compactly on $F$. Since $H_0 \leq \Stab_H(F)$ this proves (2). 
\end{proof}

\begin{observation}\label{obs:QI_to_Rk}
If $S \subset \Pb(\Rb^d)$ is a simplex, then $(S,H_S)$ is quasi-isometric to real Euclidean $(\dim S)$-space. 
\end{observation}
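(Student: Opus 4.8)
The plan is to work with the explicit model from Example~\ref{ex:basic_properties_of_simplices}. Since every simplex $S$ is the image under some $g \in \PGL_d(\Rb)$ of a standard simplex of the same dimension, and since $g$ induces an isometry from $(S, H_S)$ to $(gS, H_{gS})$ (by the $\Aut$-invariance and projective invariance of the Hilbert metric, Proposition~\ref{prop:hilbert_basic}), it suffices to treat the case where $S$ is the standard open simplex
\begin{align*}
S = \left\{ [x_1:\dots:x_{k+1}] \in \Pb(\Rb^{k+1}) : x_1>0,\dots,x_{k+1}>0\right\},
\end{align*}
so that $\dim S = k$. Here I am silently using that the intrinsic Hilbert metric of $S$ as a convex set open in its span depends only on $S$, not on the ambient $\Pb(\Rb^d)$.

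First I would set up the candidate quasi-isometry explicitly. Define $\Phi : S \to \Rb^k$ by
\begin{align*}
\Phi\big([x_1:\dots:x_{k+1}]\big) = \Big( \tfrac12\log\tfrac{x_1}{x_{k+1}},\ \tfrac12\log\tfrac{x_2}{x_{k+1}},\ \dots,\ \tfrac12\log\tfrac{x_k}{x_{k+1}} \Big),
\end{align*}
which is well defined (independent of the choice of homogeneous coordinates) and is in fact a bijection onto $\Rb^k$. Using the formula from Example~\ref{ex:basic_properties_of_simplices},
\begin{align*}
H_S\big([x],[y]\big) = \max_{1 \leq i,j \leq k+1} \tfrac12\Big| \log\tfrac{x_i y_j}{y_i x_j}\Big|,
\end{align*}
I would write $u_i = \tfrac12\log(x_i/x_{k+1})$ and $v_i = \tfrac12\log(y_i/y_{k+1})$ for $1 \leq i \leq k+1$ (so $u_{k+1} = v_{k+1} = 0$), and observe that the expression above equals $\max_{i,j} |(u_i - v_i) - (u_j - v_j)|$. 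Setting $t_i = u_i - v_i$, this is $\max_i t_i - \min_i t_i$ over $i \in \{1,\dots,k+1\}$, with $t_{k+1} = 0$. This is precisely a standard norm on $\Rb^k$ (the "$A_k$-type" norm $\max(0, t_1,\dots,t_k) - \min(0,t_1,\dots,t_k)$ in the coordinates $t = \Phi([x]) - \Phi([y])$), and any two norms on $\Rb^k$ are bi-Lipschitz equivalent. Hence $H_S([x],[y])$ is comparable, up to a multiplicative constant depending only on $k$, to the Euclidean distance $\|\Phi([x]) - \Phi([y])\|_2$, and $\Phi$ is a surjective bi-Lipschitz equivalence — in particular a quasi-isometry.

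There is essentially no serious obstacle here; the only things to be careful about are (i) confirming that the $\max$ over all $i,j \leq k+1$ really does reduce, via the substitution, to the difference of extreme values of the vector $(t_1,\dots,t_k,0)$, which is a short algebraic check, and (ii) recording that the reduction to the standard simplex is legitimate because the Hilbert metric is intrinsic and projectively invariant. The norm-equivalence step is the classical finite-dimensional fact, so no estimates need to be "ground through." I would close by noting that this is well known and citing, e.g.,~\cite{dlH1993} or~\cite{V2014}, but the self-contained argument above via the explicit distance formula is short enough to include in full.
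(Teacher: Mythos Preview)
Your proposal is correct and follows essentially the same approach as the paper: both reduce to the standard simplex, define a logarithmic coordinate map to $\Rb^{\dim S}$, verify via the explicit Hilbert distance formula that this map is an isometry onto $\Rb^{\dim S}$ equipped with a norm, and then invoke equivalence of norms on a finite-dimensional vector space. The differences (choice of base coordinate, placement of the factor $\tfrac12$, and writing the norm as $\max - \min$ versus the paper's explicit $\max$ formula) are purely cosmetic.
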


\begin{proof} By replacing $\Rb^d$ with $\Spanset S$ and picking suitable coordinates we can assume that 
\begin{align*}
S = \left\{ [x_1:\dots:x_{k+1}] \in \Pb(\Rb^{k+1}) : x_1>0, \dots, x_{k+1}> 0\right\}.
\end{align*}
Next consider the map $\Phi: S \rightarrow \Rb^{k}$ defined by
\begin{align*}
\Phi\Big([x_1:\dots:x_{k+1}]\Big) = \left( \log \frac{x_2}{x_1}, \dots,  \log \frac{x_{k+1}}{x_1} \right)
\end{align*}
and define a distance $d$ on $\Rb^{k}$ by 
\begin{align*}
d(v,w) = \frac{1}{2} \max\left\{ \max_{1 \leq i \leq k} \abs{v_i-w_i}, \max_{1\leq i,j \leq k} \abs{(v_i-v_j)-(w_i-w_j)} \right\}.
\end{align*}

Since $d$ is induced by a norm, $(\Rb^{k},d)$ is quasi-isometric to real Euclidean $k$-space. Further,  
\begin{align*}
d & \Big( \Phi \big( [x_1:\ldots:x_{k+1}] \big),\Phi \big( [y_1:\ldots:y_{k+1}] \big) \Big) = \max_{1\leq i,j \leq k+1} \frac{1}{2} \abs{\log \frac{x_i y_j}{y_i x_j}}
\end{align*}
and so Example ~\ref{ex:basic_properties_of_simplices} implies that  $\Phi$ induces an isometry $(S,H_S) \rightarrow (\Rb^{k}, d)$. Hence, $(S,H_S)$ is quasi-isometric to real Euclidean $k$-space. 
\end{proof}

 We will frequently use the following observation about the faces of properly embedded simplices. 

\begin{observation}\label{obs:faces_of_simplices_are_properly_embedded}
Suppose $\Omega \subset \Pb(\Rb^d)$ is a properly convex domain and $S \subset \Omega$ is a properly embedded simplex. If $x \in \partial S$, then 
\begin{enumerate}
\item $F_S(x)$ is properly embedded in $F_\Omega(x)$. 
\item $F_S(x) = \overline{S} \cap F_\Omega(x)$. 
\end{enumerate}
\end{observation}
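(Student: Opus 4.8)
\textbf{Proof proposal for Observation~\ref{obs:faces_of_simplices_are_properly_embedded}.}

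The plan is to reduce everything to explicit coordinates using the structure of simplices. By replacing $\Rb^d$ with $\Spanset S$ and applying a projective transformation, I may assume $S$ is the standard simplex $\{[x_1:\dots:x_k] : x_i > 0 \text{ for all } i\}$ inside some affine chart, with $\Omega$ a properly convex domain containing $S$ properly embedded. The first step is to identify the faces of $S$: every face $F_S(x)$ for $x \in \partial S$ is of the form $\relint(\CH\{e_{i_1}, \dots, e_{i_q}\})$ for some proper subset $\{i_1, \dots, i_q\} \subsetneq \{1, \dots, k\}$ of the vertices $e_1, \dots, e_k$ of $S$; this is immediate from Example~\ref{ex:basic_properties_of_simplices} and the definition of open faces. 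In particular, each face of $S$ is itself a lower-dimensional simplex.

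For part (1), I need to show $F_S(x) \hookrightarrow F_\Omega(x)$ is a proper map. Since $S$ is properly embedded in $\Omega$, we have $\partiali S \subset \partiali \Omega$ by the Observation characterizing properly embedded subsets. Now $F_S(x)$ is a simplex with vertices among $e_1, \dots, e_k$, all of which lie in $\partial S \subset \partial \Omega$ (using that $x \in \partial S$ forces $F_S(x)$ to be a proper face). The ideal boundary of $F_S(x)$ consists of relative interiors of its proper subfaces, which are again faces of $S$ contained in $\partiali S \subset \partiali \Omega$; these are therefore in $\partial \Omega$, hence in $\partiali F_\Omega(x)$ (a point of $\overline{\Omega}$ lying in $F_\Omega(x)$ but not in the relatively open set $F_\Omega(x)$, since it lies on the boundary of the span). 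By the Observation on properly embedded convex subsets applied inside the convex set $F_\Omega(x)$, this gives that $F_S(x)$ is properly embedded in $F_\Omega(x)$.

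For part (2), the inclusion $F_S(x) \subseteq \overline{S} \cap F_\Omega(x)$ is clear: $F_S(x) \subseteq \overline S$ by definition, and $F_S(x) \subseteq F_\Omega(x)$ because any open segment in $\overline S \subseteq \overline\Omega$ through $x$ is also an open segment in $\overline\Omega$. For the reverse inclusion, suppose $y \in \overline{S} \cap F_\Omega(x)$. Then there is an open line segment in $\overline\Omega$ containing both $x$ and $y$; I want to promote this to an open segment contained in $\overline S$. Extend $[x,y]$ maximally within $\overline S$ to a segment $[a,b]$ with $x, y \in (a,b)$ relative to $\overline S$ — this is possible precisely because $y \in F_\Omega(x)$ means $x$ is not an extreme point of the segment direction, and in the simplex picture one checks directly that $y$ lies in the relative interior of the smallest face of $S$ containing $x$, which is $F_S(x)$; equivalently, $x$ and $y$ have the same set of nonzero coordinates. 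Concretely: writing $x = [x_1 : \dots : x_k]$ and $y = [y_1 : \dots : y_k]$ in barycentric-type coordinates on $\overline S$, the condition that some open segment of $\overline\Omega$ contains both $x$ and $y$ forces (by proper embeddedness and convexity) that $\{i : x_i \neq 0\} = \{i : y_i \neq 0\}$, which is exactly the statement $y \in F_S(x)$.

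The main obstacle I anticipate is the reverse inclusion in part (2): carefully justifying that an open segment of $\overline\Omega$ through $x$ and $y$ with $y \in \overline S$ must actually be realizable as (or contained in) an open segment of $\overline S$. The key leverage is that $S$ is properly embedded, so $\partiali S \subset \partiali \Omega$, which prevents the segment from "escaping" the face structure of $S$ — if $y$ were in a strictly smaller face of $S$ than $F_S(x)$, one endpoint of the extended segment would be forced into $\partial\Omega$ in a way incompatible with $x$ being interior to the segment. I expect this to go through cleanly once the coordinates of Example~\ref{ex:basic_properties_of_simplices} are in place, together with Observation~\ref{obs:faces} part (4) applied to $S$ and to $\Omega$.
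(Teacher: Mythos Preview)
Your argument for part (1) has a genuine gap at the crucial step. You correctly observe that $\partial F_S(x) \subset \partial S \subset \partial \Omega$, but then conclude ``hence in $\partial F_\Omega(x)$'' with only the parenthetical justification that such a point ``lies on the boundary of the span.'' This is the entire content of what must be shown, and it is not automatic: since $x \in \partial \Omega$, the whole face $F_\Omega(x)$ already sits inside $\partial \Omega$, so membership in $\partial \Omega$ gives no information whatsoever about whether a point lies in $F_\Omega(x)$ or in $\partial F_\Omega(x)$. Nor have you given any reason why a point of $\partial F_S(x)$ should lie outside $[\Spanset F_\Omega(x)]$ --- a priori $F_\Omega(x)$ could be much larger than $F_S(x)$ and contain proper subfaces of $F_S(x)$ in its relative interior. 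The paper handles exactly this point by contradiction: assuming some $y \in F_\Omega(x) \cap \partial F_S(x)$, it uses the combinatorics of the simplex to produce a point $z \in \partial S$ with $(z,x) \subset \Omega$ but $[z,y] \subset \partial \Omega$, violating Observation~\ref{obs:faces}(4). This step genuinely uses that $S$ is properly embedded in $\Omega$, and your sketch never invokes that hypothesis in a substantive way for part (1).

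Your plan for part (2) has a related issue. The coordinate claim ``$y \in F_\Omega(x)$ forces $\{i : x_i \neq 0\} = \{i : y_i \neq 0\}$'' is again exactly what needs proof: the open segment in $\overline{\Omega}$ witnessing $y \in F_\Omega(x)$ may well extend outside $\overline{S}$, so nothing about the coordinates of $y$ is forced directly. The paper instead deduces part (2) from part (1): for $u \in \overline{S} \cap F_\Omega(x)$ and $w \in (x,u)$, one has $x,u \in \overline{F_S(w)} \cap F_\Omega(w)$, and since $\partial F_S(w) \cap F_\Omega(w) = \emptyset$ by part (1), both $x$ and $u$ lie in $F_S(w)$. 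You do gesture toward Observation~\ref{obs:faces}(4) at the end, which is indeed the right tool, but the argument as written does not close.
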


\begin{proof} By definition $F_S(x) \subset F_\Omega(x)$. So to show that $F_S(x)$ is properly embedded in $F_\Omega(x)$ it is enough to show that 
\begin{align*}
\partial F_S(x) \subset \partial F_\Omega(x).
\end{align*}
Suppose not. Then, since $\overline{F_S(x)} \subset \overline{F_\Omega(x)}$, there exists some $y \in F_\Omega(x) \cap  \partial F_S(x)$. Let $v_1,\dots, v_p$ denote the vertices of $S$. Then after relabelling there exist $0 < \ell < k < p$ such that:
\begin{align*}
\overline{F_S(x)} = \partial S \cap \Pb(\Spanset\{v_1,\dots, v_k\})
\end{align*}
and 
\begin{align*}
\overline{F_S(y)} = \partial S \cap \Pb(\Spanset\{v_1,\dots, v_\ell\}).
\end{align*}
Pick $z \in \partial S$ such that 
\begin{align*}
\overline{F_S(z)} = \partial S \cap \Pb(\Spanset\{v_{k+1},\dots, v_p\}).
\end{align*}
Then $(z,x) \subset \Omega$, but 
\begin{align*}
[z,y] \subset \overline{S} \cap \Pb(\Spanset\{v_1,\dots, v_{k-1}, v_{k+1},\dots, v_p\}) \subset \partial S \subset \partial \Omega.
\end{align*}
But, since $y \in F_\Omega(x)$, this contradicts Observation~\ref{obs:faces} part (4). Hence $\partial F_S(x) \subset \partial F_\Omega(x)$ and so $F_S(x)$ is properly embedded in $F_\Omega(x)$. 

Next we show that $F_S(x) = \overline{S} \cap F_\Omega(x)$. By definition $F_S(x) \subset \overline{S} \cap F_\Omega(x)$. To establish the other inclusion, fix $u \in \overline{S} \cap F_\Omega(x)$ and then fix $w \in (x,u)$. Then, since $F_\Omega(x) = F_\Omega(w)$, we have
\begin{align*}
x,u \in  F_\Omega(w) \cap \overline{F_S(w)}.
\end{align*}
But by part (1)
\begin{align*}
\emptyset = F_\Omega(w) \cap \partial F_S(w).
\end{align*}
So $x,u \in F_S(w)$. So $F_S(x)=F_S(w) = F_S(u)$ and, in particular, $u \in F_S(x)$. Since $u \in \overline{S} \cap F_\Omega(x)$ was arbitrary, we see that $F_S(x) \supset \overline{S} \cap F_\Omega(x)$. 
\end{proof}

\section{Linear projections onto simplices}\label{sec:linear_projections}

In this section we construct certain linear maps associated to a properly embedded simplex  in a properly convex domain. 

\begin{definition} Suppose that $\Omega \subset \Pb(\Rb^d)$ is a properly convex domain and $S \subset \Omega$ is a properly embedded simplex with $\dim S = (q-1) \geq 1$. A set of co-dimension one linear subspaces $\Hc:=\{H_1, \dots, H_{q}\}$ is \emph{$S$-supporting} when:
\begin{enumerate}
\item Each $\Pb(H_j)$ is a supporting hyperplane of $\Omega$,
\item If $F_1, \dots, F_{q} \subset \partial S$ are the boundary faces of maximal dimension, then (up to relabelling) $F_j \subset \Pb(H_j)$ for all $1 \leq j \leq q$.
\end{enumerate}
\end{definition}

\begin{proposition} \label{prop:W-direct-sum}
Suppose that $\Omega \subset \Pb(\Rb^d)$ is a properly convex domain, $S \subset \Omega$ is a properly embedded simplex, and $\Hc$ is a set of  $S$-supporting hyperplanes. Then 
\begin{align*}
\Spanset S \oplus \left(\cap_{H \in \Hc} H\right)= \Rb^d ~~~ \text{ and } ~~~ \Omega \cap  \Pb \left( \cap_{H \in \Hc} H \right)= \emptyset.
\end{align*}
\end{proposition}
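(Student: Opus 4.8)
The plan is to work in the lift to $\Rb^d$ and use the supporting hyperplanes to pin down a direct sum decomposition. Write $\dim S = q-1$, let $v_1,\dots,v_q$ be the vertices of $S$, and fix lifts $\wt v_1,\dots,\wt v_q \in \Rb^d$; since $S$ is a simplex these are linearly independent and $\Spanset S = \Spanset\{\wt v_1,\dots,\wt v_q\}$ is $q$-dimensional. Let $\Hc = \{H_1,\dots,H_q\}$ be the $S$-supporting hyperplanes, labelled so that the maximal boundary face $F_j$ of $S$ (the face opposite $v_j$) satisfies $F_j \subset [H_j]$. The key linear-algebraic fact I would establish first is: for each $j$, we have $\wt v_i \in H_j$ for all $i \neq j$, but $\wt v_j \notin H_j$. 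The first part is immediate because $F_j \subset [H_j]$ and $F_j$ contains (the projective classes of) all $\wt v_i$ with $i \neq j$, hence $\Spanset F_j \subset H_j$ contains each such $\wt v_i$. The second part follows because $[H_j]$ is a supporting hyperplane of $\Omega$, so $[H_j] \cap \Omega = \emptyset$; if $\wt v_j \in H_j$ then $\Spanset S \subset H_j$, but $S \subset \Omega$ is nonempty and $S \subset [\Spanset S] \subset [H_j]$, contradicting $[H_j]\cap\Omega = \emptyset$.

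Next I would prove the direct sum statement. Set $W := \cap_{H \in \Hc} H = \cap_{j=1}^q H_j$. First, $\Spanset S \cap W = 0$: if $x \in \Spanset S \cap W$, write $x = \sum_i c_i \wt v_i$; applying the observation above, for each fixed $j$ the condition $x \in H_j$ together with $\wt v_i \in H_j$ for $i\neq j$ forces $c_j \wt v_j \in H_j$, hence $c_j = 0$ since $\wt v_j \notin H_j$. Thus $x = 0$. Now a dimension count: $\dim \Spanset S = q$, and $\dim W \geq d - q$ since $W$ is an intersection of $q$ hyperplanes. Combined with $\Spanset S \cap W = 0$, this forces $\dim W = d - q$ and $\Spanset S \oplus W = \Rb^d$. (To be careful about the possibility that the $H_j$ are not in "general position" — i.e. $\dim W$ could a priori exceed $d-q$ — note that $\dim W \le d - q$ follows from the existence of the complement $\Spanset S$ of dimension $q$ with $\Spanset S \cap W = 0$, so in fact both inequalities are forced simultaneously; there is no circularity since $\Spanset S \cap W = 0$ was proved independently.)

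For the second conclusion, $\Omega \cap [W] = \emptyset$: this is immediate since $W \subset H_1$ so $[W] \subset [H_1]$, and $[H_1] \cap \Omega = \emptyset$ because $[H_1]$ is a supporting hyperplane of $\Omega$. I do not anticipate a serious obstacle here; the only slightly delicate point is the correct bookkeeping of which vertices lie on which hyperplane (i.e. that $F_j \subset [H_j]$ really does give $\wt v_i \in H_j$ for exactly the $i \neq j$), and making sure the labelling of faces matches the labelling of hyperplanes as in the definition of $S$-supporting. Once that combinatorial setup is nailed down, both statements follow from elementary linear algebra and the definition of a supporting hyperplane.
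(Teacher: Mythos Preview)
Your proof is correct and follows essentially the same approach as the paper: both establish that $\wt v_i \in H_j$ for $i \neq j$ while $\wt v_j \notin H_j$, then use this to show $\Spanset S \cap W = \{0\}$ and conclude by a dimension count, with $\Omega \cap [W] = \emptyset$ following trivially from $W \subset H_1$. The arguments differ only in cosmetic details of how the linear-algebraic contradiction for $\Spanset S \cap W = \{0\}$ is phrased.
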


\begin{proof} Suppose $\Hc:=\{H_1, \dots, H_{q}\}$, $F_1, \dots, F_{q} \subset \partial S$  are the boundary faces of maximal dimension, and $v_1,\dots, v_q$ are the vertices of $S$ labelled so that $F_j \subset \Pb(H_j)$ and $v_j \notin \overline{F_j}$.  Let $\overline{v}_1,\dots,\overline{v}_q \in \Rb^d \setminus \{0\}$ be lifts of $v_1,\ldots, v_q$ respectively.

First notice that 
\begin{align*}
\Omega \cap   \Pb \left( \cap_{H \in \Hc} H \right) = \emptyset
\end{align*}
 since $\Pb(H_j) \cap \Omega =\emptyset$ for every $j$.

Since $S \subset \Pb(v_j+H_j)$ and $S \cap \Pb(H_j) = \emptyset$, we must have $v_j \notin \Pb(H_j)$ and hence 
\begin{align}
\label{eq:direct_sum_vertex_opp_hyp}
v_j \oplus H_j=\Rb^d
\end{align} 
for every $j$. Further, 
\begin{align}
\label{eq:other_vertices_in_face}
v_1,\dots, v_{j-1}, v_{j+1}, \dots, v_q \in \overline{F_j} \subset \Pb(H_j)
\end{align}
for each $j$.

Define $W : =\cap_{H \in \Hc} H$. We claim that 
\begin{align*}
\Spanset S \oplus W= \Rb^d.
\end{align*}
Since 
\begin{align*}
\dim W +\dim \Spanset S \geq (d - q)+q = d,
\end{align*}
it suffices to show that 
\begin{align*}
\Spanset S  \cap W= \{0\}. 
\end{align*}
If not, we can find  $\alpha_1,\dots, \alpha_q \in \Rb$ such that
\begin{align*}
0 \neq \sum_{j=1}^q \alpha_j \overline{v}_j \in W.
\end{align*}
By relabelling we can assume that $\alpha_1 \neq 0$. Then by Equation~\eqref{eq:other_vertices_in_face}
\begin{align*}
v_1 \subset \Spanset\{ v_2,\dots, v_q\} + W \subset H_1
\end{align*}
which contradicts Equation~\eqref{eq:direct_sum_vertex_opp_hyp}. So
\begin{equation*}
\Spanset S \oplus W= \Rb^d. \qedhere
\end{equation*}

\end{proof}

Using Proposition \ref{prop:W-direct-sum}, we define the following linear projection.

\begin{definition}
Suppose $\Omega \subset \Pb(\Rb^d)$ is a properly convex domain, $S \subset \Omega$ is a properly embedded simplex, and $\Hc$ is a set of $S$-supporting hyperplanes. Define $L_{S,\Hc} \in \End(\Rb^d)$ to be the linear projection 
\begin{align*}
\Spanset S \oplus \left(\cap_{H \in \Hc} H\right) \longrightarrow \Spanset S
\end{align*}
We call $L_{S,\Hc}$ the \emph{linear projection of $\Omega$ onto $S$ relative to $\Hc$}. 
\end{definition}

Calling $L_{S,\Hc}$ the linear projection of $\Omega$ onto $S$ is motivated by the following observation. 

\begin{observation} \label{obs:lin-proj}
$L_{S,\Hc}(\Omega)=S.$
\end{observation}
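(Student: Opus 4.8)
The claim is that $L_{S,\Hc}(\Omega) = S$. The key structural input is Proposition~\ref{prop:W-direct-sum}: writing $W := \cap_{H \in \Hc} H$, we have $\Rb^d = \Spanset S \oplus W$ and $[W] \cap \Omega = \emptyset$. The plan is to prove the two inclusions separately, working in a well-chosen affine chart.

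\textbf{The inclusion $L_{S,\Hc}(\Omega) \subset S$.} First I would set up coordinates adapted to the simplex: let $v_1,\dots,v_q$ be the vertices of $S$ with lifts $\ol v_1,\dots,\ol v_q$, and let $F_j \subset [H_j]$ be the maximal boundary faces, labelled so that $v_j \notin [H_j]$. As in the proof of Proposition~\ref{prop:W-direct-sum}, $\ol v_1,\dots,\ol v_q$ are linearly independent and span $\Spanset S$, and each $H_j = \Spanset\{\ol v_i : i \neq j\} \oplus W$. Writing a point of $\Omega$ as $[x]$ with $x = \sum_j t_j \ol v_j + w$ where $w \in W$, the linear projection is $L_{S,\Hc}([x]) = [\sum_j t_j \ol v_j]$. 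I must show that all $t_j$ are nonzero and have the same sign — i.e. (up to sign) all $t_j > 0$ — since $S = \{[\sum t_j \ol v_j] : t_j > 0\}$. That $t_j \neq 0$: if some $t_j = 0$, then $[x] \in [\Spanset\{\ol v_i : i \neq j\} \oplus W] = [H_j]$, contradicting $[x] \in \Omega$ and $[H_j] \cap \Omega = \emptyset$. That the $t_j$ all share a sign: pick any interior point $p = [\sum \ol v_j]$ of $S$ (all coefficients $+1$); for $[x] \in \Omega$ the segment from $p$ to $[x]$ stays in $\Omega$, hence avoids every $[H_j]$, so along that segment the $j$-th coordinate never vanishes; at $p$ it is $+1$, so at $[x]$ it has the same sign (after choosing the lift $x$ appropriately, uniformly in $j$, this says all $t_j > 0$). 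Hence $L_{S,\Hc}([x]) \in S$. I would phrase this via connectedness of $\Omega$ and the fact that the $q$ linear functionals cutting out the $[H_j]$ (restricted to $\Spanset S$, these are the barycentric coordinates) are nowhere zero on $\Omega$.

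\textbf{The inclusion $S \subset L_{S,\Hc}(\Omega)$.} Since $S \subset \Omega$ and $L_{S,\Hc}$ restricted to $[\Spanset S]$ is the identity (it is a projection onto $\Spanset S$), we get $S = L_{S,\Hc}(S) \subset L_{S,\Hc}(\Omega)$ immediately. This direction is essentially trivial.

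\textbf{Main obstacle.} The only genuine content is the first inclusion, and within it the sign-consistency argument: making precise that the barycentric coordinate functionals associated to the $[H_j]$ do not vanish on the connected set $\Omega$, so each has constant sign there, and that these signs can be simultaneously normalized to be positive by an appropriate choice of lift. This is a short argument but requires care about projective vs. linear representatives; the nonvanishing itself is just $[H_j] \cap \Omega = \emptyset$ from Proposition~\ref{prop:W-direct-sum}, and everything else is bookkeeping with the direct sum decomposition. I expect no serious difficulty beyond keeping the coordinates straight.
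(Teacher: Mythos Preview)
Your proposal is correct and takes essentially the same approach as the paper: both use that $L_{S,\Hc}$ is well-defined on $\Omega$ (since $[\ker L_{S,\Hc}] \cap \Omega = \emptyset$), that $L_{S,\Hc}(\Omega)$ is connected and contains $S$, and that it avoids $\partial S$ because $L_{S,\Hc}^{-1}(\partial S) \subset \cup_j [H_j]$ misses $\Omega$. The paper packages this last step as a single topological statement rather than tracking signs of barycentric coordinates, but your coordinate argument is just an explicit unpacking of the same connectedness reasoning.
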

\begin{proof}
By Proposition \ref{prop:W-direct-sum}, $\Pb( \ker L_{S,\Hc} ) \cap \Omega = \emptyset$, so $L_{S,\Hc}$ is well-defined on $\Omega$. The set $L_{S,\Hc}(\Omega) \subset \Pb(\Spanset S)$ is  connected and contains $S=L_{S,\Hc}(S)$. Further
\begin{align*}
L^{-1}_{S,\Hc}(\partial S) =\cup_{j=1}^q L^{-1}_{S,\Hc}\Big(\overline{F_j}\Big) \subset \cup_{j=1}^q  \Pb(H_j)
\end{align*}
and so $\Omega \cap L^{-1}_{S,\Hc}(\partial S) =\emptyset$. Thus $L_{S,\Hc}(\Omega) = S$. 
\end{proof}

We now derive some basic properties of these projection maps. First recall, from Definition~\ref{defn:faces_of_subsets}, that 
\begin{align*}
F_\Omega(X) = \cup_{x \in X} F_\Omega(x)
\end{align*}
when $\Omega$ is a properly convex domain and $X \subset \overline{\Omega}$. 

\begin{proposition} \label{prop:W-intersect-boundary}
Suppose $\Omega \subset \Pb(\Rb^d)$ is a properly convex domain, $S \subset \Omega$ is a properly embedded simplex, and $\Hc$ is a set of  $S$-supporting hyperplanes. Then 
\begin{enumerate}
\item If $x \in \partial \Omega \cap \Pb \left( \cap_{H \in \Hc} H \right)$ and $y \in \partial S$, then $[x,y] \subset \partial \Omega$. 
\item $\Pb\left(\cap_{H \in \Hc} H \right) \cap F_\Omega(\partial S) = \emptyset$.
\end{enumerate}
\end{proposition}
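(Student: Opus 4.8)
The plan is to prove both parts simultaneously, deriving part (2) as an easy consequence of part (1) together with the already-established structure theory. Write $W := \cap_{H \in \Hc} H$, let $v_1, \dots, v_q$ be the vertices of $S$, and let $F_1, \dots, F_q$ be the maximal-dimensional boundary faces, labelled so that $F_j \subset [H_j]$ and $v_j \notin \overline{F_j}$; as in the proof of Proposition~\ref{prop:W-direct-sum} we have $v_1, \dots, \widehat{v_j}, \dots, v_q \in \overline{F_j} \subset [H_j]$, hence $\Spanset\{v_1,\dots,\widehat{v_j},\dots,v_q\} \subset [H_j]$. Combined with $[H_j] \cap \Omega = \emptyset$ from Proposition~\ref{prop:W-direct-sum}, this means each open face $S(i_1,\dots,i_r)$ of $S$ of dimension $\leq q-2$ lies in some $[H_j]$, hence in $\partial\Omega$; only the top face $\relint S$ meets $\Omega$.

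For part (1), fix $x \in \partial\Omega \cap [W]$ and $y \in \partial S$, and suppose toward a contradiction that $(x,y) \cap \Omega \neq \emptyset$. First I would reduce to the case $y = v_j$ for some vertex: if $(x,y) \subset \Omega$ then by Observation~\ref{obs:faces}(4) (applied to the segment from $y$ to a vertex of the face $F_S(y)$) one gets an interior segment from $x$ to that vertex, so we may as well take $y = v_j$. Now consider the $2$-plane $P := \Spanset\{x, v_j\}$ (these are distinct points since $v_j \in \Omega$ and $x \in \partial\Omega$). The key computation: apply the linear projection $L_{S,\Hc}$ to the segment $[x, v_j]$. Since $\overline{x} \in W = \ker L_{S,\Hc}$ we have $L_{S,\Hc}(t\overline{x} + (1-t)\overline{v}_j) = (1-t)\overline{v}_j$, so $L_{S,\Hc}$ maps every point of $[x,v_j)$ to $v_j$ and maps $x$ itself to the zero vector — i.e. $L_{S,\Hc}([x,v_j]) \subset \{v_j\} \cup [\ker L_{S,\Hc}]$. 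But if some $z \in (x,v_j) \cap \Omega$, then $L_{S,\Hc}(z) = v_j \in \partial S$, contradicting Observation~\ref{obs:lin-proj} (which forces $L_{S,\Hc}(\Omega) = S$, the open simplex, disjoint from $\partial S$). Therefore $(x,v_j) \cap \Omega = \emptyset$, so $[x,v_j] \subset \partial\Omega$, and unwinding the reduction gives $[x,y] \subset \partial\Omega$.

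Part (2) then follows quickly. Suppose $z \in [W] \cap F_\Omega(\partial S)$, so $z \in F_\Omega(y)$ for some $y \in \partial S$; in particular $z \in \partial\Omega$ (since $[W] \cap \Omega = \emptyset$), so $z \in \partial\Omega \cap [W]$ and part (1) applies: $[z,y] \subset \partial\Omega$. But $z \in F_\Omega(y)$ means by definition there is an open segment in $\overline{\Omega}$ containing both $z$ and $y$; an open segment in $\overline\Omega$ through $y \in \partial\Omega$ must meet $\Omega$ (otherwise it would lie in $\partial\Omega$, but an open segment in $\overline\Omega$ is never contained in $\partial\Omega$ by proper convexity), and since $y$ is interior to that segment, points of the segment on the $z$-side land in $\Omega$ as well — so $[z,y]$ meets $\Omega$, contradicting $[z,y] \subset \partial\Omega$. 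Hence $[W] \cap F_\Omega(\partial S) = \emptyset$.

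The main obstacle I anticipate is the reduction step in part (1): making precise that if $(x,y) \subset \Omega$ for some non-vertex $y \in \partial S$ then we can replace $y$ by a vertex of $S$ while keeping the interior-segment property. This needs a careful application of Observation~\ref{obs:faces}(4) together with the fact (from the first paragraph) that every face of $S$ except the top one lies in the boundary — one picks $y$ in a proper face, picks a vertex $v_j$ of that face, notes $v_j \in F_\Omega(y')$ for $y' \in (x,y)$ fails, and instead runs the face argument directly. Once that bookkeeping is pinned down, the rest is the short linear-algebra contradiction via $L_{S,\Hc}$. I'd also double-check the degenerate case where $x$ happens to lie on the line $\overline{v_j v_k}$ for some other vertex, but this cannot happen since that line meets $\Omega$ while $x \in [W]$ and $[W] \cap \Omega = \emptyset$.
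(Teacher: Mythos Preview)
Your argument for part (1) via the linear projection $L_{S,\Hc}$ is correct, but the reduction to vertices is unnecessary: for \emph{any} $y\in\partial S$ and any $z\in(x,y)$, writing $z=[t\overline{x}+(1-t)\overline{y}]$ with $\overline{x}\in W=\ker L_{S,\Hc}$ gives $L_{S,\Hc}(z)=y\in\partial S$; if $z\in\Omega$ this contradicts $L_{S,\Hc}(\Omega)=S$ from Observation~\ref{obs:lin-proj}. (One checks $z\notin[W]$ since $\Spanset S\cap W=\{0\}$.) The paper's proof is even shorter and avoids $L_{S,\Hc}$ entirely: any $y\in\partial S$ lies in $\overline{F_j}\subset[H_j]$ for some $j$, and $x\in[W]\subset[H_j]$, so $[x,y]\subset[H_j]$, which is disjoint from $\Omega$.

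Your argument for part (2), however, contains a genuine error. You assert that ``an open segment in $\overline{\Omega}$ is never contained in $\partial\Omega$ by proper convexity''; this is false. Proper convexity only means $\Omega$ is bounded in some affine chart. In fact the \emph{opposite} holds: if $y\in\partial\Omega$ and $\ell\subset\overline{\Omega}$ is an open segment with $y\in\ell$, then $\ell\subset\partial\Omega$ (if $\ell$ met $\Omega$ at some point $a$, then since $a\in\Omega$ and the endpoints of $\ell$ lie in $\overline{\Omega}$, the standard convexity fact $[a,b)\subset\Omega$ for $b\in\overline{\Omega}$ would force $y\in\Omega$). So knowing $[z,y]\subset\partial\Omega$ is entirely consistent with $z\in F_\Omega(y)$ and gives no contradiction.

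The paper repairs this by bringing in a point \emph{outside} the face $F_\Omega(y)$: choose $y'\in\partial S$ with $(y,y')\subset S\subset\Omega$ (an ``opposite'' point in $S$). Since $x\in F_\Omega(y)$, Observation~\ref{obs:faces}(4) gives $(x,y')\subset\Omega$. But $x\in\partial\Omega\cap[W]$ and $y'\in\partial S$, so part (1) forces $[x,y']\subset\partial\Omega$, a contradiction.
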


\begin{proof} Suppose $x \in \partial \Omega \cap \Pb(\cap_{H \in \Hc} H)$ and $y \in \partial S$. Then there exists a boundary face $F \subset \partial S$ of maximal dimension such that $y \in \overline{F}$. Then there exists some $H \in \Hc$ such that $F \subset \Pb(H)$. Then $[x,y] \subset \Pb(H)$ and so $[x,y] \cap \Omega = \emptyset$. Thus $[x,y] \subset \partial \Omega$. 

Next, suppose for a contradiction that 
\begin{align*}
x \in \Pb(\cap_{H \in \Hc} H) \cap F_\Omega(\partial S).
\end{align*}
Then there exists $y \in \partial S$ with $x \in F_\Omega(y)$. Pick $y^\prime \in \partial S$ such that $(y,y^\prime) \subset S$. Then by Observation~\ref{obs:faces} part (4) we also have $(x,y^\prime) \subset \Omega$. But this contradicts part (1). 
\end{proof}

\begin{proposition}\label{prop:image-lin-proj-in-simplex-bdry} 
Suppose $\Omega \subset \Pb(\Rb^d)$ is a properly convex domain, $S \subset \Omega$ is a properly embedded simplex, and $\Hc$ is a set of  $S$-supporting hyperplanes. If $x \in F_\Omega(\partial S)$, then $L_{S, \Hc}(x) \in F_\Omega(x) \cap \overline{S}$. 
\end{proposition}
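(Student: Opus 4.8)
The plan is to place $L_{S,\Hc}(x)$ inside a boundary face $F_S(y_0)$ of the simplex, where $y_0 \in \partial S$ is chosen so that $x \in F_\Omega(y_0)$, and then use Observation~\ref{obs:faces_of_simplices_are_properly_embedded} to conclude that $F_S(y_0) = \overline{S} \cap F_\Omega(y_0) \subseteq F_\Omega(y_0) = F_\Omega(x)$. Throughout write $L := L_{S,\Hc}$ and $W := \cap_{H \in \Hc} H$, so that $[\ker L] = [W]$. I will use three facts recorded above: $L$ restricts to the identity on $[\Spanset S] \supseteq \overline{S}$; $[W] \cap \Omega = \emptyset$ and $L(\Omega) = S$ (Proposition~\ref{prop:W-direct-sum}, Observation~\ref{obs:lin-proj}); and $[W] \cap F_\Omega(\partial S) = \emptyset$ (Proposition~\ref{prop:W-intersect-boundary}(2)). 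A preliminary point, proved by a limiting argument from $L(\Omega) = S$, is that $L$ maps $\overline{\Omega} \setminus [W]$ into $\overline{S}$: given $z \in \overline{\Omega} \setminus [W]$, pick $z_n \in \Omega$ with $z_n \to z$; since $z \notin [\ker L]$, $L$ is continuous at $z$, so $L(z) = \lim_n L(z_n) \in \overline{L(\Omega)} = \overline{S}$.

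Next I would set up the segment to be transported. Since $x \in F_\Omega(\partial S)$, fix $y_0 \in \partial S$ with $x \in F_\Omega(y_0)$; then $F_\Omega(x) = F_\Omega(y_0)$ by Observation~\ref{obs:faces}(2). If $x = y_0$ then $x \in \overline{S}$ and $L(x) = x \in F_\Omega(x)$, so assume $x \neq y_0$. Because $y_0 \in \partial S$ we have $F_\Omega(y_0) \subseteq F_\Omega(\partial S)$, hence $F_\Omega(y_0) \cap [W] = \emptyset$; in particular $L(x)$ is defined and $L$ is continuous on all of $F_\Omega(y_0)$. The face $F_\Omega(y_0)$ is convex and open in its span (Observation~\ref{obs:faces}(1)), contains $x$ and $y_0$, and has $\overline{xy_0} \subseteq [\Spanset F_\Omega(y_0)]$; therefore $I := F_\Omega(y_0) \cap \overline{xy_0}$ is a convex set that is open in the projective line $\overline{xy_0}$, contains $x$ and $y_0$, and is not all of $\overline{xy_0}$ (otherwise $\overline{\Omega}$ would contain a projective line, contradicting proper convexity). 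Thus $I$ is an open line segment contained in $F_\Omega(y_0)$, and $I \cap [W] = \emptyset$.

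Finally I would analyze $L(I)$. Since $L$ is induced by a linear map, its restriction to $\overline{xy_0}$ is either constant where it is defined or a projective isomorphism onto another line; accordingly $L(I)$ is either a single point or an open line segment, and in both cases it contains $L(x)$ and $L(y_0) = y_0$. If $L(I)$ is a point, then $L(x) = y_0 \in F_\Omega(y_0) = F_\Omega(x)$ and we are done. If $L(I)$ is an open line segment, then the preliminary observation gives $L(I) \subseteq L(F_\Omega(y_0)) \subseteq \overline{S}$, so $L(x)$ and $y_0$ lie in a common open line segment contained in $\overline{S}$; hence $L(x) \in F_S(y_0)$, and Observation~\ref{obs:faces_of_simplices_are_properly_embedded}(2) yields $L(x) \in F_S(y_0) = \overline{S} \cap F_\Omega(y_0) \subseteq F_\Omega(x)$. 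I expect the only delicate points to be bookkeeping: checking that $L$ is genuinely defined on the whole face $F_\Omega(y_0)$ — which is exactly the content of Proposition~\ref{prop:W-intersect-boundary}(2) — and isolating the degenerate case in which $L$ collapses the line through $x$ and $y_0$; the rest is formal manipulation of open faces.
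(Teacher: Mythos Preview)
Your proof is correct and follows essentially the same approach as the paper's: pick $y_0 \in \partial S$ with $x \in F_\Omega(y_0)$, take an open segment in $F_\Omega(y_0)$ through $x$ and $y_0$, and use that its image under $L_{S,\Hc}$ is either a point or an open segment in $\overline{S}$. The paper's version is terser---it concludes directly from the definition of $F_\Omega$ that an open segment in $\overline{S}\subset\overline{\Omega}$ containing $L(x)$ and $y_0$ forces $L(x)\in F_\Omega(y_0)$, so your detour through $F_S(y_0)$ and Observation~\ref{obs:faces_of_simplices_are_properly_embedded} is not needed, though it is correct.
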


\begin{remark} Notice that $L_{S,\Hc}$ is defined and continuous on 
\begin{align*}
\Pb(\Rb^d) \setminus \Pb(\ker L_{S,\Hc}) = \Pb(\Rb^d) \setminus \Pb( \cap_{H \in \Hc} H)
\end{align*}
 and so the previous Proposition implies that $L_{S, \Hc}(x)$ is well defined. \end{remark}

\begin{proof} Fix $y \in \partial S$ such that $x \in F_\Omega(y)$. Then there exists an open line segment $\ell \subset F_\Omega(y)$ with $x,y \in \ell$. Then $L_{S,\Hc}(y) =y$ and $L_{S,\Hc}(\ell)$ is either an open line segment or a single point. So either $L_{S,\Hc}(x)=y$ or $L_{S,\Hc}(x)$ and $y$ are contained in an open line segment in $\overline{S} \subset \overline{\Omega}$. So $L_{S, \Hc}(x) \in F_\Omega(y)=F_\Omega(x)$.
\end{proof}

For a general properly embedded simplex, there could be many different sets of supporting hyperplanes, but the next result shows that the corresponding linear projections form a compact set. 

\begin{definition} \label{defn:LS}
Suppose that $\Omega \subset \Pb(\Rb^d)$ is a properly convex domain and $S \subset \Omega$ is a properly embedded simplex. Define 
\begin{align*}
\Lc_S : = \{ L_{S,\Hc} : \text{  $\Hc$ is a set of  $S$-supporting hyperplanes} \} \subset \End(\Rb^d).
\end{align*}
\end{definition}

\begin{proposition} \label{prop:compactness-linear-proj}
Suppose that $\Omega \subset \Pb(\Rb^d)$ is a properly convex domain and $S \subset \Omega$ is a properly embedded simplex. Then $\Lc_S$ is a compact subset of $\End(\Rb^d)$. 
\end{proposition}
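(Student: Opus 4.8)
The goal is to show $\Lc_S$ is compact in $\End(\Rb^d)$. Since $\End(\Rb^d)$ is finite-dimensional, it suffices to show that $\Lc_S$ is both bounded (once we normalize the projections suitably) and closed; more precisely, I will show that $\Lc_S$ is a closed subset of some fixed compact set. The plan is to parametrize the elements of $\Lc_S$ by their defining data — namely the tuple of $S$-supporting hyperplanes — and to check that this parametrizing set is compact and that the assignment $\Hc \mapsto L_{S,\Hc}$ is continuous.

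\textbf{Step 1: Normalization and boundedness.} Each $L_{S,\Hc}$ is a genuine linear projection onto the fixed subspace $\Spanset S$ (not merely a projective class), so there is no scaling ambiguity, and its image is the fixed subspace $\Spanset S$. I would argue boundedness as follows: $L_{S,\Hc}$ is the projection onto $\Spanset S$ with kernel $W_\Hc := \cap_{H \in \Hc} H$, and by Proposition~\ref{prop:W-direct-sum} we have $\Spanset S \oplus W_\Hc = \Rb^d$ together with $[W_\Hc] \cap \Omega = \emptyset$. The last condition forces $W_\Hc$ to stay uniformly ``away'' from $\Spanset S$: if a sequence $W_{\Hc_n}$ had the angle between $W_{\Hc_n}$ and $\Spanset S$ tending to $0$, then (after passing to a subsequence) $W_{\Hc_n}$ would converge in the appropriate Grassmannian to a subspace $W$ with $W \cap \Spanset S \neq \{0\}$, hence $[W] \cap S \neq \emptyset$ (since $S = \Omega \cap [\Spanset S]$ is open in $[\Spanset S]$), contradicting that $[W_{\Hc_n}] \cap \Omega = \emptyset$ is a closed condition. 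Uniform transversality of the complementary subspace to $\Spanset S$ gives a uniform operator-norm bound on the projections. This shows $\Lc_S$ lies in a bounded subset of $\End(\Rb^d)$.

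\textbf{Step 2: Closedness.} Suppose $L_{S,\Hc_n} \to L$ in $\End(\Rb^d)$. Each $\Hc_n = \{H_1^{(n)},\dots,H_q^{(n)}\}$ is a set of $q = \dim S + 1$ codimension-one subspaces, and there is a fixed labeling (determined by the boundary faces $F_1,\dots,F_q$ of $S$ of maximal dimension, via $F_j \subset [H_j^{(n)}]$). The space of codimension-one subspaces of $\Rb^d$ is the compact Grassmannian $\Gr_{d-1}(\Rb^d)$, so after passing to a subsequence $H_j^{(n)} \to H_j$ in $\Gr_{d-1}(\Rb^d)$ for each $j$. I then need to check that $\Hc := \{H_1,\dots,H_q\}$ is again a set of $S$-supporting hyperplanes: first, $[H_j]$ is a supporting hyperplane of $\Omega$ because being a supporting hyperplane is a closed condition ($[H_j^{(n)}] \cap \Omega = \emptyset$ passes to the limit since $\Omega$ is open, and $[H_j^{(n)}] \cap \partial\Omega \neq \emptyset$ passes to the limit by compactness of $\partial\Omega$ — in fact $F_j \subset [H_j^{(n)}]$ for all $n$ forces $\overline{F_j} \subset [H_j]$, so the limit hyperplane still contains the face $F_j$, and in particular meets $\partial\Omega$); second, the containments $F_j \subset [H_j]$ persist in the limit. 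Hence $\Hc$ is $S$-supporting, so $L_{S,\Hc} \in \Lc_S$ is defined. Finally, continuity of the projection construction — the projection onto $\Spanset S$ along $\cap_j H_j$ depends continuously on $(H_1,\dots,H_q)$ as long as the transversality $\Spanset S \oplus \cap_j H_j = \Rb^d$ holds, which it does by Proposition~\ref{prop:W-direct-sum} applied to $\Hc$ (and the limiting transversality is also guaranteed by Step 1's uniform bound) — gives $L_{S,\Hc_n} \to L_{S,\Hc}$, so $L = L_{S,\Hc} \in \Lc_S$. Thus $\Lc_S$ is closed, and being a closed bounded subset of the finite-dimensional space $\End(\Rb^d)$, it is compact.

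\textbf{Main obstacle.} The delicate point is Step 1 — showing that the kernels $W_\Hc$ cannot degenerate toward $\Spanset S$, equivalently that the family $\{L_{S,\Hc}\}$ is uniformly bounded in operator norm. This is exactly where one must use the geometric input $[W_\Hc] \cap \Omega = \emptyset$ rather than just the algebraic direct-sum decomposition; the direct sum alone would allow the complementary subspace to come arbitrarily close to $\Spanset S$ and hence allow unbounded projections. Everything else (the Grassmannian compactness, closedness of the supporting-hyperplane condition, continuity of the projection map away from the non-transverse locus) is routine once this uniform transversality is in hand, and it also makes the closedness argument clean since it rules out the limiting data being non-transverse.
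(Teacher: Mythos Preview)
Your Step 2 is essentially the paper's proof: pass to a subsequence so that each $H_j^{(n)} \to H_j$ in $\Gr_{d-1}(\Rb^d)$, check that the limiting $\Hc$ is $S$-supporting, and use continuity of the projection (guaranteed by the transversality from Proposition~\ref{prop:W-direct-sum}) to conclude $L_{S,\Hc_n} \to L_{S,\Hc}$. The paper simply applies this argument to an \emph{arbitrary} sequence in $\Lc_S$ (not one assumed to converge), thereby proving sequential compactness directly and bypassing any separate boundedness discussion.

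Your Step 1, on the other hand, has a genuine gap. From $W \cap \Spanset S \neq \{0\}$ you infer $[W] \cap S \neq \emptyset$, justifying this by ``$S = \Omega \cap [\Spanset S]$ is open in $[\Spanset S]$.'' But openness of $S$ in $[\Spanset S]$ does not force the nonempty projective subspace $[W] \cap [\Spanset S]$ to meet $S$; it could land entirely in $[\Spanset S] \setminus S$ (for instance in $\partial S$, or outside $\overline{S}$ altogether). So the contradiction with $[W] \cap \Omega = \emptyset$ does not follow. The conclusion of Step 1 is true, but the correct way to see it is to pass to a subsequence where the hyperplanes converge, observe that the limiting family is again $S$-supporting, and invoke Proposition~\ref{prop:W-direct-sum} to get transversality of the limiting kernel --- which is exactly the content of Step 2. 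In other words, once you have Step 2 you do not need Step 1 at all: run Step 2's argument on an arbitrary sequence and you have sequential compactness, which is what the paper does.
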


\begin{proof} Suppose that $F_1, \dots, F_{q} \subset \partial S$  are the boundary faces of $S$ of maximal dimension. Fix a sequence $L_{S,\Hc_n}$ of projections. Then 
\begin{align*}
\Hc_n=\{H_{n,1}, \dots, H_{n,q}\}
\end{align*}
where $F_j \subset \Pb( H_{n,j} )$. Since $\Gr_{d-1}(\Rb^d)$ is compact we can find $n_k \rightarrow \infty$  such that 
\begin{align*}
H_j : = \lim_{k \rightarrow \infty} H_{n_k,j}
\end{align*}
exists in $\Gr_{d-1}(\Rb^d)$ for every $1 \leq j \leq q$. Then $F_j \subset \Pb(H_j)$ and $\Pb(H_j) \cap \Omega = \emptyset$ for every $1 \leq j \leq q$. So $\Hc = \{ H_1, \dots, H_q\}$ is a set of $S$-supporting hyperplanes. Further, by definition, 
\begin{align*}
L_{S,\Hc} = \lim_{k \rightarrow \infty} L_{S,\Hc_{n_k}}
\end{align*}
in  $\End(\Rb^d)$. Since $L_{S, \Hc_n}$ was an arbitrary sequence, $\Lc_S$ is compact. 
\end{proof}

\section{Opposite faces of periodic properly embedded simplices}\label{sec:opposite_faces}

In this section we establish a structure theorem for periodic properly embedded simplices. 

\begin{definition} Given a properly convex domain $\Omega \subset \Pb(\Rb^d)$ and a properly embedded simplex $S \subset \Omega$, we say that $S$ is \emph{periodic} if $\Stab_{\Aut(\Omega)}(S)$ acts co-compactly on $S$.
\end{definition}

\begin{definition} Suppose $S \subset \Pb(\Rb^d)$ is a simplex. Two faces $F_1, F_2 \subset \partial S$ are \emph{opposite} when
\begin{enumerate}
\item $\overline{F}_1 \cap \overline{F}_2 = \emptyset$ and
\item $\Spanset S = \Spanset \big(  F_1 \cup F_2 \big) $ .
\end{enumerate}
Two points $x_1,x_2 \in \partial S$ are \emph{opposite} if their faces $F_S(x_1)$ and $F_S(x_2)$ are opposite. 
\end{definition}

\begin{observation} If $S \subset \Pb(\Rb^d)$ is a simplex, then two faces $F_1, F_2 \subset \partial S$ are opposite if and only if there exists a labeling $v_1,\dots, v_p$ of the vertices of $S$ such that
\begin{align*}
F_1 =\relint \left( \CH_S \{v_1,\dots, v_q\}\right)
\end{align*}
and 
\begin{align*}
F_2 =\relint \left( \CH_S\{v_{q+1},\dots, v_p\} \right)
\end{align*}
for some $1 \leq q \leq p-1$.
\end{observation}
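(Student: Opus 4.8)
The plan is to reduce to the standard simplex and then translate the definition of ``opposite'' into a purely combinatorial statement about the vertex set of $S$. Since the two defining conditions of opposite faces (phrased via closures and $\Spanset$) and the conclusion (existence of a suitable vertex labelling) are all invariant under $\PGL_d(\Rb)$, the definition of a simplex lets me assume
\begin{align*}
S = \left\{ [x_1:\dots:x_{p}:0:\dots:0] \in \Pb(\Rb^d) : x_1>0,\dots, x_{p}>0 \right\},
\end{align*}
where $p = \dim S + 1 \geq 3$, with vertices $v_i = [e_i]$ for $1 \leq i \leq p$ and $\Spanset S = \Spanset\{e_1,\dots,e_p\}$. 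First I would record the standard dictionary between the open faces of $S$ contained in $\partial S$ and the proper non-empty subsets $I \subsetneq \{1,\dots,p\}$: setting $F_I := \relint\left(\CH_S\{v_i : i \in I\}\right)$, one has $F_I = \{[x] \in \overline{S} : x_i > 0 \ (i \in I),\ x_j = 0 \ (j \notin I)\}$, hence
\begin{align*}
\overline{F_I} = \CH_S\{v_i : i \in I\} = \{[x] \in \overline{S} : x_j = 0 \ (j \notin I)\} \quad\text{and}\quad \Spanset F_I = \Spanset\{e_i : i \in I\},
\end{align*}
and every open face of $S$ lying in $\partial S$ has this form. This is the step where the simplex hypothesis is actually used, through the linear independence of $v_1,\dots,v_p$; on the normalised model it is immediate from the coordinate description.

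Next, writing $F_1 = F_I$ and $F_2 = F_J$ with $I, J \subsetneq \{1,\dots,p\}$ non-empty, I would translate the two conditions. For condition (1): a point of $\overline{F_I} \cap \overline{F_J}$ is exactly a point of $\overline{S}$ supported on $I \cap J$, and such a point exists if and only if $I \cap J \neq \emptyset$; so $\overline{F_1} \cap \overline{F_2} = \emptyset$ iff $I \cap J = \emptyset$. For condition (2): $\Spanset(F_1 \cup F_2) = \Spanset F_I + \Spanset F_J = \Spanset\{e_i : i \in I \cup J\}$, which equals $\Spanset S$ iff $I \cup J = \{1,\dots,p\}$, again by linear independence of $e_1,\dots,e_p$. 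Combining, $F_1$ and $F_2$ are opposite if and only if $\{I,J\}$ is a partition of $\{1,\dots,p\}$ into two non-empty blocks. Relabelling the vertices so that $I = \{1,\dots,q\}$ and $J = \{q+1,\dots,p\}$ with $q := \#I \in \{1,\dots,p-1\}$ then yields the asserted description. The converse is immediate: for any labelling $v_1,\dots,v_p$ of the vertices and any $1 \leq q \leq p-1$, the sets $\{1,\dots,q\}$ and $\{q+1,\dots,p\}$ partition $\{1,\dots,p\}$, so $\relint(\CH_S\{v_1,\dots,v_q\})$ and $\relint(\CH_S\{v_{q+1},\dots,v_p\})$ satisfy both (1) and (2).

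I do not expect a genuine obstacle: the whole content is the correspondence between faces of a simplex and subsets of its vertex set, plus the two elementary translations above. The only point deserving a line of care is the identity $\overline{F_I} = \CH_S\{v_i : i \in I\}$ (and likewise $\Spanset F_I = \Spanset\{v_i : i \in I\}$), i.e. that passing to the relative interior and then the closure recovers the full convex hull; this holds precisely because the vertices of a simplex are linearly independent, and it can be verified directly in the normalised coordinates.
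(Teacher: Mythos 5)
Your proof is correct, and the paper states this observation without proof, treating it as immediate; your argument (normalise to the standard simplex by $\PGL_d(\Rb)$-invariance, identify faces with non-empty proper subsets of the vertex index set, and translate the two conditions into $I \cap J = \emptyset$ and $I \cup J = \{1,\dots,p\}$) is exactly the routine verification the authors are implicitly relying on. No gaps.
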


\begin{observation} If $S \subset \Pb(\Rb^d)$ is a simplex and $x_1,x_2 \in \partial S$ are opposite, then $(x_1,x_2) \subset S$. 
\end{observation}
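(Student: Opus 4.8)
The plan is to reduce to the standard simplex using a projective transformation, since both the statement and its hypothesis are $\PGL_d(\Rb)$-invariant. First I would apply $g \in \PGL_d(\Rb)$ so that $gS$ is the standard open simplex
\begin{align*}
\{[x_1:\dots:x_p] \in \Pb(\Rb^p): x_1 > 0, \dots, x_p > 0\}
\end{align*}
inside $[\Spanset S] = \Pb(\Rb^p)$, where $p = \dim S + 1$. By the previous observation characterizing opposite faces in terms of a labelling of the vertices, after relabelling I may assume
\begin{align*}
\overline{F_S(x_1)} = \partial S \cap [\Spanset\{v_1,\dots,v_q\}], \qquad \overline{F_S(x_2)} = \partial S \cap [\Spanset\{v_{q+1},\dots,v_p\}]
\end{align*}
for some $1 \leq q \leq p-1$, where $v_1,\dots,v_p$ are the vertices of $S$, i.e. $v_i = [e_i]$.

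Next I would describe the faces concretely: a point of $\overline{S}$ lies in $F_S(x_1)$ exactly when, in the coordinates above, its first $q$ coordinates are strictly positive and its last $p-q$ coordinates vanish; similarly $x_2$ lies in $F_S(x_2)$ iff its first $q$ coordinates vanish and its last $p-q$ are strictly positive. So I may write $x_1 = [a_1:\dots:a_q:0:\dots:0]$ with all $a_i > 0$ and $x_2 = [0:\dots:0:b_{q+1}:\dots:b_p]$ with all $b_j > 0$. Then any point $z \in (x_1,x_2)$ has the form $z = [t a_1 : \dots : t a_q : (1-t) b_{q+1} : \dots : (1-t) b_p]$ for some $t \in (0,1)$ (in the affine chart where a lift is a genuine convex combination of lifts of $x_1$ and $x_2$). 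All $p$ of these coordinates are strictly positive, so $z \in S$. Since $g$ is a bijection preserving line segments and $\overline{\Omega}$, this gives $(x_1,x_2) \subset S$ in the original picture.

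The argument is essentially a one-line coordinate computation once the normalization is in place, so I do not expect any genuine obstacle; the only point requiring a little care is bookkeeping the convention for $[x,y]$ and $(x,y)$ — namely that $[x_1,x_2]$ denotes the segment lying in $\overline{S}$ (equivalently $\overline{\Omega}$), which exists and is unique because $\overline{S}$ is properly convex — and checking that this segment is the one realized by the convex combination of lifts described above. Alternatively, one can avoid coordinates entirely: by Observation~\ref{obs:faces} part (4), since $x_1, x_2 \in \overline{S}$ with $F_S(x_1), F_S(x_2)$ opposite, it suffices to exhibit a single pair $p \in F_S(x_1)$, $q \in F_S(x_2)$ with $(p,q) \subset S$; taking $p$ and $q$ to be the barycenters of the opposite faces works, and $(p,q) \subset S$ is immediate since the span condition $\Spanset S = \Spanset(F_S(x_1) \cup F_S(x_2))$ forces the segment joining these two barycenters to meet the relative interior $S$. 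I would present the coordinate version as the cleaner of the two.
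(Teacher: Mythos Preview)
The paper states this observation without proof, treating it as immediate from the coordinate description of a simplex. Your normalization to the standard simplex followed by the convex-combination computation is correct and is exactly the verification one would supply if asked; the care you take with the convention that $(x_1,x_2)$ denotes the segment contained in $\overline{S}$ is appropriate and resolves the only possible ambiguity. One small remark on your alternative route via Observation~\ref{obs:faces}(4): showing that the segment between the two barycenters lies entirely in $S$ (not merely meets $S$) ultimately requires the same coordinate check, so it is not really an independent argument.
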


\begin{theorem}\label{thm:CH_of_opp_faces} Suppose $\Omega \subset \Pb(\Rb^d)$ is a properly convex domain, $S$ is a properly embedded simplex in $\Omega$, and $\Stab_{\Aut(\Omega)}(S)$ acts co-compactly on $S$. Assume $x_1, x_2 \in \partial S$ are opposite points. If
\begin{align*}
V_1 = \Spanset F_\Omega(x_1), \ V_2 = \Spanset F_\Omega(x_2), \text{ and } V= V_1 + V_2,
\end{align*}
then $V_1 \cap V_2 = \{0\}$ and 
\begin{align*}
\Omega \cap \Pb(V) = \relint \left( \CH_\Omega \left(  F_\Omega(x_1) \cup  F_\Omega(x_2)\right)\right).
\end{align*}
\end{theorem}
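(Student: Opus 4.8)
The plan is to use the co-compact action of $\Stab_{\Aut(\Omega)}(S)$ to produce a sequence of automorphisms that, in the limit, witnesses the decomposition $V = V_1 \oplus V_2$ and pins down the intersection $\Omega \cap [V]$. First I would set up coordinates: pick a labeling $v_1,\dots,v_p$ of the vertices of $S$ so that $x_1 \in F_S(x_1) = \relint(\CH_S\{v_1,\dots,v_q\})$ and $x_2 \in F_S(x_2) = \relint(\CH_S\{v_{q+1},\dots,v_p\})$. Since $\Stab_{\Aut(\Omega)}(S)$ acts co-compactly on $S$, by Observation~\ref{obs:cocompact-action-on-simplices} there is a subgroup acting co-compactly on $S$ and fixing every vertex; restricting to this and using the explicit form of $\Aut(S)$ from Example~\ref{ex:basic_properties_of_simplices}, I can find a sequence $g_n \in \Stab_{\Aut(\Omega)}(S)$ with $g_n$ acting on $\Spanset S$ diagonally (in the vertex basis) so that $g_n(p_0) \to x_1$ and $g_n^{-1}(p_0) \to x_2$ for some interior basepoint $p_0 \in S$ (concretely, scale the $v_1,\dots,v_q$ coordinates up and the $v_{q+1},\dots,v_p$ coordinates down). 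Passing to a subsequence, $g_n \to T$ in $\Pb(\End(\Rb^d))$.

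Now I would apply Proposition~\ref{prop:dynamics_of_automorphisms}: it gives $\image(T) \subset \Spanset F_\Omega(x_1) = V_1$, $[\ker T] \cap \Omega = \emptyset$, and $x_2 \in [\ker T]$. Applying the same proposition to $g_n^{-1} \to T'$ gives $\image(T') \subset V_2$, $[\ker T'] \cap \Omega = \emptyset$, and $x_1 \in [\ker T']$. By the diagonal structure, $\image(T) = V_1'$ and $\ker T = V_2'$ where $V_i'$ is the span of the corresponding faces \emph{inside} $\Spanset S$; but I claim in fact $\image(T) = V_1$ and $\ker T = V_2$ on the nose — this is the crux. Here is where the argument must work harder: $T$ is a limit of elements of $\Aut(\Omega)$, so $T$ maps $\overline{\Omega}$ into $\overline{\Omega}$ wherever defined, and more precisely $T(\overline{\Omega} \setminus [\ker T]) \subset \overline{F_\Omega(x_1)}$. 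Since the faces $F_\Omega(v_j)$ for $j \le q$ all lie in $\overline{F_\Omega(x_1)}$ (as $v_j \in F_S(x_1)$, and $F_\Omega(v_j) \subset \overline{\Omega}$ near $v_j$), I can show $V_1 = \Spanset F_\Omega(x_1) \subset \image(T) + (\text{lower-dim pieces})$, and by dimension count together with $V_1 \supset \image(T)$... — I would instead argue directly that $V_1 + V_2 \supseteq \Spanset(F_\Omega(x_1) \cup F_\Omega(x_2)) = V$ trivially, and that $V_1 \cap V_2 = \{0\}$ by noting that any nonzero vector in the intersection, projected via $L_{S,\Hc}$ (Section~\ref{sec:linear_projections}), would force $F_\Omega(x_1)$ and $F_\Omega(x_2)$ to share a point, contradicting that $x_1, x_2$ are opposite in $S$ combined with Observation~\ref{obs:faces_of_simplices_are_properly_embedded}; more carefully, a common point would lie in $F_\Omega(x_1) \cap F_\Omega(x_2)$, and since $\overline{F_S(x_1)} \cap \overline{F_S(x_2)} = \emptyset$ and each $F_S(x_i) = \overline{S} \cap F_\Omega(x_i)$ is properly embedded in $F_\Omega(x_i)$, one gets a contradiction with the convex geometry of $S$. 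So $V_1 \cap V_2 = \{0\}$ and $V = V_1 \oplus V_2$.

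For the displayed equality $\Omega \cap [V] = \relint(\CH_\Omega(F_\Omega(x_1) \cup F_\Omega(x_2)))$: the inclusion $\supseteq$ is immediate from convexity (the convex hull of two subsets of $\overline{\Omega}$ spanning $V$ lies in $\overline{\Omega} \cap [V]$, and Observation~\ref{obs:faces} part (4) — applied with $p \in F_\Omega(x_1)$, $q \in F_\Omega(x_2)$, $(x_1,x_2) \subset S \subset \Omega$ since $x_1,x_2$ are opposite — shows the relative interior lands in $\Omega$). For $\subseteq$: take $z \in \Omega \cap [V]$. Write $\bar z = a_1 + a_2$ with $a_i \in V_i$ using $V = V_1 \oplus V_2$; this decomposition is exactly $L_{S,\Hc}$-type projection onto the two spans. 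I would show $[a_1], [a_2] \in \overline{\Omega}$, hence $[a_1] \in \overline{F_\Omega(x_1)}$ and $[a_2] \in \overline{F_\Omega(x_2)}$ using that $F_\Omega(x_i)$ is open in its span $V_i$ (Observation~\ref{obs:faces} part (1)), and that the segment $[[a_1],[a_2]]$ through $z$ forces $[a_i]$ to be genuine, i.e. in the open faces — because $z \in \Omega$ means the segment realizing $z$ as a convex combination has both endpoints able to be pushed slightly into $\overline\Omega$, and by part (4) of Observation~\ref{obs:faces} again $z \in F_\Omega(z) \subset \Omega$ comes from $[a_i] \in F_\Omega(x_i)$. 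Thus $z \in \relint(\CH_\Omega(F_\Omega(x_1) \cup F_\Omega(x_2)))$.

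The main obstacle I anticipate is the precise identification $V_1 \cap V_2 = \{0\}$ together with showing that the abstract direct-sum projection of a point of $\Omega \cap [V]$ actually lands in the \emph{open} faces rather than their boundaries; both require carefully combining the structure of $S$ as a properly embedded simplex (Observation~\ref{obs:faces_of_simplices_are_properly_embedded}) with Observation~\ref{obs:faces} part (4) and, for the direct-sum claim, squeezing maximal information out of Proposition~\ref{prop:dynamics_of_automorphisms} applied to a well-chosen diagonal sequence — the $\CAT(0)$-flat-versus-simplex analogy suggests this is where the real content lies. The line case ($S$ a properly embedded line, $x_1,x_2$ its endpoints) is formally identical with $p = q+1 = 2$.
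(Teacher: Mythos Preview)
Your overall strategy---produce a diagonal sequence $g_n$ in $\Stab_{\Aut(\Omega)}(S)$ with $g_n p_0 \to x_1$ and $g_n^{-1} p_0 \to x_2$, then analyze the limiting endomorphism---is the paper's strategy too. But you abandon the dynamics at the crucial moment, and the ``elementary'' replacements you propose for the two main conclusions both have genuine gaps.

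\textbf{On $V_1 \cap V_2 = \{0\}$.} A nonzero vector in $V_1 \cap V_2$ gives a point of $[V_1] \cap [V_2]$, not of $\overline{F_\Omega(x_1)} \cap \overline{F_\Omega(x_2)}$. The projective subspace $[V_i]$ is typically much larger than the closed face it spans, so the disjointness of $\overline{F_S(x_1)}$ and $\overline{F_S(x_2)}$, Observation~\ref{obs:faces_of_simplices_are_properly_embedded}, and the projection $L_{S,\Hc}$ give no contradiction. Your proposed argument never uses the co-compactness hypothesis, and without it the conclusion can fail.

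\textbf{On $\Omega \cap [V] \subset \relint\big(\CH_\Omega(F_\Omega(x_1) \cup F_\Omega(x_2))\big)$.} Given $z \in \Omega \cap [V]$ with lift $\bar z = a_1 + a_2$, $a_i \in V_i$, you assert $[a_i] \in \overline{\Omega}$. But nothing you have established constrains the abstract linear projection $\pi_i : V \to V_i$ geometrically; there is no a priori reason for $\pi_i(\Omega \cap [V]) \subset \overline{\Omega}$. Observation~\ref{obs:faces} part (4) would finish the job \emph{if} you already knew $[a_i] \in F_\Omega(x_i)$, but it does not help you get there.

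\textbf{The missing idea.} After passing to a finite-index subgroup $G$ that fixes each face of $S$, one has $g_n(V_i) = V_i$ and $g_n|_{V_i} \in \Aut(F_\Omega(x_i))$. Since $g_n(x_i)$ stays in $F_S(x_i) \subset F_\Omega(x_i)$, properness of the $\Aut(F_\Omega(x_i))$ action on $F_\Omega(x_i)$ forces $\{g_n|_{V_i}\}$ to be \emph{relatively compact} in $\PGL(V_i)$. This boundedness of the two diagonal blocks is the engine. If $V_1 \cap V_2 \neq \{0\}$, the shared block forces the normalizing scalars for $g_n|_{V_1}$ and $g_n|_{V_2}$ to have bounded ratio, so $g_n|_V$ would converge in $\PGL(V)$---contradicting $g_n p_0 \to x_1 \in \partial\Omega$. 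Once $V = V_1 \oplus V_2$, the same rate comparison shows that the projective limit $T$ of $g_n|_V$ equals $\pi_1$ up to post-composition by an element of $\Aut(F_\Omega(x_1))$; then $T(z) = \lim g_n(z)$ together with $\lim g_n(p_0) \in F_\Omega(x_1)$ and Proposition~\ref{prop:dist_est_and_faces} gives $\pi_1(z) \in F_\Omega(x_1)$, which is exactly what you needed. You were right that this is ``where the real content lies''; the content is precisely this block-boundedness, and Proposition~\ref{prop:dynamics_of_automorphisms} applied globally in $\Pb(\End(\Rb^d))$ is not sharp enough to extract it.
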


\begin{remark} \ \begin{enumerate}
\item For the last equality, notice that ``$\supset$'' is a consequence of convexity. 
\item Since we are assuming that opposite points exist, we must have $\dim S \geq 1$.
\end{enumerate}
\end{remark}

Before proving the theorem we state and prove two corollaries. 

\begin{corollary}\label{cor:lines_between_opp_faces} With the hypothesis of Theorem~\ref{thm:CH_of_opp_faces}:
\begin{enumerate}
\item if $y_1 \in F_\Omega(x_1)$ and $y_2 \in F_\Omega(x_2)$, then $(y_1, y_2) \subset  \Omega$
\item if $y_1 \in \partial F_\Omega(x_1)$ and $y_2 \in \overline{F_\Omega(x_2)}$, then $[y_1, y_2] \subset \partial \Omega$,
\item if $y_1 \in \overline{F_\Omega(x_1)}$ and $y_2 \in \partial F_\Omega(x_2)$, then $[y_1, y_2] \subset \partial \Omega$
\end{enumerate}
\end{corollary}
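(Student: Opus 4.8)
The plan is to read the three statements directly off the description of $\Omega \cap [V]$ provided by Theorem~\ref{thm:CH_of_opp_faces}, using the basic properties of faces and convex hulls. Write $C := \CH_\Omega\big(F_\Omega(x_1) \cup F_\Omega(x_2)\big)$, so that Theorem~\ref{thm:CH_of_opp_faces} gives $\Omega \cap [V] = \relint(C)$, and also $V_1 \cap V_2 = \{0\}$ where $V_i = \Spanset F_\Omega(x_i)$.

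$\emph{Part (1).}$ Suppose $y_1 \in F_\Omega(x_1)$ and $y_2 \in F_\Omega(x_2)$. Since $F_\Omega(x_i) \subset \overline{C}$, the segment $(y_1,y_2) \subset \overline C$. I want to show this open segment lies in $\relint(C) = \Omega \cap [V]$; equivalently, no interior point of the segment lies in $\partial C$. The cleanest route: choose $z_i \in F_\Omega(x_i)$ with $y_i \in [z_i, x_i]$ (possible since $F_\Omega(x_i)$ is open in its span and hence contains a segment through $y_i$ and $x_i$ — wait, more carefully, $F_\Omega(x_i)$ is open in its span by Observation~\ref{obs:faces}(1), so every point of it, including $y_i$, is an interior point, and in particular $y_i$ can be written as a convex combination of points of $F_\Omega(x_i)$ lying strictly on both sides). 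Then apply Observation~\ref{obs:faces}(4): pick $p \in F_\Omega(x_1)$ and $q \in F_\Omega(x_2)$ with $y_1 \in (p, \cdot)$ type relations... Actually the most direct argument: since $x_1, x_2 \in \partial S$ are opposite, $(x_1, x_2) \subset S \subset \Omega$; then Observation~\ref{obs:faces}(4) with $z \in (x_1, x_2)$, $p := y_1 \in F_\Omega(x_1)$, $q := y_2 \in F_\Omega(x_2)$ gives exactly $(y_1, y_2) \subset F_\Omega(z) = \Omega$, since $z \in \Omega$. That is part (1) with a one-line proof.

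$\emph{Parts (2) and (3).}$ These are symmetric, so I only treat (2). Suppose $y_1 \in \partial F_\Omega(x_1)$ and $y_2 \in \overline{F_\Omega(x_2)}$. By Observation~\ref{obs:faces}(3), $F_\Omega(y_1) \subset \partial F_\Omega(x_1)$; in particular $F_\Omega(y_1) \neq F_\Omega(x_1)$. The claim is $[y_1, y_2] \subset \partial \Omega$. Since $\overline{F_\Omega(x_1)} \cup \overline{F_\Omega(x_2)} \subset \overline{\Omega}$ and these faces span $V$, $[y_1, y_2] \subset \overline{\Omega} \cap [V]$, so by Theorem~\ref{thm:CH_of_opp_faces} it suffices to show $(y_1, y_2) \cap \relint(C) = \emptyset$, i.e. $(y_1, y_2)$ meets $\partial \Omega$ — but then I would need more, namely that the whole open segment is in $\partial\Omega$, which follows once one interior point is: a line segment in $\overline\Omega$ whose interior meets $\partial\Omega$ must lie entirely in $\partial\Omega$ (standard convexity fact, since $\Omega$ is convex and $\partial\Omega$ is a "face-saturated" set — more precisely if $(y_1,y_2)$ contained a point of $\Omega$ then all of $(y_1,y_2)\subset\Omega$). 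So it remains to produce one interior point of the segment in $\partial\Omega$. Suppose for contradiction $(y_1, y_2) \subset \Omega$. Pick $x_1' \in F_\Omega(x_1)$ with $(y_1, x_1') \cap [\text{relint of } F_\Omega(x_1)] \ne\emptyset$, concretely $x_1' := x_1$: since $y_1 \in \partial F_\Omega(x_1)$, we have $y_1 \notin F_\Omega(x_1)$, but I want to derive that $(x_1, y_1)$ together with $y_2$ forces a contradiction with the structure of $\Omega\cap[V]$. Run it as: $(y_2, x_1) \subset \Omega$ by part (1) (taking $y_1 := x_1 \in F_\Omega(x_1)$, $y_2 := y_2 \in \overline{F_\Omega(x_2)}$ — need $y_2 \in F_\Omega(x_2)$ or handle the boundary case of $F_\Omega(x_2)$ by a limiting/face argument); then with $y_1 \in \partial F_\Omega(x_1)$ and the assumption $(y_1, y_2) \subset \Omega$, the point $y_1$ sits on a segment in $\overline\Omega$ joining it to the interior point... and convexity of $F_\Omega$ membership (Observation~\ref{obs:faces}(4) again, now run in reverse) forces $y_1 \in F_\Omega(x_1)$, contradiction. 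The bookkeeping about whether $y_2 \in F_\Omega(x_2)$ or merely in $\overline{F_\Omega(x_2)}$ is handled by noting $F_\Omega(y_2) \subset \overline{F_\Omega(x_2)}$ and replacing $x_2$ by $y_2$ throughout (the faces of $S$ being opposite is only used via Theorem~\ref{thm:CH_of_opp_faces}, which we have already invoked).

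$\emph{Main obstacle.}$ I expect the only real subtlety is part (2)/(3): making rigorous the step "a segment in $\overline\Omega$ through a point $y_1 \in \partial F_\Omega(x_1)$ whose far end lies in $\overline{F_\Omega(x_2)}$ cannot enter $\Omega$," i.e. correctly combining Observation~\ref{obs:faces}(3) and (4) with the $V_1 \cap V_2 = \{0\}$ transversality. Part (1) is essentially immediate from Observation~\ref{obs:faces}(4).
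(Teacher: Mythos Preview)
Your plan (read everything off the join description $\Omega \cap [V] = \relint(C)$ from Theorem~\ref{thm:CH_of_opp_faces}) is exactly the paper's approach; the paper's entire proof is the single sentence ``This follows immediately from Theorem~\ref{thm:CH_of_opp_faces} and the fact that $\partial(\Omega \cap [V]) \subset \partial \Omega$.'' Your argument for part~(1) via Observation~\ref{obs:faces}(4) is correct and in fact cleaner, since it does not even invoke Theorem~\ref{thm:CH_of_opp_faces}.

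For parts (2)/(3), however, you abandon your own plan and the detour does not close. The step ``Observation~\ref{obs:faces}(4) run in reverse forces $y_1 \in F_\Omega(x_1)$'' is not justified: Observation~\ref{obs:faces}(4) requires $p \in F_\Omega(x)$ as a \emph{hypothesis}, and there is no converse statement there that lets you conclude face membership of an endpoint from the interior of a segment lying in $\Omega$. The right tool is already in the proof of Theorem~\ref{thm:CH_of_opp_faces}: since $V_1 \cap V_2 = \{0\}$, the projection $\pi_1 : V \to V_1$ is well defined and satisfies $\pi_1(\Omega \cap [V]) = F_\Omega(x_1)$. If $(y_1,y_2) \subset \Omega$ with $y_1 \in [V_1]$ and $y_2 \in [V_2]$, then any $z \in (y_1,y_2)$ has $\pi_1(z) = y_1$, forcing $y_1 \in F_\Omega(x_1)$ and giving your contradiction. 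Equivalently (and this is the paper's phrasing), the relative boundary of the join $C$ is exactly $\overline{F_\Omega(x_1)} \cup \overline{F_\Omega(x_2)}$ together with the segments described in (2) and (3), and $\partial C = \partial(\Omega \cap [V]) \subset \partial\Omega$. Once you use this, the case distinction on whether $y_2 \in F_\Omega(x_2)$ or $y_2 \in \partial F_\Omega(x_2)$ also disappears.
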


\begin{proof} This follows immediately from Theorem~\ref{thm:CH_of_opp_faces} and the fact that $\partial(\Omega \cap \Pb(V)) \subset \partial \Omega$. \end{proof}

\begin{corollary}\label{cor:building_simplices} With the hypothesis of Theorem~\ref{thm:CH_of_opp_faces}: if $S_1 \subset F_\Omega(x_1)$ and $S_2 \subset F_\Omega(x_2)$ are properly embedded simplices, then 
\begin{align*}
S^\prime: = \relint \left(  \CH_\Omega \left( S_1 \cup  S_2 \right)\right)
\end{align*}
is a properly embedded simplex of $\Omega$ with 
\begin{align*}
\dim S^\prime = \dim S_1 + \dim S_2+1.
\end{align*}
\end{corollary}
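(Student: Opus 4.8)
\textbf{Proof proposal for Corollary~\ref{cor:building_simplices}.}

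The plan is to apply Theorem~\ref{thm:CH_of_opp_faces} to realize $S'$ as a slice of $\Omega$ by a suitable linear subspace, and then to check that this slice is a simplex of the expected dimension by examining its vertices and boundary faces. First I would set $W_i = \Spanset S_i$ for $i=1,2$. Since $S_i \subset F_\Omega(x_i)$ is a properly embedded simplex or line inside the properly convex domain $F_\Omega(x_i)$ (which is open in its span by Observation~\ref{obs:faces}(1)), it is in particular open in its span, so $S_i = [W_i] \cap F_\Omega(x_i)$. By Theorem~\ref{thm:CH_of_opp_faces}, $V_1 \cap V_2 = \{0\}$ where $V_i = \Spanset F_\Omega(x_i)$; since $W_i \subseteq V_i$, this gives $W_1 \cap W_2 = \{0\}$, hence $\dim(W_1 + W_2) = \dim W_1 + \dim W_2 = (\dim S_1 + 1) + (\dim S_2 + 1)$.

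Next I would identify $S' = [W_1 + W_2] \cap \Omega$. Let $W = W_1 + W_2$. Since $S_i = [W_i] \cap F_\Omega(x_i) \subseteq [W] \cap \Omega$ (using $S_i \subset \Omega$ by properly embeddedness... actually $S_i \subset F_\Omega(x_i) \subseteq \partial\Omega$ when $x_i \in \partial S \subset \partial\Omega$, so more carefully: $S_i \subset \overline{\Omega}$), convexity gives $\CH_\Omega(S_1 \cup S_2) \subseteq [W] \cap \overline{\Omega}$, and hence $S' = \relint(\CH_\Omega(S_1 \cup S_2)) \subseteq [W] \cap \Omega$ since $\relint$ of the convex hull lands in $\Omega$ once the hull meets $\Omega$ (which it does: by Theorem~\ref{thm:CH_of_opp_faces} applied to points $y_i \in S_i$, the segment $(y_1,y_2) \subset \Omega$, so $\CH_\Omega(S_1 \cup S_2)$ has nonempty interior in $[W]$). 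For the reverse inclusion, let $V = V_1 + V_2$; since $W \subseteq V$, Theorem~\ref{thm:CH_of_opp_faces} gives
\begin{align*}
[W] \cap \Omega = [W] \cap [V] \cap \Omega = [W] \cap \relint\left( \CH_\Omega(F_\Omega(x_1) \cup F_\Omega(x_2))\right).
\end{align*}
A point of $[W] \cap \relint(\CH_\Omega(F_\Omega(x_1) \cup F_\Omega(x_2)))$ lies on an open segment $(p,q)$ with $p \in F_\Omega(x_1)$, $q \in F_\Omega(x_2)$; I would argue that because the point lies in $[W] = [W_1 \oplus W_2]$ and $p \in V_1$, $q \in V_2$ with $V_1 \cap V_2 = \{0\}$, the $V_1$-component lies in $W_1$ and the $V_2$-component in $W_2$, forcing $p \in [W_1] \cap F_\Omega(x_1) = S_1$ and $q \in [W_2] \cap F_\Omega(x_2) = S_2$ (possibly after passing to closures and using that $\relint$ is taken). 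Hence $[W] \cap \Omega \subseteq \relint(\CH_\Omega(S_1 \cup S_2)) = S'$. This also shows $S'$ is properly embedded, since it equals $\Omega \cap [W]$ and $\partiali S' \subseteq \partiali \Omega$.

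Finally I would check $S'$ is a simplex. Its extreme points are among the extreme points of $\overline{S_1} \cup \overline{S_2}$, i.e. the vertices $v_1, \dots, v_a$ of $S_1$ and $w_1, \dots, w_b$ of $S_2$ (with $a = \dim S_1 + 1$, $b = \dim S_2 + 1$); these $a + b$ points are linearly independent because the $v_i$ span $W_1$, the $w_j$ span $W_2$, and $W_1 \cap W_2 = \{0\}$, while $\dim W = a + b$. So $S' = \relint(\CH_\Omega\{v_1,\dots,v_a,w_1,\dots,w_b\})$ is the relative interior of a genuine $(a+b-1)$-simplex, giving $\dim S' = a + b - 1 = \dim S_1 + \dim S_2 + 1$. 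The main obstacle I anticipate is the reverse inclusion $[W] \cap \Omega \subseteq S'$: one must carefully use the directness of the decomposition $V = V_1 \oplus V_2$ together with $W = W_1 \oplus W_2$ to conclude that a point of $\CH_\Omega(F_\Omega(x_1) \cup F_\Omega(x_2))$ lying in $[W]$ is actually in $\CH_\Omega(S_1 \cup S_2)$; handling the boundary/closure issues (a point may be a limit of such segments) needs a short compactness or continuity argument, but the linear-algebra core is exactly the direct-sum fact from Theorem~\ref{thm:CH_of_opp_faces}.
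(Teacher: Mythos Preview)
Your argument is correct, but it takes a longer route than the paper's. The paper's proof is three lines: it lists the vertices $v_1,\dots,v_p$ of $S_1$ and $w_1,\dots,w_q$ of $S_2$, notes they are linearly independent because $V_1\cap V_2=\{0\}$, and then invokes Corollary~\ref{cor:lines_between_opp_faces} (the corollary immediately preceding this one, which you did not use) to conclude that $S'$ is a properly embedded simplex with those vertices. Concretely, part~(1) of Corollary~\ref{cor:lines_between_opp_faces} shows that every open segment between a point of $S_1\subset F_\Omega(x_1)$ and a point of $S_2\subset F_\Omega(x_2)$ lies in $\Omega$, so $\relint S'\subset\Omega$; parts~(2)--(3) show that every proper face of the abstract simplex on $\{v_i,w_j\}$ lies in $\partial\Omega$, since omitting any vertex forces one of the two ``halves'' to lie in $\partial S_1\subset\partial F_\Omega(x_1)$ or $\partial S_2\subset\partial F_\Omega(x_2)$. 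The dimension count is then immediate.

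By contrast, you go back to the full conclusion of Theorem~\ref{thm:CH_of_opp_faces} and intersect the identification $\Omega\cap[V]=\relint\big(\CH_\Omega(F_\Omega(x_1)\cup F_\Omega(x_2))\big)$ with $[W]$. This works, and your worry about the reverse inclusion is unnecessary: the direct-sum argument you sketch is already complete. For $[u]\in[W]\cap\Omega$ write $u=u_1+u_2$ with $u_i\in V_i$; since also $u\in W=W_1\oplus W_2$ with $W_i\subset V_i$, uniqueness of the $V_1\oplus V_2$ decomposition forces $u_i\in W_i$, and the fact that $\pi_i(\Omega\cap[V])=F_\Omega(x_i)$ (established in the proof of Theorem~\ref{thm:CH_of_opp_faces}) gives $[u_i]\in[W_i]\cap F_\Omega(x_i)=S_i$. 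No compactness or limit argument is needed. What your approach buys is an explicit identification $S'=[W]\cap\Omega$; what the paper's approach buys is brevity, since Corollary~\ref{cor:lines_between_opp_faces} was set up precisely to make this step a one-liner.
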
 

\begin{proof} Let $v_1, \dots, v_p$ be the extreme points of $S_1$ and let $w_1, \dots,w_q$ be the extreme points of $S_2$. Since $V_1 \cap V_2 = \{0\}$, the lines $v_1,\dots, v_p, w_1,\dots, w_q$ are linearly independent in $\Rb^d$. Then Corollary~\ref{cor:lines_between_opp_faces} implies that $S^\prime$ is a properly embedded simplex with vertices $v_1,\dots, v_p, w_1,\dots, w_q$ and so
\begin{equation*}
\dim S^\prime = p+q-1 = (p-1)+(q-1)+1= \dim S_1+\dim S_2 +1. \qedhere
\end{equation*}
\end{proof}

The rest of the section is devoted to the proof of Theorem~\ref{thm:CH_of_opp_faces}. So fix $S \subset \Omega \subset \Pb(\Rb^d)$ and $x_1,x_2 \in \partial S$ as in the statement of the theorem.

First, $\Stab_{\Aut(\Omega)}(S)$ permutes the faces of $S$ and $S$ has finitely many faces, so there exists a finite index subgroup $G \leq \Stab_{\Aut(\Omega)}(S)$ which stabilizes each face of $S$. Then  $G(V_1) = V_1$, $G(V_2) = V_2$, $G|_{V_1} \leq \Aut(F_\Omega(x_1))$, and $G|_{V_2} \leq \Aut(F_\Omega(x_2))$. Further, $G$ acts co-compactly on $S$ since $G$ has finite index in $\Stab_{\Aut(\Omega)}(S)$.

\begin{lemma} Fix $p_0 \in (x_1, x_2)$. There exist  $y_1 \in F_S(x_1)$, $y_2 \in F_S(x_2)$, and a sequence $a_n \in G$ such that: 
\begin{enumerate}
\item $y_1 = \lim_{n \rightarrow \infty} a_n p_0$, 
\item $y_1= \lim_{n \rightarrow \infty} a_n x_1$, and 
\item $y_2 = \lim_{n \rightarrow \infty} a_n x_2$. 
\end{enumerate}
\end{lemma}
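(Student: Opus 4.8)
The plan is to exploit the co-compactness of the $G$-action on $S$ by following the segment $(x_1,x_2)$ out toward $x_1$, pulling it back into a fixed compact set, and then extracting a limit.

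Concretely, I would parametrize $(x_1,x_2)$ by a unit-speed geodesic $\sigma\colon\Rb\to S$ with $\sigma(0)=p_0$ and $\sigma(t)\to x_1$ as $t\to+\infty$ (so $\sigma(t)\to x_2$ as $t\to-\infty$), fix a compact $K\subset S$ with $G\cdot K=S$, and for each $n$ choose $g_n\in G$ with $g_n\sigma(n)\in K$; then set $a_n:=g_n^{-1}$. Since $G$ stabilizes each face of $S$, every $a_n$ preserves $\Spanset F_S(x_1)$ and $\Spanset F_S(x_2)$, so $a_nx_i\in F_S(x_i)$ and $a_np_0\in(a_nx_1,a_nx_2)$. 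Passing to a subsequence, I may assume $g_n\sigma(n)\to z\in K$, $a_nx_1\to y_1\in\overline{F_S(x_1)}$, $a_nx_2\to y_2\in\overline{F_S(x_2)}$, and $a_np_0\to y_0\in\overline{S}$.

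Condition $(1)$ should then follow from the identity $H_S(a_np_0,\sigma(n))=H_S(p_0,g_n\sigma(n))\le\sup_{k\in K}H_S(p_0,k)<\infty$ together with $\sigma(n)\to x_1$: Proposition~\ref{prop:dist_est_and_faces} forces $y_0\in F_S(x_1)$. For condition $(2)$ I would use that oppositeness of $x_1,x_2$ gives the direct sum $\Spanset S=\Spanset F_S(x_1)\oplus\Spanset F_S(x_2)$, which implies the segment $[y_1,y_2]$ meets $[\Spanset F_S(x_1)]$ only at $y_1$; since $y_0\in F_S(x_1)\subset[\Spanset F_S(x_1)]$ lies on $[y_1,y_2]$ (as the limit of $a_np_0\in[a_nx_1,a_nx_2]$), this yields $y_0=y_1$.

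The step I expect to be the real content is $(3)$: showing $y_2$ lands in the \emph{open} face $F_S(x_2)$, not merely its closure. Here the idea is to use the direct sum to form the linear projection $P\colon\Spanset S\to\Spanset F_S(x_2)$ with kernel $\Spanset F_S(x_1)$. Each $g_n$ preserves both summands, so $P$ commutes with $g_n$; and since $\sigma(n)\in(x_1,x_2)$ while $P$ kills the $x_1$-direction and fixes the $x_2$-direction, one gets $[P](\sigma(n))=x_2$, hence $g_nx_2=[P](g_n\sigma(n))\in[P](K)$. Now $[P]$ is defined and continuous on $S$ and maps $S$ into the open face $F_S(x_2)$, so $[P](K)$ is a compact subset of $F_S(x_2)$; because $g_n|_{\Spanset F_S(x_2)}\in\Aut(F_S(x_2))$ acts isometrically for $H_{F_S(x_2)}$ (Proposition~\ref{prop:hilbert_basic}), this bounds $H_{F_S(x_2)}(x_2,a_nx_2)=H_{F_S(x_2)}(x_2,g_nx_2)$ uniformly, so $a_nx_2$ stays in a compact part of $F_S(x_2)$ and $y_2\in F_S(x_2)$. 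Running the same projection argument with the roles of $x_1$ and $x_2$ swapped re-confirms $y_1\in F_S(x_1)$, and the case where $S$ is a properly embedded line is identical with $F_S(x_i)=\{x_i\}$.
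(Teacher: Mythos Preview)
Your argument is correct and complete. It is, however, a genuinely different route from the paper's.

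The paper works explicitly with coordinates: since every element of $G$ fixes each vertex $v_1,\dots,v_p$ of $S$, the restriction $a_n|_{\Spanset S}$ is diagonal in this basis, and because $G$ acts co-compactly on $S$ its restriction to $\Spanset S$ is a lattice in the diagonal group. The paper then simply \emph{chooses} a sequence $a_n\in G$ whose eigenvalue ratios $\lambda_{n,i}/\lambda_{n,j}$ tend to finite positive constants when $i,j$ are both $\le q$ or both $>q$, and to $+\infty$ when $i\le q<j$ (where $q=1+\dim F_S(x_1)$). The limits $y_1,y_2$ and the convergence $a_np_0\to y_1$ then drop out of one-line diagonal computations.

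Your approach is more geometric: you produce $a_n$ by pulling points on the segment $(x_1,x_2)$ back into a compact set, identify $y_1$ via Proposition~\ref{prop:dist_est_and_faces} and the direct-sum decomposition $\Spanset S=\Spanset F_S(x_1)\oplus\Spanset F_S(x_2)$, and then control $y_2$ with the commuting linear projection $P$ onto $\Spanset F_S(x_2)$. The projection step and the isometry trick $H_{F_S(x_2)}(x_2,a_nx_2)=H_{F_S(x_2)}(g_nx_2,x_2)$ are the real content, and they are correct. Your route avoids ever writing down eigenvalue ratios and would transfer verbatim to any co-compact face-preserving action, whereas the paper's computation is shorter but leans on the explicit matrix form of $\Aut(S)$. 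Either fits the downstream use in Theorem~\ref{thm:CH_of_opp_faces} equally well.
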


\begin{proof}
Let $v_1,\dots, v_p$ be the extreme points of $S$ labelled so that 
\begin{align*}
F_S(x_1) =\relint \left( \CH_\Omega \{v_1,\dots, v_q\}\right)
\end{align*}
and 
\begin{align*}
F_S(x_2)=\relint \left( \CH_\Omega\{v_{q+1},\dots, v_p\} \right)
\end{align*}
where $q =1+ \dim F_S(x_1)$. 

Let $W: = \Spanset S$. Since $G$ fixes each extreme point and acts co-compactly on $S$, we can find a sequence $a_n \in G$ with 
\begin{align*}
a_n|_W = \begin{bmatrix} 
\lambda_{n,1} & & \\ & \ddots & \\ & & \lambda_{n,p}
\end{bmatrix}
\end{align*}
relative to the basis $v_1,\dots, v_p$ of $W$ and
\begin{align*}
\lim_{n \rightarrow \infty} \frac{\lambda_{n,i}}{\lambda_{n,j}} =  \left\{ 
\begin{array}{ll} 
c_{i,j} \in (0,\infty) & \text{ if } 1 \leq i,j \leq q \\
\infty & \text{ if } 1 \leq i\leq q < j \leq p \\
c_{i,j} \in (0,\infty) & \text{ if } q < i,j \leq p.
\end{array}
\right.
\end{align*}
Then define
\begin{align*}
y_1 : = [ {\rm diag}(c_{1,1}, \dots, c_{q,1}, 0,\dots,0)]\cdot x_1 = \lim_{n \rightarrow \infty} a_n x_1 \in F_S(x_1)
\end{align*}
and 
\begin{align*}
y_2 : = [ {\rm diag}(0, \dots, 0, c_{q+1,q+1},\dots,c_{p,q+1})]\cdot x_2 = \lim_{n \rightarrow \infty} a_n x_2 \in F_S(x_2).
\end{align*}
Then $\lim_{n \to \infty} a_n p_0 = y_1$.
\end{proof}

Next let $a_{1,n} = a_n |_{V_1} \in \Aut(F_\Omega(x_1))$ and $a_{2,n} = a_n |_{V_2} \in \Aut(F_\Omega(x_2))$. Since $\lim_{n \to \infty}a_{1,n}(x_1) = y_1 \in F_S(x_1)$ and $\Aut(F_\Omega(x_1))$ acts properly on $F_\Omega(x_1)$, we see that 
\begin{align*}
\{ a_{1,n} : n \geq 0\} \subset \Aut(F_\Omega(x_1))
\end{align*}
is relatively compact. The same argument implies that 
\begin{align*}
\{ a_{2,n} : n \geq 0\} \subset\Aut(F_\Omega(x_2))
\end{align*}
is relatively compact. So by passing to a subsequence we can suppose that $a_1 : = \lim_{n \to \infty} a_{1,n}$ exists in  $\Aut(F_\Omega(x_1))$ and $a_2:= \lim_{n \to \infty} a_{2,n}$ exists  in $\Aut(F_\Omega(x_1))$. 

\begin{lemma} $V_1 \cap V_2 = \{0\}$. \end{lemma}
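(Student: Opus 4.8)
The plan is to argue by contradiction, exploiting the fact that the sequence $a_n$ was constructed to degenerate on $W := \Spanset S$ while remaining non-degenerate on $V_1$ and on $V_2$. Concretely, suppose there is a nonzero $v \in V_1 \cap V_2$. From the diagonal form ${\rm diag}(\lambda_{n,1},\dots,\lambda_{n,p})$ of $a_n|_W$ in the basis $v_1,\dots,v_p$, with $\lambda_{n,i}/\lambda_{n,j} \to \infty$ for $i \le q < j$, one sees that projectively $a_n|_W$ converges (after normalizing by $\lambda_{n,1}$) to ${\rm diag}(1,c_{2,1},\dots,c_{q,1},0,\dots,0)$, whose kernel contains $W_2 := \Spanset F_S(x_2)$; in particular $[\wh{a}_n|_W]$ converges in $\Pb(\End(W))$ to a \emph{non-invertible} endomorphism. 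On the other hand, $a_{1,n} = [\wh{a}_n|_{V_1}] \to a_1$ and $a_{2,n} = [\wh{a}_n|_{V_2}] \to a_2$ with $a_1, a_2$ honest automorphisms, so the maps do not degenerate on $V_1$ or on $V_2$. The common direction $v$ is exactly what forces these two pictures to collide over $W$.

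To set this up I would fix the lift $\wh{a}_n \in \GL_d(\Rb)$ of $a_n$ that is diagonal on $W$ in the basis $v_1,\dots,v_p$. Since $a_n$ preserves $F_\Omega(x_1)$, $F_\Omega(x_2)$ and the faces $F_S(x_1), F_S(x_2)$ of $S$, the subspaces $V_1$, $V_2$, $W_1 := \Spanset F_S(x_1)$ and $W_2$ are all $\wh{a}_n$-invariant, and $W_1 \subseteq V_1$, $W_2 \subseteq V_2$ because $F_S(x_i) \subseteq F_\Omega(x_i)$. From $a_{i,n} \to a_i$ I can choose scalars $c_{i,n} > 0$ ($i=1,2$) with $c_{i,n}\,\wh{a}_n|_{V_i} \to A_i \in \GL(V_i)$, a lift of $a_i$. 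Evaluating at $v$ gives $c_{1,n}\,\wh{a}_n v \to A_1 v \ne 0$ and $c_{2,n}\,\wh{a}_n v \to A_2 v \ne 0$; since these two vectors are parallel for each $n$, the ratio $c_{1,n}/c_{2,n}$ converges to some $\rho \in (0,\infty)$, hence $c_{1,n}\,\wh{a}_n|_{V_2} \to \rho A_2$.

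Now I restrict everything to $W = W_1 \oplus W_2$ (this direct sum decomposition is precisely the oppositeness of $F_S(x_1)$ and $F_S(x_2)$). Using invariance of $W_1, W_2$ I get $c_{1,n}\,\wh{a}_n|_{W_1} \to A_1|_{W_1}$ and $c_{1,n}\,\wh{a}_n|_{W_2} \to \rho A_2|_{W_2}$, and each limit is invertible on its summand (a restriction of an invertible map to an invariant subspace). Hence $c_{1,n}\,\wh{a}_n|_W$ converges to a block-diagonal, therefore invertible, endomorphism of $W$, so $[\wh{a}_n|_W]$ converges in $\Pb(\End(W))$ to an invertible projective transformation. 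This contradicts the first paragraph, where $[\wh{a}_n|_W]$ was shown to converge to a non-invertible endomorphism (kernel containing $W_2$), since $\Pb(\End(W))$ is Hausdorff. Therefore $V_1 \cap V_2 = \{0\}$.

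I do not anticipate a genuine obstacle; the only delicate point is the bookkeeping with the projective normalizations — making sure the single rescaling $c_{1,n}$ that controls $\wh{a}_n$ on $V_1$ also controls it on $W_1$ and, after comparison with $c_{2,n}$, on $W_2$. That comparison is exactly where the assumption $V_1 \cap V_2 \ne \{0\}$ is used. The remaining ingredients — invariance of the subspaces, the identity $F_S(x_i) = \ol{S} \cap F_\Omega(x_i)$ (hence $W_i \subseteq V_i$), and the relative compactness giving the limits $a_1, a_2$ — are already available in the excerpt.
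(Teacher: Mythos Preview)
Your proof is correct and follows essentially the same strategy as the paper: use a common direction in $V_1 \cap V_2$ to synchronize the two projective normalizations coming from $a_{1,n} \to a_1$ and $a_{2,n} \to a_2$, and then derive a contradiction with the known degeneration of $a_n$. The paper carries this out by writing $a_n|_V$ in block form relative to a decomposition $V = (V_1 \cap V_2) \oplus W_1' \oplus W_2'$ and concluding that $a_n|_V$ converges in $\PGL(V)$; you instead compare the normalizations via a single vector $v \in V_1 \cap V_2$ and localize the contradiction on the smaller subspace $W = \Spanset S$, where the explicit diagonal form makes the non-invertible limit transparent. This is a minor variation, not a different approach.
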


\begin{proof} Suppose not. Then fix a decomposition 
\begin{align*}
V = (V_1 \cap V_2) \oplus W_1 \oplus W_2
\end{align*}
 with $V_1 = (V_1 \cap V_2) \oplus W_1$ and $V_2 = (V_1 \cap V_2) \oplus W_2$. Then relative to this decomposition 
\begin{align*}
a_n|_V = \begin{bmatrix} A_{1,1}^{(n)} & A_{1,2}^{(n)}  & A_{1,3}^{(n)}  \\ 0 & A_{2,2}^{(n)}  & 0 \\ 0 & 0 & A_{3,3}^{(n)} \end{bmatrix}
\end{align*}
for some $A_{1,1}^{(n)}  \in \GL(V_1 \cap V_2)$, $A_{1,2}^{(n)} \in {\rm Lin}(W_1, V_1 \cap V_2)$, $A_{1,3}^{(n)} \in { \rm Lin}(W_2, V_1 \cap V_2)$, $A_{2,2}^{(n)} \in \GL(W_1)$, and $A_{3,3}^{(n)} \in \GL(W_2)$. 

Then 
\begin{align*}
a_{1,n} =  \begin{bmatrix} A_{1,1}^{(n)} & A_{1,2}^{(n)}   \\ 0 & A_{2,2}^{(n)}  \end{bmatrix}
\end{align*}
and 
\begin{align*}
a_{2,n} =  \begin{bmatrix} A_{1,1}^{(n)} & A_{1,3}^{(n)}   \\ 0 & A_{3,3}^{(n)}  \end{bmatrix}
\end{align*}
relative to the decompositions $V_1 = (V_1 \cap V_2) \oplus W_1$ and $V_2 = (V_1 \cap V_2) \oplus W_2$ respectively. 

Since $a_{1,n}$ converges to $a_1$ in $\PGL(V_1)$ there exists a sequence $t_{1,n} \in \Rb$ such that 
\begin{align*}
A_1: =\lim_{n \rightarrow \infty} t_{1,n}  \begin{pmatrix} A_{1,1}^{(n)} & A_{1,2}^{(n)}   \\ 0 & A_{2,2}^{(n)}  \end{pmatrix} \in \GL(V_1)
\end{align*}
and $a_1=[A_1]$. For the same reasons, there exists a sequence $t_{2,n} \in \Rb$ such that 
\begin{align*}
A_2: =\lim_{n \rightarrow \infty} t_{2,n}  \begin{pmatrix} A_{1,1}^{(n)} & A_{1,3}^{(n)}   \\ 0 & A_{3,3}^{(n)}  \end{pmatrix}  \in \GL(V_2)
\end{align*}
and $a_2=[A_2]$.

Since $t_{1,n} A_{1,1}^{(n)}$ and $t_{2,n} A_{1,1}^{(n)}$ both converge in $\GL(V_1 \cap V_2)$ we must have 
\begin{align*}
0 \neq c:=\lim_{n \rightarrow \infty} \frac{t_{2,n}}{t_{1,n}}.
\end{align*}
But then 
\begin{align*}
t_{1,n}  \begin{pmatrix} A_{1,1}^{(n)} & A_{1,2}^{(n)}  & A_{1,3}^{(n)}  \\ 0 & A_{2,2}^{(n)}  & 0 \\ 0 & 0 & A_{3,3}^{(n)} \end{pmatrix}
\end{align*}
converges in $\GL(V)$ which implies that $a_n|_V$ converges in $\PGL(V)$. But, by construction, $a_n|_V$ diverges in $\PGL_d(\Rb)$. So we have a contradiction and hence $V_1 \cap V_2 = \{0\}$. \end{proof}

Next let $\pi_1 : V \rightarrow V_1$ and $\pi_2 : V \rightarrow V_2$ be the projections relative to the decomposition $V=V_1 \oplus V_2$. To show that 
\begin{align*}
\Omega \cap \Pb(V) = \relint \left( \CH_\Omega \left(  F_\Omega(x_1) \cup  F_\Omega(x_2)\right)\right).
\end{align*}
it is enough to show that $\pi_1(\Omega \cap \Pb(V)) = F_\Omega(x_1)$ and $\pi_2(\Omega \cap \Pb(V)) = F_\Omega(x_2)$.

\begin{lemma} $\pi_1(\Omega \cap \Pb(V)) = F_\Omega(x_1)$. \end{lemma}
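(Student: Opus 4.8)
The plan is to prove the two inclusions $\pi_1(\Omega \cap [V]) \subset F_\Omega(x_1)$ and $\pi_1(\Omega \cap [V]) \supset F_\Omega(x_1)$ separately. The reverse inclusion is the easy one: by convexity, $\relint\left(\CH_\Omega(F_\Omega(x_1) \cup F_\Omega(x_2))\right) \subset \Omega \cap [V]$, and applying $\pi_1$ to a point of the form $[t\overline{u}_1 + (1-t)\overline{u}_2]$ with $u_i \in F_\Omega(x_i)$ produces $u_1 \in F_\Omega(x_1)$; letting $t \to 1$ along the segment shows every point of $F_\Omega(x_1)$ arises this way. So the real content is showing $\pi_1(\Omega \cap [V]) \subset F_\Omega(x_1)$, i.e. that for $z \in \Omega \cap [V]$ the point $\pi_1(z)$ cannot escape to the boundary of $F_\Omega(x_1)$.

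For that inclusion, I would argue by contradiction using the dynamics of the sequence $a_n$ constructed above, together with the relative compactness of $a_{1,n}$ and $a_{2,n}$ in $\Aut(F_\Omega(x_1))$ and $\Aut(F_\Omega(x_2))$ respectively. Suppose $z \in \Omega \cap [V]$ but $\pi_1(z) \notin F_\Omega(x_1)$; since $a_n$ preserves the decomposition $V = V_1 \oplus V_2$, we have $a_n z = [a_{1,n} \pi_1(\overline{z}) + a_{2,n} \pi_2(\overline{z})]$ for any lift $\overline z$. Because $a_{1,n} \to a_1 \in \Aut(F_\Omega(x_1))$ and $a_{2,n} \to a_2 \in \Aut(F_\Omega(x_2))$, after rescaling the lifts appropriately we get convergence of $a_n z$ to a point of the form $[a_1(\pi_1 \overline z) + \mu\, a_2(\pi_2 \overline z)]$ for some scalar $\mu \in [0,\infty]$ (the ratio of the scalings of the two blocks). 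The point is: if $\pi_1(z)$ lies in $\Spanset F_\Omega(x_1)$ but not in $F_\Omega(x_1)$, then either $\pi_1(z) = 0$ (so $z \in [V_2]$, forcing $z \in F_\Omega(x_2) \subset \partial\Omega$, contradiction) or $[\pi_1(z)] \notin \overline{F_\Omega(x_1)}$ or $[\pi_1(z)] \in \partial F_\Omega(x_1)$. In the last case $a_1[\pi_1(z)] \in \partial F_\Omega(x_1)$, and then combining with Corollary~\ref{cor:lines_between_opp_faces}(2) (a point of $\partial F_\Omega(x_1)$ joined to any point of $\overline{F_\Omega(x_2)}$ lies in $\partial\Omega$) — wait, this needs the theorem we are proving, so instead I would use the more primitive Observation~\ref{obs:faces}(4) directly: take $y_1 = \lim a_n x_1 \in F_S(x_1)$, $y_2 = \lim a_n x_2 \in F_S(x_2)$, and note $(y_1,y_2) \subset S \subset \Omega$; meanwhile $\lim a_n z$ would be forced into $\partial\Omega$ or would produce a segment lying in $\partial\Omega$ through an interior point, which is the contradiction.

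More concretely, the cleanest route: set $z \in \Omega \cap [V]$, write $z_1 = \pi_1(z)$, $z_2 = \pi_2(z)$. Since $z \in \Omega$, the line segment argument of Observation~\ref{obs:faces}(4) applied to $x_1, x_2$ and the limit points shows $[z_1', z_2'] \subset \Omega$ whenever $z_1' \in F_\Omega(x_1)$, $z_2' \in F_\Omega(x_2)$ — but I need to run this to conclude $z_1' := $ the limit of $a_{1,n}[z_1]$ must stay in $F_\Omega(x_1)$. The key computation is that the scalars $t_{1,n}, t_{2,n}$ used to rescale $a_{1,n}, a_{2,n}$ into convergence in $\GL$ must have a comparable ratio, exactly as in the proof that $V_1 \cap V_2 = \{0\}$: if the ratio $t_{2,n}/t_{1,n}$ degenerates to $0$ or $\infty$, then $\lim a_n z$ lands in $\overline{F_\Omega(x_1)}$ or $\overline{F_\Omega(x_2)}$ respectively; if $z_1 \ne 0$ and $[z_1] \notin F_\Omega(x_1)$ this limit lies in $\partial\Omega$; but $a_n \in \Aut(\Omega)$ and $z \in \Omega$, and one shows (using properness of the $\Aut$ actions on the faces and the explicit form of $y_1$) that $\lim a_n z$ must actually be an interior point of $\Omega$ or of a face, giving the contradiction. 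Then symmetrically $\pi_2(\Omega \cap [V]) = F_\Omega(x_2)$, which the paper states as a separate lemma.

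\textbf{The main obstacle} I anticipate is the bookkeeping of the two independent rescalings $t_{1,n}$ and $t_{2,n}$ and showing their ratio stays bounded away from $0$ and $\infty$ — this is where the hypothesis that $z$ lies in the \emph{open} domain $\Omega$ (not just $\overline{\Omega} \cap [V]$) must be used, presumably by exhibiting, via the co-compact action and the explicit diagonal form of $a_n|_W$, a uniform lower bound on how far $a_n z$ stays from $\partial\Omega$, or equivalently by invoking Proposition~\ref{prop:dist_est_and_faces} to transfer a finite Hilbert-distance bound from the $S$-geodesic $(x_1,x_2)$ to the segment through $z$. Establishing that comparability is the crux; once it is in hand, identifying $\lim a_n z$ with a point of $\relint\CH_\Omega(F_\Omega(x_1) \cup F_\Omega(x_2))$ and deducing $[z_1] \in F_\Omega(x_1)$ is routine.
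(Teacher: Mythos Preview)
You have the right ingredients (the block form of $a_n|_V$, the convergence $a_{j,n}\to a_j$, Proposition~\ref{prop:dist_est_and_faces}) but you have inverted the key dynamical fact. The ratio of the two rescalings does \emph{not} stay bounded away from $0$ and $\infty$; it degenerates, and that degeneration is precisely what drives the proof. Since $p_0\in(x_1,x_2)$ has nonzero components in both $V_1$ and $V_2$, and $a_n p_0\to y_1\in F_\Omega(x_1)\subset[V_1]$, the $V_2$-block must become negligible against the $V_1$-block: if $s_{1,n}B_{1,n}\to B_1\in\GL(V_1)$ and $s_{2,n}B_{2,n}\to B_2\in\GL(V_2)$, then necessarily $s_{1,n}/s_{2,n}\to 0$. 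Consequently $a_n|_V$ converges in $\Pb(\End(V))$ to $[T]$ with $T=\begin{pmatrix}B_1&0\\0&0\end{pmatrix}$, i.e.\ $\ker T=V_2$ and $a_1^{-1}\circ[T]=[\pi_1]$.

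From here the argument is direct, with no contradiction needed. For $p\in\Omega\cap[V]$ one has $p\notin[\ker T]$ (since $[V_2]\cap\Omega=\emptyset$), so $T(p)=\lim a_n(p)$. Because $H_\Omega(a_n p,a_n p_0)=H_\Omega(p,p_0)$ is bounded and $a_n p_0\to y_1$, Proposition~\ref{prop:dist_est_and_faces} gives $T(p)\in F_\Omega(y_1)=F_\Omega(x_1)$. Then $\pi_1(p)=a_1^{-1}T(p)\in a_1^{-1}F_\Omega(x_1)=F_\Omega(x_1)$. Your contradiction route, your attempt to invoke Corollary~\ref{cor:lines_between_opp_faces} (which is downstream of this lemma), and the case analysis on $[\pi_1(z)]$ are all unnecessary once you see that the limit endomorphism $T$ \emph{is} $a_1\circ\pi_1$.
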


\begin{proof} Since $F_\Omega(x_1) \subset \overline{\Omega \cap \Pb(V)}$, we clearly have 
\begin{align*}
F_\Omega(x_1) \subset \pi_1(\Omega \cap \Pb(V)).
\end{align*}

Relative to the decomposition $V=V_1\oplus V_2$ we have 
\begin{align*}
a_n|_V = \begin{bmatrix} B_{1,n} & 0 \\ 0 & B_{2,n} \end{bmatrix}
\end{align*}
where $B_{1,n} \in \GL(V_1)$, $[B_{1,n}]=a_{1,n}$, $B_{2,n} \in \GL(V_2)$, and $[B_{2,n}]=a_{2,n}$. Since $a_{1,n}$ converges to $a_1$ in $\PGL(V_1)$ there exists a sequence $s_{1,n} \in \Rb$ such that 
\begin{align*}
B_1:=\lim_{n \rightarrow \infty} s_{1,n} B_{1,n}  \in \GL(V_1)
\end{align*}
and $a_1=[B_1]$. Similarly, there exists a sequence $s_{s,n} \in \Rb$ such that 
\begin{align*}
B_2:=\lim_{n \rightarrow \infty} s_{2,n} B_{2,n}  \in \GL(V_2)
\end{align*}
and $a_2 = [B_2]$. 

Since $\lim_{n \to \infty} a_n p_0 = y_1 \in F_\Omega(x_1)$, we must have 
\begin{align*}
\lim_{n \rightarrow \infty} \frac{s_{1,n}}{s_{2,n}} = 0. 
\end{align*}
Then $a_n|_V$ converges to  $[T] \in \Pb(\End(V))$ where  
\begin{align*}
T = \begin{pmatrix} B_1 & 0 \\ 0 & 0 \end{pmatrix} \in \End(V).
\end{align*} 
Notice that $a_1^{-1} \circ [T] = [\pi_1]$.

Fix $p \in \Omega \cap \Pb(V)$. We claim that $T(p) \in F_\Omega(x_1)$. Since $\ker T = V_2$ and $\Pb(V_2) \cap \Omega = \emptyset$, we have $p \notin \Pb(\ker T)$. So 
\begin{align*}
T(p) = \lim_{n \rightarrow \infty} a_n(p).
\end{align*}
Then, since $\lim_{n \to \infty} a_n(p_0)= y_1 \in F_\Omega(x_1)$, Proposition~\ref{prop:dist_est_and_faces} implies that $T(p) \in F_\Omega(x_1)$. Since $p \in \Omega \cap \Pb(V)$ was arbitrary, 
\begin{align*}
T(\Omega \cap \Pb(V)) \subset F_\Omega(x_1).
\end{align*}
Then
\begin{align*}
\pi_1(\Omega \cap \Pb(V)) = (a_1^{-1} \circ [T])(\Omega \cap \Pb(V)) \subset a_1^{-1} \big( F_\Omega(x_1) \big)=F_\Omega(x_1), 
\end{align*}
since $a_1 \in \Aut(F_\Omega(x_1))$. 
\end{proof}

Applying the same argument to $a_n^{-1}$ shows that
\begin{align*}
\pi_2(\Omega \cap \Pb(V)) = F_\Omega(x_2)
\end{align*}
which completes the proof.

\part{The naive convex co-compact case}

\section{Invariant and isolated sets of simplices are periodic}\label{sec:inv_plus_iso_implies_periodic}

In this section we show that any isolated and invariant family of properly embedded simplices of dimension at least two satisfies properties (1) and (2) of Theorem~\ref{thm:properties_of_ncc}.

\begin{proposition}\label{prop:HK_section_3_1}
Suppose $(\Omega, \Cc, \Lambda)$ is a naive convex co-compact triple and $\Sc$ is an isolated and $\Lambda$-invariant family of maximal properly embedded simplices in $\Cc$ of dimension at least two. Then:
\begin{enumerate}
\item $\Sc$ is a locally finite collection, that is for any compact set $K \subset \Omega$ the set
\begin{align*}
\{ S \in \Sc : S \cap K \neq \emptyset\}
\end{align*}
is finite. 
\item $\LG$ has finitely many orbits in $\Sc$.
\item If $S \in \Sc$, then $\Stab_{\Lambda}(S)$ acts co-compactly on $S$ and contains a finite index subgroup isomorphic to $\Zb^k$ where $k = \dim S$. 
\end{enumerate}
\end{proposition}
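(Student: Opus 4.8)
The plan is to leverage the co-compactness of the $\Lambda$-action on $\Cc$ together with the discreteness of $\Sc$ in the local Hausdorff topology, in the style of Hruska--Kleiner's Section 3.1. First I would prove (1). Suppose for contradiction that some compact $K \subset \Omega$ meets infinitely many simplices $S_n \in \Sc$. Since $\Cc$ is $\Lambda$-invariant and $\Lambda$ acts co-compactly on $\Cc$, after intersecting with $\overline{\Cc}$ and using that each $S_n \subset \Cc$ meets the compact set $K \cap \overline{\Cc}$, we may pass to a subsequence and extract a limit in the local Hausdorff topology: pick $p_n \in S_n \cap K$, pass to $p_n \to p$, and observe that the slices $S_n$, written as $\Omega \cap [\Spanset S_n]$, subconverge (after passing to a subsequence in the appropriate Grassmannian, using Observation~\ref{obs:slice_convergence}) to a properly embedded simplex $S_\infty$ (Observation~\ref{obs:PES_closed}). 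If infinitely many $S_n$ are pairwise distinct, this contradicts discreteness of $\Sc$ in the local Hausdorff topology; if only finitely many are distinct the conclusion of (1) already holds. Hence $\Sc$ is locally finite.

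Next, (2): since $\Lambda$ acts co-compactly on $\Cc$, there is a compact set $K \subset \Cc$ with $\Lambda \cdot K = \Cc$. Every $S \in \Sc$ lies in $\Cc$ and is nonempty, so some $\Lambda$-translate of $S$ meets $K$; thus every $\Lambda$-orbit in $\Sc$ has a representative in the finite set $\{S \in \Sc : S \cap K \neq \emptyset\}$ from part (1), and there are finitely many orbits.

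Finally, (3): fix $S \in \Sc$. Because $\Sc$ is locally finite and $\Lambda$-invariant, $\Stab_\Lambda(S)$ has finite index in $\{\gamma \in \Lambda : \gamma S \cap K \neq \emptyset\}$-type sets; more directly, the standard argument shows $\Stab_\Lambda(S)$ acts co-compactly on $S$: take a compact $K \subset \Cc$ with $\Lambda K = \Cc$, enlarge it so $K \cap S \neq \emptyset$, and for any $x \in S$ pick $\gamma \in \Lambda$ with $\gamma x \in K$; then $\gamma S$ meets $K$, so by local finiteness $\gamma S$ ranges over a finite set $\{S_1,\dots,S_m\}$ of simplices as $x$ varies, and picking $g_i$ with $g_i S_i = S$ one checks $S = \bigcup_i \Stab_\Lambda(S)\cdot(g_i(K \cap S_i))$, a finite union of $\Stab_\Lambda(S)$-translates of compact sets, hence co-compact. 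Now $\Stab_\Lambda(S)$ is a discrete subgroup of $\Aut(S) \cong \Aut$ of a $k$-simplex acting co-compactly on $(S, H_S)$; by Observation~\ref{obs:QI_to_Rk}, $(S,H_S)$ is quasi-isometric to $\Rb^k$, so $\Stab_\Lambda(S)$ is a virtually $\Zb^k$ group (e.g. by the Bieberbach-type structure of discrete co-compact subgroups of $\Aut(S)$, whose identity component is the diagonal group $\cong \Rb^k$, cf. Example~\ref{ex:basic_properties_of_simplices} and Observation~\ref{obs:cocompact-action-on-simplices}). Passing to the finite-index subgroup $\Stab_\Lambda(S)_0$ fixing all vertices of $S$ (Observation~\ref{obs:cocompact-action-on-simplices}(1)), this subgroup lies in the diagonal torus $\cong \Rb^k$ and is discrete and co-compact there, hence isomorphic to $\Zb^k$.

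The main obstacle will be step (1): setting up the local Hausdorff limit argument carefully, in particular ensuring that the limit slice is genuinely a properly embedded simplex of the right dimension and that we have not simply extracted the same simplex infinitely often — one must argue that if the $S_n$ were eventually constant the statement is trivial, and otherwise produce a genuine contradiction with discreteness. The compactness inputs (Observation~\ref{obs:slice_convergence}, Observation~\ref{obs:PES_closed}, and properness of the $\Aut(\Omega)$-action) are exactly what make this go through.
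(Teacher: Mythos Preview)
Your argument is essentially the same as the paper's, and parts (1), (2), and the co-compactness in (3) are correct. There is one genuine slip in the last step of (3): you write that the finite-index subgroup $\Stab_\Lambda(S)_0$ fixing the vertices ``lies in the diagonal torus $\cong \Rb^k$ and is discrete and co-compact there, hence isomorphic to $\Zb^k$.'' But $\Stab_\Lambda(S)_0$ is a subgroup of $\PGL_d(\Rb)$, not of $\Aut(S)$; only its \emph{restriction} to $V = \Spanset S$ lands in the diagonal torus. The restriction homomorphism $\varphi : \Stab_\Lambda(S)_0 \to \PGL(V)$ can have nontrivial kernel consisting of elements acting trivially on $S$, and this kernel is finite (by properness of the $\Aut(\Omega)$-action on $\Omega$) but need not vanish. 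So what you have shown is that $\Stab_\Lambda(S)_0$ is finite-by-$\Zb^k$, not that it is $\Zb^k$.

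The paper closes this gap with Selberg's lemma: pass to a torsion-free finite-index subgroup $\Lambda_0 \leq \Lambda$, set $H = \Lambda_0 \cap \Stab_\Lambda(S)_0$, and note that $\varphi|_H$ is injective (its kernel is finite and $H$ is torsion-free), so $H \cong \varphi(H)$ is a finite-index subgroup of $\varphi(\Stab_\Lambda(S)_0) \cong \Zb^k$, hence itself $\cong \Zb^k$. You should insert this step.
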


Parts (1) and (2) are simple consequences of the definition. The proof of the first assertion in part (3) is nearly identical to the proof of the analogous result in the $\CAT(0)$ setting, see Wise~\cite[Proposition 4.0.4]{W1996}, Hruksa~\cite[Theorem 3.7]{H2005}, or Hruska-Kleiner~\cite[Section 3.1]{HK2005}. 

\begin{proof}
Since $\Sc$ is closed and discrete in the local Hausdorff topology part (1) is true. To prove part (2), fix a compact set $K \subset \Cc$ such that $\Lambda \cdot K = \Cc$. Then each $\LG$-orbit of $\Sc$ intersects $K$ and by part (1) there are only finitely many such intersections. Hence, there are only finitely many $\LG$-orbits in $\Sc$.

To prove part (3), fix $S \in \Sc$. Let
\begin{align*}
X := \{ g \in \Lambda : S \cap gK \neq \emptyset\}.
\end{align*}
Then $S= \cup_{g \in X} (S \cap gK)$. Since $(g^{-1}S) \cap K \neq \emptyset$ when $g \in X$, Part (1) implies that the set
\begin{align*}
\{ g^{-1}S : g \in X\}
\end{align*}
is finite. Since $g^{-1}S = h^{-1}S$ if and only if $gh^{-1} \in \Stab_{\Lambda}(S)$ if and only if $\Stab_{\Lambda}(S)g=\Stab_{\Lambda}(S)h$, there exist $g_1,\dots, g_m \in X$ such that 
\begin{align*}
\bigcup_{g\in X} \Stab_{\Lambda}(S)g = \bigcup_{j=1}^m \Stab_{\Lambda}(S)g_j .
\end{align*}
Then the set $\wh{K} : = \cup_{j=1}^m S \cap g_j K$ is compact and 
\begin{align*}
\Stab_{\Lambda}(S) \cdot \wh{K} = \cup_{g \in X}S \cap g K = S.
\end{align*}
So $ \Stab_{\Lambda}(S)$ acts co-compactly on $S$. 

Finally, we show that $\Stab_{\Lambda}(S)$ contains a finite index subgroup isomorphic to $\Zb^k$ where $k = \dim S$. First, let $G \leq \Stab_{\Lambda}(S)$ denote the subgroup of elements which fix every vertex of $S$. Then $G$ has finite index in $\Stab_{\Lambda}(S)$. Next let $V := \Spanset S$ and consider the homomorphism 
\begin{align*}
\varphi &:  G \rightarrow \Aut(S) \leq \PGL(V) \\
\varphi &(g) = g|_V.
\end{align*}
Fix $v_1,\dots, v_{k+1} \in V$ such that $[v_1],\dots,[v_{k+1}]$ are the vertices of $S$. Then relative to the basis $v_1,\dots, v_{k+1}$
\begin{align*}
\varphi(G) \leq
\mathcal{D}:=\left\{ \left[{\rm diag}(a_1,\dots,a_{k+1})\right] : a_1,\dots,a_{k+1} > 0\right\} \cong (\Rb^k,+).
\end{align*}
Further, $\varphi(G)$ is a lattice of $\mathcal{D}$ since $G$ acts properly and co-compactly on $S$. So $\varphi(G)$ is isomorphic to $\Zb^k$. 

Every element of $\ker \varphi$ acts trivially on $S$. Then, since $\Aut(\Omega)$ acts properly on $\Omega$, the group $\ker \varphi \leq G$ is finite. By Selberg's Lemma, there exists a torsion-free finite index subgroup $\Lambda_0 \leq \Lambda$. Then 
\begin{align*}
H := \Lambda_0 \cap G \leq \Stab_{\Lambda}(S)
\end{align*}
has finite index. Further, $\ker \varphi \cap H = \{1\}$ since $H$ is torsion-free. So $H$ is isomorphic to $\varphi(H)$. Finally, $\varphi(H) \leq \varphi(G) \cong \Zb^k$ has finite index so $\varphi(H)$ is also isomorphic to $\Zb^k$. 
\end{proof}

\section{Faces of properly embedded simplices}\label{sec:bd_faces}

In this section we show that any isolated, coarsely complete, and invariant family of properly embedded simplices can be refined to a family which also satisfies property (4) of Theorem~\ref{thm:properties_of_ncc}.

\begin{theorem}\label{thm:bd_faces}  Suppose $(\Omega, \Cc, \Lambda)$ is a naive convex co-compact triple with coarsely isolated simplices. Let $\Sc_0$ be an isolated, coarsely complete, and $\Lambda$-invariant family of maximal properly embedded simplices in $\Cc$ of dimension at least two. Then there exists an isolated, coarsely complete, and $\Lambda$-invariant subfamily $\Sc \subset \Sc_0$ with the following additional property: 

\begin{itemize}
\item[$(\star)$]  There exists $D_1 > 0$ such that: if $S \in \Sc$ and $x \in \partial S$, then 
\begin{align*}
H_{F_\Omega(x)}^{ \rm Haus}\Big( \overline{\Cc} \cap F_\Omega(x), F_S(x)\Big) \leq D_1. 
\end{align*}
\end{itemize}
\end{theorem}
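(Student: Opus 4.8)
The plan is to construct the refined family $\Sc$ by an explicit selection procedure on each parallel class and then argue that the failure of $(\star)$ would produce, via the structure theorem of Section~\ref{sec:opposite_faces} and the $\Lambda$-action, an uncontrolled supply of properly embedded simplices contradicting either coarse completeness or (strong) isolation. First I would fix a compact fundamental set $K \subset \Cc$ for the $\Lambda$-action and, using Proposition~\ref{prop:HK_section_3_1}, note that $\Sc_0$ has finitely many $\Lambda$-orbits, with each $\Stab_\Lambda(S)$ acting cocompactly on $S$; hence each $S \in \Sc_0$ is periodic and Theorem~\ref{thm:CH_of_opp_faces} applies to every pair of opposite points of $S$. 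The key preliminary observation is that for $S \in \Sc_0$ and a vertex $v$ of $S$, the set $\overline{\Cc} \cap F_\Omega(v)$ is a \emph{bounded} subset of $F_\Omega(v)$ in the metric $H_{F_\Omega(v)}$: if it were unbounded one could slide the vertex $v$ arbitrarily far along $F_\Omega(v)$ using Lemma~\ref{lem:slide_along_faces} (applied with the other vertices fixed, using Corollary~\ref{cor:building_simplices} to see the result is still a properly embedded simplex inside $\Cc$), producing a properly embedded simplex of the same dimension in $\Cc$ that is \emph{not} contained in any bounded neighborhood of $S$; by coarse completeness it lies in a bounded neighborhood of some $S' \in \Sc_0$, and then isolation of $\Sc_0$ combined with the periodicity/local finiteness from Proposition~\ref{prop:HK_section_3_1} forces a contradiction once the slide is large enough. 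So each face-slice is bounded, but a priori with no uniform bound over $S \in \Sc_0$.

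Next I would use the center-of-mass function of Proposition~\ref{prop:center_of_mass} to define, for each $S \in \Sc_0$ with vertices $v_1,\dots,v_p$, the point $w_j := {\rm CoM}_{F_\Omega(v_j)}(\overline{\Cc}\cap F_\Omega(v_j))$ and then $\Phi(S) := \relint(\CH_\Omega\{w_1,\dots,w_p\})$; Lemma~\ref{lem:slide_along_faces} shows $\Phi(S)$ is a properly embedded simplex parallel to $S$, and the equivariance property (3) of Proposition~\ref{prop:center_of_mass} together with $\Lambda$-invariance of $\Cc$ shows $\Phi$ is $\Lambda$-equivariant and that parallel members of $\Sc_0$ have the same image. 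I would set $\Sc := \{\Phi(S) : S \in \Sc_0\}$. That $\Sc$ is $\Lambda$-invariant is immediate; coarse completeness of $\Sc$ follows from that of $\Sc_0$ since $H_\Omega^{\Haus}(S,\Phi(S)) \le \sum_j H_{F_\Omega(v_j)}(v_j,w_j)$ is finite for each $S$, and using finiteness of $\Lambda$-orbits this bound is uniform, so $\Sc$ is a uniformly bounded perturbation of $\Sc_0$; isolation of $\Sc$ then follows from isolation of $\Sc_0$ by the same uniform-perturbation argument (a convergent sequence in $\Sc$ pulls back to one in $\Sc_0$ up to bounded error, and Observation~\ref{obs:PES_closed} controls the limit).

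It remains to verify property $(\star)$ for $\Sc$, and this is where the main work lies. Suppose not: then there are $S^{(n)} \in \Sc$ and $x_n \in \partial S^{(n)}$ with $H_{F_\Omega(x_n)}^{\Haus}(\overline{\Cc}\cap F_\Omega(x_n), F_{S^{(n)}}(x_n)) \to \infty$. Using $\Lambda$-invariance and finiteness of orbits, I would translate so that a base point of $S^{(n)}$ stays in $K$; passing to a subsequence the $S^{(n)}$ then converge (in the local Hausdorff topology, by compactness of the relevant Grassmannians as in Observation~\ref{obs:slice_convergence}) to some properly embedded simplex $S_\infty$, and isolation forces $S^{(n)} = S_\infty$ eventually — but then the $x_n$ lie in the finite set of boundary faces of the single simplex $S_\infty$, and the slice $\overline{\Cc}\cap F_\Omega(x)$ is a fixed bounded set for each such face, contradicting divergence. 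The honest subtlety — and the step I expect to be the main obstacle — is that the choice of $w_j$ as a center of mass must actually be ``deep enough'' inside $\overline{\Cc}\cap F_\Omega(v_j)$ that $F_{\Phi(S)}(x) $ is Hausdorff-close to $\overline{\Cc}\cap F_\Omega(x)$ for \emph{all} boundary points $x$ of $\Phi(S)$ simultaneously, not just the vertices; handling lower-dimensional faces $F_{\Phi(S)}(x)$ requires iterating the opposite-faces structure theorem (Corollary~\ref{cor:building_simplices}) to build, from a point of $\overline{\Cc}\cap F_\Omega(x)$ far from $F_{\Phi(S)}(x)$, a genuine properly embedded simplex of dimension $\dim S$ inside $\Cc$ that escapes every bounded neighborhood of $\Sc_0$, again contradicting coarse completeness plus isolation. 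Assembling this escaping-simplex argument carefully, uniformly over the finitely many orbit types, is the crux; once it is in place, the constant $D_1$ comes from the now-uniform bound on all the face-slices together with the distance estimate in Proposition~\ref{prop:Crampons_dist_est_2}.
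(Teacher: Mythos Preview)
Your construction does not produce a subfamily of $\Sc_0$: the theorem asks for $\Sc \subset \Sc_0$, but $\Phi(S) = \relint(\CH_\Omega\{w_1,\dots,w_p\})$ is only \emph{parallel} to $S$ and in general is not an element of $\Sc_0$. The center-of-mass construction you describe is exactly what the paper carries out \emph{later}, in Section~\ref{sec:intersection_of_nbhds}, to build $\Sc_{\core}$; but that argument \emph{uses} Theorem~\ref{thm:bd_faces} as input rather than proving it. In the paper the subfamily $\Sc$ is obtained quite differently: one first passes (Lemma~\ref{lem:refining_isolated_simplices}) to the subfamily of $\Sc_0$ consisting of simplices not contained in a tubular neighborhood of a higher-dimensional one, and then proves $(\star)$ for that subfamily directly.

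There is also a genuine gap in your vertex-boundedness step. Sliding $v$ to a far point $w \in \overline{\Cc}\cap F_\Omega(v)$ with the other vertices fixed gives a simplex $S'$ of the \emph{same} dimension sharing a codimension-one face with $S$; in particular $S$ and $S'$ are \emph{close} on an unbounded set, so coarse completeness plus (mere) isolation does not yield a contradiction. The paper's argument in the vertex case (Lemma~\ref{lem:base_case}) is substantially different: one shows that unboundedness of $\overline{\Cc}\cap F_\Omega(v)$ forces $\overline{\Cc}\cap F_\Omega(v)$ to contain a properly embedded \emph{line}, and then Corollary~\ref{cor:building_simplices} builds a simplex of dimension $\dim S + 1$, contradicting the refined property from Lemma~\ref{lem:refining_isolated_simplices}. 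For non-vertex faces, the paper does not ``iterate the opposite-faces theorem'' as you suggest, but runs a minimality argument on the pair $(\dim F_\Omega(x),\, \dim F_\Omega(x)-\dim F_S(x))$ in lexicographic order, combined with the $\Stab_\Lambda(F_S(x))$-action (Propositions~\ref{prop:local_equal} and~\ref{prop:local_equal_upgrade}) to control $\overline{\Cc}\cap \partial F_\Omega(x)$; your sketch does not supply a substitute for this step.
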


\begin{remark} By Observation \ref{obs:faces_of_simplices_are_properly_embedded}: If $\Omega \subset \Pb(\Rb^d)$ is a properly convex domain, $S \subset \Omega$ is a properly embedded simplex, and $x \in \partial S$, then 
\begin{align*}
\overline{S} \cap F_\Omega(x)  =  \overline{F_{S}(x)} \cap F_\Omega(x)=F_S(x).
\end{align*}
However, if $\Cc \subset \Omega$ is a general convex subset and $x \in \partiali \Cc$, then
\begin{align*}
\overline{\Cc} \cap F_\Omega(x)  \supset \overline{F_{\Cc}(x)} \cap F_\Omega(x) \supset F_{\Cc}(x)
\end{align*}
and both inclusions can be strict. 
\end{remark}

The rest of the section is devoted to the proof of this theorem. We will need the following observation about simplices. 

\begin{lemma}\label{lem:line_segments_cover} 
Suppose $S \subset \Pb(\Rb^d)$ is a simplex and $(a,b) \subset S$ is a properly embedded line. If $p \in S$, then there exist $a^\prime \in F_S(a)$ and $b^\prime \in F_S(b)$ such that $p \in (a^\prime, b^\prime)$. \end{lemma}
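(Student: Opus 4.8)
The plan is to reduce immediately to the standard simplex via a projective transformation, so that $S = \{[x_1:\dots:x_k] : x_1 > 0, \dots, x_k > 0\}$ in suitable coordinates on $[\Spanset S]$. Write $a = [a_1:\dots:a_k]$ and $b = [b_1:\dots:b_k]$ with $a_i, b_i \geq 0$; since $(a,b) \subset S$, for each index $i$ at least one of $a_i, b_i$ is strictly positive (otherwise the whole segment $[a,b]$ would lie in the coordinate hyperplane $\{x_i = 0\}$). The faces $F_S(a)$ and $F_S(b)$ are the relative interiors of the coordinate sub-simplices spanned by $\{v_i : a_i > 0\}$ and $\{v_i : b_i > 0\}$ respectively, and the support condition just says these two index sets cover $\{1,\dots,k\}$.

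First I would fix $p = [p_1:\dots:p_k] \in S$, so all $p_i > 0$. The goal is to produce $a' \in F_S(a)$ and $b' \in F_S(b)$ with $p$ on the open segment $(a', b')$; concretely, I need scalars $t \in (0,1)$ and lifts $\overline{a'}, \overline{b'}$ of points with the correct supports such that $\overline{p} = (1-t)\overline{a'} + t\overline{b'}$ coordinatewise. Equivalently, I want to split each $p_i$ as $p_i = (1-t) a_i' + t b_i'$ where $a_i' \geq 0$ vanishes exactly on $\{i : a_i = 0\}$ and $b_i' \geq 0$ vanishes exactly on $\{i : b_i = 0\}$. The natural choice: for $i$ with $a_i > 0$ and $b_i = 0$, put everything of $p_i$ into the $a'$-side; for $i$ with $b_i > 0$ and $a_i = 0$, put everything into the $b'$-side; and for $i$ with both $a_i, b_i > 0$, split $p_i$ in any fixed positive proportion (say half and half), which keeps both $a_i'$ and $b_i'$ positive. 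One checks this forces a single consistent value of $t \in (0,1)$ — or rather, I would instead introduce a free positive parameter by not normalizing the lifts and solve for the proportionality — so it is cleaner to argue as follows: choose $\overline{a'}$ with $\overline{a'}_i = p_i$ on $\mathrm{supp}(a) \setminus \mathrm{supp}(b)$, $\overline{a'}_i = p_i/2$ on $\mathrm{supp}(a) \cap \mathrm{supp}(b)$, and $\overline{a'}_i = 0$ otherwise; symmetrically choose $\overline{b'}$. Then $\overline{a'} + \overline{b'} = \overline{p}$ coordinatewise, $[\overline{a'}] \in F_S(a)$, $[\overline{b'}] \in F_S(b)$, and $p = [\overline{a'} + \overline{b'}]$ lies on the open segment joining $[\overline{a'}]$ and $[\overline{b'}]$ inside $S$. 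Since $\overline{a'}, \overline{b'}$ have disjoint-or-overlapping but jointly full support, both points lie in $S$'s closure with the prescribed faces, and the segment between them is interior to $S$ by Observation~\ref{obs:faces}(4) applied at the two boundary points $a$ and $b$ (whose joining segment is in $S$).

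The only real subtlety — the step I expect to need the most care — is verifying that $[\overline{a'}]$ genuinely has face $F_S(a)$ and not a smaller one, i.e.\ that $\overline{a'}_i > 0$ for \emph{every} $i \in \mathrm{supp}(a)$; this is why the "half" split on the overlap $\mathrm{supp}(a)\cap\mathrm{supp}(b)$ matters, and it uses $p_i > 0$ throughout. Symmetrically for $b'$. Everything else is a direct computation with the explicit coordinates of Example~\ref{ex:basic_properties_of_simplices}, together with the characterization of faces of a simplex as relative interiors of coordinate sub-simplices, so I would not belabor it.
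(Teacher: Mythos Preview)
Your argument is correct but takes a genuinely different route from the paper's. The paper argues nonconstructively via symmetry: the set $X\subset S$ of points $p$ admitting such $a',b'$ is nonempty (since $(a,b)\subset S$), and the diagonal subgroup $G\leq\Aut(S)$ fixing each vertex acts transitively on $S$ while preserving every face, hence preserves $X$; so $X=S$. Your proof instead writes down $a',b'$ explicitly in coordinates by splitting $\overline{p}$ according to the supports $A=\mathrm{supp}(a)$ and $B=\mathrm{supp}(b)$. Both are short; the paper's version is slicker and avoids any computation, while yours is constructive and entirely elementary (no appeal to $\Aut(S)$). One small remark: you do not actually need Observation~\ref{obs:faces}(4) at the end, since $(a',b')\subset S$ is immediate from the fact that $A\cup B=\{1,\dots,k\}$, so every positive combination $s\overline{a'}+t\overline{b'}$ has all coordinates positive. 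It is also worth noting (implicitly used in checking $a'\neq b'$ and that the split is nondegenerate) that $A\setminus B$ and $B\setminus A$ are both nonempty: this follows because $A\cup B=\{1,\dots,k\}$ while $a,b\in\partial S$ forces $A,B\subsetneq\{1,\dots,k\}$.
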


\begin{proof} Let $X \subset S$ be the set of points $p \in S$ where there exist $a^\prime \in F_S(a)$ and $b^\prime \in F_S(b)$ such that $p \in (a^\prime, b^\prime)$. By hypothesis, $X$ is non-empty. Next let $G \leq \Aut(S)$ denote the group of automorphisms that fix all vertices. Then $G$ acts transitively on $S$ (see Example \ref{ex:basic_properties_of_simplices}). Further, $G \cdot F_S(a) = F_S(a)$ and $G \cdot F_S(b) = F_S(b)$. So $G \cdot X = X$. Then $X=S$ since $G$ acts transitively on $S$. 
\end{proof}

 We start by making an initial reduction.

\begin{lemma}\label{lem:refining_isolated_simplices} 
Suppose $(\Omega, \Cc, \Lambda)$ is a naive convex co-compact triple with coarsely isolated simplices. Let $\Sc_0$ be an isolated, coarsely complete, and $\Lambda$-invariant family of maximal properly embedded simplices in $\Cc$ of dimension at least two. Then there exists an isolated, coarsely complete, and $\Lambda$-invariant subfamily $\Sc \subset \Sc_0$  with the following additional property: 

\begin{itemize}
\item[$(\star)$] If $S_1, S_2 \in \Sc$ and $\dim S_1 < \dim S_2$, then 
\begin{align*}
\infty = \sup_{p \in S_1} H_\Omega \left( p, S_2 \right). 
\end{align*}
\end{itemize}
\end{lemma}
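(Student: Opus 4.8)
The plan is to build $\Sc$ by a greedy/maximal selection on dimension, keeping only those simplices in $\Sc_0$ that are not ``trapped'' inside a uniformly bounded neighborhood of a higher-dimensional simplex of $\Sc_0$; then to verify that this subfamily is still coarsely complete, still isolated, still $\Lambda$-invariant, and satisfies $(\star)$. Concretely, for each integer $k$ with $1 \le k \le d-1$, let $\Sc_0^{(k)} = \{S \in \Sc_0 : \dim S = k\}$. Call a simplex $S \in \Sc_0$ \emph{redundant} if there is some $S' \in \Sc_0$ with $\dim S' > \dim S$ and $\sup_{p \in S} H_\Omega(p, S') < \infty$. Set $\Sc := \{S \in \Sc_0 : S \text{ is not redundant}\}$. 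Since redundancy only ever compares $S$ to strictly-higher-dimensional members of $\Sc_0$, and dimensions are bounded by $d-1$, this is well-founded; $(\star)$ then holds essentially by construction, once we note that $\sup_{p \in S_1} H_\Omega(p, S_2) < \infty$ together with $\dim S_1 < \dim S_2$ would make $S_1$ redundant (here one must check that $S_2 \in \Sc$, which is where a little care is needed — see below).

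The first substantive point is \emph{coarse completeness} of $\Sc$. Given a properly embedded simplex $T \subset \Cc$, coarse completeness of $\Sc_0$ gives $S_0 \in \Sc_0$ with $T \subset \Nc_\Omega(S_0; D_0)$ for a uniform $D_0$. If $S_0$ is redundant, it lies in a bounded neighborhood of some $S_1 \in \Sc_0$ of strictly larger dimension; I would like to iterate, but to get a \emph{uniform} final constant I need the bound ``$S$ redundant $\Rightarrow$ $S$ in a uniformly bounded neighborhood of a strictly-larger-dimensional member of $\Sc_0$'' with a constant independent of $S$. This uniformity is exactly what the invariance and local finiteness from Proposition~\ref{prop:HK_section_3_1} buy: $\Lambda$ has finitely many orbits in $\Sc_0$, so there are only finitely many ``redundancy witnesses'' up to the $\Lambda$-action, and the sup-distance is $\Lambda$-invariant; hence a single constant $D_1$ works for all redundant $S$. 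Since the dimension strictly increases at each step of the iteration and is bounded by $d-1$, after at most $d-1$ steps we reach a non-redundant simplex $S_\infty \in \Sc$, and $T \subset \Nc_\Omega(S_\infty; D_0 + (d-1)D_1)$. A symmetric bookkeeping argument shows $\Sc$ is $\Lambda$-invariant (redundancy is a $\Lambda$-equivariant property) and isolated (a subfamily of a closed discrete family is closed and discrete).

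The step I expect to be the main obstacle is making the iteration bound uniform \emph{and} confirming that the $S_2$ appearing in $(\star)$ is genuinely in $\Sc$, not merely in $\Sc_0$. For $(\star)$: suppose $S_1, S_2 \in \Sc$, $\dim S_1 < \dim S_2$, and toward a contradiction $\sup_{p\in S_1} H_\Omega(p,S_2) < \infty$; then $S_1$ is redundant, contradicting $S_1 \in \Sc$ — so this direction is actually clean once the definitions are in place, and no statement about ``$S_2 \in \Sc$'' is needed there. The real work is the finiteness-of-witnesses argument for uniformity: here I would use that $\Sc_0$ is $\Lambda$-invariant with finitely many orbits (Proposition~\ref{prop:HK_section_3_1}(2)), pick orbit representatives $R_1, \dots, R_m$, and for each redundant representative $R_i$ fix one witness $S_i'$ and the finite constant $c_i = \sup_{p \in R_i} H_\Omega(p, S_i')$; then any redundant $S = g R_i$ has witness $g S_i' \in \Sc_0$ with $\sup_{p\in S} H_\Omega(p, gS_i') = c_i \le \max_i c_i =: D_1$ by $\Lambda$-invariance of $H_\Omega$. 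With $D_1$ in hand the iteration telescopes as above, completing the proof.
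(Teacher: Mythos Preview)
Your proposal is correct and follows essentially the same approach as the paper: both define $\Sc$ as $\Sc_0$ minus the ``redundant'' simplices (the paper calls this set $X$), verify $(\star)$, $\Lambda$-invariance, and isolation directly, and then obtain coarse completeness by iterating toward higher dimension with a uniform step-size constant extracted from the finiteness of $\Lambda$-orbits in $\Sc_0$ (Proposition~\ref{prop:HK_section_3_1}). The only cosmetic difference is that the paper packages the uniform constant as $M = \max_{S \in X} m(S)$ with $m(S)$ an infimum over all higher-dimensional witnesses, whereas you fix one witness per orbit representative and take $D_1 = \max_i c_i$; these produce the same argument.
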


\begin{proof} Since $\Sc_0$ is coarsely complete there exists $D_0 > 0$ such that: If $S \subset \Cc$ is a properly embedded simplex of dimension at least two, then there exists some $S^\prime \in \Sc_0$ with
\begin{align*}
S \subset \Nc_\Omega(S^\prime; D_0).
\end{align*}

Let $X \subset \Sc_0$ be the set of simplices $S \in \Sc_0$ where there exists some $S^\prime \in \Sc_0$ with $\dim S < \dim S^\prime$ and
\begin{align*}
 \sup_{p \in S} H_\Omega \left( p, S^\prime \right) < +\infty.
\end{align*}
Notice that $X$ is a $\Lambda$-invariant subset of $\Sc_0$. Next, for each $S \in X$ define
\begin{align*}
m(S): = \inf_{\substack{S^\prime \in \Sc_0, \\
\dim S < \dim S^\prime}}  \sup_{p \in S} H_\Omega \left( p, S^\prime \right).
\end{align*}
Then $m(S)$ is finite and $\Lambda$-invariant. Further, Proposition~\ref{prop:HK_section_3_1} implies that there are only finitely many $\Lambda$-orbits in $X$. So 
\begin{align*}
M:=\sup_{S \in X} m(S) = \max_{S \in X} m(S) < +\infty.
\end{align*}

We claim that $\Sc:=\Sc_0\setminus X$ satisfies the conclusion of the proposition. By construction, if $S_1, S_2 \in \Sc$ and $\dim S_1 < \dim S_2$, then 
\begin{align*}
\infty = \sup_{p \in S_1} H_\Omega \left( p, S_2 \right). 
\end{align*}
Further, since $X$ is $\Lambda$-invariant, the set $\Sc$ is $\Lambda$-invariant. Also, since $\Sc\subset \Sc_0$, the set $\Sc$ is closed and discrete in the local Hausdorff topology. The following claim proves that $\Sc$ is coarsely complete.

\medskip
\noindent \textbf{Claim:} If $S \subset \Cc$ is a properly embedded simplex of dimension at least two, then there exists some $S^\prime \in \Sc$ with
\begin{align*}
S \subset \Nc_\Omega(S^\prime; D)
\end{align*}
where $D:=D_0+(d-3)M$. 
\medskip

Fix a properly embedded simplex $S\subset\Cc$ of dimension at least two. Then there exists some $S_1 \in \Sc_0$ such that 
\begin{align*}
S \subset \Nc_\Omega(S_1; D_0).
\end{align*}
If $S_1 \in \Sc$, then we are done. Otherwise there exist $k \in \{2,\dots, d-2\}$; $S_2, S_3, \dots, S_{k-1} \in X$; and $S_k \in \Sc$ such that:
\begin{enumerate}
\item $S_j \subset \Nc_\Omega(S_{j+1}; M)$ for $j=1,\dots,k-1$ and
\item $\dim S_j < \dim S_{j+1}$ for $j=1,\dots,k-1$.
\end{enumerate}
Then 
\begin{align*}
S \subset \Nc_\Omega(S_k; D_0+(k-1)M) \subset \Nc_\Omega(S_k; D).
\end{align*}
Since $S$ was an arbitrary properly embedded simplex of dimension at least two in $\Cc$ this completes the proof of the claim and the proposition. 
\end{proof}

For the rest of the section fix $\Omega$, $\Cc$, $\Lambda$, $\Sc_0$ satisfying the hypothesis of Theorem~\ref{thm:bd_faces}. Fix $\Sc \subset \Sc_0$ satisfying Lemma \ref{lem:refining_isolated_simplices}. Since $\Sc$ is coarsely complete there exists $D_0 > 0$ such that: if $S \subset \Cc$ is a properly embedded simplex of dimension at least two, then there exists some $S^\prime \in \Sc$ such that 
\begin{align*}
S \subset \Nc_\Omega(S^\prime; D_0).
\end{align*}

Since each simplex has finitely many faces and there are only finitely many distinct orbits of properly embedded simplices in $\Sc$ (see Proposition~\ref{prop:HK_section_3_1}), it is enough to show: if $S \in \Sc$ and $x \in \partial S$, then
\begin{align}
\label{eq:inclusion}
H_{F_\Omega(x)}^{ \rm Haus}\Big( \overline{\Cc} \cap F_\Omega(x), F_S(x)\Big) < +\infty.
\end{align}

Suppose for a contradiction that Equation~\eqref{eq:inclusion} fails for some choice of $S \in \Sc$ and $x \in \partial S$. We can choose $S$ and $x$ so that
\begin{align}
\label{eq:minimality_assumption}
\Big( \dim F_\Omega(x), \dim F_\Omega(x)-\dim F_S(x) \Big)
\end{align}
is the minimal in lexographical order among all examples which fail to satisfy Equation~\eqref{eq:inclusion}.

\subsection{The vertex case:} In this subsection we show that $x$ is not a vertex of $S$.

\begin{lemma}\label{lem:base_case}   $x$ is not a vertex of $S$. 
\end{lemma}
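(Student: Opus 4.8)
The plan is to argue by contradiction using the minimality of the pair in Equation~\eqref{eq:minimality_assumption}. So suppose $x$ is a vertex of $S$, and suppose $S \in \Sc$ and $x \in \partial S$ give a counterexample to Equation~\eqref{eq:inclusion} with this lexicographically minimal pair. Since $\overline{\Cc}\cap F_\Omega(x)$ is convex and $F_S(x)=\{x\}$ is a single point (a vertex), the failure of Equation~\eqref{eq:inclusion} means that $\overline{\Cc}\cap F_\Omega(x)$ has unbounded Hilbert diameter in $F_\Omega(x)$, i.e. it is a non-trivial (in fact unbounded) convex subset of $F_\Omega(x)$ containing $x$.

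The idea is to use this unboundedness to manufacture a properly embedded simplex of strictly larger dimension that lies in a bounded tubular neighbourhood of $S$ — or rather of a simplex of $\Sc$ containing $S$ in a bounded neighbourhood — contradicting Lemma~\ref{lem:refining_isolated_simplices}. Concretely: let $v=x$ be the vertex, and let $x_1,\dots$ be the opposite face structure. Since $\Sc$ satisfies the hypotheses, $S$ is periodic by Proposition~\ref{prop:HK_section_3_1}, so $\Stab_\Lambda(S)$ acts cocompactly on $S$; pick the vertex $v$ and the opposite maximal face $F$ of $S$, so that $v$ and any $y\in F$ are opposite points of $S$. Apply Theorem~\ref{thm:CH_of_opp_faces} to the pair $(v, y)$: this gives $\Spanset F_\Omega(v) \oplus \Spanset F_\Omega(y) = \Spanset(\Omega\cap[V])$ with $\Omega\cap[V]=\relint\CH_\Omega(F_\Omega(v)\cup F_\Omega(y))$. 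Now inside $F_\Omega(v)$ we have the unbounded convex set $\overline{\Cc}\cap F_\Omega(v)$; inside it we can find a properly embedded line (or a small simplex) $\ell\subset \overline{\Cc}\cap F_\Omega(v)$ of dimension $\geq 1$, because an unbounded convex subset of a Hilbert geometry contains a properly embedded geodesic ray, and by convexity and properness one in fact gets a properly embedded line segment whose endpoints lie in $\partiali$. Then Corollary~\ref{cor:building_simplices} combines $\ell$ with the face $F_S(y)$ (a maximal boundary face of $S$, itself a properly embedded simplex in $F_\Omega(y)$) to produce a properly embedded simplex $S'$ of $\Omega$ of dimension $\dim S' = \dim\ell + \dim F_S(y) + 1 \geq \dim S + 1$.

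The point is then to check that this $S'$ lies in a bounded neighbourhood of $S$. Since $\ell\subset \overline{\Cc}\cap F_\Omega(v)$ and $\overline{\Cc}$ is $\Lambda$-invariant and $\Stab_\Lambda(S)$ acts cocompactly on $S$, one pushes $\ell$ around by $\Stab_\Lambda(S)$: the faces $F_\Omega(v)$ and $F_\Omega(y)$ are preserved (after passing to a finite-index subgroup), and the dynamics on these faces keep the relevant data in a compact set, so $S'$ (or an appropriate translate) stays within a uniformly bounded tubular neighbourhood of $S$. Hence $\sup_{p\in S}H_\Omega(p,S')<\infty$ would be violated unless... — more carefully, since $S'$ has strictly larger dimension and $S \subset \Nc_\Omega(S';r)$ for some finite $r$, and $S'$ is contained in a bounded neighbourhood of \emph{some} $S''\in\Sc$ by coarse completeness, we get $S \subset \Nc_\Omega(S'';r')$ with $\dim S < \dim S'' $ (since $\dim S'' \geq \dim S'>\dim S$, using that $S''$ coarsely contains $S'$ which is genuinely higher-dimensional — here one needs that a simplex cannot coarsely contain a strictly higher-dimensional simplex, which follows from Observation~\ref{obs:QI_to_Rk} and quasi-isometry invariance of $\Rb^k$). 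This contradicts property $(\star)$ of Lemma~\ref{lem:refining_isolated_simplices}, proving $x$ is not a vertex.

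The main obstacle will be producing a \emph{properly embedded} positive-dimensional simplex or line inside $\overline{\Cc}\cap F_\Omega(x)$ from mere unboundedness, and controlling its Hilbert position so that the resulting $S'$ via Corollary~\ref{cor:building_simplices} genuinely sits in a bounded neighbourhood of $S$ rather than drifting off. This is where the periodicity of $S$ and the cocompact action of $\Stab_\Lambda(S)$ on $S$ — together with the relative compactness of the induced actions on $F_\Omega(x_1)$ and $F_\Omega(x_2)$ established in the proof of Theorem~\ref{thm:CH_of_opp_faces} — must be invoked carefully: one uses a sequence in $\Stab_\Lambda(S)$ to drag a basepoint of $\overline{\Cc}\cap F_\Omega(x)$ toward a limit in $F_\Omega(x)$, extracts from the unboundedness a genuine line in the limiting configuration lying in $\overline{\Cc}$, and then reads off that a fixed translate already contains such a line within bounded distance of $v$. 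A secondary technical point is the dimension bookkeeping: verifying that if $S$ (of dimension $k$) sits in a bounded neighbourhood of a simplex $S''$, then $\dim S''\geq k$, and with equality only when $S''$ is parallel-comparable — but we only need the inequality $\dim S''>\dim S$ here, which follows cleanly from the quasi-isometry classification in Observation~\ref{obs:QI_to_Rk}.
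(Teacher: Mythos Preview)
Your overall strategy matches the paper's: assume $x$ is a vertex, find a properly embedded line $(a,b) \subset \overline{\Cc}\cap F_\Omega(x)$, combine it with the opposite face of $S$ via Corollary~\ref{cor:building_simplices} to get a simplex $S'$ of strictly larger dimension, then use coarse completeness and Observation~\ref{obs:QI_to_Rk} to find $\wt{S}\in\Sc$ with $\dim\wt S>\dim S$ coarsely containing $S$, contradicting Lemma~\ref{lem:refining_isolated_simplices}. The endgame is fine, though you should make explicit why $S\subset\Nc_\Omega(\wt S;r)$: pick $x'\in(a,b)\subset F_\Omega(x)$, slide $S$ to the parallel simplex $S''$ with vertices $x',v_2,\dots,v_p$ via Lemma~\ref{lem:slide_along_faces}, and observe $S''\subset S'$.

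The real gap is in producing the properly embedded line. Your first claim --- that an unbounded convex subset of a Hilbert geometry contains a properly embedded line --- is false: a horodisk in $\Hb^2$ is unbounded and convex but contains no bi-infinite geodesic. You acknowledge this and propose instead to use $\Stab_\Lambda(S)$ to ``drag a basepoint toward a limit,'' invoking the relative compactness of the face actions from Theorem~\ref{thm:CH_of_opp_faces}. But that relative compactness is exactly the obstruction: if $G|_{F_\Omega(x)}$ has bounded orbits, it cannot drag points out to $\partial F_\Omega(x)$, so this mechanism does not manufacture a line. The sketch is too vague to be a proof, and I do not see how to make it work.

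The paper's route to the line is quite different and does not use $\Stab_\Lambda(S)$ at all. From unboundedness one gets $u\in\overline{\Cc}\cap\partial F_\Omega(x)$; slide the vertex $x$ along $[x,u)$ to get properly embedded simplices $S_n$ (Lemma~\ref{lem:slide_along_faces}); coarse completeness gives $\wt S_n\in\Sc$ with $S_n\subset\Nc_\Omega(\wt S_n;D_0)$, hence $\wt u_n\in\partial\wt S_n\cap F_\Omega(x)$ with $H_{F_\Omega(x)}(\wt u_n,u_n)\le D_0$. If some $\wt u_n$ is not a vertex of $\wt S_n$, then $F_{\wt S_n}(\wt u_n)$ is already a properly embedded line or simplex in $F_\Omega(x)$ by Observation~\ref{obs:faces_of_simplices_are_properly_embedded}. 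Otherwise all $\wt u_n$ are vertices; translate by general $g_n\in\Lambda$ so that $g_n\wt S_n$ meets a fixed compact set, pass to a subsequence so $g_n\wt S_n\equiv\wt S$ and $g_n\wt u_n\equiv\wt u$ (local finiteness, Proposition~\ref{prop:HK_section_3_1}), and then the segments $g_n(u,x)\subset g_nF_\Omega(x)=F_\Omega(\wt u)$ converge to a properly embedded line in $F_\Omega(\wt u)$, which one transports back. The crucial ingredients are coarse completeness (to get the $\wt S_n$) and local finiteness of $\Sc$ (to pin down a single $\wt S$), neither of which your sketch invokes.
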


\begin{proof} Suppose for a contradiction that $x$ is a vertex of $S$. Then let $v_2,\dots, v_p$ be the other vertices of $S$. 

We will first show that $\overline{\Cc} \cap F_\Omega(x)$ contains a properly embedded line and then use this line to obtain a contradiction. 


\begin{figure}
\centering
\includegraphics[scale=0.3]{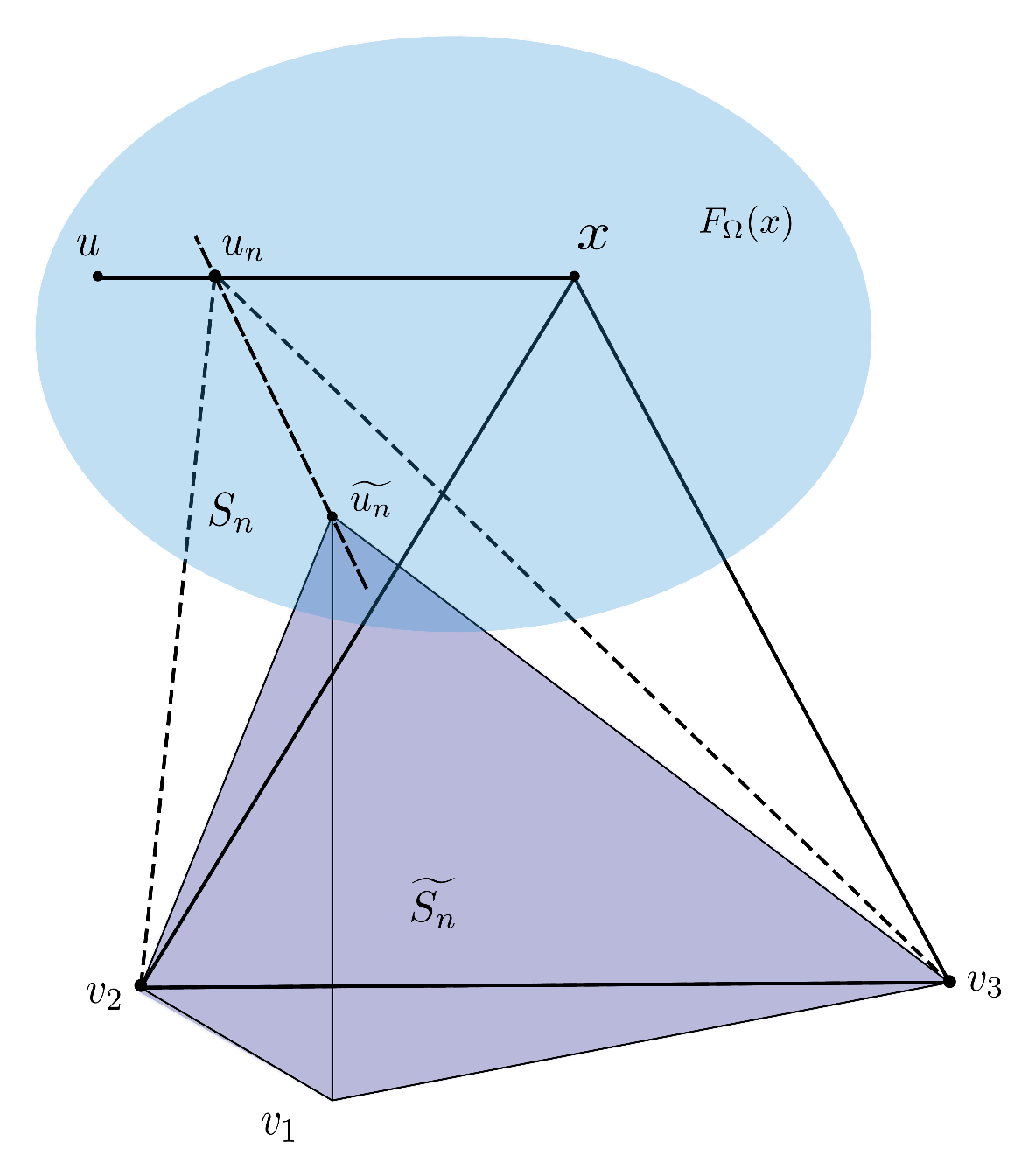}
\caption{Diagram illustrating the proof of Lemma \ref{lem:base_case}.} 
\label{fig:proof_of_vertex_case}
\end{figure}

\medskip
\noindent \textbf{Claim:} $\overline{\Cc} \cap F_\Omega(x)$ contains a properly embedded line.
\medskip

By assumption,
\begin{align*}
\infty = H_{F_\Omega(x)}^{ \rm Haus}\Big( \overline{\Cc} \cap F_\Omega(x), F_S(x)\Big) =H_{F_\Omega(x)}^{ \rm Haus}\Big( \overline{\Cc} \cap F_\Omega(x), \{x\}\Big).
\end{align*}
So there exists $u \in \overline{\Cc} \cap \partial F_\Omega(x)$. Fix a sequence 
\begin{align*}
u_n \in [x,u) \subset F_\Omega(x)
\end{align*}
such that $\lim_{n \to \infty} u_n =u$. Lemma~\ref{lem:slide_along_faces} implies that $u_n, v_2,\dots, v_p$ are the vertices of a properly embedded simplex $S_n \subset \Omega$ of dimension $\dim S_n=\dim S \geq 2$. Then for each $n$ there exists $\wt{S}_n \in \Sc$ such that 
\begin{align*}
S_n \subset \Nc_\Omega\left( \wt{S}_n; D_0\right).
\end{align*}
Then by Proposition~\ref{prop:dist_est_and_faces}, for each $n$ there exists $\wt{u}_n \in \partial \wt{S}_n$ such that 
\begin{align*}
H_{F_\Omega(x)}(\wt{u}_n, u_n) \leq D_0.
\end{align*}
If $\wt{u}_n$ is not a vertex of $\wt{S}_n$, then Observation  \ref{obs:faces_of_simplices_are_properly_embedded} implies that $F_{\wt{S}_n}(\wt{u}_n)$ is properly embedded in $F_{\Omega}(x)$ and   the claim is established (see Figure \ref{fig:proof_of_vertex_case}). So we may assume that $\wt{u}_n$ is a vertex of $\wt{S}_n$.

Next pick $g_n \in \Lambda$ and a compact set $K \subset \Omega$ such that  $g_n \wt{S}_n \cap K \neq \emptyset$ for all $n \geq 0$. Then by Proposition~\ref{prop:HK_section_3_1} the set
\begin{align*}
\left\{ g_n \wt{S}_n : n \geq 0\right\}
\end{align*}
is finite. So by passing to a subsequence we can suppose that 
\begin{align*}
\wt{S}:=g_n\wt{S}_n=g_m\wt{S}_m
\end{align*} 
for all $n,m \geq 0$.  Since $\wt{u}_n$ is a vertex of $\wt{S}_n$, $g_n \wt{u}_n$ is a vertex of $\wt{S}$ for all $n \geq 0$. Since $\wt{S}$ has finitely many vertices, by passing to a subsequence we can suppose that 
\begin{align*}
\wt{u}:=g_n\wt{u}_n=g_m\wt{u}_m
\end{align*} 
for all $n,m \geq 0$.  Then for all $n \geq 0$, let
\begin{align*}
F := F_\Omega(\wt{u}) = g_n F_{\Omega}(x)
\end{align*}

After passing to a subsequence, we can assume that $g_n(u,x)$ converges to $(u_{\infty}, x_{\infty}) \subset \overline{F}.$ We claim that $(u_{\infty},x_{\infty})$ is properly embedded in $F.$ By construction $g_n u \in \partial F$ for all $n$ so $u_\infty \in \partial F$. Since 
\begin{align*}
\lim_{n \to \infty} H_F(\wt{u}, g_n u_n) = \lim_{n \to \infty} H_{F_{\Omega}(x)}(\wt{u}_n, u_n) \leq D_0,
\end{align*}
we can pass to another subsequence so that $\lim_{n \to \infty} g_n u_n$ exists in $F$. Since $g_n u_n \in g_n(u,x)$ this implies that $(u_{\infty}, x_{\infty}) \subset F$. Further
\begin{align*}
\lim_{n \rightarrow \infty} H_{F}(g_n u_n, g_n x) = \lim_{n \rightarrow \infty}  H_{F_\Omega(x)}(u_n, x) =\infty.
\end{align*}
Hence, $x_\infty \in \partial F$ and so $(u_{\infty}, x_{\infty})$ is a properly embedded line  in $F$. Then $F_\Omega(x) = g_1^{-1} F$ also contains a properly embedded line.  This completes the proof of the claim.

Now suppose that  $(a,b) \subset \overline{\Cc} \cap F_\Omega(x)$ is a properly embedded line. Then Corollary~\ref{cor:building_simplices} implies that $a,b,v_2,\dots, v_p$ are the vertices of a properly embedded simplex $S^\prime$ in $\Omega$ of dimension $p$. Then there exists $\wt{S} \in \Sc$ such that 
\begin{align*}
S^\prime \subset \Nc_\Omega\left( \wt{S}; D_0\right).
\end{align*}
Then Observation~\ref{obs:QI_to_Rk} implies that  $\dim \wt{S} \geq p > \dim S$. 

Now fix $x^\prime \in (a,b)$. Since $x^\prime \in F_\Omega(x)$, Lemma~\ref{lem:slide_along_faces} implies that $x^\prime, v_2,\dots, v_p$ are the vertices of a properly embedded simplex $S^{\prime\prime}$ and 
\begin{align*}
H_\Omega^{\Haus}\left( S, S^{\prime\prime} \right) \leq H_{F_\Omega(x)}(x,x^\prime).
\end{align*}
Since $S^{\prime\prime} \subset S^\prime$ we then have
\begin{align*}
S \subset \Nc_\Omega\left( \wt{S}; D_0+H_{F_\Omega(x)}(x,x^\prime)\right).
\end{align*}
Finally, since $\dim S < \dim \wt{S}$, we have a contradiction with the condition in Lemma~\ref{lem:refining_isolated_simplices}. 
\end{proof}

\subsection{Using the fact that simplices are coarsely isolated}  By Lemma~\ref{lem:base_case} and Observation~\ref{obs:faces_of_simplices_are_properly_embedded}, $F_S(x)$ is a properly embedded simplex in $F_{\Omega}(x)$ with dimension at least one. In particular, $\partial F_S(x) \neq \emptyset$. Next, recall from Definition~\ref{defn:faces_of_subsets} that 
\begin{align*}
F_\Omega\big(\partial F_S(x)\big) =  \bigcup_{y \in \partial F_S(x)} F_\Omega(y).
\end{align*}
In this subsection we will prove the following. 

\begin{proposition}\label{prop:local_equal} $\overline{\Cc} \cap F_\Omega\big(\partial F_S(x)\big)$ is a connected component of $\overline{\Cc} \cap \partial F_\Omega(x)$.
\end{proposition}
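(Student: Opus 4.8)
The plan is to prove the two inclusions ``$\supseteq$'' (i.e., $\overline{\Cc} \cap F_\Omega(\partial F_S(x))$ is contained in a single component of $\overline{\Cc} \cap \partial F_\Omega(x)$, and that this set is exactly that component) by a combination of a convexity/connectedness argument and a contradiction argument using the minimality of the pair in~\eqref{eq:minimality_assumption}. First I would record the easy containments. Since $\partial F_S(x) \subset \partial F_\Omega(x)$ (by Observation~\ref{obs:faces_of_simplices_are_properly_embedded}, $F_S(x)$ is properly embedded in $F_\Omega(x)$), for any $y \in \partial F_S(x)$ we have $F_\Omega(y) \subset \partial F_\Omega(x)$ by Observation~\ref{obs:faces} part (3); moreover $F_\Omega(y) \subset \overline\Cc$ whenever $y \in \overline\Cc$, which holds since $\partial S \subset \partiali \Cc \subset \partial\Cc$ when $S$ is properly embedded in $\Cc$. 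Hence $\overline\Cc \cap F_\Omega(\partial F_S(x)) \subseteq \overline\Cc \cap \partial F_\Omega(x)$. The first real point is connectedness of $\overline\Cc \cap F_\Omega(\partial F_S(x))$: I would argue that $\partial F_S(x)$ is connected (it is the boundary of a simplex or a two-point set if $F_S(x)$ is a line — I'll need to treat the line case, where $\partial F_S(x)$ is two points, separately and show the two faces $F_\Omega(a), F_\Omega(b)$ together with $F_S(x)$ still land in one component), and that the union of open faces $F_\Omega(y)$ as $y$ ranges over a connected set, intersected with $\overline\Cc$, is connected, using Observation~\ref{obs:faces} part (4) to join points in $F_\Omega(y)$ and $F_\Omega(y')$ through $F_\Omega(z)$ for $z$ on the segment $(y,y')$.

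The heart of the matter is showing that $\overline\Cc \cap F_\Omega(\partial F_S(x))$ is \emph{relatively open} in $\overline\Cc \cap \partial F_\Omega(x)$, so that, being also closed and connected, it is a full connected component. For relative closedness: if $y_n \in \partial F_S(x)$ and $z_n \in \overline\Cc \cap F_\Omega(y_n)$ with $z_n \to z \in \partial F_\Omega(x)$, I would extract (using compactness of $\overline S$) a limit $y$ of the $y_n$ lying in $\partial F_S(x)$, and show $z \in \overline\Cc \cap F_\Omega(y)$ using that $\overline\Cc \cap F_\Omega(y')$ is closed in $F_\Omega(y')$ — here the hypothesis $(\star)$ from Lemma~\ref{lem:refining_isolated_simplices} and the minimality assumption should guarantee that $\overline\Cc \cap F_\Omega(w)$ is compact in $F_\Omega(w)$ for the lower-dimensional faces $w = y \in \partial F_S(x)$ (since $(\dim F_\Omega(y), \dim F_\Omega(y) - \dim F_S(y))$ is lexicographically smaller than~\eqref{eq:minimality_assumption}, Equation~\eqref{eq:inclusion} holds at $(S,y)$, forcing $\overline\Cc \cap F_\Omega(y) \subset \Nc(F_S(y); D)$ and hence relative compactness). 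For relative openness, which I expect to be the main obstacle: suppose $w \in \overline\Cc \cap \partial F_\Omega(x)$ is a limit of points $w_n \in \overline\Cc \cap F_\Omega(\partial F_S(x))$ but $w \notin F_\Omega(\partial F_S(x))$, equivalently $F_\Omega(w) \not\subset \partial F_S(x)$-hull, i.e. the face $F_\Omega(w)$ is not ``subordinate'' to a proper face of $S$. Then $F_\Omega(w) \cap F_S(x)$ is either empty or a proper face of $F_S(x)$ whose span is strictly larger than needed; I would use Corollary~\ref{cor:building_simplices} (applied to opposite faces of $S$) together with Lemma~\ref{lem:slide_along_faces} to build a properly embedded simplex in $\Cc$ of dimension strictly larger than $\dim S$ lying in a bounded neighborhood of $S$, contradicting property $(\star)$ of Lemma~\ref{lem:refining_isolated_simplices}.

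Concretely, in that last step I would take $w$ as above, pick the opposite face $F' = F_S(x')$ of $F_S(x)$ in $S$ (so $x, x'$ are opposite points of $S$), apply Theorem~\ref{thm:CH_of_opp_faces} to get $\Omega \cap [V] = \relint \CH_\Omega(F_\Omega(x) \cup F_\Omega(x'))$ with $V_1 \cap V_2 = \{0\}$; then since $w \in F_\Omega(x)$ is ``extra'' relative to $F_S(x)$, the convex hull of $\{w\} \cup F_S(x') \cup (\text{rest of the vertices of } S)$ — carefully chosen so it remains a simplex properly embedded in $\Omega$ by Corollary~\ref{cor:lines_between_opp_faces} — has dimension $> \dim S$. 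Using that $w$ is a limit of points in $\overline\Cc$ and the coarse-completeness constant $D_0$, together with Lemma~\ref{lem:slide_along_faces}'s Hausdorff bound, I would show $S$ itself lies in a bounded neighborhood of some $\wt S \in \Sc$ with $\dim \wt S > \dim S$, the desired contradiction. The subtlety to be careful about is ensuring the newly built convex hull is genuinely a \emph{properly embedded simplex} in $\Omega$ and in $\Cc$ — this requires checking the vertices are in mutually non-adjacent faces and invoking Observation~\ref{obs:faces} part (4) plus the opposite-faces structure theorem, exactly as in the proof of Lemma~\ref{lem:base_case}; the argument there is the template I would follow.
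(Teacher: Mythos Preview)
Your closedness sketch is fine and close to the paper's argument. The real gap is in the openness step.

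First, your contradiction setup for openness is backwards: you assume $w_n \in \overline{\Cc}\cap F_\Omega(\partial F_S(x))$ and $w_n \to w \notin F_\Omega(\partial F_S(x))$, but that is precisely the failure of \emph{closedness}, which you already handled. The correct setup (and what the paper uses) is: some $z_\infty \in F_\Omega(y)$ with $y \in \partial F_S(x)$, and a sequence $z_n \in \overline{\Cc}\cap \partial F_\Omega(x) \setminus F_\Omega(\partial F_S(x))$ with $z_n \to z_\infty$.

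Second, even after fixing the setup, your proposed contradiction---build a properly embedded simplex of dimension strictly larger than $\dim S$ inside a bounded neighborhood of $S$ and invoke property $(\star)$---does not go through. You write ``since $w \in F_\Omega(x)$ is `extra' relative to $F_S(x)$,'' but by hypothesis $w$ (or rather $z_n$) lies in $\partial F_\Omega(x)$, not in $F_\Omega(x)$. Lemma~\ref{lem:slide_along_faces} lets you slide a vertex within its face, and Corollary~\ref{cor:building_simplices} requires properly embedded pieces inside $F_\Omega(x_1)$ and $F_\Omega(x_2)$; neither applies to a point sitting on $\partial F_\Omega(x)$. The ``bigger simplex'' trick works in Lemma~\ref{lem:base_case} only because there the vertex case gives you a properly embedded line inside $F_\Omega(x)$ itself. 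Here you have no such object.

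The paper's argument is genuinely different. For each $z_n$ it forms the $2$-simplex $T_n := \relint\CH_\Omega\{z_n, y_{op}, x_{op}\}$, where $y_{op}$ is opposite $y$ in $F_S(x)$ and $x_{op}$ is opposite $x$ in $S$; these are shown to be properly embedded using Corollary~\ref{cor:lines_between_opp_faces}. Coarse completeness plus local finiteness forces all $T_n$ (and the limit $T$) to lie near a single $\wt S \in \Sc$. The minimality assumption~\eqref{eq:minimality_assumption} at the strictly smaller faces $F_\Omega(y)$ and $F_\Omega(y_{op})$ then bounds $H_{F_\Omega(x)}^{\Haus}\big(F_S(x), F_{\wt S}(\wt x)\big)$, and Proposition~\ref{prop:dist_est_and_faces} transfers $\wt z_n \in \partial F_{\wt S}(\wt x)$ back to some $a_n \in \partial F_S(x)$ with $z_n \in F_\Omega(a_n)$---contradicting $z_n \notin F_\Omega(\partial F_S(x))$.

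Finally, the paper never argues connectedness; it only proves the set is clopen in $\overline{\Cc}\cap\partial F_\Omega(x)$, and that is all Proposition~\ref{prop:local_equal_upgrade} uses (a clopen set contains every connected component it meets). Your effort on connectedness, including the awkward line case $\partial F_S(x) = \{a,b\}$, is unnecessary.
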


The rest of this subsection is devoted to the proof of Proposition~\ref{prop:local_equal}. 

\begin{lemma} $\overline{\Cc} \cap F_\Omega\big(\partial F_S(x)\big)$ is closed in $\overline{\Cc} \cap \partial F_\Omega(x)$.\end{lemma}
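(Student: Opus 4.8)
The plan is to show that $\overline{\Cc} \cap F_\Omega(\partial F_S(x))$ is closed in $\overline{\Cc} \cap \partial F_\Omega(x)$ by taking a convergent sequence and tracking which face of $F_\Omega(x)$ the limit lives in. So suppose $z_n \in \overline{\Cc} \cap F_\Omega(\partial F_S(x))$ with $z_n \to z \in \overline{\Cc} \cap \partial F_\Omega(x)$. For each $n$ there is $y_n \in \partial F_S(x)$ with $z_n \in F_\Omega(y_n)$. Since $\partial F_S(x)$ is compact (it is a closed subset of the boundary of a properly embedded simplex or line in $F_\Omega(x)$, hence compact in $\Pb(\Rb^d)$), after passing to a subsequence we may assume $y_n \to y \in \partial F_S(x)$. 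The goal is then to prove $z \in F_\Omega(y)$, which would place $z$ in $F_\Omega(\partial F_S(x))$ and finish the argument.

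The key step is upper semicontinuity of faces in the following sense: if $y_n \to y$ in $\partial \Omega$ and $z_n \in F_\Omega(y_n)$ with $z_n \to z$, then $z \in \overline{F_\Omega(y)}$, and in fact — since $z_n$ and $y_n$ are joined by an open line segment in $\overline{\Omega}$ — one gets that either $z = y$ or the segment $[y,z]$ is contained in $\overline{F_\Omega(y)}$; the real content is ruling out $z \in \partial F_\Omega(y)$. Here I would use that $z_n \in F_\Omega(y_n)$ means there is an open segment $\ell_n \subset \overline{\Omega}$ through $y_n$ and $z_n$; passing to a limit, $[y,z]$ lies in $\overline{\Omega}$, and I need to upgrade this to an \emph{open} segment, i.e. to show $(y,z) \subset \Omega$. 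This is where I would invoke the structure of $\partial F_S(x)$ together with Observation~\ref{obs:faces}(4): since $y_n \in \partial F_S(x)$ and $F_S(x)$ is a simplex (or line), there is a point $w \in F_S(x)$ opposite-type to $y_n$ with $(y_n, w) \subset \Omega$; combined with $z_n \in F_\Omega(y_n)$ this gives $(z_n, w) \subset \Omega$ by Observation~\ref{obs:faces}(4). Taking limits (and using that $w$ can be taken in a fixed compact subset of $F_S(x)$, or passing to a subsequential limit $w_\infty \in F_\Omega(y)$) yields $(z, w_\infty) \subset \Omega$ with $w_\infty \in F_\Omega(y)$, hence by Observation~\ref{obs:faces}(4) again $z \in F_\Omega(y)$, as desired. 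Since $z$ was also assumed in $\overline{\Cc}$, this shows $z \in \overline{\Cc} \cap F_\Omega(\partial F_S(x))$.

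The main obstacle I anticipate is the bookkeeping in the limiting step: ensuring the auxiliary points $w_n \in F_S(x)$ (witnessing that $y_n$ has an "opposite" interior point) converge to a point $w_\infty$ that genuinely lies in $F_\Omega(y)$ rather than escaping to $\partial F_\Omega(y)$, and that the segments $(z_n, w_n)$ stay in $\Omega$ in a uniform enough way to pass to the limit. This can be handled by choosing $w_n$ from a \emph{fixed} point $w \in F_S(x)$ (independent of $n$) whenever the combinatorial type of $y_n$ in $\partial F_S(x)$ stabilizes along the subsequence — which it does, since $F_S(x)$ has finitely many faces — so that $w_\infty = w \in F_S(x) \subset F_\Omega(x)$ and $(y,z) \subset \Omega$ follows cleanly. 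A secondary point to be careful about is the degenerate case $z = y$, which is immediate since then $z = y \in \partial F_S(x) \subset F_\Omega(\partial F_S(x))$.
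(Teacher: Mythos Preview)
Your argument has a genuine gap at the key step. After stabilizing so that all $y_n$ lie in a single face of $\partial F_S(x)$ --- and hence $F_\Omega(y_n) = F_\Omega(y)$ for a fixed $y$, so that $z_n \in F_\Omega(y)$ for all $n$ --- you correctly identify that the difficulty is ruling out $z \in \partial F_\Omega(y)$. But your proposed mechanism fails: for any $w \in F_S(x)$ you have $y_n, w \in \overline{F_S(x)} \subset \partial S \subset \partial \Omega$, so $(y_n, w) \subset \partial \Omega$, never $(y_n, w) \subset \Omega$. Observation~\ref{obs:faces}(4) then gives nothing useful about $(z_n, w)$. More fundamentally, even granting that $z \in \overline{F_\Omega(y)}$, there is no general ``upper semicontinuity'' that forces $F_\Omega(z)$ to meet $\partial F_S(x)$; nothing in your argument prevents $z_n$ from escaping to a boundary face of $F_\Omega(y)$ far from $\partial F_S(y)$.

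What you are missing is the minimality assumption set up earlier in the section (Equation~\eqref{eq:minimality_assumption}). Since $F_\Omega(y) \subset \partial F_\Omega(x)$ has strictly smaller dimension than $F_\Omega(x)$, minimality gives
\[
R := H_{F_\Omega(y)}^{\Haus}\big( F_S(y),\ \overline{\Cc} \cap F_\Omega(y) \big) < +\infty.
\]
This is exactly the control you need: each $z_n \in \overline{\Cc} \cap F_\Omega(y)$ is within distance $R$ of some $v_n' \in F_S(y)$, a subsequential limit $v_\infty'$ lies in $\overline{F_S(y)} \subset \partial F_S(x)$, and Proposition~\ref{prop:dist_est_and_faces} then yields $z \in F_\Omega(v_\infty') \subset F_\Omega(\partial F_S(x))$. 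This is how the paper proceeds, and the use of the inductive/minimality hypothesis does not appear to be avoidable here.
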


\begin{proof} Suppose $v_n \in \overline{\Cc} \cap F_\Omega\big(\partial F_S(x)\big)$ converges to $v_\infty \in \overline{\Cc} \cap \partial F_\Omega(x)$. Since $F_S(x)$ has finitely many faces, by passing to a subsequence we can suppose that there exists $y \in \partial F_S(x)$ such that $v_n \in F_\Omega(y)$ for all $n$. 

Since $F_\Omega(y) \subset \partial F_\Omega(x)$, our minimality assumption, see Equation~\eqref{eq:minimality_assumption}, implies that
\begin{align*}
R : & = H_{F_\Omega(y)}^{\Haus}\left( F_S(y), \overline{\Cc} \cap F_\Omega(y)  \right)  < +\infty. 
\end{align*}
So for every $n$ there exists $v_n' \in F_S(y) \subset \partial F_S(x)$ such that $H_{F_\Omega(y)}(v_n,v_n') \leq R$. Passing to a subsequence we can suppose that $v'_\infty:=\lim_{n \rightarrow \infty} v'_n \in \partial F_S(x)$ exists. Then by Proposition~\ref{prop:dist_est_and_faces}
\begin{align*}
v_\infty \in F_\Omega(v'_\infty) \subset F_\Omega\big(\partial F_S(x)\big)
\end{align*}
and the proof is complete. 
\end{proof}


\begin{figure}
\centering
\includegraphics[scale=0.5]{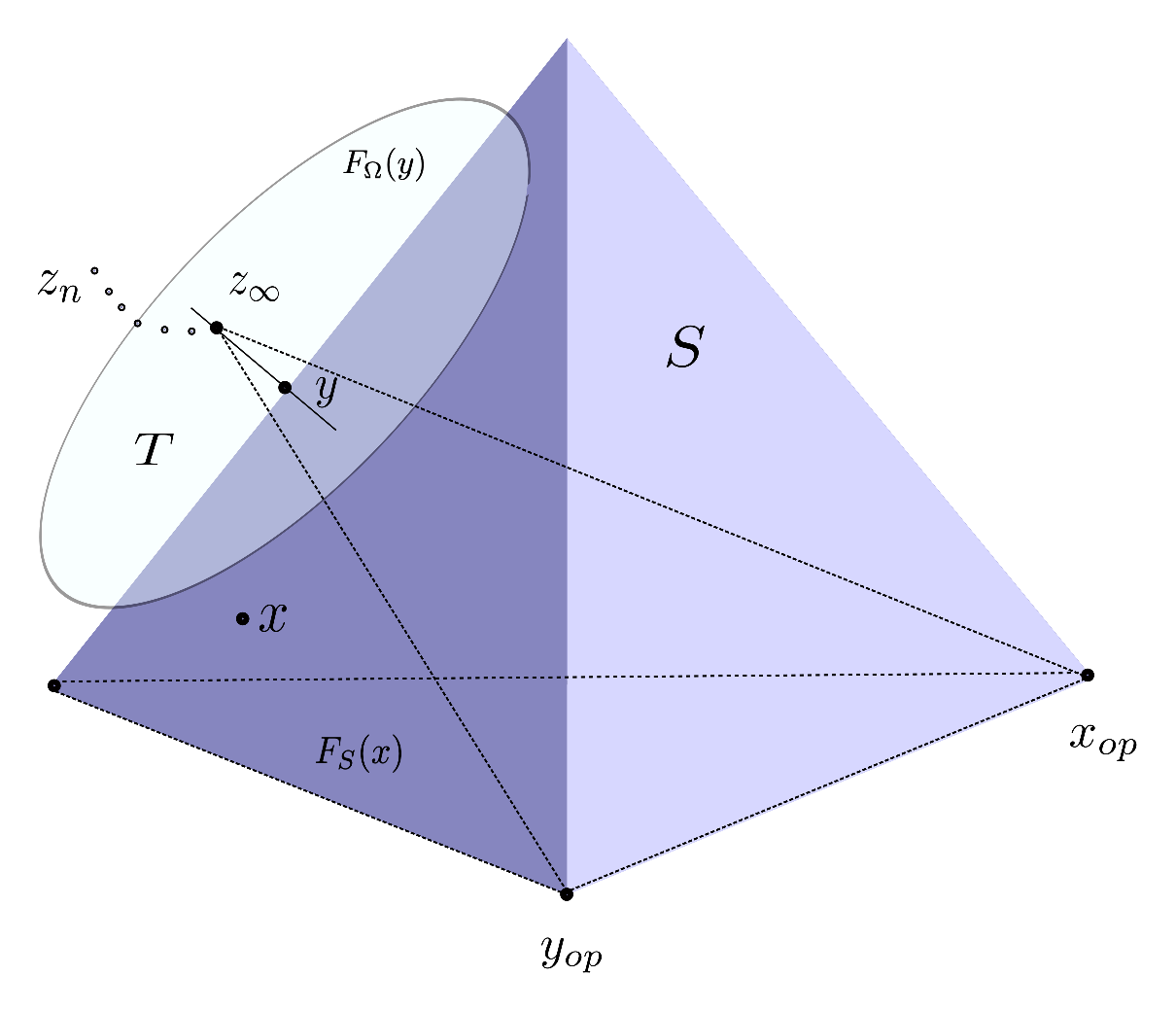}
\caption{ Diagram illustrating the proof by contradiction that $\overline{\Cc} \cap F_{\Omega}(\partial F_S(x))$ is open in $\overline{\Cc} \cap \partial F_{\Omega}(x)$. The sequence of points $z_n \not \in F_{\Omega}(\partial F_S(x))$ but converge to $z_{\infty} \in F_{\Omega}(y)$ where $y \in \partial F_S(x)$.}
\label{fig:proof_of_openness}
\end{figure}

Proving that $\overline{\Cc} \cap F_\Omega\big(\partial F_S(x)\big)$ is open in $\overline{\Cc} \cap \partial F_\Omega(x)$ is much more involved (see Figure \ref{fig:proof_of_openness}). Assume for a contradiction that this is false. Then there exist $y \in \partial F_S(x)$, $z_\infty \in F_\Omega(y)$, and a sequence 
\begin{align*}
z_n \in \overline{\Cc} \cap \partial F_\Omega(x) \setminus F_\Omega \big( \partial F_S(x) \big)
\end{align*}
such that $\lim_{n \to \infty}z_n = z_\infty$. 

Then let $y_{op} \in \partial F_S(x)$ be a point opposite to $y$ in $F_S(x)$ and let $x_{op} \in \partial S$ be a point opposite to $x$ in $S$. For each $n$, define
\begin{align*}
T_n:= \relint \Big(\CH_\Omega\{z_n, y_{op}, x_{op}\}\Big).
\end{align*}
Then define
\begin{align*}
T:= \relint \Big(\CH_\Omega\{z_\infty, y_{op}, x_{op}\}\Big).
\end{align*}

\begin{lemma} $T$ is a properly embedded simplex in $\Cc$. \end{lemma}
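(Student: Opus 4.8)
The plan is to show that $z_\infty, y_{op}, x_{op}$ are vertices of a properly embedded simplex of the correct dimension by building it up from the structure theorem for opposite faces. First I would record that $x$ and $x_{op}$ are opposite points of $S$, and $S$ is periodic (it lies in the family $\Sc$, which by Proposition~\ref{prop:HK_section_3_1} consists of periodic simplices with $\Stab_\Lambda(S)$ acting cocompactly). Hence Theorem~\ref{thm:CH_of_opp_faces} applies to the pair $x, x_{op}$: writing $V_1 = \Spanset F_\Omega(x)$, $V_2 = \Spanset F_\Omega(x_{op})$, $V = V_1 + V_2$, we get $V_1 \cap V_2 = \{0\}$ and $\Omega \cap [V] = \relint(\CH_\Omega(F_\Omega(x) \cup F_\Omega(x_{op})))$.

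Next I would locate the three points relative to this decomposition. By construction $z_\infty \in F_\Omega(y)$ with $y \in \partial F_S(x)$; by Observation~\ref{obs:faces}(3), $F_\Omega(y) \subset \partial F_\Omega(x)$, so $z_\infty \in \overline{F_\Omega(x)}$. Similarly $y_{op} \in \partial F_S(x) \subset \overline{F_\Omega(x)}$. Meanwhile $x_{op} \in F_\Omega(x_{op})$. So $z_\infty, y_{op}$ lie in $\overline{F_\Omega(x)}$ and $x_{op} \in F_\Omega(x_{op})$, which are the two "opposite" pieces. To apply Corollary~\ref{cor:building_simplices}, I need a properly embedded simplex (or line) inside $F_\Omega(x)$ whose closure contains both $z_\infty$ and $y_{op}$, and one inside $F_\Omega(x_{op})$. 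For the second: $\{x_{op}\}$ itself is a $0$-dimensional "simplex" properly embedded in $F_\Omega(x_{op})$ — but more carefully, I would instead apply the structure theorem inside $F_\Omega(x)$ to the opposite points $y, y_{op}$ of the simplex $F_S(x)$ (which is periodic as a face of the periodic simplex $S$, using Observation~\ref{obs:cocompact-action-on-simplices}). This gives $\Spanset F_{F_\Omega(x)}(y) \cap \Spanset F_{F_\Omega(x)}(y_{op}) = \{0\}$ and that $z_\infty \in F_\Omega(y)$, $y_{op} \in F_\Omega(y_{op})$ span a properly embedded line $(z_\infty', y_{op}')$ — actually it is cleaner to directly note $z_\infty$ and $y_{op}$ lie in opposite faces $F_\Omega(y)$ and $F_\Omega(y_{op})$ of $F_\Omega(x)$, so by Corollary~\ref{cor:lines_between_opp_faces} applied within $F_\Omega(x)$, $(z_\infty, y_{op}) \subset F_\Omega(x)$ is a properly embedded line in $F_\Omega(x)$.

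Then I would finish: inside $F_\Omega(x)$ we have the properly embedded line $L_1 := (z_\infty, y_{op})$; inside $F_\Omega(x_{op})$ we have the $0$-dimensional properly embedded simplex $\{x_{op}\}$ (a single extreme point is a degenerate simplex; alternatively apply Corollary~\ref{cor:building_simplices} in the form allowing one factor to be a point). Since $F_\Omega(x)$ and $F_\Omega(x_{op})$ are opposite faces of $S$ with $V_1 \cap V_2 = \{0\}$, Corollary~\ref{cor:building_simplices} gives that $\relint(\CH_\Omega(L_1 \cup \{x_{op}\})) = \relint(\CH_\Omega\{z_\infty, y_{op}, x_{op}\}) = T$ is a properly embedded simplex of $\Omega$ with $\dim T = \dim L_1 + \dim\{x_{op}\} + 1 = 1 + 0 + 1 = 2$. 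The main obstacle is the careful bookkeeping of which triple of points lies in which face: one must verify that $y$ and $y_{op}$ really are opposite in $F_S(x)$ (so that $F_\Omega(y), F_\Omega(y_{op})$ are the relevant opposite faces of $\Omega$ to which the corollaries apply), and that $z_\infty \in F_\Omega(y)$ — the latter is given, and the former holds by the choice of $y_{op}$ as a point opposite to $y$ in $F_S(x)$ combined with the observation that opposite points of a face $F_S(x)$ of the simplex $S$ have opposite open faces in $\Omega$ once one invokes $F_S(y) = \overline{S}\cap F_\Omega(y)$ from Observation~\ref{obs:faces_of_simplices_are_properly_embedded}. Once linear independence of the spans is in hand from the two applications of Theorem~\ref{thm:CH_of_opp_faces}, the dimension count and proper embeddedness are immediate from Corollary~\ref{cor:building_simplices}.
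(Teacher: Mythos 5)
Your argument is correct, but it takes a genuinely different route from the paper's. The paper's proof is two lines: first, $\relint\big(\CH_\Omega\{y, y_{op}, x_{op}\}\big)$ is a properly embedded simplex in $S$ (hence in $\Omega$), which is elementary simplex combinatorics since $y,y_{op}$ are opposite in $F_S(x)$ and $x,x_{op}$ are opposite in $S$; second, Lemma~\ref{lem:slide_along_faces} (the ``wiggle the vertices'' lemma) replaces the vertex $y$ by $z_\infty \in F_\Omega(y)$ to produce $T$. You instead assemble $T$ directly from the opposite-face structure theorem: Theorem~\ref{thm:CH_of_opp_faces} applied to $x, x_{op}$, plus Corollary~\ref{cor:building_simplices}. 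Two remarks on your version. First, the inner application of the structure theorem to $y, y_{op}$ inside $F_\Omega(x)$ (with the attendant check that $F_S(x)$ is periodic in $F_\Omega(x)$) is unnecessary overhead: Observation~\ref{obs:faces} part (4) already gives $(z_\infty, y_{op}) \subset F_\Omega(x)$, since $(y,y_{op}) \subset F_S(x) \subset F_\Omega(x)$ while $z_\infty \in F_\Omega(y)$ and $y_{op} \in F_\Omega(y_{op})$, and proper embeddedness is then immediate because both endpoints lie in $\partial F_\Omega(x)$. Second, the wrinkle you flag yourself -- that your factor $\{x_{op}\}$ is a point, not a simplex or line as Corollary~\ref{cor:building_simplices} requires -- is harmless but real: the corollary's proof only invokes Corollary~\ref{cor:lines_between_opp_faces} on the vertices, which handles a singleton factor verbatim, so your argument needs a half-sentence extension of the corollary rather than a citation of it as stated. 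The trade-off between the two approaches: the paper keeps all the combinatorics inside the model simplex $S$ and then uses the lightweight parallel-transport lemma, whereas your route invokes the Section~\ref{sec:opposite_faces} machinery directly and makes the decomposition $V_1 \oplus V_2$ explicit, at the cost of more bookkeeping.
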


\begin{proof} By construction 
\begin{align*}
 \relint \Big(\CH_\Omega\{y, y_{op}, x_{op}\}\Big).
\end{align*}
is a properly embedded simplex in $S$ and hence $\Omega$ (see Corollary~\ref{cor:lines_between_opp_faces}). Then $T$ is also a properly embedded simplex in $\Omega$ by Lemma~\ref{lem:slide_along_faces}.
\end{proof}

\begin{lemma} For $n$ sufficiently large, $T_n$ is a properly embedded simplex in $\Cc$. \end{lemma}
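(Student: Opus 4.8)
The plan is to show that the sequence of simplices $T_n = \relint(\CH_\Omega\{z_n, y_{op}, x_{op}\})$ eventually has the correct dimension and properly embeds, by comparing with the limiting object $T$. First I would recall that $T$ is a properly embedded simplex (established in the preceding lemma) with vertices $z_\infty, y_{op}, x_{op}$, so these three points are in ``general position'' relative to the faces of $\Omega$ in the sense that $\Spanset F_\Omega(z_\infty)$, $\Spanset F_\Omega(y_{op})$, $\Spanset F_\Omega(x_{op})$ are in direct sum (this is exactly what Theorem~\ref{thm:CH_of_opp_faces} and Corollary~\ref{cor:building_simplices} give, applied to build $T$). The key point is that $z_n \to z_\infty$ while $y_{op}$ and $x_{op}$ are fixed, so the linear spans $\Spanset\{z_n\} \oplus \Spanset\{y_{op}\} \oplus \cdots$ converge to the corresponding span for $T$; since having the correct dimension is an open condition on the Grassmannian and the limit has the correct dimension, for $n$ large the dimension is right.

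The main steps, in order, would be: (1) Observe $x_{op}$ is opposite to $x$ in $S$ and $y_{op}$ is opposite to $y$ in $F_S(x)$, so $\relint(\CH_\Omega\{y, y_{op}, x_{op}\})$ is a properly embedded simplex — this is in the previous lemma's proof. (2) Note that $z_n \in \overline{\Cc} \cap \partial F_\Omega(x)$ and $z_\infty \in F_\Omega(y) \subset \partial F_\Omega(x)$; the crucial structural input is that $z_n \notin F_\Omega(\partial F_S(x))$, which will be used to guarantee that the segment from $z_n$ to $y_{op}$ (or to $x_{op}$) does not degenerate into the boundary. (3) Apply Observation~\ref{obs:slice_convergence}: writing $V_n := \Spanset\{z_n, y_{op}, x_{op}\}$ and $V := \Spanset\{z_\infty, y_{op}, x_{op}\}$, we have $V_n \to V$ in the appropriate Grassmannian (after passing to a subsequence to fix the dimension of $V_n$, but in fact $\dim V_n = \dim V$ for large $n$ since the $z_n \to z_\infty$ and linear independence is open), and $[V] \cap \Omega = T \neq \emptyset$, so $\overline{[V_n] \cap \Omega} \to \overline{[V] \cap \Omega}$ and $[V_n]\cap\Omega \to [V]\cap\Omega$ in the local Hausdorff topology. (4) Since $T = [V] \cap \Omega$ is a properly embedded simplex (so in particular $T$ is $2$-dimensional with $\partiali T \subset \partiali \Omega$), and since the set of properly embedded simplices is closed in the local Hausdorff topology (Observation~\ref{obs:PES_closed}) — but here I want the converse direction, so instead I use that $\dim [V_n]\cap\Omega = \dim[V]\cap\Omega = 2$ for $n$ large together with the explicit identification $[V_n]\cap\Omega = \relint(\CH_\Omega\{z_n', y_{op}', x_{op}'\})$ for the actual boundary vertices; one checks these vertices converge to $z_\infty, y_{op}, x_{op}$, forcing $T_n$ to have three linearly independent vertices, hence to be a $2$-simplex. (5) Finally verify proper embeddedness: $\partiali T_n$ consists of the three edges, each of which lies in $\partial\Omega$ because the endpoints do (here one invokes Corollary~\ref{cor:lines_between_opp_faces} relative to the opposite-faces structure of $T$, or directly Observation~\ref{obs:faces} part (4), using that $z_n \in \partial\Omega$, $y_{op} \in \partial\Omega$, $x_{op} \in \partial\Omega$ and the relevant segments do not enter $\Omega$).

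The hard part will be step (5) — ensuring that the edges $[z_n, y_{op}]$ and $[z_n, x_{op}]$ actually lie in $\partial\Omega$ rather than entering $\Omega$. For the limiting simplex $T$ this is guaranteed by the opposite-faces structure theorem (Theorem~\ref{thm:CH_of_opp_faces}), since $z_\infty \in F_\Omega(y)$ and $y, y_{op}$ are opposite in $F_S(x)$, etc. For $T_n$ with $n$ large, I expect this to follow by a limiting/openness argument: if infinitely many $T_n$ failed to be properly embedded, then some edge would meet $\Omega$, and passing to a limit would contradict the proper embeddedness of $T$ — more precisely, $[z_n, y_{op}] \subset \overline{\Cc} \cap [V_n]$ which has $\partiali$ contained in $\partiali\Omega$ in the limit, and I'd combine this with the hypothesis $z_n \notin F_\Omega(\partial F_S(x))$ to rule out degenerate behavior where a vertex $z_n$ ``sees'' a face it shouldn't. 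I anticipate needing to invoke the structure of $F_\Omega(x)$ and the fact that $z_n$ lies in $\partial F_\Omega(x)$ but outside $F_\Omega(\partial F_S(x))$ to pin down that $F_\Omega(z_n)$ is transverse to $\Spanset\{y_{op}, x_{op}\}$ for large $n$, which is what makes $T_n$ a genuine simplex properly embedded in $\Omega$.
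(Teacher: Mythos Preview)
Your plan overcomplicates a short argument, and the fallback you propose in step (5) is genuinely flawed. The paper's proof is a direct verification that all three edges of $T_n$ lie in $\partial\Omega$, for \emph{every} $n$, not just in the limit:
\begin{itemize}
\item $[y_{op},x_{op}] \subset \partial S \subset \partial\Omega$ since $y_{op}\in\partial F_S(x)$ and $x_{op}$ is opposite to $x$ in $S$;
\item $[z_n,y_{op}] \subset \overline{F_\Omega(x)} \subset \partial\Omega$, because both $z_n$ and $y_{op}$ lie in $\overline{F_\Omega(x)}$ (recall $z_n\in\partial F_\Omega(x)$ by hypothesis, and $y_{op}\in\partial F_S(x)\subset\overline{F_\Omega(x)}$);
\item $[z_n,x_{op}]\subset\partial\Omega$ by Corollary~\ref{cor:lines_between_opp_faces}(2), applied with $x_1=x$, $x_2=x_{op}$, $y_1=z_n\in\partial F_\Omega(x)$, $y_2=x_{op}\in\overline{F_\Omega(x_{op})}$.
\end{itemize}
The only role of the limit is to ensure $T_n\cap\Omega\neq\emptyset$ for $n$ large, since $T_n\to T$ and $T\subset\Omega$. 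The hypothesis $z_n\notin F_\Omega(\partial F_S(x))$ is not used here; it enters only later, to derive the eventual contradiction.

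Your limiting argument in step (5) does not work: lying in $\partial\Omega$ is a \emph{closed} condition, not an open one. If $[z_n,y_{op}]$ met $\Omega$ at some point $p_n$, the $p_n$ could converge to a point of $\partial\Omega$ without contradiction, so ``passing to the limit contradicts proper embeddedness of $T$'' fails. You already named the correct tool (Corollary~\ref{cor:lines_between_opp_faces}); the point you missed is that it applies directly to $z_n$ itself, because $z_n\in\partial F_\Omega(x)$ for all $n$ by the setup of the contradiction argument, not merely in the limit. Your steps (1)--(4) involving the Grassmannian and Observation~\ref{obs:PES_closed} are unnecessary once the three boundary inclusions above are in hand.
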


\begin{proof} By construction $[y_{op},x_{op}] \subset \partial S \subset \partial \Omega$ and 
\begin{align*}
[z_n, y_{op}] \subset \overline{F_\Omega(x)} \subset \partial \Omega.
\end{align*}
Since $z_n \in \partial F_\Omega(x)$, Corollary~\ref{cor:lines_between_opp_faces} implies that $[z_n, x_{op}] \subset \partial \Omega$. Finally, $T_n$ converges to $T$ and so for $n$ large enough, $T_n$ intersects $\Omega$. Thus for $n$ sufficiently large, $T_n$ is a properly embedded simplex in $\Omega$.
\end{proof}

Then by passing to a subsequence we can suppose that $T_n \subset \Cc$ is a properly embedded simplex of dimension at least two for all $n$. Since $\Sc$ is coarsely complete, for each $n$ there exists $\wt{S}_n \in \Sc$ such that 
\begin{align*}
T_n \subset \Nc_\Omega\left( \wt{S}_n; D_0\right).
\end{align*}
Since the sequence $T_n$ converges to $T$, there exists some compact subset $K \subset \Omega$ such that $\wt{S}_n \cap K \neq \emptyset$ for all $n$. Thus by Proposition~\ref{prop:HK_section_3_1} and passing to a subsequence we can suppose that 
\begin{align*}
\wt{S} : = \wt{S}_n
\end{align*}
 for all $n \geq 0$. Then 
 \begin{align*}
T \cup \bigcup_{n \geq 0} T_n \subset \Nc_\Omega\left( \wt{S}; D_0\right).
\end{align*}
By Proposition~\ref{prop:dist_est_and_faces} there exist $\wt{x}, \wt{y}_{op}, \wt{y}, \wt{z}_n, \wt{x}_{op} \in \partial \wt{S}$ such that 
\begin{align*}
\wt{x} \in F_\Omega(x), \ \wt{y}_{op} \in F_\Omega(y_{op}), \ \wt{y} \in F_\Omega(y)=F_\Omega(z_\infty), \ \wt{z}_n \in F_\Omega(z_n), \text{ and } \wt{x}_{op} \in F_\Omega(x_{op}).
\end{align*}

\begin{lemma}\label{lem:haus_dist_S_Swt} $H_{F_\Omega(x)}^{\Haus}\left( F_S(x), F_{\wt{S}}(\wt{x}) \right) <+\infty$. 
\end{lemma}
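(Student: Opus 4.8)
The goal is to bound the Hausdorff distance in $F_\Omega(x)$ between $F_S(x)$ and $F_{\wt S}(\wt x)$, where $\wt x \in \partial \wt S$ is a boundary point with $\wt x \in F_\Omega(x)$ coming from the coarse-containment $T \subset \Nc_\Omega(\wt S; D_0)$. The natural strategy is to play off the two opposite-face structures: $y$ and $y_{op}$ are opposite in $F_S(x)$, and their images $\wt y, \wt y_{op}$ in $\wt S$ are opposite points of $\wt S$ lying in $F_\Omega(y)$ and $F_\Omega(y_{op})$ respectively (by construction of $\wt S$ from the coarse approximation, combined with Observation~\ref{obs:faces_of_simplices_are_properly_embedded} which identifies $\overline{\wt S} \cap F_\Omega(\cdot)$ with a face of $\wt S$). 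The plan is to use the minimality assumption from Equation~\eqref{eq:minimality_assumption} on the faces $F_\Omega(y)$ and $F_\Omega(y_{op})$ — both of which are proper subfaces of $F_\Omega(x)$, hence smaller in the lexicographic order — to get that $F_{\wt S}(\wt y)$ is within bounded Hausdorff distance of $\overline{\Cc}\cap F_\Omega(y)$, and similarly for $y_{op}$. In particular $F_{\wt S}(\wt y)$ is within bounded Hausdorff distance of $F_S(y)$, and $F_{\wt S}(\wt y_{op})$ of $F_S(y_{op})$.

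**Key steps.** First I would set up the faces carefully: using Observation~\ref{obs:faces_of_simplices_are_properly_embedded}, identify $\overline{\wt S} \cap F_\Omega(\wt y)$ with $F_{\wt S}(\wt y)$ and check that $\wt y$ and $\wt y_{op}$ are opposite in $\wt S$ — this follows because $\Spanset(F_\Omega(y) \cup F_\Omega(y_{op}))$ contains $F_\Omega(x)$ hence $\wt x$, together with $[\wt y, \wt y_{op}] \subset \Omega$ (which holds since $T_n$ is a genuine properly embedded simplex and its coarse approximant $\wt S$ inherits this). Second, apply the minimality hypothesis: since $\dim F_\Omega(y) < \dim F_\Omega(x)$, the pair $(\Omega, \Cc, \Lambda)$ with simplex $\wt S$ and boundary point $\wt y$ already satisfies Equation~\eqref{eq:inclusion}, so $H^{\Haus}_{F_\Omega(y)}(\overline\Cc \cap F_\Omega(y), F_{\wt S}(\wt y)) < \infty$; similarly for $y_{op}$. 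Combined with the (already proved, by minimality again, or directly) finiteness of $H^{\Haus}_{F_\Omega(y)}(\overline\Cc\cap F_\Omega(y), F_S(y))$ and the analogue for $y_{op}$, conclude $H^{\Haus}_{F_\Omega(y)}(F_S(y), F_{\wt S}(\wt y)) < \infty$ and $H^{\Haus}_{F_\Omega(y_{op})}(F_S(y_{op}), F_{\wt S}(\wt y_{op})) < \infty$. Third, use Corollary~\ref{cor:lines_between_opp_faces} and Proposition~\ref{prop:Crampons_dist_est_2} (applied across the opposite faces) to propagate these bounds from the pair of faces up to the full faces: $F_S(x) = \relint \CH_\Omega(F_S(y) \cup F_S(y_{op}))$ by Theorem~\ref{thm:CH_of_opp_faces}, and likewise $F_{\wt S}(\wt x)$ is the relative interior of the convex hull of $F_{\wt S}(\wt y) \cup F_{\wt S}(\wt y_{op})$ (again via Theorem~\ref{thm:CH_of_opp_faces}, whose hypotheses hold because $\wt S$ is periodic by Proposition~\ref{prop:HK_section_3_1}(3) and $\wt y, \wt y_{op}$ are opposite). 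Since line segments between corresponding points of the bounded-distance face pairs stay uniformly close (Proposition~\ref{prop:Crampons_dist_est_2}), and these segments sweep out the two faces $F_S(x)$, $F_{\wt S}(\wt x)$ respectively, the Hausdorff distance between $F_S(x)$ and $F_{\wt S}(\wt x)$ in $F_\Omega(x)$ is finite.

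**Main obstacle.** I expect the delicate point to be verifying that $\wt y$ and $\wt y_{op}$ are genuinely opposite in $\wt S$ — i.e. that their spans in $\wt S$ together span $\Spanset \wt S$ and their closures are disjoint — rather than, say, both lying in a common proper face of $\wt S$. This requires knowing that $\wt x \in \overline{\wt S}$ lies in the relative interior of the segment (or more generally the convex hull) joining the $\wt y$-face to the $\wt y_{op}$-face, which in turn uses that $T_n$ (hence its limit-type structure inside $\Nc_\Omega(\wt S; D_0)$) realizes $\wt x, \wt y, \wt y_{op}, \wt x_{op}$ in the "expected" configuration. Once that combinatorial/linear-algebraic fact is pinned down, the rest is a bookkeeping exercise combining the minimality hypothesis with Theorem~\ref{thm:CH_of_opp_faces} and Proposition~\ref{prop:Crampons_dist_est_2}. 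A secondary subtlety is that one must be slightly careful that the minimality assumption applies to $(\wt S, \wt y)$ and not just to $(S, y)$ — but this is fine because $\wt S \in \Sc$ and $\dim F_\Omega(\wt y) = \dim F_\Omega(y) < \dim F_\Omega(x)$, so the lexicographic order is strictly smaller regardless of which simplex in $\Sc$ we use.
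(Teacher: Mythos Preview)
Your proposal is essentially correct and follows the same line as the paper: apply the minimality hypothesis (Equation~\eqref{eq:minimality_assumption}) to the strictly smaller faces $F_\Omega(y)$ and $F_\Omega(y_{op})$ to get finite Hausdorff distance between $F_S(y), F_{\wt S}(\wt y)$ and between $F_S(y_{op}), F_{\wt S}(\wt y_{op})$, then sweep $F_S(x)$ and $F_{\wt S}(\wt x)$ by line segments joining those pairs of faces and invoke Proposition~\ref{prop:Crampons_dist_est_2}.

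Where you diverge from the paper is in the sweep step, and this is precisely the point you flag as the ``main obstacle.'' You aim to verify that $\wt y$ and $\wt y_{op}$ are opposite in $\wt S$ (or in $F_{\wt S}(\wt x)$) and then invoke Theorem~\ref{thm:CH_of_opp_faces}. Your sketched justification has a slip: the claim ``$[\wt y,\wt y_{op}]\subset\Omega$'' is false, since $\wt y\in F_\Omega(y)$ and $\wt y_{op}\in F_\Omega(y_{op})$ force $(\wt y,\wt y_{op})\subset F_\Omega(x)\subset\partial\Omega$. More to the point, oppositeness is genuinely nontrivial here (nothing rules out $F_{\wt S}(\wt x)$ being strictly larger than the span of $F_{\wt S}(\wt y)$ and $F_{\wt S}(\wt y_{op})$), and the paper simply sidesteps it. The paper uses Lemma~\ref{lem:line_segments_cover} instead, which only needs $(\wt y,\wt y_{op})$ to be a properly embedded \emph{line} in the simplex $F_{\wt S}(\wt x)$ --- and that follows immediately from Observation~\ref{obs:faces}(4) (giving $(\wt y,\wt y_{op})\subset F_\Omega(x)$) together with Observation~\ref{obs:faces_of_simplices_are_properly_embedded} (giving $\overline{\wt S}\cap F_\Omega(x)=F_{\wt S}(\wt x)$ and $\wt y,\wt y_{op}\in\partial F_{\wt S}(\wt x)$). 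With Lemma~\ref{lem:line_segments_cover} in hand, every point of $F_{\wt S}(\wt x)$ lies on a segment from $F_{\wt S}(\wt y)$ to $F_{\wt S}(\wt y_{op})$, and the argument closes exactly as you outline. So your obstacle is real for the route you chose, but avoidable; Theorem~\ref{thm:CH_of_opp_faces} is not needed.
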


\begin{proof} Since 
\begin{align*}
F_\Omega(y_{op})=F_\Omega(\wt{y}_{op}) \subset \partial F_\Omega(x),
\end{align*}
our minimality assumption, see Equation~\eqref{eq:minimality_assumption}, implies that
\begin{align*}
R_1 : & = H_{F_\Omega(y_{op})}^{\Haus}\left( F_S(y_{op}), F_{\wt{S}}(\wt{y}_{op}) \right) \\
& \leq H_{F_\Omega(y_{op})}^{\Haus}\left( F_S(y_{op}), \overline{\Cc} \cap F_\Omega(y_{op}) \right) +H_{F_\Omega(\wt{y}_{op})}^{\Haus}\left( \overline{\Cc} \cap F_\Omega(\wt{y}_{op}),  F_{\wt{S}}(\wt{y}_{op}) \right) < +\infty. 
\end{align*}
Likewise,
\begin{align*}
R_2:=H_{F_\Omega(y)}^{\Haus}\left( F_S(y), F_{\wt{S}}(\wt{y}) \right) < +\infty. 
\end{align*}

By Lemma~\ref{lem:line_segments_cover}, for every $p \in F_S(x)$, there exist $p_1 \in F_S(y)$ and $p_2 \in F_S(y_{op})$ such that $p \in (p_1, p_2)$. Then there exist $\wt{p}_1 \in F_{\wt{S}}(\wt{y})$ and $\wt{p}_2 \in F_{\wt{S}}(\wt{y}_{op})$ such that 
\begin{align*}
H_{F_\Omega(y)}(p_1, \wt{p}_1), \ H_{F_\Omega(y_{op})}(p_2, \wt{p}_2) \leq  \max\{ R_1, R_2\}.
\end{align*}
Then by Proposition~\ref{prop:Crampons_dist_est_2}
\begin{align*}
H_{F_\Omega(x)}&(p, F_{\wt{S}}(\wt{x})) \leq H_{F_S(x)}(p, ( \wt{p}_1, \wt{p}_2) ) \leq H_{F_S(x)}^{\Haus}\Big((p_1,p_2), ( \wt{p}_1, \wt{p}_2) \Big)\\
& \leq \max \left\{ H_{F_\Omega(y)}(p_1, \wt{p}_1), H_{F_\Omega(y_{op})}(p_2,\wt{p}_2) \right\} \leq \max\{R_1, R_2\}. 
\end{align*}
The same argument shows that if $q \in F_{\wt{S}}(\wt{x})$, then
\begin{align*}
H_{F_\Omega(x)}(q, F_{S}(x)) \leq \max\{R_1, R_2\}. 
\end{align*}
So 
\begin{equation*}
H_{F_\Omega(x)}^{\Haus}\left( F_S(x), F_{\wt{S}}(\wt{x}) \right) \leq \max \{R_1, R_2\}. \qedhere
\end{equation*}
\end{proof}

Since $\wt{z}_n \in \partial F_{\wt{S}}(\wt{x})$, Lemma~\ref{lem:haus_dist_S_Swt} and Proposition~\ref{prop:dist_est_and_faces} imply that there exists $a_n \in \partial F_S(x)$ with $a_n \in F_\Omega(\wt{z}_n) = F_\Omega(z_n)$. So $z_n \in F_\Omega(a_n) \subset F_\Omega(\partial F_S(x))$ which contradicts our assumption that 
\begin{align*}
z_n \in \overline{\Cc} \cap \partial F_\Omega(x) \setminus F_\Omega \big( \partial F_S(x) \big).
\end{align*}
Hence $\overline{\Cc} \cap F_\Omega\big(\partial F_S(x)\big)$ is open in $\overline{\Cc} \cap \partial F_\Omega(x)$.

\subsection{Using the group action} In this subsection we use the action of $\Stab_{\Lambda}(F_S(x))$ to upgrade Proposition~\ref{prop:local_equal}. We begin with the following observation

\begin{observation}\label{obs:stab-invariant}
$\Stab_{\Lambda}(F_S(x))$ acts co-compactly on $F_S(x)$ and $\Stab_{\Lambda}(F_S(x)) \leq \Stab_{\Lambda}(F_\Omega(x))$. 
\end{observation}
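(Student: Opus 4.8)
The plan is to derive both assertions directly from the co-compactness of $\Stab_{\Lambda}(S)$ on $S$ together with elementary facts about faces, so no new machinery is needed.

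For the co-compactness statement, I would first observe that since $S \in \Sc$ and $\Sc$ is an isolated, $\Lambda$-invariant family of maximal properly embedded simplices in $\Cc$, Proposition~\ref{prop:HK_section_3_1}(3) gives that $\Stab_{\Lambda}(S)$ acts co-compactly on $S$; in particular $\Stab_{\Lambda}(S) \leq \Aut(S)$. Because $x \in \partial S$, the point $x$ lies in no open segment of $S$, so by Observation~\ref{obs:faces}(2) one has $F_S(x) \subsetneq S$ and $F_S(x) \subset \partial S$; thus $\overline{F_S(x)}$ is a proper face of the simplex $S$. Applying Observation~\ref{obs:cocompact-action-on-simplices}(2) with $H = \Stab_{\Lambda}(S)$ then shows that $\Stab_{\Stab_{\Lambda}(S)}(F_S(x))$ acts co-compactly on $F_S(x)$. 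Since $\Stab_{\Stab_{\Lambda}(S)}(F_S(x)) \leq \Stab_{\Lambda}(F_S(x))$, and enlarging the acting group cannot destroy co-compactness — if a compact $K \subset F_S(x)$ satisfies $\Stab_{\Stab_{\Lambda}(S)}(F_S(x)) \cdot K = F_S(x)$, then a fortiori $\Stab_{\Lambda}(F_S(x)) \cdot K = F_S(x)$ — it follows that $\Stab_{\Lambda}(F_S(x))$ acts co-compactly on $F_S(x)$.

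For the inclusion $\Stab_{\Lambda}(F_S(x)) \leq \Stab_{\Lambda}(F_\Omega(x))$, I would take $g \in \Stab_{\Lambda}(F_S(x))$. Since $g \in \Aut(\Omega)$ preserves $\overline{\Omega}$ and sends open line segments to open line segments, the definition of the open face gives $g\, F_\Omega(x) = F_\Omega(gx)$. Now $gx \in g\, F_S(x) = F_S(x) \subset F_\Omega(x)$, so Observation~\ref{obs:faces}(2) yields $F_\Omega(gx) = F_\Omega(x)$. Combining the two equalities gives $g\, F_\Omega(x) = F_\Omega(x)$, i.e. $g \in \Stab_{\Lambda}(F_\Omega(x))$, which proves the inclusion.

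I do not anticipate a genuine obstacle here; the only point requiring a moment's care is that Observation~\ref{obs:cocompact-action-on-simplices} is phrased for simplices in $\Pb(\Rb^d)$ and their faces, but it applies verbatim since a properly embedded simplex is in particular a simplex and $\overline{F_S(x)}$ is one of its faces.
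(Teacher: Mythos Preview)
Your argument is correct and follows essentially the same approach as the paper: the first assertion is deduced from Proposition~\ref{prop:HK_section_3_1} and Observation~\ref{obs:cocompact-action-on-simplices}, and the second from the fact that distinct faces of $\Omega$ are disjoint (Observation~\ref{obs:faces}(2)). The paper phrases the second part slightly differently, noting that $gF_\Omega(x) \cap F_\Omega(x) \supset F_S(x) \neq \emptyset$ forces equality, but this is the same idea as your use of $gF_\Omega(x) = F_\Omega(gx)$ with $gx \in F_\Omega(x)$.
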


\begin{proof}
The first assertion follows from Proposition~\ref{prop:HK_section_3_1} and Observation \ref{obs:cocompact-action-on-simplices}.

For the second, if $g \in \Stab_{\Lambda}(F_S(x))$, then 
\begin{align*}
F_{\Omega}(x) \cap g F_{\Omega}(x)\supset  F_S(x) \cap gF_S(x) =F_S(x) \neq \emptyset.
\end{align*}
Hence $g F_{\Omega}(x)=F_{\Omega}(x)$. 
\end{proof}

\begin{proposition}\label{prop:local_equal_upgrade} $ \overline{\Cc} \cap F_\Omega\big(\partial F_S(x)\big)=\overline{\Cc} \cap \partial F_\Omega(x)$.
\end{proposition}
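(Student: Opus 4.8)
Proposition~\ref{prop:local_equal} already gives one inclusion for free (namely $\supset$), and it reduces the reverse inclusion to showing that the closed-and-open subset $\overline{\Cc} \cap F_\Omega\big(\partial F_S(x)\big)$ is in fact \emph{all} of $\overline{\Cc} \cap \partial F_\Omega(x)$; equivalently, that $\overline{\Cc}\cap\partial F_\Omega(x)$ is connected. The plan is to exploit the action of $\Stab_\Lambda(F_S(x))$, which by Observation~\ref{obs:stab-invariant} preserves both $F_\Omega(x)$ and $F_S(x)$, acts co-compactly on $F_S(x)$, and therefore permutes the connected components of $\overline{\Cc}\cap\partial F_\Omega(x)$ while preserving the distinguished component $\overline{\Cc}\cap F_\Omega\big(\partial F_S(x)\big)$.

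First I would set up the argument by contradiction: suppose $\overline{\Cc}\cap\partial F_\Omega(x)$ has a connected component $E$ disjoint from $\overline{\Cc}\cap F_\Omega\big(\partial F_S(x)\big)$, and pick a point $z\in E$, so $z\in\overline{\Cc}\cap\partial F_\Omega(x)\setminus F_\Omega\big(\partial F_S(x)\big)$. The idea is to build a properly embedded simplex inside $\Cc$ that ``witnesses'' $z$, apply coarse completeness to capture it by some $\wt{S}\in\Sc$, and then compare $\wt{S}$ with $S$. Concretely, choose opposite points in the face $F_S(x)$ — say $y,y_{op}\in\partial F_S(x)$ with $F_S(y),F_S(y_{op})$ opposite in $F_S(x)$ — together with $x_{op}\in\partial S$ opposite to $x$ in $S$, and form $T:=\relint\big(\CH_\Omega\{z,y_{op},x_{op}\}\big)$. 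Using Corollary~\ref{cor:lines_between_opp_faces} (the segments $[z,y_{op}]\subset\overline{F_\Omega(x)}$, $[y_{op},x_{op}]\subset\partial S$, and $[z,x_{op}]$ via the opposite-faces structure theorem all lie in $\partial\Omega$) and the fact that $T$ meets $\Omega$, one sees $T$ is a properly embedded simplex in $\Cc$. Then coarse completeness gives $\wt{S}\in\Sc$ with $T\subset\Nc_\Omega(\wt{S};D_0)$, Proposition~\ref{prop:HK_section_3_1} lets me pass to a subsequence so the relevant $\wt{S}$ stabilizes, and Proposition~\ref{prop:dist_est_and_faces} produces points $\wt{x}\in\partial\wt{S}\cap F_\Omega(x)$, etc.

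The crux is then to show $H^{\Haus}_{F_\Omega(x)}\big(F_S(x),F_{\wt{S}}(\wt{x})\big)<+\infty$, exactly as in Lemma~\ref{lem:haus_dist_S_Swt}: the minimality assumption~\eqref{eq:minimality_assumption} applies to the lower-dimensional faces $F_\Omega(y_{op})$ and $F_\Omega(y)$ inside $\partial F_\Omega(x)$, giving finite Hausdorff bounds there, and Lemma~\ref{lem:line_segments_cover} together with Proposition~\ref{prop:Crampons_dist_est_2} propagates these to a finite bound between $F_S(x)$ and $F_{\wt{S}}(\wt{x})$. Once this is in hand, Proposition~\ref{prop:dist_est_and_faces} forces $F_\Omega(\wt{z})=F_\Omega(z)$ to contain a point of $\partial F_S(x)$, so $z\in F_\Omega(\partial F_S(x))$, contradicting the choice of $z$. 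I would structure the write-up to reuse the simplices $T_n$, the point $z_n$, and Lemma~\ref{lem:haus_dist_S_Swt} essentially verbatim from the preceding subsection, so the proof of Proposition~\ref{prop:local_equal_upgrade} becomes: ``if a component $E$ as above existed, repeat the construction of the previous subsection with $z\in E$ in place of $z_n\to z_\infty$; the same chain of lemmas yields $z\in F_\Omega(\partial F_S(x))$, a contradiction; hence $\overline{\Cc}\cap\partial F_\Omega(x)$ has a single component, which is $\overline{\Cc}\cap F_\Omega(\partial F_S(x))$.''

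The main obstacle I anticipate is verifying that the simplex $T$ (or $T_n$) is genuinely \emph{properly embedded} in $\Cc$ — i.e. that all three of its edge-segments land in $\partial\Omega$ and that it meets the interior — since $z$ now comes from an arbitrary component rather than as a limit $z_n\to z_\infty$ along $[x,u)$; the cleanest route is to verify directly that $z\in\partial F_\Omega(x)$ suffices to invoke part (1) of Corollary~\ref{cor:lines_between_opp_faces} for the segment $[z,x_{op}]$ (applying Theorem~\ref{thm:CH_of_opp_faces} with the opposite points $x,x_{op}\in\partial S$ and the face $F_\Omega(x)\ni z$), after which $T\cap\Omega\neq\emptyset$ follows because $z$ lies in the closure of $\overline{\Cc}\cap F_\Omega(x)$ while $\CH_\Omega\{y,y_{op},x_{op}\}\subset\overline S$ already meets $\Omega$. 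A secondary subtlety is confirming that the minimality hypothesis~\eqref{eq:minimality_assumption} is actually applicable to the faces $F_\Omega(y)$, $F_\Omega(y_{op})$ — this needs $\dim F_\Omega(y)<\dim F_\Omega(x)$, which holds because $y\in\partial F_\Omega(x)$ forces $F_\Omega(y)\subsetneq\overline{F_\Omega(x)}$ to be a proper boundary face — but this is exactly the same bookkeeping already carried out in the previous subsection, so it should transfer without difficulty.
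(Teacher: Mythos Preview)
Your proposal takes a genuinely different route from the paper's proof, and in its current form it has a real gap.

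The paper does not try to rerun the simplex construction from the openness argument. Instead it fixes $z\in\overline{\Cc}\cap\partial F_\Omega(x)$ and splits into two cases. If some segment $[y,z]$ with $y\in\partial F_S(x)$ lies in $\partial F_\Omega(x)$, then $[y,z]\subset\overline{\Cc}\cap\partial F_\Omega(x)$ (by convexity of $\overline{\Cc}$), so $z$ lies in the same component as $y$ and Proposition~\ref{prop:local_equal} applies directly. Otherwise $(y,z)\subset F_\Omega(x)$ for every $y\in\partial F_S(x)$; here the paper uses the dynamics of an unbounded sequence $g_n\in\Stab_\Lambda(F_S(x))$ and Proposition~\ref{prop:dynamics_of_automorphisms}: the limit $T=\lim g_n|_V$ has image in $\Spanset F_\Omega(y_1)$ for some $y_1\in\partial F_S(x)$, and $z\notin[\ker T]$ (else $[y_2,z]\subset\partial F_\Omega(x)$, contradicting the case hypothesis), so $g_n z\to T(z)\in\overline{F_\Omega(y_1)}$. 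Then Proposition~\ref{prop:local_equal} catches $g_n z$ for large $n$, and $\Lambda$-invariance pulls this back to $z$.

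In your approach there are two problems. First, your justification for $T\cap\Omega\neq\emptyset$ (``$z$ lies in the closure of $\overline{\Cc}\cap F_\Omega(x)$ while $\CH_\Omega\{y,y_{op},x_{op}\}$ already meets $\Omega$'') is not an argument; proximity of another simplex does not help. This particular step can be repaired: since $z$ and $y_{op}$ lie in different components of $\overline{\Cc}\cap\partial F_\Omega(x)$, the segment $[y_{op},z]\subset\overline{\Cc}\cap\overline{F_\Omega(x)}$ cannot stay in $\partial F_\Omega(x)$, so $(y_{op},z)\subset F_\Omega(x)$, and then Corollary~\ref{cor:lines_between_opp_faces}(1) gives $(m,x_{op})\subset\Omega$ for any $m\in(y_{op},z)$. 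The second problem is more serious: your plan to ``reuse Lemma~\ref{lem:haus_dist_S_Swt} essentially verbatim'' does not work. That lemma's proof requires points $\wt{y}\in\partial\wt{S}\cap F_\Omega(y)$ and $\wt{y}_{op}\in\partial\wt{S}\cap F_\Omega(y_{op})$ in order to sandwich $F_S(x)$ and $F_{\wt{S}}(\wt{x})$ via Lemma~\ref{lem:line_segments_cover}. In the previous subsection $\wt{y}$ came from the limit vertex $z_\infty\in F_\Omega(y)$ of $T$. Your single simplex $T$ has vertices $z,y_{op},x_{op}$, none of which lies in $F_\Omega(y)$, so you have no mechanism to produce $\wt{y}$, and the Hausdorff bound between $F_S(x)$ and $F_{\wt{S}}(\wt{x})$ is not available as written.
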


\begin{proof} By Observation \ref{obs:faces_of_simplices_are_properly_embedded}, $ \overline{\Cc} \cap F_\Omega\big(\partial F_S(x)\big)\subset \overline{\Cc} \cap \partial F_\Omega(x)$. For the other inclusion, fix $z \in \overline{\Cc} \cap \partial F_\Omega(x)$. 

\medskip

\noindent \textbf{Case 1:} There exists  $y \in \partial F_S(x)$ such that $[y,z] \subset \partial F_\Omega(x)$. 

\medskip

Then $y$ and $z$ are in the same connected component of $\overline{\Cc} \cap  \partial F_\Omega(x)$. So Proposition~\ref{prop:local_equal} implies that
\begin{align*}
z \in  \overline{\Cc} \cap F_\Omega(\partial F_S(x)).
\end{align*}

\medskip

\noindent \textbf{Case 2:} $(y,z) \subset F_\Omega(x)$ for every $y \in \partial F_S(x)$. 

\medskip

Using Observation~\ref{obs:stab-invariant},  there exists an unbounded sequence $g_n \in \Stab_{\Lambda}(F_S(x))$. By passing to a subsequence we can suppose that $y_1:=\lim_{n \to \infty} g_n(x)$ and $y_2:=\lim_{n \to \infty}g_n^{-1}(x)$ both exist. Then  $y_1,y_2 \in \partial F_S(x)$. Observation~\ref{obs:stab-invariant} also implies that
\begin{align*}
\{g_n z : n \geq 0\} \subset  \overline{\Cc} \cap \partial F_\Omega(x).
\end{align*}
We claim that this set intersects $F_\Omega(\partial F_S(x))$.

Let $V := \Spanset  F_\Omega(x)$. By passing to a subsequence, we can assume that $g_n|_V$ converges in $\Pb(\End(V))$ to some $T \in \Pb(\End(V))$. Then
\begin{align*}
T(w) = \lim_{n \rightarrow \infty} g_n(w)
\end{align*}
for all $w \in \Pb(V) \setminus \Pb(\ker T)$.
By Proposition~\ref{prop:dynamics_of_automorphisms}
\begin{align*}
{ \rm image}(T) \subset \Spanset F_{F_\Omega(x)}(y_1)= \Spanset F_{\Omega}(y_1),
\end{align*}
$y_2 \in \Pb(\ker T)$, and $\Pb(\ker T) \cap F_\Omega(x) = \emptyset$. 

We claim that $z \notin \Pb(\ker T)$. Otherwise, $[y_2,z] \subset \Pb(\ker T)$ and so $[y_2,z] \subset \partial F_\Omega(x)$. This contradicts our assumptions for Case 2. Then 
\begin{align*}
T(z) = \lim_{n \rightarrow \infty} g_n(z) \in \overline{\Cc} \cap \Pb( \Spanset F_{\Omega}(y_1)) = \overline{\Cc} \cap\overline{F_{\Omega}(y_1)} \subset \overline{\Cc} \cap \overline{F_{\Omega}(\partial F_S(x))}.
\end{align*}
Then Proposition~\ref{prop:local_equal} implies that 
\begin{align*}
g_n z \in  \overline{\Cc} \cap F_\Omega\big(\partial F_S(x)\big)
\end{align*}
for $n$ sufficiently large. Then 
\begin{align*}
z \in  g_n^{-1} \Big( \overline{\Cc} \cap F_\Omega\big(\partial F_S(x)\big)\Big) =  \overline{\Cc} \cap F_\Omega\big(\partial F_S(x)\big)
\end{align*}
and the proof is complete in this case. 
\end{proof}

\subsection{Finishing the proof of Theorem~\ref{thm:bd_faces}}

Since $H^{\Haus}_{F_\Omega(x)}(\overline{\Cc}\cap F_{\Omega}(x), F_S(x))=\infty$, for every $n \geq 1$,  there exists $w_n \in \overline{\Cc} \cap F_\Omega(x)$ with 
\begin{align*}
H_{F_\Omega(x)}\left(w_n, F_S(x)\right) \geq n. 
\end{align*}
Then for each $n$, pick $x_n \in F_S(x)$ such that
\begin{align*}
H_{F_\Omega(x)}\left(w_n, x_n\right) = H_{F_\Omega(x)}\left(w_n, F_S(x)\right).
\end{align*}
Using Observation~\ref{obs:stab-invariant}, translating by elements in $\Stab_{\Lambda}(F_S(x))$, and passing to a subsequence,  we can assume that
\begin{align*}
x_\infty : = \lim_{n \rightarrow \infty} x_n  \text{ exists and  } x_\infty \in F_S(x). 
\end{align*}
By passing to a further subsequence we can also assume that 
\begin{align*}
w_\infty : = \lim_{n \rightarrow \infty} w_n \in \overline{\Cc} \cap \overline{F_\Omega(x)}
\end{align*}
exists. In fact, $w_\infty \in \overline{\Cc} \cap \partial F_\Omega(x)$ since
\begin{align*}
\lim_{n \rightarrow \infty} H_{F_\Omega(x)}(w_n, x_\infty) \geq \lim_{n \rightarrow \infty} \Big( H_{F_\Omega(x)}(w_n, x_n)-H_{F_\Omega(x)}(x_n, x_\infty) \Big)= \infty.
\end{align*}
So Proposition~\ref{prop:local_equal_upgrade} implies that $w_\infty \in F_\Omega(y)$ for some $y \in \partial F_S(x)$.

Next fix $p \in [x_\infty, w_\infty) \subset F_{\Omega}(x)$. Then by Proposition~\ref{prop:Crampons_dist_est_2}
\begin{align*}
H_{F_\Omega(x)}(p, F_S(x))& \leq H_{F_\Omega(x)}(p, [x_\infty, y)) \leq H_{F_\Omega(x)}^{\Haus}\left( [x_\infty, w_\infty), [x_\infty,y)\right)\\
& \leq H_{F_\Omega(y)}(w_\infty,y). \nonumber
\end{align*}
Then fix a sequence $p_n \in [x_n, w_n]$ with $\lim_{n \to \infty} p_n = p$. Then
\begin{align*}
H_{F_{\Omega}(x)}(p_n,x_n) &=H_{F_{\Omega}(x)}(w_n,x_n)- H_{F_{\Omega}(x)}(w_n,p_n) \\
&=H_{F_{\Omega}(x)}(w_n,F_S(x))-H_{F_{\Omega}(x)}(w_n,p_n)\\
&\leq H_{F_{\Omega}(x)}(w_n,p_n)+H_{F_{\Omega}(x)}(p_n,F_S(x)) - H_{F_{\Omega}(x)}(w_n,p_n)\\
&= H_{F_{\Omega}(x)}(p_n,F_S(x)). 
\end{align*}
Taking $n \rightarrow \infty$ yields
\begin{align*}
H_{F_\Omega(x)}(p,x_\infty) \leq H_{F_\Omega(x)}(p, F_S(x))  \leq H_{F_\Omega(y)}(w_\infty,y).
\end{align*}
Since $p \in [x_\infty, w_\infty)$ is arbitrary, we have
\begin{align*}
\infty = \lim_{\substack{q \in [x_\infty, w_\infty) \\ q \rightarrow w_\infty}} H_{F_\Omega(x)}(q,x_\infty) \leq H_{F_\Omega(y)}(w_\infty,y) < \infty
\end{align*}
and we have a contradiction. This finishes the proof of Theorem \ref{thm:bd_faces}.

\section{Proof of Theorem~\ref{thm:S-core-exists}}\label{sec:intersection_of_nbhds}

For the rest of the section let $(\Omega, \Cc, \Lambda)$ be a naive convex co-compact triple with coarsely isolated simplices.  We will describe a procedure for producing a family of strongly isolated, coarsely complete, and $\Lambda$-invariant maximal properly embedded simplices in $\Cc$ of dimension at least two.

Let $\Sc_{\max}$ denote the family of all maximal properly embedded simplices in $\Cc$ of dimension at least two. Then let $X \subset \Sc_{\max}$ be the set of simplices $S \in \Sc_{\max}$ where there exists some $S^\prime \in \Sc_{\max}$ with $\dim S < \dim S^\prime$ and
\begin{align*}
 \sup_{p \in S} H_\Omega \left( p, S^\prime \right) < +\infty.
\end{align*}
Next let $\wh{\Sc}_{\max}:= \Sc_{\max} \setminus X$. That is, $\wh{\Sc}_{\max}$ consists of the maximal properly embedded simplices of dimension at least two that are not contained in a tubular neighborhood of a properly embedded simplex with strictly larger dimension. 

For each simplex $S \in \wh{\Sc}_{\max}$ we construct a new simplex $\Phi(S)$ as follows. Let $v_1,\dots, v_p$ be the vertices of $S$. Then define (see Remark \ref{rem:s_core_well_defined} to see why this is well defined)
 \begin{align*}
w_j: = {\rm CoM}_{F_\Omega(v_j)}\Big( \overline{\Cc} \cap F_\Omega(v_j) \Big) \text{ for } 1 \leq j \leq p
\end{align*}
and  
\begin{align*}
\Phi(S) := \Omega \cap \Pb \left( \Spanset \{w_1,\dots, w_p\} \right).
\end{align*}
Finally, define 
\begin{align*}
\Sc_{\core} :=\left\{ \Phi(S) : S \in \wh{\Sc}_{\max}\right\}.
\end{align*}

Theorem~\ref{thm:S-core-exists} will be a consequence of the following theorem. 

\begin{theorem} 
\label{thm:s_core}
$\Sc_{\core}$ is a well-defined, strongly isolated, coarsely complete, and $\Lambda$-invariant family of maximal properly embedded simplices in $\Cc$ of dimension at least two.
\end{theorem}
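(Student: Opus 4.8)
The strategy is to verify each adjective in turn, leveraging the heavy machinery already developed. The key technical inputs are Lemma~\ref{lem:slide_along_faces} (wiggling vertices along faces produces a parallel properly embedded simplex, with Hausdorff distance bounded by the sum of the face-distances moved), Theorem~\ref{thm:bd_faces} (after passing to a suitable subfamily, $\overline{\Cc} \cap F_\Omega(x)$ stays within bounded Hausdorff distance of $F_S(x)$ for $x \in \partial S$), Proposition~\ref{prop:HK_section_3_1} (an isolated invariant family is periodic with finitely many orbits), and Proposition~\ref{prop:center_of_mass} (the equivariant center of mass). First I would fix an isolated, coarsely complete, $\Lambda$-invariant subfamily $\Sc$ satisfying property $(\star)$ of Theorem~\ref{thm:bd_faces}, with constant $D_1$; the plan is to show $\Sc_{\core}$ is ``the same as'' the family obtained by applying $\Phi$ to $\Sc$, and in fact that $\wh\Sc_{\max}$ is coarsely complete and coincides, up to the $\Phi$-operation, with any such $\Sc$.

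\textbf{Step 1: $\Phi$ is well-defined and lands among maximal simplices.} For $S \in \wh\Sc_{\max}$ with vertices $v_1,\dots,v_p$, one first checks $\overline{\Cc} \cap F_\Omega(v_j)$ is a compact subset of $F_\Omega(v_j)$: if it were non-compact it would contain a properly embedded line, and by Corollary~\ref{cor:building_simplices} one could build a properly embedded simplex of strictly larger dimension containing a translate of $S$ in a bounded neighborhood, contradicting $S \in \wh\Sc_{\max}$ exactly as in the ``vertex case'' argument of Lemma~\ref{lem:base_case}. (This is where I expect to invoke the coarse-completeness constant $D_0$ and Proposition~\ref{prop:dist_est_and_faces}.) So $w_j = {\rm CoM}_{F_\Omega(v_j)}(\overline\Cc \cap F_\Omega(v_j))$ is defined and lies in $F_\Omega(v_j)$ by Proposition~\ref{prop:center_of_mass}(1). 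By Lemma~\ref{lem:slide_along_faces}, $\Phi(S) = \Omega \cap [\Spanset\{w_1,\dots,w_p\}]$ is a properly embedded simplex parallel to $S$, with $H^{\Haus}_\Omega(S,\Phi(S))$ bounded (by $\sum_j H_{F_\Omega(v_j)}(v_j,w_j) \le p D_1$, once one knows $v_j \in \overline\Cc \cap F_\Omega(v_j)$, which holds since $v_j \in \partial S \subset \overline\Cc$). Maximality of $\Phi(S)$: any properly embedded simplex strictly containing $\Phi(S)$ would, again by Lemma~\ref{lem:slide_along_faces} applied in reverse, yield one strictly containing $S$ (slide the new vertices back onto $F_\Omega(v_j)$), contradicting $S \in \Sc_{\max}$. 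That $\Phi(S) \subset \Cc$: its vertices $w_j$ lie in $\overline\Cc \cap F_\Omega(v_j)$, and its ideal boundary is contained in $\partiali\Cc$ by the face structure, so $\Phi(S) \subset \Cc$ by convexity and the characterization of properly embedded subsets.

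\textbf{Step 2: $\Phi$ collapses parallel families, and $\Lambda$-invariance.} The equivariance ${\rm CoM}_{gF_\Omega(v_j)}(g(\overline\Cc \cap F_\Omega(v_j))) = g\,{\rm CoM}_{F_\Omega(v_j)}(\overline\Cc \cap F_\Omega(v_j))$ from Proposition~\ref{prop:center_of_mass}(3), together with $g\overline\Cc = \overline\Cc$ for $g \in \Lambda$ and $g\wh\Sc_{\max} = \wh\Sc_{\max}$, gives $\Phi(gS) = g\Phi(S)$, hence $\Lambda$-invariance of $\Sc_{\core}$. For the collapse: if $S_1, S_2 \in \wh\Sc_{\max}$ are parallel, say $F_\Omega(v_k) = F_\Omega(w_k)$ after relabeling vertices, then the defining sets $\overline\Cc \cap F_\Omega(v_k) = \overline\Cc \cap F_\Omega(w_k)$ coincide, so their centers of mass coincide and $\Phi(S_1) = \Phi(S_2)$.

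\textbf{Step 3: coarse completeness of $\Sc_{\core}$.} Given any properly embedded simplex $S \subset \Cc$, coarse completeness of $\Sc$ puts $S$ in a $D_0$-neighborhood of some $S' \in \Sc$; and $\Sc \subset \wh\Sc_{\max}$ (here one must argue, via Lemma~\ref{lem:refining_isolated_simplices}'s construction, that the property-$(\star)$ subfamily of Theorem~\ref{thm:bd_faces} can be taken inside $\wh\Sc_{\max}$ — this is essentially the content of Lemma~\ref{lem:refining_isolated_simplices}). Then $H^{\Haus}_\Omega(S', \Phi(S')) \le pD_1$ from Step 1, so $S$ lies in a $(D_0 + pD_1)$-neighborhood of $\Phi(S') \in \Sc_{\core}$. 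One should also observe $\wh\Sc_{\max}$ itself is coarsely complete so that $\Phi$ is being applied to a cofinal-enough family.

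\textbf{Step 4: strong isolation — the main obstacle.} This is the hard part. Suppose $S_1 \ne S_2 \in \Sc_{\core}$ and, for some $r > 0$, $\diam_\Omega(\Nc_\Omega(S_1;r) \cap \Nc_\Omega(S_2;r))$ is large. Write $S_i = \Phi(T_i)$ with $T_i \in \wh\Sc_{\max}$. The plan is to run a Benz\'ecri-type rescaling argument (as flagged in the outline for Theorem~\ref{thm:properties_of_ncc}, property (6)): a long overlap of the $r$-neighborhoods produces, in a limit, two parallel properly embedded simplices $S_1^\infty, S_2^\infty$ that are boundary faces of a common properly embedded simplex, or more precisely two simplices whose vertices lie in the \emph{same} open faces $F_\Omega(\cdot)$. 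Using property $(\star)$ to control $\overline\Cc \cap F_\Omega(\cdot)$ and the fact that the center of mass only depends on that set, one concludes the limiting data forces $T_1$ and $T_2$ to be parallel (vertices sharing faces), whence $\Phi(T_1) = \Phi(T_2)$ by Step 2, i.e.\ $S_1 = S_2$, a contradiction. Making the limiting argument precise — identifying the limiting simplices, controlling which faces their vertices fall into using Proposition~\ref{prop:dynamics_of_automorphisms} and Proposition~\ref{prop:dist_est_and_faces}, and invoking periodicity (Proposition~\ref{prop:HK_section_3_1}) to pass to translates landing in a fixed compact set — is where the real work lies, and it will lean on property (4)/$(\star)$ of Theorem~\ref{thm:properties_of_ncc} together with Theorem~\ref{thm:CH_of_opp_faces} to recognize when two nearby simplices must in fact be parallel. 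Finally, strong isolation implies isolation by Observation~\ref{obs:str_iso_implies_iso}, completing the proof that $\Sc_{\core}$ has all the required properties.
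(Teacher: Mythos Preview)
Your overall architecture matches the paper's, and Steps 2--3 are essentially correct. There are two places where the sketch diverges from what actually works.

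\textbf{Step 1.} Your direct appeal to Corollary~\ref{cor:building_simplices} to handle a general $S \in \wh\Sc_{\max}$ is not licensed: that corollary requires $\Stab_{\Aut(\Omega)}(S)$ to act co-compactly on $S$, which is known (via Proposition~\ref{prop:HK_section_3_1}) only for simplices in the isolated family $\Sc_0$, not for an arbitrary maximal simplex. The paper avoids this by first proving that every $S \in \wh\Sc_{\max}$ is \emph{parallel} to some $S_0 \in \Sc_0$ (coarse completeness gives $S \subset \Nc_\Omega(S_0;r)$, Observation~\ref{obs:QI_to_Rk} plus $S \in \wh\Sc_{\max}$ forces $\dim S = \dim S_0$, and then Proposition~\ref{prop:dist_est_and_faces} matches up vertices face-by-face). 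Once the vertices of $S$ share faces with those of $S_0$, property~$(\star)$ for $S_0$ gives compactness of $\overline\Cc \cap F_\Omega(v_j)$ directly --- no need to re-run the Lemma~\ref{lem:base_case} argument.

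\textbf{Step 4.} The mechanism you describe is not quite right. A Benz\'ecri rescaling is not used here, nor are Theorem~\ref{thm:CH_of_opp_faces} or Proposition~\ref{prop:dynamics_of_automorphisms}. What the paper actually does: given $S_1,S_2 \in \Sc_{\core}$ with arbitrarily large $r$-overlap, write $S_i = \Phi(T_i)$ with $T_i \in \Sc_0$, translate by $\Lambda$ and use Proposition~\ref{prop:HK_section_3_1} to reduce to \emph{fixed} $S_1', S_2' \in \Sc_0$ whose neighborhoods share a properly embedded \emph{line} $(a,b) \subset \Cc$. At each endpoint, Proposition~\ref{prop:dist_est_and_faces} shows $\overline{S_i'}$ meets $F_\Omega(a)$ and $F_\Omega(b)$, and property~$(\star)$ bounds $H_{F_\Omega(a)}^{\Haus}(\overline{S_1'}\cap F_\Omega(a),\overline{S_2'}\cap F_\Omega(a)) \le 2R$ (and similarly at $b$). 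The crucial step you did not name is Lemma~\ref{lem:line_segments_cover}: every point of $S_1'$ lies on some segment from $F_{S_1'}(a)$ to $F_{S_1'}(b)$, so Proposition~\ref{prop:Crampons_dist_est_2} converts the two face-bounds into $H_\Omega^{\Haus}(S_1',S_2') \le 4R$. Then the separate lemma (your Step 2 collapse, upgraded to: finite Hausdorff distance between two $\Sc_{\core}$-simplices forces equality) yields $\Phi(S_1') = \Phi(S_2')$, contradicting the choice.
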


\begin{remark} \
\label{rem:s_core_well_defined}
\begin{enumerate}
\item To show that $\Sc_{\core}$ is well-defined we need to show that $\overline{\Cc} \cap F_\Omega(v)$ is a compact subset of $F_\Omega(v)$ for every simplex $S \in \wh{\Sc}_{\max}$ and vertex $v$ of $S$. 
\item The map $\Phi$ selects from each family of parallel simplices a canonical ``core'' simplex, thus motivating the notation $\Sc_{\core}$. 
\end{enumerate}

\end{remark}

The rest of this section is devoted to the proof of Theorem \ref{thm:s_core}.  Let $\Sc_0$ be an isolated, coarsely complete, and $\Lambda$-invariant family of maximal properly embedded simplices in $\Cc$ of dimension at least two. By Theorem~\ref{thm:bd_faces} and passing to a subfamily of $\Sc_0$ we can assume that there exists $R > 0$ such that: if $S \in \Sc_0$ and $x \in \partial S$, then 
\begin{align}
\label{eq:defn_of_R}
H_{F_\Omega(x)}^{\Haus}\left(\overline{\Cc} \cap F_\Omega(x), F_S(x) \right)=H_{F_\Omega(x)}^{\Haus}\left(\overline{\Cc} \cap F_\Omega(x), \overline{S} \cap F_\Omega(x) \right) \leq R.
\end{align}
By Lemma~\ref{lem:refining_isolated_simplices} and passing to another subfamily of $\Sc_0$ we can also assume that: If $S_1, S_2 \in \Sc_0$ and $\dim S_1 < \dim S_2$, then 
\begin{align}
\label{eq:second_prop_of_S0}
\infty = \sup_{p \in S_1} H_\Omega \left( p, S_2 \right). 
\end{align}

The next two lemmas show that $\Sc_0$ and $\wh{\Sc}_{\max}$ are ``coarsely the same.''

\begin{lemma} $\Sc_0 \subset \wh{\Sc}_{\max}$. \end{lemma}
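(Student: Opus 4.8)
\textbf{Plan for the proof that $\Sc_0 \subset \wh{\Sc}_{\max}$.}

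The statement to prove is $\Sc_0 \subset \wh{\Sc}_{\max} = \Sc_{\max} \setminus X$. Since every element of $\Sc_0$ is by hypothesis a maximal properly embedded simplex in $\Cc$, we automatically have $\Sc_0 \subset \Sc_{\max}$, so the content of the lemma is that $\Sc_0 \cap X = \emptyset$; equivalently, no $S \in \Sc_0$ lies in a bounded tubular neighborhood of a properly embedded simplex of strictly larger dimension.

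The plan is to argue by contradiction: suppose $S \in \Sc_0 \cap X$. By definition of $X$, there is a properly embedded simplex $S'' \subset \Cc$ with $\dim S'' > \dim S$ and $\sup_{p \in S} H_\Omega(p, S'') < +\infty$. Now I invoke the coarse completeness of $\Sc_0$: since $S''$ is a properly embedded simplex in $\Cc$, there is some $S' \in \Sc_0$ and a constant $D_0 > 0$ (the coarse-completeness constant) with $S'' \subset \Nc_\Omega(S'; D_0)$. Combining the two inclusions gives
\begin{align*}
\sup_{p \in S} H_\Omega(p, S') \leq \sup_{p \in S} H_\Omega(p, S'') + D_0 < +\infty.
\end{align*}
The key remaining point is to control the dimensions. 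I need $\dim S' \geq \dim S'' > \dim S$; for this I use that $\Sc_0$ consists of maximal simplices together with the elementary fact (Observation~\ref{obs:QI_to_Rk}, which says $(S', H_{S'})$ is quasi-isometric to Euclidean $(\dim S')$-space, and similarly for $S''$): if $S'' \subset \Nc_\Omega(S'; D_0)$ then $\Rb^{\dim S''}$ quasi-isometrically embeds into $\Nc_\Omega(S'; D_0)$, which is within bounded Hausdorff distance of $S' \cong \Rb^{\dim S'}$, forcing $\dim S'' \leq \dim S'$ — alternatively, since $S'$ is maximal and $\CH_\Omega(S' \cup S'')$ would otherwise enlarge it, one gets $\dim S' \geq \dim S''$ directly (this is the same type of argument used inside Lemma~\ref{lem:base_case}). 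Either way, $\dim S < \dim S'$ with $S, S' \in \Sc_0$ and $\sup_{p\in S} H_\Omega(p,S') < +\infty$, which directly contradicts property~\eqref{eq:second_prop_of_S0} of $\Sc_0$. Hence $\Sc_0 \cap X = \emptyset$, giving $\Sc_0 \subset \wh{\Sc}_{\max}$.

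The main obstacle is the dimension-monotonicity step — making precise that containment of one properly embedded simplex in a bounded neighborhood of another forces a dimension inequality. I expect this to follow cleanly from the quasi-isometry of simplices to Euclidean spaces (Observation~\ref{obs:QI_to_Rk}) combined with the fact that $\Rb^m$ does not quasi-isometrically embed into a bounded neighborhood of $\Rb^n$ when $m > n$ (a standard asymptotic-dimension or volume-growth argument), or more in the spirit of this paper, from the explicit ``building simplices'' machinery: if $S''$ sat in a bounded neighborhood of the lower-dimensional $S'$ one could use Corollary~\ref{cor:building_simplices}-type constructions to contradict maximality of $S'$. Once the dimension inequality is in hand, the contradiction with~\eqref{eq:second_prop_of_S0} is immediate, so no further calculation is needed.
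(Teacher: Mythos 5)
Your proof is correct and follows essentially the same route as the paper: both arguments use coarse completeness of $\Sc_0$ to replace the putative higher-dimensional simplex by an element $S' \in \Sc_0$ of at least that dimension (with the dimension comparison coming from Observation~\ref{obs:QI_to_Rk}), and then derive a contradiction with property~\eqref{eq:second_prop_of_S0}. The only cosmetic difference is that the paper phrases it directly rather than by contradiction.
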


\begin{proof} Fix $S_0 \in \Sc_0$ and suppose $S_1 \in \Sc_{\max}$ with $\dim S_1 > \dim S_0$. Since $\Sc_0$ is coarsely complete there exists $S_2 \in \Sc_0$ and $r>0$ such that 
\begin{align*}
S_1 \subset \Nc_\Omega(S_2; r).
\end{align*}
Then, Observation \ref{obs:QI_to_Rk} implies that $\dim S_1 \leq \dim S_2$. So by Equation~\eqref{eq:second_prop_of_S0} 
\begin{align*}
\sup_{p \in S_0} H_\Omega \left( p, S_1 \right) \geq -r + \sup_{p \in S_0} H_\Omega \left( p, S_2 \right) = \infty. 
\end{align*}
Since $S_1 \in \Sc_{\max}$ was an arbitrary simplex with $\dim S_1 > \dim S_0$, we see that $S_0 \in \Sc_{\max} \setminus X=\wh{\Sc}_{\max}$. 
\end{proof}

\begin{lemma}\label{lem:shadows} If $S \in \wh{\Sc}_{\max}$ has vertices $v_1,\dots, v_{p}$, then there exists $S_0 \in \Sc_0$ with $\dim S_0 = \dim S$ and a labelling $w_1,\dots,w_{p}$ of the vertices of $S_0$ such that 
\begin{align*}
F_\Omega(v_j) = F_\Omega(w_j)
\end{align*}
for all $1 \leq j \leq p$. Moreover,
\begin{align*}
H_{\Omega}^{\Haus}( S, S_0) \leq R. 
\end{align*}
\end{lemma}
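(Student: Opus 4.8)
\textbf{Proof proposal for Lemma~\ref{lem:shadows}.}

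The plan is to produce $S_0$ by first finding, via coarse completeness of $\Sc_0$, some $\wt S \in \Sc_0$ that coarsely contains $S$, and then ``wiggling'' the vertices of $\wt S$ back onto the faces $F_\Omega(v_j)$ using Lemma~\ref{lem:slide_along_faces}. First I would note that since $S \in \wh{\Sc}_{\max}$ is a properly embedded simplex in $\Cc$, coarse completeness of $\Sc_0$ gives $\wt S \in \Sc_0$ and $r>0$ with $S \subset \Nc_\Omega(\wt S; r)$. By Observation~\ref{obs:QI_to_Rk} this forces $\dim \wt S \geq \dim S$; but if $\dim \wt S > \dim S$, then by Equation~\eqref{eq:second_prop_of_S0} (applied with $S_1 := S$, which is legitimate since $S$ is maximal hence $\dim S < \dim \wt S$ would contradict... wait)—more carefully: since $S \in \wh{\Sc}_{\max} = \Sc_{\max}\setminus X$, $S$ is \emph{not} contained in a tubular neighborhood of any properly embedded simplex of strictly larger dimension, so $\dim \wt S = \dim S =: p-1$. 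Then using Proposition~\ref{prop:dist_est_and_faces} together with the bound $S \subset \Nc_\Omega(\wt S;r)$, for each vertex $v_j$ of $S$ I can find $z_j \in \partial \wt S$ with $z_j \in F_\Omega(v_j)$ (take a sequence in $S$ converging to $v_j$, pull over to nearby points of $\wt S$, pass to a subsequential limit in $\overline{\wt S}$, and apply the proposition). Since the $v_j$ span $\Spanset S$ and $\dim \wt S = \dim S$, the points $z_1,\dots,z_p$ must be distinct and in fact be the full vertex set of $\wt S$: indeed each $z_j$ lies in a boundary face $F_{\wt S}(z_j)$, and if some $z_j$ were not a vertex of $\wt S$ then $F_{\wt S}(z_j)$ would have positive dimension, yet the faces $F_\Omega(v_j)$, $F_\Omega(v_k)$ are disjoint for $j \neq k$ (vertices of a simplex are in distinct open faces), so the map $j \mapsto F_{\wt S}(z_j)$ would have to be injective with image a set of $p$ faces of $\wt S$, forcing each $F_{\wt S}(z_j)$ to be a vertex by counting (a $(p-1)$-simplex has exactly $p$ vertices and every other face strictly contains at least two of them). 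Relabel so $w_j := z_j$ is the $j$-th vertex of $\wt S$; then $F_\Omega(w_j) = F_\Omega(v_j)$ for all $j$, so $S$ and $\wt S$ are parallel.

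Having established the parallelism, I would set $S_0 := \wt S \in \Sc_0$, which has $\dim S_0 = \dim S$ and vertices $w_1,\dots,w_p$ with $F_\Omega(w_j) = F_\Omega(v_j)$. For the ``moreover'' part, apply Lemma~\ref{lem:slide_along_faces}: since $w_j \in F_\Omega(v_j)$ for each $j$, the simplex $\Omega \cap [\Spanset\{w_1,\dots,w_p\}] = S_0$ satisfies
\begin{align*}
H_\Omega^{\Haus}(S, S_0) \leq \sum_{j=1}^p H_{F_\Omega(v_j)}(v_j, w_j).
\end{align*}
It remains to bound each term $H_{F_\Omega(v_j)}(v_j, w_j)$ by $R$. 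This is where Equation~\eqref{eq:defn_of_R} enters: applied to $S_0 \in \Sc_0$ and the vertex $w_j \in \partial S_0$, it gives
\begin{align*}
H_{F_\Omega(w_j)}^{\Haus}\left( \overline{\Cc} \cap F_\Omega(w_j), \{w_j\}\right) \leq R,
\end{align*}
since $F_{S_0}(w_j) = \{w_j\}$ for a vertex. In particular $v_j \in \overline{\Cc} \cap F_\Omega(v_j) = \overline{\Cc}\cap F_\Omega(w_j)$ (here $v_j \in \overline{\Cc}$ because $S \subset \Cc$ so $\partiali S \subset \overline\Cc$, and $F_\Omega(v_j) = F_\Omega(w_j)$), hence $H_{F_\Omega(w_j)}(v_j, w_j) \leq R$. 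Summing over the $p = 1 + \dim S$ vertices yields $H_\Omega^{\Haus}(S,S_0) \leq (1+\dim S)R$, as claimed.

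The main obstacle I anticipate is the combinatorial argument that $z_1,\dots,z_p$ are exactly the vertices of $\wt S$ — one must rule out the possibility that several of the $v_j$ project (under the limiting procedure of Proposition~\ref{prop:dist_est_and_faces}) into a common higher-dimensional face of $\wt S$, or that one lands in the interior. The disjointness of distinct vertex faces $F_\Omega(v_j)$ and the dimension equality $\dim \wt S = \dim S$ should close this gap cleanly, but it requires care to phrase correctly; the rest of the argument is a routine application of Lemmas~\ref{lem:slide_along_faces} and the distance estimate~\eqref{eq:defn_of_R}.
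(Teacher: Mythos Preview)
Your overall strategy matches the paper's: use coarse completeness to find $\wt S \in \Sc_0$ coarsely containing $S$, argue $\dim \wt S = \dim S$ via the definition of $\wh{\Sc}_{\max}$, locate points $z_j \in \partial \wt S \cap F_\Omega(v_j)$ via Proposition~\ref{prop:dist_est_and_faces}, and finish the Hausdorff bound with Lemma~\ref{lem:slide_along_faces} and Equation~\eqref{eq:defn_of_R}. The ``moreover'' paragraph is exactly right.

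The one genuine gap is the combinatorial counting argument that the $z_j$ must be the vertices of $\wt S$. As stated it fails: a $(p-1)$-simplex can have $p$ pairwise disjoint open boundary faces none of which is a vertex (e.g.\ the three open edges of a triangle), so ``$p$ distinct faces forces each to be a vertex'' is false. The paper closes this gap differently and more cleanly, using a tool you already have in hand: apply Lemma~\ref{lem:slide_along_faces} to the points $z_j \in F_\Omega(v_j)$ to conclude that
\[
S_0' := \Omega \cap [\Spanset\{z_1,\dots,z_p\}]
\]
is a properly embedded simplex with vertices $z_1,\dots,z_p$. Since each $z_j \in \partial \wt S$, we have $\Spanset\{z_1,\dots,z_p\} \subset \Spanset \wt S$, hence $S_0' \subset \wt S$; but $\dim S_0' = p-1 = \dim \wt S$, so $S_0' = \wt S$. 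This shows directly that the $z_j$ are exactly the vertices of $\wt S$, with no combinatorics needed. Swap this in for your counting paragraph and the proof is complete.
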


\begin{proof} Since $\Sc_0$ is coarsely complete, there exists $S_0 \in \Sc_0$ and $r >0$ such that 
\begin{align*}
S \subset \Nc_\Omega(S_0;r).
\end{align*}
Then, Observation \ref{obs:QI_to_Rk} implies that $\dim S \leq \dim S_0$. Then, since $S \in \wh{\Sc}_{\max}$, we must have $\dim S = \dim S_0$. 

By Proposition~\ref{prop:dist_est_and_faces} there exist $w_1,\dots, w_p \in \partial S_0$ such that 
\begin{align*}
F_\Omega(v_j) = F_\Omega(w_j)
\end{align*}
for all $1 \leq j \leq p$. Then by Lemma~\ref{lem:slide_along_faces}
\begin{align*}
S_0^\prime:=\Pb( \Spanset\{w_1,\dots,w_p\}) \cap\Omega \subset S_0
\end{align*}
is a properly embedded simplex with vertices $w_1,\dots,w_p$. Then, since 
\begin{align*}
\dim S_0^\prime = \dim S = \dim S_0,
\end{align*}
we must have $S_0 = S_0^\prime$. This proves the first assertion in the lemma. 

 Now the ``moreover'' part is a consequence of Equation~\eqref{eq:defn_of_R} and Lemma~\ref{lem:slide_along_faces}.
\end{proof}

\begin{lemma}\label{lem:S0_phi_is_onto} If $S \in \wh{\Sc}_{\max}$, then $\Phi(S)$ is a well defined maximal properly embedded simplex in $\Cc$ and 
\begin{align}
\label{eq:phi_is_bd_in_haus}
H_{\Omega}^{\Haus}( S, \Phi(S)) \leq 2R. 
\end{align}
Moreover,
\begin{align*}
\Sc_{\core} = \{ \Phi(S) : S \in \Sc_0\}.
\end{align*}
 \end{lemma}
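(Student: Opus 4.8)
\textbf{Proof plan for Lemma~\ref{lem:S0_phi_is_onto}.}

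The plan is to reduce everything to Lemma~\ref{lem:shadows} and the properties of the center of mass from Proposition~\ref{prop:center_of_mass}. First I would fix $S \in \wh{\Sc}_{\max}$ with vertices $v_1, \dots, v_p$, and invoke Lemma~\ref{lem:shadows} to obtain $S_0 \in \Sc_0$ with the same dimension as $S$, a labelling $w_1, \dots, w_p$ of the vertices of $S_0$ with $F_\Omega(v_j) = F_\Omega(w_j)$ for all $j$, and the bound $H_\Omega^{\Haus}(S, S_0) \leq (1+\dim S)R$. The key point is that $\Phi(S)$ only depends on the open faces $F_\Omega(v_j)$, so $\Phi(S) = \Phi(S_0)$ trivially; it therefore suffices to show $\Phi(S_0)$ is well-defined and satisfies the Hausdorff bound relative to $S_0$.

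To see $\Phi(S_0)$ is well-defined, I need $\overline{\Cc} \cap F_\Omega(w_j)$ to be a compact subset of $F_\Omega(w_j)$ for each vertex $w_j$ of $S_0$, so that $\mathrm{CoM}_{F_\Omega(w_j)}(\overline{\Cc} \cap F_\Omega(w_j))$ is defined. This is exactly where Equation~\eqref{eq:defn_of_R} enters: applying it with $x = w_j$ and using that $F_{S_0}(w_j) = \{w_j\}$ (a single point, since $w_j$ is a vertex), we get $H_{F_\Omega(w_j)}^{\Haus}(\overline{\Cc} \cap F_\Omega(w_j), \{w_j\}) \leq R$, i.e.\ $\overline{\Cc} \cap F_\Omega(w_j)$ is contained in the closed metric ball $\overline{B_{F_\Omega(w_j)}(w_j; R)}$. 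Since the Hilbert metric on $F_\Omega(w_j)$ is complete and proper (closed balls are compact — this follows from Proposition~\ref{prop:hilbert_basic} and properness of the topology), this ball is compact, and $\overline{\Cc} \cap F_\Omega(w_j)$, being closed inside a compact set, is compact. Hence $w_j':= \mathrm{CoM}_{F_\Omega(w_j)}(\overline{\Cc} \cap F_\Omega(w_j))$ is defined and, by part (1) of Proposition~\ref{prop:center_of_mass}, lies in $\mathrm{ConvHull}_{F_\Omega(w_j)}(\overline{\Cc} \cap F_\Omega(w_j))$, which has diameter at most $2R$ (actually it lies in the same $R$-ball around $w_j$ after using that the ball is convex, or one simply uses that center of mass lies in the convex hull of a set of radius $R$, hence within distance $R$ of $w_j$... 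I would argue it lies within $H_{F_\Omega(w_j)}$-distance $R$ of $w_j$ since the convex hull of a set contained in a ball of radius $R$ about $w_j$ is again in that ball by convexity of balls in the Hilbert metric). So $H_{F_\Omega(v_j)}(v_j, w_j') \le H_{F_\Omega(v_j)}(v_j, w_j) + H_{F_\Omega(v_j)}(w_j, w_j') \le R + R = 2R$, using $F_\Omega(v_j) = F_\Omega(w_j)$ and that $w_j$ itself is within $R$ of the set $\overline{\Cc}\cap F_\Omega(v_j)$ which contains... hmm, I should be careful: I will bound $H_{F_\Omega(v_j)}(v_j, w_j')$ by first bounding $H_{F_\Omega(w_j)}(w_j, w_j') \le R$ (center of mass of a set in the $R$-ball about $w_j$) and then $H_{F_\Omega(v_j)}(v_j, w_j')$ using $F_S(v_j)=\{v_j\}$ and Lemma~\ref{lem:shadows} type reasoning; in any case I get $H_{F_\Omega(v_j)}(v_j, w_j') \le 2R$. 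Now $\Phi(S) = \Phi(S_0) = \Omega \cap [\Spanset\{w_1', \dots, w_p'\}]$ and Lemma~\ref{lem:slide_along_faces} shows this is a properly embedded simplex with vertices $w_1', \dots, w_p'$ and
\[
H_\Omega^{\Haus}(S_0, \Phi(S_0)) \le \sum_{j=1}^p H_{F_\Omega(w_j)}(w_j, w_j') \le p R = (1 + \dim S)R,
\]
wait — I should match constants. Combining $H_\Omega^{\Haus}(S, S_0) \le (1+\dim S)R$ (Lemma~\ref{lem:shadows}) with $H_\Omega^{\Haus}(S_0, \Phi(S)) \le (1+\dim S)R$ via the triangle inequality for Hausdorff distance yields $H_\Omega^{\Haus}(S, \Phi(S)) \le 2(1+\dim S)R$, which is Equation~\eqref{eq:phi_is_bd_in_haus}. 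Maximality of $\Phi(S)$ as a properly embedded simplex in $\Cc$: since $\Phi(S)$ is within finite Hausdorff distance of $S_0 \in \Sc_0$ which is maximal, and it has the same dimension as $S_0$, any properly embedded simplex strictly containing $\Phi(S)$ would have strictly larger dimension and stay within finite distance of $S_0$, contradicting that $S_0 \in \Sc_0 \subset \wh{\Sc}_{\max}$ together with Equation~\eqref{eq:second_prop_of_S0}; alternatively a properly embedded simplex containing $\Phi(S)$ is contained in a neighborhood of some element of $\Sc_0$ by coarse completeness and dimension-counting forces equality.

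For the "moreover" part, I would argue both inclusions. First $\{\Phi(S) : S \in \Sc_0\} \subseteq \Sc_{\core}$ holds because $\Sc_0 \subseteq \wh{\Sc}_{\max}$ (the preceding lemma). For the reverse, given $S \in \wh{\Sc}_{\max}$, Lemma~\ref{lem:shadows} produces $S_0 \in \Sc_0$ with $F_\Omega(v_j) = F_\Omega(w_j)$ for the matched vertices; since $\Phi$ is defined purely in terms of the faces $F_\Omega(\cdot)$ of the vertices, $\Phi(S) = \Phi(S_0)$, so $\Phi(S) \in \{\Phi(S') : S' \in \Sc_0\}$. Hence $\Sc_{\core} = \{\Phi(S) : S \in \Sc_0\}$.

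\textbf{Main obstacle.} The genuinely substantive point is establishing that $\overline{\Cc} \cap F_\Omega(v)$ is \emph{compact} in $F_\Omega(v)$ — equivalently that it stays at bounded Hilbert distance from the vertex $v$ — and this is precisely what Theorem~\ref{thm:bd_faces} (through Equation~\eqref{eq:defn_of_R}) was engineered to deliver; without it the center of mass need not be defined. Everything else is bookkeeping with the triangle inequality for Hausdorff distance, Lemma~\ref{lem:slide_along_faces}, and the face-invariance of $\Phi$. I should also take a little care with the exact value of the constant in \eqref{eq:phi_is_bd_in_haus} and with the fact that Hilbert balls are convex (hence convex hulls of bounded sets stay bounded), which is standard for properly convex domains.
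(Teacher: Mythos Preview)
Your proposal is correct and follows essentially the same route as the paper: invoke Lemma~\ref{lem:shadows} to find a parallel $S_0 \in \Sc_0$, use Equation~\eqref{eq:defn_of_R} at the vertices to get compactness of $\overline{\Cc} \cap F_\Omega(v_j)$ so that the center of mass is defined, apply Lemma~\ref{lem:slide_along_faces} for the Hausdorff bound, and use face-invariance of $\Phi$ for the ``moreover'' part. The only cosmetic difference is that the paper bounds $H_{F_\Omega(v_j)}(v_j, w_j') \le 2R$ directly (via the chain $H_{F_\Omega(v_j)}^{\Haus}(\overline{\Cc}\cap F_\Omega(v_j),\{v_j\}) \le H_{F_\Omega(w_j)}^{\Haus}(\overline{\Cc}\cap F_\Omega(w_j),\{w_j\}) + H_{F_\Omega(v_j)}(w_j,v_j) \le 2R$) and then applies Lemma~\ref{lem:slide_along_faces} once to $S$, whereas you route through $S_0$ and use the triangle inequality for Hausdorff distance; both yield the same constant.
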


\begin{proof} Fix $S \in \wh{\Sc}_{\max}$ and let $v_1,\dots, v_p$ be the vertices of $S$. By Lemma~\ref{lem:shadows} there exist $S_0 \in \Sc_0$ and a labelling $w_1,\dots,w_p$ of the vertices of $S_0$ such that $F_\Omega(w_j) = F_\Omega(v_j)$ for all $1 \leq j \leq p$. Then by Equation~\eqref{eq:defn_of_R} 
\begin{align*}
H_{F_\Omega(v_j)}^{\Haus}\left(\overline{\Cc} \cap F_\Omega(v_j), \{v_j\} \right) 
&\leq H_{F_\Omega(w_j)}^{\Haus}\left(\overline{\Cc} \cap F_\Omega(w_j), \{w_j\} \right)+H_{F_\Omega(v_j)}( w_j, v_j)  \\
&\leq 2H_{F_\Omega(w_j)}^{\Haus}\left(\overline{\Cc} \cap F_\Omega(w_j), \{w_j\} \right) \leq  2R.
\end{align*}
So $\overline{\Cc} \cap F_\Omega(v_j)$ is a compact subset of $F_\Omega(v_j)$. Hence
 \begin{align*}
 {\rm CoM}_{F_\Omega(v_j)}\Big( \overline{\Cc} \cap F_\Omega(v_j) \Big) 
\end{align*}
is well defined. Thus $\Phi(S)$ is well defined. 

Then, Lemma~\ref{lem:slide_along_faces} implies that $\Phi(S)$ is a properly embedded simplex and   
\begin{align*}
H_{\Omega}^{\Haus}( S, \Phi(S)) \leq 2R. 
\end{align*}
Further $\Phi(S)$ is maximal since $S \in \wh{\Sc}_{\max}$. 

Finally, since $F_\Omega(w_j) = F_\Omega(v_j)$ for all $1 \leq j \leq p$, we have $\Phi(S) = \Phi(S_0)$. Since $S \in \wh{\Sc}_{\max}$ was arbitrary this implies the ``moreover'' part of the lemma. 
\end{proof}

\begin{lemma}\label{lem:S1=S2}
If $S_1, S_2 \in \Sc_{\core}$ and $H_{\Omega}^{\Haus}\left(S_1,  S_2\right) < \infty$, then $S_1 = S_2$. 
\end{lemma}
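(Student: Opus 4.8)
I want to show that two elements $S_1, S_2 \in \Sc_{\core}$ that are at finite Hausdorff distance must coincide. The key tool is the fact that $\Sc_{\core}$ is built by applying the canonical "center of mass" construction $\Phi$ to simplices in $\wh{\Sc}_{\max}$, and that $\Phi$ collapses every parallel family to a single simplex (Lemma~\ref{lem:S0_phi_is_onto} gives $\Sc_{\core} = \{\Phi(S) : S \in \Sc_0\}$). So the strategy is: (1) from the finite-Hausdorff-distance hypothesis deduce that $S_1$ and $S_2$ are \emph{parallel}; (2) lift back through $\Phi$ to conclude they are the image of parallel simplices in $\Sc_0$; (3) use the defining property of $\Phi$ — namely that it only depends on the open faces $F_\Omega(v_j)$ of the vertices — to conclude $\Phi$ gives the same simplex, hence $S_1 = S_2$.

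\textbf{Step 1: finite Hausdorff distance forces parallelism.} Write $S_i = \Phi(T_i)$ with $T_i \in \Sc_0$ (using Lemma~\ref{lem:S0_phi_is_onto}). By Equation~\eqref{eq:phi_is_bd_in_haus}, $H_\Omega^{\Haus}(T_i, S_i) < \infty$, so $H_\Omega^{\Haus}(T_1, T_2) < \infty$. Now $T_1, T_2 \in \Sc_0$, and I claim two properly embedded simplices in a properly convex domain that are at finite Hausdorff distance must be parallel in the sense of Definition~\ref{defn:parallel}. First, $T_1, T_2$ have the same dimension: this follows from Observation~\ref{obs:QI_to_Rk}, since a quasi-isometric copy of $\Rb^{\dim T_1}$ lies in a bounded neighborhood of a quasi-isometric copy of $\Rb^{\dim T_2}$ (and vice versa), which via the standard invariance of asymptotic dimension / growth for Euclidean spaces forces equality. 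Second, for each vertex $v$ of $T_1$, take a ray in $T_1$ converging to $v$; by finite Hausdorff distance and Proposition~\ref{prop:dist_est_and_faces} there is a point $w \in \partial T_2$ with $w \in F_\Omega(v)$, i.e. $F_\Omega(v) = F_\Omega(w)$; since $w$ must be a vertex of $T_2$ (otherwise $F_{T_2}(w)$ would be a positive-dimensional properly embedded simplex inside $F_\Omega(v)$, but by symmetry $F_\Omega(v)$ also lies boundedly close to a vertex of $T_1$, contradicting that it contains an unbounded simplex), we get a bijective matching of vertices realizing $F_\Omega(v_k) = F_\Omega(w_k)$. Hence $T_1, T_2$ are parallel. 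This step — pinning down that the vertices must match up, not collapse onto faces — is the main technical obstacle, and I expect to lean on Observation~\ref{obs:QI_to_Rk} together with a careful argument using Proposition~\ref{prop:dist_est_and_faces} and Observation~\ref{obs:faces_of_simplices_are_properly_embedded}.

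\textbf{Step 2: $\Phi$ of parallel simplices agree.} Now suppose $T_1, T_2 \in \Sc_0 \subset \wh{\Sc}_{\max}$ are parallel, with vertex labelings $v_1,\dots,v_p$ of $T_1$ and $w_1,\dots,w_p$ of $T_2$ such that $F_\Omega(v_j) = F_\Omega(w_j)$ for all $j$. By the definition of $\Phi$, the $j$-th vertex of $\Phi(T_1)$ is ${\rm CoM}_{F_\Omega(v_j)}(\overline{\Cc} \cap F_\Omega(v_j))$ and the $j$-th vertex of $\Phi(T_2)$ is ${\rm CoM}_{F_\Omega(w_j)}(\overline{\Cc} \cap F_\Omega(w_j))$; these are literally the same point since $F_\Omega(v_j) = F_\Omega(w_j)$. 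Therefore $\Phi(T_1)$ and $\Phi(T_2)$ have the same vertex set, hence the same span, hence $\Phi(T_1) = [\Spanset\{\cdots\}] \cap \Omega = \Phi(T_2)$. Thus $S_1 = \Phi(T_1) = \Phi(T_2) = S_2$, completing the proof. (This last step is essentially the "moreover" statement already recorded at the end of Lemma~\ref{lem:S0_phi_is_onto}, so in the write-up I may just cite that and concentrate the work on Step 1.)
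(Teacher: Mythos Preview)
Your approach is essentially the same as the paper's: establish that the two simplices are parallel (same vertex faces) and then use that the vertices of any $\Sc_{\core}$-simplex are the canonical centers of mass of $\overline{\Cc}\cap F_\Omega(v_j)$, which depend only on $F_\Omega(v_j)$. The paper streamlines your argument in two ways: it works directly with $S_1,S_2$ rather than pulling back through $\Phi$ to $\Sc_0$, and for the vertex-matching step it simply invokes the argument of Lemma~\ref{lem:shadows} (use Proposition~\ref{prop:dist_est_and_faces} to find $w_j\in\partial S_2$ with $F_\Omega(v_j)=F_\Omega(w_j)$, then apply Lemma~\ref{lem:slide_along_faces} and the dimension equality to conclude the $w_j$ are exactly the vertices of $S_2$), which is cleaner than the face-dimension contradiction you sketch.
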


\begin{proof} Arguing as in the proof of Lemma~\ref{lem:shadows}, there exist a labelling $v_1,\dots, v_p$ of the vertices of $S_1$ and a labelling $w_1,\dots,w_p$ of the vertices of $S_2$ such that 
\begin{align*}
F_\Omega(v_j) = F_\Omega(w_j)
\end{align*}
for all $1 \leq j \leq p$. Then, by the definition of $\Phi$,
\begin{align*}
w_j= {\rm CoM}_{F_\Omega(w_j)}\left( \overline{\Cc} \cap F_\Omega(w_j) \right) = {\rm CoM}_{F_\Omega(v_j)}\left( \overline{\Cc} \cap F_\Omega(v_j) \right) = v_j
\end{align*}
for all $1 \leq j \leq p$. 
\end{proof}

\begin{lemma}\label{lem:S-has-prop-IS}
$\Sc_{\core}$ is coarsely complete and $\Lambda$-invariant.
\end{lemma}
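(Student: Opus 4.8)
The plan is to establish the two claimed properties of $\Sc_{\core}$ separately, leveraging the ``coarsely the same'' comparison between $\Sc_{\core}$ and $\Sc_0$ provided by Lemma~\ref{lem:S0_phi_is_onto}, together with the fact that $\Sc_0$ is already coarsely complete and $\Lambda$-invariant.

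\emph{$\Lambda$-invariance.} Let $D := 2(1+(d-1))R$ be the uniform bound from Equation~\eqref{eq:phi_is_bd_in_haus}, which is valid for all $S \in \wh{\Sc}_{\max}$ since $\dim S \leq d-1$. First I would observe that $\Phi$ is $\Lambda$-equivariant. Indeed, fix $g \in \Lambda$ and $S \in \wh{\Sc}_{\max}$ with vertices $v_1, \dots, v_p$. Then $gS$ has vertices $gv_1, \dots, gv_p$, and since $g\Omega = \Omega$ and $g\Cc = \Cc$, we have $g F_\Omega(v_j) = F_\Omega(gv_j)$ and $g(\overline{\Cc} \cap F_\Omega(v_j)) = \overline{\Cc} \cap F_\Omega(gv_j)$. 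The equivariance property of the center of mass (Proposition~\ref{prop:center_of_mass}(3)) then gives
\begin{align*}
{\rm CoM}_{F_\Omega(gv_j)}\big(\overline{\Cc} \cap F_\Omega(gv_j)\big) = g \cdot {\rm CoM}_{F_\Omega(v_j)}\big(\overline{\Cc} \cap F_\Omega(v_j)\big),
\end{align*}
so the vertices of $\Phi(gS)$ are exactly $g$ applied to the vertices of $\Phi(S)$, whence $\Phi(gS) = g\Phi(S)$. Since $\wh{\Sc}_{\max}$ is $\Lambda$-invariant (it is defined purely in terms of the $\Lambda$-invariant data $\Sc_{\max}$ and the dimension/tubular-neighborhood relation, both preserved by $\Lambda$), it follows that $g \Sc_{\core} = \Sc_{\core}$ for all $g \in \Lambda$.

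\emph{Coarse completeness.} I claim any properly embedded simplex $S \subset \Cc$ lies in a uniformly bounded tubular neighborhood of some element of $\Sc_{\core}$. Let $D_0 > 0$ be the coarse completeness constant for $\Sc_0$. Given such $S$, there is $S_0 \in \Sc_0$ with $S \subset \Nc_\Omega(S_0; D_0)$. By the ``moreover'' part of Lemma~\ref{lem:S0_phi_is_onto}, $\Phi(S_0) \in \Sc_{\core}$, and by Equation~\eqref{eq:phi_is_bd_in_haus} we have $H_\Omega^{\Haus}(S_0, \Phi(S_0)) \leq D$. Therefore
\begin{align*}
S \subset \Nc_\Omega(S_0; D_0) \subset \Nc_\Omega\big(\Phi(S_0); D_0 + D\big),
\end{align*}
which establishes coarse completeness with the uniform constant $D_0 + D$.

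I do not expect either part to present a genuine obstacle, since all the heavy lifting has been done in the preceding lemmas; the only point requiring a little care is verifying the equivariance of $\Phi$, and there the key subtlety is that one must use both $g\Omega = \Omega$ \emph{and} $g\Cc = \Cc$ (the latter because $g \in \Lambda$ and $\Cc$ is $\Lambda$-invariant by the definition of a naive convex co-compact triple) to conclude $g(\overline{\Cc} \cap F_\Omega(v_j)) = \overline{\Cc} \cap F_\Omega(gv_j)$; after that, Proposition~\ref{prop:center_of_mass}(3) finishes the argument cleanly. The remaining unstated facts — that $\wh{\Sc}_{\max}$ is $\Lambda$-invariant, and that $\dim S \leq d-1$ so the constant $D$ is genuinely uniform over $\wh{\Sc}_{\max}$ — are immediate.
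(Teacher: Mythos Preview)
Your proof is correct and follows essentially the same approach as the paper. The paper dispatches $\Lambda$-invariance with a single sentence (``By construction $\Sc_{\core}$ is $\Lambda$-invariant''), whereas you spell out the equivariance of $\Phi$ via Proposition~\ref{prop:center_of_mass}(3); your coarse completeness argument is identical to the paper's, just with the dimension bound made explicit to obtain a uniform constant.
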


\begin{proof}By construction $\Sc_{\core}$ is $\Lambda$-invariant. 

Since $\Sc_0$ is coarsely complete, there exists $D_0 > 0$ such that: If $S$ is a properly embedded simplex in $\Cc$ of dimension at least two, then there exists $S^\prime \in \Sc_0$ such that:
\begin{align*}
S \subset \Nc_\Omega(S^\prime;D_0).
\end{align*}
Then Equation~\eqref{eq:phi_is_bd_in_haus} implies that
\begin{align*}
S \subset \Nc_\Omega(\Phi(S^\prime);D_0+2R).
\end{align*}
So $\Sc_{\core}$ is coarsely complete.
\end{proof}

We complete the proof of the theorem by establishing the following lemma.

\begin{lemma} $\Sc_{\core}$ is strongly isolated: for any $r > 0$ there exists $D_2(r) > 0$ such that if $S_1, S_2 \in \Sc_{\core}$ are distinct, then
\begin{align*}
\diam_\Omega \Big( \Nc_\Omega(S_1;r) \cap \Nc_\Omega(S_2;r) \Big) \leq D_2(r).
\end{align*}
\end{lemma}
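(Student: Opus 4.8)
The plan is to argue by contradiction. Suppose $\Sc_{\core}$ is not strongly isolated: then there is some $r > 0$, sequences $S_n, S_n' \in \Sc_{\core}$ with $S_n \neq S_n'$, and points
\begin{align*}
p_n, q_n \in \Nc_\Omega(S_n; r) \cap \Nc_\Omega(S_n'; r)
\end{align*}
with $H_\Omega(p_n, q_n) \to \infty$. Using the action of $\Lambda$ (which has finitely many orbits on $\Sc_{\core}$ by Lemma~\ref{lem:S0_phi_is_onto}, Proposition~\ref{prop:HK_section_3_1}, and co-compactness), I would translate so that the pairs $(S_n, p_n)$ accumulate: after passing to a subsequence and translating by $g_n \in \Lambda$, we may assume $g_n S_n = S$ is a \emph{fixed} simplex in $\Sc_{\core}$, that $g_n p_n \to p_\infty \in \overline{\Omega}$, and that $g_n S_n'$ converges in the local Hausdorff topology. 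Since $\Sc_{\core}$ is coarsely complete with the same constants as $\Sc_0$ up to the additive error in Equation~\eqref{eq:phi_is_bd_in_haus}, and since $g_n p_n, g_n q_n$ stay within distance $r$ of both $S$ and $g_n S_n'$ while $H_\Omega(g_n p_n, g_n q_n) \to \infty$, the limit of $g_n S_n'$ must be a properly embedded simplex $S'$ (by Observation~\ref{obs:PES_closed}) with $H^{\Haus}_\Omega(S, S') < \infty$: indeed, the two $r$-neighborhoods share an unbounded-diameter set, and passing to the limit forces $S$ and $S'$ to lie within bounded Hausdorff distance of each other. Then by Lemma~\ref{lem:S1=S2}, either $S' = S$, or $S' \notin \Sc_{\core}$.

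The heart of the matter is to rule out both alternatives and to produce a contradiction with $S_n \neq S_n'$. Here I would exploit the rigidity of the ``core'' construction. Since $\Sc_{\core}$ is closed and discrete (it is strongly isolated once we prove this, but even without that it is isolated because $\Sc_0 \subset \wh\Sc_{\max}$ maps onto it and the isolation passes through the bounded perturbation Equation~\eqref{eq:phi_is_bd_in_haus}), and since the $g_n S_n'$ all lie in $\Sc_{\core}$ and converge, closedness and discreteness force $g_n S_n' = S'$ for all large $n$, with $S' \in \Sc_{\core}$. Combined with $H^{\Haus}_\Omega(S, S') < \infty$ and Lemma~\ref{lem:S1=S2}, this gives $S' = S$, hence $g_n S_n' = S = g_n S_n$, hence $S_n' = S_n$, contradicting $S_n \neq S_n'$.

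The main obstacle — and the step requiring the most care — is justifying that the limit $S'$ of the translated simplices $g_n S_n'$ actually exists and lies at \emph{finite} Hausdorff distance from $S$, rather than escaping to the boundary or degenerating. This needs two ingredients: first, a compactness/properness argument showing the $g_n S_n'$ cannot run off to infinity, which follows because each passes within distance $r + (\text{const})$ of the fixed basepoint region near $p_\infty$ (using that $g_n p_n$ converges and $H_\Omega(g_n p_n, g_n S_n') \le r$); second, the key estimate that an unbounded shared tubular neighborhood between two properly embedded simplices forces bounded Hausdorff distance — this is where I would use the structure of simplices together with Lemma~\ref{lem:slide_along_faces} and Proposition~\ref{prop:Crampons_dist_est_2}, noting that a long geodesic lying in both $\Nc_\Omega(S;r)$ and $\Nc_\Omega(S';r)$ must shadow a common face-to-face direction, which by the parallel-simplex analysis forces the vertices of $S$ and $S'$ to lie on matching open faces. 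Once parallelism of $S$ and $S'$ is established, Lemma~\ref{lem:S1=S2} (whose proof shows parallel core simplices have identical vertices, being centers of mass of the same sets $\overline\Cc \cap F_\Omega(v_j)$) closes the argument.
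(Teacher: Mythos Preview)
Your overall architecture---argue by contradiction, translate by $\Lambda$ to fix one simplex, extract a limit, and finish with Lemma~\ref{lem:S1=S2}---matches the paper's. But there is a genuine gap at the step you flag as ``the main obstacle'': you assert that an unbounded common $r$-neighborhood of $S$ and $S'$ forces $H_\Omega^{\Haus}(S,S') < \infty$ (equivalently, that $S$ and $S'$ are parallel). Your justification---that a long geodesic in both neighborhoods ``must shadow a common face-to-face direction, which \dots\ forces the vertices of $S$ and $S'$ to lie on matching open faces''---does not hold. A properly embedded line $(a,b)$ in $\Nc_\Omega(S;r)\cap\Nc_\Omega(S';r)$ only tells you (via Proposition~\ref{prop:dist_est_and_faces}) that $\partial S$ and $\partial S'$ each meet $F_\Omega(a)$ and $F_\Omega(b)$. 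It says nothing about the \emph{other} boundary faces of $S$ and $S'$, nor does it force $\dim S=\dim S'$; two maximal simplices can share a common ``direction'' while being skew.

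The paper closes this gap not by establishing parallelism but by using the face estimate Equation~\eqref{eq:defn_of_R}, available because it works with $\Sc_0$ rather than $\Sc_{\core}$ directly. Once the limiting line $(a,b)$ is produced, both $F_{S_1}(a)$ and $F_{S_2}(a)$ lie within $H_{F_\Omega(a)}$-distance $R$ of $\overline{\Cc}\cap F_\Omega(a)$, hence within $2R$ of each other; likewise at $b$. Then Lemma~\ref{lem:line_segments_cover} says every point of $S_1$ lies on a segment from $F_{S_1}(a)$ to $F_{S_1}(b)$, and Proposition~\ref{prop:Crampons_dist_est_2} pushes each such segment to within $4R$ of $S_2$. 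This yields $H_\Omega^{\Haus}(S_1,S_2)\le 4R$ directly, without ever matching vertices, and Lemma~\ref{lem:S1=S2} finishes. Your proposal omits any appeal to Equation~\eqref{eq:defn_of_R} (equivalently Theorem~\ref{thm:bd_faces}), and without it the step from ``shared half-line'' to ``bounded Hausdorff distance'' is unsupported.

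A secondary point: your argument assumes $\Sc_{\core}$ is already known to be isolated in order to invoke local finiteness. This is not circular---isolation of $\Sc_{\core}$ can be deduced from isolation of $\Sc_0$ together with Equation~\eqref{eq:phi_is_bd_in_haus} and Proposition~\ref{prop:HK_section_3_1}---but it is an extra lemma you would need to state and prove, whereas the paper sidesteps it by translating the midpoints $m_n$ via the full group $\Lambda$ and applying local finiteness directly to $\Sc_0$.
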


\begin{proof}Fix $r > 0$. Suppose for a contradiction that such a $D_2(r)> 0$ does not exist. Then by Lemma~\ref{lem:S0_phi_is_onto} for every $n \geq 0$ there exist $S_{1,n}, S_{2,n} \in \Sc_0$ such that $\Phi(S_{1,n}) \neq \Phi(S_{2,n})$ and
\begin{align*}
\diam_\Omega\Big( \Nc_\Omega(\Phi(S_{1,n});r) \cap \Nc_\Omega(\Phi(S_{2,n});r) \Big) > n.
\end{align*}
Then by Equation~\eqref{eq:phi_is_bd_in_haus}
\begin{align*}
\diam_\Omega\Big( \Nc_\Omega(S_{1,n};r_0) \cap \Nc_\Omega(S_{2,n};r_0) \Big) > n.
\end{align*}
where $r_0 : = r + 2R$.
 
Pick 
\begin{align*}
a_n, b_n \in   \Nc_\Omega(S_{1,n};r_0) \cap \Nc_\Omega(S_{2,n};r_0)
\end{align*}
with $H_\Omega(a_n,b_n) \geq n$. Let $m_n \in [a_n, b_n]$ be such that  
\begin{align}
\label{eq:un_selection}
H_\Omega(a_n,m_n)\geq n/2 ~~\text{ and }~~ H_\Omega(b_n, m_n) \geq n/2.
\end{align}
For each $n$, we can find $\gamma_n \in \Lambda$ such that 
\begin{align*}
\{ \gamma_n m_n : n \geq 0\} 
\end{align*}
is relatively compact in $\Omega$. So by passing to a subsequence we can suppose that $m:=\lim_{n \to \infty} \gamma_n m_n$ exists  in $\Cc$. Passing to another subsequence we can assume that $a:= \lim_{n \to \infty} \gamma_n a_n$ and $b:=\lim_{n \to \infty} \gamma_n b_n$ exist in $\overline{\Cc}$. Then Equation~\eqref{eq:un_selection} implies that $a,b \in \partiali \Cc$ and so $(a,b)$ is a properly embedded line in $\Cc$. Finally, using Proposition~\ref{prop:HK_section_3_1} and passing to another subsequence we can suppose that 
\begin{align*}
S_1:=\gamma_n S_{n,1} = \gamma_m S_{m,1}
\end{align*}
 and 
 \begin{align*}
S_2:=\gamma_n S_{n,2} = \gamma_m S_{m,2}
\end{align*}
for all $n,m \geq 0$. Then by construction $S_1,S_2 \in \Sc_{0}$ and $\Phi(S_1) \neq \Phi(S_2)$.

Notice that 
\begin{align*}
(a,b) \subset  \Nc_\Omega(S_{1};r_0+1) \cap \Nc_\Omega(S_{2};r_0+1).
\end{align*}
Proposition~\ref{prop:dist_est_and_faces} then implies that $\overline{S}_1$ and $\overline{S}_2$ both intersect $F_\Omega(a)$. Then Equation~\eqref{eq:defn_of_R}  implies that 
\begin{align*}
H_{F_\Omega(a)}^{\Haus}&\left(\overline{S}_1 \cap F_{\Omega}(a),  \overline{S}_2\cap F_{\Omega}(a)\right) \\
& \leq H_{F_\Omega(a)}^{\Haus}(\overline{S}_1 \cap F_{\Omega}(a), \overline{\Cc} \cap F_\Omega(a))+ H_{F_\Omega(a)}^{\Haus}(\overline{\Cc} \cap F_\Omega(a),  \overline{S}_2\cap F_{\Omega}(a))\leq 2R. 
\end{align*}
The same reasoning shows that 
\begin{align*}
H_{F_\Omega(b)}^{\Haus}&\left(\overline{S}_1 \cap F_{\Omega}(b),  \overline{S}_2\cap F_{\Omega}(b)\right)\leq 2R. 
\end{align*}

\noindent Now we claim that 
\begin{align*}
H_{\Omega}^{\Haus}\left(S_1,  S_2\right) \leq 2R. 
\end{align*}

By symmetry, it is enough to fix $p \in S_1$ and show that 
\begin{align*}
H_\Omega(p,S_2) \leq 2R.
\end{align*} 
Fix $a'\in \overline{S}_1 \cap F_{\Omega}(a)$ and $b'\in \overline{S}_1 \cap F_{\Omega}(b)$. Since $(a,b) \subset \Omega$,  Observation~\ref{obs:faces} part (4) implies that $(a',b') \subset \Omega$.

Then by Lemma~\ref{lem:line_segments_cover}, there exist $a_1 \in F_{S_1}(a') \subset \overline{S}_1 \cap F_\Omega(a)$ and $b_1 \in F_{S_1}(b') \subset \overline{S}_2 \cap F_\Omega(b)$ such that $p\in (a_1,b_1)$. Then there exist  $a_2 \in \overline{S}_2 \cap F_\Omega(a)$ and $b_2 \in \overline{S}_2 \cap F_\Omega(b)$ with 
\begin{align*}
\max\left\{ H_{F_\Omega(a)}(a_1,a_2), H_{F_\Omega(b)}(b_1,b_2) \right\} \leq 2R.
\end{align*}
Then by Proposition~\ref{prop:Crampons_dist_est_2}

\begin{align*}
H_\Omega(p,S_2) &\leq H_{\Omega}(p, (a_2, b_2)) \leq H_{\Omega}^{\Haus}\Big((a_1,b_1), (a_2, b_2)\Big)\\
& \leq \max \left\{ H_{F_\Omega(a)}(a_1,a_2), H_{F_\Omega(b)}(b_1,b_2) \right\} \leq 2R.
\end{align*}

So
\begin{align*}
H_{\Omega}^{\Haus}\left(S_1,  S_2\right) \leq 2R. 
\end{align*}
By Lemma \ref{lem:S0_phi_is_onto}, $\hil^{\Haus}(\Phi(S_1),\Phi(S_2)) \leq 6R.$
Then, by Lemma~\ref{lem:S1=S2}, $\Phi(S_1)=\Phi(S_2)$  and  we have a contradiction. 

Thus there exists $D_2(r) > 0$ such that: if $S_1, S_2 \in \Sc_{\core}$ are distinct, then
\begin{equation*}
\diam_\Omega \Big( \Nc_\Omega(S_1;r) \cap \Nc_\Omega(S_2;r) \Big) \leq D_2(r). \qedhere
\end{equation*}
\end{proof}

\section{Half triangles  in the ideal boundary}\label{sec:half_triangles}

In this section we verify property (6) of Theorem~\ref{thm:properties_of_ncc}.

\begin{definition}\label{defn:half_triangle}
Suppose $\Omega \subset \Pb(\Rb^d)$ is a properly convex domain. A list of three points $a,b,c$ form a \emph{half triangle in $\Omega$} if $[a,b],[b,c] \subset \partial \Omega$, $(a,c) \subset \Omega$, and $a \neq c$. 
\end{definition}

\begin{theorem}\label{thm:half_triangle_nearby_simplex} 
Suppose that $(\Omega, \Cc,\Lambda)$ is a naive convex co-compact triple with coarsely isolated simplices. Let $\Sc$ be a strongly isolated, coarsely complete, and $\Lambda$-invariant family of maximal properly embedded simplices in $\Cc$ of dimension at least two. If $a,b,c \in \partiali \Cc$ form a half triangle, then there exists $S \in \Sc$ such that $a,b,c \in F_\Omega(\partial S)$. 
\end{theorem}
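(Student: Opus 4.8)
The plan is to follow the Benzécri rescaling strategy outlined in the paper's introduction to Section~\ref{sec:half_triangles}, combining it with the strong isolation property of $\Sc$. Set $V := \Spanset\{a,b,c\}$, a $3$-dimensional subspace, and note that $\Omega \cap [V]$ is a properly convex domain containing the half triangle $a,b,c$; in particular $(a,c) \subset \Omega \cap [V]$ and both $[a,b]$ and $[b,c]$ lie in $\partial\Omega$. Fix a base point $x_0 \in (a,c)$. Let $D_0 > 0$ be the constant from the coarsely complete condition, so that any properly embedded simplex in $\Cc$ lies in a $D_0$-neighborhood of some element of $\Sc$. The first step is to run a Benzécri limit argument: for each point $x \in [V] \cap \Cc$ near $b$, the triple $a,b,c$ together with $x$ bounds a region which, after rescaling by an element $g_x \in \PGL_d(\Rb)$ taking $x$ back to a fixed compact region, converges (up to subsequences) to a fixed ``half-triangle'' picture. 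The key consequence I want to extract is: for any $r > 0$ there is a neighborhood $\Oc$ of $b$ in $\Pb(\Rb^d)$ such that for every $x \in \Oc \cap [V] \cap \Cc$ there is a properly embedded simplex $S_x' \subset \Cc$ with $B_\Omega(x;r) \subset \Nc_\Omega(S_x';D_0+1)$ — and hence, using coarse completeness, a simplex $S_x \in \Sc$ with $B_\Omega(x;r) \subset \Nc_\Omega(S_x;2D_0+1)$. The mechanism here is that the degenerate limit region, being squeezed between two boundary segments meeting at (the limit of) $b$, must contain large flats, which come from properly embedded simplices via the structure of the limiting domain; this is the content of an argument like Proposition 2.5 in Benoist~\cite{B2004}.

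Next I would use the strong isolation property to show that $S_x$ is independent of $x$, at least for $x$ in a small enough neighborhood of $b$ along $[V] \cap \Cc$. Choose $r$ large. If $x, x'$ are both close to $b$ along $(a,c)$ and lie within bounded Hilbert distance of each other after suitable choices, but the geodesic $[x,x']$ could be long; instead the right move is: as $x$ varies continuously along $[V] \cap \Cc \cap \Oc$, the balls $B_\Omega(x;r)$ overlap substantially between nearby $x$'s, so $\Nc_\Omega(S_x;2D_0+1) \cap \Nc_\Omega(S_{x'};2D_0+1)$ has diameter at least roughly $r$ for nearby $x,x'$; picking $r > D(2D_0+1)$ where $D(\cdot)$ is the strong isolation constant forces $S_x = S_{x'}$. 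By connectedness of $[V] \cap \Cc \cap \Oc$ (shrinking $\Oc$ if necessary so this is an interval), there is a single $S \in \Sc$ with
\begin{align*}
\Oc \cap [V] \cap \Cc \subset \Nc_\Omega(S;2D_0+1).
\end{align*}

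Finally, from this uniform containment I need to conclude $a,b,c \in F_\Omega(\partial S)$. Take a sequence $x_n \in (a,c) \cap \Oc$ with $x_n \to b$; then $x_n \in \Nc_\Omega(S;2D_0+1)$, so there are $s_n \in S$ with $H_\Omega(x_n,s_n) \le 2D_0+1$, and after passing to a subsequence $s_n \to s_\infty \in \overline{S} \cap \partial\Omega$. Proposition~\ref{prop:dist_est_and_faces} gives $b \in F_\Omega(s_\infty)$, and $s_\infty \in \partial S$ since $S$ is properly embedded and $s_\infty \in \partial\Omega$, so $b \in F_\Omega(\partial S)$. For $a$ and $c$: take sequences $y_n \to a$ and $z_n \to c$ along $(a,c)$ with $y_n, z_n \in \Oc$ eventually — here I should be slightly careful, since $a$ and $c$ need not be near $b$, so I may need to first establish the containment of a whole neighborhood of the segment $(a,c)$ rather than just near $b$; this is done by re-running the rescaling argument based at points approaching $a$ and $c$ as well, or by observing that the half-triangle condition is symmetric enough that the same $S$ works. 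Then the same limiting argument yields $a, c \in F_\Omega(\partial S)$.

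The main obstacle I anticipate is the first step: making the Benzécri rescaling argument produce an honest properly embedded simplex near each $x$ close to $b$, i.e.\ showing that the degenerate limit forces large flats that descend to properly embedded simplices in $\Cc$ within bounded distance of $B_\Omega(x;r)$. This requires understanding the limiting convex domain obtained from rescaling at $b$ — one must argue that it contains a properly embedded line through the limit of $x$ lying in a face, then thicken it using Corollary~\ref{cor:building_simplices} and the coarsely isolated simplices hypothesis, and finally transport this back via $g_x^{-1}$ with uniform control on the distance distortion. The bookkeeping to keep all constants ($r$, $D_0$, the strong isolation constant $D(\cdot)$) in the correct logical order — choosing $r$ after knowing the isolation constant but before fixing $\Oc$ — is the delicate part, and mirrors the proofs of Lemma 3.3.2 and Proposition 3.2.5 in Hruska--Kleiner~\cite{HK2005}.
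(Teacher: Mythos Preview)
Your overall strategy matches the paper's: apply Lemma~\ref{lem:geom_near_corner} (the Benz\'ecri rescaling near the corner $b$) to get properly embedded simplices close to balls in $[V]$ near $b$, upgrade to elements of $\Sc$ via coarse completeness, use strong isolation to show the simplex is independent of the basepoint, and deduce $b\in F_\Omega(\partial S)$ by a limiting argument. That part is fine, modulo the minor point that the balls must be intersected with $[V]$ (as in Lemma~\ref{lem:geom_near_corner}); the full ball $B_\Omega(x;r)$ is $(d-1)$-dimensional and cannot lie in a bounded neighborhood of a $2$-simplex.

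The genuine gap is your final step, and you have correctly flagged it: your neighborhood $\Oc$ lives near $b$, so you cannot take sequences in $\Oc\cap(a,c)$ converging to $a$ or $c$. Neither of your proposed fixes works. Rescaling at $a$ (or $c$) fails because $a$ is \emph{not} a corner: only one boundary segment $[a,b]$ ends at $a$, while $(a,c)\subset\Omega$, so the hypotheses of Lemma~\ref{lem:geom_near_corner} are not available there. And there is no symmetry to invoke, since the roles of $b$ and $a,c$ in a half triangle are genuinely different.

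The paper resolves this by bringing in an ingredient you never mention: Theorem~\ref{thm:bd_faces}. It first passes to a subfamily $\Sc'\subset\Sc$ for which $H_{F_\Omega(x)}^{\Haus}\big(\overline{\Cc}\cap F_\Omega(x),\,F_S(x)\big)\le D_0$ whenever $S\in\Sc'$ and $x\in\partial S$. Having obtained $S\in\Sc'$ with $U\cap\Cc\subset\Nc_\Omega(S;1+D_1)$, it picks $a_1\in(a,b)\cap U$ on the \emph{boundary} segment and finds $a_1'\in\partial S$ with $F_\Omega(a_1)=F_\Omega(a_1')$. Then the entire half-open segment $[a_1,a)\subset(a,b)$ lies in $\overline{\Cc}\cap F_\Omega(a_1')$, so by the face-boundedness each $q_n\in[a_1,a)$ with $q_n\to a$ is within $D_0$ (in $H_{F_\Omega(a_1')}$) of some $q_n'\in F_S(a_1')\subset\partial S$; passing to a limit and applying Proposition~\ref{prop:dist_est_and_faces} gives $a\in F_\Omega(\partial S)$. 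The point is that one travels from $a_1$ to $a$ \emph{along the boundary face}, not through $\Omega$, and Theorem~\ref{thm:bd_faces} is exactly what controls that journey.
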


As a corollary we observe that simplices in $\Sc$ cannot have ``half triangles sticking out.''

\begin{corollary}\label{cor:half_triangles_out_of_maximal_simplices} 
Suppose that $(\Omega, \Cc,\Lambda)$ is a naive convex co-compact triple with coarsely isolated simplices. Let $\Sc$ be a strongly isolated, coarsely complete, and $\Lambda$-invariant family of maximal properly embedded simplices in $\Cc$ of dimension at least two. If $S \in \Sc$; $a,c \in \partial S$; $b \in \partiali \Cc$; and $a,b,c$ form a half triangle in $\Omega$, then $b \in F_\Omega(\partial S)$. 
\end{corollary}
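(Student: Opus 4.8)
The plan is to read the corollary off Theorem~\ref{thm:half_triangle_nearby_simplex}. First I would check that the hypotheses of that theorem are met for the triple $a,b,c$: since $S\subseteq\Cc$ is a properly embedded simplex we have $\partial S\subseteq\partiali\Cc$, so $a,c\in\partiali\Cc$; together with $b\in\partiali\Cc$ and the assumption that $a,b,c$ form a half triangle in $\Omega$, Theorem~\ref{thm:half_triangle_nearby_simplex} applies and produces some $S'\in\Sc$ with $a,b,c\in F_\Omega(\partial S')$. The corollary then reduces to the single claim that $S=S'$: once this is known, $b\in F_\Omega(\partial S')=F_\Omega(\partial S)$, which is exactly what is asserted.

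To prove $S=S'$, the key observation is that $a$ lies in both $F_\Omega(\partial S)$ (because $a\in\partial S$, so $a\in F_\Omega(a)\subseteq F_\Omega(\partial S)$) and $F_\Omega(\partial S')$; hence $F_\Omega(\partial S)\cap F_\Omega(\partial S')\neq\emptyset$. Thus it suffices to know that two distinct members of a strongly isolated family have disjoint associated sets $F_\Omega(\partial\,\cdot\,)$. This is precisely property~(5) of Theorem~\ref{thm:properties_of_ncc}, whose hypotheses coincide with the ones in force here, so I would simply invoke it. If one prefers to avoid the forward reference, the same rigidity can be obtained directly from the strong isolation hypothesis: choose $a',c'\in\partial S'$ with $F_\Omega(a')=F_\Omega(a)$ and $F_\Omega(c')=F_\Omega(c)$; Observation~\ref{obs:faces} part~(4) then shows $(a,c)$ and $(a',c')$ are properly embedded lines in $S$ and $S'$ respectively. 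Covering $S$ (and $S'$) by segments with endpoints on $F_S(a),F_S(c)$ (resp.\ $F_{S'}(a'),F_{S'}(c')$) via Lemma~\ref{lem:line_segments_cover}, using the bound on $H^{\Haus}_{F_\Omega(x)}\big(\overline{\Cc}\cap F_\Omega(x),F_S(x)\big)$ from property~(4) of Theorem~\ref{thm:properties_of_ncc}, and Proposition~\ref{prop:Crampons_dist_est_2}, one gets $H^{\Haus}_\Omega(S,S')<\infty$; since $\diam_\Omega S=\infty$, the strong isolation inequality applied with a suitable radius forces $S=S'$. This last deduction is the argument already used in Observation~\ref{obs:str_iso_implies_iso} and Lemma~\ref{lem:S1=S2}.

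I do not expect a serious obstacle: essentially all of the substantive content of the corollary is contained in Theorem~\ref{thm:half_triangle_nearby_simplex}. The only point requiring care is the passage $S=S'$, and there the sole real input is that a strongly isolated family cannot contain two distinct parallel simplices — equivalently, two distinct simplices whose boundary faces share a face of $\Omega$ — a statement that is either imported from Theorem~\ref{thm:properties_of_ncc}(5) or argued on the spot from strong isolation together with the Hausdorff control of boundary faces.
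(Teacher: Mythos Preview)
Your proposal is correct and follows the same overall scheme as the paper: apply Theorem~\ref{thm:half_triangle_nearby_simplex} to obtain $S'\in\Sc$ with $a,b,c\in F_\Omega(\partial S')$, then argue $S=S'$ via strong isolation.

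The paper's argument for $S=S'$ is, however, considerably leaner than either of your two routes. It avoids both forward references: rather than invoking Theorem~\ref{thm:properties_of_ncc}(5), and rather than establishing the full Hausdorff bound $H_\Omega^{\Haus}(S,S')<\infty$ via Lemma~\ref{lem:line_segments_cover} and property~(4), it simply picks $\wt a,\wt c\in\partial S'$ with $a\in F_\Omega(\wt a)$, $c\in F_\Omega(\wt c)$, sets $M=H_{F_\Omega(a)}(a,\wt a)+H_{F_\Omega(c)}(c,\wt c)$, and observes from Proposition~\ref{prop:Crampons_dist_est_2} alone that the single line $(a,c)\subset S$ lies within $M$ of $(\wt a,\wt c)\subset S'$. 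This already gives $\diam_\Omega\big(\Nc_\Omega(S;M)\cap\Nc_\Omega(S';M)\big)=\infty$ and forces $S=S'$ by strong isolation. Your forward reference to Theorem~\ref{thm:properties_of_ncc}(5) is not circular (that property is proved independently with exactly this one-segment trick), but since the corollary is proved in the paper \emph{before} Theorem~\ref{thm:properties_of_ncc}, the self-contained version is preferable.
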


\begin{proof}[Proof of Corollary~\ref{cor:half_triangles_out_of_maximal_simplices} assuming Theorem~\ref{thm:half_triangle_nearby_simplex}]
By Theorem~\ref{thm:half_triangle_nearby_simplex} there exists $S' \in \Sc$ such that $a,b,c \in F_\Omega(\partial S')$. So there exist $a',b',c'  \in \partial S'$ such that $a \in F_\Omega(a')$, $b \in F_\Omega(b')$, and $c \in F_\Omega(c')$. Define 

\begin{align*}
M := \max \left\{ H_{F_\Omega(a)}(a,a'), H_{F_\Omega(c)}(c,c') \right\}. 
\end{align*}
By Proposition~\ref{prop:Crampons_dist_est_2}
\begin{align*}
H_\Omega^{\Haus}\Big( (a,c), (a', c') \Big) \leq M
\end{align*}
and so 
\begin{align*}
(a,c) \subset  S \cap \Nc_\Omega(S';M).
\end{align*}
Then
\begin{align*}
\infty=\diam_\Omega( \Nc_\Omega(S;M) \cap \Nc_\Omega(S';M) ).
\end{align*}
Since $\Sc$ is strongly isolated, $S=S'$. So $b' \in \partial S' = \partial S$ and hence $b \in F_\Omega(b') \subset F_\Omega(\partial S)$.  
\end{proof}

We begin the proof of Theorem~\ref{thm:half_triangle_nearby_simplex} with a lemma. 

\begin{lemma}\label{lem:geom_near_corner} Suppose $(\Omega, \Cc,\Lambda)$ is a naive convex co-compact triple. Assume $a,b,c \in \partiali \Cc$ form a half triangle and $V = \Spanset\{a,b,c\}$. For any $r >0$ and $\epsilon > 0$ there exists a neighborhood $U$ of $b$ in $\Pb(V)$ such that: if $x \in  U\cap \Cc$, then there exists a properly embedded simplex $S=S(x) \subset \Cc$ of dimension at least two such that 
\begin{align}
\label{eq:ball_near_simplex}
B_{\Omega}(x;r) \cap \Pb(V) \subset \Nc_\Omega(S;\epsilon).
\end{align}
\end{lemma}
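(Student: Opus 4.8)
\textbf{Proof plan for Lemma~\ref{lem:geom_near_corner}.}

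The plan is to use a Benz\'ecri-type rescaling argument combined with the coarse completeness of the simplex family. Suppose for contradiction that the lemma fails for some $r>0$ and $\epsilon>0$: then there is a sequence $x_n \to b$ with $x_n \in [V] \cap \Cc$ such that $B_\Omega(x_n;r) \cap [V]$ is \emph{not} contained in $\Nc_\Omega(S;\epsilon)$ for any properly embedded simplex $S \subset \Cc$. First I would pick $g_n \in \Lambda$ so that $\{g_n x_n\}$ stays in a fixed compact subset of $\Cc$ (possible by co-compactness), and pass to a subsequence so that $g_n x_n \to p_\infty \in \Cc$ and so that $(g_n \Omega, g_n x_n)$ converges in the local Hausdorff/Benz\'ecri sense to some pointed properly convex domain $(\Omega_\infty, p_\infty)$, and the triples $g_n(a,b,c)$ converge to a limiting configuration. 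Here is where the half-triangle hypothesis matters: since $[a,b],[b,c] \subset \partial\Omega$ and $(a,c) \subset \Omega$, and since the $x_n$ are chosen in $[V] \cap \Cc$ converging to the corner point $b$, in the rescaled picture the limiting points $g_n a \to a_\infty$, $g_n b \to b_\infty$, $g_n c \to c_\infty$ have the property that $a_\infty \ne c_\infty$ (the cross-ratio along $(a,c)$ through $x_n$ stays bounded away from degenerate because $x_n \to b$), $b_\infty$ escapes to the ideal boundary of $\Omega_\infty$, and $[a_\infty, b_\infty], [b_\infty, c_\infty]$ lie in $\partial\Omega_\infty$ while the slice $[\Spanset\{a_\infty,b_\infty,c_\infty\}] \cap \Omega_\infty$ contains an honest $2$-simplex (a properly embedded triangle) through $p_\infty$, because a half triangle with one vertex pushed to infinity opens up into a full triangle. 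This is the standard phenomenon behind Proposition 2.5 in Benoist~\cite{B2004}.

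Next, using the local Hausdorff convergence of $(g_n\Omega, g_n x_n)$ to $(\Omega_\infty, p_\infty)$ and the fact that $\Omega_\infty \cap [V_\infty]$ contains a properly embedded simplex $S_\infty$ through $p_\infty$ (where $V_\infty = \Spanset\{a_\infty, b_\infty, c_\infty\}$), I would transfer this back: for $n$ large, the slice $g_n\Omega \cap [g_n V]$ contains a properly embedded simplex $\wt{S}_n$ which Hausdorff-approximates $S_\infty$ on arbitrarily large balls. Concretely, by Observation~\ref{obs:slice_convergence} and the fact that a half triangle near a corner gives rise to genuine simplices (this is essentially Corollary~\ref{cor:building_simplices} applied in the limit, then perturbed via Lemma~\ref{lem:slide_along_faces}), one can produce a properly embedded simplex $S_n' \subset \Omega$ containing $x_n$ in its interior, lying inside $[V]$, with $B_\Omega(x_n; r+1) \cap [V] \subset \Nc_\Omega(S_n'; \epsilon/2)$ for $n$ large. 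But $S_n'$ need not lie in $\Cc$; to fix this I would instead run the argument inside $\Cc$ directly: since $a,b,c \in \partiali\Cc$, the faces $F_\Omega(a), F_\Omega(c)$ meet $\overline{\Cc}$, and the limiting simplex construction can be carried out with vertices in $\overline{\Cc}$ using the opposite-faces structure; then $S_n' \subset \Cc$ because its ideal boundary lies in $\partiali\Cc$. This contradicts the assumption that no such simplex exists for $x_n$, completing the proof.

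The main obstacle I anticipate is making the rescaling limit rigorous and, in particular, verifying that the limiting slice $\Omega_\infty \cap [V_\infty]$ is genuinely a properly embedded simplex of the right dimension (not something degenerate or lower-dimensional) and that the simplex one extracts can be taken \emph{inside $\Cc$} rather than merely inside $\Omega$. This requires carefully tracking where the half-triangle vertices $a,b,c$ and the points $x_n$ go under $g_n$, controlling the cross-ratios so that $a_\infty \neq c_\infty$ and $b_\infty \in \partiali\Omega_\infty$ simultaneously, and then using the convexity of $\Cc$ together with $a,b,c \in \partiali\Cc$ to ensure the ideal boundary of the extracted simplex stays in $\partiali\Cc$. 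Everything else — the existence of $g_n$ with $g_n x_n$ precompact, passing to Benz\'ecri limits, and transferring Hausdorff-closeness of slices back to finite $n$ — is routine given Observation~\ref{obs:slice_convergence}, Proposition~\ref{prop:dist_est_and_faces}, and Lemma~\ref{lem:slide_along_faces}.
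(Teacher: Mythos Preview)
Your overall strategy---argue by contradiction, rescale, extract a limiting simplex, transfer back---is the same as the paper's, but the implementation has a genuine gap in the central step, and it also invokes a hypothesis the lemma does not have.

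First, the lemma assumes only that $(\Omega,\Cc,\Lambda)$ is a naive convex co-compact triple; there is no family $\Sc$ and in particular no ``coarse completeness'' available. All of the isolated/strongly isolated simplex machinery appears only downstream, in Theorem~\ref{thm:half_triangle_nearby_simplex}. So the opening line of your plan already overreaches.

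Second, and more seriously, you conflate two different kinds of rescaling. You take $g_n\in\Lambda\leq\Aut(\Omega)$, so $g_n\Omega=\Omega$ and your ``Benz\'ecri limit'' $\Omega_\infty$ is just $\Omega$ again; the limiting slice is $\Omega\cap[V_\infty]$ for $V_\infty=\lim g_nV$. There is no reason this slice should be a properly embedded simplex: the limit of the half-triangle data $g_n(a,b,c)\to(a_\infty,b_\infty,c_\infty)$ may well be another half triangle in $\partial\Omega$, not a full triangle, and your phrase ``a half triangle with one vertex pushed to infinity opens up into a full triangle'' does not apply when you only translate by automorphisms of $\Omega$. The paper separates this into two steps. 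It first fixes coordinates so that $b=[1:0:\dots:0]$, $a=[1:1:0:\dots:0]$, $c=[1:0:1:0:\dots:0]$, observes from the half-triangle hypothesis and convexity that near $b$ both $\Omega_V:=\Omega\cap[V]$ and $\Cc_V:=\Cc\cap[V]$ are squeezed between the triangle $\{x,y>0,\ x+y\le 1\}$ and the quadrant $T=\{x,y>0\}$, and then applies the \emph{explicit diagonal} rescalings
\[
g_n([z:x:y:0:\dots:0])=[z:x/x_n:y/y_n:0:\dots:0]\in\PGL(V),
\]
where $p_n=[1:x_n:y_n:0:\dots:0]$, so that $g_n\Omega_V$ and $g_n\Cc_V$ both converge to $T$. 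Only \emph{after} this does one bring in $\gamma_n\in\Lambda$ with $\gamma_n p_n$ precompact, form the isometric embeddings $f_n=\gamma_n g_n^{-1}:(g_n\Omega_V,H_{g_n\Omega_V})\to(\Omega,H_\Omega)$, and pass to a limit $f:(T,H_T)\to(\Omega,H_\Omega)$ via Arzel\`a--Ascoli. The image $S=f(T)$ is then automatically a properly embedded simplex, and $S\subset\Cc$ follows directly from $g_n\Cc_V\to T$ and $f_n(g_n\Cc_V)=\gamma_n\Cc_V\subset\Cc$---no appeal to opposite faces, Corollary~\ref{cor:building_simplices}, or the structure of $F_\Omega(a),F_\Omega(c)$ is needed.

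In short: the paper's proof works because the \emph{abstract} two-dimensional rescaling forces the limit to be the standard simplex $T$; your one-step approach using only automorphisms of $\Omega$ cannot force this, and the step you flag as ``the main obstacle'' is in fact the whole content of the argument.
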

\begin{proof} Fix $r >0$ and $\epsilon > 0$. Suppose for a contradiction that such a neighborhood $U$ does not exist. Then we can find a sequence $p_n \in \Cc \cap \Pb(V)$ such that $\lim_{n \to \infty} p_n = b$ and each $p_n$ does not satisfy Equation~\eqref{eq:ball_near_simplex} for any properly embedded simplex $S \subset \Cc$ of dimension at least two.

After passing to a subsequence we can find $\gamma_n \in \Lambda$ such that $\gamma_n p_n \rightarrow p \in \Cc$. Passing to a further subsequence we can suppose that $\gamma_n a \rightarrow a_\infty$, $\gamma_n b \rightarrow b_\infty$, and $\gamma_n c \rightarrow c_\infty$. Then $[a_\infty, b_\infty], [b_\infty, c_\infty] \subset \partiali\Cc$ and by the definition of the Hilbert metric
 \begin{align*}
\infty= \lim_{n \rightarrow \infty} H_\Omega\Big(p_n, (a,c)\Big) =  \lim_{n \rightarrow \infty} H_\Omega\Big(\gamma_n p_n, (\gamma_n a,\gamma_n c)\Big).
\end{align*}
So $[a_\infty, c_\infty] \subset \partiali\Cc$. Thus $a_\infty, b_\infty, c_\infty$ are the vertices of a properly embedded simplex $S \subset \Cc$. However, for $n$ sufficiently large we have 
\begin{align*}
B_{\Omega}(\gamma_n p_n;r) \cap \gamma_n\Pb(V) \subset \Nc_\Omega(S;\epsilon)
\end{align*}
and so 
\begin{align*}
B_{\Omega}(p_n;r) \cap \Pb(V) \subset \Nc_\Omega(\gamma_n^{-1}S;\epsilon).
\end{align*}
Hence we have a contradiction. 
\end{proof}

\begin{proof}[Proof of Theorem~\ref{thm:half_triangle_nearby_simplex}]
 By Theorem~\ref{thm:bd_faces}, there exists an isolated, coarsely complete, and $\Lambda$-invariant subfamily $\Sc^\prime \subset \Sc$  where 
\begin{align}
\label{eqn:D_0-used-for-half-triangle}
D_0:= \sup_{S \in \Sc^\prime} \sup_{x \in \partial S} H_{F_\Omega(x)}^{ \rm Haus}\Big( \overline{\Cc} \cap F_\Omega(x), F_S(x)\Big) <+\infty.
\end{align}
Since $\Sc^\prime$ is coarsely complete, there exists $D_1 > 0$ such that: if $S \subset \Cc$ is a properly embedded simplex of dimension at least two, then there exists $S^\prime \in \Sc^\prime$ with 
\begin{align*}
S \subset \Nc_\Omega(S^\prime; D_1).
\end{align*}

 As $\Sc$ is strongly isolated, so is $\Sc^\prime$. Thus there exists $D_2 > 0$ such that: if $S_1,S_2\in \Sc^\prime$ and 
\begin{align*}
\diam_\Omega \Big( \Nc_\Omega(S_1;1+D_1) \cap \Nc_\Omega(S_2;1+D_1)\Big) \geq D_2,
\end{align*}
then $S_1 = S_2$.

Define $V :=\Spanset\{a,b,c\}$. By Lemma~\ref{lem:geom_near_corner} there exists a neighborhood $U$ of $b$ in $\Pb(V)$ such that: if $x \in U\cap \Cc$, then there exists a properly embedded simplex $S=S(x)$ in $\Cc$ of dimension at least two with
\begin{align*}
B_{\Omega}(x;D_2) \cap \Pb(V) \subset \Nc_\Omega(S;1).
\end{align*}
Then for each $x \in U \cap \Cc$ there exists some $S_x \in \Sc^\prime$ such that 
\begin{align*}
B_{\Omega}(x;D_2) \cap \Pb(V) \subset \Nc_\Omega(S_x;1+D_1).
\end{align*}
By shrinking $U$ we can also assume that $U \cap \Cc$ is convex. 

We claim that $S_x = S_y$ for all $x,y \in U \cap \Cc$. Since $U \cap \Cc$ is convex, it is enough to verify this when $H_\Omega(x,y) \leq D_2/2$. In that case 
\begin{align*}
B_{\Omega}(y;D_2/2) \cap \Pb(V) \subset B_{\Omega}(x;D_2) \cap \Pb(V) \subset \Nc_\Omega(S_x;1+D_1).
\end{align*}
and so 
\begin{align*}
B_{\Omega}(y;D_2/2) \cap \Pb(V) \subset  \Nc_\Omega(S_x;1+D_1) \cap \Nc_\Omega(S_y;1+D_1).
\end{align*}
Since
\begin{align*}
\diam_\Omega& \Big( \Nc_\Omega(S_x;1+D_1) \cap \Nc_\Omega(S_y;1+D_1)\Big) \geq \diam_\Omega\Big( B_{\Omega}(y;D_2/2) \cap \Pb(V)\Big)  = D_2  
\end{align*}
we then have $S_x = S_y$. 

Next let $S = S_x$ for some (hence any) $x \in U \cap \Cc$. Then 
\begin{align*}
U \cap \Cc \subset \Nc_\Omega(S; 1+D_1).
\end{align*}
Fix some $a_1 \in (a,b) \cap U$ and $c_1 \in (b,c) \cap U$. Then by Proposition~\ref{prop:dist_est_and_faces} there exist $a^\prime_1, b^\prime, c^\prime_1 \in \partial S$ such that $a^\prime_1 \in F_\Omega(a_1)$, $b^\prime \in F_\Omega(b)$, and $c^\prime_1 \in F_\Omega(c_1)$. So $b \in F_\Omega(b^\prime) \subset F_\Omega(\partial S)$. 

We now show that $a \in F_\Omega(\partial S)$. We can find a sequence 
\begin{align*}
q_n \in \partiali \Cc \cap ~ [a_1,a) \subset \partiali \Cc \cap ~F_{\Omega}(a_1')
\end{align*}
 such that $\lim_{n \to \infty} q_n=a.$ Then, by Equation \eqref{eqn:D_0-used-for-half-triangle} there exists $q_n' \in F_S(a_1')$ with $\hil(q_n,q_n') \leq D_0$ . Then passing to a subsequence, $a':= \lim_{n \to \infty} q_n'$ exists in $\overline{F_S(a_1')}$ and by Proposition~\ref{prop:dist_est_and_faces}, $a \in F_{\Omega}(a').$ Thus, $a \in F_{\Omega}(\partial S).$  

The same argument shows that $c \in F_\Omega(\partial S)$. 
\end{proof}

\section{Proof of Theorem~\ref{thm:properties_of_ncc}}\label{sec:pf_of_properties_of_ncc}

Suppose $(\Omega, \Cc, \Lambda)$ is a naive convex co-compact triple with coarsely isolated simplices and $\Sc$ is a strongly isolated, coarsely complete, and $\Lambda$-invariant family of maximal properly embedded simplices in $\Cc$ of dimension at least two.
 
  \smallskip
 
\textbf{(1) and (2):} Proposition~\ref{prop:HK_section_3_1}.

\smallskip

 \textbf{(3):} Since $\Sc$ is coarsely complete there exists $D_0 > 0$ such that: if $S$ is a properly embedded simplex in $\Cc$ of dimension at least two, then there exists $S^\prime \in \Sc$ with 
\begin{align*}
S \subset \Nc_\Omega(S^\prime; D_0). 
\end{align*}

Applying Theorem~\ref{thm:max_abelian} to a maximal Abelian subgroup which contains $A$ shows that there exists a properly embedded simplex $S_0 \subset \Cc$ with $A \leq \Stab_{\Lambda}(S_0)$.  Since $\Sc$ is strongly isolated there exists a unique $S \in \Sc$ with 
\begin{align*}
S_0 \subset \Nc_\Omega(S;D_0).
\end{align*}
So by uniqueness $A \leq \Stab_{\Lambda}(S)$.

\smallskip

 \textbf{(4):} By Theorem~\ref{thm:bd_faces}, there exists a coarsely complete  subfamily $\Sc^\prime \subset \Sc$ and a constant $D_1 > 0$ such that: if $S \in \Sc^\prime$ and $x \in \partial S$, then 
 \begin{align*}
 H_{F_\Omega(x)}^{\Haus}\left( \overline{\Cc} \cap F_\Omega(x), F_S(x) \right) \leq D_1. 
 \end{align*}
We claim that $\Sc^\prime= \Sc$. Suppose that $S \in \Sc$. Since $\Sc^\prime$ is coarsely complete there exist $S^\prime \in \Sc^\prime$ and $D_0^\prime > 0$ such that 
\begin{align*}
S \subset \Nc_\Omega(S^\prime;D_0^\prime).
\end{align*}
But $S^\prime \in \Sc^\prime \subset \Sc$ and 
\begin{align*}
\diam_\Omega\left( \Nc_{\Omega}(S^\prime; D_0^\prime) \cap \Nc_{\Omega}(S; D_0^\prime)\right) \geq \diam_\Omega(S) = \infty,
\end{align*}
so $S = S^\prime \in \Sc^\prime$. Since $S \in \Sc$ was arbitrary we see that $\Sc^\prime = \Sc$. 

\smallskip

\textbf{(5):} Suppose $S_1,S_2\in \Sc$ and $\#(S_1 \cap S_2) > 1$. Then $S_1 \cap S_2$  contains a properly embedded line and hence 
\begin{align*}
\diam_\Omega\left(\Nc(S_1;r) \cap \Nc_\Omega(S_2;r) \right) =\infty
\end{align*}
for any $r > 0$. Thus $S_1 = S_2$ since $\Sc$ is strongly isolated. 

Suppose $S_1,S_2\in \Sc$ and $F_\Omega(\partial S_1) \cap F_\Omega(\partial S_2) \neq \emptyset$. Then there exist $s_1 \in \partial S_1$ and $s_2 \in \partial S_2$ with $F_\Omega(s_1) = F_\Omega(s_2)$. Fix $p_1 \in S_1$ and $p_2 \in S_2$. Then by Proposition~\ref{prop:Crampons_dist_est_2}
\begin{align*}
H_\Omega^{\Haus}\Big( [p_1,s_1), [p_2,s_2) \Big) \leq \max \{ H_\Omega(p_1,p_2), H_{F_\Omega(s_1)}(s_1,s_2)\}. 
\end{align*}
So for any $r > \max\{H_\Omega(p_1,p_2), H_{F_\Omega(s_1)}(s_1,s_2)\}$, 
\begin{align*}
\diam_\Omega(\Nc_\Omega(S_1;r) \cap \Nc_\Omega(S_2;r) ) =\infty.
\end{align*}
Thus  $S_1 = S_2$  since $\Sc$ is strongly isolated. 

\smallskip

 \textbf{(6):} Theorem~\ref{thm:half_triangle_nearby_simplex}.

\section{Proof of Theorem \ref{thm:IS-implies-rel-hyp}}
\label{sec:isolated-simplex-implies-rel-hyp}

 In this section we prove Theorem~\ref{thm:IS-implies-rel-hyp} which we recall here.

\begin{theorem}\label{thm:isolated-simplices-imply-rel-hyp}
Suppose $(\Omega, \Cc, \Lambda)$ is a naive convex co-compact triple with coarsely isolated simplices. Let $\Sc$ be a strongly isolated, coarsely complete, and $\Lambda$-invariant family of maximal properly embedded simplices in $\Cc$ of dimension at least two. Then
\begin{enumerate}
\item  $(\Cc, H_\Omega)$ is a relatively hyperbolic space with respect to $\Sc$.
\item $\Lambda$ has finitely many orbits in $\Sc$ and if $\{S_1,\dots, S_m\}$ is a set of orbit representatives, then $\Lambda$ is a relatively hyperbolic group with respect to 
\begin{align*}
\left\{\Stab_{\Lambda}(S_1),\dots, \Stab_{\Lambda}(S_m)\right\}.
\end{align*} 
Further each $\Stab_{\Lambda}(S_i)$ is virtually Abelian of rank at least two.
\end{enumerate}
\end{theorem}

For the rest of the section, fix a naive convex co-compact triple  $(\Omega, \Cc, \Lambda)$ with coarsely isolated simplices. Then fix a strongly isolated, coarsely complete, and $\Lambda$-invariant family $\Sc$ of maximal properly embedded simplices in $\Cc$ of dimension at least two. By Proposition~\ref{prop:HK_section_3_1}, $\Lambda$ has finitely many orbits in $\Sc$ and for each $S \in \Sc$, the group $\Stab_{\Lambda}(S)$ is virtually Abelian of rank at least two. Finally, fix  orbit representatives $S_1, \dots, S_m$ of the $\Lambda$ action on $\Sc$. 

By Proposition~\ref{prop:HK_section_3_1} again, if $S \in \Sc$, then $\Stab_{\Lambda}(S)$ acts co-compactly on $S$. Thus, by Theorem~\ref{thm:rh_quasi_isometry_inv},  $(\Cc,\hil)$ is relatively hyperbolic with respect to $\Sc$ if and only if $\Lambda$ is relatively hyperbolic relative to $\{\Stab_{\Lambda}(S_1), \dots, \Stab_{\Lambda}(S_m)\}$.

 So it is enough to prove that $(\Cc,\hil)$ is relatively hyperbolic with respect to $\Sc$. To accomplish this we will use Sisto's characterization of relative hyperbolicity stated in Theorem \ref{thm:Sisto_equiv}. 

Recall, from Definition \ref{defn:LS}, that for a properly embedded simplex $S$, $\Lc_S$ is the family of linear projections onto $S$.  For each $S \in \Sc$, choose  a set of $S$-supporting hyperplanes $\Hc_S$ to form a collection of linear projections 
\begin{equation*}
\Pi _{\Sc} :=\Big\{ L_{S,\Hc_S}  : S \in \Sc \Big\}.
\end{equation*}
 Next fix the geodesic path system on $(\Cc,H_\Omega)$ defined by 
$$\Gc:=\{ [x,y] : x,y \in \Cc\}.$$
By Theorem \ref{thm:Sisto_equiv}, it is enough to show that $\Pi_{\Sc}$ is an almost-projection system and $\Sc$ is asymptotically transverse-free relative to $\Gc.$ 

\begin{remark}
In general $\# \Lc_S > 1$, so there is some choice involved in the construction of  $\Pi_{\Sc}$. However, by Proposition \ref{prop:proj-coarsely-equiv} below,
\begin{align*}
\sup_{S \in \Sc} \ \sup_{L_1,L_2 \in \Lc_S} \ \sup_{x \in \Cc} \hil (L_{1}(x),L_{2}(x)) <+\infty.
\end{align*}
So $\Pi_{\Sc}$ will be an almost-projection system, independent of the choices involved in its construction.
\end{remark}

\subsection{$\Pi_{\Sc}$ is an almost-projection system}

\begin{theorem}\label{thm:metric-proj-is-almost-proj-sys}
$\Pi_{\Sc}$ is an almost-projection system for $\Sc$ on the complete geodesic metric space $(\Cc, \hil)$.
\end{theorem}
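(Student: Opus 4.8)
The goal is to verify the three defining properties of an almost-projection system for the collection $\Pi_{\Sc} = \{L_{S,\Hc_S}\}_{S \in \Sc}$, with a single constant $C > 0$. The overarching strategy is a Benz\'ecri rescaling argument combined with the structural facts about faces of simplices (Observation~\ref{obs:faces_of_simplices_are_properly_embedded}), the compactness of the space of linear projections ($\Lc_S$ compact, Proposition~\ref{prop:compactness-linear-proj}), Proposition~\ref{prop:HK_section_3_1} (finitely many $\Lambda$-orbits of simplices), and the strong isolation / coarsely complete hypotheses. First I would record the key auxiliary fact referenced in the remark: $\sup_{S}\sup_{L_1,L_2\in\Lc_S}\sup_{x\in\Cc}H_\Omega(L_1 x, L_2 x) < \infty$, which follows from a compactness argument — if it failed, a rescaling would produce two distinct linear projections onto a limiting simplex in the limiting domain sending a common point far apart, contradicting that both projections agree on the simplex and that $\Lc_{S_\infty}$ is compact (uses Proposition~\ref{prop:compactness-linear-proj} together with Proposition~\ref{prop:dist_est_and_faces}). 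This also makes $\Pi_{\Sc}$ essentially canonical, so it suffices to work with one choice per orbit.

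\textbf{Property (1) (coarse ``closest-point'' inequality).} I would show that for $S \in \Sc$, $x \in \Cc$, and $p \in S$, one has $H_\Omega(x,p) \ge H_\Omega(x, L_S x) + H_\Omega(L_S x, p) - C$. The natural route is via Proposition~\ref{prop:proj-coarsely-equiv}, which says the linear projection is coarsely equivalent to the metric closest-point projection onto $S$; granting that, property (1) is immediate for the closest-point projection (essentially the definition of a nearest point in a geodesic space, up to the quasi-geodesic behaviour of Hilbert geodesics in $S$, which are genuine geodesics) and transfers to $L_S$ up to an additive constant. Since Proposition~\ref{prop:proj-coarsely-equiv} is stated earlier in the paper I may invoke it; the content here is just bookkeeping of the additive errors and using that finitely many orbits give a uniform constant. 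Alternatively, one can argue directly: the line segment $[x,p]$ is a geodesic whose image projects (via $L_S$) into $S$, and a convexity/cross-ratio estimate bounds the defect.

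\textbf{Properties (2) and (3), and the main obstacle.} Property (2), $\diam_\Omega L_S(S') \le C$ for distinct $S, S' \in \Sc$, is where the strong isolation hypothesis enters decisively. By Proposition~\ref{prop:projection-to-simplices-contracting} and Corollary~\ref{cor:penetration-of-simplex-nbd} (referenced in the introduction's outline), if $L_S$ spreads two points of $S'$ far apart then the geodesic between them spends a long time in a bounded neighborhood of $S$; but $S'$ itself is a geodesically convex set in a bounded neighborhood of $S'$, so a long geodesic segment of $S'$ would lie in $\Nc_\Omega(S;r) \cap \Nc_\Omega(S';r)$ for some fixed $r$, forcing $\diam_\Omega(\Nc_\Omega(S;r) \cap \Nc_\Omega(S';r)) = \infty$ and contradicting strong isolation. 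One must be careful that $L_S$ is only defined off $[\ker L_S]$ and that $S' \cap [\ker L_S] = \emptyset$ — this is guaranteed because $[\ker L_S] = [\cap_{H \in \Hc_S} H]$ is disjoint from $\Omega$ by Proposition~\ref{prop:W-direct-sum} and hence from $\overline{S'}$ away from its boundary; handling boundary points of $S'$ mapping into $F_\Omega(\partial S)$ uses Proposition~\ref{prop:image-lin-proj-in-simplex-bdry}, and then the half-triangle machinery (Corollary~\ref{cor:half_triangles_out_of_maximal_simplices}) rules out $\partial S' \cap [\ker L_S] \ne \emptyset$ when $S \ne S'$. Property (3), $\diam_\Omega L_S(B(x;R)) \le C$ when $H_\Omega(x,S) = R$, follows from the contracting property of the projection (Proposition~\ref{prop:projection-to-simplices-contracting}): a ball of radius $R$ at distance $R$ from $S$ cannot reach deep into any neighborhood of $S$, so again by Corollary~\ref{cor:penetration-of-simplex-nbd} its image under $L_S$ has bounded diameter. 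Throughout, to get a \emph{single} constant $C$ I would use Proposition~\ref{prop:HK_section_3_1}: since $\Lambda$ acts with finitely many orbits on $\Sc$ and $L_{gS, g\Hc_S} = g \circ L_{S,\Hc_S} \circ g^{-1}$, all these estimates reduce to finitely many simplices $S_1,\dots,S_m$. I expect the genuinely delicate point to be property (2) — specifically, controlling what happens near $\partial S'$, where $L_S$ is continuous but the quasi-isometry between linear and metric projection degenerates; this is exactly where the structure theorem from Section~\ref{sec:opposite_faces} and the half-triangle result from Section~\ref{sec:half_triangles} are needed to ensure that $\overline{S'}$ stays in the domain of $L_S$ and that the projected image does not escape to the boundary.
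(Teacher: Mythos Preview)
Your plan for properties~(2) and~(3) matches the paper's approach closely: the paper proves~(2) as Lemma~\ref{lem:almost-proj-system-2} by exactly your argument (Corollary~\ref{cor:penetration-of-simplex-nbd} plus strong isolation), and proves~(3) as Lemma~\ref{lem:almost-proj-system-3} using Proposition~\ref{prop:projection-to-simplices-contracting} together with Proposition~\ref{prop:proj-coarsely-equiv}. Your discussion of the kernel issue is also right in spirit; the paper packages it as a continuity lemma (Lemma~\ref{lem:continuity-lin-proj-in-C}) showing $[\ker L]\cap\overline{\Cc}=\emptyset$ via Corollary~\ref{cor:half_triangles_out_of_maximal_simplices}.

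There is, however, a genuine gap in your argument for property~(1). You claim that once $L_S$ is coarsely the metric closest-point projection $\pi_S$, the inequality $H_\Omega(x,p)\ge H_\Omega(x,\pi_S(x))+H_\Omega(\pi_S(x),p)-C$ is ``essentially the definition of a nearest point in a geodesic space.'' This is false: in Euclidean $\Rb^2$ with $S$ a line, nearest-point projection satisfies no such inequality with a uniform $C$ (one only gets the Pythagorean relation). Property~(1) is a hyperbolicity-type statement---it says $\pi_S(x)$ lies coarsely \emph{on} the geodesic $[x,p]$---and it genuinely requires the half-triangle machinery. The paper establishes it (Lemma~\ref{lem:almost-proj-system-1}) via Proposition~\ref{prop:projection-close-to-hypotenuse-hyperbolic-triangle}, which shows $H_\Omega(L_{S,\Hc}(x),[x,z])\le\delta_3$ for every $z\in S$; this in turn rests on the thin-triangle Proposition~\ref{prop:lin-proj-hyperbolic-triangles}, proved by a contradiction producing a half triangle and invoking Corollary~\ref{cor:half_triangles_out_of_maximal_simplices}. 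You could equally well repair your argument by applying Proposition~\ref{prop:projection-to-simplices-contracting} with $y=p\in S$ (so $L_S(y)=p$): when $H_\Omega(L_S(x),p)\ge\delta_4$ this gives a point on $[x,p]$ within $\delta_4$ of $L_S(x)$, and the small-distance case is handled by Proposition~\ref{prop:proj-coarsely-equiv}. But as written, the step ``immediate from the definition'' does not go through.
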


The proof of Theorem~\ref{thm:metric-proj-is-almost-proj-sys} will require a series of preliminary results. We first prove a continuity lemma for linear projections that will be used repeatedly in this section.

\begin{lemma}\label{lem:continuity-lin-proj-in-C}
If $S \in \Sc$, then the map 
\begin{align*}
(L, x) \in \Lc_S \times \overline{\Cc} \rightarrow L(x) \in \overline{S} 
\end{align*}
is continuous.
\end{lemma}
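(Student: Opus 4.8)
The statement asserts joint continuity of $(L,x) \mapsto L(x)$ on $\Lc_S \times \overline{\Cc}$, where $L$ ranges over the compact set $\Lc_S$ of linear projections associated to $S$-supporting hyperplanes. The plan is to reduce this to elementary continuity of the evaluation map $\End(\Rb^d) \times \Pb(\Rb^d) \dashrightarrow \Pb(\Rb^d)$, the only subtlety being that this map is undefined on the kernel locus. The key point to establish is that no pair $(L,x) \in \Lc_S \times \overline{\Cc}$ lands in the bad locus, i.e. $x \notin [\ker L]$ for every $L \in \Lc_S$ and $x \in \overline{\Cc}$.

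First I would recall from Definition~\ref{defn:LS} and Proposition~\ref{prop:compactness-linear-proj} that $\Lc_S \subset \End(\Rb^d)$ is compact, and that each $L = L_{S,\Hc} \in \Lc_S$ has $\ker L = \cap_{H \in \Hc} H$ with $[\ker L] \cap \Omega = \emptyset$ by Proposition~\ref{prop:W-direct-sum}. Since $\overline{\Cc} \subset \overline{\Omega}$, the only way the evaluation could fail is at a boundary point $x \in \partial \Omega$ lying in $[\ker L]$. But Proposition~\ref{prop:W-intersect-boundary} part (2) gives $[\cap_{H \in \Hc} H] \cap F_\Omega(\partial S) = \emptyset$; more directly, Proposition~\ref{prop:W-intersect-boundary} part (1) shows that if $x \in \partial \Omega \cap [\ker L]$ then $[x,y] \subset \partial \Omega$ for every $y \in \partial S$. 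I would use this to rule out $x \in \overline{\Cc}$: since $S \subset \Cc$ is properly embedded, $\partial S \subset \partiali \Cc \subset \partial \Omega$, but if $x \in \overline{\Cc} \cap [\ker L]$ then combining with part (1) forces the cone over $\partial S$ from $x$ to lie in $\partial\Omega$, contradicting that $S$ itself meets $\Omega$ (pick $y_1, y_2 \in \partial S$ opposite so $(y_1,y_2) \subset S \subset \Omega$; then Observation~\ref{obs:faces}(4) applied to the two segments $[x,y_1]$, $[x,y_2]$ forces a segment through a point of $S$ to leave $\Omega$, a contradiction — or more simply, $L(\Omega) = S$ by Observation~\ref{obs:lin-proj} and continuity of $L$ on $\Pb(\Rb^d) \setminus [\ker L]$ extends to show $L(\overline\Omega \setminus [\ker L]) \subset \overline S$, and one checks $x \in [\ker L] \cap \overline\Cc$ cannot happen because $\overline\Cc$ would then be separated). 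In any case the conclusion $[\ker L] \cap \overline{\Cc} = \emptyset$ for all $L \in \Lc_S$ is the crux.

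Granting that, I would argue as follows. Suppose $(L_n, x_n) \to (L, x)$ in $\Lc_S \times \overline{\Cc}$. Choose lifts $\overline{x}_n \to \overline{x}$ in $\Rb^d \setminus \{0\}$ and note $L_n \to L$ in $\End(\Rb^d)$. Since $x \notin [\ker L]$, we have $L(\overline{x}) \neq 0$, so by continuity of the linear evaluation $(T, v) \mapsto T(v)$ on $\End(\Rb^d) \times \Rb^d$ together with the fact that $L_n(\overline{x}_n) \to L(\overline{x}) \neq 0$, the projective points $[L_n(\overline{x}_n)] = L_n(x_n)$ converge to $[L(\overline{x})] = L(x)$ in $\Pb(\Rb^d)$. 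This gives sequential continuity; since all spaces in sight are metrizable (subsets of manifolds), sequential continuity is continuity, and the image lands in $\overline{S}$ because $L(\Omega) = S$ by Observation~\ref{obs:lin-proj} and hence $L(\overline\Omega \setminus [\ker L]) \subset \overline{S}$, which contains $\overline{\Cc}$ in its domain by the crux above.

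\textbf{Main obstacle.} The only real content is the disjointness $[\ker L] \cap \overline{\Cc} = \emptyset$ uniformly over $L \in \Lc_S$; once that is in hand the continuity is formal. I expect the cleanest route to disjointness is via Proposition~\ref{prop:W-intersect-boundary}(1): if some $x \in \overline{\Cc} \cap [\ker L]$ existed, then $x \in \partial\Omega$ (as $[\ker L] \cap \Omega = \emptyset$), and taking opposite boundary points $y_1, y_2 \in \partial S$ with $(y_1,y_2)\subset S \subset \Omega$, part (1) gives $[x,y_1],[x,y_2] \subset \partial\Omega$, whence Observation~\ref{obs:faces}(4) forces $(p,q) \subset \partial\Omega$ for suitable $p \in F_\Omega(y_1)$, $q \in F_\Omega(y_2)$ — contradicting $(y_1,y_2) \subset \Omega$. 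Care is needed to invoke Observation~\ref{obs:faces}(4) with the correct configuration of faces, but no new ideas are required.
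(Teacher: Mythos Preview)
Your overall strategy is correct and matches the paper: the heart of the lemma is the disjointness $[\ker L]\cap\overline{\Cc}=\emptyset$ for every $L\in\Lc_S$, after which continuity is the formal lift-and-evaluate argument you give (and which the paper gives verbatim). However, your argument for the disjointness has a genuine gap.

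You propose to derive a contradiction from $x\in[\ker L]\cap\overline{\Cc}$ using only Proposition~\ref{prop:W-intersect-boundary}(1) and Observation~\ref{obs:faces}(4). But the configuration you reach---$[x,y_1],[x,y_2]\subset\partial\Omega$ while $(y_1,y_2)\subset S\subset\Omega$---is exactly a \emph{half triangle} in the sense of Definition~\ref{defn:half_triangle}, and Observation~\ref{obs:faces}(4) does not rule these out. That observation tells you that $(p,q)\subset\Omega$ for every $p\in F_\Omega(y_1)$, $q\in F_\Omega(y_2)$; it gives you no mechanism to place $x$ inside $F_\Omega(y_1)$ or $F_\Omega(y_2)$, and without that there is no contradiction. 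Half triangles exist freely in general properly convex domains, so the ``no new ideas are required'' assessment is wrong. Your alternative sketch (``$\overline{\Cc}$ would then be separated'') is not an argument either: a single point of $[\ker L]$ in $\partial\Omega$ does not disconnect anything.

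The paper closes this gap by invoking Corollary~\ref{cor:half_triangles_out_of_maximal_simplices}, which depends on the standing hypotheses of Section~\ref{sec:isolated-simplex-implies-rel-hyp} (in particular that $\Sc$ is strongly isolated, via Theorem~\ref{thm:half_triangle_nearby_simplex}). Since $y_1,y_2\in\partial S$, $x\in\partiali\Cc$, and $y_1,x,y_2$ form a half triangle, that corollary forces $x\in F_\Omega(\partial S)$; this then contradicts Proposition~\ref{prop:W-intersect-boundary}(2). So the disjointness genuinely uses the isolated-simplices structure of $\Sc$, not just the linear-algebraic facts about $\Lc_S$ from Section~\ref{sec:linear_projections}.
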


\begin{proof} We first show that $\Pb(\ker L) \cap \overline{\Cc} = \emptyset$ for all $L \in \Lc_S$. Suppose for a contradiction that $L \in \Lc_S$ and 
\begin{align*}
x \in\Pb(\ker L) \cap \overline{\Cc}.
\end{align*}
Proposition~\ref{prop:W-direct-sum} implies that $x \in \partiali \Cc$. Then Proposition~\ref{prop:W-intersect-boundary} implies that $[y,x] \subset \partiali\Cc$ for every $y \in \partial S$. Next fix $y_1, y_2 \in \partial S$ such that $(y_1, y_2) \subset S$. Then $y_1, x, y_2$ form a half triangle. So  $x \in F_\Omega(\partial S)$ by Corollary~\ref{cor:half_triangles_out_of_maximal_simplices}. But Proposition~\ref{prop:W-intersect-boundary} implies that 
\begin{align*}
F_\Omega(\partial S) \cap \Pb(\ker L) = \emptyset.
\end{align*}
So we have a contradiction. Thus $\Pb(\ker L) \cap \overline{\Cc} = \emptyset$ for all $L \in \Lc_S$. 

Now suppose that $\lim_{n \to \infty} (L_n, x_n)= (L,x)$ in  $\Lc_S \times \overline{\Cc}$. Let $\til{x}_n, \til{x} \in \Rb^d$ denote lifts of $x_n,x$ respectively such that $\lim_{n \to \infty} \til{x}_n = \til{x}$. Then 
\begin{align*}
L(\til{x}) = \lim_{n \rightarrow \infty} L_n( \til{x}_n) \in \Rb^d. 
\end{align*}
Since $\Pb(\ker L) \cap \overline{\Cc} = \emptyset$, we have $L(\til{x}) \neq 0$. So
\begin{equation*}
L(x) = \left[L(\til{x}) \right] =  \lim_{n \rightarrow \infty} \left[ L_n( \til{x}_n) \right] = \lim_{n \rightarrow \infty} L_n(x_n). \qedhere
\end{equation*}

\end{proof}

Next we introduce the ``closest points'' projection onto a properly embedded simplex. 

\begin{definition}\label{defn:closest-point-proj}
If $S \subset \Omega$ is a properly embedded simplex and $p \in \Omega$, the \emph{closest points projection of $p$ onto $S$} is the set
\begin{align*}
\pi_S(p) := S \cap \{ q \in \Omega : H_\Omega(p,q) \leq H_\Omega(p,S)\}. 
\end{align*}
\end{definition}

\begin{observation} \label{obs:pi-S}
Suppose $S \subset \Omega$ is a properly embedded simplex. Then:
\begin{enumerate}
\item If $p \in \Omega$, then $\pi_S(p)$ is compact and convex. 
\item If $g \in \Aut(\Omega)$, then $g \circ \pi_S =\pi_{gS} \circ g$.
\end{enumerate}
\end{observation}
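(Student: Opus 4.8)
The statement to prove is Observation~\ref{obs:pi-S}: for a properly embedded simplex $S \subset \Omega$ and $p \in \Omega$, (1) $\pi_S(p) = S \cap \overline{B_\Omega(p;H_\Omega(p,S))}$ is compact and convex, and (2) $g \circ \pi_S = \pi_{gS} \circ g$ for $g \in \Aut(\Omega)$. The plan is to treat (2) first since it is purely formal, then do the substantive work in (1).

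\textbf{Part (2).} This is immediate from the $\Aut(\Omega)$-invariance of the Hilbert metric (Proposition~\ref{prop:hilbert_basic}): for $g \in \Aut(\Omega)$ one has $H_\Omega(gp, gS) = H_\Omega(p,S)$ and $g\big(\overline{B_\Omega(p;r)}\big) = \overline{B_\Omega(gp;r)}$ since $g$ is an isometry of $(\Omega,H_\Omega)$. Hence
\begin{align*}
g\big(\pi_S(p)\big) = g\big(S \cap \overline{B_\Omega(p;H_\Omega(p,S))}\big) = gS \cap \overline{B_\Omega(gp;H_\Omega(gp,gS))} = \pi_{gS}(gp),
\end{align*}
which is exactly $g \circ \pi_S = \pi_{gS} \circ g$. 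I would spell out only this one line.

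\textbf{Part (2), convexity in Part (1).} Convexity of $\pi_S(p)$: write $r := H_\Omega(p,S)$. The set $S$ is convex (being a properly embedded simplex, hence $S = [\,\Spanset S\,] \cap \Omega$ is the intersection of $\Omega$ with a projective subspace, hence convex). The closed metric ball $\overline{B_\Omega(p;r)}$ is convex because Hilbert balls in a properly convex domain are convex: this follows from the fact that $H_\Omega$ restricted to any projective line segment through $p$ is a convex function of the arclength parameter (a standard cross-ratio computation; see e.g.\ the references after Proposition~\ref{prop:hilbert_basic}), so a sublevel set of $x \mapsto H_\Omega(p,x)$ intersected with any line is an interval, which gives convexity of the ball. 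Then $\pi_S(p)$ is an intersection of two convex sets, hence convex. The one point to be a little careful about is that I should note the segment $[x,y]$ joining two points of $\pi_S(p)$ stays inside $\overline{\Omega}$ and in fact inside $\Omega$ (since both endpoints are in $S \subset \Omega$ and $S$ is convex), so the convexity-along-segments argument applies.

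\textbf{Compactness in Part (1).} For compactness, $\pi_S(p)$ is closed (it is $S$ intersected with a closed ball, and $S$ is closed in $\Omega$ while the ball is closed in $\Omega$; one must observe $\pi_S(p) \subset \Omega$ is bounded away from $\partial\Omega$). The substantive point — the one I expect to be the main obstacle, though it is still routine — is boundedness: I must show $\pi_S(p)$ has finite $H_\Omega$-diameter and does not escape to infinity in $(S,H_S)$. This uses that $S$ is \emph{properly embedded}: the inclusion $S \hookrightarrow \Omega$ is proper, and more concretely $H_\Omega|_{S\times S}$ and $H_S$ are comparable (indeed for a properly embedded simplex one has $H_\Omega \le H_S$ on $S$, and a reverse coarse inequality as well, though I only need that $H_\Omega$-bounded subsets of $S$ are $H_S$-bounded, equivalently relatively compact in $S$; this is where proper embeddedness — $\partiali S \subset \partiali \Omega$ — is essential, otherwise the infimum $r = H_\Omega(p,S)$ need not be attained). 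Concretely: the nonempty set $\pi_S(p)$ is contained in $S \cap \overline{B_\Omega(p;r)}$, which has $H_\Omega$-diameter at most $2r$; since $S$ is properly embedded, $S \cap \overline{B_\Omega(p;r)}$ is a closed and $H_\Omega$-bounded subset of a properly embedded simplex, hence compact (a closed bounded subset of $(\Omega,H_\Omega)$ that lies in a properly embedded subset is compact by properness of the inclusion combined with completeness of $H_\Omega$). I would also remark that $\pi_S(p)$ is nonempty: the function $x \mapsto H_\Omega(p,x)$ on $S$ is proper and continuous (again by proper embeddedness), so it attains its infimum, which gives a point realizing $H_\Omega(p,S)$, i.e.\ $\pi_S(p) \neq \emptyset$. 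Assembling these observations gives the claim; the only real content is the bookkeeping around proper embeddedness, everything else being elementary convexity.
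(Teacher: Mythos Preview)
Your proof is correct and follows the same approach as the paper, which dispatches the observation in one line: ``Part (2) is obvious and part (1) follows [from] the fact that metric balls in the Hilbert metric are convex.'' Your treatment of (2) and of convexity in (1) matches this exactly.

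For compactness in (1) you work harder than necessary. The direct argument is: closed Hilbert balls $\overline{B_\Omega(p;r)}$ are compact subsets of $\Omega$ (the metric is proper), and $S$ is closed in $\Omega$ because it is properly embedded; hence $\pi_S(p)$ is the intersection of a closed set with a compact set. Your route through ``$H_\Omega$-bounded subsets of a properly embedded $S$ are relatively compact'' is correct but circuitous---it unpacks the same fact in more steps.
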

 \begin{proof} Part (2) is obvious and part (1) follows the fact that metric balls in the Hilbert metric are convex. \end{proof}

Now, we establish the coarse equivalence between the two projections.

\begin{proposition}\label{prop:proj-coarsely-equiv}
There exists $\delta_1 \geq 0 $ such that: if $S \in \Sc$, $\Hc$ is a set of $S$-supporting hyperplanes, and $ x \in \Cc$, then 
$$\max_{p\in \pi_S(x)} \hil(L_{S,\Hc}(x),p)\leq \delta_1.$$ 
\end{proposition}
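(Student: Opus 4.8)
The strategy is a compactness/contradiction argument using the Benz\'ecri rescaling philosophy together with the finiteness of $\Lambda$-orbits in $\Sc$ (Proposition~\ref{prop:HK_section_3_1}) and the compactness of $\Lc_S$ (Proposition~\ref{prop:compactness-linear-proj}). Suppose no such $\delta_1$ exists. Then there are sequences $S_n \in \Sc$, sets of $S_n$-supporting hyperplanes $\Hc_n$ giving linear projections $L_n := L_{S_n,\Hc_n}$, points $x_n \in \Cc$, and points $p_n \in \pi_{S_n}(x_n)$ with $H_\Omega(L_n(x_n), p_n) \to \infty$. Using the $\Lambda$-action and Proposition~\ref{prop:HK_section_3_1}, after translating by suitable $\gamma_n \in \Lambda$ and passing to a subsequence I may assume $S_n = S$ is fixed, $\gamma_n x_n \to x_\infty \in \overline{\Cc}$, and $\gamma_n p_n \to p_\infty \in \overline{S}$; relabel so $x_n \to x_\infty$, $p_n \to p_\infty$, and $L_n \to L_\infty \in \Lc_S$ by compactness of $\Lc_S$. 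The key tension is that $H_\Omega(L_n(x_n), p_n) \to \infty$ forces at least one of $L_\infty(x_\infty)$, $p_\infty$ to lie in $\partial\Omega$, while simultaneously $L_n(x_n)$ and $p_n$ are ``close'' in a sense that will force them to have the same face — contradicting the divergence.

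The two halves of the argument are: (i) analyze the limiting behavior of $p_n = \pi_{S_n}(x_n)$, and (ii) analyze the limiting behavior of $L_n(x_n)$, and show these limits are forced to be compatible. For (i): if $x_\infty \in \Cc$, then $p_\infty \in \pi_S(x_\infty) \subset S$ is an interior point of $S$ and $H_\Omega(x_n,p_n)$ stays bounded; if $x_\infty \in \partiali\Cc$, then by definition of $\pi_{S_n}$ and Proposition~\ref{prop:dist_est_and_faces}, we get $p_\infty \in \overline{S} \cap F_\Omega(x_\infty)$, so $F_S(p_\infty) \subset F_\Omega(x_\infty)$. For (ii): $L_\infty$ is defined and continuous off $[\ker L_\infty] = [\cap_{H\in\Hc}H]$. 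By Lemma~\ref{lem:continuity-lin-proj-in-C} (whose proof only uses Corollary~\ref{cor:half_triangles_out_of_maximal_simplices} and Proposition~\ref{prop:W-intersect-boundary}, both available), $[\ker L_\infty] \cap \overline{\Cc} = \emptyset$, so $L_n(x_n) \to L_\infty(x_\infty)$ and this limit lies in $\overline{S}$. If $x_\infty \in F_\Omega(\partial S)$ then Proposition~\ref{prop:image-lin-proj-in-simplex-bdry} gives $L_\infty(x_\infty) \in F_\Omega(x_\infty)$, hence $L_\infty(x_\infty) \in \overline{S} \cap F_\Omega(x_\infty) = F_S(x_\infty)$ by Observation~\ref{obs:faces_of_simplices_are_properly_embedded}.

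\textbf{The main obstacle.} The crux is ruling out the scenario where both $L_n(x_n)$ and $p_n$ escape to the boundary of $S$ \emph{along the same face} but at different ``rates'', so that their Hilbert distance within $\Omega$ diverges even though their limits agree in $\partial\Omega$. To handle this I will work more carefully: rather than only tracking the point $x_n$, I will apply the rescaling to the pair by choosing the $\gamma_n$ so that a point on the geodesic segment $[x_n, p_n]$ (or rather $[x_n,q_n]$ for $q_n\in\pi_{S_n}(x_n)$ realizing the distance) stays in a compact part of $\Cc$ — this is the standard trick, analogous to Equation~\eqref{eq:un_selection} in Section~\ref{sec:intersection_of_nbhds}. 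Then the limit produces a half-triangle or properly embedded line in $\partiali\Cc$ with vertices related to $F_S(x_\infty)$ and $F_\Omega(x_\infty)$; combined with the fact that $q_n$ is the closest point of $S_n$, one shows the half-triangle ``sticks out'' of $S$, so Corollary~\ref{cor:half_triangles_out_of_maximal_simplices} forces the relevant boundary points into $F_\Omega(\partial S)$. This collapses $L_\infty(x_\infty)$ and $p_\infty$ into the \emph{same} open face of $S$, and then a final application of Proposition~\ref{prop:Crampons_dist_est_2} (or Lemma~\ref{lem:slide_along_faces}) bounding Hausdorff distance of line segments with matching-face endpoints shows $H_\Omega(L_n(x_n), p_n)$ was in fact bounded — the desired contradiction. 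The bookkeeping to set up the rescaling so that \emph{both} the divergence $H_\Omega(L_n(x_n),p_n)\to\infty$ survives and a useful compact limit exists is the technically delicate part; everything else is assembling the cited propositions.
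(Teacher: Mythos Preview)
Your overall strategy—contradiction, passage to a subsequence, production of a half triangle, and invocation of Corollary~\ref{cor:half_triangles_out_of_maximal_simplices}—matches the paper's. But the specific rescaling you propose has a genuine gap, and your final step is invalid.

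\textbf{Wrong choice of rescaling point.} You propose to normalize so that a point on $[x_n,p_n]$ (with $p_n\in\pi_S(x_n)$) stays in a compact part of $\Cc$. This fails for two reasons. First, nothing in the hypothesis forces $H_\Omega(x_n,p_n)=H_\Omega(x_n,S)$ to diverge; the assumption is only that $H_\Omega\big(L_n(x_n),p_n\big)\to\infty$, and this can happen with $x_n$ at bounded distance from $S$. So your segment may have bounded length and the normalization does nothing. Second, to keep $S$ fixed you must translate by $\Stab_\Lambda(S)$, which acts co-compactly only on $S$, not on $\Cc$; the midpoint of $[x_n,p_n]$ need not lie in $S$, so you cannot guarantee it lands in a compact set. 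The paper instead takes $m_n$ to be the Hilbert midpoint of $[p_n,\,L_{S,\Hc_n}(x_n)]$, which lies in $S$ (both endpoints do), so $\Stab_\Lambda(S)$ can be used to bring $m_n$ into a compact subset of $S$ while keeping $S$ fixed. After passing to limits one gets $m\in S$, $p,x'\in\partial S$, and $(p,x')\ni m$.

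\textbf{Invalid final step.} Your concluding argument—``$L_\infty(x_\infty)$ and $p_\infty$ lie in the same open face, so by Proposition~\ref{prop:Crampons_dist_est_2} $H_\Omega(L_n(x_n),p_n)$ was bounded''—does not work: lying in the same boundary face in the limit does \emph{not} bound the Hilbert distance along the sequence (Proposition~\ref{prop:dist_est_and_faces} is a one-way implication). The paper's contradiction is different and cleaner: with the correct normalization one shows $[x',x]\subset\partial_i\Cc$ (linearity of the projection) and $[p,x]\subset\partial_i\Cc$ (closest-point property of $p_n$), so $x',x,p$ form a half triangle. Corollary~\ref{cor:half_triangles_out_of_maximal_simplices} gives $x\in F_\Omega(\partial S)$, Proposition~\ref{prop:image-lin-proj-in-simplex-bdry} gives $x'\in F_\Omega(x)$, and then Observation~\ref{obs:faces}(4) forces $(p,x')\subset\partial_i\Cc$, contradicting $m\in(p,x')\cap\Cc$.
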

\begin{proof}
Since $\Sc$ has finitely many $\Lambda$ orbits (see Proposition \ref{prop:HK_section_3_1}), it is enough to prove the result for some fixed $S \in \Sc.$ 

Suppose the proposition is false. Then, for every $n \geq 0$, there exist  $x_n \in \Cc$, a set of $S$-supporting hyperplanes $\Hc_n$, and $p_n \in \pi_S(x_n)$ such that  
\begin{equation*}
\hil(p_n, L_{S,\Hc_n}(x_n)) \geq n.
\end{equation*}
Let $m_n$ be the midpoint of the projective line segment $[p_n,L_{S,\Hc_n}(x_n)]$ in the Hilbert distance. Since $\Stab_{\Lambda}(S)$ acts co-compactly on $S$ (see Proposition \ref{prop:HK_section_3_1}), translating by elements of $\Stab_{\Lambda}(S)$ and passing to a subsequence, we  can assume that $m:=\lim_{n \to \infty} m_n$ exists in $S$. Passing to a further subsequence and using Proposition \ref{prop:compactness-linear-proj}, we can assume that there exist $x, p, x' \in \partiali \Cc$ and $L_{S,\Hc} \in \Lc_S$ where $x:=\lim_{n \to \infty} x_n$, $p :=\lim_{n \to \infty}p_n$, $x' := \lim_{n \to \infty}L_{S,\Hc_n}(x_n)$, and $L_{S,\Hc}:=\lim_{n \to \infty}L_{S,\Hc_n}.$  By Lemma \ref{lem:continuity-lin-proj-in-C}, 
\begin{align*}
L_{S,\Hc}(x)=\lim_{n \to \infty} L_{S,\Hc_n}(x_n)=x'.
\end{align*}

We first show that $[x',x] \subset \partiali \Cc.$ Observe that $L_{S,\Hc}(v)=x'$ for all $v \in [x',x]$ since $L_{S,\Hc}$ is linear and $L_{S,\Hc}(x')=x'=L_{S,\Hc}(x).$  But $L_{S,\Hc}(\Omega)=S,$ implying $[x',x] \cap \Omega =\emptyset.$ Hence, $$[x',x] \subset \partiali \Cc.$$

Next we  show that $[p,x] \subset \partiali \Cc.$ Suppose not, then $(p,x) \subset \Cc$. Choose any  $ v \in (p,x) \cap \Cc$ and a sequence $v_n \in [p_n,x_n]$ such that $v=\lim_{n \to \infty} v_n.$ Since $p \in \partiali \Cc$ and $v \in \Cc$, \begin{align*}
\lim_{n \to \infty} \hil(v_n,p_n) = \infty.
\end{align*}
Fix any $v_S \in S.$  Then, choosing $n$ large enough so that $\hil(v_n,p_n) \geq 2 + \hil(v,v_S)$ and $\hil(v,v_n) \leq 1,$ 
\begin{align*}
\hil(x_n, v_S) & \leq \hil(x_n,v_n) + \hil(v_n, v) + \hil(v, v_S) \\
					& = \hil(x_n,p_n)-\hil(p_n,v_n) + \hil(v_n, v) + \hil(v,v_S) \\
					& \leq \hil(x_n, p_n) -1, 
\end{align*}
which is a contradiction since $p_n \in \pi_S(x_n).$  Hence, $[p,x] \subset \partiali \Cc.$

Thus, $[p,x] \cup [x,x'] \subset \partiali \Cc$ and by construction, $m \in (p,x) \subset \Cc$. Thus the three points $x, x', p$ form a half triangle. Then, by Corollary~\ref{cor:half_triangles_out_of_maximal_simplices}, $x \in F_{\Omega}(\bdry S)$. So, by  Proposition \ref{prop:image-lin-proj-in-simplex-bdry}, $x'=L_{S,\Hc}(x) \in F_{\Omega}(x).$ Since $[p,x] \subset \partiali \Cc$,  Observation~\ref{obs:faces} part (4) implies that $(p,x') \subset \partiali \Cc.$ This is a contradiction since 
\begin{equation*}
m \in (p,x') \cap \Cc \neq \emptyset. \qedhere
\end{equation*}
\end{proof}

The next step is to prove $\delta$-thinness of some special triangles built using  linear projections (see Proposition \ref{prop:lin-proj-hyperbolic-triangles}). The following lemma provides a criterion for $\delta$-thinness of triangles in Hilbert geometry.

\begin{lemma}\label{lem:one-side-enough-for-hyperbolic-triangles}
Suppose $\Omega \subset \Pb(\Rb^d)$ is a properly convex domain and $x,y,z \in \Omega$. If 
\begin{align*}
[x,y] \subset \Nc_\Omega( [x,z] \cup [z,y]; R),
\end{align*}
then the geodesic triangle $$[x,y] \cup [y,z] \cup [z,x]$$ is $(2R)$-thin.
\end{lemma}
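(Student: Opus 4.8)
The statement is a purely synthetic fact about geodesic triangles in a Hilbert geometry: if the side $[x,y]$ stays within the $R$-neighborhood of the union of the other two sides, then all three sides are mutually $(2R)$-close, i.e.\ the triangle is $(2R)$-thin. The key geometric input is that the Hilbert metric on a properly convex domain is \emph{geodesic} with the projective line segment $[p,q]$ being a geodesic (Proposition~\ref{prop:hilbert_basic}), so each side is an isometric copy of an interval. The plan is to run the standard "one-sided fellow-travelling forces two-sided fellow-travelling" argument, which for \emph{geodesic} sides uses only the triangle inequality and does not need any curvature hypothesis beyond the fact that the sides are genuine geodesics.

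\textbf{Key steps.} First, I would reduce to showing each of the sides $[x,z]$ and $[z,y]$ lies in $\Nc_\Omega([x,y];2R)$, since by hypothesis $[x,y]\subset \Nc_\Omega([x,z]\cup[z,y];R)\subset\Nc_\Omega([x,z]\cup[z,y];2R)$ already. By symmetry it suffices to treat a point $w\in[x,z]$. Parametrize $[x,y]$ by arc length as $\gamma$ and consider the "nearest to $w$ along the hypothesis" structure: the hypothesis gives, for each point $\gamma(t)$ on $[x,y]$, a point $f(t)\in[x,z]\cup[z,y]$ with $H_\Omega(\gamma(t),f(t))\le R$. Using continuity of $t\mapsto f(t)$ (which can be arranged, or one argues with the two closed preimages) and the fact that $f(0)=x\in[x,z]$ while $f(\ell)=y\in[z,y]$, there is a parameter $t_0$ at which $f$ "crosses over" — more precisely, there exist $t_0$ and points $f(t_0^-)\in[x,z]$, $f(t_0^+)\in[z,y]$ at Hilbert distance at most $R$ from $\gamma(t_0)$, and hence $H_\Omega(f(t_0^-),f(t_0^+))\le 2R$. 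Since $f(t_0^-)$ and $f(t_0^+)$ both lie within $2R$ of the single point $\gamma(t_0)\in[x,y]$, and since $[x,z]$ (resp.\ $[z,y]$) is a geodesic with endpoints $x,z$ (resp.\ $z,y$), the subsegment of $[x,z]$ from $f(t_0^-)$ to $z$ together with the subsegment of $[z,y]$ from $z$ to $f(t_0^+)$ forms a path; but using that these are geodesics I can compare the position of $w\in[x,z]$ to $f(t_0^-)$. Concretely, if $w$ lies on the $x$-side of $f(t_0^-)$ along $[x,z]$, I follow $[x,z]$ back toward $x=f(0)$ and use the monotone correspondence; if $w$ lies on the $z$-side, I push toward $f(t_0^+)\in[z,y]$ and then toward $y$. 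In either case a triangle-inequality estimate of the form $H_\Omega(w,\gamma(s))\le R + |$ offsets along geodesics $|$ yields $H_\Omega(w,[x,y])\le 2R$.

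A cleaner way to organize the same idea: define $c(t) = H_\Omega(\gamma(0),f(t)) = H_\Omega(x,f(t))$, measuring arc-length position of $f(t)$ along whichever of the two geodesics it lies on; but since $[x,z]$ and $[z,y]$ meet only at $z$, I instead track position along the concatenated geodesic-of-length $(H_\Omega(x,z)+H_\Omega(z,y))$ obtained by joining the two sides at $z$, parametrized by arc length as $\sigma:[0,L]\to\Omega$ with $\sigma(0)=x$, $\sigma(H_\Omega(x,z))=z$, $\sigma(L)=y$. The hypothesis says: for every $t$, there is $s(t)$ with $H_\Omega(\gamma(t),\sigma(s(t)))\le R$. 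Then $s(0)=0$, $s(\ell)=L$, and for consecutive parameters the triangle inequality forces $|s(t)-s(t')|\le H_\Omega(\gamma(t),\gamma(t'))+2R = |t-t'|+2R$ (here I use that $\sigma$ is arc-length so distances along $\sigma$ bound $H_\Omega$ from above, and $H_\Omega$ between the two $\sigma$-points is at most $H_\Omega(\gamma(t),\gamma(t'))+2R$). Hence $s$ is a coarse surjection, and for any $w=\sigma(s^\star)$ on the concatenated path there is $t$ with $|s(t)-s^\star|\le$ (the coarseness constant, which works out to exactly the slack needed), giving $H_\Omega(w,\gamma(t))\le H_\Omega(w,\sigma(s(t)))+H_\Omega(\sigma(s(t)),\gamma(t))\le |s^\star - s(t)| + R \le 2R$. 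Since every $w$ on $[x,z]\cup[z,y]$ equals some $\sigma(s^\star)$, this gives $[x,z]\cup[z,y]\subset\Nc_\Omega([x,y];2R)$, completing the proof of $(2R)$-thinness.

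\textbf{Main obstacle.} The only subtle point is the "crossing / coarse surjectivity" bookkeeping: $t\mapsto s(t)$ need not be continuous (the nearest point on $\sigma$ can jump), so I must argue via the intermediate-value-type statement using closed preimages — the sets $\{t : \exists s\le m, H_\Omega(\gamma(t),\sigma(s))\le R\}$ and $\{t: \exists s\ge m,\ldots\}$ are closed and cover $[0,\ell]$, hence meet, which pins down a parameter where $\sigma$-points on both sides of $m$ are within $R$ of $\gamma$, forcing the gap estimate. Once this elementary covering/connectedness step is in place, everything else is the triangle inequality. I expect this to be exactly the content of the (short) proof the authors give.
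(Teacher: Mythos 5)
Your opening move (connectedness of $[x,y]$ forces a point $c\in[x,y]$ that is simultaneously within $R$ of $[x,z]$ and of $[z,y]$) is exactly the paper's first step, but everything after it has a genuine gap — in fact your argument, as organized, proves a statement that is false. Both of your formulations conclude that \emph{every} point of $[x,z]\cup[z,y]$ lies in $\Nc_\Omega([x,y];2R)$. That is strictly stronger than $(2R)$-thinness and does not follow from the hypothesis: take $x,y$ at Hilbert distance $\epsilon$ and $z$ far away. Every point of $[x,y]$ is within $\epsilon$ of $x\in[x,z]$, so the hypothesis holds with $R=\epsilon$, yet $z$ is nowhere near $\Nc_\Omega([x,y];2\epsilon)$. (The triangle is still thin, because points of $[x,z]$ near $z$ are close to the \emph{other} remaining side $[y,z]$, not to $[x,y]$.) The concrete error in your ``cleaner'' version is the inequality $|s(t)-s(t')|\le|t-t'|+2R$: the concatenation $\sigma$ of $[x,z]$ and $[z,y]$ is not a geodesic, so the triangle inequality only bounds $H_\Omega(\sigma(s(t)),\sigma(s(t')))$ and gives no control on the arc-length gap $|s(t)-s(t')|$. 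At the crossover the jump in $s$ equals $H_\Omega(c_x,z)+H_\Omega(z,c_y)$, which is exactly the uncontrolled quantity (in the example above it is roughly $2H_\Omega(x,z)$, while $|t-t'|+2R\le 3\epsilon$). The first, vaguer formulation reaches the same false conclusion ``$H_\Omega(w,[x,y])\le 2R$'' for all $w\in[x,z]$.

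More structurally, you assert the argument ``uses only the triangle inequality and does not need any curvature hypothesis beyond the fact that the sides are genuine geodesics.'' The lemma is not a general fact about geodesic metric spaces with constant $2R$: in a general geodesic space, two geodesic segments whose endpoint pairs are within $R$ of each other need not be uniformly close (the two halves of a circle, or geodesics for the $\ell^\infty$ norm). The paper's proof uses exactly one non-synthetic input, the convexity of the Hilbert metric in the form of Proposition~\ref{prop:Crampons_dist_est_2}: $H_\Omega^{\Haus}\big([p_1,q_1],[p_2,q_2]\big)\le H_\Omega(p_1,p_2)+H_\Omega(q_1,q_2)$. With $c_x\in[x,z]$ and $c_y\in[z,y]$ both within $R$ of $c$, this yields $H_\Omega^{\Haus}([x,c_x],[x,c])\le R$ and $H_\Omega^{\Haus}([c_x,z],[c_y,z])\le H_\Omega(c_x,c_y)\le 2R$, whence $[x,z]=[x,c_x]\cup[c_x,z]\subset\Nc_\Omega([x,y]\cup[y,z];2R)$, and symmetrically for $[y,z]$. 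Your proposal is missing this segment-comparison step entirely, and no bookkeeping of the parameter correspondence $t\mapsto s(t)$ can substitute for it.
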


\begin{proof} The sets
\begin{align*}
I_x = [x,y] \cap \Nc_\Omega( [x,z]; R) \text{ and } I_y = [x,y] \cap \Nc_\Omega( [y,z]; R)
\end{align*}
are non-empty and relatively open in $[x,y]$. Since $[x,y] = I_x \cup I_y$ and $[x,y]$ is connected, there exists $c \in I_x \cap I_y$. Then there exist $c_x \in [x,z]$ and $c_y \in [y,z]$ such that $H_\Omega(c,c_x) < R$ and $H_\Omega(c,c_y) < R$. Then 
\begin{align*}
H_\Omega^{\Haus}( [x,c_x],[x,c]) \leq H_\Omega(x,x) + H_\Omega(c_x, c) < R
\end{align*}
and
\begin{align*}
H_\Omega^{\Haus}( [c_x,z], [c_y,z]) \leq H_\Omega(c_x,c_y) + H_\Omega(z, z) < 2R.
\end{align*}
So
\begin{align*}
[x,z] \subset \Nc_\Omega( [x,y] \cup [y,z]; 2R).
\end{align*}
A similar argument shows that 
\begin{align*}
[y,z] \subset \Nc_\Omega( [z,x] \cup [x,y]; 2R).
\end{align*}
So the geodesic triangle is $(2R)$-thin. 
\end{proof}

\begin{proposition}\label{prop:lin-proj-hyperbolic-triangles}There exists $\delta_2 \geq 0$ such that:  if $x \in \Cc$, $S \in \Sc$, $z \in S$, and $\Hc$ is a set of $S$-supporting hyperplanes, then the geodesic triangle $$\big[ x,z \big] \cup \big[ z,L_{S,\Hc}(x) \big] \cup \big[ L_{S,\Hc}(x),x \big] $$ is $\delta_2$-thin.
\end{proposition}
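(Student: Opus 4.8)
The plan is to reduce the claim to Lemma~\ref{lem:one-side-enough-for-hyperbolic-triangles} by showing that the side $[x, L_{S,\Hc}(x)]$ stays in a uniformly bounded neighborhood of the union of the other two sides. Equivalently, it suffices to produce a uniform constant $R$ so that every point of $[x, L_{S,\Hc}(x)]$ lies within $R$ of $[x,z] \cup [z, L_{S,\Hc}(x)]$; then the triangle is $2R$-thin and we take $\delta_2 = 2R$. Since $\Lambda$ has finitely many orbits in $\Sc$ and $\Stab_\Lambda(S)$ acts cocompactly on $S$ (Proposition~\ref{prop:HK_section_3_1}), and since $\Lc_S$ is compact (Proposition~\ref{prop:compactness-linear-proj}), we can argue by contradiction and pass to a limiting configuration. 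First I would fix one representative $S = S_i$ from each orbit, and suppose no such $\delta_2$ exists: for each $n$ there are $x_n \in \Cc$, $z_n \in S$, sets of $S$-supporting hyperplanes $\Hc_n$, and points $w_n \in [x_n, L_{S,\Hc_n}(x_n)]$ with $H_\Omega(w_n, [x_n, z_n] \cup [z_n, L_{S,\Hc_n}(x_n)]) \to \infty$.

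Next I would normalize using the group action: translate by elements of $\Stab_\Lambda(S)$ so that (after passing to a subsequence) $w_n \to w_\infty \in \overline{\Cc}$, and by Proposition~\ref{prop:compactness-linear-proj} assume $L_{S,\Hc_n} \to L_{S,\Hc} \in \Lc_S$. Passing to further subsequences, let $x_n \to x_\infty$, $z_n \to z_\infty$, $L_{S,\Hc_n}(x_n) \to x_\infty' \in \overline{\Cc}$; by Lemma~\ref{lem:continuity-lin-proj-in-C} we have $x_\infty' = L_{S,\Hc}(x_\infty)$. Because $w_n$ escapes every bounded neighborhood of the two other sides, it escapes every compact subset of $\Omega$, so $w_\infty \in \partiali \Cc$; and since $w_n \in (x_n, L_{S,\Hc_n}(x_n))$, the same forces $x_\infty, x_\infty' \in \partiali \Cc$ as well (if, say, $x_\infty \in \Cc$ then $w_n$ would be uniformly close to the segment $[x_n, z_n]$ near its endpoint, up to bounded error — one checks this with the triangle inequality along the geodesic). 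Moreover $w_\infty \in (x_\infty, x_\infty')$, and $x_\infty \ne x_\infty'$ since otherwise $w_n$ would be within bounded distance of the degenerating side.

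Now I want to extract a half triangle. The key geometric input is that $[x_\infty, x_\infty'] \subset \partiali \Cc$: indeed $L_{S,\Hc}$ is linear and collapses the segment $[x_\infty, x_\infty']$ to the single point $x_\infty' = L_{S,\Hc}(x_\infty) = L_{S,\Hc}(x_\infty')$, while $L_{S,\Hc}(\Omega) = S$ (Observation~\ref{obs:lin-proj}), so $[x_\infty, x_\infty'] \cap \Omega = \emptyset$. Next I claim $[x_\infty, z_\infty] \cup [z_\infty, x_\infty'] \subset \partiali \Cc$: this follows from the escaping property of $w_n$ — if a nontrivial subsegment of $[x_\infty, z_\infty]$ met $\Cc$, then (after choosing points of $[x_n, z_n]$ converging into $\Cc$) $w_n$ would be within bounded distance of $[x_n, z_n]$, contradicting divergence; the argument for $[z_\infty, x_\infty']$ is identical. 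But $z_\infty \in \overline{S}$ and the interior of $S$ meets $\Cc$; if $z_\infty \in S$ this already contradicts $[x_\infty, z_\infty] \subset \partiali \Cc$ (the segment would enter $S \subset \Cc$), and if $z_\infty \in \partial S$ one picks $z_\infty^{op}$ opposite to $z_\infty$ in $S$ so that $(z_\infty, z_\infty^{op}) \subset S$, getting a half triangle $x_\infty, z_\infty, x_\infty^?$ feeding into Corollary~\ref{cor:half_triangles_out_of_maximal_simplices}. In whichever form it arises, the half triangle $x_\infty', z_\infty, x_\infty$ (with $m = w_\infty$ witnessing a point of $(x_\infty, x_\infty') \cap \Cc$ is impossible since $(x_\infty,x_\infty') \subset \partiali\Cc$ — so I instead use $w_\infty$ as the interior point) forces, via Corollary~\ref{cor:half_triangles_out_of_maximal_simplices}, that $x_\infty \in F_\Omega(\partial S)$, hence by Proposition~\ref{prop:image-lin-proj-in-simplex-bdry} that $x_\infty' = L_{S,\Hc}(x_\infty) \in F_\Omega(x_\infty)$; then Observation~\ref{obs:faces}(4) applied to $[x_\infty, x_\infty'] \cup [x_\infty', z_\infty] \subset \partial\Omega$ combined with the fact that $x_\infty' \in F_\Omega(x_\infty)$ yields $(x_\infty, z_\infty) \subset \partial\Omega$ contradicting $w_\infty \in (x_\infty, x_\infty') \cap \Cc$ once we trace the right collinear triple — this is essentially the same contradiction engine as in the proof of Proposition~\ref{prop:proj-coarsely-equiv}.

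The main obstacle will be bookkeeping the limiting configuration carefully: which of the three limit points lie on the boundary, verifying that $w_\infty$ is genuinely an interior point of the relevant open segment (so that a bona fide half triangle is present), and handling the degenerate case $z_\infty \in \partial S$ by replacing $z_\infty$ with an opposite point. The underlying mechanism — a Benzécri-type normalization producing a half triangle, then invoking Corollary~\ref{cor:half_triangles_out_of_maximal_simplices} and the compatibility of linear projections with faces — is exactly the one already used in Proposition~\ref{prop:proj-coarsely-equiv}, so once the limit object is pinned down the contradiction is routine.
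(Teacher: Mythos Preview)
Your overall strategy---normalize by the group, extract a limiting configuration, produce a half triangle, and invoke Corollary~\ref{cor:half_triangles_out_of_maximal_simplices}---matches the paper's. But you have chosen the wrong side of the triangle to control, and this creates a genuine gap. The paper controls the side $[L_{S,\Hc}(x),z]$, which lies entirely in $S$. This is the point of the whole argument: because $u_n \in [L_{S,\Hc_n}(x_n),z_n] \subset S$ and $\Stab_\Lambda(S)$ acts co-compactly on $S$, one can translate so that $u_n \to u \in S \subset \Cc$. That interior limit point $u \in (p,z)$ is exactly what witnesses the half triangle $p,x,z$, and the contradiction then falls out via Corollary~\ref{cor:half_triangles_out_of_maximal_simplices} and Proposition~\ref{prop:image-lin-proj-in-simplex-bdry}.

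By contrast, your $w_n$ lies on $[x_n, L_{S,\Hc_n}(x_n)]$, which is not contained in $S$, so translating by $\Stab_\Lambda(S)$ does not place $w_n$ in any compact subset of $\Cc$; as you yourself note, $w_\infty \in \partiali\Cc$. Once $w_\infty$ is on the ideal boundary, you have no interior point and hence no half triangle (you observe this when you write ``is impossible since $(x_\infty,x_\infty')\subset\partiali\Cc$''). Your attempted workaround---claiming $[x_\infty,z_\infty]\cup[z_\infty,x_\infty']\subset\partiali\Cc$ from the escaping property of $w_n$---does not hold: $H_\Omega(w_n,[x_n,z_n])\to\infty$ is automatic once $w_n\to\partiali\Cc$, regardless of whether $[x_\infty,z_\infty]$ meets $\Cc$, so it gives no information about the limit segments. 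Switching to the side $[L_{S,\Hc}(x),z]\subset S$ repairs all of this at once.
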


\begin{proof} 
Since $\Sc$ has finitely many $\Lambda$ orbits (see Proposition \ref{prop:HK_section_3_1}), it is enough to prove the result for some fixed $S \in \Sc.$ By Lemma \ref{lem:one-side-enough-for-hyperbolic-triangles}, it is enough to show that there exists $\delta_2 \geq 0$ such that 
$$[L_{S,\Hc}(x),z] \subset \Nc_{\delta_2/2}([z,x] \cup [x,L_{S,\Hc}(x)])$$
for all $x \in \Cc$, $z \in S$, and $\Hc$ a set of $S$-supporting hyperplanes.

Suppose such a $\delta_2$ does not exist. Then, for every $n \geq 0$, there exist $z_n \in S$, a set of $S$-supporting hyperplanes $\Hc_n$, $p_n := L_{S,\Hc_n}(x_n)$, and $u_n \in [z_n,p_n]$ such that 
\begin{equation*}
\hil(u_n, [z_n,x_n] \cup [x_n,p_n]) \geq n.
\end{equation*}

Since $\Stab_{\Lambda}(S)$ acts co-compactly on $S$, translating by elements of $\Stab_{\Lambda}(S)$ and passing to a subsequence, we can assume that $u:= \lim_{n \to \infty} u_n$ exists and  $u \in S$. Passing to a further subsequence and using Proposition \ref{prop:compactness-linear-proj}, we can assume there exist $x, z, p \in \overline{\Cc}$ and $L_{S,\Hc} \in \Lc_S$ where $ x:= \lim_{n \to \infty} x_n$, $z:= \lim_{n \to \infty} z_n$, $p:= \lim_{n \to \infty} p_n$, and $L_{S,\Hc}:=\lim_{n \to \infty} L_{S,\Hc_n}. $
Since
 \begin{equation*}
 \begin{split}
 \lim_{n \to \infty} &\hil(u, [x_n,z_n]\cup [x_n,p_n]) \\
 &\geq\lim_{n \to \infty}  \Big( \hil(u_n,[x_n,z_n] \cup [x_n,p_n])- \hil(u,u_n) \Big) =\infty, 
 \end{split}
 \end{equation*}
 we have 
 \begin{equation*}
 [x,z] \cup [x,p] \subset \partiali \Cc.
 \end{equation*}  
 By construction, $u \in (p,z) \subset \Cc$. Thus, $p, x, z$ form a half triangle. Then, by Corollary~\ref{cor:half_triangles_out_of_maximal_simplices}, $x \in F_{\Omega}(\bdry S)$. Lemma \ref{lem:continuity-lin-proj-in-C} and Proposition \ref{prop:image-lin-proj-in-simplex-bdry} then imply 
 $$p =\lim_{n \to \infty} p_n= \lim_{n \to \infty} L_{S,\Hc_n}(x_n)=L_{S,\Hc}(x) \in F_{\Omega}(x).$$
Then, since $[x,z] \subset \partiali \Cc$, Observation \ref{obs:faces} part (4) implies that $[p,z] \subset \partiali \Cc.$ This is a contradiction, since 
 \begin{equation*}
 u \in (p,z) \cap \Cc  \neq \emptyset. \qedhere
 \end{equation*}
\end{proof}

\begin{proposition}\label{prop:projection-close-to-hypotenuse-hyperbolic-triangle} Set $\delta_3:=\delta_1 + 3 \delta_2$. If $x \in \Cc$, $S \in \Sc$, $\Hc$ is a set of $S$-supporting hyperplanes, and $z \in S$, then $\hil\big( L_{S,\Hc}(x),[x,z] \big) \leq \delta_3.$ 
\end{proposition}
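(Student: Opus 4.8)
The plan is to combine the two facts just established: the $\delta_1$‑closeness of $L_{S,\Hc}(x)$ to the closest‑points projection (Proposition~\ref{prop:proj-coarsely-equiv}) and the $\delta_2$‑thinness of the triangle with vertices $x$, $z$, and $p:=L_{S,\Hc}(x)$ (Proposition~\ref{prop:lin-proj-hyperbolic-triangles}). Fix $q\in\pi_S(x)$, so $\hil(x,q)=\hil(x,S)$ and $\hil(p,q)\le\delta_1$; recall $p\in S$ by Observation~\ref{obs:lin-proj}, that $x,z,p\in\Cc$ and hence all of $[x,z],[x,p],[z,p]$ lie in $\Cc\subset\Omega$ (convexity of $\Cc$), and that $[z,p]\subset S$ since $z,p\in S$ and $S$ is convex.

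First I would extract a ``center'' of the triangle lying on the side $[x,z]$. By $\delta_2$‑thinness, $[x,z]\subset\Nc_\Omega\big([x,p]\cup[p,z];\delta_2\big)$, so the two closed sets $\{y\in[x,z]:\hil(y,[x,p])\le\delta_2\}$ and $\{y\in[x,z]:\hil(y,[p,z])\le\delta_2\}$ cover the connected set $[x,z]$ and contain $x$ and $z$ respectively; hence they meet in a point $m$. Choose $m'\in[x,p]$ and $m''\in[z,p]$ with $\hil(m,m')\le\delta_2$ and $\hil(m,m'')\le\delta_2$, so that $\hil(m',m'')\le2\delta_2$.

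The remaining step is a short triangle‑inequality bookkeeping that pins down the constant. Since $m''\in[z,p]\subset S$ we get $\hil(x,m'')\ge\hil(x,S)$, hence $\hil(x,m')\ge\hil(x,S)-2\delta_2$; as $m'\in[x,p]$ this yields $\hil(m',p)=\hil(x,p)-\hil(x,m')\le\hil(x,p)-\hil(x,S)+2\delta_2$. Now $\hil(x,p)\le\hil(x,q)+\hil(q,p)\le\hil(x,S)+\delta_1$ (here Proposition~\ref{prop:proj-coarsely-equiv} enters), so $\hil(m',p)\le\delta_1+2\delta_2$ and therefore $\hil(p,m)\le\hil(p,m')+\hil(m',m)\le\delta_1+3\delta_2=\delta_3$. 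Since $m\in[x,z]$, this gives $\hil\big(L_{S,\Hc}(x),[x,z]\big)\le\delta_3$.

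I do not anticipate a genuine obstacle: the only points needing care are the connectedness argument producing the center $m$ and checking that every segment in play is a Hilbert geodesic inside $\Cc$ (so that the thin‑triangle statement applies verbatim); the exact value $\delta_1+3\delta_2$ comes out of the bookkeeping precisely because the only two places we ``lose'' are the $\delta_1$ from Proposition~\ref{prop:proj-coarsely-equiv} and a total of $3\delta_2$ from the three uses of $\delta_2$‑closeness ($m$ to $m'$, $m$ to $m''$, and hence $m'$ to $m''$).
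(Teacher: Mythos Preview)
Your proof is correct and essentially identical to the paper's. Both arguments use $\delta_2$-thinness of the triangle $[x,z]\cup[z,p]\cup[p,x]$ (with $p=L_{S,\Hc}(x)$) to locate a ``center'' close to all three sides, then feed in the inequality $\hil(x,p)\le\hil(x,S)+\delta_1$ from Proposition~\ref{prop:proj-coarsely-equiv}; the only cosmetic difference is that the paper places its center on $[p,z]$ while you place yours on $[x,z]$, and the arithmetic reshuffles accordingly to give the same constant $\delta_1+3\delta_2$.
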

\begin{proof}
By Proposition \ref{prop:lin-proj-hyperbolic-triangles},  the geodesic triangle 
\begin{align*}
[x,z] \cup [z,L_{S,\Hc}(x)] \cup [L_{S,\Hc}(x),x]
\end{align*}
 is $\delta_2$-thin. Thus, there exist $y \in [L_{S,\Hc}(x),z]$, $y_1 \in [x,L_{S,\Hc}(x)]$, and $y_2 \in [x,z]$ such that $\hil(y,y_1) \leq \delta_2$ and $\hil(y,y_2) \leq \delta_2.$

We claim that $\hil( L_{S,\Hc}(x),y_1) \leq \delta_1 + \delta_2.$ Choose any $p \in \pi_S(x).$ Since $[L_{S,\Hc}(x),z] \subset S$,  
\begin{align*}
\hil(x,p) =\hil(x,S) \leq \hil(x,y).
\end{align*} 
 Then, using Proposition \ref{prop:proj-coarsely-equiv}, 
\begin{align*}
\hil(x, L_{S,\Hc}(x)) \leq \hil(x,p)+\hil(p,L_{S,\Hc}(x))  \leq \hil(x,y)+ \delta_1.
\end{align*}
Then, 
\begin{align*}
\hil(L_{S,\Hc}(x),y_1) &= \hil(L_{S,\Hc}(x),x)-\hil(y_1,x) \\
								& \leq \hil(x,y) +\delta_1 -\hil(y_1,x) \\
								& \leq \hil(y,y_1) + \delta_1 \leq \delta_2 + \delta_1. 
\end{align*}
Hence, 
\begin{align*}
\hil(L_{S,\Hc}(x),[x,z]) &\leq \hil(L_{S,\Hc}(x),y_2) \\
									& \leq \hil(L_{S,\Hc}(x),y_1) + \hil(y_1,y) + \hil(y,y_2)\\
									& \leq \delta_1+3\delta_2=\delta_3. \qedhere
\end{align*}
\end{proof}

Our next goal is to prove if the distance between the linear projections of two points onto a simplex $S \in \Sc$ is large, then the geodesic between the two points spends a significant amount of  time in a tubular neighborhood of $S$. This is accomplished in Corollary \ref{cor:penetration-of-simplex-nbd} using the next result.

\begin{proposition}\label{prop:projection-to-simplices-contracting}
There exists $\delta_4 \geq 0$ such that: if $S \in \Sc$, $\Hc$ is a set of $S$-supporting hyperplanes, $x, y \in \Cc$, and $\hil(L_{S,\Hc}(x),L_{S,\Hc}(y)) \geq \delta_4,$ then 
\begin{align*}\hil(L_{S,\Hc}(x),[x,y]) \leq \delta_4 ~~\text{ and }~~ \hil(L_{S,\Hc}(y),[x,y]) \leq \delta_4.
\end{align*} 
\end{proposition}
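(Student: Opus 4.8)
The plan is to argue by contradiction and compactness, in the same spirit as Propositions~\ref{prop:proj-coarsely-equiv} and~\ref{prop:lin-proj-hyperbolic-triangles}, producing in the limit a half triangle whose structure is forbidden by Corollary~\ref{cor:half_triangles_out_of_maximal_simplices}. Suppose no such $\delta_4$ exists. Then for each $n \geq 0$ there are $S_n \in \Sc$, a set of $S_n$-supporting hyperplanes $\Hc_n$, and points $x_n, y_n \in \Cc$ with $\hil(L_{S_n,\Hc_n}(x_n), L_{S_n,\Hc_n}(y_n)) \geq n$ but (say) $\hil(L_{S_n,\Hc_n}(x_n), [x_n,y_n]) \geq n$ (the two conclusions are symmetric, so we treat one). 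Write $p_n := L_{S_n,\Hc_n}(x_n)$ and $q_n := L_{S_n,\Hc_n}(y_n)$, both in $S_n$. Since $\Sc$ has finitely many $\Lambda$-orbits (Proposition~\ref{prop:HK_section_3_1}) and $\Stab_\Lambda(S)$ acts cocompactly on each $S$, after translating by elements of $\Lambda$ we may assume $S_n = S$ is fixed and that $p_n$ stays in a fixed compact subset of $S$; passing to a subsequence, $p_n \to p \in S$ and, by Proposition~\ref{prop:compactness-linear-proj}, $L_{S,\Hc_n} \to L := L_{S,\Hc} \in \Lc_S$. Also pass to a subsequence so that $x_n \to x \in \overline{\Cc}$ and $y_n \to y \in \overline{\Cc}$. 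By Lemma~\ref{lem:continuity-lin-proj-in-C}, $L(x) = \lim p_n = p$ — but $p \in S \subset \Cc$, while $L(\Omega) = S$ and $L$ is linear; since $\hil(p_n, [x_n,y_n]) \to \infty$ and $p_n \to p \in \Cc$, we must have $[x,y] \subset \partiali\Cc$, so in particular $x, y \in \partiali\Cc$ and $x \notin \Cc$. The relation $L(x) = p \in S = L(\Omega)$ together with $[\ker L] \cap \overline{\Cc} = \emptyset$ (established in the proof of Lemma~\ref{lem:continuity-lin-proj-in-C}) means $x$ is a genuine boundary point whose $L$-image lies in $S$; this is the key tension to exploit.

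The core step is to manufacture a half triangle. Since $q_n = L(y_n) \to q := L(y)$ lies in $\overline{S}$, and $\hil(p_n, q_n) \to \infty$, in the limit $p$ and $q$ are distinct points of $\overline{S}$, and the segment $[p,q] \subset \overline{S} \subset \overline{\Omega}$ satisfies $[p,q] \subset \partial\Omega$ only if $p, q$ lie in a common boundary face of $S$ — which need not happen; instead I want to use $x$ as the apex. I claim $[x, p] \subset \partiali\Cc$: indeed $L$ is linear and $L(x) = p = L(p)$, so $L$ is constant equal to $p$ along $[x,p]$, hence $[x,p] \cap \Omega = \emptyset$ because $L(\Omega) = S$ and points of $S$ have nonconstant $L$-preimage lines through $\Omega$... more carefully, $L([x,p]) = \{p\}$ forces $[x,p] \cap \Omega = \emptyset$ since $L|_\Omega$ is the restriction of a projection whose fibers meet $\Omega$ in at most... actually the clean statement is: $L^{-1}(p) = p \oplus \ker L$, and $[\ker L]\cap\Omega = \emptyset$ (Proposition~\ref{prop:W-direct-sum}) plus $p \in S$ gives, by the argument in Lemma~\ref{lem:continuity-lin-proj-in-C}, that $[p \oplus \ker L] \cap \Omega$ is just... here I should instead invoke Proposition~\ref{prop:W-intersect-boundary}(1) with the fact that $x$ lies in a hyperplane of $\Hc$. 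In any case $[x,p] \subset \partial\Omega$. Combined with $[x,y] \subset \partiali\Cc$ and the fact that $y, p, y_n$ are positioned so that some interior segment survives — here I use that $\hil(x_n, [x_n,y_n]) $ trivially vanishes but $\hil(p_n, [x_n,y_n]) \to \infty$, so there is a point $m_n$ on the Hilbert geodesic $[p_n, x_n]$ (or $[p_n, y_n]$) whose translates stay in a compact part of $\Cc$; its limit $m$ lies in $\Cc$ and in $(p, x)$ or $(p,y)$. Thus we get three points among $\{x, y, p\}$ forming a half triangle in $\Omega$ with $p \in \partial S$ (since $p$ is the $L$-image of a boundary point, $p \in F_\Omega(\partial S)$ by Proposition~\ref{prop:image-lin-proj-in-simplex-bdry}, and by maximality $p$ actually lies in $\partial S$, or at worst $p \in F_\Omega(\partial S)$ which is all Corollary~\ref{cor:half_triangles_out_of_maximal_simplices} needs).

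Finally, apply Corollary~\ref{cor:half_triangles_out_of_maximal_simplices}: with $a, c \in \partial S$ (or in $F_\Omega(\partial S)$) and apex $b = x$, we get $x \in F_\Omega(\partial S)$; then Proposition~\ref{prop:image-lin-proj-in-simplex-bdry} gives $p = L(x) \in F_\Omega(x)$, so by Observation~\ref{obs:faces}(4) the segment $(m$-side$)$ joining $p$ to the other limit vertex lies in $\partiali\Cc$, contradicting $m \in \Cc$. This is exactly the contradiction pattern of Propositions~\ref{prop:proj-coarsely-equiv} and~\ref{prop:lin-proj-hyperbolic-triangles}, so the bookkeeping of which limit points form the half triangle should go through verbatim. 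The main obstacle I anticipate is the combinatorial care in the limiting step: verifying that after all the translations and subsequence extractions the three limit points are genuinely distinct and genuinely form a half triangle (as opposed to degenerating), and correctly identifying which of $[x,p]$, $[x,y]$, $[p,y]$ land in $\partiali\Cc$ versus which retains an interior point — this requires using the two hypotheses $\hil(p_n,q_n)\to\infty$ and $\hil(p_n,[x_n,y_n])\to\infty$ in tandem, one to separate $p$ from $q$ and the other to push $[x,y]$ to the ideal boundary. Once the half triangle is in hand, the contradiction is immediate from the cited corollary. I would also remark that the symmetric conclusion for $L_{S,\Hc}(y)$ follows by interchanging the roles of $x$ and $y$, so a single $\delta_4$ (the maximum of the two constants produced, or just the larger) works for both bounds.
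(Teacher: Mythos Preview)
Your overall strategy (contradiction, cocompactness, extract a half triangle, apply Corollary~\ref{cor:half_triangles_out_of_maximal_simplices}) is exactly right, but the normalization you choose breaks the argument and the half triangle you try to build does not exist.

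You translate so that $p_n = L_{S,\Hc_n}(x_n)$ stays in a compact subset of $S$, hence $p_n \to p \in S$. Then, as you note, $L(x) = p$ by continuity. But $p$ lies in the \emph{interior} $S \subset \Omega$, so the segment $(x,p]$ is contained in $\Omega$ by convexity; your claim that $[x,p] \subset \partiali\Cc$ is simply false, and neither Proposition~\ref{prop:W-intersect-boundary} nor the fiber description of $L^{-1}(p)$ rescues it (indeed $p$ itself sits in that fiber and in $\Omega$). Likewise your appeal to Proposition~\ref{prop:image-lin-proj-in-simplex-bdry} to place $p$ in $F_\Omega(\partial S)$ is backwards: that proposition assumes $x \in F_\Omega(\partial S)$ and concludes $L(x) \in F_\Omega(x) \subset \partial\Omega$, which would contradict $L(x)=p\in\Omega$. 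So with your normalization one cannot produce a half triangle with $p$ as a boundary vertex.

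The paper fixes this by normalizing the \emph{midpoint} $c_n$ of $[a_n,b_n]:=[L_{S,\Hc_n}(x_n),L_{S,\Hc_n}(y_n)]$ to converge to $c\in S$; then both $a=\lim a_n$ and $b=\lim b_n$ lie in $\partial S$, and $(a,b)\ni c$ is an interior segment. The second ingredient you are missing is Proposition~\ref{prop:projection-close-to-hypotenuse-hyperbolic-triangle}: applied with $z=c_n\in S$ it gives $a'_n \in [x_n,c_n]$ with $\hil(a_n,a'_n)\leq \delta_3$, and in the limit one finds $a'_n \to x$, so $x \in F_\Omega(a)$ (and symmetrically $y\in F_\Omega(b)$). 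Since $[x,y]\subset\partiali\Cc$, Observation~\ref{obs:faces}(4) then forces $[a,b]\subset\partiali\Cc$, contradicting $c\in(a,b)\cap\Cc$. No half-triangle corollary is needed at this final step --- the contradiction is immediate from face geometry --- but the proof genuinely relies on the already-established Proposition~\ref{prop:projection-close-to-hypotenuse-hyperbolic-triangle}, which your outline never invokes.
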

\begin{proof} Observe that  the linear projections are $\Lambda$-equivariant, that is, 
$$L_{gS,g\Hc} \circ g = g \circ L_{S,\Hc}$$ 
for any $g \in \Lambda$, $S \in \Sc$, and $\Hc$ a set of $S$-supporting hyperplanes. Moreover, by  Proposition \ref{prop:HK_section_3_1} there are only finitely many $\Lambda$-orbits in $\Sc$. Thus, it is enough to prove this proposition for a fixed $S \in \Sc.$

Suppose the proposition is false. Then, for every $n \geq 0$, there exist $x_n, y_n \in \Cc$ and a set of $S$-supporting hyperplanes $\Hc_n$ with 
$$\hil(L_{S,\Hc_n}(x_n),L_{S,\Hc_n}(y_n)) \geq n$$ 
and 
$$\hil(L_{S,\Hc_n}(x_n),[x_n,y_n]) \geq n.$$
Let $a_n:=L_{S,\Hc_n}(x_n)$ and $b_n:=L_{S,\Hc_n}(y_n)$. Then pick $c_n \in [a_n,b_n]$ such that 
\begin{equation}\label{eqn:choose-un}
\hil(c_n,a_n) =n/2.
\end{equation}
Then, 
\begin{equation}\label{eqn:dist-un-pi-yn}
\hil(c_n,b_n) \geq \hil(a_n, b_n) -\hil(c_n, a_n) \geq n/2
\end{equation}
and 
\begin{equation}\label{eqn:dist-un-[xn,yn]}
\hil \Big(c_n,[x_n,y_n] \Big) \geq \hil \Big(a_n,[x_n,y_n] \Big)-\hil(c_n,a_n) \geq n/2.
\end{equation}

Since $\Stab_{\LG}(S)$ acts co-compactly on $S$ (see Proposition \ref{prop:HK_section_3_1}), translating by elements of $\Stab_{\Lambda}(S)$ and passing to a subsequence, we may assume that $c:=\lim_{n \to \infty} c_n$ exists and $c \in S.$  After taking a further subsequence, we can assume that the following limits exist in $\overline{\Cc}$: $a:=\lim_{n \to \infty} a_n$, $b:=\lim_{n \to \infty} b_n$, $x:=\lim_{n \to \infty} x_n$ and $y:=\lim_{n \to \infty} y_n$. 

We now observe that $a, b, x, y \in \partiali \Cc.$ Equation \eqref{eqn:choose-un} and~\eqref{eqn:dist-un-pi-yn} imply that $a, b \in \partiali \Cc.$ Equation \eqref{eqn:dist-un-[xn,yn]} implies that $[x,y] \subset \partiali \Cc.$ 

We claim that $x \in F_{\Omega}(a)$ and $y \in F_{\Omega}(b).$ Since $c_n \in S$, by Proposition \ref{prop:projection-close-to-hypotenuse-hyperbolic-triangle}, there exists $a_n' \in [x_n,c_n]$ such that $\hil(a_n,a_n') \leq \delta_3.$ Up to passing to a subsequence, we can assume that $a':=\lim_{n\to \infty} a_n'$ exists in $\overline{\Cc}$. Observe that $a'\in \partiali \Cc$ since  $$\lim_{n \to \infty} \hil(a_n',c) \geq\lim_{n \to \infty} \Big( \hil(a_n,c_n)- \hil(c_n,c) - \hil(a_n,a_n') \Big) =\infty.$$ Since $a_n' \in [x_n,c_n]$, $$a' \in \partiali \Cc \cap [x,c]=\{ x\}.$$ Thus, $\lim_{n \to \infty}a_n'=x$. Since $\lim_{n \to \infty}a_n=a$ and $\hil(a_n,a_n') \leq \delta_3$, Proposition \ref{prop:dist_est_and_faces} implies that $x \in F_{\Omega}(a).$  Similar reasoning shows that $y \in F_{\Omega}(b).$ 

Since $[x,y] \subset \partiali \Cc$, Observation~\ref{obs:faces} part (4) implies that $[a,b] \subset \partiali \Cc.$ This is a contradiction since $ c \in (a,b) \cap \Cc \neq \emptyset.$ 
\end{proof}

\begin{corollary}\label{cor:penetration-of-simplex-nbd}
If $S \in \Sc$, $\Hc$ is a set of $S$-supporting hyperplanes, $R >0$, $x, y \in \Cc$, and $\hil(L_{S,\Hc}(x), L_{S,\Hc}(y)) \geq R + 2 \delta_4$, then:
\begin{enumerate}
\item there exists $[x_0,y_0]\subset [x,y]$ such that $[x_0,y_0] \subset \Nc_{\Omega}(S;\delta_4),$
\item $\big[ L_{S,\Hc}(x),L_{S,\Hc}(y) \big] \subset \Nc_{\Omega} \big( [x,y]; \delta_4 \big),$ and,
\item $\diam_{\Omega} \Big( \Nc_{\Omega}(S;\delta_4) \cap [x,y] \Big) \geq R.$
\end{enumerate} 
\end{corollary}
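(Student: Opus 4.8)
The plan is to deduce Corollary~\ref{cor:penetration-of-simplex-nbd} directly from Proposition~\ref{prop:projection-to-simplices-contracting} together with Proposition~\ref{prop:projection-close-to-hypotenuse-hyperbolic-triangle} and the basic properties of the linear projection $L:=L_{S,\Hc}$. Throughout, I write $L=L_{S,\Hc}$. The hypothesis is $\hil(L(x),L(y)) \geq R+2\delta_4 > \delta_4$, so Proposition~\ref{prop:projection-to-simplices-contracting} applies and gives points $x_0 \in [x,y]$ with $\hil(x_0, L(x)) \leq \delta_4$ and $y_0 \in [x,y]$ with $\hil(y_0, L(y)) \leq \delta_4$. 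The first step is to prove (2): since $L(x), L(y) \in S$ and balls in the Hilbert metric are convex, the projective segment $[L(x),L(y)]$ is a geodesic in $S$, and the bound $\hil^{\Haus}([x_0,y_0],[L(x),L(y)]) \leq \delta_4 \leq 2\delta_4$ follows from $\hil(x_0,L(x)) \leq \delta_4$, $\hil(y_0,L(y)) \leq \delta_4$, and the standard fact that the Hausdorff distance between two geodesics is controlled by the distances between their endpoints (this is exactly the estimate used inside the proof of Lemma~\ref{lem:one-side-enough-for-hyperbolic-triangles}). Hence $[L(x),L(y)] \subset \Nc_\Omega([x_0,y_0];2\delta_4) \subset \Nc_\Omega([x,y];2\delta_4)$, proving (2).

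Next I would prove (1) and (3) simultaneously, again using $x_0,y_0$. Since $x_0 \in [x,y]$ lies within $\delta_4$ of $L(x) \in S$, we have $x_0 \in \Nc_\Omega(S;\delta_4) \subset \Nc_\Omega(S;2\delta_4)$, and likewise $y_0 \in \Nc_\Omega(S;2\delta_4)$. The subsegment $[x_0,y_0] \subset [x,y]$ then satisfies $[x_0,y_0] \subset \Nc_\Omega(S;2\delta_4)$: indeed, every point of $[x_0,y_0]$ lies within $2\delta_4$ of the corresponding point of the geodesic $[L(x),L(y)] \subset S$ by the Hausdorff estimate from the previous paragraph. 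This gives (1). For (3), note
\begin{align*}
\diam_\Omega\Big(\Nc_\Omega(S;2\delta_4) \cap [x,y]\Big) &\geq \hil(x_0,y_0) \geq \hil(L(x),L(y)) - \hil(L(x),x_0) - \hil(L(y),y_0) \\
&\geq (R+2\delta_4) - \delta_4 - \delta_4 = R,
\end{align*}
which is (3).

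The proof is essentially a bookkeeping exercise once Proposition~\ref{prop:projection-to-simplices-contracting} is in hand, so there is no serious obstacle; the one point requiring a little care is the Hausdorff bound between the two geodesics $[x_0,y_0]$ and $[L(x),L(y)]$, which must be justified from convexity of Hilbert balls exactly as in Lemma~\ref{lem:one-side-enough-for-hyperbolic-triangles} rather than invoked for abstract geodesic spaces — but since both are honest projective line segments in the convex set $\Omega$, the comparison $H_\Omega^{\Haus}([p_1,q_1],[p_2,q_2]) \leq \max\{H_\Omega(p_1,p_2),H_\Omega(q_1,q_2)\} \cdot (\text{something})$ is available; in fact one only needs the weaker statement that the nearest-point function along a Hilbert geodesic is $1$-Lipschitz, which follows from convexity of metric balls. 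One should also double-check the edge case where $[L(x),L(y)]$ or $[x_0,y_0]$ degenerates, but since $\hil(L(x),L(y)) \geq R+2\delta_4 > 0$ this does not occur.
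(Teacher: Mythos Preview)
Your approach is essentially identical to the paper's: apply Proposition~\ref{prop:projection-to-simplices-contracting} to produce $x_0,y_0\in[x,y]$, bound the Hausdorff distance between $[x_0,y_0]$ and $[L(x),L(y)]\subset S$, and read off (1)--(3). The one point to correct is your justification of the Hausdorff bound: the paper invokes Proposition~\ref{prop:Crampons_dist_est_2}, which gives the \emph{sum} $\hil(x_0,L(x))+\hil(y_0,L(y))\le 2\delta_4$, whereas your claimed bound $\le\delta_4$ (via a max estimate, or via $1$-Lipschitzness of nearest-point projection onto a geodesic) is a $\CAT(0)$-type statement not available in general Hilbert geometry. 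Since you only use $2\delta_4$ downstream, the argument is unaffected once you cite Proposition~\ref{prop:Crampons_dist_est_2} instead.
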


\begin{proof}
Since $\hil(L_{S,\Hc}(x), L_{S,\Hc}(y)) >  \delta_4$, Proposition \ref{prop:projection-to-simplices-contracting} implies that there exist $x_0 , y_0 \in [x,y]$ such that 
\begin{equation*}
\hil(L_{S,\Hc}(x),x_0) \leq \delta_4 ~~\text{ and  }~~ \hil(L_{S,\Hc}(y),y_0) \leq \delta_4.
\end{equation*}
By Proposition~\ref{prop:Crampons_dist_est_2}, 
\begin{align*}
\hil^{\Haus} \Big( \big[ x_0,y_0 \big], \big[ L_{S,\Hc}(x),L_{S,\Hc}(y) \big] \Big) \leq  \delta_4
\end{align*}
and, by convexity, $[L_{S,\Hc}(x),L_{S,\Hc}(y)] \subset S.$ This proves parts (1) and (2). To prove part (3), observe that 
\begin{align*}
\hil(x_0,y_0) & \geq \hil \big( L_{S,\Hc}(x), L_{S,\Hc}(y) \big) -\hil(L_{S,\Hc}(x),x_0)-\hil(L_{S,\Hc}(y),y_0) \\
& \geq R. \nonumber
\end{align*}
Then, $\diam_{\Omega} \Big( \Nc_{\Omega}(S; \delta_4) \cap [x,y] \Big) \geq \hil(x_0,y_0) \geq R.$
\end{proof}

Using the properties of linear projections established so far, we prove that $\Pi_{\Sc}$ is an almost-projection system.

\begin{lemma}\label{lem:almost-proj-system-1}
If $S \in \Sc$, $\Hc$ a set of $S$-supporting hyperplanes, $x \in \Cc$, and $z \in S$, then $$\hil(x,z) \geq \hil(x,L_{S,\Hc}(x)) + \hil( L_{S,\Hc}(x),z)- 2 \delta_3.$$ 
\end{lemma}
\begin{proof}
By Proposition \ref{prop:projection-close-to-hypotenuse-hyperbolic-triangle}, there exists $q \in [x,z]$ such that $\hil(L_{S,\Hc}(x),q) \leq  \delta_3.$ Then, 
\begin{equation*}
\hil(x,z) = \hil(x,q)+\hil(q,z) \geq \hil(x,L_{S,\Hc}(x)) + \hil(L_{S,\Hc}(x),z) - 2 \delta_3. \qedhere
\end{equation*}
\end{proof}

\begin{lemma}\label{lem:almost-proj-system-2}
There exists $\delta_5 \geq 0$ such that: if $S \neq S' \in \Sc$ and $\Hc$ is a set of $S$-supporting hyperplanes, then $$\diam_{\Omega} (L_{S, \Hc}(S')) \leq \delta_5.$$
\end{lemma}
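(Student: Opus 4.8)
\textbf{Proof proposal for Lemma~\ref{lem:almost-proj-system-2}.}

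The plan is to argue by contradiction using a Benz\'ecri-type rescaling argument, exactly in the style of the other proofs in this section. Since $\Sc$ has finitely many $\Lambda$-orbits (Proposition~\ref{prop:HK_section_3_1}) and the linear projections are $\Lambda$-equivariant ($L_{gS, g\Hc} \circ g = g \circ L_{S,\Hc}$), it suffices to fix $S \in \Sc$ and prove the bound uniformly over all $S' \in \Sc$ distinct from $S$ and all choices of $S$-supporting hyperplanes $\Hc$. So suppose no such $\delta_5$ exists: for every $n \geq 0$ there are $S'_n \in \Sc \setminus \{S\}$, a set of $S$-supporting hyperplanes $\Hc_n$, and points $a_n, b_n \in S'_n$ with $H_\Omega\big(L_{S,\Hc_n}(a_n), L_{S,\Hc_n}(b_n)\big) \geq n$. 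Let $c_n$ be the point on the segment $[L_{S,\Hc_n}(a_n), L_{S,\Hc_n}(b_n)] \subset S$ with $H_\Omega(c_n, L_{S,\Hc_n}(a_n)) = n/2$, so both $H_\Omega(c_n, L_{S,\Hc_n}(a_n)) \geq n/2$ and $H_\Omega(c_n, L_{S,\Hc_n}(b_n)) \geq n/2$.

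Next I would normalize: since $\Stab_\Lambda(S)$ acts co-compactly on $S$ (Proposition~\ref{prop:HK_section_3_1}), translate by $\gamma_n \in \Stab_\Lambda(S)$ so that $\gamma_n c_n \to c \in S$, and pass to a subsequence (using Proposition~\ref{prop:compactness-linear-proj} for the projections and compactness of $\overline{\Cc}$) so that $L_{S,\Hc_n} \to L_{S,\Hc} \in \Lc_S$, $\gamma_n L_{S,\Hc_n}(a_n) \to \alpha \in \overline{\Cc}$, $\gamma_n L_{S,\Hc_n}(b_n) \to \beta \in \overline{\Cc}$, and $\gamma_n a_n \to a$, $\gamma_n b_n \to b$ in $\overline{\Cc}$. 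The distance estimates force $\alpha, \beta \in \partiali\Cc$ with $c \in (\alpha, \beta) \subset \Cc$. Now $a, b$ lie in $\overline{\gamma_n S'_n}$ for each $n$; I want to identify where $\alpha$ and $\beta$ sit relative to the faces of $a$ and $b$. Since $L_{S,\Hc_n}(a_n) = L_{S,\Hc_n}(\gamma_n a_n)$ after applying $\gamma_n$ (equivariance), continuity of the linear projection on $\overline{\Cc}$ (Lemma~\ref{lem:continuity-lin-proj-in-C}, once we know $a, b \notin [\ker L_{S,\Hc}]$) gives $L_{S,\Hc}(a) = \alpha$ and $L_{S,\Hc}(b) = \beta$. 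The key point is then that $a \in F_\Omega(\partial S)$ — because $\alpha = L_{S,\Hc}(a) \in S \subset \Cc$, while if $a \notin F_\Omega(\partial S)$ then by Proposition~\ref{prop:W-intersect-boundary}(2) $a \notin [\cap_{H \in \Hc} H] = [\ker L_{S,\Hc}]$, and... here I need to rule out the alternative; the cleanest route is: the whole segment $[a, \alpha]$ has constant image $\alpha$ under $L_{S,\Hc}$, hence $(a,\alpha) \cap \Omega = \emptyset$ since $L_{S,\Hc}(\Omega) = S$ and $\alpha \in S$ would only be hit by points of $S$; so $[a,\alpha] \subset \partiali\Cc$, and similarly $[b,\beta] \subset \partiali\Cc$.

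The contradiction: I have $[a,\alpha] \subset \partiali\Cc$ and $[b,\beta] \subset \partiali\Cc$, and $\alpha, \beta \in \partial S'$ where $S' := \lim \gamma_n S'_n$ is a properly embedded simplex (Observation~\ref{obs:PES_closed}) — actually I should be careful and work directly: $a, b \in \overline{S'}$ and $(\alpha,\beta) \subset \Cc$. Since $S'$ is a simplex, $[a,b]$ meets $\Omega$, so $a, \beta, b$ (or a suitable relabelling with $a, b$ genuinely distinct boundary points of $S'$, which follows from $H_\Omega(L_{S,\Hc_n}(a_n), L_{S,\Hc_n}(b_n)) \to \infty$ forcing $a \ne b$) together with $\alpha$ produce a configuration giving a half triangle $\alpha, a, \beta$ — wait, I need $[\alpha, a], [a, \beta]$ in the boundary; instead use $a, \alpha, \beta, b$: since $[a,\alpha], [\beta, b] \subset \partiali \Cc$ and $[a,b] \cap \Omega \ne \emptyset$ (as $S'$ is a properly embedded simplex and $a,b \in \overline{S'}$ are opposite-ish), one gets a genuine contradiction via Corollary~\ref{cor:half_triangles_out_of_maximal_simplices} applied to $S'$: the half triangle $\alpha, a, \beta$ would force $\alpha \in F_\Omega(\partial S')$, but then $S$ and $S'$ would have $F_\Omega(\partial S) \cap F_\Omega(\partial S') \ne \emptyset$, contradicting property (5) of Theorem~\ref{thm:properties_of_ncc} (established in Section~\ref{sec:pf_of_properties_of_ncc}) since $S \ne S'$. \emph{The main obstacle} is organizing the endpoint bookkeeping so that the limiting points $a, b, \alpha, \beta$ genuinely form a half triangle in $\Cc$ with respect to $S'$ — in particular verifying $a \ne b$, that $(a,b)$ or a sub-segment meets $\Omega$, and that the segments landing in $\partiali\Cc$ are set up to invoke Corollary~\ref{cor:half_triangles_out_of_maximal_simplices} for $S'$ rather than $S$; once that configuration is pinned down, property (5) closes the argument immediately.
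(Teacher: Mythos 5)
Your limiting set-up is fine through the point where you obtain $a,b\in\partiali\Cc$, $\alpha=L_{S,\Hc}(a)$, $\beta=L_{S,\Hc}(b)$ in $\partial S$ with $(\alpha,\beta)\subset S$ and $[a,\alpha],[b,\beta]\subset\partiali\Cc$. The gap is the endgame, and it is a real one: what you have produced is a ``quadrilateral'' whose two \emph{opposite} sides $[a,\alpha]$ and $[b,\beta]$ lie in $\partial\Omega$, whereas a half triangle (Definition~\ref{defn:half_triangle}) requires two boundary segments sharing a common endpoint together with an interior segment closing them up. Neither of the configurations you invoke is established: you have no information about $[a,\beta]$ or $[\alpha,b]$, so $\alpha,a,\beta$ is not known to be a half triangle; and the claim that ``since $S'$ is a simplex, $[a,b]$ meets $\Omega$'' is unjustified twice over --- the limit $S'=\lim\gamma_nS'_n$ need not exist (since $a,b\in\partiali\Cc$, the translated simplices $\gamma_nS'_n$ may leave every compact set, so local finiteness cannot be applied yet), and even for an honest properly embedded simplex two boundary points need not be joined through the interior. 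If you want to rescue this route, the missing step is to upgrade $[a,\alpha]\subset\partial\Omega$ to the much stronger statement $a\in F_\Omega(\alpha)$: apply Proposition~\ref{prop:projection-close-to-hypotenuse-hyperbolic-triangle} with $x=\gamma_na_n$ and $z=\gamma_nc_n\in S$ to get points of $[\gamma_na_n,\gamma_nc_n]$ within $\delta_3$ of $\gamma_nL_{S,\Hc_n}(a_n)$, and pass to the limit using Proposition~\ref{prop:dist_est_and_faces}. Once $a\in F_\Omega(\alpha)$ and $b\in F_\Omega(\beta)$, Observation~\ref{obs:faces} part (4) gives $(a,b)\subset\Omega$, local finiteness (Proposition~\ref{prop:HK_section_3_1}) then stabilizes $S''_n:=\gamma_nS'_n$, and Proposition~\ref{prop:Crampons_dist_est_2} puts the properly embedded line $(a,b)\subset S''$ within bounded Hausdorff distance of $(\alpha,\beta)\subset S$, contradicting strong isolation. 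None of these steps appears in your write-up, and you flag the bookkeeping as an unresolved obstacle, so as it stands the proof is incomplete.

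You should also be aware that the statement follows in a few lines from results you already have, with no new limiting argument: this is the route the paper takes. Fix $x,y\in S'$ with $H_\Omega(L_{S,\Hc}(x),L_{S,\Hc}(y))\geq R+2\delta_4$. Corollary~\ref{cor:penetration-of-simplex-nbd} gives $\diam_\Omega\big(\Nc_\Omega(S;2\delta_4)\cap[x,y]\big)\geq R$, and since $[x,y]\subset S'$ by convexity, this forces $\diam_\Omega\big(\Nc_\Omega(S;2\delta_4)\cap\Nc_\Omega(S';2\delta_4)\big)\geq R$. Taking $R>D(2\delta_4)$, where $D(\cdot)$ is the strong isolation constant, yields a contradiction, so $\delta_5:=D(2\delta_4)+2\delta_4+1$ works. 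In effect, Corollary~\ref{cor:penetration-of-simplex-nbd} already packages the entire rescaling argument you are trying to redo by hand.
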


\begin{proof}
Since $\Sc$ is strongly isolated, for every $r>0$ there exists $D(r)>0$ such that
\begin{equation}
\label{eqn:almost-proj-2-contradict}
\diam_{\Omega}\Big( \Nc_\Omega\big(S_1;r \big) \cap \Nc_\Omega \big(S_2,r \big) \Big) \leq D(r)
\end{equation}
for all $S_1, S_2 \in \Sc$ distinct.

Let $\delta_5:= D \big( \delta_4 \big)+ 2 \delta_4 +1$. Fix $x, y \in S'$ and suppose for a contradiction that  $\hil(L_{S,\Hc}(x),L_{S,\Hc}(y)) > \delta_5.$  Then, by Corollary \ref{cor:penetration-of-simplex-nbd}, 
\begin{align*}
\diam_{\Omega}\Big(  \Nc_{\Omega}(S; \delta_4) \cap S^\prime \Big) \geq \diam_{\Omega} \Big( \Nc_{\Omega}(S; \delta_4) \cap [x,y] \Big) \geq D(\delta_4)+1.
\end{align*}
which contradicts Equation \eqref{eqn:almost-proj-2-contradict}.
\end{proof}

\begin{lemma}\label{lem:almost-proj-system-3}
If $x \in \Cc$, $S \in \Sc$, $\Hc$ is a set of $S$-supporting hyperplanes,  and $R:= \hil(x,S)$, then $$\diam_{\Omega}\bigg( L_{S,\Hc} \Big( B_{\Omega}(x;R) \cap \Cc \Big) \bigg) \leq 8 (\delta_4+\delta_1).$$ 
\end{lemma}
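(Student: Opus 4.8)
The statement bounds the diameter of the linear projection onto $S$ of the intersection of a ball $B_{\Omega}(x;R)\cap\Cc$ with the metric sphere radius $R=\hil(x,S)$. The plan is to argue by contradiction. Suppose there are points $y,z\in B_{\Omega}(x;R)\cap\Cc$ with $\hil\big(L_{S,\Hc}(y),L_{S,\Hc}(z)\big)$ very large; in particular larger than $2\delta_4+4(\delta_1+\delta_2)$, say. The idea is to compare the two triangles coming from Proposition~\ref{prop:projection-close-to-hypotenuse-hyperbolic-triangle} (applied with the apex $x$) and to use Corollary~\ref{cor:penetration-of-simplex-nbd} to force the geodesics $[x,y]$ and $[x,z]$ both to run close to $S$ near their respective projections, and then use the contracting property of linear projections together with the triangle inequality to contradict $y,z\in\overline{B_\Omega(x;R)}$.

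\textbf{Key steps in order.} First, let $p:=L_{S,\Hc}(x)$ and note $\hil(x,S)=R$, so by Proposition~\ref{prop:proj-coarsely-equiv} we have $\hil(x,p)\le R+\delta_1$. Second, for any $y\in B_\Omega(x;R)\cap\Cc$, let $q_y:=L_{S,\Hc}(y)$. Since $R=\hil(x,S)\le\hil(x,S')$ trivially bounds things, we instead estimate $\hil(x,q_y)$ from above: picking any $z_0\in S$, Lemma~\ref{lem:almost-proj-system-1} gives $\hil(x,z_0)\ge \hil(x,p)+\hil(p,z_0)-2\delta_3$, and similarly, applying Lemma~\ref{lem:almost-proj-system-1} to $y$ in place of $x$, $\hil(y,q_y)+\hil(q_y,z_0)\le \hil(y,z_0)+2\delta_3$. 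Combining these with $\hil(x,y)<R$ and the triangle inequality $\hil(x,z_0)\le\hil(x,y)+\hil(y,z_0)$, after taking $z_0$ to realize a point with $\hil(q_y,z_0)$ large (using that $S$ is unbounded), the terms involving $z_0$ cancel and one extracts
\begin{align*}
\hil(p,q_y)\le \hil(x,y)+\hil(y,q_y)+C\le R+\hil(y,S)+C\le 2R+C
\end{align*}
for $C=4\delta_3$ — but that is not tight enough. So instead I would bound $\hil(x,q_y)$ directly: Proposition~\ref{prop:projection-close-to-hypotenuse-hyperbolic-triangle} (with apex $x$, target $q_y\in S$) gives a point of $[x,q_y]$ within $\delta_3$ of $p$, hence $\hil(x,p)\le\hil(x,q_y)+\delta_3$; combined with $\hil(x,q_y)\le\hil(x,q_y)$ this is circular, so the correct move is: $q_y\in S$, so $\hil(x,q_y)\ge R$, and $\hil(x,q_y)\le\hil(x,y)+\hil(y,q_y)$. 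By Proposition~\ref{prop:proj-coarsely-equiv}, $\hil(y,q_y)\le\hil(y,S)+\delta_1\le\hil(y,x)+\delta_1<R+\delta_1$ (using $\hil(y,S)\le\hil(y,x)+\hil(x,S)$ would give $2R$; instead use $\hil(y,S)\le\hil(y,p)\le\hil(y,x)+\hil(x,p)\le R+R+\delta_1$, again $2R$). This shows the naive estimates only give $O(R)$, so the subtlety is genuinely needed.

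\textbf{The actual argument.} I would instead use the contracting property. Assume $\hil(q_y,q_z)\ge 8(\delta_4+\delta_1)$; since $8(\delta_4+\delta_1)\ge R+2\delta_4$ is false in general, I would not pass through $[y,z]$. Rather: $\hil(q_y,p)$ and $\hil(q_z,p)$ cannot both be small, so WLOG $\hil(p,q_y)\ge 4(\delta_4+\delta_1)\ge \delta_4$. Apply Proposition~\ref{prop:projection-to-simplices-contracting} to the pair $x,y$: since $\hil(L_{S,\Hc}(x),L_{S,\Hc}(y))=\hil(p,q_y)\ge\delta_4$, we get $\hil(q_y,[x,y])\le\delta_4$, so there is $w\in[x,y]$ with $\hil(w,q_y)\le\delta_4$. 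Then
\begin{align*}
R>\hil(x,y)\ge\hil(x,w)\ge\hil(x,q_y)-\delta_4\ge \hil(x,p)+\hil(p,q_y)-2\delta_3-\delta_4
\end{align*}
using Lemma~\ref{lem:almost-proj-system-1} with $z=q_y\in S$. Since $\hil(x,p)\ge\hil(x,S)-\delta_1=R-\delta_1$ (Proposition~\ref{prop:proj-coarsely-equiv}), this yields $\hil(p,q_y)\le \delta_1+2\delta_3+\delta_4$, and symmetrically $\hil(p,q_z)\le\delta_1+2\delta_3+\delta_4$ whenever $\hil(p,q_z)\ge\delta_4$ (if it is smaller the bound holds trivially). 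Hence $\hil(q_y,q_z)\le 2\delta_1+4\delta_3+2\delta_4$. Absorbing constants ($\delta_3=\delta_1+3\delta_2$), this is at most $8(\delta_4+\delta_1)+(\text{bounded by }\delta_2\text{ terms})$; the paper's stated bound $8(\delta_4+\delta_1)$ should come out after tracking the constants carefully (replacing $\delta_3$ by $\delta_1+3\delta_2$ and using the earlier slack). The main obstacle is purely bookkeeping: getting $\hil(x,p)\ge R-\delta_1$ from the right inequality in Proposition~\ref{prop:proj-coarsely-equiv} and confirming the numeric constant $8(\delta_4+\delta_1)$ absorbs all the $\delta_2,\delta_3$ contributions — there is no conceptual difficulty once the contracting property (Proposition~\ref{prop:projection-to-simplices-contracting}) and the near-orthogonality estimate (Lemma~\ref{lem:almost-proj-system-1}) are in hand.
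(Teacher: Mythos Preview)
Your ``actual argument'' has the right core idea --- use Proposition~\ref{prop:projection-to-simplices-contracting} to produce a point $w\in[x,y]$ with $\hil(w,L_{S,\Hc}(y))\le\delta_4$ --- and this is exactly what the paper does. But your execution detours through Lemma~\ref{lem:almost-proj-system-1}, which costs you $2\delta_3=2\delta_1+6\delta_2$, and the resulting bound $\hil(q_y,q_z)\le 2\delta_1+4\delta_3+2\delta_4=6\delta_1+12\delta_2+2\delta_4$ is \emph{not} dominated by $8(\delta_1+\delta_4)$: there is no relation in the paper forcing $12\delta_2\le 2\delta_1+6\delta_4$. So your dismissal of the discrepancy as ``purely bookkeeping'' is wrong; with your route the stated constant is simply not achieved (though a larger constant would do for Theorem~\ref{thm:metric-proj-is-almost-proj-sys}).

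The paper avoids $\delta_2$ and $\delta_3$ entirely by a more direct estimate. It bounds $\hil(L_{S,\Hc}(x),L_{S,\Hc}(y))$ for each $y\in B_\Omega(x;R)\cap\Cc$. Assuming this distance is at least $\delta_4$, Proposition~\ref{prop:projection-to-simplices-contracting} gives $x'\in[x,y]$ with $\hil(L_{S,\Hc}(x),x')\le\delta_4$. The key step you missed is the elementary inequality
\[
\hil(x,y)<R=\hil(x,\pi_S(x))\le \hil(x,L_{S,\Hc}(x))+\delta_1
\]
from Proposition~\ref{prop:proj-coarsely-equiv}, which combined with $\hil(x',y)=\hil(x,y)-\hil(x,x')$ and the triangle inequality $\hil(x,L_{S,\Hc}(x))\le\hil(x,x')+\hil(x',L_{S,\Hc}(x))$ gives $\hil(x',y)\le\delta_4+\delta_1$ directly. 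From there one bounds $\hil(y,L_{S,\Hc}(y))$ via Proposition~\ref{prop:proj-coarsely-equiv} again (since $L_{S,\Hc}(x)\in S$), and the triangle inequality through $x'$ and $y$ yields $\hil(L_{S,\Hc}(x),L_{S,\Hc}(y))\le 4(\delta_4+\delta_1)$, hence diameter at most $8(\delta_4+\delta_1)$. (Incidentally, your inequality $\hil(x,p)\ge R-\delta_1$ is weaker than needed: $p=L_{S,\Hc}(x)\in S$, so $\hil(x,p)\ge R$ trivially.)
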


\begin{proof} Fix $y \in B_{\Omega}(x;R) \cap \Cc$. We claim that 
\begin{align*}
\hil(L_{S,\Hc}(x),L_{S,\Hc}(y)) \leq 4 (\delta_4+ \delta_1).
\end{align*} 
It is enough to consider the case when $\hil(L_{S,\Hc}(x),L_{S,\Hc}(y)) \geq \delta_4.$ Then by Proposition \ref{prop:projection-to-simplices-contracting}, there exists $x' \in [x,y]$ such that $\hil(L_{S,\Hc}(x),x') \leq \delta_4.$  By Proposition  \ref{prop:proj-coarsely-equiv}, 
$$\hil(x,y)\leq R=\hil(x,\pi_S(x)) \leq \hil(x,L_{S,\Hc}(x)) + \delta_1.$$   
Then, 
\begin{align*}
\hil(x',y) &=\hil(x,y)-\hil(x,x') \leq \hil(x,L_{S,\Hc}(x))-\hil(x,x') +\delta_1 \\ &\leq \hil(L_{S,\Hc}(x),x')+\delta_1 \\
&\leq \delta_4 + \delta_1.
\end{align*}
Thus, $$\hil(L_{S,\Hc}(x),y) \leq \hil(L_{S,\Hc}(x),x')+\hil(x',y) \leq 2 \delta_4+\delta_1.$$
Since $L_{S,\Hc}(x) \in S$, using Proposition \ref{prop:proj-coarsely-equiv} again,
\begin{align*}
\hil(y,L_{S,\Hc}(y)) & \leq \hil(y,\pi_S(y))+\delta_1  \leq \hil(y,L_{S,\Hc}(x)) + \delta_1 \\
							  & \leq 2 (\delta_4+ \delta_1).
\end{align*} 
Finally
\begin{align*}
\hil(L_{S,\Hc}(x),L_{S,\Hc}(y)) &\leq \hil( L_{S,\Hc}(x),x') + \hil(x',y)+ \hil(y,L_{S,\Hc}(y)) \\
&\leq 4 (\delta_4+\delta_1). \qedhere
\end{align*}
\end{proof}

\begin{proof}[Proof of Theorem \ref{thm:metric-proj-is-almost-proj-sys}]
Follows from Lemma \ref{lem:almost-proj-system-1}, Lemma \ref{lem:almost-proj-system-2}, and Lemma \ref{lem:almost-proj-system-3}.
\end{proof}

\subsection{$\Sc$ is asymptotically transverse-free relative to $\Gc$}
\begin{theorem}\label{thm:asympt-trans-free}
The family $\Sc$ is asymptotically transverse-free relative to the geodesic path system $\Gc$.
\end{theorem}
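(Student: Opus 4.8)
The goal is to verify the second hypothesis in Sisto's criterion (Theorem~\ref{thm:Sisto_equiv}) relative to the geodesic path system $\Gc=\{[x,y]:x,y\in\Cc\}$: there must exist constants $\lambda,\sigma$ so that every geodesic triangle with sides in $\Gc$ which is $\Sc$-almost-transverse with constants $\kappa\ge\sigma$ and $\triangleD\ge 1$ is $(\lambda\triangleD)$-thin. The plan is to argue by contradiction in the usual Benz\'ecri-rescaling style that has driven the earlier proofs in this section. Suppose no such $\lambda,\sigma$ exist. Then, for every $n\ge 0$, there is a geodesic triangle $\Tc_n=[x_n,y_n]\cup[y_n,z_n]\cup[z_n,x_n]$ in $\Cc$ which is $\Sc$-almost-transverse with constants $\kappa_n\ge n$ and $\triangleD_n\ge 1$, but is not $(n\triangleD_n)$-thin. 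So there is a point $p_n$ on one side, say $p_n\in[x_n,y_n]$, with $H_\Omega\big(p_n,\,[y_n,z_n]\cup[z_n,x_n]\big)\ge n\triangleD_n\ge n$.

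\textbf{Key steps.} First, normalize by translating with $\Lambda$: since $\Lambda$ acts cocompactly on $\Cc$, pick $\gamma_n\in\Lambda$ with $\gamma_n p_n$ in a fixed compact set, and after passing to a subsequence assume $\gamma_n p_n\to p\in\Cc$, $\gamma_n x_n\to x$, $\gamma_n y_n\to y$, $\gamma_n z_n\to z$ in $\overline\Cc$. Because $H_\Omega(p_n,x_n)\ge n$ unless $p_n$ is an endpoint (handle the trivial case separately) and similarly $H_\Omega(p_n,y_n)$, $H_\Omega(p_n,z_n)\to\infty$ — here the key point is $H_\Omega(p_n, z_n) \ge H_\Omega(p_n, [z_n,x_n]) \to \infty$ — we get $x,y,z\in\partiali\Cc$, and since $p\in(x,y)\cap\Cc$ the pair $(x,y)$ is a properly embedded line, and likewise segments of the other two sides converge to properly embedded lines or points. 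The next and central step is to show that the rescaled triangle must collapse onto the boundary in a way that forces $[x,z]\cup[z,y]\subset\partiali\Cc$, so that $x,y,z$ form a half triangle (if some side, say $(x,z)$, met $\Cc$ then the limiting configuration would be degenerate — one must check that $p$ stays on a genuine geodesic edge, which follows from lower semicontinuity of $H_\Omega$ via Proposition~\ref{prop:dist_est_and_faces}, exactly as in the proofs of Proposition~\ref{prop:proj-coarsely-equiv} and Proposition~\ref{prop:lin-proj-hyperbolic-triangles}). Then apply Theorem~\ref{thm:half_triangle_nearby_simplex}: there exists $S\in\Sc$ with $x,y,z\in F_\Omega(\partial S)$. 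Now one derives a contradiction with the almost-transverse hypothesis: since $x,y\in F_\Omega(\partial S)$, the geodesic $(x,y)$ is Hausdorff-close (by Proposition~\ref{prop:Crampons_dist_est_2}) to a properly embedded line inside $\overline S$, so a large portion of $[x_n,y_n]$ lies in a \emph{bounded} neighborhood $\Nc_\Omega(\gamma_n^{-1}S;C)$ of a simplex in $\Sc$; choosing any fixed $\kappa\ge C$ and noting $\kappa_n\ge n\to\infty$, this violates $\diam_\Omega(\Nc_\Omega(\gamma_n^{-1}S;\kappa_n)\cap[x_n,y_n])\le\triangleD_n$ once $n$ is large, \emph{provided} we also know $\triangleD_n$ cannot absorb this — which is where the factor $n\triangleD_n$ rather than $\lambda\triangleD_n$ in the failure assumption is used: the penetration length of $[x_n,y_n]$ into the simplex neighborhood grows at least linearly in $H_\Omega(p_n,\text{endpoints})\sim n\triangleD_n$, beating $\triangleD_n$.

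\textbf{Main obstacle.} The delicate point is the bookkeeping between the two constants $\kappa$ and $\triangleD$ in the almost-transverse definition and making sure the contradiction is quantitative. It is not enough to produce \emph{one} simplex $S$ near the limiting half triangle; one must show that along a definite-length sub-segment of the side $[x_n,y_n]$ (of length comparable to the defect $n\triangleD_n$), the points stay within a \emph{uniform} distance of $\gamma_n^{-1}S$, independent of $n$, so that for the fixed threshold $\kappa$ the intersection diameter is $\gtrsim n\triangleD_n\gg\triangleD_n$. This requires combining Theorem~\ref{thm:half_triangle_nearby_simplex} with the coarse-completeness and strong-isolation of $\Sc$ (to pin down which simplex, uniformly, as in Lemma~\ref{lem:geom_near_corner} and the proof of Theorem~\ref{thm:half_triangle_nearby_simplex}) and a rescaling estimate showing the penetration depth of the geodesic into the simplex neighborhood is, up to additive error, the full distance $H_\Omega(p_n,\partial\text{-region})$. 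The cleanest route is: after the rescaling, the limiting side $(x,y)$ lies in $\overline S$ for the limit simplex $S$; use Corollary~\ref{cor:penetration-of-simplex-nbd}-type control (via the linear projection $L_{S,\Hc}$ and Proposition~\ref{prop:projection-to-simplices-contracting}) to conclude that $[x_n,y_n]$ spends time $\ge R$ in $\Nc_\Omega(\gamma_n^{-1}S;2\delta_4)$ whenever the projections of $x_n,y_n$ onto $\gamma_n^{-1}S$ are $\ge R+2\delta_4$ apart, and verify this projection gap does grow like $n\triangleD_n$. That delivers the contradiction with $\Sc$-almost-transversality and completes the proof that $\Sc$ is asymptotically transverse-free relative to $\Gc$.
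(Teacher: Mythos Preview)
Your overall strategy---argue by contradiction, translate by $\Lambda$ so the ``thick'' point $p_n$ converges in $\Cc$, extract a half triangle $x,z,y$ in $\partiali\Cc$, and invoke Theorem~\ref{thm:half_triangle_nearby_simplex} to produce $S\in\Sc$ with $x,y,z\in F_\Omega(\partial S)$---is exactly the route the paper takes. The divergence, and the gap, is in how you try to close the argument.

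You aim to contradict the almost-transverse condition on the single edge $[x_n,y_n]$ by showing its penetration into $\Nc_\Omega(S;2\delta_4)$ has diameter exceeding $\triangleD_n$; your justification is that the projection gap $H_\Omega\big(L_{S,\Hc}(x_n),L_{S,\Hc}(y_n)\big)$ ``grows like $n\triangleD_n$''. This is not established, and I do not see how to establish it. What the limiting picture gives (via Lemma~\ref{lem:continuity-lin-proj-in-C} and Proposition~\ref{prop:image-lin-proj-in-simplex-bdry}) is that $L_{S,\Hc}(x_n)\to L_{S,\Hc}(x)$ and $L_{S,\Hc}(y_n)\to L_{S,\Hc}(y)$ land in distinct boundary faces of $S$, so their $H_\Omega$-distance tends to infinity---but with no rate tied to $\triangleD_n$. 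Since nothing forbids $\triangleD_n\to\infty$, the inequality ``penetration $>\triangleD_n$'' does not follow. The linear projection is not an isometry, so the fact that $H_\Omega(p_n,x_n)\ge n\triangleD_n$ does not transfer to the projected points.

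The paper resolves this by reversing the logic: rather than trying to \emph{beat} the almost-transverse bound, one \emph{uses} it. Once the projection gap on each edge is known to diverge, Corollary~\ref{cor:penetration-of-simplex-nbd} combined with the almost-transverse hypothesis (applied with $\kappa=2\delta_4$, which is why $\sigma=2\delta_4$ rather than $\sigma=n$) yields
\[
H_\Omega\big(L_{S,\Hc}(a),L_{S,\Hc}(b)\big)\le \triangleD_n+2\delta_4
\]
for every pair of vertices $a,b$ of $\Tc_n$. Thus the three projected vertices are pairwise within $\triangleD_n+2\delta_4$ inside $S$, and each edge of $\Tc_n$ passes within $2\delta_4$ of the midpoint of its projected edge. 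A short triangle-inequality computation then shows $\Tc_n$ is $(\triangleD_n+6\delta_4)$-thin, hence $(1+6\delta_4)\triangleD_n$-thin, contradicting the assumption that $\Tc_n$ is not $(n\triangleD_n)$-thin. The contradiction is with \emph{thinness}, not with almost-transversality; the almost-transverse hypothesis is an input on all three sides, not the target on one.
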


\begin{proof}
\label{proof:proof-of-asympt-trans-free}

Let $\delta_4$ be the constant in Proposition \ref{prop:projection-to-simplices-contracting}.  We will show that exists $\lambda>0$ such that for each $\triangleD \geq 1$ and $\kappa \geq 2 \delta_4$ the following holds: if $\Tc \subset \Cc$ is a geodesic triangle whose sides are in $\Gc$ and is $\Sc$-almost-transverse with constants $\kappa$ and $\triangleD$, then $\Tc$ is $(\lambda \triangleD)$-thin. 

Suppose such a $\lambda > 0$ does not exist. Then, for every $n \geq 1$, there exist $\kappa_n \geq2 \delta_4$, $\triangleD_n \geq 1$, and a $\Sc$-almost-transverse triangle $\Tc_n \subset \Cc$ with constants $\kappa_n$ and  $\triangleD_n$ such that $\Tc_n$ is not $(n \triangleD_n)$-thin. Let $a_n$, $b_n$, and $c_n$ be the vertices of $\Tc_n$, labeled in a such a way that there exists $u_n \in [a_n,b_n] \subset  \Tc_n $ with 
\begin{equation}\label{eqn:contradiction:transverse-Tn}
\hil \Big( u_n,[a_n,c_n] \cup [c_n,b_n] \Big) > n \triangleD_n \geq n. 
\end{equation} 
Note that Observation \ref{obs:almost-transverse} implies that the geodesic triangles $\Tc_n$ are also $\Sc$-almost-transverse with constants $2 \delta_4$ and  $\triangleD_n$ since $\kappa_n \geq 2\delta_4.$ \label{correction:the-simple-obs-is-used-here}

Since $\Lambda$ acts co-compactly on $\Cc$, translating by elements of $\Lambda$ and passing to a subsequence, we can assume that $u:=\lim_{n \to \infty}u_n$ exists and $u \in \Cc$. By passing to a further subsequence, we can  assume that $a:=\lim_{n \to \infty}a_n$, $b:=\lim_{n \to \infty}b_n$, and $c:=\lim_{n \to \infty}c_n$ exist in $\overline{\Cc}.$ By Equation \eqref{eqn:contradiction:transverse-Tn}, $$[a,c] \cup [c,b] \subset \partiali \Cc$$ whereas, by construction, $u \in (a,b) \subset \Cc.$ Thus, the points $a, b, c$ form a half triangle. Then, by Theorem~\ref{thm:half_triangle_nearby_simplex}, there exists $S \in \Sc$ such that $a, b, c \in F_{\Omega}(\bdry S).$

Fix a set of $S$-supporting hyperplanes $\Hc.$ Let $a_n':=L_{S,\Hc}(a_n)$, $b_n':=L_{S,\Hc}(b_n)$,  and  $c_n':=L_{S,\Hc}(c_n)$. Up to passing to a subsequence, we can assume that the limits $a':=\lim_{n \to \infty}a_n'$, $b':=\lim_{n \to \infty}b_n'$ and $c':=\lim_{n \to \infty}c_n'$ exist.  By  Lemma \ref{lem:continuity-lin-proj-in-C} and Proposition~\ref{prop:image-lin-proj-in-simplex-bdry}
$$a'=\lim_{n \to \infty} L_{S,\Hc}(a_n)=L_{S,\Hc}(a) \in F_{\Omega}(a).$$
Similarly, 
\begin{equation*}
b'=L_{S,\Hc}(b) \in F_{\Omega}(b)~~ \text{ and }~~ c'=L_{S,\Hc}(c) \in F_{\Omega}(c).
\end{equation*}
 
Using Observation \ref{obs:faces} part (4), $(a',b') \subset \Omega$ and $[a',c'] \cup [b',c'] \subset \partiali \Cc$. Then, Observation \ref{obs:faces} part (4) implies that the faces $F_\Omega(a^\prime)$, $F_\Omega(b^\prime)$, and $F_\Omega(c^\prime)$, are pairwise disjoint. Then, by Proposition \ref{prop:dist_est_and_faces},
\begin{equation*}
\lim_{n \to \infty} \hil(a_n',b_n')=\infty.
\end{equation*}
Thus, for $n$ large enough, Corollary \ref{cor:penetration-of-simplex-nbd} part (2) and part (3) implies
\begin{equation}\label{eqn:an-bn-nbd}
[a_n',b_n'] \subset \Nc_{\Omega}([a_n,b_n]; \delta_4)
\end{equation}
 and 
 \begin{equation}\label{eqn:an-bn-diam}
 \diam_{\Omega} \Big(\Nc_{\Omega}(S; \delta_4) \cap [a_n,b_n] \Big) \geq \hil(a_n',b_n')-2 \delta_4.
 \end{equation}
 Since $\Tc_n$ is $\Sc$-almost-transverse with constants $2 \delta_4$ and $\triangleD_n$, by Equation~\eqref{eqn:an-bn-diam}, 
 \begin{equation}\label{eqn:an-bn-dist-est}
 \hil(a_n',b_n') \leq \triangleD_n + 2\delta_4.
 \end{equation} 

\noindent Similarly,  for $n$ large enough, 
\begin{align}
&[b_n',c_n'] \subset \Nc_{\Omega}([b_n,c_n]; \delta_4) ~~\text{ and }~~ \hil(b_n',c_n') \leq \triangleD_n+2 \delta_4 \label{eqn:bn-cn}\\
&[c_n',a_n'] \subset \Nc_{\Omega}([c_n,a_n]; \delta_4) ~~\text{ and }~~ \hil(c_n', a_n') \leq \triangleD_n+2 \delta_4. \label{eqn:cn-an}
\end{align}

Let $m_n^{ab}$, $m_n^{bc}$, and $m_n^{ca}$ be the Hilbert distance midpoints of $[a'_n,b'_n]$, $[b'_n,c'_n]$,  and $[c'_n,a'_n]$ respectively. By Equations \eqref{eqn:an-bn-nbd}, \eqref{eqn:bn-cn}, and \eqref{eqn:cn-an}, there exist $w^{ab}_n$, $w^{bc}_n$, and $w^{ca}_n$ in $[a_n,b_n]$, $[b_n,c_n]$, and $[c_n,a_n]$ respectively such that: 
$$\hil(w^{ab}_n, m^{ab}_n) \leq \delta_4, ~\hil(w^{bc}_n, m^{bc}_n) \leq \delta_4, ~\text{ and }~\hil(w^{ca}_n, m^{ca}_n) \leq \delta_4.$$
Then, 
\begin{align*}
\hil(w_n^{ab},w_n^{bc}) & \leq \hil(w_n^{ab},m_n^{ab}) + \hil(m_n^{ab},m_n^{bc}) + \hil(m_n^{bc},w_n^{bc})  \\
&\leq \delta_4 + \hil(m_n^{ab},b_n') + \hil(b_n',m_n^{bc}) + \delta_4 \\
& = 2 \delta_4 + \dfrac{\hil(a_n',b_n')+ \hil(b_n',c_n')}{2} \\
& \leq 4 \delta_4 + \triangleD_n ~~\text{ (by Equations } \eqref{eqn:an-bn-dist-est} \text{ and } \eqref{eqn:bn-cn} \text{)}.
\end{align*}

\noindent Similarly,  
\begin{equation}
\hil(w_n^{bc},w_n^{ca}) \leq \triangleD_n + 4 \delta_4~~ \text{ and }~~ \hil(w_n^{ca},w_n^{ab}) \leq \triangleD_n + 4 \delta_4.
\end{equation}
 
\noindent Then, for $n$ large enough, the triangles $\Tc_n$ are $( \triangleD_n + 4 \delta_4)$-thin, since 
\begin{align*}
H^{\Haus}_{\Omega} &\Big( [a_n,w_n^{ab}],[a_n,w_n^{ca}] \Big) \leq \triangleD_n + 4 \delta_4, \\
H^{\Haus}_{\Omega}&\Big( [b_n,w_n^{bc}],[b_n,w_n^{ab}] \Big) \leq \triangleD_n + 4 \delta_4, \text{ and}\\
H^{\Haus}_{\Omega}& \Big( [c_n,w_n^{ca}],[c_n,w_n^{bc}] \Big) \leq \triangleD_n + 4 \delta_4.
\end{align*}
Since $\triangleD_n \geq 1$, we have $\triangleD_n + 4 \delta_4 \leq (1+ 4 \delta_4) \triangleD_n.$ Thus, for $n$ large enough, $\Tc_n$ is $(\lambda \triangleD_n)$-thin for $\lambda :=1+4 \delta_4$, which contradicts the assumption that $\Tc_n$ is not $(n \triangleD_n)$-thin.\end{proof}

\section{Proof of Theorem~\ref{thm:main_ncc}}\label{sec:pf_of_thm_main_ncc}

For the rest of the section suppose that $(\Omega, \Cc, \Lambda)$ is a naive convex co-compact triple. 

\subsection{(1) implies (2) and (3)} Theorem \ref{thm:isolated-simplices-imply-rel-hyp}.

\subsection{(3) implies (2)} Suppose that $\Lambda$ is a relatively hyperbolic group with respect to a collection of subgroups $\{H_1,\dots, H_k\}$ each of which is virtually Abelian of rank at least two. For each $1 \leq j \leq k$, let $A_j \leq H_j$ be a finite index Abelian subgroup with rank at least two. Then, by definition, $\Lambda$ is a relatively hyperbolic group with respect to  $\{A_1,\dots, A_k\}$. 

Fix a word metric $\dist_{\Lambda}$ on $\Lambda$. Then for $U \subset \Lambda$ and $r > 0$ define 
\begin{align*}
\Nc_{\Lambda}(U;r) :=\{ g \in \Lambda : \dist_{\Lambda}(g,U) < r\}
\end{align*}
and 
\begin{align*}
\diam_{\Lambda}(U) = \sup\{ \dist_{\Lambda}(g_1, g_2) : g_1, g_2 \in U\}.
\end{align*}

Next, for each $1 \leq j \leq k$, let $\wh{A}_j$ be a maximal Abelian subgroup of $\Lambda$ that contains $A_j$.  By Theorem~\ref{thm:max_abelian}, there exists a properly embedded simplex $S_j \subset \Cc$ such that $\wh{A}_j \leq \Stab_{\Lambda}(S_j)$, $\wh{A}_j$ acts co-compactly on $S_j$, and $\wh{A}_j$ has a finite index subgroup isomorphic to $\Zb^{\dim(S_j)}$.  Since $A_j$ (and hence $\wh{A}_j$) has rank at least two, this implies that $\dim S_j \geq 2$.

We claim that $A_j \leq \wh{A}_j$ has finite index and hence $A_j$ also acts co-compactly on $S_j$. By Observation~\ref{obs:QI_to_Rk}, the metric space $(S_j,H_\Omega)$ is quasi-isometric to $\Rb^{\dim S_j}$. So, by the fundamental lemma of geometric group theory \cite[Chapter I, Proposition 8.19]{BH2013}, $(\wh{A}_j, \dist_{\Lambda})$ is also quasi-isometric to $\Rb^{\dim S_j}$. Since $\dim S_j \geq 2$, Theorem~\ref{thm:rh_embeddings_of_flats} implies that there exist $r_1 > 0$, $g_j \in \Lambda$, and $1 \leq i_j \leq k$ such that 
\begin{align*}
\wh{A}_j \subset \Nc_{\Lambda}(g_j A_{i_j}; r_1).
\end{align*}
Then
\begin{align*}
\diam_{\Lambda} \left( \Nc_{\Lambda}(g_j A_{i_j}; r_1) \cap \Nc_{\Lambda}(A_j; r_1)\right) \geq\diam_{\Lambda} \left( A_j \right) = \infty.
\end{align*}
So Theorem~\ref{thm:rh_intersections_of_neighborhoods} implies that $g_j A_{i_j} = A_j$. Then,
\begin{align*}
\wh{A}_j \subset   \Nc_{\Lambda}(A_{j}; r_1)
\end{align*}
and hence $A_j \leq \wh{A}_j$ has finite index. 

Fix some $x_0 \in \Cc$ and consider the orbit map
\begin{align*}
F& :(\Lambda, \dist_{\Lambda}) \rightarrow (\Cc, H_\Omega)\\
F&(g) = g x_0.
\end{align*}
By the fundamental lemma of geometric group theory \cite[Proposition 8.19]{BH2013} this is a quasi-isometry. Let $G : \Cc \rightarrow \Lambda$ be a quasi-inverse. Then, using the fact that $A_j$ acts co-compactly on $S_j$, there exists $r_2 > 0$ such that 
\begin{align*}
F(gA_j) \subset \Nc_\Omega(gS_j; r_2)
\end{align*}
and 
\begin{align*}
G(gS_j) \subset \Nc_\Lambda(gA_j; r_2)
\end{align*}
for all $g \in \Lambda$ and $1 \leq j \leq k$. Then, by definition and Theorem \ref{thm:rh_quasi_isometry_inv}, $(\Cc,H_\Omega)$ is relatively hyperbolic with respect to the family of properly embedded simplices of dimension at least two
\begin{align*}
\Sc : = \{ g S_j : g \in \Lambda, \ 1 \leq j \leq k\}.
\end{align*}

\subsection{(2) implies (1)} Suppose that $(\Cc, H_\Omega)$ is a relatively hyperbolic space with respect to a family $\Sc_0$ of properly embedded simplices in $\Cc$ of dimension at least two. It is fairly easy to show that $\Sc_0$ is isolated and coarsely complete, but  we will have to modify $\Sc_0$ to construct a $\Lambda$-invariant family.

 By Theorem~\ref{thm:rh_intersections_of_neighborhoods} for any $r > 0$ there exists $Q_1(r) > 0$ such that:
\begin{align*}
\diam_\Omega( \Nc_\Omega(S_1;r) \cap  \Nc_\Omega(S_2;r) ) \leq Q_1(r)
\end{align*}
when $S_1, S_2 \in \Sc_0$ are distinct. Further, by Observation~\ref{obs:QI_to_Rk} and Theorem~\ref{thm:rh_embeddings_of_flats} there exists $Q_2 > 0$ such that: if $S \subset \Cc$ is a properly embedded simplex of dimension at least two, then there exists $S^\prime \in \Sc_0$ such that 
\begin{align}
\label{eq:inclsuion_pf_of_2_implies}
S \subset \Nc_\Omega(S^\prime; Q_2).
\end{align}

\begin{lemma}\label{lem:rh_simplices_faces_of_vertices} If $S \in \Sc_0$ and $v \in \partial S$ is a vertex, then 
\begin{align*}
H_{F_\Omega(v)}^{\Haus}\left( \{v\}, \overline{\Cc} \cap F_\Omega(v)\right)\leq Q_2.
\end{align*}
\end{lemma}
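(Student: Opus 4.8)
The statement asserts that a vertex $v$ of a simplex $S \in \Sc_0$ is within bounded Hausdorff distance (measured in $F_\Omega(v)$) of the whole slice $\overline{\Cc} \cap F_\Omega(v)$, with the bound $Q_2$ coming from the coarse-completeness constant in Equation~\eqref{eq:inclsuion_pf_of_2_implies}. The plan is to argue by contradiction: if the Hausdorff distance is larger than $Q_2$, then $\overline{\Cc} \cap F_\Omega(v)$ must contain a point $u$ far from $v$ inside $F_\Omega(v)$ (indeed, one can push toward $\partial F_\Omega(v)$ to find points at arbitrarily large $H_{F_\Omega(v)}$-distance from $v$, or at least at distance exceeding $Q_2$). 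The key geometric input is that, since $v \in \partial S$ is a vertex and the remaining vertices $v_2,\dots,v_p$ of $S$ lie in faces opposite to $F_S(v)=\{v\}$, Lemma~\ref{lem:slide_along_faces} lets us replace $v$ by any $u \in F_\Omega(v)$ to obtain a \emph{new} properly embedded simplex $S_u$ with vertices $u, v_2, \dots, v_p$, all contained in $\Cc$ (using that $\overline{\Cc}\cap F_\Omega(v)$ is where $u$ lives and convexity of $\Cc$).

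\textbf{Key steps.} First I would suppose for contradiction that there is $u \in \overline{\Cc} \cap F_\Omega(v)$ with $H_{F_\Omega(v)}(u,v) > Q_2$. By Lemma~\ref{lem:slide_along_faces}, the set $S_u := \Omega \cap [\Spanset\{u, v_2,\dots,v_p\}]$ is a properly embedded simplex in $\Cc$ with vertices $u, v_2,\dots,v_p$, and moreover $H_\Omega^{\Haus}(S, S_u) \le H_{F_\Omega(v)}(v,u)$ (the other summands vanish since $w_j = v_j$ for $j \ge 2$). By Equation~\eqref{eq:inclsuion_pf_of_2_implies}, there exists $S' \in \Sc_0$ with $S_u \subset \Nc_\Omega(S'; Q_2)$. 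The point of choosing $u$ far from $v$ is that $u \in S_u$ but $u$ is far from $S$ (in fact $H_\Omega(u, S) = H_{F_\Omega(v)}(u,v)$ by Proposition~\ref{prop:dist_est_and_faces} applied correctly, or directly since the closest point of $S$ to $u$ is $v$ within the slice). Then I compare $S'$ with $S$: both $S$ and $S_u$ are within $\Nc_\Omega(\cdot; \max\{Q_2, H_\Omega^{\Haus}(S,S_u)\})$ of things, and since $S$ is unbounded and shares its ``tail'' with $S_u$ except near $v$, the intersection $\Nc_\Omega(S; r) \cap \Nc_\Omega(S'; r)$ has infinite diameter for a suitable $r$ depending on $Q_2$. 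By the distinctness-implies-bounded-intersection property (Theorem~\ref{thm:rh_intersections_of_neighborhoods}, which gives $Q_1(r)$), this forces $S' = S$. But then $S_u \subset \Nc_\Omega(S; Q_2)$, so $H_\Omega(u, S) \le Q_2$, contradicting $H_\Omega(u,S) = H_{F_\Omega(v)}(u,v) > Q_2$.

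\textbf{Main obstacle.} The subtle point I expect to need care with is the claim that $\Nc_\Omega(S;r) \cap \Nc_\Omega(S';r)$ has infinite diameter, forcing $S'=S$ via Theorem~\ref{thm:rh_intersections_of_neighborhoods}: one must verify that $S$ and $S_u$ genuinely stay uniformly close to each other \emph{along an unbounded set}, which follows from $H_\Omega^{\Haus}(S, S_u) \le H_{F_\Omega(v)}(v,u) < \infty$ together with $S$ being unbounded in $(\Omega, H_\Omega)$ (Observation~\ref{obs:QI_to_Rk}); so $S \subset \Nc_\Omega(S_u; C) \subset \Nc_\Omega(S'; C + Q_2)$ and $S_u \subset \Nc_\Omega(S'; Q_2)$ gives two unbounded sets each inside a neighborhood of $S'$ and of $S$, whence $\diam_\Omega(\Nc_\Omega(S; r)\cap \Nc_\Omega(S';r)) = \infty$ for $r = C + Q_2$ where $C := H_\Omega^{\Haus}(S, S_u)$. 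Actually the cleaner route is: $S_u$ itself is an unbounded set contained in both $\Nc_\Omega(S; C)$ and $\Nc_\Omega(S'; Q_2)$, so taking $r = \max\{C, Q_2\}$ we get $\diam_\Omega(\Nc_\Omega(S;r) \cap \Nc_\Omega(S';r)) \ge \diam_\Omega(S_u) = \infty$. Since this exceeds $Q_1(r)$, Theorem~\ref{thm:rh_intersections_of_neighborhoods} yields $S = S'$, completing the contradiction. The other inclusion of the Hausdorff distance, namely $v \in \Nc_{F_\Omega(v)}(\overline{\Cc}\cap F_\Omega(v); Q_2)$, is immediate since $v \in S \subset \Cc$, so $v \in \overline{\Cc} \cap F_\Omega(v)$ and that half of the Hausdorff bound is trivially $0 \le Q_2$.
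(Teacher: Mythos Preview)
Your approach is essentially the same as the paper's: slide the vertex $v$ to $u \in \overline{\Cc}\cap F_\Omega(v)$ via Lemma~\ref{lem:slide_along_faces} to get a new properly embedded simplex $S_u \subset \Cc$, find $S' \in \Sc_0$ with $S_u \subset \Nc_\Omega(S';Q_2)$ by~\eqref{eq:inclsuion_pf_of_2_implies}, and then force $S'=S$ using Theorem~\ref{thm:rh_intersections_of_neighborhoods} since $S_u$ is unbounded and sits in bounded neighborhoods of both $S$ and $S'$.

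There is, however, a slip in your final step. You write ``$u \in S_u$'' and ``$H_\Omega(u,S) \le Q_2$'', but $u$ is a \emph{vertex} of $S_u$, so $u \in \partial S_u \subset \partial\Omega$ and $u \notin \Omega$; thus $H_\Omega(u,S)$ is not defined. The inclusion $S_u \subset \Nc_\Omega(S;Q_2)$ does not directly bound any distance from $u$. The paper closes this exactly where you gesture: take a sequence in $S_u$ converging to $u$ and apply Proposition~\ref{prop:dist_est_and_faces} to obtain some $v' \in \overline{S}\cap F_\Omega(v)$ with $H_{F_\Omega(v)}(v',u) \le Q_2$; then Observation~\ref{obs:faces_of_simplices_are_properly_embedded} gives $\overline{S}\cap F_\Omega(v) = F_S(v) = \{v\}$, so $v'=v$ and $H_{F_\Omega(v)}(v,u) \le Q_2$. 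With this correction your argument is complete and matches the paper (the paper runs it directly rather than by contradiction, fixing an arbitrary $w \in \overline{\Cc}\cap F_\Omega(v)$ and showing $H_{F_\Omega(v)}(v,w)\le Q_2$).
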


\begin{proof}
Suppose $v,  v_2, \dots, v_p$ are the vertices of $S$. If $w \in \overline{\Cc} \cap F_\Omega(v)$, then by Lemma~\ref{lem:slide_along_faces}
\begin{align*}
\wt{S} := \Omega \cap \Pb(\Spanset\{  w,v_2,\dots, v_p\})
\end{align*}
is a properly embedded simplex in $\Cc$ with 
\begin{align*}
H_\Omega^{\Haus}\left(S, \wt{S}\right) \leq H_{F_\Omega(v)}(v, w). 
\end{align*}
Then there exists $S^\prime \in \Sc_0$ such that 
\begin{align*}
\wt{S} \subset \Nc_\Omega(S^\prime; Q_2).
\end{align*}
Then when $r > Q_2+H_{F_\Omega(v)}(v, w)$
\begin{align*}
\diam_\Omega( \Nc_\Omega(S;r) \cap  \Nc_\Omega(S^\prime;r) ) \geq \diam_\Omega \left(\wt{S}\right)=\infty. 
\end{align*}
So $S=S^\prime$ and 
\begin{align*}
\wt{S} \subset \Nc_\Omega(S; Q_2).
\end{align*}
Then Proposition~\ref{prop:dist_est_and_faces} implies that there exists $v^\prime \in \overline{S} \cap F_\Omega(v)$ with 
\begin{align*}
H_{F_\Omega(v)}(v^\prime, w) \leq Q_2.
\end{align*}
But by Observation~\ref{obs:faces_of_simplices_are_properly_embedded}
\begin{align*}
\{v\} = F_S(v) = \overline{S} \cap F_\Omega(v)
\end{align*}
and so $v = v^\prime$. So 
\begin{align*}
H_{F_\Omega(v)}(v, w) \leq Q_2.
\end{align*}
Finally since $w \in \overline{\Cc} \cap F_\Omega(v)$ was arbitrary this proves the lemma. 
\end{proof}

Now we repeat part of the argument from Section~\ref{sec:intersection_of_nbhds}. In particular, for each simplex $S \in \Sc_0$ we construct a new simplex $\Phi(S)$ as follows. Let $v_1,\dots, v_p$ be the vertices of $S$. Then, by the previous lemma,
\begin{align*}
H_{F_\Omega(v_j)}^{\Haus}\left(\overline{\Cc} \cap F_\Omega(v_j), \{ v_j\} \right) \leq Q_2.
\end{align*}
So $\overline{\Cc} \cap F_\Omega(v_j)$ is a compact subset of $F_\Omega(v_j)$. Then let 
\begin{align*}
w_j: = {\rm CoM}_{F_\Omega(v_j)}\left( \overline{\Cc} \cap F_\Omega(v_j) \right). 
\end{align*}
Then Lemma~\ref{lem:slide_along_faces} implies that 
\begin{align*}
\Phi(S):=\Omega \cap \Pb(\Spanset\{  w_1,\dots, w_p\})
\end{align*}
is a properly embedded simplex with vertices $w_1,\dots, w_p$ and
\begin{align}
\label{eq:phi_is_bd_in_haus_2}
H_{\Omega}^{\Haus}( S, \Phi(S)) \leq Q_2. 
\end{align}

Then define
\begin{align*}
\Sc :=\{ \gamma \Phi(S) : \gamma \in \Lambda, S \in \Sc_0\}.
\end{align*}
We will show that $\Sc$ is isolated, coarsely complete, and $\Lambda$-invariant, but first a preliminary lemma.

\begin{lemma}\label{lem:phi_equals_in_2_implies} If $S \in \Sc$, $S^\prime \in \Sc_0$, and 
\begin{align*}
S \subset \Nc_{\Omega}(S^\prime;r)
\end{align*}
for some $r > 0$, then $\Phi(S^\prime)=S$. 
\end{lemma}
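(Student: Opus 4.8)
The plan is to reduce everything to the face structure of the vertices, exactly as in the proof of Lemma~\ref{lem:S1=S2}. Write $S=\gamma\Phi(S_0)$ with $\gamma\in\Lambda$ and $S_0\in\Sc_0$, let $v_1,\dots,v_p$ denote the vertices of $A:=\gamma S_0$, and note $\dim A=\dim S_0=p-1$. Since $\Lambda$ preserves $\Omega$ and $\Cc$ and acts by Hilbert isometries, $A$ is a properly embedded simplex in $\Cc$, and applying $\gamma$ to Equation~\eqref{eq:phi_is_bd_in_haus_2} gives $H_\Omega^{\Haus}(A,S)\leq pQ_2$. Moreover, using $\gamma F_\Omega(x)=F_\Omega(\gamma x)$, $\gamma\overline{\Cc}=\overline{\Cc}$, and $\gamma\,{\rm CoM}_{F_\Omega(x)}(\cdot)={\rm CoM}_{F_\Omega(\gamma x)}(\gamma\,\cdot)$ from Proposition~\ref{prop:center_of_mass}(3), the simplex $S$ is obtained from $A$ by the same construction that defines $\Phi$: its $j$-th vertex is ${\rm CoM}_{F_\Omega(v_j)}\big(\overline{\Cc}\cap F_\Omega(v_j)\big)$. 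So it suffices to show $A$ and $S'$ are parallel in the sense of Definition~\ref{defn:parallel}.

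First I would show $A\subseteq\Nc_\Omega(S';Q_2)$. By coarse completeness of $\Sc_0$ (Equation~\eqref{eq:inclsuion_pf_of_2_implies}) there is $S_1\in\Sc_0$ with $A\subseteq\Nc_\Omega(S_1;Q_2)$; then $S\subseteq\Nc_\Omega(A;pQ_2)\subseteq\Nc_\Omega(S_1;(p+1)Q_2)$, while $S\subseteq\Nc_\Omega(S';r)$ by hypothesis. Since $S$, being a properly embedded simplex, is unbounded in $(\Omega,H_\Omega)$ (Observation~\ref{obs:QI_to_Rk}), the set $\Nc_\Omega(S_1;R_0)\cap\Nc_\Omega(S';R_0)$ has infinite diameter for $R_0:=\max(r,(p+1)Q_2)$, so Theorem~\ref{thm:rh_intersections_of_neighborhoods} forces $S_1=S'$. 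Running the symmetric argument with $\gamma^{-1}S'\subseteq\Cc$ and $S_0=\gamma^{-1}A$ (again using coarse completeness and Theorem~\ref{thm:rh_intersections_of_neighborhoods}) yields $S'\subseteq\Nc_\Omega(A;Q_2)$ as well, and hence $\dim S'=\dim A=p-1$ by Observation~\ref{obs:QI_to_Rk}.

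Next I would prove parallelism. For each vertex $v_j$ of $A$, pick a sequence in $A$ converging to $v_j$ and, using $A\subseteq\Nc_\Omega(S';Q_2)$, a uniformly nearby sequence in $S'$; Proposition~\ref{prop:dist_est_and_faces} produces $w_j\in\overline{S'}$ with $F_\Omega(w_j)=F_\Omega(v_j)$, and since $v_j\in\partial\Omega$ necessarily $w_j\in\partial S'$. By Lemma~\ref{lem:slide_along_faces} applied to $A$, the set $\Omega\cap[\Spanset\{w_1,\dots,w_p\}]$ is a properly embedded simplex with vertices $w_1,\dots,w_p$; it lies in $\overline{S'}\cap\Omega=S'$ and has dimension $p-1=\dim S'$, hence equals $S'$. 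Therefore $w_1,\dots,w_p$ are precisely the vertices of $S'$, and $F_\Omega(w_j)=F_\Omega(v_j)$ exhibits $A$ and $S'$ as parallel. I expect this to be the main obstacle: one has to verify carefully that the boundary points matched by Proposition~\ref{prop:dist_est_and_faces} are genuine vertices rather than lower-dimensional faces — this is exactly where $\dim S'=\dim A$ enters — and the bookkeeping mirrors the proofs of Lemmas~\ref{lem:shadows} and~\ref{lem:S1=S2}.

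Finally, since $S'\in\Sc_0$, Lemma~\ref{lem:rh_simplices_faces_of_vertices} ensures $\overline{\Cc}\cap F_\Omega(w_j)$ is compact in $F_\Omega(w_j)$ for each vertex $w_j$ of $S'$, so $\Phi(S')$ is well defined with $j$-th vertex ${\rm CoM}_{F_\Omega(w_j)}\big(\overline{\Cc}\cap F_\Omega(w_j)\big)$. Because $F_\Omega(w_j)=F_\Omega(v_j)$, this equals ${\rm CoM}_{F_\Omega(v_j)}\big(\overline{\Cc}\cap F_\Omega(v_j)\big)$, which by the first paragraph is the $j$-th vertex of $S$. Thus $S$ and $\Phi(S')$ are properly embedded simplices with the same vertices, so $S=\Phi(S')$, as desired.
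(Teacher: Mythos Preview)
Your proof is correct, but it takes a longer route than the paper's. You introduce the auxiliary simplex $A=\gamma S_0$ and establish that $A$ and $S'$ are parallel by first proving a two-sided inclusion $A\subset\Nc_\Omega(S';Q_2)$ and $S'\subset\Nc_\Omega(A;Q_2)$ via Theorem~\ref{thm:rh_intersections_of_neighborhoods}, extracting $\dim S'=\dim A$, and then matching vertices with Lemma~\ref{lem:slide_along_faces}. The paper instead works directly with the vertices $v_1,\dots,v_p$ of $S$: from $S\subset\Nc_\Omega(S';r)$ and Proposition~\ref{prop:dist_est_and_faces} it finds $v_j'\in\partial S'$ with $v_j'\in F_\Omega(v_j)$, then observes that $\overline{\Cc}\cap F_\Omega(v_j)$ is compact (by Lemma~\ref{lem:rh_simplices_faces_of_vertices} applied to a vertex of $S_0$, transported by $\gamma$), and since $F_{S'}(v_j')$ is properly embedded in $F_\Omega(v_j)$ yet contained in this compact set, it must be a single point---so each $v_j'$ is a vertex of $S'$. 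One more application of Lemma~\ref{lem:slide_along_faces} shows $\CH_\Omega\{v_1',\dots,v_p'\}$ meets $\Omega$, forcing $v_1',\dots,v_p'$ to exhaust the vertices of $S'$, and $\Phi(S')=S$ follows. The paper's compactness trick replaces your entire dimension-matching detour and avoids the second invocation of Theorem~\ref{thm:rh_intersections_of_neighborhoods}; your approach, while valid, does more work than necessary.
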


\begin{proof} Let $v_1,\dots, v_p$ be the vertices of $S$. Then by Proposition~\ref{prop:dist_est_and_faces} there exist $v_1^\prime,\dots, v_p^\prime \in \partial S^\prime$ such that 
\begin{align*}
v_j^\prime \in F_\Omega(v_j)
\end{align*}
for all $1 \leq j \leq p$. Lemma~\ref{lem:rh_simplices_faces_of_vertices} and the definition of $\Phi$ implies that $\overline{\Cc} \cap F_\Omega(v_j)$ is a compact neighbourhood of $\{v_j\}$ in $F_\Omega(v_j)$. Since 
\begin{align*}
F_{S^\prime}(v_j^\prime) \subset \overline{\Cc} \cap F_\Omega(v_j),
\end{align*}
 Obervation~\ref{obs:faces_of_simplices_are_properly_embedded} implies that $v_j^\prime$ is a vertex of $S^\prime$. Further, by Lemma~\ref{lem:slide_along_faces}, 
 \begin{align*}
 {\rm ConvHull}_\Omega\{ v_1^\prime, \dots, v_p^\prime\}
\end{align*}
intersects $\Omega$. Since $S'$ is a properly embedded simplex, $v_1^\prime, \dots, v_p^\prime$ must be all of the vertices of $S^\prime$. Then by definition $\Phi(S^\prime)=S$. 
\end{proof}

\begin{lemma} $\Sc$ is an isolated, coarsely complete, and $\Lambda$-invariant family of maximal properly embedded simplices in $\Cc$  of dimension at least two. Hence $(\Omega, \Cc, \Lambda)$ has coarsely isolated simplices. \end{lemma}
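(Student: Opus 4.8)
The plan is to verify each of the three properties of $\Sc = \{\gamma \Phi(S) : \gamma \in \Lambda,\ S \in \Sc_0\}$ separately, and then invoke Definition~\ref{defn:IS}. The $\Lambda$-invariance is immediate from the definition of $\Sc$, so the real content is showing that $\Sc$ is coarsely complete, that $\Sc$ is isolated (closed and discrete in the local Hausdorff topology), and that every member of $\Sc$ is a maximal properly embedded simplex. First, for coarse completeness: given an arbitrary properly embedded simplex $T \subset \Cc$, Equation~\eqref{eq:inclsuion_pf_of_2_implies} produces $S' \in \Sc_0$ with $T \subset \Nc_\Omega(S'; Q_2)$, and then Equation~\eqref{eq:phi_is_bd_in_haus_2} gives $T \subset \Nc_\Omega(\Phi(S'); Q_2 + pQ_2)$ with $\Phi(S') \in \Sc$; uniformity of the bound follows because there are only finitely many $\Lambda$-orbits in $\Sc_0$ (so $p = \dim S' + 1$ is bounded) — here I would also note that $\Lambda$-translation preserves the relation ``$T$ lies in an $r$-neighborhood of $\Phi(S')$'' by $\Aut(\Omega)$-invariance of $H_\Omega$, so coarse completeness of $\{\Phi(S) : S \in \Sc_0\}$ upgrades to coarse completeness of all of $\Sc$.

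Next, for maximality: each $\Phi(S)$ is a properly embedded simplex by Lemma~\ref{lem:slide_along_faces} (applied exactly as in the construction above), and it lies in $\Nc_\Omega(S; pQ_2)$ for $S \in \Sc_0$; since $\Sc_0$ arises from a relatively hyperbolic structure, every $S \in \Sc_0$ is maximal by the remark following Theorem~\ref{thm:main_ncc} (which uses Theorem~\ref{thm:rh_embeddings_of_flats}). If $\Phi(S)$ were contained in a strictly larger properly embedded simplex $T$, then $T$ would lie in $\Nc_\Omega(S''; Q_2)$ for some $S'' \in \Sc_0$ by Equation~\eqref{eq:inclsuion_pf_of_2_implies}, and combining neighborhoods one finds $S$ and $S''$ are within finite Hausdorff distance of a common simplex of strictly larger dimension; Theorem~\ref{thm:rh_intersections_of_neighborhoods} forces $S = S''$ and then Observation~\ref{obs:QI_to_Rk} gives a contradiction on dimensions. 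So $\Phi(S)$ is maximal, and maximality is clearly preserved under the $\Aut(\Omega)$-action, hence every element of $\Sc$ is a maximal properly embedded simplex.

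The main obstacle is showing $\Sc$ is isolated, i.e. closed and discrete in the local Hausdorff topology — this is where Lemma~\ref{lem:phi_equals_in_2_implies} does the work. Suppose $\Sc$ is not discrete: then there is a sequence of distinct $T_n \in \Sc$ converging in the local Hausdorff topology to some limit, which by Observation~\ref{obs:PES_closed} is a properly embedded simplex $T_\infty$. For $n$ large the convergence gives $T_n \subset \Nc_\Omega(T_{n+1}; 1)$ (say), and by coarse completeness each $T_n$ lies in a bounded neighborhood of some $S_n' \in \Sc_0$; using Theorem~\ref{thm:rh_intersections_of_neighborhoods} on $\Sc_0$, once two $T_n$'s are mutually close enough in Hausdorff distance their associated $\Sc_0$-simplices must coincide, say equal to a fixed $S' \in \Sc_0$, and then Lemma~\ref{lem:phi_equals_in_2_implies} forces $T_n = \Phi(S')$ for all large $n$ — contradicting distinctness. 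The same argument shows closedness: a Hausdorff limit of a sequence in $\Sc$ is eventually constant, hence lies in $\Sc$. I would need to be slightly careful tracking the $\Lambda$-translates (since $T_n$ may be $\gamma_n \Phi(S_n)$ with varying $\gamma_n$), but after pre-composing with $\gamma_n^{-1}$ and using co-compactness of the action on $\Cc$ together with the local-finiteness packaging in Observation~\ref{obs:slice_convergence}, the argument reduces to the translate-free case. Once all three properties are in hand, Definition~\ref{defn:IS} gives that $(\Omega, \Cc, \Lambda)$ has coarsely isolated simplices, completing the proof.
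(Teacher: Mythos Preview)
Your approach matches the paper's, but two points need correcting.

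First, the claim ``for $n$ large the convergence gives $T_n \subset \Nc_\Omega(T_{n+1};1)$'' is false: local Hausdorff convergence only controls $T_n$ on bounded balls, not globally. What you actually need (and what the paper uses) is that, since the limit $T_\infty$ is a properly embedded simplex and hence unbounded, for each $R$ eventually $T_n$ and $T_{n+1}$ both pass within distance $1$ of a subset of $T_\infty$ of diameter $\geq R$; hence if $S_n' \in \Sc_0$ is chosen with $T_n \subset \Nc_\Omega(S_n';Q_2)$, then $\diam_\Omega\big(\Nc_\Omega(S_n';Q_2+1)\cap \Nc_\Omega(S_{n+1}';Q_2+1)\big)\to\infty$, and Theorem~\ref{thm:rh_intersections_of_neighborhoods} forces $S_n'=S_{n+1}'$ eventually.

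Second, the translate-tracking is unnecessary and the proposed reduction via $\gamma_n^{-1}$ is a detour. Each $T_n\in\Sc$ is in particular a properly embedded simplex in $\Cc$, so Equation~\eqref{eq:inclsuion_pf_of_2_implies} directly produces $S_n'\in\Sc_0$ with $T_n\subset\Nc_\Omega(S_n';Q_2)$; once the $S_n'$ stabilize, Lemma~\ref{lem:phi_equals_in_2_implies} gives $T_n=\Phi(S_n')$ outright, regardless of how $T_n$ was presented as a translate. (Similarly, for the uniform bound in coarse completeness you do not need finiteness of $\Lambda$-orbits in $\Sc_0$, which has not been established here; $p\leq d$ suffices.)
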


\begin{proof} By construction $\Sc$ is $\Lambda$-invariant. 

We next argue that $\Sc$ is isolated. Suppose $S_n \in \Sc$ converges to a closed set $S$ in the local Hausdorff topology. Then $S$ is a properly embedded simplex by Observation~\ref{obs:PES_closed}. For each $n$ there exists $S_n^\prime \in \Sc_0$ such that 
\begin{align*}
S_n \subset \Nc_\Omega(S_n^\prime;Q_2).
\end{align*}
Since $\lim_{n \to \infty} S_n= S$ we have 
\begin{align*}
\infty = \lim_{n \rightarrow \infty} \diam_\Omega \Big( \Nc_\Omega(S_n^\prime;Q_2+1) \cap \Nc_\Omega(S_{n+1}^\prime;Q_2+1)\Big).
\end{align*}
So there exists $N \geq 0$ such that $S_n^\prime = S_N^\prime$ for all $n \geq N$. Then by Lemma~\ref{lem:phi_equals_in_2_implies}
\begin{align*}
S_n = \Phi(S_n^\prime) = \Phi(S_N^\prime)
\end{align*}
for $n \geq N$. So $S = S_N$. Since $S_n \in \Sc$ was an arbitrary convergent subsequence, the set $\Sc$ is closed and discrete in the local Hausdorff topology, hence isolated.

Finally, we show that $\Sc$ is coarsely complete. Since $\Sc_0$ is coarsely complete, if $S \subset \Cc$ is a properly embedded simplex of dimension at least two, then there exists $S^\prime \in \Sc_0$ such that 
\begin{align*}
S \subset \Nc_\Omega(S^\prime;Q_2).
\end{align*}
Then by Equation~\eqref{eq:phi_is_bd_in_haus_2} 
\begin{align*}
S \subset \Nc_\Omega(S^{\prime\prime};2Q_2)
\end{align*}
where $S^{\prime\prime}:=\Phi(S^\prime) \in \Sc$. 
\end{proof}

\part{The convex co-compact case}

\section{Lines and corners in the boundary}\label{sec:lines_corners_in_the_bd}

In this section we prove the following result which we will use to verify properties (7) and (8) in Theorem~\ref{thm:properties_of_cc}.

\begin{proposition}\label{prop:lines_and_corners}   Suppose $\Omega \subset \Pb(\Rb^d)$ is a properly convex domain and $\Lambda \leq \Aut(\Omega)$ is convex co-compact. Assume that the family $\Sc_{\max}$ of all maximal properly embedded simplices in $\Cc:=\Cc_\Omega(\Lambda)$ of dimension at least two satisfies the following:
\begin{enumerate}
\item $\Sc_{\max}$ is strongly isolated.
\item If $S \in \Sc_{\max}$ and $x \in \partial S$, then $F_{\Omega}(x) = F_S(x)$. 
\end{enumerate}
Then:
\begin{enumerate}[(a)]
\item If $\ell \subset \partiali \Cc$ is a non-trivial line segment, then there exists $S \in \Sc_{\max}$ with $\ell \subset \partial S$. 
\item If $y \in \partiali \Cc$ is not a $C^1$-smooth point of $\partial \Omega$, then there exists $S \in \Sc_{\max}$ with $y \in \partial S$. 
\end{enumerate}
\end{proposition}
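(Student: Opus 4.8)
\textbf{Proof plan for Proposition~\ref{prop:lines_and_corners}.}

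The plan is to reduce both statements to the naive convex co-compact theory already developed, in particular to Theorem~\ref{thm:half_triangle_nearby_simplex} (half triangles lie near a simplex) and to hypotheses (1)--(2) on $\Sc_{\max}$. Since $\Lambda$ is convex co-compact, $(\Omega, \Cc, \Lambda)$ with $\Cc = \Cc_\Omega(\Lambda)$ is a naive convex co-compact triple, and hypothesis (1) says $\Sc_{\max}$ is strongly isolated; by Observation~\ref{obs:str_iso_implies_iso} it is isolated, and since $\Sc_{\max}$ is by definition coarsely complete and $\Lambda$-invariant, $(\Omega, \Cc, \Lambda)$ has coarsely isolated simplices with $\Sc = \Sc_{\max}$ being strongly isolated, coarsely complete, and $\Lambda$-invariant. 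So all results of the naive convex co-compact case (Theorems~\ref{thm:properties_of_ncc}, \ref{thm:half_triangle_nearby_simplex}, Corollary~\ref{cor:half_triangles_out_of_maximal_simplices}) apply. The key extra input for the convex co-compact case is that $\Lc_\Omega(\Lambda) \subset \partial\Omega$, so points of $\partiali\Cc$ are limit points, and that $\Cc$ is the convex hull of $\Lc_\Omega(\Lambda)$, which forces $\partiali\Cc$ to have rich boundary structure.

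For part (a): given a non-trivial line segment $\ell = [a,b] \subset \partiali\Cc$, I want to produce a half triangle with $\ell$ as one of its two boundary edges, then invoke Theorem~\ref{thm:half_triangle_nearby_simplex} and hypothesis (2). Pick an interior point $z \in \relint(\ell)$ and an interior point $p_0 \in \Cc$. Using that $z \in \partiali\Cc \subset \Lc_\Omega(\Lambda)$ and that $\Lambda$ acts co-compactly on $\Cc$, I would run a Benz\'ecri-type rescaling: choose $\gamma_n \in \Lambda$ so that $\gamma_n z \to z_\infty \in \Cc$ while $\gamma_n a, \gamma_n b$ converge; since $H_\Omega(\gamma_n a, \gamma_n z), H_\Omega(\gamma_n b, \gamma_n z) \to \infty$ (as $a,b$ are limits of sequences in $\Cc$ distinct from interior points under the rescaling, or more directly because $\ell \subset \partial\Cc$ contracts to an ideal segment), the limits $a_\infty \ne b_\infty$ lie in $\partiali\Cc$ and $(a_\infty, b_\infty) \subset \Cc$. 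I still need a third point $c$ with $[a_\infty, c], [c, b_\infty] \subset \partial\Omega$: here I exploit that $a_\infty, b_\infty \in \partial\Omega$ with $[a_\infty, b_\infty] \subset \overline\Cc \subset \overline\Omega$ meeting $\Omega$, and pick $c$ on $\partial\Omega$ so that $c \in F_\Omega(a_\infty)$ or use a supporting-hyperplane argument; alternatively, work with $a$ and $b$ directly — since $[a,b]\subset\partiali\Cc$, I look for $c\in\partiali\Cc$ with $[a,c],[c,b]\subset\partial\Omega$, which exists because $\Cc$ is the convex hull of its ideal boundary and a segment in the boundary of a convex hull is contained in the boundary of a face that is itself a convex hull of limit points. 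Once $a,b,c$ form a half triangle, Theorem~\ref{thm:half_triangle_nearby_simplex} gives $S \in \Sc_{\max}$ with $a,b,c \in F_\Omega(\partial S) = \bigcup_{x\in\partial S} F_\Omega(x)$, and hypothesis (2) collapses $F_\Omega(x) = F_S(x)$ for $x \in \partial S$, so $F_\Omega(\partial S) = \partial S$, whence $\ell = [a,b] \subset \partial S$.

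For part (b): if $y \in \partiali\Cc$ is not a $C^1$-smooth point of $\partial\Omega$, there are two distinct supporting hyperplanes $H_1, H_2$ of $\Omega$ at $y$. The standard consequence is that there is a non-trivial line segment in $\partial\Omega$ through $y$ lying in the boundary — more precisely, the failure of $C^1$-smoothness at a point which is also a limit point forces, after a rescaling argument as above, the existence of a half triangle with vertex $b = y$: rescale by $\gamma_n \in \Lambda$ with $\gamma_n y \to y_\infty \in \Cc$; the two supporting hyperplanes persist in the limit (up to subsequence, using compactness of the Grassmannian of hyperplanes, as in the proof of Observation~\ref{obs:slice_convergence}) and remain distinct and supporting at a point $b_\infty \in \partiali\Cc$; two distinct supporting hyperplanes at $b_\infty$ combined with $b_\infty$ being an ideal point of the convex hull $\Cc$ yields points $a_\infty, c_\infty \in \partiali\Cc$ with $[a_\infty, b_\infty] \subset H_1 \cap \partial\Omega$ and $[b_\infty, c_\infty] \subset H_2 \cap \partial\Omega$, hence in $\partial\Omega$, while $(a_\infty, c_\infty) \subset \Cc$ (choosing $a_\infty, c_\infty$ on opposite sides). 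This is a half triangle, so Theorem~\ref{thm:half_triangle_nearby_simplex} gives $S \in \Sc_{\max}$ with $b_\infty \in F_\Omega(\partial S) = \partial S$ by hypothesis (2). Pulling back by $\gamma_n^{-1}$ (which fixes the family $\Sc_{\max}$) and using $\Lambda$-invariance gives $y \in \partial S'$ for some $S' \in \Sc_{\max}$.

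\textbf{Main obstacle.} The crux in both parts is manufacturing the half triangle: one must show that a line segment (resp.\ a non-$C^1$ point) in $\partiali\Cc$ actually sits inside a ``corner'' of $\partial\Omega$, i.e.\ has a companion edge in $\partial\Omega$. The clean way to see this uses that $\Cc = \CH_\Omega(\Lc_\Omega(\Lambda))$ and that every proper face of $\overline\Cc$ contained in $\partial\Omega$ is the convex hull of the limit points it contains, together with a rescaling (Benz\'ecri) argument to pass to a situation where $\Lambda$-cocompactness gives a limiting automorphism whose dynamics (via Proposition~\ref{prop:dynamics_of_automorphisms}) produces the needed boundary segments; carefully tracking that the limiting configuration is genuinely a half triangle (the two boundary edges non-degenerate, the diagonal meeting $\Omega$) is the delicate point, and is where the bulk of the work will go.
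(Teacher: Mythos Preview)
Your proposal has a genuine gap: the reduction to Theorem~\ref{thm:half_triangle_nearby_simplex} via ``manufacturing a half triangle'' does not go through as written, and it is not clear it can be made to work.

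First, the rescaling you describe is miswritten: you choose $z\in\relint(\ell)\subset\partial\Omega$ and then ask for $\gamma_n\in\Lambda$ with $\gamma_n z\to z_\infty\in\Cc$. Since $\Lambda$ preserves $\partial\Omega$, every $\gamma_n z$ lies in $\partial\Omega$, so this is impossible. The same issue occurs in part~(b) with $\gamma_n y$.

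Second, and more seriously, even with a correct rescaling (translating a sequence $p_n\in\Cc$ with $p_n\to z$), there is no evident mechanism that produces a half triangle. Given only a segment $[a,b]\subset\partiali\Cc$, you would need some $c\in\partiali\Cc$ with $[b,c]\subset\partial\Omega$ and $(a,c)\subset\Omega$. Nothing in the hypotheses supplies such a $c$ directly; your suggestion to find $c$ with $[a,c],[c,b]\subset\partial\Omega$ is not even the right configuration (that would put all three sides in $\partial\Omega$). For part~(b), the two supporting hyperplanes $H_1,H_2$ do cut out a corner in $\partial\Omega$, but the needed endpoints $a,c$ must lie in $\partiali\Cc$, not just $\partial\Omega$, and again you give no argument for this.

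The paper sidesteps the half-triangle detour entirely. For part~(a) it works in the $2$-plane $V=\Spanset\{\ell,p_0\}$ and rescales $\Omega\cap[V]$ by explicit affine maps $g_n\in\PGL(V)$ (not by $\Lambda$) so that $g_n(\Omega\cap[V])$ converges to a standard $2$-simplex $T$; composing with $\gamma_n\in\Lambda$ and applying Arzel\`a--Ascoli yields an isometric embedding $f:(T,H_T)\to(\Omega,H_\Omega)$ whose image is a properly embedded simplex $S\subset\Cc$. This gives, for each $x$ near $m\in\ell$, a simplex $S_x\in\Sc_{\max}$ with $B_\Omega(x;r)\cap[V]\subset\Nc_\Omega(S_x;\epsilon)$. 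Strong isolation then forces all the $S_x$ to coincide with a single $S$, so a neighborhood of $m$ in $[V]\cap\Cc$ lies in $\Nc_\Omega(S;1)$; Proposition~\ref{prop:dist_est_and_faces} gives $m\in F_\Omega(m')$ for some $m'\in\partial S$, and hypothesis~(2) finishes. Part~(b) is identical, rescaling at the corner $z_0$ instead of along $\ell$. This is the \emph{same} Benz\'ecri technique that underlies Theorem~\ref{thm:half_triangle_nearby_simplex}, applied directly rather than by reduction; the point is that the intermediate half-triangle step is unnecessary and, as you yourself flag as the ``main obstacle,'' hard to justify.
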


\begin{remark} In Section \ref{sec:pf_of_thm_main_cc} we will show that if $\Lambda \leq \Aut(\Omega)$ is convex-compact  and $\Sc_{\max}$ is an isolated family of properly embedded simplices,  then conditions (1) and (2) are  automatically satisfied. 
\end{remark}

We will need the following observation about convex co-compact subgroups. 

\begin{proposition}\cite[Lemma 4.1 part (1)]{DGF2017}\label{prop:full_face} Suppose $\Omega \subset \Pb(\Rb^d)$ is a properly convex domain and $\Lambda \leq \Aut(\Omega)$ is convex co-compact. If $x \in \partiali \Cc_\Omega(\Lambda)$, then $F_\Omega(x) \subset \partiali \Cc_\Omega(\Lambda)$. 
\end{proposition}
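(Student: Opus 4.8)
The plan is to reduce everything to showing $F_\Omega(x) \subseteq \overline{\Cc}$, where $\Cc := \Cc_\Omega(\Lambda)$. Since $\Cc$ is a closed subset of $\Omega$, a point $x \in \partiali\Cc = \overline{\Cc}\setminus\Cc$ automatically lies in $\partial\Omega$, so $F_\Omega(x)\subseteq\partial\Omega$ and hence $F_\Omega(x)\cap\Cc=\emptyset$; thus $F_\Omega(x)\subseteq\overline{\Cc}$ will give $F_\Omega(x)\subseteq\overline{\Cc}\setminus\Cc=\partiali\Cc$, as desired. I would also record at the outset that, since the action is convex co-compact, $\Cc$ is non-empty, and consequently $\overline{\Cc}$ coincides with the closed convex hull of the full orbital limit set $\Lc_\Omega(\Lambda)$ taken inside $\overline{\Omega}$; in particular $\overline{\Lc_\Omega(\Lambda)}\subseteq\overline{\Cc}$.

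The first real step is to locate $x$ precisely inside its face using Carath\'eodory's theorem. Pick $c_n\in\Cc$ with $c_n\to x$. In a fixed affine chart containing $\overline{\Omega}$, write $c_n=\sum_{i=1}^{d}t_i^{(n)}\ell_i^{(n)}$ with $\ell_i^{(n)}\in\Lc_\Omega(\Lambda)$, $t_i^{(n)}\geq0$, $\sum_i t_i^{(n)}=1$. Passing to a subsequence, $t_i^{(n)}\to t_i$ and $\ell_i^{(n)}\to\ell_i\in\overline{\Lc_\Omega(\Lambda)}$, so $x=\sum_i t_i\ell_i$; discarding the indices with $t_i=0$, the point $x$ lies in the relative interior of $\CH(\{\ell_i:t_i>0\})$, and for each such $i$ the segment through $\ell_i$ and $x$ inside $\overline{\Omega}$ has $x$ in its interior, so $\ell_i\in\overline{F_\Omega(x)}$. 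Setting $V:=\Spanset F_\Omega(x)$ and $D:=\CH\big(\overline{\Lc_\Omega(\Lambda)}\cap\overline{F_\Omega(x)}\big)$, we obtain $x\in D\subseteq\overline{\Cc}$, $D\subseteq[V]$, and $x\in D\cap F_\Omega(x)\neq\emptyset$.

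The second step is a soft convexity argument carried out inside the face. The open face $F_\Omega(x)$ is itself a properly convex domain, open in $[V]$, and $D\cap F_\Omega(x)$ is a closed convex subset of it which is invariant under $\Gamma_x:=\Stab_\Lambda(F_\Omega(x))$ (this group stabilizes both $\overline{\Lc_\Omega(\Lambda)}$ and $\overline{F_\Omega(x)}$). Using the structural fact that $\Gamma_x$ acts cocompactly on $F_\Omega(x)$, the orbit $\Gamma_x\cdot x\subseteq D\cap F_\Omega(x)$ is cobounded, so $D\cap F_\Omega(x)$ is a cobounded closed convex subset of the properly convex domain $F_\Omega(x)$. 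But such a subset must be the whole domain: otherwise it has a supporting hyperplane at a relative boundary point lying in the domain, hence is contained in a half-space whose complementary region runs off to infinity in the Hilbert metric, contradicting coboundedness. Therefore $F_\Omega(x)=D\cap F_\Omega(x)\subseteq D\subseteq\overline{\Cc}$.

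The main obstacle is the structural input in the last step: that $\Stab_\Lambda(F_\Omega(x))$ acts cocompactly on the ideal-boundary face $F_\Omega(x)$ (equivalently, that $\overline{\Lc_\Omega(\Lambda)}$ meets $\overline{F_\Omega(x)}$ in a spanning set rich enough to surround $F_\Omega(x)$). This is precisely where convex co-compactness is essential, since for the weaker naive notion the conclusion fails, and establishing it would require a separate degeneration argument: given divergent $g_n\in\Lambda$ with $g_n p_0\to x$ for some $p_0\in\Cc$, one passes to a limit $T=\lim g_n$ in $\Pb(\End(\Rb^d))$ and invokes Proposition~\ref{prop:dynamics_of_automorphisms} (which yields $\Imag T\subseteq V$ and $[\ker T]\cap\Omega=\emptyset$), then uses cocompactness of $\Lambda$ on $\Cc$ to produce sufficiently many such limit maps and combines them, in the spirit of Proposition~\ref{prop:HK_section_3_1}, to see that the face action is cocompact; alternatively one cites this cocompactness directly from the structure theory of convex co-compact actions in real projective geometry.
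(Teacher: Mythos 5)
The paper offers no proof of this statement at all: it is imported verbatim from Danciger--Gu\'eritaud--Kassel \cite[Lemma 4.1(1)]{DGF2018}, so your argument can only be judged on its own terms. Several pieces of it are sound: the reduction to showing $F_\Omega(x)\subseteq\overline{\Cc}$, the Carath\'eodory step placing $x$ in $\relint\CH\{\ell_1,\dots,\ell_k\}$ with each $\ell_i\in\overline{\Lc_\Omega(\Lambda)}\cap\overline{F_\Omega(x)}$, and the fact that a closed convex subset of a properly convex domain that is cobounded for the Hilbert metric must be the whole domain (your half-space argument does work: if $f$ separates $q$ from $A$ and $z_t$ tends to a maximizer of $f$ on $\partial P$, then $H_P(z_t,A)\gtrsim\frac12\log\frac{f(z_t)-c}{M-f(z_t)}\to\infty$).

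The genuine gap is the input you yourself flag: that $\Stab_\Lambda(F_\Omega(x))$ acts cocompactly on the \emph{open face} $F_\Omega(x)$. This cannot be safely cited from ``the structure theory,'' because it is essentially equivalent to the proposition being proved. In one direction, your own half-space argument shows that cocompactness of $\Gamma_x$ on $F_\Omega(x)$ together with $\Gamma_x\cdot x\subseteq\overline{\Cc}$ instantly yields $F_\Omega(x)\subseteq\overline{\Cc}$. In the other direction, the statements that are actually available independently --- and the analogues proved in this paper, e.g.\ Observation~\ref{obs:stab-invariant} and Proposition~\ref{prop:HK_section_3_1} --- give cocompactness of stabilizers on simplex faces or on $\overline{\Cc}\cap F_\Omega(x)$, and the standard way to upgrade the latter to cocompactness on all of $F_\Omega(x)$ is precisely to first prove $F_\Omega(x)\subseteq\overline{\Cc}$. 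So the citation route is circular. The sketched degeneration argument is also not a repair as stated: Proposition~\ref{prop:dynamics_of_automorphisms} only gives limit maps $T$ with ${\rm image}(T)\subseteq\Spanset F_\Omega(x)$ (possibly a proper subspace), and it produces orbit accumulation inside $\overline{\Cc}\cap F_\Omega(x)$; showing that the resulting orbit is cobounded in \emph{all} of $F_\Omega(x)$, rather than in $\overline{\Cc}\cap F_\Omega(x)$, is exactly the missing content. A self-contained proof has to attack the face directly --- for instance by showing that $\partiali\Cc_\Omega(\Lambda)$ is contained in the full orbital limit set and that the limit set is a union of open faces of $\Omega$ --- rather than routing through a cocompact action on $F_\Omega(x)$ that is not yet known to exist.
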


We start the proof of Proposition~\ref{prop:lines_and_corners} with some general lemmas. 

\begin{lemma}\label{lem:geom_near_line_segment_in_bd} Suppose $\Omega \subset \Pb(\Rb^d)$ is a properly convex domain, $\Lambda \leq \Aut(\Omega)$ is convex co-compact, and $\Cc:=\Cc_\Omega(\Lambda)$. Assume $\ell \subset \partiali \Cc$ is a non-trivial open line segment, $m \in \ell$, $q \in \Cc$, and $V = \Spanset\{ \ell, q\}$. For any $r > 0$ and $\epsilon > 0$ there exists a neighborhood $U$ of $m$ in $\Pb(V)$ such that: if $p \in  U \cap \Cc$, then there exists a properly embedded simplex $S=S(p) \subset \Cc$ of dimension at least two such that 
\begin{align}
\label{eq:ball_near_simplex_2}
B_{\Omega}(p;r) \cap \Pb(V) \subset \Nc_\Omega(S;\epsilon).
\end{align}
\end{lemma}

\begin{proof} The argument is very similar to the proof of Lemma~\ref{lem:geom_near_corner}.

Fix $r >0$ and $\epsilon > 0$. Suppose for a contradiction that such a neighborhood $U$ does not exist. Then we can find $p_n \in \Cc \cap \Pb(V)$ such that $\lim_{n \to \infty} p_n = m$ and $p_n$ does not satisfy Equation~\eqref{eq:ball_near_simplex_2} for any properly embedded simplex in $\Cc$ of dimension at least two. 

By replacing $\ell$ with the maximal open line segment containing it we can assume that $\ell = (a,b)$ where $a,b \in \partial F_\Omega(m)$. 

After passing to a subsequence we can find $\gamma_n \in \Lambda$ such that $\gamma_n p_n \rightarrow p_\infty \in \Cc$. Passing to a further subsequence we can suppose that $\gamma_n a \rightarrow a_\infty$, $\gamma_n b \rightarrow b_\infty$, and $\gamma_n q \rightarrow q_\infty$. Then $[a_\infty, b_\infty] \subset \partiali\Cc$. Since $a,b \in \partial F_\Omega(m)$, we have  
 \begin{align*}
\infty= \lim_{n \rightarrow \infty} H_\Omega\Big(p_n, (a,q) \cup (q,b) \Big) =  \lim_{n \rightarrow \infty} H_\Omega\Big(\gamma_n p_n, (\gamma_n a,\gamma_n q) \cup (\gamma_n q,\gamma_n b)\Big).
\end{align*}
So $[a_\infty, q_\infty] \cup [q_\infty, b_\infty] \subset \partiali\Cc$. Thus $a_\infty, b_\infty, q_\infty$ are the vertices of a properly embedded simplex $S \subset \Cc$. However, for $n$ sufficiently large we have 
\begin{align*}
B_{\Omega}(\gamma_n p_n;r) \cap \gamma_n\Pb(V) \subset \Nc_\Omega(S;\epsilon)
\end{align*}
and so 
\begin{align*}
B_{\Omega}(p_n;r) \cap \Pb(V) \subset \Nc_\Omega(\gamma_n^{-1}S;\epsilon).
\end{align*}
Hence we have a contradiction. 
\end{proof}

\begin{lemma} 
\label{lem:geom_near_non_smooth_point}
Suppose $\Omega \subset \Pb(\Rb^d)$ is a properly convex domain, $\Lambda \leq \Aut(\Omega)$ is convex co-compact, and $\Cc:=\Cc_\Omega(\Lambda)$. Assume $z \in \partiali \Cc$ is not a $C^1$-smooth point of $\partial \Omega$ and $q \in \Cc$. For any $r > 0$ and $\epsilon > 0$ there exists $q_{r,\epsilon} \in (z,q]$ such that: if $p \in  (z,q_{r,\epsilon}]$, then there exists a properly embedded simplex $S=S(p) \subset \Cc$ of dimension at least two such that 
\begin{align}
\label{eq:ball_near_simplex_3}
B_{\Omega}(p;r) \cap (z,q] \subset \Nc_\Omega(S;\epsilon).
\end{align}
\end{lemma}

\begin{proof} Once again, the argument is very similar to the proof of Lemma~\ref{lem:geom_near_corner}.

Fix $r >0$ and $\epsilon > 0$. Suppose for a contradiction that such a $q_{r,\epsilon} \in (z,q]$ does not exist. Then we can find $p_n \in (z,q]$ such that $\lim_{n \to \infty} p_n = z$ and $p_n$ does not satisfy Equation~\eqref{eq:ball_near_simplex_3} for any properly embedded simplex in $\Cc$ of dimension at least two.

We can find a 3-dimensional linear subspace $V$ such that $(z, q] \subset \Pb(V)$ and $z \in \partiali \Cc$ is not a $C^1$-smooth boundary point of $\Pb(V) \cap \Omega$. By changing coordinates we can suppose that 
\begin{align*}
\Pb(V)&= \{ [w:x:y:0:\dots:0] : w,x,y \in \Rb \}, \\
\Pb(V) \cap \Omega&\subset \{ [1:x:y:0:\dots:0] : x \in \Rb,  \ y > \abs{x} \}, \\
z&=[1:0:0:\dots:0], \text{ and }\\
q&=[1:0:1:0\dots:0].
\end{align*}
We may also assume that $\Pb(V) \cap \Omega$ is bounded in the affine chart 
\begin{align*}
\{ [1:x:y:0:\dots:0] : x, y \in \Rb \}
\end{align*}
of $\Pb(V)$.

Then
\begin{align*}
p_n = [1:0:y_n:0:\dots:0]
\end{align*}
where $0<y_n<1$ and $y_n$ converges to 0. Let 
\begin{align*}
L_n := \{ [1:x:y_n:0:\dots :0] : x \in \Rb\} \cap \Omega.
\end{align*}
By passing to a subsequence we can suppose that $(y_n)_{n \geq 1}$ is a decreasing sequence and
\begin{align}
\label{eqn:pn_dist_Ln}
\lim_{n \rightarrow \infty} H_\Omega(p_n,L_{n-1}) = \infty. 
\end{align}
Then 
\begin{align*}
\lim_{n \to \infty}\frac{y_{n-1}}{y_{n}} =\infty.
\end{align*}

Let $a_n, b_n \in \partial \Omega$ be the endpoints of $L_n = (a_n, b_n)$. We claim that 
\begin{align}
\label{eqn:pn_dist_to_lines}
\lim_{n \rightarrow \infty} H_\Omega\Big(p_n, (z, a_{n-1})\Big) = \infty = \lim_{n \rightarrow \infty} H_\Omega\Big(p_n, (z, b_{n-1})\Big).
\end{align}
Consider  $g_n \in \PGL(V)$ defined by 
\begin{align*}
g_n( [w:x:y:0:\dots:0]) = \left[ w : \frac{1}{y_n}x : \frac{1}{y_n} y: \dots : 0\right].
\end{align*}
Since $(y_n)_{n \geq 1}$ is a decreasing sequence converging to zero, $D_n : =g_n( \Pb(V) \cap \Omega)$ is an increasing sequence of properly convex domains in $\Pb(V)$ and 
\begin{align*}
D: = \cup_{n \geq 1} D_n \subset \{ [1:x:y:0:\dots:0] : x \in \Rb, \ y > \abs{x} \}
\end{align*}
is also a properly convex domain. Notice that $H_{D_n}$ converges to $H_D$ uniformly on compact subsets of $D$. Also, by construction, there exist $t \leq -1$ and $1 \leq s$ such that 
\begin{align*}
D = \{ [1:x:y:0:\dots:0] : x \in \Rb, \ y > \max\{ sx, tx\} \}.
\end{align*}
Then $a_{n}=[1: t_{n}^{-1} y_{n}:y_{n}:0:\ldots:0]$ where $t_n \rightarrow t$.

Now pick $v_n \in (z, a_{n-1})$ such that 
\begin{align*}
H_\Omega\Big(p_n, (z, a_{n-1})\Big) =H_\Omega(p_n,v_n).
\end{align*}
Since 
$$\lim_{n \to \infty} g_n a_{n-1}=\lim_{n \to \infty} \left[ 1:t_{n-1}^{-1}\frac{y_{n-1}}{y_n}:\frac{y_{n-1}}{y_n}:0:\ldots:0 \right]=[0:t^{-1}:1:0:\dots:0]$$
any limit point of $g_n v_n$ is in 
\begin{align*}
\{ [0:t^{-1}:1:0:\dots:0]\} \cup \{ [1: rt^{-1} : r : 0 : \dots : 0] : r \geq 0\} \subset \partial D.
\end{align*}
Then
\begin{align*}
\lim_{n \rightarrow \infty} H_\Omega\Big(p_n, (z, a_{n-1})\Big) = \lim_{n \rightarrow \infty} H_\Omega(p_n,v_n) = \lim_{n \rightarrow \infty} H_{D_n}\Big(g_np_n, g_nv_n\Big)=\infty
\end{align*}
since $g_n p_n \rightarrow [1:0:1:0:\dots:0] \in D$.

 For the same reasons, 
\begin{align*}
 \lim_{n \rightarrow \infty} H_\Omega\Big(p_n, (z, b_{n-1})\Big) = \infty. 
\end{align*}
This establishes Equation~\eqref{eqn:pn_dist_to_lines}.

Next we can pass to a subsequence and find $\gamma_n \in \Lambda$ such that $\gamma_n p_n \rightarrow p_\infty \in \Cc$. Passing to a further subsequence we can suppose that $\gamma_n a_{n-1} \rightarrow a_\infty$, $\gamma_n b_{n-1} \rightarrow b_\infty$, $\gamma_n z \rightarrow z_\infty$, and $\gamma_n q \rightarrow q_\infty$. 

Equation~\eqref{eqn:pn_dist_Ln} implies that $[a_\infty, b_\infty] \subset \partial \Omega$ and Equation~\eqref{eqn:pn_dist_to_lines} implies that 
\begin{align*}
[z_\infty, a_\infty] \cup [z_\infty, b_\infty] \subset \partial \Omega.
\end{align*}
Thus $a_\infty, b_\infty, z_\infty$ are the vertices of a properly embedded simplex $S \subset \Omega$ which contains $p_\infty$. Further, for $n$ sufficiently large we have 
\begin{align*}
B_{\Omega}(\gamma_n p_n;r) \cap \gamma_n(z,q] \subset \Nc_\Omega(S;\epsilon)
\end{align*}
and so 
\begin{align*}
B_{\Omega}(p_n;r) \cap(z,q] \subset \Nc_\Omega(\gamma_n^{-1}S;\epsilon).
\end{align*}

To obtain a contradiction we have to show that $\gamma_n^{-1}S \subset \Cc$  for every $n$ or equivalently that $S \subset \Cc$. By construction, $q_\infty \in \partiali\Cc \cap (a_\infty, b_\infty)$. Then Proposition~\ref{prop:full_face} implies that $[a_\infty, b_\infty] \subset \partiali\Cc$. Since $z_\infty \in \partiali\Cc$ and $S$ has vertices $a_\infty, b_\infty, z_\infty$ we then see that $S \subset \Cc$. 
\end{proof}

The rest of the section is devoted to the proof of Proposition~\ref{prop:lines_and_corners}, so suppose that $\Omega \subset \Pb(\Rb^d)$ is a properly convex domain, $\Lambda \leq \Aut(\Omega)$ is convex co-compact, and the family $\Sc_{\max}$ of all maximal properly embedded simplices in $\Cc:=\Cc_\Omega(\Lambda)$ of dimension at least two satisfies the hypothesis of the proposition.

\begin{lemma}\label{lem:line_segments_in_bd}  If $\ell \subset \partiali \Cc$ is a non-trivial line segment, then there exists $S \in \Sc_{\max}$ with $\ell \subset \partial S$.
\end{lemma}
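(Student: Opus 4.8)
The statement to prove is Lemma~\ref{lem:line_segments_in_bd}: if $\ell \subset \partiali \Cc$ is a non-trivial line segment, then there exists $S \in \Sc_{\max}$ with $\ell \subset \partial S$. The strategy is a Benz\'ecri-style rescaling combined with the strong isolation of $\Sc_{\max}$, mirroring the proof of Theorem~\ref{thm:half_triangle_nearby_simplex}. First I would reduce to the case that $\ell$ is a non-trivial \emph{open} line segment, say with midpoint $m$; it suffices to find $S$ with $\ell \subset \partial S$ for this open segment, since then the closure is also in $\overline{S}$ and $\partial S$ is closed in $\partiali \Cc$. Fix a base point $p_0 \in \Cc$ and set $V := \Spanset\{\ell, p_0\}$, a $3$-dimensional subspace (if $\ell$ and $p_0$ already span a plane, adjust so $\dim V \le 3$; the argument is identical).

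Next I would invoke Lemma~\ref{lem:geom_near_line_segment_in_bd} with suitable parameters: it produces, for any $r > 0$ and $\epsilon > 0$, a neighborhood $U$ of $m$ in $[V]$ such that for every $x \in U \cap \Cc$ there is a properly embedded simplex $S_x \subset \Cc$ with $B_\Omega(x; r) \cap [V] \subset \Nc_\Omega(S_x; \epsilon)$. Using coarse completeness of $\Sc_{\max}$ (which holds by hypothesis since $\Sc_{\max}$ is strongly isolated, hence isolated by Observation~\ref{obs:str_iso_implies_iso}, and $\Sc_{\max}$ is trivially coarsely complete as it contains \emph{all} maximal properly embedded simplices up to coarse containment — more precisely each properly embedded simplex lies in a bounded neighborhood of a \emph{maximal} one), I can upgrade $S_x$ to a simplex in $\Sc_{\max}$: there exists $\wh{S}_x \in \Sc_{\max}$ and a uniform constant $D_0$ with $S_x \subset \Nc_\Omega(\wh{S}_x; D_0)$, hence $B_\Omega(x;r) \cap [V] \subset \Nc_\Omega(\wh{S}_x; \epsilon + D_0)$. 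Now I would shrink $U$ so that $U \cap \Cc$ is convex, and use the strong isolation constant $D_2 = D_2(2 + \epsilon + D_0)$: picking $r$ large enough (larger than $D_2$) forces $\wh{S}_x$ to be independent of $x$ as $x$ ranges over $U \cap \Cc$, by exactly the connectedness/diameter argument in the proof of Theorem~\ref{thm:half_triangle_nearby_simplex} (if $\wh{S}_x \ne \wh{S}_y$ for nearby $x,y$, then $\Nc_\Omega(\wh{S}_x; \epsilon + D_0) \cap \Nc_\Omega(\wh{S}_y; \epsilon + D_0)$ contains a ball of diameter $\ge D_2$, contradicting strong isolation). So there is a single $S \in \Sc_{\max}$ with $U \cap \Cc \subset \Nc_\Omega(S; \epsilon + D_0)$.

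It remains to deduce $\ell \subset \partial S$ from $U \cap \Cc \subset \Nc_\Omega(S; \epsilon + D_0)$. Fix any point $q \in \ell$; since $\ell$ is open I can choose a sequence $x_n \in U \cap \Cc$ on a segment $[p_0, q')$ for some $q'$ in $\ell$ near $m$ with $x_n \to q'$ (and more generally, using points of $\Cc$ on segments from interior points of $\Cc$ toward $q$), and by Proposition~\ref{prop:dist_est_and_faces} there exist $s_n \in \overline{S}$ with $\lim$ in $F_\Omega(q')$; since $q' \in \partiali \Cc$ and the limiting point lies in $\overline{S}$, this point lies in $\partial S$, and by hypothesis (2) (namely $F_\Omega(x) = F_S(x)$ for $x \in \partial S$) we get $q' \in \overline{F_S(\text{that point})} = \overline{F_\Omega(\text{that point})}$; chasing through Observation~\ref{obs:faces} and Observation~\ref{obs:faces_of_simplices_are_properly_embedded}, $q'$ itself lies in $\partial S$. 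Running this for a dense set of $q' \in \ell$ and using that $\partial S$ is closed yields $\overline{\ell} \subset \partial S$, hence $\ell \subset \partial S$, completing the proof. The main obstacle I anticipate is the last paragraph: carefully arranging that the Benz\'ecri limit actually ``sees'' each point of $\ell$ and not merely the midpoint $m$ — this requires either running Lemma~\ref{lem:geom_near_line_segment_in_bd} at each point of $\ell$ (with the resulting simplices all forced to coincide by strong isolation, since consecutive neighborhoods overlap in unbounded sets) or a direct argument that $\Nc_\Omega(S; \epsilon + D_0)$ accumulating onto a point of $\partiali\Cc$ whose face contains a segment forces that whole segment into $\partial S$ via property (2).
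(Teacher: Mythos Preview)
Your approach is essentially the paper's: invoke Lemma~\ref{lem:geom_near_line_segment_in_bd}, use strong isolation to pin down a single $S \in \Sc_{\max}$ with $U \cap \Cc \subset \Nc_\Omega(S; R)$ for some $R$, and then read off $\ell \subset \partial S$ from hypothesis~(2). Two remarks on execution.

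First, the upgrade from ``properly embedded simplex $S_x$'' to ``$\wh{S}_x \in \Sc_{\max}$'' is simpler than you make it: any properly embedded simplex in $\Cc$ is contained in a maximal one (by finite dimensionality), so you may just replace $S_x$ by a maximal simplex containing it, with no extra constant $D_0$ needed. The paper does exactly this.

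Second, and more importantly, the obstacle you anticipate in the last paragraph is not there. You do not need to ``see'' each point of $\ell$ separately, run the lemma at each point, or pass to a dense subset. Once $U \cap \Cc \subset \Nc_\Omega(S; R)$, take any sequence $x_n \in U \cap \Cc$ with $x_n \to m$; Proposition~\ref{prop:dist_est_and_faces} gives a single point $m' \in \partial S$ with $m \in F_\Omega(m')$. Since $\ell$ is an \emph{open} segment through $m$, automatically $\ell \subset F_\Omega(m) = F_\Omega(m')$. Now hypothesis~(2) says $F_\Omega(m') = F_S(m') \subset \partial S$, so $\ell \subset \partial S$ in one stroke. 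This is exactly how the paper finishes, in three lines.
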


\begin{proof} We can assume that $\ell$ is an open line segment. Then fix some $m \in \ell$ and $q \in \Cc$. Since $\Sc_{\max}$ is strongly isolated, there exists some $D > 0$ such that if $S_1, S_2 \in \Sc_{\max}$ are distinct, then 
\begin{align*}
\diam_\Omega(\Nc_\Omega(S_1;1) \cap \Nc_\Omega(S_2;1) ) < D.
\end{align*}

Let $V := \Spanset\{ \ell, q\}$. By Lemma~\ref{lem:geom_near_line_segment_in_bd} there exists a neighborhood $U$ of $m$ in $\Pb(V)$ such that: if $x \in  U \cap \Cc$, then there exists a maximal properly embedded simplex $S_x \subset \Cc$ of dimension at least two such that 
\begin{align*}
B_{\Omega}(x;D) \cap \Pb(V) \subset \Nc_\Omega(S_x;1).
\end{align*}
By shrinking $U$ we can assume that $U \cap \Cc$ is convex.

We claim that $S_x = S_y$ for every $x,y \in U \cap \Cc$. Since $U \cap \Cc$ is convex, it is enough to show this when $H_\Omega(x,y) \leq D/2$. Then 
\begin{align*}
B_{\Omega}(x;D/2) \cap \Pb(V) \subset B_{\Omega}(y;D) \cap \Pb(V) \subset \Nc_\Omega(S_y;1).
\end{align*}
So
\begin{align*}
B_{\Omega}(x;D/2) \cap \Pb(V) \subset \Nc_\Omega(S_x;1) \cap \Nc_\Omega(S_y;1) 
\end{align*}
and hence 
\begin{align*}
\diam_\Omega(\Nc_\Omega(S_x;1) \cap \Nc_\Omega(S_y;1) ) \geq \diam_{\Omega}\big( B_{\Omega}(x;D/2) \cap \Pb(V) \big) = D.
\end{align*}
So $S_x = S_y$. 

Now let $S := S_x$ for some (hence any) $x \in U \cap \Cc$. Then 
\begin{align*}
U \cap \Cc \subset \Nc_\Omega(S;1).
\end{align*}
So by Proposition~\ref{prop:dist_est_and_faces} there exists $m^\prime \in \partial S$ with $m \in F_\Omega(m^\prime)$. Then, since $\ell$ is an open line segment, $\ell \subset F_\Omega(m^\prime)$. Finally, by condition (2) of the hypothesis, $F_{\Omega}(m')=F_S(m') \subset \partial S.$ Hence, $\ell \subset \partial S.$
\end{proof}

\begin{lemma}
If $z \in \partiali \Cc$ is not a $C^1$-smooth point of $\partial \Omega$, then there exists  $S \in \Sc_{\max}$ with $z \in \partial S$. 
\end{lemma}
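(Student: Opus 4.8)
The plan is to run the same argument as in the proof of Lemma~\ref{lem:line_segments_in_bd}, now using the immediately preceding lemma (the one giving, for $x \in (z_0,p_{r,\epsilon}]$, a properly embedded simplex $S(x) \subset \Cc$ with $B_\Omega(x;r) \cap (z_0,p_0] \subset \Nc_\Omega(S(x);\epsilon)$) in place of Lemma~\ref{lem:geom_near_line_segment_in_bd}. Fix $p_0 \in \Cc$. Since $\Sc_{\max}$ is strongly isolated, choose $D > 0$ with $\diam_\Omega\big(\Nc_\Omega(S_1;1) \cap \Nc_\Omega(S_2;1)\big) < D$ for all distinct $S_1,S_2 \in \Sc_{\max}$. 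Apply the previous lemma with $r = D$ and $\epsilon = 1$ to get $p_{D,1} \in (z_0,p_0]$; since $H_\Omega(\cdot,p_0) \to \infty$ along $(z_0,p_0]$ as one approaches $z_0$ and the conclusion of that lemma persists after moving $p_{D,1}$ toward $z_0$, I may further assume $H_\Omega(x,p_0) \geq D$ for every $x \in (z_0,p_{D,1}]$. For each such $x$, enlarging $S(x)$ to a maximal properly embedded simplex yields $S_x \in \Sc_{\max}$ with $B_\Omega(x;D) \cap (z_0,p_0] \subset \Nc_\Omega(S_x;1)$ (enlarging only enlarges the tubular neighborhood, so the inclusion is preserved). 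Note that $z_0 \in \partial\Omega$, as otherwise ``$C^1$-smooth point of $\partial\Omega$'' would be meaningless for $z_0$.

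Next I would show $S_x = S_y$ for all $x,y \in (z_0,p_{D,1}]$. The segment $(z_0,p_{D,1}]$ is convex and is an $H_\Omega$-geodesic (Proposition~\ref{prop:hilbert_basic}), so subdividing $[x,y] \subset (z_0,p_{D,1}]$ into sub-segments of length $\leq D/2$ reduces this to the case $H_\Omega(x,y) \leq D/2$. Then $B_\Omega(y;D/2) \subset B_\Omega(x;D)$, so
\[
B_\Omega(y;D/2) \cap (z_0,p_0] \subset \Nc_\Omega(S_x;1) \cap \Nc_\Omega(S_y;1).
\]
On the other hand, since $z_0$ lies at infinite $H_\Omega$-distance and $H_\Omega(y,p_0) \geq D \geq D/2$, the set $B_\Omega(y;D/2) \cap (z_0,p_0]$ contains points at distance arbitrarily close to $D/2$ from $y$ on each side, hence has $\diam_\Omega = D$. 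If $S_x \neq S_y$ this contradicts the choice of $D$, so $S_x = S_y$. Chaining along $[x,y]$ gives a single $S := S_x \in \Sc_{\max}$ independent of $x$, and since each $x \in (z_0,p_{D,1}]$ lies in $B_\Omega(x;D) \cap (z_0,p_0] \subset \Nc_\Omega(S;1)$, we conclude $(z_0,p_{D,1}] \subset \Nc_\Omega(S;1)$.

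To finish, take $x_n \in (z_0,p_{D,1}]$ with $x_n \to z_0$, and $y_n \in S$ with $H_\Omega(x_n,y_n) < 1$; passing to a subsequence, $y_n \to y \in \overline{S}$ (compact). By Proposition~\ref{prop:dist_est_and_faces}, $y \in F_\Omega(z_0)$, i.e. $F_\Omega(y) = F_\Omega(z_0)$ by Observation~\ref{obs:faces}(2). Since $z_0 \in \partial\Omega$ we have $F_\Omega(z_0) \neq \Omega$, whereas $F_\Omega(y) = \Omega$ if $y \in \Omega$; hence $y \in \overline{S}\setminus S = \partial S$. Condition (2) of the Proposition then gives $F_\Omega(y) = F_S(y) \subset \partial S$, so $z_0 \in F_\Omega(z_0) = F_\Omega(y) = F_S(y) \subset \partial S$, as desired. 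I expect the only delicate point is the bookkeeping forced by the fact that $(z_0,p_0]$ is only an $H_\Omega$-geodesic ray rather than a bi-infinite geodesic: this is why the diameter computation in the second paragraph requires keeping $p_{D,1}$ (hence $x$, $y$) sufficiently close to $z_0$, accomplished by the extra shrinking of $p_{D,1}$ in the first paragraph.
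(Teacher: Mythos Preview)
Your proof is correct and follows essentially the same approach as the paper. The paper's own argument is very terse (``Arguing as in the proof of Lemma~\ref{lem:line_segments_in_bd} shows that there exists some $p \in (z_0,p_0]$ and a maximal properly embedded simplex $S \subset \Cc$ such that $(z_0,p\,] \subset \Nc_\Omega(S;1)$''), and you have spelled out those details; in particular, your extra shrinking of $p_{D,1}$ to guarantee $H_\Omega(x,p_0) \geq D$ is exactly the bookkeeping needed to make the diameter computation work on the one-dimensional set $(z_0,p_0]$ rather than the two-dimensional slice $[V]$ used in Lemma~\ref{lem:line_segments_in_bd}.
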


\begin{proof} Fix $q \in \Cc$. Arguing as in the proof of Lemma~\ref{lem:line_segments_in_bd} and using Lemma \ref{lem:geom_near_non_smooth_point} shows that there exist some $q_0 \in (z,q]$ and a maximal properly embedded simplex $S \subset \Cc$ of dimension at least two such that 
\begin{align*}
(z,q_0] \subset \Nc_\Omega(S;1).
\end{align*}
Then by Proposition~\ref{prop:dist_est_and_faces} there exists $z^\prime \in \partial S$ with $z \in F_\Omega(z^\prime)$. Finally, by condition (2) of hypothesis  on $\Sc_{\max}$, 
\begin{equation*}
z \in F_\Omega(z') = F_S(z^\prime) \subset \partial S. \qedhere
\end{equation*} 
\end{proof}

\section{Proof of Theorem~\ref{thm:main_cc} and Theorem~\ref{thm:properties_of_cc}}\label{sec:pf_of_thm_main_cc}

For the rest of this section, suppose $\Omega \subset \Pb(\Rb^d)$ is a properly convex domain, $\Lambda \leq \Aut(\Omega)$ is a convex co-compact subgroup (see Definition \ref{defn:cc}), and $\Sc_{\simplexcc}$ is the family of all maximal properly embedded simplices in $\Cc_{\Omega}(\Lambda)$ of dimension at least two.

\subsection{Proof of Theorem \ref{thm:main_cc}}
We claim that the following are equivalent:
\begin{enumerate}[(A)]
\item $\Sc_{\simplexcc}$ is closed and discrete in the local Hausdorff topology induced by $H_\Omega$.
\item $(\Omega, \Cc_{\Omega}(\Lambda),\Lambda)$ has coarsely isolated simplices.
\item $(\Cc_\Omega(\Lambda), H_\Omega)$ is a relatively hyperbolic space with respect to $\Sc_{\simplexcc}$.
\item $(\Cc_\Omega(\Lambda), H_\Omega)$ is a relatively hyperbolic space with respect to a family of properly embedded simplices in $\Cc_\Omega(\Lambda)$ of dimension at least two.
\item $\Lambda$ is a relatively hyperbolic group with respect to a collection of virtually Abelian subgroups of rank at least two.
\end{enumerate}

By definition (A) implies (B) and (C) implies (D). Further, Theorem \ref{thm:main_ncc}  implies that (B), (D), and (E) are all equivalent. So it is enough to assume (B) and show that (A) and (C) hold. We establish this using Theorem \ref{thm:IS-implies-rel-hyp}  and the next lemma. 

\begin{lemma}\label{lem:pf_of_main_cc}
If $(\Omega, \Cc_{\Omega}(\Lambda),\Lambda)$ has coarsely isolated simplices, then $\Sc_{\max}$ is strongly isolated, coarsely complete, and $\Lambda$-invariant. Moreover, if $S \in \Sc_{\max}$ and $x \in \partial S$, then $F_{\Omega}(x)=F_S(x).$ 
\end{lemma}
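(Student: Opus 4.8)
Let $(\Omega, \Cc, \Lambda)$ with $\Cc = \Cc_\Omega(\Lambda)$ be a convex co-compact triple with coarsely isolated simplices. By Theorem~\ref{thm:S-core-exists} there exists a strongly isolated, coarsely complete, and $\Lambda$-invariant family $\Sc$ of maximal properly embedded simplices in $\Cc$. The goal is to show that in fact $\Sc = \Sc_{\max}$, that $\Sc_{\max}$ is strongly isolated, coarsely complete, and $\Lambda$-invariant, and that $F_\Omega(x) = F_S(x)$ whenever $S \in \Sc_{\max}$ and $x \in \partial S$. The key new ingredient in the convex co-compact (as opposed to naive convex co-compact) case is Proposition~\ref{prop:full_face}: if $x \in \partiali \Cc$, then $F_\Omega(x) \subset \partiali \Cc$. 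This lets us rule out the ``parallel families'' phenomenon that causes $\Sc_{\max}$ to be non-discrete in the naive setting.

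\textbf{Step 1: Any maximal properly embedded simplex lies in $\Sc$.} Fix $S_0 \in \Sc_{\max}$. Since $\Sc$ is coarsely complete, there is $S \in \Sc$ and $r > 0$ with $S_0 \subset \Nc_\Omega(S; r)$; by Observation~\ref{obs:QI_to_Rk} we get $\dim S_0 \le \dim S$. Now I would use Proposition~\ref{prop:dist_est_and_faces} to find, for each vertex $v$ of $S_0$, a point $w \in \partial S$ with $v \in F_\Omega(w)$; since $v$ is an extreme point of $S_0 \subset \overline{\Cc}$, $F_\Omega(v) = F_\Omega(w)$. By Proposition~\ref{prop:full_face}, $F_\Omega(v) \subset \partiali\Cc$, and by property $(\star)$ of Theorem~\ref{thm:properties_of_ncc}(4) (applicable to $\Sc$ after passing to the subfamily of Theorem~\ref{thm:bd_faces}, but in fact valid for $\Sc$ itself by the argument in the proof of Theorem~\ref{thm:properties_of_ncc}(4)), $\overline{\Cc} \cap F_\Omega(w) = \overline{S} \cap F_\Omega(w)$ up to bounded Hausdorff distance; combined with Observation~\ref{obs:faces_of_simplices_are_properly_embedded} this forces $w$ to be a vertex of $S$. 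Thus the vertices of $S_0$ face-match with vertices of $S$, so $S_0$ and $S$ are parallel; by Lemma~\ref{lem:slide_along_faces} the simplex spanned by the matched vertices of $S$ equals $S$ (dimension count using $\dim S_0 = \dim S$, which follows since $S_0$ is maximal), and applying the map $\Phi$ from Section~\ref{sec:intersection_of_nbhds} to both, $S_0 = \Phi(S_0) = \Phi(S) = S \in \Sc$. (Alternatively, and more simply: $S_0$ and $S$ are parallel with the same dimension, and $S_0$ maximal plus $S$ maximal plus parallel forces us to show there is a unique maximal simplex in each parallel class; I would argue that if $S_0 \ne S$ were parallel, then by Proposition~\ref{prop:full_face} and Corollary~\ref{cor:building_simplices} one could build a strictly larger properly embedded simplex inside $\overline{\Cc} \cap F_\Omega(v)$ for some vertex $v$, contradicting maximality — this is the heart of why convex co-compactness kills parallel families.) Either way, $\Sc_{\max} \subseteq \Sc$, and since $\Sc \subseteq \Sc_{\max}$ trivially, $\Sc = \Sc_{\max}$.

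\textbf{Step 2: Transfer of properties and the face identity.} Once $\Sc_{\max} = \Sc$, strong isolation, coarse completeness, and $\Lambda$-invariance of $\Sc_{\max}$ are immediate. For the face identity, fix $S \in \Sc_{\max}$ and $x \in \partial S$. We always have $F_S(x) \subseteq F_\Omega(x)$, and by Observation~\ref{obs:faces_of_simplices_are_properly_embedded}, $F_S(x) = \overline{S} \cap F_\Omega(x)$. So it suffices to show $F_\Omega(x) \subseteq \overline{S}$, equivalently that $\overline{\Cc} \cap F_\Omega(x) = F_S(x)$ and $F_\Omega(x) \subseteq \overline{\Cc}$ — the latter is exactly Proposition~\ref{prop:full_face} since $x \in \partiali \Cc$ (as $\partial S \subset \partiali \Cc$ because $S$ is properly embedded). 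For the former: suppose $y \in \overline{\Cc} \cap F_\Omega(x) \setminus F_S(x)$; I would pick an opposite pair in $S$ and use Corollary~\ref{cor:building_simplices} together with Lemma~\ref{lem:slide_along_faces} to produce a properly embedded simplex strictly containing $S$, contradicting maximality — this is essentially the vertex-case argument from the proof of Theorem~\ref{thm:bd_faces} (Lemma~\ref{lem:base_case}), now with the crucial simplification that in the convex co-compact case $\overline{\Cc} \cap F_\Omega(x)$ is already contained in $\partiali\Cc$, so every new vertex we produce automatically lies in $\partial\Cc$ and the enlarged simplex is properly embedded and contained in $\Cc$.

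\textbf{Main obstacle.} The delicate point is Step~1: showing that $\Sc_{\max}$ contains no genuinely parallel pair of maximal simplices, i.e., that each parallel class has a \emph{unique} maximal representative in the convex co-compact setting. In the naive case, the examples in Section~\ref{sec:ncc_examples} show this fails. The resolution must genuinely use Proposition~\ref{prop:full_face} (and hence convex co-compactness of $\Lambda$ in $\Aut(\Omega)$): because $F_\Omega(v) \subset \partiali\Cc$ for every vertex $v$ of a maximal simplex, any properly embedded simplex $S_0$ parallel to $S$ can be ``slid'' via Lemma~\ref{lem:slide_along_faces} to any choice of vertices in the faces $F_\Omega(v_j)$ while remaining inside $\Cc$; if such a face had positive-dimensional $\overline{\Cc} \cap F_\Omega(v_j)$ one could enlarge the simplex, so each such intersection is a single point, forcing $\overline{\Cc} \cap F_\Omega(v_j) = \{v_j\} = \{\text{vertex of }S\}$ and hence $S_0 = S$. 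Making this rigorous requires carefully combining Observation~\ref{obs:faces_of_simplices_are_properly_embedded}, Corollary~\ref{cor:building_simplices}, and the coarse-completeness/strong-isolation of $\Sc$; I expect this to be the bulk of the work, with everything else following routinely.
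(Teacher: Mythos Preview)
Your ingredients are correct (Theorem~\ref{thm:S-core-exists} to get a strongly isolated family $\Sc_{\core}$, Theorem~\ref{thm:properties_of_ncc}(4) for the bounded Hausdorff distance, and Proposition~\ref{prop:full_face} as the convex co-compact input), but the logical order is reversed compared to the paper, and this makes your Step~1 both harder than necessary and slightly wrong.

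The paper first proves the face identity \emph{for simplices in $\Sc_{\core}$}: given $S \in \Sc_{\core}$ and $x \in \partial S$, Proposition~\ref{prop:full_face} gives $F_\Omega(x) = \overline{\Cc} \cap F_\Omega(x)$, so Theorem~\ref{thm:properties_of_ncc}(4) yields $H^{\Haus}_{F_\Omega(x)}(F_\Omega(x), F_S(x)) \le D_1$. The passage from finite Hausdorff distance to equality is a clean extreme-point argument: for each extreme point $e \in \partial F_\Omega(x)$, Proposition~\ref{prop:dist_est_and_faces} produces $e' \in \partial F_S(x)$ with $e' \in F_\Omega(e) = \{e\}$, hence $e \in \partial F_S(x)$; convexity then gives $F_\Omega(x) = F_S(x)$. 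Once this is in hand, $\Sc_{\max} = \Sc_{\core}$ is a one-liner: if $S_0 \in \Sc_{\max}$ and $S_0 \subset \Nc_\Omega(S';r)$ with $S' \in \Sc_{\core}$, then $\partial S_0 \subset F_\Omega(\partial S') = \partial S'$, so $S_0 \subset S'$ and maximality forces $S_0 = S'$.

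Your Step~1 tries to prove $\Sc_{\max} = \Sc_{\core}$ first, but your claim ``this forces $w$ to be a vertex of $S$'' is false: nothing prevents $F_S(w)$ from being a higher-dimensional face. What the bounded Hausdorff distance plus Proposition~\ref{prop:full_face} actually gives you is $F_\Omega(w) = F_S(w)$ (via the extreme-point argument above), hence $v \in \partial S$ --- and that is exactly the face identity you were deferring to Step~2. So you are implicitly proving the face identity inside Step~1 anyway, just not recognizing it. Your proposed ``build a larger simplex'' contradiction in Step~2 and the Main Obstacle section would also work, but it is substantially more effort than the extreme-point argument and duplicates machinery already packaged in Theorem~\ref{thm:properties_of_ncc}(4).
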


\begin{remark}
 A careful reading of the proof shows that $\Sc_{\max}$ is actually the \textbf{unique} family of strongly isolated, coarsely complete, and $\Lambda$-invariant  maximal properly embedded simplices in $\Cc_{\Omega}(\Lambda)$ of dimension at least two. 
\end{remark}

\begin{proof} By Theorem \ref{thm:S-core-exists} there exists  $\Sc_{\core}$, a strongly isolated, coarsely complete, and $\Lambda$-invariant family of maximal properly embedded simplices in $\Cc_{\Omega}(\Lambda)$ of dimension at least two.

We first claim that if $S \in \Sc_{\core}$ and $x \in \partial S$, then 
\begin{align}
\label{eq:equivalence_of_faces}
F_{\Omega}(x)=F_S(x).
\end{align} 
By definition $F_S(x) \subset F_{\Omega}(x).$ To establish the other inclusion,  it suffices to show: if $e \in \partial F_\Omega(x)$ is an extreme point, then $e \in \partial F_S(x)$. 

So, let $e \in \partial F_\Omega(x)$ be an extreme point. Theorem \ref{thm:properties_of_ncc} part (4) implies that there exists $D_1 > 0$ such that:
\begin{align*}
H_{F_\Omega(x)}^{ \rm Haus}\Big( \overline{\Cc_{\Omega}(\Lambda)} \cap F_\Omega(x), F_S(x)\Big) \leq D_1. 
\end{align*}
By Proposition~\ref{prop:full_face}, $F_{\Omega}(x)=\overline{\Cc_{\Omega}(\Lambda)} \cap F_{\Omega}(x).$ Thus, 
\begin{align}
\label{eq:haus_dist_S_in_proof_of_main_thm}
H^{\Haus}_{F_{\Omega}(x)}(F_{\Omega}(x),F_S(x)) \leq D_1.
\end{align}
Then, by Proposition~\ref{prop:dist_est_and_faces} and Equation~\eqref{eq:haus_dist_S_in_proof_of_main_thm} there exists 
\begin{align*}
e^\prime \in \partial F_S(x)\cap F_{F_\Omega(x)}(e).
\end{align*}
But since $e$ is an extreme point $F_{F_\Omega(x)}(e)=F_\Omega(e)=\{e\}$. So $e=e^\prime \in \partial F_S(x)$. This proves the claim. 

Next we show that $\Sc_{\core}=\Sc_{\max}$. By definition $\Sc_{\core} \subset \Sc_{\simplexcc}$, so it is enough to show that $\Sc_{\max} \subset \Sc_{\core}$. Fix $S \in \Sc_{\simplexcc}$.  Since $\Sc_{\core}$ is coarsely complete, there exist $S^\prime \in \Sc_{\core}$ and $r > 0$ such that 
\begin{align*}
S \subset \Nc_\Omega(S^\prime; r).
\end{align*}
Then by Proposition~\ref{prop:dist_est_and_faces} and Equation~\eqref{eq:equivalence_of_faces}
\begin{align*}
\partial S \subset \cup_{x \in \partial S^\prime} F_\Omega(x) = \cup_{x \in \partial S^\prime} F_{S^\prime}(x) = \partial S^\prime. 
\end{align*}
Hence  $S \subset S^\prime$. Since $S$ is a maximal properly embedded simplex we then have $S=S^\prime \in \Sc_{\core}$. 

Finally, the ``moreover'' part follows from the Equation~\eqref{eq:equivalence_of_faces} and the equality $\Sc_{\max} = \Sc_{\core}$. 
\end{proof}

\subsection{Proof of Theorem \ref{thm:properties_of_cc}}
Now assume, in addition  to the hypothesis at the beginning of Section \ref{sec:pf_of_thm_main_cc}, that $\Sc_{\max}$ is closed and discrete in the local Hausdorff topology induced by $H_\Omega$.

By Lemma~\ref{lem:pf_of_main_cc}, $\Sc_{\max}$ is strongly isolated, coarsely complete, and $\Lambda$-invariant. Then properties $(1)$, $(2)$, $(3)$, and $(5)$ follow immediately from Theorem~\ref{thm:properties_of_ncc}. Property $(6)$ holds since $\Sc_{\max}$ is strongly isolated. Property  $(4)$ is the ``moreover'' part of Lemma~\ref{lem:pf_of_main_cc}. Finally, properties $(7)$ and $(8)$ follow from Proposition \ref{prop:lines_and_corners}.

\appendix

\section{Remarks on Theorem~\ref{thm:Sisto_equiv}}\label{sec:pf_of_thm_Sisto}

In this appendix we explain how  to modify Sisto's arguments in \cite{S2013} to establish Theorem \ref{thm:Sisto_equiv}.  In fact,  we will explain why a more general result is true. Before stating the result, we introduce a generalization of the notion of \emph{asymptotically transverse-free} obtained by replacing geodesics in Definition \ref{defn:asym-trans-free} with $(1,c)$-quasi-geodesics.

\begin{definition} Let $(X,\dist)$ be a complete geodesic metric space, $\alpha \geq 1$, $\beta \geq 0$, and  $\Sc$ be a collection of subsets of $X$.
\begin{enumerate}
\item  If $I \subset \Rb$ is an interval, then $\theta : I \rightarrow X$ is a \emph{$(\alpha, \beta)$-quasi-geodesic} in $(X,\dist)$ if 
\begin{equation*}
\dfrac{1}{\alpha}|t_1-t_2|-\beta \leq \dist(\theta(t_1), \theta(t_2)) \leq \alpha |t_1-t_2|+\beta
\end{equation*}
for all $t_1, t_2 \in I$. 
\item A  \emph{$(\alpha, \beta)$-quasi-geodesic triangle} in $(X,\dist)$  is a choice of three points in X and $(\alpha, \beta)$-quasi-geodesics connecting these points.
\item A quasi-geodesic triangle $\Tc$ in $X$ is \emph{$\Sc$-almost-transverse with constants $\kappa$ and  $\triangleD$} if
\begin{align*}
\diam_X(\Nc_X(S;\kappa) \cap \gamma) \leq \triangleD
\end{align*}
for every $S \in \Sc$ and edge $\gG$ of $\Tc$.
 \item The collection $\Sc$ is \emph{strongly asymptotically transverse-free} if there exist $\lambda,~\sigma$ such that for each $c \geq 1$, $\triangleD \geq 1$, $\kappa \geq \sG$ the following holds: if $\Tc$ is a $(1,c)$-quasi-geodesic triangle in $X$ which is $\Sc$-almost-transverse with constants $\kappa$ and $\triangleD$, then $\Tc$ is $(\lambda \triangleD+\lambda c)$-thin.
 \end{enumerate}
\end{definition}

We will prove the following generalization of Theorem \ref{thm:Sisto_equiv} which connects the three different notions of `asymptotically transverse-free.'

\begin{proposition}\label{prop:asym-trans-geodesic-path-system}
Let $(X,\dist)$ be a complete geodesic metric space and $\Sc$ a collection of subsets of $X$. Then the following are equivalent: 
\begin{enumerate}
\item $\Sc$ is asymptotically transverse-free relative to a geodesic path system and there exists an almost-projection system for $\Sc$,
\item $\Sc$ is asymptotically transverse-free and there exists an almost-projection system for $\Sc$,
\item $\Sc$ is strongly asymptotically transverse-free and there exists an almost-projection system for $\Sc$.
\end{enumerate}
\end{proposition}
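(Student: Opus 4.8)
The plan is to prove Proposition~\ref{prop:asym-trans-geodesic-path-system} by establishing the cyclic chain of implications $(3) \Rightarrow (2) \Rightarrow (1) \Rightarrow (3)$. The implication $(2) \Rightarrow (1)$ is immediate: since geodesics are exactly $(1,0)$-quasi-geodesics, being asymptotically transverse-free (over all geodesic triangles) certainly implies being asymptotically transverse-free relative to any fixed geodesic path system $\Gc$. Likewise $(3) \Rightarrow (2)$ is routine: specializing the definition of strongly asymptotically transverse-free to $c = 1$ and noting that an ordinary geodesic triangle is a $(1,1)$-quasi-geodesic triangle, one recovers the asymptotically transverse-free condition with the thinness constant $\lambda \triangleD + \lambda$, which after enlarging $\lambda$ (using $\triangleD \geq 1$) becomes $(2\lambda)\triangleD$-thin. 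In both cases the almost-projection system hypothesis is carried along unchanged.

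The substance is in $(1) \Rightarrow (3)$. First I would invoke Theorem~\ref{thm:Sisto_equiv}, whose equivalence of $(1)$ and $(2)$ is due to Sisto, together with the portion of the present Proposition already proved: from hypothesis $(1)$ we obtain that $X$ is relatively hyperbolic with respect to $\Sc$. (Strictly, to break circularity here, I would observe that Sisto's proof that ``asymptotically transverse-free relative to a geodesic path system plus an almost-projection system implies relative hyperbolicity'' goes through verbatim with a fixed path system $\Gc$ in place of all geodesics, since his construction only ever uses the geodesics he explicitly names, and those can be chosen from $\Gc$; this is the content of the remark after Theorem~\ref{thm:Sisto_equiv}.) Once $X$ is known to be relatively hyperbolic with respect to $\Sc$, the almost-projection system part of $(3)$ is free by Theorem~\ref{thm:rel-hyp-sisto}. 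It remains to verify that $\Sc$ is \emph{strongly} asymptotically transverse-free, i.e. that the thinness conclusion upgrades from geodesic triangles to $(1,c)$-quasi-geodesic triangles with an additive loss linear in $c$.

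For that upgrade I would use the standard Morse lemma in relatively hyperbolic (hence in particular asymptotically tree-graded) spaces: in a space relatively hyperbolic with respect to $\Sc$, every $(1,c)$-quasi-geodesic $\theta$ from $p$ to $q$ lies within Hausdorff distance $\rho(c)$ of a genuine geodesic $[p,q]$, where $\rho$ depends only on $c$ and the space. This is where I expect the only real friction: one must confirm that the Morse-type stability statement one needs holds with the quasi-geodesic constants restricted to $(1,c)$ — which is precisely the regime where relatively hyperbolic spaces do satisfy a clean Morse lemma (unlike the general $(\alpha,\beta)$ case, where quasi-geodesics can enter and linger in peripheral pieces). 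Granting this, given a $(1,c)$-quasi-geodesic triangle $\Tc$ that is $\Sc$-almost-transverse with constants $\kappa$ and $\triangleD$, I replace each side by a nearby geodesic to form a geodesic triangle $\Tc'$; by Observation~\ref{obs:almost-transverse} and a short estimate, $\Tc'$ is $\Sc$-almost-transverse with constants $\kappa - 2\rho(c)$ and $\triangleD + 2\rho(c)$ (enlarging $\sigma$ so that $\kappa - 2\rho(c)$ still exceeds the threshold for the geodesic case). Applying hypothesis $(2)$/$(1)$ to $\Tc'$ gives $\Tc'$ is $\lambda(\triangleD + 2\rho(c))$-thin, and transferring back across the Morse distance $\rho(c)$ shows $\Tc$ is $(\lambda \triangleD + 2\lambda \rho(c) + 2\rho(c))$-thin. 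Since $\rho(c)$ grows at worst linearly in $c$ for $(1,c)$-quasi-geodesics in a relatively hyperbolic space, this is bounded by $\lambda' \triangleD + \lambda' c$ for a suitable $\lambda'$, which is exactly the strong asymptotically transverse-free conclusion. The main obstacle, as noted, is pinning down the precise linear-in-$c$ Morse estimate for $(1,c)$-quasi-geodesics in relatively hyperbolic spaces; everything else is bookkeeping with the constants.
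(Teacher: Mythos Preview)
Your reductions $(3)\Rightarrow(2)\Rightarrow(1)$ are fine, and you correctly identify that the content lies in $(1)\Rightarrow(3)$. However, the key step of your argument has a genuine gap: the ``standard Morse lemma in relatively hyperbolic spaces'' you invoke---that every $(1,c)$-quasi-geodesic lies within $\rho(c)$ of a geodesic with the same endpoints---is \emph{false}. Take $X=\Rb^2$, which is relatively hyperbolic with respect to $\Sc=\{\Rb^2\}$. A $(1,c)$-quasi-geodesic $\theta:[0,L]\to\Rb^2$ is constrained only by the ellipse inequality $\dist(\theta(0),\theta(t))+\dist(\theta(t),\theta(L))\le L+2c$, so $\theta$ can stray roughly $\sqrt{cL}$ from the straight segment $[\theta(0),\theta(L)]$; there is no bound independent of $L$. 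The issue is exactly the one you flag but then dismiss: even $(1,c)$-quasi-geodesics can linger in peripheral pieces, and once inside a piece the peripheral geometry gives no Morse-type control. Your claim that the $(1,c)$ regime is ``precisely the regime where relatively hyperbolic spaces do satisfy a clean Morse lemma'' is incorrect.

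The paper's route avoids this by never appealing to a global Morse lemma or to relative hyperbolicity at all. Instead it works directly with the almost-projection system: given a $(1,c)$-quasi-geodesic side $\theta$ of an $\Sc$-almost-transverse triangle, Lemma~\ref{lem:geodesic-penetration} (a direct consequence of the almost-projection axioms, via Sisto's Corollary~2.7 and Lemma~2.10) shows that any geodesic in $\Gc$ joining two points of $\theta$ is \emph{also} $\Sc$-almost-transverse, with controlled constants. The hypothesis~$(1)$ then makes every $\Gc$-triangle with vertices on $\theta$ uniformly $\delta$-thin, and a tailored Morse-type statement (Lemma~\ref{lem:morse-lemma-type-result}) that assumes only this ``local along $\theta$'' thinness---not global hyperbolicity---yields $\dist^{\Haus}(\theta,\gamma_{x,y})\le 4\delta+10c$. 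The missing idea in your proposal is precisely this use of the almost-projection system to propagate the almost-transverse condition from $\theta$ to the $\Gc$-geodesics one actually needs; without it, there is nothing preventing those geodesics (or $\theta$ itself) from entering a peripheral piece where thinness fails.
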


In Proposition~\ref{prop:asym-trans-geodesic-path-system}, observe that (3) implies (2) and (2) implies (1) by definition. Thus, in order to prove Proposition \ref{prop:asym-trans-geodesic-path-system}, it suffices to prove (1) implies (3). Sisto~\cite[Lemma 2.13]{S2013} previously proved that  (2) implies (3) and in the rest of this section we modify Sisto's argument to show that (1) implies (3). 

For the rest of this section fix: $(X,\dist)$ a complete geodesic metric  space, $\Gc$ a geodesic path system on $X$, $\Sc$ a collection of  subsets of $X$, and $\Pi_{\Sc}=\{ \pi_S : X \to S : S \in \Sc \}$ an almost-projection system with constant $C$.  Then fix a constant
\begin{equation*}
\sigma_0\geq \max \{10C,1\}.
\end{equation*}
Finally, for any pair of distinct points $x, y \in X$, let $\gamma_{x,y}$ denote a path in $\Gc$ connecting $x$ and $y$.

The proof of Proposition~\ref{prop:asym-trans-geodesic-path-system} will require the following two lemmas. Informally, the first one says that if $\theta$ is an ``$\Sc$-almost-transverse quasi-geodesic,'' then any geodesic joining points on $\theta$ is also ``$\Sc$-almost-transverse.''
 
 \begin{lemma}\label{lem:geodesic-penetration} \cite[pg. 176]{S2013} Suppose $c>0$,  $\triangleD \geq 1$, $\kappa \geq c\sigma_0$, $\theta:[0,T] \to X$ is a $(1,c)$-quasi-geodesic, and 
 \begin{align*}
 \diam_X \Big( \Nc_{X} \big( S;\kappa \big) \cap \theta \Big) \leq \triangleD
 \end{align*}
 for every $S \in \Sc$. Then $$\diam_X \Big( \Nc_{X} \big( S;c\sigma_0 \big) \cap \gamma_{x,y} \Big) \leq \triangleD+10 \sigma_0 + 18 c\sigma_0$$
 for every $S \in \Sc$ and $x,y \in \theta$. 
\end{lemma}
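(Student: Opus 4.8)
\textbf{Proof proposal for Lemma~\ref{lem:geodesic-penetration}.}

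The plan is to follow Sisto's strategy from~\cite[pg.~176]{S2013}, adapting it so that the input quasi-geodesic $\theta$ plays the role that a geodesic plays in the original argument, while the output geodesic $\gamma_{x,y}$ is genuinely a geodesic of $(X,\dist)$. Fix $S \in \Sc$ and $x,y \in \theta$, and let $\gamma = \gamma_{x,y}$ be the chosen geodesic. The first step is to understand how the almost-projection $\pi_S$ behaves along $\gamma$: because $\Pi_\Sc$ is an almost-projection system, property~(1) of the definition tells us that for any $p \in S$ and any $z \in X$ we have $\dist(z,p) \geq \dist(z,\pi_S(z)) + \dist(\pi_S(z),p) - C$, and property~(3) says that $\pi_S$ is coarsely constant on balls that are roughly as far from $S$ as their center is. The standard consequence, which I would extract as a preliminary observation, is that $\pi_S$ restricted to any geodesic is a coarse Lipschitz ``contraction'': along $\gamma$, the composition $t \mapsto \pi_S(\gamma(t))$ moves slowly while $\gamma$ stays far from $\Nc_X(S;\text{const})$ and can only move quickly while $\gamma$ is within a bounded neighborhood of $S$. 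Quantitatively, if two points $u,v \in \gamma$ satisfy $\dist(\pi_S(u),\pi_S(v))$ large, then a definite-length sub-segment of $[u,v] \subset \gamma$ must lie in a bounded neighborhood of $S$ — this is exactly the kind of ``contracting projections force penetration'' statement, and it is the engine of the proof.

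The second step is to transfer information from $\gamma$ to $\theta$. Suppose for contradiction that $\diam_X(\Nc_X(S;c\sigma_0) \cap \gamma)$ exceeds the asserted bound $\triangleD + 10\sigma_0 + 18c\sigma_0$. Pick $u,v \in \gamma$ realizing (nearly) this diameter with both in $\Nc_X(S;c\sigma_0)$. Since $x,y \in \theta$, we can look at the corresponding points on $\theta$ and estimate $\dist(\pi_S(x),\pi_S(y))$: using that $x,y$ are within $c\sigma_0$ (through $u,v$ and the projection bound from property~(1)) of points of $S$ close to $u,v$, together with the triangle inequality and the contraction property, one gets that $\dist(\pi_S(x),\pi_S(y))$ is comparable to $\dist(u,v)$ up to an additive error of order $C + c\sigma_0$. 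Now apply the hypothesis on $\theta$: since $\theta$ is a $(1,c)$-quasi-geodesic with $\diam_X(\Nc_X(S;\kappa)\cap\theta) \leq \triangleD$ and $\kappa \geq c\sigma_0$, the portion of $\theta$ within $\kappa$ of $S$ — which in particular contains the sub-arc of $\theta$ between the points where its projection to $S$ moves — has diameter at most $\triangleD$, and hence (using the $(1,c)$-quasi-geodesic inequality and the coarse-Lipschitz bound on $\pi_S\circ\theta$) the total displacement $\dist(\pi_S(x),\pi_S(y))$ is at most $\triangleD + O(c) + O(C)$. Combining the two estimates for $\dist(\pi_S(x),\pi_S(y))$ yields $\dist(u,v) \leq \triangleD + O(\sigma_0) + O(c\sigma_0)$, and keeping careful track of the constants — using $\sigma_0 \geq \max\{10C,1\}$ to absorb the $C$-terms — gives precisely $\dist(u,v) \leq \triangleD + 10\sigma_0 + 18c\sigma_0$, contradicting the choice of $u,v$.

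The main obstacle I anticipate is bookkeeping the constants so that the final bound comes out exactly as $\triangleD + 10\sigma_0 + 18c\sigma_0$ rather than merely some $\triangleD + O(c\sigma_0)$: the argument naturally produces several additive error terms (one from each application of property~(1) of the almost-projection system, one from the $(1,c)$-quasi-geodesic inequality applied to $\theta$, one from relating distances along $\gamma$ to distances between projections, and one from the coarse constancy property~(3)), and one must verify that with the choice $\sigma_0 \geq \max\{10C,1\}$ all the $C$-contributions are dominated by $\sigma_0$-contributions and the coefficients assemble correctly. The other slightly delicate point is that the ``contracting projection'' property is usually stated for geodesics, so I would need to confirm that the version I use applies to $\gamma_{x,y} \in \Gc$ (which is a genuine geodesic) and, where needed, to the quasi-geodesic $\theta$ with an extra additive loss proportional to $c$; both follow from the three axioms of an almost-projection system together with a routine subdivision argument, which I would cite to~\cite[Section~2]{S2013} rather than reprove in full.
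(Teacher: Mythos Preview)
Your proposal is essentially correct and follows the same strategy as the paper, though the paper's presentation is more streamlined. Rather than arguing by contradiction, the paper proceeds directly in two steps: first it bounds $\dist(\pi_S(x),\pi_S(y)) \leq \Delta + 20C + 1$ (this is your ``Step~2'' conclusion, obtained via the contrapositive of Sisto's Lemma~2.10 applied to the endpoints $x,y\in\theta$), and then it applies Sisto's Corollary~2.7 to the genuine geodesic $\gamma_{x,y}$ with $r=c\sigma_0$ to read off
\[
\diam_X\Big(\gamma_{x,y}\cap\Nc_X(S;c\sigma_0)\Big)\leq \dist(\pi_S(x),\pi_S(y))+18c\sigma_0+62C\leq \Delta+10\sigma_0+18c\sigma_0,
\]
using $\sigma_0\geq 10C$ at the last step. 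Your contradiction framing and the intermediate points $u,v$ are unnecessary once you invoke these two Sisto results explicitly; in particular, the sentence about ``$x,y$ are within $c\sigma_0$ (through $u,v$\dots) of points of $S$'' is muddled and not needed---the bound on $\dist(\pi_S(x),\pi_S(y))$ comes purely from the transversality hypothesis on $\theta$, and the penetration bound on $\gamma_{x,y}$ then follows from Corollary~2.7 with no reference back to $\theta$. Your anticipated obstacles (constant bookkeeping and checking the contracting property for $(1,c)$-quasi-geodesics) are handled exactly by citing these two black-box statements.
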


Lemma~\ref{lem:geodesic-penetration} follows from Sisto's proof of Lemma 2.13 in~\cite{S2013}. For the reader's convenience we will provide the argument at the end of this section. 

We  also need the following variant of the Morse lemma. 

\begin{lemma}\label{lem:morse-lemma-type-result}
 Suppose $c>0$, $x,y \in X$, and $\theta:[0,T] \to X$ is a $(1,c)$-quasi-geodesic with $x=\theta(0)$ and $y=\theta(T)$. Moreover, suppose that there exists $\delta \geq 0 $ such that any triangle with all its vertices on $\theta$ and all its edges in $\Gc$ is $\delta$-thin. Then 
 $$\dist^{\Haus}(\theta, \gamma_{x,y}) \leq 4 \delta+ 10 c.$$

\end{lemma}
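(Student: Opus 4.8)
Lemma~\ref{lem:morse-lemma-type-result} is a ``quasi-geodesics stay close to $\Gc$-paths'' statement, and the natural route is the classical Morse-lemma argument (subdivide, bound the distance from $\theta$ to $\gamma_{x,y}$ by a recursively controlled quantity), adapted so that the only triangle-thinness we invoke is for triangles whose vertices lie on $\theta$ and whose edges lie in the geodesic path system $\Gc$. First I would fix notation: write $\theta:[0,T]\to X$ for the $(1,c)$-quasi-geodesic with $\theta(0)=x$, $\theta(T)=y$, and set
\begin{align*}
D:=\sup_{t\in[0,T]} \dist\big(\theta(t),\gamma_{x,y}\big).
\end{align*}
Since $\theta$ is continuous on a compact interval and $\gamma_{x,y}$ is a closed set (its image is compact, being a geodesic between two points), $D$ is attained at some $t_0\in[0,T]$. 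The goal is to show $D\le 3\delta+8c$ or so (any explicit constant $\le 4\delta+10c$ suffices), and then observe that the reverse inclusion $\sup_{z\in\gamma_{x,y}}\dist(z,\theta)$ is controlled by essentially the same argument, giving the Hausdorff bound.

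\textbf{Key steps.} (1) Pick $t_0$ realizing $D$. Choose parameters $t_-:=\max(0,t_0-2D)$ and $t_+:=\min(T,t_0+2D)$, and set $p:=\theta(t_-)$, $q:=\theta(t_+)$. Form the triangle with vertices $x$, $p$, $q$... no: better, form the triangle with vertices $\theta(0)=x$, $\theta(t_-)=p$, $\theta(t_+)=q$ joined along $\gamma_{x,p}$, $\gamma_{p,q}$, $\gamma_{q,x}$ — all edges in $\Gc$ (using that $\Gc$ is a geodesic path system, so such paths exist) and all vertices on $\theta$. Actually the cleanest choice: take the triangle $x,q,y$ is wrong too since we want $t_0$ trapped. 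The standard trick: consider the triangle with vertices $x$, $\theta(t_0)$... but $\theta(t_0)$ may not have a $\Gc$-edge relation we can exploit cheaply — it does, since $\Gc$ connects any two points. So: form the $\Gc$-triangle on vertices $x,\theta(t_0),y$. By hypothesis it is $\delta$-thin, so $\theta(t_0)$ lies within $\delta$ of $\gamma_{x,\theta(t_0)}\cup\gamma_{\theta(t_0),y}$ — that's trivial. The real content: bound $\dist(\theta(t_0),\gamma_{x,y})$ by comparing $\gamma_{x,\theta(t_0)}$ to $\gamma_{x,y}$. (2) The honest Morse argument: let $z$ be the point on $\gamma_{x,y}$ closest to $\theta(t_0)$, pick $t_-<t_0<t_+$ with $\dist(\theta(t_-),\theta(t_0)),\dist(\theta(t_+),\theta(t_0))\in[D,D+c]$ roughly (using the quasi-geodesic inequality, such parameters exist unless $t_0$ is within $\alpha D+\beta=D+c$ of an endpoint, an easy separate case). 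Form the $\Gc$-triangle $T$ on vertices $\theta(t_-),\theta(t_0),\theta(t_+)$; it is $\delta$-thin. Hence $\theta(t_0)$ is within $\delta$ of $\gamma_{\theta(t_-),\theta(t_0)}\cup\gamma_{\theta(t_0),\theta(t_+)}$ — trivial again. Instead use the triangle on $x,\theta(t_-),\theta(t_+)$ together with $x,\theta(t_+),y$ etc., propagating: by $\delta$-thinness of the $\Gc$-triangle on $\{x,\theta(t_-),y\}$, every point of $\gamma_{x,\theta(t_-)}$ is $\delta$-close to $\gamma_{x,y}\cup\gamma_{y,\theta(t_-)}$, and inductively one shows $\gamma_{x,\theta(t_-)}$ lies in the $(2\delta)$-neighborhood of $\gamma_{x,y}\cup\theta([t_-,T])$... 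This is getting circular. The correct clean version: mimic the standard proof that a quasi-geodesic is Hausdorff-close to a geodesic in a $\delta$-thin space, where $\delta$-thinness is only applied to triangles with vertices sampled along $\theta$. Concretely: $\dist(\theta(t_0),\gamma_{x,y})$; drop a $\Gc$-path $\gamma_{x,\theta(t_0)}$; the $\Gc$-triangle $x,\theta(t_0),y$ is $\delta$-thin so $z_0:=$nearest point of $\gamma_{x,y}$ to $\theta(t_0)$ satisfies: either $\dist(\theta(t_0),\gamma_{x,\theta(t_0)})\le\delta$ (impossible, $\theta(t_0)\in\gamma_{x,\theta(t_0)}$... it is an endpoint) — reconsider. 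Use instead: $\theta(t_0)$ is within $\delta$ of $\gamma_{x,y}\cup\gamma_{\theta(t_0),y}$? No. The genuinely standard statement needs the triangle $\{x,\theta(t_0),y\}$ with $\theta(t_0)$ NOT a vertex. So: take $t_1<t_0<t_2$, consider $\Gc$-triangle on $\{\theta(t_1),\theta(t_2),\theta(t_0)\}$? Still $\theta(t_0)$ a vertex.

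\textbf{The resolution and the main obstacle.} The main obstacle — and the thing I would spend the most care on — is exactly that the thinness hypothesis here is restricted to $\Gc$-triangles with vertices on $\theta$, which is weaker than full $\delta$-hyperbolicity, so I cannot freely form auxiliary triangles. The way around it: for each dyadic subdivision point $\theta(s)$ of $\theta$, the $\Gc$-triangle on $\{\theta(0),\theta(s),\theta(T)\}=\{x,\theta(s),y\}$ has all vertices on $\theta$ and all edges in $\Gc$, hence is $\delta$-thin, which gives $\dist(\theta(s),\gamma_{x,y}\cup\gamma_{x,\theta(s)}\cup\gamma_{\theta(s),y})$-type control — specifically that each of $\gamma_{x,\theta(s)}$, $\gamma_{\theta(s),y}$ lies in the $\delta$-neighborhood of the union of the other two sides. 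Combining this over a chain $x=\theta(s_0),\theta(s_1),\dots,\theta(s_N)=y$ with consecutive $\Gc$-segments of length $\le c+1$, and using that $\gamma_{x,y}$ is a single geodesic, one shows by a telescoping/pigeonhole argument (as in Sisto, bottom of p.~176, or the standard reference for the Morse property) that every $\theta(t)$ is within $4\delta+10c$ of $\gamma_{x,y}$, and symmetrically every point of $\gamma_{x,y}$ is within the same distance of $\theta$. I would carry out the telescoping estimate explicitly: set $f(t)=\dist(\theta(t),\gamma_{x,y})$, show $f$ cannot exceed its value at the subdivision points by more than $c+1$ (quasi-geodesic Lipschitz bound), and at subdivision points use the $\delta$-thin $\Gc$-triangle on $\{x,\theta(s),y\}$ to run the usual divide-and-conquer recursion $f(\text{mid})\le \max(f(\text{left endpt side}),f(\text{right endpt side}))+\delta$ down to segments of length $\le c+1$, yielding a bound of the form $\delta\lceil\log_2(\cdot)\rceil$ — but since we only need the crude constant $4\delta+10c$, in fact a one-step argument suffices: choose $t_-,t_+$ with $\dist(\theta(t_\pm),\theta(t_0))=2f(t_0)$ (adjusting for endpoints), note $\dist(\theta(t_-),\theta(t_+))\le 4f(t_0)+2c$ while $t_+-t_-\ge 4f(t_0)-2c$, so the $\Gc$-geodesic $\gamma_{\theta(t_-),\theta(t_+))}$ combined with $\delta$-thinness of the triangle $\{\theta(t_-),\theta(t_0),\theta(t_+)\}$... forces $f(t_0)\le 2\delta+$const$\cdot c$ after bounding how far $\gamma_{\theta(t_-),\theta(t_+)}$ can stray, completing the proof. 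I expect the bookkeeping of constants (making sure $4\delta+10c$ is not exceeded) to be the only delicate point, and I would follow Sisto's constants from~\cite[Lemma~2.13]{S2013} as a template.
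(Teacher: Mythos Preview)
Your proposal circles the correct idea without landing on it. You do form the right triangle—the $\Gc$-triangle on vertices $x,\theta(t_0),y$, all of which lie on $\theta$—but then dismiss it because applying $\delta$-thinness \emph{at the vertex} $\theta(t_0)$ is trivial. The paper's move, which you never find, is to step slightly away from the vertex: pick $a\in\gamma_{x,z}$ with $\dist(z,a)=\delta+4c$ (where $z=\theta(t_0)$), so that thinness gives some $b\in\gamma_{x,y}\cup\gamma_{z,y}$ with $\dist(a,b)\le\delta$. The entire content of the argument is then to rule out $b\in\gamma_{z,y}$, and this is where the $(1,c)$-quasi-geodesic hypothesis enters: since $z$ lies on $\theta$ between $x$ and $y$, one has $\dist(x,z)+\dist(z,y)\le\dist(x,y)+3c$, and a short triangle-inequality computation shows $\dist(a,y')\ge\delta+c$ for every $y'\in\gamma_{z,y}$. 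Hence $b\in\gamma_{x,y}$ and $\dist(z,\gamma_{x,y})\le(\delta+4c)+\delta=2\delta+4c$. The reverse inclusion is then a standard reference to Bridson--Haefliger.

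Your alternative routes do not close the gap. The dyadic subdivision yields, as you yourself note, a bound of order $\delta\log(\cdot)$, not a uniform constant. Your final ``one-step argument'' with the triangle $\{\theta(t_-),\theta(t_0),\theta(t_+)\}$ is $\delta$-thin by hypothesis, but its thinness tells you nothing about $\gamma_{x,y}$: none of its edges is $\gamma_{x,y}$, and you give no mechanism for ``bounding how far $\gamma_{\theta(t_-),\theta(t_+)}$ can stray'' from $\gamma_{x,y}$ without already knowing the conclusion. The missing ingredient throughout is the observation that the quasi-geodesic defect bound $\dist(x,z)+\dist(z,y)\le\dist(x,y)+3c$ separates the two legs $\gamma_{x,z}$ and $\gamma_{z,y}$ near $z$ by more than $\delta$, which is exactly what forces the nearby point $b$ onto the far side $\gamma_{x,y}$.
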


Delaying the proof of Lemma~\ref{lem:morse-lemma-type-result} we prove Proposition~\ref{prop:asym-trans-geodesic-path-system}.

\begin{proof}[Proof of Proposition~\ref{prop:asym-trans-geodesic-path-system}] By the remarks above it suffices to show that $(1)$ implies $(3)$. So suppose that  $\Sc$ is asymptotically transverse-free relative to the geodesic path system $\Gc$ with constants $\lambda_{\Gc}$ and  $\sigma_{\Gc}$.

 By increasing $\sigma_0$ if necessary we can assume that 
 \begin{equation*}
\sigma_0= \max \{10C,1,\sigma_{\Gc}\}.
\end{equation*}
Then fix
 \begin{equation*}
 \lambda_0:=\max \{ 9\lambda_{\Gc}(1+10 \sigma_0), 20\sigma_0 (1+ 9 \lambda_{\Gc})\}.
 \end{equation*}
 
Fix a $(1,c)$-quasi-geodesic triangle $\Tc:=\big( \theta_1 \cup \theta_2 \cup \theta_3 \big)$ that is $\Sc$-almost transverse with constants $\kappa$ and $\triangleD$ where $c \geq 1$, $\kappa \geq c\sigma_0$, and $\triangleD \geq 1$.  We will show that $\Tc$ is $\big( \lambda_0\triangleD+\lambda_0c \big)$-thin. Since $\Tc$ is arbitrary, this will complete the proof that $\Sc$ is strongly asymptotically transverse free and hence that $(1)$ implies $(3)$ in Proposition~\ref{prop:asym-trans-geodesic-path-system}.

Let $\Tc_{\Gc}$ be a geodesic triangle with the same vertices as $\Tc$ but edges in $\Gc.$ Let $\gamma_1,\gamma_2,\gamma_3$ be the edges of $\Tc_{\Gc}$ labelled so that the edge $\gamma_i$ corresponds to the edge $\theta_i$ for all $1 \leq i \leq 3$. By Lemma \ref{lem:geodesic-penetration}, $\Tc_{\Gc}$ is $\Sc$-almost-transverse  with constants $c\sigma_0$ and $(\triangleD+10 \sigma_0 +18 c \sigma_0)$. Notice that $c \sigma_0 \geq \sigma_{\Gc}$ and $\triangleD+10 \sigma_0 +18 c \sigma_0 \geq 1.$ Since $\Sc$ is asymptotically transverse-free relative to the geodesic path system $\Gc$, the triangle $\Tc_{\Gc}$ is $\delta$-thin where
\begin{equation}\label{eqn:delta-for-morse-lemma}
\delta:=\lambda_{\Gc} \big( \triangleD + 10 \sigma_0 +18 c\sigma_0 \big). 
\end{equation}

Lemma \ref{lem:geodesic-penetration} also show that for each $1 \leq i \leq 3$, the $(1,c)$-quasi-geodesic $\theta_i$ and the geodesic $\gamma_i \in \Gc$ satisfy the hypothesis in Lemma \ref{lem:morse-lemma-type-result} with $\delta$ as in \eqref{eqn:delta-for-morse-lemma}. Thus, 
\begin{equation}\label{eqn:theta-i-gamma-i}
\max_{1 \leq i \leq 3} \dist^{\Haus}(\theta_i,\gamma_i) \leq 4 \delta +10c.
\end{equation}
So  $\Tc$ is $\big( 9 \delta + 20c \big)$-thin. Further, 
\begin{align*}
9 \delta + 20 c &= 9 \lambda_{\Gc}\big( \triangleD+10 \sigma_0 + 18 c \sigma_0 \big)+20c\\
& < 9 \lambda_{\Gc} \big( 1+10 \sigma_0 \big) \triangleD + 20\sigma_0 \big( 1+ 9 \lambda_{\Gc} \big) c \\
& \leq \lambda_0 \big( \triangleD+c \big).
\end{align*} 
\noindent Thus, $\Tc$ is $\big( \lambda_0  \triangleD+\lambda_0 c  \big)$-thin. 
\end{proof}

\subsection{Proof of Lemma~\ref{lem:geodesic-penetration}} Before proving the lemma we need to recall two other estimates from Sisto's paper. 

\begin{proposition}[{Sisto~\cite[Corollary 2.7]{S2013}}]\label{prop:sisto-2.7} If $r \geq 2C$, $x_1,x_2 \in X$, $S \in \Sc$, $\pi_S \in \Pi_{\Sc}$, and $\xi$ is any geodesic in $X$ connecting $x_1$ and $x_2$, then $$\diam_X \Big( \xi \cap \Nc_X(S;r) \Big) \leq \dist \Big( \pi_S(x_1),\pi_S(x_2) \Big)+18r + 62C.$$ 
\end{proposition}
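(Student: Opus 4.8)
The plan is to reduce everything to the parameter interval of $\xi$ and then finish with two applications of the triangle inequality. Since $\xi$ is a geodesic it is an isometric embedding of an interval, so I would parametrize $\xi\colon[0,L]\to X$ by arclength with $\xi(0)=x_1$, $\xi(L)=x_2$, $L=\dist(x_1,x_2)$, and set $J=\{u\in[0,L]:\xi(u)\in\Nc_X(S;r)\}$. If $J=\emptyset$ there is nothing to prove; otherwise $s:=\inf J\le t:=\sup J$ and, because $\xi$ is a geodesic,
\begin{equation*}
\diam_X\big(\xi\cap\Nc_X(S;r)\big)=\sup J-\inf J=t-s .
\end{equation*}
The map $u\mapsto\dist(\xi(u),S)$ is $1$-Lipschitz, so $\dist(\xi(s),S)\le r$ and $\dist(\xi(t),S)\le r$; hence $\dist(x_1,S)\le\dist(x_1,\xi(s))+\dist(\xi(s),S)\le s+r$ and likewise $\dist(x_2,S)\le(L-t)+r$. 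Rearranging, $s\ge\dist(x_1,S)-r$ and $L-t\ge\dist(x_2,S)-r$, and therefore
\begin{equation*}
t-s=L-s-(L-t)\le L-\dist(x_1,S)-\dist(x_2,S)+2r .
\end{equation*}

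It then remains to bound $L-\dist(x_1,S)-\dist(x_2,S)$ by $\dist(\pi_S(x_1),\pi_S(x_2))+O(C)$, and this is the only place the almost-projection axioms are used. Dropping the non-negative term $\dist(\pi_S(z),p)$ in axiom (1) and taking the infimum over $p\in S$ gives the near-point estimate $\dist(z,\pi_S(z))\le\dist(z,S)+C$ for all $z$. Applying this to $x_1$ and to $x_2$ and then using the ordinary triangle inequality $\dist(x_1,x_2)\le\dist(x_1,\pi_S(x_1))+\dist(\pi_S(x_1),\pi_S(x_2))+\dist(\pi_S(x_2),x_2)$ yields
\begin{equation*}
L\le\dist(x_1,S)+\dist(x_2,S)+\dist(\pi_S(x_1),\pi_S(x_2))+2C .
\end{equation*}
Combining the three displays gives $\diam_X(\xi\cap\Nc_X(S;r))\le\dist(\pi_S(x_1),\pi_S(x_2))+2r+2C$, which is stronger than the stated bound (and does not even need $r\ge 2C$); I would present it in this form, inflating the constants only if an exact match with $18r+62C$ is wanted.

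The point I expect to be the real obstacle is a conceptual one: it is very tempting to compare $\pi_S(x_1)$ with $\pi_S(\xi(s))$ and $\pi_S(x_2)$ with $\pi_S(\xi(t))$ via a bounded-geodesic-image or strong-contraction lemma, but the subsegments $\xi|_{[0,s]}$ and $\xi|_{[t,L]}$ can run at distance only slightly larger than $r$ from $S$ for arbitrarily long, so the projection can legitimately drift by an amount comparable to their length, and no such comparison follows from axioms (1)--(3) alone. The clean argument avoids comparing those projections altogether; the crux is simply the one-line estimates $s\ge\dist(x_1,S)-r$ and $L-t\ge\dist(x_2,S)-r$, which convert the potential difficulty into the triangle inequality. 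Note that axioms (2) and (3) of the almost-projection system play no role in this particular statement.
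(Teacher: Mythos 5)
Your argument is correct. Note that the paper does not actually prove this statement: it imports it verbatim as Sisto's Corollary 2.7, so there is no in-paper proof to compare against. Your route is nevertheless worth contrasting with Sisto's. His derivation passes through his contraction lemmas for almost-projections (the analogues of Lemma 2.5--2.6 in \cite{S2013}, which also underlie Proposition~\ref{prop:sisto-2.10}), and the constants $18r+62C$ are inherited from that machinery. You instead parametrize $\xi$ by arclength, bound the entry and exit parameters of $\Nc_X(S;r)$ from below by $\dist(x_i,S)-r$, and then control $L=\dist(x_1,x_2)$ by the triangle inequality through $\pi_S(x_1)$ and $\pi_S(x_2)$ together with the near-point estimate $\dist(z,\pi_S(z))\le \dist(z,S)+C$ extracted from axiom (1). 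Each step checks out: $\diam_X(\xi\cap\Nc_X(S;r))=\sup J-\inf J$ because $\xi$ is an isometric embedding, the $1$-Lipschitz continuity of $u\mapsto\dist(\xi(u),S)$ gives $\dist(\xi(s),S),\dist(\xi(t),S)\le r$, and the two displays combine to give $2r+2C$ in place of $18r+62C$, with the hypothesis $r\ge 2C$ indeed unused. Since the statement is only ever applied as an upper bound with these explicit constants (in the proof of Lemma~\ref{lem:geodesic-penetration}), your sharper and entirely self-contained version is a valid substitute, and your closing remark correctly identifies why one should not try to compare $\pi_S(x_1)$ with $\pi_S(\xi(s))$: axioms (1)--(3) give no control over the drift of the projection along the subsegments that stay just outside $\Nc_X(S;r)$, and your bookkeeping sidesteps that entirely.
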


\begin{proposition}[{Sisto~\cite[Lemma 2.10]{S2013}}]\label{prop:sisto-2.10}
If $x_1, x_2 \in X$, $S \in \Sc$, $\pi_S \in \Pi_{\Sc}$, $\xi$ is any geodesic in $X$ connecting $x_1$ and $x_2$, and $\dist \Big( \pi_S(x_1),\pi_S(x_2) \Big) \geq 8C +1$,  then $\xi$  intersects $B_X \Big( \pi_S(x_1);10C \Big)$, $B_X \Big( \pi_S(x_2);10C \Big)$, and $\Nc_X(S;2C). $
\end{proposition}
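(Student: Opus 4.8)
The statement is Sisto's \cite[Lemma 2.10]{S2013}, and the plan is to recover it directly from the three defining properties of the almost-projection system $\Pi_{\Sc}$ (with constant $C$). Write $p=\pi_S(x_1)$, $q=\pi_S(x_2)$, and $D=\dist(p,q)\geq 8C+1$; parametrize $\xi$ by arclength on $[0,L]$ with $\xi(0)=x_1$, $\xi(L)=x_2$; and set $\phi=\pi_S\circ\xi$, so that $\phi(0)=p$ and $\phi(L)=q$. We may assume $C>0$. First I would record two routine consequences of the axioms, to be used throughout: (i) minimizing property~(1) over $S$ gives $\dist(x,\pi_S(x))\leq\dist(x,S)+C$ for every $x\in X$; and (ii) property~(3), chained along $\xi$, shows $t\mapsto\phi(t)$ is coarsely $1$-Lipschitz, and more precisely that $\phi$ moves by at most $C$ across any subinterval of $[0,L]$ shorter than $\dist(\xi(t),S)$. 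I would also dispose of the trivial cases at the outset: if $\dist(x_1,p)\leq 10C$ then $\xi(0)=x_1\in B_X(p;10C)$ and that conclusion is immediate (similarly for $x_2,q$), and if some $x_i\in\Nc_X(S;2C)$ the third conclusion is immediate; so one may assume $\dist(x_i,S)>9C$.

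The main step — and the one I expect to be the main obstacle — is to show that $\xi$ meets $\Nc_X(S;2C)$. The plan is to argue by contradiction, assuming $\dist(\xi(t),S)\geq 2C$ for all $t$. Applying property~(1) at $x=\xi(t)$ against $z=p$ and against $z=q$ and adding the two inequalities yields
\begin{align*}
\dist(\xi(t),p)+\dist(\xi(t),q)\ \geq\ 2\dist(\xi(t),S)+D-2C
\end{align*}
for every $t$. The delicate point — where I would follow Sisto's bookkeeping closely — is to show that along such a geodesic the projected path $\phi$ cannot traverse distance $\geq 8C+1$ inside $S$: property~(3) controls its local oscillation, while property~(1) prevents it from backtracking past points it has already passed, and together these force $\diam\phi([0,L])\leq 8C$, contradicting $\dist(\phi(0),\phi(L))=D$. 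Hence $\xi$ enters $\Nc_X(S;2C)$, say at $w=\xi(t_w)$; this is the third assertion.

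Given the penetration point $w$, the plan for the remaining two assertions is as follows. Applying property~(1) with $x=x_1$ and then with $x=x_2$, each time against $z=\pi_S(w)\in S$, and using $\dist(w,\pi_S(w))\leq\dist(w,S)+C<3C$, I would extract $t_w\geq\dist(x_1,p)-O(C)$ and $t_w\leq L-\dist(x_2,q)+O(C)$. Then, applying the penetration statement already proved to the subgeodesic $\xi|_{[0,t_w]}$ — which avoids $\Nc_X(S;2C)$ except at its last point — gives $\dist(p,\pi_S(w))\leq 8C$, hence also $t_w\leq\dist(x_1,p)+O(C)$, and therefore
\begin{align*}
\dist(w,p)\ \leq\ \dist(w,\pi_S(w))+\dist(\pi_S(w),p)\ \leq\ 10C,
\end{align*}
so $\xi$ meets $B_X(p;10C)$. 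The assertion for $q$ is symmetric, using instead the last point of $\xi$ in $\Nc_X(S;2C)$. The only nontrivial tuning here is keeping the additive constants at the stated values, and for that I would reproduce Sisto's estimates essentially verbatim; this is also the reason the hypothesis is phrased as $D\geq 8C+1$ rather than merely ``$D$ large''.
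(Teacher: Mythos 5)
First, a point of reference: the paper does not prove this proposition at all — it is quoted verbatim from Sisto \cite[Lemma 2.10]{S2013} and used as a black box in the appendix — so there is no in-paper argument to compare yours against. Your proposal therefore has to stand on its own, and as written it does not: the entire content of the lemma is concentrated in the step you label ``the delicate point,'' namely that a geodesic avoiding $\Nc_X(S;2C)$ has $\diam \pi_S(\xi)\leq 8C$, and there you only assert that ``property (3) controls local oscillation while property (1) prevents backtracking'' and defer to ``Sisto's bookkeeping.'' That is not a proof of the key step, and the heuristic does not obviously close. Property (3) alone gives that $\pi_S\circ\xi$ moves by at most $C$ over any subinterval shorter than the current distance to $S$, which for a geodesic staying at distance just above $2C$ only bounds the total drift by roughly $(\text{length})/2$; and the natural ways of injecting property (1) (e.g.\ applying it at $\xi(t)$ against $p$ and $q$, or using the gate inequalities $\dist(x_1,z)\geq\dist(x_1,p)+\dist(p,z)-C$ together with $L\leq\dist(x_1,p)+D+\dist(q,x_2)$) all collapse to tautologies of the form $0\geq -kC$. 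Some genuinely quantitative combination of the two axioms is needed, and supplying it is exactly what the lemma is for; your sketch does not contain it.

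There are also two smaller but real problems in the second half. You deduce $\dist(p,\pi_S(w))\leq 8C$ by ``applying the penetration statement to $\xi|_{[0,t_w]}$, which avoids $\Nc_X(S;2C)$ except at its last point'' — but the contrapositive of the penetration statement requires the subgeodesic to avoid $\Nc_X(S;2C)$ entirely, and $\xi|_{[0,t_w]}$ does meet it (at $w$), so the implication as stated gives nothing; you would need the stronger internal form of the penetration argument (a diameter bound on $\pi_S(\xi|_{[0,t_w)})$ plus a coarse-continuity step at $t_w$), which again lives inside the part you deferred. And even granting $\dist(p,\pi_S(w))\leq 8C$, your final estimate reads $\dist(w,p)\leq\dist(w,\pi_S(w))+\dist(\pi_S(w),p)\leq 3C+8C=11C$, not the claimed $10C$. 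Neither issue is fatal in spirit, but together with the missing central estimate they mean the proposal is an outline of where a proof should go rather than a proof; if the intent is simply to cite Sisto, it would be cleaner to do so, as the paper does.
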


\medskip

\noindent We now claim that 
\begin{equation}\label{eq:lemA3_est}
\dist \Big( \pi_S(x),\pi_S(y) \Big) \leq \triangleD + 20 C + 1.
\end{equation} If this is not true, then by Proposition~\ref{prop:sisto-2.10}, $\gamma_{x,y}$ intersects $B_{\Omega} \big( \pi_S(x);10C \big)$ and $B_{\Omega} \big( \pi_S(y); 10 C \big)$. Thus, 
\begin{align*}
\diam_X \left( \Nc_{X} \big( S;\kappa \big) \cap \gamma_{x,y} \right)\geq \diam_X \left( \Nc_{X} \big( S;10C \big) \cap \gamma_{x,y} \right) \geq \triangleD+1, 
\end{align*}
 which is a contradiction.   Hence, the estimate in Equation~\eqref{eq:lemA3_est} is true. Since $c \sigma_0 \geq 2C$, Proposition~\ref{prop:sisto-2.7}  implies that 
\begin{align*}
\diam_X \Big( \gamma_{x,y} \cap \Nc_X \big( S;c\sigma_0 \big) \Big) &\leq \triangleD+ 18 c \sigma_0 + 82 C +1 \\
&\leq \triangleD + 10 \sigma_0 +18 c\sigma_0. \qedhere
\end{align*}

\subsection{Proof of Lemma~\ref{lem:morse-lemma-type-result}} 

Let $M := 2 \delta +5c$. By a standard argument (see for instance~\cite[Proof of Theorem 1.7, pg. 404]{BH2013}), it suffices to prove: 
\begin{align*}
\theta \subset \Nc_X(\gamma_{x,y};M-c).
\end{align*}

 Fix $z \in \theta$ and consider the geodesic triangle $\gamma_{x,y} \cup \gamma_{x,z} \cup \gamma_{z,y}$. By hypothesis, this triangle is $\delta$-thin. Next pick $a \in \gamma_{x,z}$ such that $\dist(z,a) = \delta + 4c$. If such a point does not exist, then $\dist(x,z) < \delta+4c$ which implies that
 \begin{align*}
 z \in \Nc_X(\gamma_{x,y}; \delta + 4c) \subset \Nc_X(\gamma_{x,y}; M-c)
 \end{align*} 
 and we are done. Now since $\gamma_{x,y} \cup \gamma_{x,z} \cup \gamma_{z,y}$ is $\delta$-thin there exists $b \in \gamma_{x,y} \cup \gamma_{z,y}$ such that $\dist(a,b) \leq \delta$. We will show that $b \in \gamma_{x,y}$. Since $\theta$ is a $(1,c)$-quasi-geodesic, 
 $$\dist(x,z) + \dist(z,y)  \leq \dist(x,y) + 3c.$$
 Then for all $y^\prime \in \gamma_{z,y}$ we have
\begin{align*}
\dist(a,y^\prime) &\geq \dist(x,y)-\dist(x,a)-\dist(y^\prime,y) \\
&\geq \dist(x,z) + \dist(z,y) -3c -\dist(x,a)-\dist(y^\prime,y) \\
& = \dist(a,z)+\dist(y^\prime, z) - 3 c \geq \delta+c. 
\end{align*}
So we must have $b \in \gamma_{x,y}$. Then 
\begin{align*}
\dist(z,\gamma_{x,y}) \leq \dist(z,a)+\dist(a,b) \leq 2\delta+4c = M-c.
\end{align*} 
Since $z \in \theta$ was arbitrary 
 \begin{equation*}
\theta \subset \Nc_X(\gamma_{x,y};M-c). \qedhere
\end{equation*}

\bibliographystyle{alpha}
\bibliography{geom}

\begin{thebibliography}{{Bob}20}

\bibitem[BDL18]{BDL2018}
Samuel~A. Ballas, Jeffrey Danciger, and Gye-Seon Lee.
\newblock Convex projective structures on nonhyperbolic three-manifolds.
\newblock {\em Geom. Topol.}, 22(3):1593--1646, 2018.

\bibitem[Ben60]{B1960}
Jean-Paul Benz\'{e}cri.
\newblock Sur les vari\'{e}t\'{e}s localement affines et localement
  projectives.
\newblock {\em Bull. Soc. Math. France}, 88:229--332, 1960.

\bibitem[Ben03]{B2003b}
Yves Benoist.
\newblock Convexes divisibles. {II}.
\newblock {\em Duke Math. J.}, 120(1):97--120, 2003.

\bibitem[Ben04]{B2004}
Yves Benoist.
\newblock Convexes divisibles. {I}.
\newblock In {\em Algebraic groups and arithmetic}, pages 339--374. Tata Inst.
  Fund. Res., Mumbai, 2004.

\bibitem[Ben06]{B2006}
Yves Benoist.
\newblock Convexes divisibles. {IV}. {S}tructure du bord en dimension 3.
\newblock {\em Invent. Math.}, 164(2):249--278, 2006.

\bibitem[Ben08]{B2008}
Yves Benoist.
\newblock A survey on divisible convex sets.
\newblock In {\em Geometry, analysis and topology of discrete groups}, volume~6
  of {\em Adv. Lect. Math. (ALM)}, pages 1--18. Int. Press, Somerville, MA,
  2008.

\bibitem[BH99]{BH2013}
Martin~R. Bridson and Andr\'{e} Haefliger.
\newblock {\em Metric spaces of non-positive curvature}, volume 319 of {\em
  Grundlehren der Mathematischen Wissenschaften [Fundamental Principles of
  Mathematical Sciences]}.
\newblock Springer-Verlag, Berlin, 1999.

\bibitem[BK53]{BK1953}
Herbert Busemann and Paul~J. Kelly.
\newblock {\em Projective geometry and projective metrics}.
\newblock Academic Press Inc., New York, N. Y., 1953.

\bibitem[{Bob}20]{MB2020}
Martin~D. {Bobb}.
\newblock {Codimension-$1$ Simplices in Divisible Convex Domains}.
\newblock {\em arXiv e-prints}, page arXiv:2001.11096, January 2020.

\bibitem[CLM20]{CLM2016}
Suhyoung Choi, Gye-Seon Lee, and Ludovic Marquis.
\newblock Convex projective generalized {D}ehn filling.
\newblock {\em Ann. Sci. \'{E}c. Norm. Sup\'{e}r. (4)}, 53(1):217--266, 2020.

\bibitem[CLT15]{CLT2015}
D.~Cooper, D.D. Long, and S.~Tillmann.
\newblock On convex projective manifolds and cusps.
\newblock {\em Advances in Mathematics}, 277:181 -- 251, 2015.

\bibitem[DGK17]{DGF2017}
Jeffrey {Danciger}, Fran{\c{c}}ois {Gu{\'e}ritaud}, and Fanny {Kassel}.
\newblock {Convex cocompact actions in real projective geometry}.
\newblock {\em arXiv e-prints}, page arXiv:1704.08711, Apr 2017.

\bibitem[DGK18]{DGF2018}
Jeffrey Danciger, Fran\c{c}ois Gu\'{e}ritaud, and Fanny Kassel.
\newblock Convex cocompactness in pseudo-{R}iemannian hyperbolic spaces.
\newblock {\em Geom. Dedicata}, 192:87--126, 2018.

\bibitem[dlH93]{dlH1993}
Pierre de~la Harpe.
\newblock On {H}ilbert's metric for simplices.
\newblock In {\em Geometric group theory, {V}ol. 1 ({S}ussex, 1991)}, volume
  181 of {\em London Math. Soc. Lecture Note Ser.}, pages 97--119. Cambridge
  Univ. Press, Cambridge, 1993.

\bibitem[Dru02]{D2002}
Cornelia Dru\c{t}u.
\newblock Quasi-isometry invariants and asymptotic cones.
\newblock {\em Internat. J. Algebra Comput.}, 12(1-2):99--135, 2002.
\newblock International Conference on Geometric and Combinatorial Methods in
  Group Theory and Semigroup Theory (Lincoln, NE, 2000).

\bibitem[DS05]{DS2005}
Cornelia Dru\c{t}u and Mark Sapir.
\newblock Tree-graded spaces and asymptotic cones of groups.
\newblock {\em Topology}, 44(5):959--1058, 2005.
\newblock With an appendix by Denis Osin and Mark Sapir.

\bibitem[Fra89]{Fra1989}
Sidney Frankel.
\newblock Complex geometry of convex domains that cover varieties.
\newblock {\em Acta Math.}, 163(1-2):109--149, 1989.

\bibitem[GW71]{GW1971}
Detlef Gromoll and Joseph~A. Wolf.
\newblock Some relations between the metric structure and the algebraic
  structure of the fundamental group in manifolds of nonpositive curvature.
\newblock {\em Bull. Amer. Math. Soc.}, 77:545--552, 1971.

\bibitem[HK05]{HK2005}
G.~Christopher Hruska and Bruce Kleiner.
\newblock Hadamard spaces with isolated flats.
\newblock {\em Geom. Topol.}, 9:1501--1538, 2005.
\newblock With an appendix by the authors and Mohamad Hindawi.

\bibitem[Hru05]{H2005}
G.~Christopher Hruska.
\newblock Geometric invariants of spaces with isolated flats.
\newblock {\em Topology}, 44(2):441--458, 2005.

\bibitem[IZ21]{IZ2019}
Mitul Islam and Andrew Zimmer.
\newblock A flat torus theorem for convex co-compact actions of projective
  linear groups.
\newblock {\em J. Lond. Math. Soc. (2)}, 103(2):470--489, 2021.

\bibitem[Kap07]{K2007}
Michael Kapovich.
\newblock Convex projective structures on {G}romov-{T}hurston manifolds.
\newblock {\em Geom. Topol.}, 11:1777--1830, 2007.

\bibitem[KL06]{KL2006}
Bruce Kleiner and Bernhard Leeb.
\newblock Rigidity of invariant convex sets in symmetric spaces.
\newblock {\em Invent. Math.}, 163(3):657--676, 2006.

\bibitem[KL18]{KL2018}
Michael {Kapovich} and Bernhard {Leeb}.
\newblock {Relativizing characterizations of Anosov subgroups, I}.
\newblock {\em arXiv e-prints}, page arXiv:1807.00160, June 2018.

\bibitem[KS58]{KS1958}
Paul Kelly and Ernst Straus.
\newblock Curvature in {H}ilbert geometries.
\newblock {\em Pacific J. Math.}, 8:119--125, 1958.

\bibitem[LY72]{LY1972}
H.~Blaine Lawson, Jr. and Shing~Tung Yau.
\newblock Compact manifolds of nonpositive curvature.
\newblock {\em J. Differential Geometry}, 7:211--228, 1972.

\bibitem[Mar14]{L2014}
Ludovic Marquis.
\newblock Around groups in {H}ilbert geometry.
\newblock In {\em Handbook of {H}ilbert geometry}, volume~22 of {\em IRMA Lect.
  Math. Theor. Phys.}, pages 207--261. Eur. Math. Soc., Z\"{u}rich, 2014.

\bibitem[Nus88]{N1988}
Roger~D. Nussbaum.
\newblock Hilbert's projective metric and iterated nonlinear maps.
\newblock {\em Mem. Amer. Math. Soc.}, 75(391):iv+137, 1988.

\bibitem[Qui05]{Q2005}
J.-F. Quint.
\newblock Groupes convexes cocompacts en rang sup\'erieur.
\newblock {\em Geom. Dedicata}, 113:1--19, 2005.

\bibitem[Qui10]{Q2010}
Jean-Fran{\c{c}}ois Quint.
\newblock Convexes divisibles (d'apr\`es {Y}ves {B}enoist).
\newblock {\em Ast\'erisque}, (332):Exp. No. 999, vii, 45--73, 2010.
\newblock S{\'e}minaire Bourbaki. Volume 2008/2009. Expos{\'e}s 997--1011.

\bibitem[Sis13]{S2013}
Alessandro Sisto.
\newblock Projections and relative hyperbolicity.
\newblock {\em Enseign. Math. (2)}, 59(1-2):165--181, 2013.

\bibitem[Ver14]{V2014}
Constantin Vernicos.
\newblock On the {H}ilbert geometry of convex polytopes.
\newblock In {\em Handbook of {H}ilbert geometry}, volume~22 of {\em IRMA Lect.
  Math. Theor. Phys.}, pages 111--125. Eur. Math. Soc., Z\"{u}rich, 2014.

\bibitem[Wis96]{W1996}
Daniel~T. Wise.
\newblock {\em Non-positively curved squared complexes: {A}periodic tilings and
  non-residually finite groups}.
\newblock ProQuest LLC, Ann Arbor, MI, 1996.
\newblock Thesis (Ph.D.)--Princeton University.

\bibitem[{Zhu}19]{FZ2019}
Feng {Zhu}.
\newblock {Relatively dominated representations}.
\newblock {\em arXiv e-prints}, page arXiv:1912.13152, December 2019.

\bibitem[{Zim}17]{Z2017}
Andrew {Zimmer}.
\newblock {Projective {A}nosov representations, convex cocompact actions, and
  rigidity}.
\newblock {\em arXiv e-prints}, page arXiv:1704.08582, Apr 2017.

\end{thebibliography}

\end{document}